\documentclass{article}
\usepackage{graphicx} 
\usepackage{authblk}

\usepackage[a4paper,top=3cm,bottom=3cm,left=3cm,right=3cm,marginparwidth=1cm]{geometry}

\usepackage{upgreek}
\usepackage{amsmath,amssymb,amsthm,amsfonts}
\usepackage{mathtools}
\usepackage{dsfont}
\usepackage{graphicx}
\usepackage{stmaryrd}
\usepackage{subcaption}
\usepackage{float}
\usepackage{bbm}
\usepackage{bm}
\usepackage{enumitem}
\usepackage{pifont}
\usepackage{crossreftools}
\usepackage{comment}
\usepackage{centernot}
\usepackage{mathrsfs}
\usepackage{tikz}
\usepackage{cancel}
\usepackage[normalem]{ulem}

\usepackage[colorinlistoftodos]{todonotes}
\usepackage[allcolors=black]{hyperref}

\usepackage[backend=biber, style=alphabetic]{biblatex}
\newcommand{\PP}{{\mathbb{P}}}

\newcommand{\EE}{{\mathbb{E}}}

\newcommand{\LL}{{\mathcal{L}}}
\newcommand{\XX}{{\mathcal{X}}}
\newcommand{\eps}{{\epsilon}}
\newcommand{\HH}{{\mathbb{H}}}

\newcommand{\vol}{{\operatorname{vol}}}
\newcommand{\diam}{{\operatorname{diam}}}
\newcommand{\ZZ}{{\mathcal{Z}}}
\newcommand{\Var}{{\mathbb{V}\!\operatorname{ar}}}
\newcommand{\dist}{{\operatorname{dist}}}
\newcommand{\der}{{\operatorname{d}}}
\newcommand{\Int}{{\operatorname{Int}}}

\newcommand{\Area}{{\operatorname{Area}}}
\def\cZ{\mathcal{Z}}
\def\lamn{\Lambda_N}

\def\1{\mathbb{I}}
\def\cH{\mathcal{H}}
\def\w{\mathrm{w}}
\def\f{\mathrm{f}}
\def\Ne {\mathtt{NoEx}}
\def\cont{\mathtt{Con_2}}
\def\sB{\mathscr{B}}
\def\fB{\mathsf{B}}
\def\sfb{\mathsf{b}}
\def\sfa{\mathsf{a}}
\def\sfc{\mathsf{c}}
\def\sfd{\mathsf{d}}
\def\sfe{\mathsf{e}}
\def\fC{\mathsf{C}}
\def\fD{\mathsf{D}}
\def\tn{|\tau|}

\def\cC{\mathcal{C}}
\newcommand{\mcc}{\mathcal}
\newcommand{\mbb}{\mathbb}
\newcommand{\mrr}{\mathrm}
\newcommand{\mbf}{\mathbf}
\newcommand{\mss}{\mathscr}
\def\bpartial{\bar{\partial}}
\def\cA{\mathcal{A}}
\newcommand{\oloop}{\operatorname{oloop}}

\newtheorem{theorem}{Theorem}[section]
\newtheorem{corollary}[theorem]{Corollary}
\newtheorem{lemma}[theorem]{Lemma}

\newtheorem{claim}[theorem]         {Claim}
\newtheorem{proposition}[theorem]{Proposition}

\theoremstyle{definition}
\newtheorem{definition}[theorem]{Definition}
\theoremstyle{remark}
\newtheorem{remark}[theorem]{Remark}
\theoremstyle{remark}

\title{Scaling limits of the single-curve interface and outermost loops in the planar random field Ising model}
\author[1]{Fenglin Huang}
\author[2]{L\'eonie Papon}
\author[3]{Aoteng Xia}
\affil[1]{Peking University}
\affil[2]{TU Wien}
\affil[3]{University of Cologne}
\date{\today}

\begin{document}

\maketitle

\begin{abstract}
    We prove that the interface separating $+1$ and $-1$ spins in the near-critical planar random field Ising model (RFIM) with Dobrushin boundary conditions has a scaling limit, whose law is conformally covariant and almost surely absolutely continuous with respect to SLE$_3$. The limiting curve can be seen as a massive version of SLE$_3$ in the sense of Makarov and Smirnov, but in a random environment. We then show that the outermost spin loops of the  near-critical planar RFIM with $+1$ boundary conditions have subsequential limits and that any of these limits is almost surely singular with respect to CLE$_3$. This dichotomy between absolute continuity of the single interface and singularity of the outermost loops reflects the fact that a single interface does not explore enough of the magnetization field of the near-critical RFIM to detect the singularity of this field with respect to the critical Ising magnetization field, whereas the outermost spin loops do.
\end{abstract}

\section{Introduction}

\subsection{Main results}

Chordal SLE$_\kappa$ curves form a one-parameter family of planar curves, characterized by conformal invariance and a Markov property, that were introduced by Schramm \cite{Schramm_SLE}. They have been shown to arise as the scaling limits of interfaces in many planar statistical mechanics models at criticality when the boundary conditions are chosen appropriately \cite{percolation, LERW, HE, DGFF, cvgIsingSLE}.

However, many questions arise when looking at near-critical perturbations of these models, which are obtained by sending some of the parameters to their critical values at an appropriate rate. In the scaling limit, these perturbations typically introduce a finite correlation length and break conformal invariance: observables of the model that are conformally invariant in the critical regime are often only conformally covariant in the near-critical regime. In \cite{massiveSLE}, Makarov and Smirnov asked whether SLE-type curves could nevertheless describe the scaling limits of interfaces in the near-critical regime. The idea is that, on a lattice, the discrete interface of the near-critical model can be seen as a perturbation of the interface of the critical one. In the scaling limit, one can therefore expect the near-critical limiting interface to be in some sense a perturbation of an SLE$_{\kappa}$ curve. This near-critical limiting interface would depend on a mass parameter, arising from the near-critical perturbation, which led Makarov and Smirnov to call the (conjectural) limiting laws of near-critical interfaces massive SLEs \cite{massiveSLE}. In particular, these massive SLEs should still enjoy a Markovian property similar to that of SLE$_{\kappa}$ and the perturbation would only affect their transformation law under conformal maps: the mass parameter would have to be changed when mapping one domain to another via a conformal map and as a result, massive SLE should be conformally covariant in law, instead of conformally invariant as SLE.

Makarov and Smirnov originally asked about scaling limits of interfaces in near-critical models in the context of deterministic near-critical perturbations. However, one can ask the same question when the near-critical perturbation is random, that is, for statistical mechanics models in a random environment.
A prime example of such a model is the planar random field Ising model (RFIM), which is an Ising model in which each spin also interacts with a random magnetic field (see below for a more precise definition). This model was introduced by Imry and Ma, who asked whether adding a random magnetic field to the Ising model would destroy the phase transition \cite{PhysRevLett.35.1399}. They conjectured that this would be the case in dimension $2$ but not in dimensions $3$ and higher. In dimension $3$, this conjecture was proved in \cite{Imbrie85} (for $\beta = \infty$, or equivalently at zero temperature) and in \cite{Bricmont-3d} (for large $\beta$) using the renormalization group theoretic approach. More recently, a new and simpler proof (without renormalization group) was provided in \cite{DZ21} (with critical inputs from \cite{Chalker83, FFS84}), and building on this, this result was later extended to the entire low temperature regime in \cite{DLX22}. In dimension $2$, this conjecture was proved by \cite{Aizenmann-2d}. Recent progress in this direction mainly concerns quantitative bounds on the decay rate: sharper and sharper bounds were progressively obtained in a series of works \cite{Cha18, AP19, DX21, AHP20}, and exponential decay was ultimately proved in \cite{DX21, AHP20} (which rules out BKT transition).

Of particular relevance to us is the discovery of a near-critical regime for the planar RFIM and the construction of its partition function by Caravenna, Zygouras and Sun \cite{CSZ16}: they showed that when the underlying Ising model is at critical temperature, one can tune the strength of the noise defining the magnetic perturbation in such a way that the corresponding planar RFIM has a non-trivial scaling limit. Their result also hinted that the critical disorder strength of the random field Ising model is $\delta^{\frac{7}{8}}$ where $\delta$ is the meshsize of the lattice. Under such disorder strength, a continuum RFIM was constructed in \cite{continuum-2d-RFIM} via scaling limits of the magnetization field. Later, in \cite{DHX23}, a rigorous phase transition was established for the spontaneous magnetization around the disorder strength $\delta^{\frac{7}{8}}$, and exponential decay was shown in the strong disorder regime. Together, these results point to 
$\delta^{\frac{7}{8}}$ as the natural candidate for the critical disorder strength, identifying a near-critical window in which the interfaces of the planar RFIM are expected to exhibit nontrivial behavior. This opens up the possibility of investigating their scaling limit and connecting it to the framework originally proposed by Makarov and Smirnov, which is the main goal of this paper.
We note that a different type of random near-critical perturbation, namely a random \textit{temperature} perturbation of the critical planar Ising model, was recently investigated in \cite{mahfouf2025nearcriticalrandombond,averous2025nearcriticalrandombondfkpercolation}.

Our first main result is about the interface separating $+1$ and $-1$ spins in the planar near-critical RFIM with Dobrushin boundary conditions. When there is no external field, this interface is known to converge to an SLE$_3$ curve \cite{cvgIsingSLE}. Following Makarov and Smirnov \cite{massiveSLE}, in the presence of a random external field, one should expect the limiting interface to be related in some way to SLE$_3$. This is what the result below establishes. Below, we denote by $S_{\delta}(x)$ the square centered at $x$ with side-length $\delta$.

\begin{theorem}\label{theorem-single-interface}
    Let $D$ be a bounded, open, and simply connected domain with piecewise smooth boundary, and let $a,b \in \partial D$. Assume that $(D_{\delta};a_{\delta},b_{\delta})_{\delta}$ is a sequence of connected subsets of $\delta \mathbb{Z}^2$ with two marked boundary points $a_{\delta}, b_{\delta}$ that satisfies the assumptions of Section \ref{sec_assumptions_domain}. Let $\lambda: \bar D \to (0,\infty)$ be a bounded and continuous function. Let $H:=(H_x)_{x \in \mathbb{Z}^2}$ be i.i.d. Gaussian random variables with mean $0$ and variance $1$. Denote the law of $H$ by $\PP$. Define $h:=(h_x)_{x \in D_{\delta}}$ via $h_{x}=H_{x/\delta}$. Let $\nu_{\delta}^{\pm,\lambda, h}$ be the law of the interface separating $+1$ and $-1$ spins in the random-field Ising model on $D_{\delta}$ with external field $\delta^{\frac{7}{8}}\lambda(x)h_{x}$ and Dobrushin boundary conditions. Then, as $\delta \to 0$,
    \begin{equation*}
        (\delta^{-1}\sum_{x \in D_{\delta}}h_{x}\mathbb{I}_{S_{\delta}(x)},\nu_{\delta}^{\pm,\lambda, h}) \to (W, \nu_{W,\lambda}^{(D,a,b)})
    \end{equation*}
    weakly, where $W$ is a two-dimensional white noise and $\nu_{W,\lambda}^{(D,a,b)}$ is a probability measure on curves in $D$ that join $a$ to $b$. Moreover, $\nu_{W,\lambda}^{(D,a,b)}$ is conformally covariant in law and $\PP$-almost surely, $\nu_{W,\lambda}^{(D,a,b)}$ is absolutely continuous with respect to the law $\nu_{\operatorname{SLE}_3}^{(D,a,b)}$ of SLE$_{3}$ in $D$ from $a$ to $b$.
\end{theorem}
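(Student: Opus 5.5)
The plan is to write the law of the Dobrushin interface in the random field model as a reweighting of the critical Ising interface law, and then pass to the limit in the Radon--Nikodym derivative. For a fixed realisation of $h$, the Gibbs weight factorises:
\[
\nu_{\delta}^{\pm,\lambda,h}(\gamma)=\nu_{\delta}^{\pm}(\gamma)\,\frac{Z^{+,\lambda,h}_{D_\delta^{L}(\gamma)}\,Z^{-,\lambda,h}_{D_\delta^{R}(\gamma)}}{Z^{+}_{D_\delta^{L}(\gamma)}Z^{-}_{D_\delta^{R}(\gamma)}}\cdot\frac{Z^{\pm}_{D_\delta}}{Z^{\pm,\lambda,h}_{D_\delta}} .
\]
Here $\nu_\delta^{\pm}$ is the critical Ising interface law, and $D^{L}_\delta(\gamma),D^{R}_\delta(\gamma)$ are the two components cut out by $\gamma$, which carry $+$ and $-$ boundary conditions. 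Each ratio is a normalised partition function
\[
\mathcal Z^{\pm}_{U}:=\Big\langle \exp\big(\textstyle\sum_{x\in U}\delta^{7/8}\lambda(x)h_x\sigma_x\big)\Big\rangle^{\pm}_{U},
\]
that is, a critical Ising expectation.

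By the results of Caravenna--Sun--Zygouras (in the form used for the continuum RFIM), $\mathcal Z^{\pm}_U$ converges jointly with the white noise to a continuum object. This object is a Wiener chaos expansion
\[
\mathcal Z^{\pm}_{U}(W)=\sum_k \frac{1}{k!}\int \lambda^{\otimes k}\,\langle\Phi_{x_1}\cdots\Phi_{x_k}\rangle^{\pm}_U\,W^{\otimes k}(dx),
\]
built from the continuum Ising magnetization correlations (Camia--Garban--Newman / Chelkak--Hongler--Izyurov). The limit law is then defined by
\[
\frac{d\nu^{(D,a,b)}_{W,\lambda}}{d\nu^{(D,a,b)}_{\mathrm{SLE}_3}}(\gamma)=\frac{\mathcal Z^{+}_{D^L(\gamma)}(W)\,\mathcal Z^{-}_{D^R(\gamma)}(W)}{\mathcal Z^{\pm}_D(W)} ,
\]
where $\mathcal Z^{\pm}_D(W)$ is the full Dobrushin normalised partition function. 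By construction this is a probability measure, and it is absolutely continuous with respect to SLE$_3$ almost surely.

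The key steps are as follows.
\begin{enumerate}
\item Establish joint convergence of the pair (white noise, critical Ising interface) to $(W,\mathrm{SLE}_3)$, with the two independent; this follows from \cite{cvgIsingSLE}.
\item Show that the map $\gamma\mapsto \mathcal Z^{\pm}_{D^{L/R}(\gamma)}$ is continuous in a suitable sense, uniformly enough, so that for Carathéodory-convergent domains $U_\delta\to U$ the discrete chaos expansions converge in $L^2(\PP)$. This uses convergence of the discrete spin correlations (rescaled by $\delta^{-k/8}$) on varying domains, together with dominating bounds of the form $\prod|x_i-x_j|^{-1/8}$-type integrability, which makes each chaos finite and the series summable.
\item Show that the densities are uniformly integrable under $\nu_\delta^{\pm}\otimes\PP$. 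Positivity of the limit, needed for normalisation, comes from the fact that $\mathcal Z>0$ almost surely, since it is a limit of positive martingale-type quantities (for instance via log-concavity or Jensen lower bounds).
\item Combine these via a standard "convergence of reweighted measures" argument: test against bounded continuous $F(\gamma)G(W)$ to obtain the stated weak convergence.
\end{enumerate}

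Conformal covariance will come from the covariance of continuum spin correlations, $\langle\prod\Phi_{\varphi(x_i)}\rangle_{\varphi(U)}=\prod|\varphi'(x_i)|^{-1/8}\langle\prod\Phi_{x_i}\rangle_U$. Combined with the scaling $W\circ\varphi\overset{d}{=}|\varphi'|^{-1}W$ of white noise, this shows that the image of $\nu_{W,\lambda}^{(D,a,b)}$ under $\varphi$ has the law of $\nu_{W',\lambda'}^{(\varphi(D),\varphi(a),\varphi(b))}$ with $\lambda'=|(\varphi^{-1})'|^{7/8}\lambda\circ\varphi^{-1}$.

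The main obstacle is step 2–3: controlling the interface-dependent partition functions uniformly over the random, fractal domains $D^{L/R}(\gamma)$. The spin correlations near a rough boundary must be bounded uniformly, and the convergence must hold uniformly over the interfaces rather than just for fixed domains. I would first try to bound $\mathcal Z$ on a subdomain by monotonicity (FKG/GKS comparisons with $+$ boundary conditions on the whole $D$, which dominate correlations on subdomains), obtaining $L^p$ bounds independent of $\gamma$. I would then use a hypercontractivity estimate on the finite-chaos truncation to pass from $L^2$ convergence to uniform integrability. Continuity in $\gamma$ would be handled by approximating domains from inside by unions of mesh squares at a fixed scale.
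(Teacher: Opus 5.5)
Your overall architecture matches the paper's: you factorize the Radon--Nikodym derivative across the interface, identify the limit via chaos expansions and a CSZ-type invariance principle, and get covariance from that of $f_D^{(\xi,k)}$ and of white noise. However, two steps do not go through as written.

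\textbf{Uniform integrability.} The comparison you propose goes the wrong way. Imposing $+$ boundary conditions on $U\subset D$ amounts to sending the field on $D\setminus U$ to $+\infty$. By GKS (Griffiths' second inequality), $\langle\sigma_A\rangle^+_U\ge\langle\sigma_A\rangle^+_D$ for $A\subset U$, so correlations on the subdomains are \emph{larger}, and they blow up as points approach $\gamma_\delta$. The conditional second moment of the renormalized left factor therefore contains $\delta^{7/4}\sum_x\lambda(x)^2(\langle\sigma_x\rangle^+_{D_{L,\delta}})^2$. For a connected interface this is comparable to $\delta^{2}\sum_x\dist(x,\gamma_\delta\cup\partial D_\delta)^{-1/4}$, a $\frac74$-dimensional Minkowski-content-type functional of $\gamma_\delta$. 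It is of order $\delta^{-1/4}$ for a lattice path that is $\delta$-dense in a macroscopic region. So no $L^p$ bound uniform in $\gamma$ exists. Hypercontractivity of truncations does not help, because the problem lies in the $\gamma$-dependent coefficients rather than in the chaos order.

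The missing idea is to average over the critical interface. First bound correlations via Lemma~\ref{lemma-bound-spin-correlations} in terms of $L(x_j)\wedge\dist(x_j,\partial D_{L,\delta})$. Then control $\mu_\delta^{\pm}\big[\prod_j\dist(x_j,\partial D_{L,\delta})^{-1/2}\big]$ by the multi-point two-arm estimate of Lemma~\ref{lemma-k-pts-estimate-interface-discrete}. Because $\frac58-\rho>\frac12$, the dyadic sum over $\dist(x_j,\gamma_\delta)$ converges and gives $C^k\prod_j(L(x_j)\wedge d_\delta(x_j))^{-1/2}$. Lemmas~\ref{lemma-sum-prod-L-x-j-delta}--\ref{lemma-sum-dist-to-boundary-power-alpha}, plus a separate treatment of near-diagonal tuples, then give Claim~\ref{claim-bound-jk-expectations}.

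These averaged bounds are also what step~2 needs to make the near-$\gamma_\delta$ part of the chaos negligible, since Theorem~\ref{theorem_spin_correlations} only controls points at macroscopic distance from the boundary. Finally, positivity of the limit is not enough. The full-domain partition function sits in the denominator, so you need $\sup_\delta\EE[(\tilde{\mathcal Z}^{\pm,\lambda,h}_\delta)^{-q}]<\infty$.

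\textbf{The interface term.} Your factorization drops a factor. Given $\gamma_\delta$, the spins adjacent to $\gamma_\delta$ are fixed ($+1$ on its left, $-1$ on its right). They are boundary vertices of $D^L_\delta(\gamma)$ and $D^R_\delta(\gamma)$, and the field acting on them contributes the factor $\exp(M^h(\gamma_\delta))$ of \eqref{decomposition-RN-interface}.

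Moreover, your $\mathcal Z^\pm_U$ do not converge: their logarithm grows like $\frac12\delta^{7/4}\sum_{x\in U}\lambda(x)^2\asymp\delta^{-1/4}$. Each must be renormalized, and the ratio then leaves $\theta_D(\delta)/(\theta_L(\delta)\theta_R(\delta))=\exp\big(-\frac12\delta^{7/4}\sum_{x\in V(\gamma_\delta)}\lambda(x)^2\big)$. Neither factor is visible in any chaos expansion.

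Conditionally on $\gamma_\delta$, $M^h(\gamma_\delta)$ is Gaussian with variance $\delta^{7/4}\sum_{x\in V(\gamma_\delta)}\lambda(x)^2$. So showing these factors tend to $1$, and obtaining the exponential moments needed for uniform integrability, requires two things. You need $\delta^{7k/4}\mu_\delta^{\pm}[|V(\gamma_\delta)|^k]\to0$ for each $k$, and bounds $C^kk^{(5/8-\rho)k}$ summable against $1/k!$ (Claims~\ref{claim-cvg-magnetization-interface}--\ref{claim-bound-EE-k-moment-V-gamma}). This is control of the discrete length of the interface, again coming from the two-arm exponent (the interface has dimension $\frac{11}{8}<\frac74$), and it is absent from your plan.
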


The topology in which convergence of $(\nu_{\delta}^{\pm,\lambda,h})_{\delta}$ is obtained is described in Section \ref{sec_topology_curves}, while the topology of convergence for $(\delta^{-1}\sum_{x \in D_{\delta}}h_{x}\mathbb{I}_{S_{\delta}(x)})$ is recalled in Section \ref{sub:proof-characterization-interface}. Besides, absolute continuity of $\nu_{W,\lambda}^{(D,a,b)}$ with respect to $\nu_{\operatorname{SLE}_3}^{(D,a,b)}$ is established by proving an explicit expression for the Radon-Nikodym derivative of $\nu_{W,\lambda}^{(D,a,b)}$ with respect to $\nu_{\operatorname{SLE}_3}^{(D,a,b)}$, stated in Section \ref{sub:proof-characterization-interface}. The conformal covariance of $\nu_{W,\lambda}^{(D,a,b)}$ follows from the conformal covariance of this Radon-Nikodym derivative, as explained in Section \ref{sec-conformal-covariance}. As $\nu_{W,\lambda}^{(D,a,b)}$ transforms covariantly under conformal maps and is absolutely continuous with respect to $\nu_{\operatorname{SLE}_3}^{(D,a,b)}$, the limiting interface can be considered as a massive version of SLE$_3$ in the sense of Makarov and Smirnov.

In Theorem \ref{thm: sle continuity critical main} below, the absolute continuity of $\nu_{W,\lambda}^{(D,a,b)}$ with respect to $\nu_{\operatorname{SLE}_{3}}^{(D,a,b)}$ established in Theorem \ref{theorem-single-interface} is complemented at the discrete level by large-deviation-type bounds on the discrete Radon-Nikodym derivative of $\nu_{\delta}^{\pm,\lambda, h}$ with respect to the law $\nu_{\delta}^{\pm}$ of the interface of the critical Ising model in $D_{\delta}$ with Dobrushin boundary conditions, when $\lambda \equiv c$, $c \in \mathbb{R}$. These bounds show in particular that when $\lambda$ is constant, $\nu_{W,\lambda}^{(D,a,b)}$ and $\nu_{\operatorname{SLE}_3}^{(D,a,b)}$ are in fact mutually absolutely continuous, see Remark \ref{remark-mutual-abs-continuity} for further details.

In the critical planar Ising model, it is also natural to look at another set of interfaces, namely the collection of nested spin loops separating $+1$ and $-1$ spins that appear when imposing $+1$ boundary conditions to the model. This collection of spin loops has been shown to converge to nested CLE$_3$, while the outermost loops in this collection have been proved to converge to CLE$_3$ \cite{cvg-nested-loops-Ising}. CLE$_3$ (conformal loop ensemble with parameter $3$) is a loop version of SLE$_3$ that was originally constructed in \cite{CLE_She} and then given an alternative construction and a characterization via conformal invariance and a domain Markov property in \cite{CLE_Markovian}. Throughout this paper, following \cite{cvg-nested-loops-Ising}, CLE$_3$ denotes the non-nested conformal loop ensemble consisting only of the outermost loops, whereas nested CLE$_3$ denotes the full loop ensemble including all nesting levels. Just as in the case of the spin-spin interface of Theorem \ref{theorem-single-interface}, one may wonder if the outermost spin loops of the near-critical planar RFIM converge to some ``near--critical" version of CLE$_3$ and if so, what can be said about this scaling limit. The next theorem provides a partial answer to this question.

\begin{theorem}\label{theorem-singularity-outermost-loops}
    Let $\Lambda:= [-1,1]^2$ and for $\delta > 0$, set $\Lambda_{\delta} := \Lambda \cap \delta \mathbb{Z}^2$. Let $H:=(H_x)_{x \in \mathbb{Z}^2}$ be i.i.d Gaussian random variables with mean $0$ and variance $1$. Denote the law of $H$ by $\PP$. Define $h:=(h_x)_{x \in \Lambda_{\delta}}$ via $h_{x}=H_{x/\delta}$. Let $\nu_{\delta}^{\operatorname{oloop},h}$ be the law of the collection of outermost loops separating $+1$ and $-1$ spins in the random-field Ising model on $\Lambda_{\delta}$ with external field $\delta^{\frac{7}{8}}h_{x}$ and $+1$ boundary conditions. Then, the joint law of $(\delta^{-1}\sum_{x \in \Lambda_{\delta}}h_{x}\mathbb{I}_{S_{\delta}(x)},\nu_{\delta}^{\operatorname{oloop},h})_{\delta}$ is tight as $\delta \to 0$ and any subsequential limit of $(\nu_{\delta}^{\operatorname{oloop},h})_{\delta}$ is $\PP$--almost surely singular with respect to the law of CLE$_3$ in $\Lambda$.
\end{theorem}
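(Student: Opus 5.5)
Tightness is the easy part. The white-noise component $\delta^{-1}\sum_x h_x\mathbb{I}_{S_\delta(x)}$ is tight in a negative Sobolev space by a standard second-moment computation. For the loop component, I would compare with the critical Ising model. For fixed $h$, the RFIM Gibbs measure has Radon--Nikodym derivative
\[
Z^{-1}\exp\Big(\delta^{7/8}\sum_x h_x\sigma_x\Big)
\]
with respect to the critical $+1$ measure. The partition-function ratio $Z_\delta(h)/Z_\delta(0)$ is known to converge in law (Caravenna--Sun--Zygouras) to a nondegenerate, strictly positive random variable with finite moments. On the annealed measure $\PP\otimes\mu^h$, I would bound the probability of the crossing events used in the Aizenman--Burchard/Kemppainen--Smirnov tightness criteria (e.g. multiple crossings of small annuli) via Cauchy--Schwarz. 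This gives a bound by the square root of the critical probability times a bounded second moment of the derivative. The critical bounds from RSW for the Ising model then transfer, which gives tightness of the joint law.

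For singularity I would build a macroscopic observable of the outermost-loop configuration that, under CLE$_3$, is deterministic in the limit, but under any subsequential near-critical limit is shifted or randomised by the field. The natural choice comes from the magnetization field. Given the outermost loops, the spins inside each loop $\ell$ form an Ising model with $-$ boundary condition, up to the field, independently across loops. Following the mechanism the abstract alludes to, I would split $\Lambda$ into $N^2$ mesoscopic boxes $B_i$ of size $\varepsilon=1/N$. In each box I would consider a functional $F_i$ of the outermost loops, for instance the area fraction (or a suitably normalised count) of loops of diameter in $[\eta\varepsilon,\varepsilon]$ inside $B_i$. I would then form the statistic
\[
T_N=\frac1{N}\sum_i \big(F_i-\EE_{\mathrm{CLE}}F_i\big)\, W(B_i)\,\varepsilon^{-1}.
\]
Under CLE$_3$ with independent $W$, the summands are roughly independent across boxes by the spatial Markov/decorrelation property of CLE, and $T_N$ should concentrate at $0$. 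Under the near-critical measure, however, the field tilts each box. To first order, $\EE[F_i\mid h]-\EE_{\mathrm{crit}}F_i\approx \delta^{7/8}\sum_{x\in B_i}h_x\,\mathrm{Cov}_{\mathrm{crit}}(F_i,\sigma_x)$. Summing over $x$ gives, in the scaling limit, $c\,\varepsilon^{15/8}\cdot\varepsilon^{-1}\cdots$. Each summand then has a covariance with $W(B_i)$ of order $\varepsilon^{7/8}$ relative to its fluctuations. Rescaled, $T_N$ has mean $\sim N^{1/8}\to\infty$ while its variance stays bounded. This is exactly the $N^2$-box law-of-large-numbers effect, and it is why a single curve, which only sees $O(N)$ boxes, fails to detect it. Passing to the limit, this produces a measurable set of (loop, noise) configurations with CLE$_3\otimes W$ probability $1$ and subsequential-limit probability $0$. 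Disintegrating in $W$ then gives $\PP$-a.s. singularity.

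The main obstacle is making the covariance computation rigorous and uniform. It requires three things. First, a nonzero, scale-covariant limit of $\mathrm{Cov}_{\mathrm{crit}}(F_i,\sigma_x)$; here I would use convergence of Ising loops jointly with the magnetization field (Camia--Garban--Newman), together with the fact that inside a loop the field has a definite sign bias. Second, control of the second-order terms in $h$, using the Wiener-chaos expansion of the partition function with its third and higher chaoses bounded. Third, approximate independence across boxes under both measures, from RSW-type mixing. I would first attempt the argument with $F_i$ being the indicator that $B_i$ contains the inside of a loop of diameter $\ge\varepsilon/2$, since its correlation with the magnetization is most transparent.
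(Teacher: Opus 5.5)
\paragraph{Main gap: too few informative boxes.}
The decisive step of your singularity argument rests on a heuristic that is false for outermost loops. You let all $N^2$ boxes contribute a covariance $\mathrm{Cov}(F_i,\mathcal M_\delta(\mathbb I_{B_i}))\asymp\varepsilon^{15/8}$, which gives $\EE T_N\asymp N^{1/8}$.

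A box lying inside a macroscopic outermost loop meets no outermost loop at all, so any functional $F_i$ of the outermost loops at scale $\varepsilon$ is trivial there. Only boxes near the $+$ cluster of $\partial\Lambda$ (the $\mathrm{CLE}_3$ carpet) are active. Conjecturally there are about $N^{187/96}$ of them; see the remark after Lemma~\ref{lem: outermost loops number lower bound 1}.

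Let $p_N$ be the fraction of active boxes. Then $\EE T_N$ is of order $N^{1/8}p_N$, not $N^{1/8}$. Under $\mathrm{CLE}_3\otimes W$ your statistic has fluctuations of order $p_N^{1/2}$. So the signal dominates only if the number $N^2p_N$ of active boxes is $\gg N^{7/4}$.

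This threshold is what actually separates this result from Theorem~\ref{theorem-single-interface}. An SLE$_3$ curve meets about $N^{11/8}$ boxes (not $O(N)$), and $11/8<7/4$. Your plan contains no argument that the outermost loops beat the threshold.

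In the paper this is Lemma~\ref{lem: outermost loops number lower bound 1}. Under $\mu^+_{\Lambda_N,0}$, with overwhelming probability, the number $|\tau|$ of annuli containing an outermost loop is at least $c_1n^{15/16-\alpha}$, with $15/16-\alpha>7/8$. The proof uses FK circuits connected to $\partial\Lambda_N$, the one-arm exponent $1/8$, and a Bernstein bound. This is exactly what makes $\sum_i\sigma_i^2\gtrsim\iota^{7/4}|\tau|$ diverge.

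\paragraph{The variance does not stay bounded.}
Your claim that $\Var(T_N)$ stays bounded under the near-critical measure also fails. Being active requires a $+$ connection to $\partial\Lambda$, a global property with power-law correlations that RSW-type mixing does not decouple.

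Heuristically, the drift part of $T_N$ is $\varepsilon^{15/8}\sum_iF_iY_i$, where $Y_i$ is a normalised conditional magnetisation. Moreover $(N^2p_N)^{-1}\sum_iF_i$ should converge to a non-degenerate random variable, a Minkowski content of the carpet. So $T_N$ fluctuates on the scale of its own mean, and a mean-versus-variance (Chebyshev) comparison does not separate the two measures. You would have to self-normalise and analyse $W$ conditionally on the loops, box by box. That brings you straight back to the dependence problem to which the paper devotes Section~\ref{sec: proof of DHX rare event}.

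\paragraph{Sign of the covariance, and how the paper avoids it.}
At the discrete level, Gaussian integration by parts gives exactly $\EE T_N=\sum_i\EE\big[\mathrm{Cov}^{h}(F_i,\mathcal M_\delta(\mathbb I_{B_i}))\big]$, with the covariance taken under the disordered measure $\mu^{+,h}_\delta$. Since the field acts at order one on macroscopic scales, linearising around $\mathrm{Cov}_{\mathrm{crit}}$ is not justified. For your choice of $F_i$ even the sign is unclear: a box with $F_i=1$ contains both $+$ carpet and the $-$ interior of the loop.

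The paper sidesteps this. For each fixed $h$ it uses the likelihood ratio of the finite observable $X=(X_1,\dots,X_n)$ and switches the field on one box at a time. Then $\log\big(\mu^+_{\Lambda_N,\eps h}(\mathcal E_\tau)/\mu^+_{\Lambda_N,0}(\mathcal E_\tau)\big)\approx\sum_i(\mathsf b_i-\mathsf b_i^2/2)$, with $\mathsf b_i$ conditionally Gaussian given $\mathcal F_{i-1}$ (Proposition~\ref{prop: adding external field fluctuation}). One only needs a lower bound on $\sum_{x\in\mathsf B_i}(\langle\sigma_x\mid\mathcal E_\tau\rangle_{i-1}-\langle\sigma_x\rangle_{i-1})^2$, with no sign. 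The dependence between boxes is controlled by the conditional $k$-point bound of Lemma~\ref{lem: DHX input for specific rare event}, which is uniform in the field outside $\mathsf B_i$ and holds for the non-monotone events $\mathcal E_\tau$.

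\paragraph{Tightness.}
The ``bounded second moment of the derivative'' you invoke does not exist. The spin-level derivative has second moment at least of order $e^{c\delta^{-1/4}}$. A uniform $L^{1+\alpha}$ bound on $\der\nu^{\operatorname{oloop},h}_\delta/\der\nu^{\operatorname{oloop}}_\delta$ would keep the $\PP$-expected total variation distance of every loop observable bounded away from $1$, contradicting Theorem~\ref{thm: cle outermost singularity}.

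The conclusion survives if you argue as in Lemma~\ref{lemma-tightness-outermost-loops}. One has
$\EE\big[\nu^{\operatorname{oloop},h}_\delta(K^c)\,\mathbb I_{\{\tilde{\mathcal Z}^{+,h}_\delta>\eta\}}\big]\le\eta^{-1}\mu^+_\delta\big[\mathbb I_{K^c}\,\theta(\delta)\,\EE[e^{\delta^{7/8}\sum_xh_x\sigma_x}]\big]$,
and $\theta(\delta)\EE[e^{\delta^{7/8}\sum_xh_x\sigma_x}]\to1$ uniformly in $\sigma$.
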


The topology in which tightness of $(\nu_{\delta}^{\operatorname{oloop},h})_{\delta}$ is obtained is described in Section \ref{sec-topology-loops}. Besides, note that Theorem \ref{theorem-singularity-outermost-loops} is stated for a specific choice of domain, namely the unit square $\Lambda$. Our proof could in fact be generalized to bounded, open, and simply connected domains with a piecewise smooth boundary. However, proving Theorem \ref{theorem-singularity-outermost-loops} in this greater generality would require more technical work without providing further insights into the mechanism at play behind the singularity of any subsequential limit of $(\nu_{\delta}^{\operatorname{oloop},h})_{\delta}$ with respect to CLE$_3$. This is why we chose to restrict ourselves to square domains in the statement of Theorem \ref{theorem-singularity-outermost-loops}.

We also note that in Section \ref{sec-singularity-nested-loops}, a result analogous to that of Theorem \ref{theorem-singularity-outermost-loops} is proved when one considers the whole collection of nested spin loops: their laws are tight and any subsequential limit is $\PP$-almost surely singular with respect to the law of nested CLE$_3$ in $D$, where in this case $D \subset \mathbb{C}$ can be any bounded, open and simply connected domain with smooth boundary. We refer the reader to Proposition \ref{prop-singularity-nested-loops} for a precise statement.

We remark that Theorem \ref{theorem-singularity-outermost-loops} contrasts with Theorem \ref{theorem-single-interface}: the law of the limit of the spin-spin interface of the near-critical RFIM is almost surely absolutely continuous with respect to SLE$_3$ whereas the law of any subsequential limit of the outermost spin loops of the near-critical RFIM is almost surely singular with respect to CLE$_3$. This dichotomy can be understood, at least at a heuristic level, as follows. On a lattice of meshsize $\delta$, both in the near-critical RFIM and in the critical Ising model, the interfaces -- whether single interface or collection of loops --  are measurable with respect to the magnetization field $\mathcal{M}_{\delta}$ defined by, for $f$ a smooth function with compact support, $\mathcal{M}_{\delta}(f) := \delta^{\frac{15}{8}}\sum_{x}f_x \sigma_x$. The limit of $(\mathcal{M}_{\delta})_{\delta}$ as $\delta \to 0$ can be constructed, both for the critical Ising model \cite{GarbanMagnetization} and for the near-critical RFIM \cite{continuum-2d-RFIM}. However, the law of the limit of $(\mathcal{M}_{\delta})_{\delta}$ in the near-critical RFIM is almost surely singular with respect to the limiting law of $(\mathcal{M}_{\delta})_{\delta}$ in the critical Ising model \cite{continuum-2d-RFIM}. Therefore, when looking at the interfaces, it is not clear whether one should expect them to be singular or not with respect to the critical ones. Our results roughly say the following: the answer depends on how much the magnetization field the interface can see. In the case of a single interface, the curve discovers too little of the magnetization field to witness singularity in the scaling limit, while the outermost spin loops explore already enough of the magnetization field to detect the singularity in the scaling limit.

We note that a similar dichotomy has been conjectured by Garban and Kupiainen in the case of the near-critical Ising model (deterministically) perturbed in the temperature direction: the scaling limit of a single interface should be absolutely continuous with respect to SLE$_3$ while the scaling limit of the spin loops should be singular with respect to nested CLE$_3$ \cite{garban2025energyfieldcriticalising}.

As a final comment, let us mention that the results of this paper do not cover the case when the random external field of the RFIM is rescaled differently than $\delta^{\frac{7}{8}}$. In the case when the random field is rescaled by $\delta^{\frac{7}{8}}g(\delta)$ with $g(\delta) \to 0$ as $\delta \to 0$, it is quite straightforward to see that the RFIM interfaces have the same limits as the interfaces of the critical Ising model. When the random field is rescaled by $\delta^{\frac{7}{8}}f(\delta)$ with $f(\delta) \to \infty$ as $\delta \to 0$, what the scaling limits of the interfaces should be is less clear. It is of course natural to expect that the resulting spin-loop ensembles remain singular with respect to the critical Ising scaling limits, since the influence of the random field becomes stronger rather than weaker in this regime. There is nevertheless one specific setting in which one may be able to make a precise conjecture about the scaling limit of the interfaces in this regime, provided one looks at the \textit{annealed} RFIM. Indeed, in the annealed RFIM, each spin locally wants to align with a random variable that has probability $1/2$ to be positive and probability $1/2$ to be negative. This local behavior dominates over the nearest-neighbor interaction if the noise variables are not properly rescaled. Now, if the RFIM is defined on the triangular lattice, since the noise variables are independent, this implies that the annealed law of the model looks like critical percolation on the triangular lattice, whose interfaces are known to converge to SLE$_{6}$ and CLE$_{6}$ \cite{percolation, Camia-percolation}. Therefore, when the scaling of the noise is of order $\delta^{\frac{7}{8}} f(\delta)$ with $f(\delta) \to \infty$, one may expect the interfaces of the annealed RFIM on the triangular lattice to also converge to SLE$_{6}$ and CLE$_{6}$.

\subsection{Comparison with other near-critical perturbations of the critical planar Ising model}

There are several natural ways to perturb the critical planar Ising model. In this paper, we focus on the random-field perturbation, but it is useful to compare it with perturbations in other natural directions, in particular, the magnetic field direction and the temperature direction. In the magnetic-field direction, the most relevant comparison is the deterministic near-critical perturbation. In the temperature direction, both deterministic and random near-critical perturbations have recently been studied.

Let us first discuss the deterministic magnetic field perturbation. In this case, the near-critical regime corresponds to rescaling the external field by $\delta^{15/8}$. Under Dobrushin boundary conditions, it was shown in \cite{papon2024interfacescalinglimitcritical} that the spin interface converges in law, as $\delta\to0$, to a measure $\nu_\lambda^{(D,a,b)}$ which is absolutely continuous with respect to $\nu_{\operatorname{SLE}_3}^{(D,a,b)}$ and conformally covariant. Moreover, the deterministic magnetic field perturbation remains soft also at the level of spin loops. Indeed, when the model has $+1$ boundary conditions, the Radon-Nikodym derivative of the law of the spin-loop ensemble under the perturbed Ising measure with respect to its critical counterpart is explicit. Using the same argument as in the proof of \cite[Proposition~3.1]{papon2024interfacescalinglimitcritical}, one obtains uniform $L^p$ bounds, for every $p\ge1$, on this Radon-Nikodym derivative. Consequently, any subsequential scaling limit of the spin-loop ensemble in the deterministic magnetic field perturbation is absolutely continuous with respect to $\operatorname{CLE}_3$.

The random-field perturbation has a different nature. It still couples to the
spin field, but the perturbing object is now a random field rather than a
deterministic function. This changes the near-critical scale from
$\delta^{15/8}$ to $\delta^{7/8}$ and, more importantly, changes the variance
structure of the perturbation. Theorem~\ref{theorem-singularity-outermost-loops} shows that the scaling limit
of the outermost spin loops is already singular with respect to
$\operatorname{CLE}_3$. In particular, the random--field perturbation leaves a
detectable signature in a much coarser observable than the full loop ensemble or
the full crossing configuration. This should be contrasted with the
deterministic magnetic field perturbation, for which even the full spin-loop
ensemble remains absolutely continuous with respect to its critical counterpart.

Let us finally discuss perturbations in the temperature direction. For the deterministic temperature perturbation, the near-critical scale is $\beta-\beta_c\asymp\delta$. Garban and Kupiainen proved that any subsequential scaling limit of the two-dimensional Ising model perturbed in the $\beta$-direction in a near-critical way is singular with respect to the critical scaling limit \cite{garban2025energyfieldcriticalising}. Their singularity is detected in the quad-crossing topology by observing crossing events at many mesoscopic scales simultaneously. The mechanism is different from the one in the random--field case: the temperature perturbation couples to the energy field, and the singularity is tied to the additional logarithmic fluctuations of the energy field. While the effect of the perturbation at a single scale is small, these contributions accumulate coherently across many boxes and many scales, eventually making the perturbed model distinguishable from the critical one with probability tending to one. Random near-critical temperature perturbations have also been studied recently by Mahfouf and Av\'erous \cite{averous2025nearcriticalrandombondfkpercolation, mahfouf2025nearcriticalrandombond}. For the corresponding random-bond models, they establish uniform crossing estimates, which in particular yield tightness of the associated interfaces.

In summary, the deterministic magnetic-field perturbation is the softest among the perturbative regimes discussed above: even the spin-loop ensemble remains absolutely continuous with respect to its critical counterpart. The deterministic temperature perturbation is singular at the level of the full crossing or interface configuration, through a mechanism related to the logarithmic fluctuations of the energy field. For the random-field perturbation considered in this paper, we prove a stronger form of singularity, namely that the singularity is already visible from the outermost spin loops. This phenomenon originates from the variance accumulation created by the random external field and, while it is also conjectured to hold for the temperature perturbation in \cite{garban2025energyfieldcriticalising}, it remains open in that setting.

\subsection{Outline of the proof of Theorem \ref{theorem-single-interface} and Theorem \ref{theorem-singularity-outermost-loops}}

The starting point of the proof of Theorem \ref{theorem-single-interface} is to observe that, as in the deterministic setting \cite{papon2024interfacescalinglimitcritical}, the Radon-Nikodym derivative $F_{\delta}^{\lambda,h}$ of $\nu_{\delta}^{\pm,\lambda,h}$ with respect to the law $\nu_{\delta}^{\pm}$ of the interface of the critical Ising model has an explicit expression. Indeed, it is easy to see that in fact, up to a normalization constant, $F_{\delta}^{\lambda,h}$ is equal to the conditional expectation of $\exp(\delta^{\frac{7}{8}}\sum_{x}\lambda(x)h_x\sigma_x)$ given the interface under $\mu_{\delta}^{\pm}$. Here, $\mu_{\delta}^{\pm}$ is the critical Ising measure in $D_{\delta}$ with Dobrushin boundary conditions. Given this observation, the proof of Theorem \ref{theorem-single-interface} consists of two steps,  which both exploit the explicit expression of $F_{\delta}^{\lambda,h}$.

The first step is to prove tightness of $(\nu_{\delta}^{\pm,\lambda,h})_{\delta}$, in a suitable topology (see Section \ref{sec_topology_curves}). Since $(\nu_{\delta}^{\pm})_{\delta}$ converges in law to $\nu_{\operatorname{SLE}_3}^{(D,a,b)}$ \cite{cvgIsingSLE}, to establish tightness of $(\nu_{\delta}^{\pm,\lambda,h})_{\delta}$, it suffices to show that $\EE \otimes \mu_{\delta}^{\pm}[F_{\delta}^{\lambda,h}(\gamma_{\delta})^{1+\alpha}]$ is uniformly bounded in $\delta$, for some $\alpha > 0$. This is achieved by using a polynomial chaos expansion of $F_{\delta}^{\lambda,h}$ that is similar in spirit to the one considered in \cite{CSZ16} (for further application, see also \cite{continuum-2d-RFIM,DHX23,HHX25}). However, in our setting, several issues arise that were not present in \cite{CSZ16}. First, the underlying domains are random and a part of their boundaries is an SLE$_{3}$-type curve, which is typically a fractal curve. Second, we will actually have to look at the product of two polynomial chaos expansions. Finally, $F_{\delta}^{\lambda, h}$ contains a term that corresponds to the magnetization of the interface and that cannot be handled using chaos expansion. To show that this term does not blow up in the scaling limit, we will need to control the moments of a quantity that can be seen as the ``discrete" Minkowski content of the interface of the critical Ising model. This requires proving good upper bounds on the probability that the interface of the critical Ising model gets close to $k$ points, for $k \in \mathbb{N}$.

The second step of the proof of Theorem \ref{theorem-single-interface} is to prove a characterization result that uniquely identifies the limit of any converging subsequence of $(\nu_{\delta}^{\pm,\lambda,h})_{\delta}$. This characterization is established by identifying the limit of $F_{\delta}^{\lambda,h}$ as $\delta \to 0$. To do so, as in the first step of the proof, we rely on a polynomial chaos expansion of $F_{\delta}^{\lambda,h}$ and adapt the Lindeberg principle of \cite{CSZ16} to our setting. Again, the main difficulty that is encountered in the implementation of this strategy arises from the randomness and the roughness of the boundaries of the underlying domain. Moreover, to identify the limit of $F_{\delta}^{\lambda,h}$, we will also have to show that the term corresponding to the magnetization of the interface not only stays bounded as $\delta \to 0$ but actually vanishes.

Let us now discuss the proof of Theorem \ref{theorem-singularity-outermost-loops}. We first prove tightness of the law of $(\nu_{\delta}^{\operatorname{oloop},h})_{\delta}$ by using an argument similar to that used to prove tightness of the magnetization field of the near-critical RFIM in \cite{continuum-2d-RFIM}. Then, to detect singularity, we construct the following observable. We cover our domain with a collection of small boxes, inside each of which we draw an annulus. In this way, we obtain a finite collection $(A_i)_i$ of disjoint annuli. To each $A_i$, we associate a random variable $X_i$ which equals to $1$ if there is an outermost loop in $A_i$ surrounding the inner boundary of $A_{i}$, and $0$ otherwise. We then show that the total variation distance between the law of $X=(X_i)_i$ under $\nu_{\mathrm{CLE}_{3}}^{D}$ and the law of $X=(X_i)_i$ under $\nu^{W}$ can be made arbitrarily close to $1$ by choosing the size of the annuli sufficiently small, where $\nu^{W}$ is any subsequential limit of $(\nu_{\delta}^{\operatorname{oloop},h})_{\delta}$. The fact that the annuli must be chosen to be small highlights that the singularity established in Theorem \ref{theorem-singularity-outermost-loops} arises from the contribution of the small loops.

To lower bound the total variation distance in the way claimed above, we first show that we can actually prove this lower bound under $\nu_{\delta}^{+}$ and $\nu_{\delta}^{\operatorname{oloop},h}$ and then take the limit as $\delta \to 0$. This second step relies on a lemma showing ``continuity of the crossing events", which is a classical argument in percolation \cite{planar-percolation,garban2025energyfieldcriticalising}. For the first step, i.e., to detect singularity at the discrete level, the idea is the following. We first observe that under the critical Ising measure $\nu_{\delta}^{+}$, there will typically be more than $n^{\frac{7}{8}+\iota}$ annuli such that $X_i =1$, where $n$ is the total number of annuli and $\iota>0$ is independent of $\delta$. By turning on the random external field in one box after the other, we show that for $\tau \in \{0,1\}^n$, the random external field inside each box changes the probability that $X=\tau$ a little bit, if $\tau_i = 1$. Moreover, thanks to the independent variance structure of the random external field, the fluctuations coming from boxes with $\tau_i=0$, although difficult to control individually, do not systematically cancel the cumulative effect created by the large number of boxes with $\tau_i=1$. Since we typically have at least $n^{\frac{7}{8}+\iota}$ annuli with $\tau_i = 1$ under $\nu_{\delta}^{+}$, summing up the small changes produced by the external field inside each box will result in a significant change in the probability that $X=\tau$ under $\nu_{\delta}^{\operatorname{oloop},h}$. This in turn will be enough to establish a lower bound on the total variation distance between the law of $X$ under $\nu_{\delta}^{+}$ and the law of $X$ under $\nu_{\delta}^{\operatorname{oloop},h}$, which is uniform in $\delta$ (for $\delta$ small enough) and converges to $1$ as the size of the annuli goes to $0$ on a macroscopic scale.

The main difficulty in aggregating the small changes is that the outermost loops are not independent. Section \ref{sec: proof of DHX rare event} is devoted to overcoming this issue. We prove that, conditioning on the values $\{X_j\}_{j<i}$, we can get a similar bound on the $k$-point function as the bound without conditioning. This is achieved by establishing an exploration procedure to get rid of the  event $\{X_j=\tau_j,~\forall j<i\}$, which is neither increasing nor decreasing.

\paragraph{Organization of the paper.} This work is split into two parts for clarity of exposition. In the first part, we give the proof of Theorem \ref{theorem-single-interface} and Theorem \ref{theorem-singularity-outermost-loops}, as well as the proof of the singularity of the nested collection of spin loops in the scaling limit. While the proof of Theorem \ref{theorem-single-interface} is self-contained, the proof of Theorem \ref{theorem-singularity-outermost-loops} relies on results derived entirely in the discrete setting. Since the ideas and techniques involved in proving these results are somewhat of a different nature, the proofs are delayed to the second part of the paper. The second part of the paper also contains the large-deviation-type bound on the discrete Radon-Nikodym derivative of the RFIM interface with respect to the critical interface mentioned after Theorem \ref{theorem-single-interface}. A reader interested only in the scaling limit of the near-critical RFIM interfaces may read only the first part, and a reader more inclined to learn about the interfaces of the near-critical RFIM at the discrete level may find the second part more relevant.

\paragraph{Acknowledgments.} We warmly thank Barbara Dembin, Piet Lammers, and Franco Severo, the organizers of the ``Percolation today" seminar through which we met and started discussing ideas related to this paper. We thank Xin Sun for helpful discussions, and in particular, for the discussion on the outermost loops singularity problem. Fenglin Huang is partially supported by the New Cornerstone Science Foundation through the New Cornerstone
Investigator Program.

\part{Convergence of the laws of the near-critical RFIM interfaces}

\section{Background}

\subsection{Assumptions on the domain and its discrete approximations} \label{sec_assumptions_domain}

We consider a bounded, open, and simply connected subset $D$ of the complex plane $\mathbb{C}$ such that $0 \in D$. We fix two marked boundary points $a, b \in \partial D$ and we assume that both $a$ and $b$ are degenerate prime ends of $D$, see \cite[Section~2.5]{Pommeranke} for a definition. These assumptions on $D$ and the boundary points $a$ and $b$ will in particular allow us to use \cite[Theorem~4.2]{Karrila}. 

Regarding the assumptions on the boundary $\partial D$ of $D$, Theorem \ref{theorem-single-interface}, Theorem \ref{theorem-singularity-outermost-loops}, and Proposition \ref{prop-singularity-nested-loops} will be shown assuming that $\partial D$ is piecewise smooth. This assumption could possibly be weakened to an upper bound on the Minkowski dimension of $\partial D$, but to keep the proof as simple as possible, we will work under this stronger smoothness assumption.

We assume that $(D_{\delta})_{\delta}$ is a sequence of graphs approximating $D$ in the sense that we will now explain. For each $\delta > 0$, $D_{\delta}$ is a finite connected subgraph of the square lattice $\delta \mathbb{Z}^2$, so that every edge of $D_{\delta}$ has length $\delta$. We denote by $V(D_{\delta})$ the set of vertices of $D_{\delta}$ and define the boundary $\partial D_{\delta}$ of $D_{\delta}$ as
\begin{equation*}
    \partial D_{\delta} := \{ w \in \delta \mathbb{Z}^2 \setminus V(D_{\delta}): \text{there exists $v \in V(D_{\delta})$ such that $v \sim w$}\}
\end{equation*}
where $v \sim w$ means that there is an edge of $\delta \mathbb{Z}^2$ connecting $v$ and $w$. With this definition of $\partial D_{\delta}$, it is legitimate to set $\Int(D_{\delta}):=V(D_{\delta})$. We also let $E(D_{\delta})$ denote the set of edges of $\delta\mathbb{Z}^2$ with at least one endpoint in $\Int(D_{\delta})$.

We associate to each $D_{\delta}$ an open and simply connected polygonal domain
$\hat D_{\delta} \subset \mathbb{C}$ by taking the union of all open squares with side length $\delta$ centered at vertices in $V(D_{\delta})$. We assume that for any $\delta > 0$, $0$ belongs to $\hat D_{\delta}$ and that there exists $\bar{R} > 0$ such that for any $\delta > 0$, $\hat D_{\delta} \subset B(0,\bar{R})$ and $D \subset B(0,\bar{R})$. These assumptions are necessary to apply \cite[Theorem~4.2]{Karrila}. The marked boundary points $a$ and $b$ of $\partial D$ are then approximated by sequences $(a_{\delta})_{\delta}$ and $(b_{\delta})_{\delta}$ where, for each $\delta >0$, $a_{\delta}$ and $b_{\delta}$ belong to $\partial \hat D_{\delta}$. 

The sequence $(\hat D_{\delta}; a_{\delta}, b_{\delta})_{\delta}$ is assumed to converge to $(D; a, b)$ in the Carath\'eodory topology. That is,
\begin{itemize}
    \item each inner point $z \in D$ belongs to $\hat D_{\delta}$ for $\delta$ small enough;
    \item for every boundary point $\zeta \in \partial D$, there exists a sequence $(\upzeta_{\delta})_{\delta}$ such that $\upzeta_{\delta} \to \upzeta$ as $\delta \to 0$, where, for each $\delta >0$, $\upzeta_{\delta} \in \partial \hat D_{\delta}$.
\end{itemize}
This can be rephrased in terms of conformal maps. Let $\psi: D \to \mathbb{D}$ be a conformal map such that $\psi(a)=1$ and $\psi(0)=0$. Similarly, for each $\delta >0$, let $\psi_{\delta}: \hat D_{\delta} \to \mathbb{D}$ be a conformal map such that $\psi_{\delta}(a_{\delta})=1$ and $\psi_{\delta}(0)=0$. Then, by \cite[Theorem~1.8]{Pommeranke}, the Carath\'eodory convergence of $(\hat D_{\delta}; a_{\delta}, b_{\delta})_{\delta}$ to $(D; a, b)$ is equivalent to
\begin{align*}
    & \psi_{\delta} \to \psi \quad \text{uniformly on compact subsets of $D$ and}\\
    & \psi_{\delta}^{-1} \to \psi^{-1} \quad \text{uniformly on compact subsets of $\mathbb{D}$}.
\end{align*}
To avoid the existence of ``disappearing fjords", we assume that there exists $\tilde \delta > 0$ such that for any $0 < \delta < \tilde \delta$, $D \subset \cap_{\delta < \tilde \delta} \hat D_{\delta}$.

Furthermore, we suppose that $a_{\delta}$, respectively $b_{\delta}$, is a close approximation of $a$, respectively $b$, as defined by Karrila in \cite[Section~4.3]{Karrila}. Let us recall what this means. To lighten the notations, we identify the prime ends $a_{\delta}$ and $a$ with their corresponding radial limit points. For $r>0$, let $S_r$ be the arc of $\partial B(a,r) \cap D$ disconnecting in $D$ the prime end $a$ from $0$ and that is closest to $a$. In other words, $S_r$ is the last arc from the (possibly countable) collection $\partial B(a,r) \cap D$ of arcs that a path running from $0$ to $a$ inside $D$ must cross. Such an arc exists by \cite[Lemma~A.1]{Karrila} and approximation by radial limits. $a_{\delta}$ is then said to be a close approximation of $a$ if
\begin{itemize}
    \item $a_{\delta} \to a$ as $\delta \to 0$; 
    \item for each $r$ small enough and for all sufficiently (depending on $r$) small $\delta$, the boundary point $a_{\delta}$ of $\hat D_{\delta}$ is connected to the midpoint of $S_r$ inside $\hat D_{\delta} \cap B(a,r)$. 
\end{itemize}
For the proof of Theorem \ref{theorem-single-interface}, we also assume that $(\partial D_{\delta})_{\delta}$ approximates $\partial D$ in the following quantitative way. For $l \in \mathbb{N}$, let us set $N_{l}(\partial D_{\delta}):= \{x \in D_{\delta}: e^l\delta \leq \dist(x,\partial D_{\delta}) \leq e^{l+1}\delta\}$. Then, we require that there exists $C>0$ such that for any $l \in \mathbb{N}$ and any $ \delta >0$,
\begin{equation} \label{condition-approximation-boundary}
    \vert N_{l}(\partial D_{\delta}) \vert \leq C e^{l+1}\delta^{-1}.
\end{equation}
This will prevent $\partial D_{\delta}$ from having a too big Minkowski content, which could be problematic to control the moments of the Radon-Nikodym derivative in the proof of Theorem \ref{theorem-single-interface}.

In what follows, we will need to split the boundary $\partial D_{\delta}$ of $D_{\delta}$ into two sets of vertices, depending on their position with respect to $a_{\delta}$ and $b_{\delta}$. To this end, we denote by $\partial D_{\delta}^{-}$, respectively $\partial D_{\delta}^{+}$, the vertices of $\partial D_{\delta}$ that are traversed when following the vertices of $\partial D_{\delta}$ from $a_{\delta}$ to $b_{\delta}$ counterclockwise, respectively clockwise.

\subsection{The two-dimensional Ising model}

For $\delta >0$, let $G$ be a finite subgraph of $\delta \mathbb{Z}^2$. The Ising model on $G$ is a probability measure on spin configurations $\sigma: V(G) \cup \partial G \to \{-1,1\}$, where as above $V(G)$ denotes the set of interior vertices of $G$ and $\partial G$ is defined as in Section \ref{sec_assumptions_domain}. In this setting, since the boundary $\partial G$ of $G$ is non-empty, boundary conditions can be imposed to specify the measure. Three different choices of boundary conditions will be of interest here. The first one is that of Dobrushin boundary conditions: the boundary of $G$ is split into two arcs, denoted $\partial G^{+}$ and $\partial G^{-}$ and the spins of the vertices of $\partial G^{-}$ are all $-1$ while the spins of the vertices of $\partial G^{+}$ are all $+1$. The probability of an Ising spin configuration $\sigma$ in $G$ with Dobrushin boundary conditions is given by
\begin{equation} \label{def_Ising_pm}
    \mu_{G,\beta}^{\pm}(\sigma)=\frac{1}{\mathcal{Z}_{G,\beta}^{\pm}}\exp\bigg(\beta \sum_{x \sim y} \sigma_x\sigma_y\bigg)\mathbb{I}_{\sigma_{\vert \partial G^{+}}=+1}\mathbb{I}_{\sigma_{\vert \partial G^{-}}=-1}
\end{equation}
where $\beta > 0$ is a parameter called the inverse temperature and where $\mathcal{Z}_{G,\beta}^{\pm}$ is the partition function. Above, the sum is over nearest neighbor vertices $x \sim y$ of $\delta \mathbb{Z}^2$ such that at least one of them belongs to $V(G)$.

The other set of boundary conditions that we will consider is that of $+1$ boundary conditions and $-1$ boundary conditions. The probability $\mu_{G, \beta}^{+}(\sigma)$, respectively $\mu_{G, \beta}^{-}(\sigma)$, of an Ising spin configuration $\sigma$  in $G$ with $+1$, respectively $-1$, boundary conditions is given by replacing $\mathbb{I}_{\sigma_{\vert \partial G^{+}}=+1}\mathbb{I}_{\sigma_{\vert \partial G^{-}}=-1}$ by $\mathbb{I}_{\sigma_{\vert \partial G}=+1}$, respectively $\mathbb{I}_{\sigma_{\vert \partial G}=-1}$ in \eqref{def_Ising_pm}. One can also define the Ising model with free boundary conditions: in this case, the spins of the vertices on $\partial G$ are not prescribed.

The parameter $\beta$ plays an important role in the macroscopic behavior of the Ising model on $G$. Indeed, there exists a critical value $0 < \beta_{c} < \infty$ at which the model undergoes a phase transition. On the square lattice, it has been shown that $\beta_{c} = \frac{1}{2}\log(\sqrt{2}+1)$ \cite{book-Ising}. Moreover, at $\beta = \beta_{c}$, as $\delta \to 0$, the Ising model on the square lattice exhibits conformal invariance properties, and this has been the topic of extensive research in the past twenty years, see \cite{Chelkak-survey, DC-survey} for a survey. 

Here, we will always take $\beta = \beta_{c}$ and for $G$ a finite subgraph of $\delta \mathbb{Z}^2$, we set $\mu_{G}^{\xi}:=\mu_{G, \beta_c}^{\xi}$, $\xi \in \{-,\pm,+\}$, and call the corresponding Ising model the critical Ising model. In the specific case when $G=D_{\delta}$ with $D_{\delta}$ as in Section \ref{sec_assumptions_domain}, we set $\mu_{\delta}^{\xi}:=\mu_{D_{\delta}}^{\xi}$, for $\xi \in \{-,\pm,+\}$. When the Ising model is defined on $\Lambda_{N}:= [-N,N]^2 \cap \mathbb{Z}^2$, the box of side-length $2N$ and meshsize $\delta = 1$, we set $\mu_{\Lambda_{N},0}^{\xi}:=  \mu_{\Lambda_{N}}^{\xi}$, for $\xi \in \{-,\pm,+\}$ (here, the $0$ refers to the absence of magnetic field). Expectations with respect to $\mu_{\Lambda_{N},0}^{\xi}$ will often be denoted by $\langle \cdot \rangle_{\Lambda_{N},0}^{\xi}$.

\subsubsection{Spin correlations at \texorpdfstring{$\beta = \beta_c$}{}} \label{section_spin_correlations}

A crucial input for the construction of the near-critical random field Ising model in two dimensions is the existence of a scaling limit for the $k$-point spin correlations of the critical planar Ising model, when the boundary conditions are either $+1$, $-1$ or Dobrushin. This result is established in \cite{spinCorrelations} and as it will play an important role in the proof of Theorem \ref{theorem-single-interface}, let us recall it. 

\begin{theorem}[\cite{spinCorrelations}] \label{theorem_spin_correlations}
Let $D \subset \mathbb{C}$ be a bounded and simply connected domain and $a, b \in \partial D$ be two prime ends of $\partial D$. Let $(D_{\delta};a_{\delta}, b_{\delta})_{\delta}$ be discrete approximations of $(D;a,b)$ satisfying the assumptions of Section \ref{sec_assumptions_domain}. Then, for any $\eta >0$ and any $k \geq 1$, we have that
\begin{align*}
    &\delta^{-\frac{k}{8}} \mu_{\delta}^{\pm}[\sigma_{x_1}\dots \sigma_{x_{k}}] = C_{\sigma}^k f_{D}^{(\pm, k)}(x_1,\dots ,x_k) + o(1) \\
    &\delta^{-\frac{k}{8}} \mu_{\delta}^{+}[\sigma_{x_1}\dots \sigma_{x_{k}}] = C_{\sigma}^k f_{D}^{(+,k)}(x_1,\dots ,x_k) + o(1) \\
    &\delta^{-\frac{k}{8}} \mu_{\delta}^{-}[\sigma_{x_1}\dots \sigma_{x_{k}}] = C_{\sigma}^k f_{D}^{(-,k)}(x_1,\dots ,x_k) + o(1)
\end{align*}
as $\delta \to 0$, uniformly over all $x_1, \dots, x_k \in D$ at distance at least $\eta$ from $\partial D$ and from each other. Here, $C_{\sigma}>0$ is a constant that corresponds to the fact that we are working on the square lattice. The functions $f_{D}^{(\pm,k)}$, $f_{D}^{(+,k)}$ and $f_{D}^{(-,k)}$ are explicit and satisfy the following conformal covariance property. If $\psi: D \to \tilde D$ is a conformal map, then for any $k \geq 1$ and any $x_1, \dots , x_k \in D$, with $\xi \in \{\pm,-,+\}$,
\begin{align*}
    &f_{D}^{(\xi, k)}(x_1,\dots ,x_k) = \prod_{j=1}^{k} \vert \psi'(x_j) \vert^{1/8} f_{\tilde D}^{(\xi,k)}(\psi(x_1),\dots ,\psi(x_k)).
\end{align*}
\end{theorem}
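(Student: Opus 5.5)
Since this statement is quoted from \cite{spinCorrelations}, the proof I have in mind follows the discrete complex analysis strategy of Chelkak--Hongler--Izyurov rather than anything developed in this paper. The quantity we can control is not $\mu_{\delta}^{\xi}[\sigma_{x_1}\cdots\sigma_{x_k}]$ itself but the ratio obtained when one spin is moved to a neighbouring site. So the plan is to express correlations as products of such ratios, identify each ratio through a discrete holomorphic observable, and fix the overall multiplicative constant separately.

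\textbf{Step 1: the spinor observable.} Fix $x_1,\dots,x_k$ and consider the double cover $D_{\delta}^{[x_1,\dots,x_k]}$ of $D_{\delta}$ branching around the faces (or vertices) $x_j$. On it, define a fermionic spinor observable $F_{\delta}(z)$ as a ratio of partition functions in the low-temperature (domain-wall) expansion: the numerator sums over configurations with a disorder line from a point near $x_1$ to the variable point $z$, and the denominator is the partition function for $\mu_{\delta}^{\xi}[\sigma_{x_1}\cdots\sigma_{x_k}]$. I would then check three properties.
\begin{itemize}
\item $F_{\delta}$ is s-holomorphic at criticality.
\item $F_{\delta}$ has the spinor sign flip around each $x_j$.
\item $F_{\delta}$ satisfies the Riemann-type boundary condition $F_{\delta}(z)\parallel \tau(z)^{-1/2}$ on $\partial D_{\delta}$.
\end{itemize}
For $\xi=\pm$ one must also track the boundary change points $a_{\delta},b_{\delta}$. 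I would handle this by an extra branching or by a disorder insertion at $b_{\delta}$, so that $F_{\delta}$ acquires a prescribed boundary singularity there. The key identity is that the value of $F_{\delta}$ adjacent to $x_1$, normalised suitably, equals the ratio $\mu_{\delta}^{\xi}[\sigma_{x_1+\delta}\sigma_{x_2}\cdots]/\mu_{\delta}^{\xi}[\sigma_{x_1}\sigma_{x_2}\cdots]$.

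\textbf{Step 2: convergence of the observable.} I would first obtain uniform boundedness of $F_{\delta}$ away from the $x_j$ and away from $\partial D$. For this I would use the discrete primitive $H_{\delta}=\operatorname{Im}\int F_{\delta}^2$, which is approximately sub- and superharmonic on the two sublattices and constant on $\partial D_{\delta}$, so the maximum principle applies. This is the main obstacle, because $H_{\delta}$ is multivalued and singular near the branch points. I would try first to control it by comparison with explicit discrete full-plane spinors $\propto (z-x_j)^{-1/2}$ near each $x_j$, and by the boundary-value trick of \cite{spinCorrelations} on a general (not necessarily smooth) boundary. Precompactness then follows from discrete regularity of s-holomorphic functions. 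Any subsequential limit $f$ is holomorphic on the double cover, satisfies $\operatorname{Im}[f\,\tau^{1/2}]=0$ on $\partial D$ in the sense that $\operatorname{Im}\int f^2$ is constant there, and has prescribed singular behaviour $\sim (z-x_j)^{\mp1/2}$ at the branch points. This boundary value problem has a unique solution, so the whole sequence converges.

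\textbf{Step 3: from ratios to correlations, and covariance.} Extracting the next coefficient of the expansion of $f$ at $x_1$ gives $\delta^{-1}\log$ of the discrete ratio converging to $\partial_{x_1}\log f_D^{(\xi,k)}$, uniformly in configurations $\eta$-separated from $\partial D$ and from each other. Integrating this logarithmic derivative along paths determines $f_D^{(\xi,k)}$ up to a global constant. The constant I would fix by sending points pairwise together or towards the boundary and matching with the known full-plane two-point asymptotics $\langle\sigma_0\sigma_x\rangle_{\mathbb{Z}^2}\sim C_\sigma^2|x|^{-1/4}$ together with a fusion/induction on $k$. This is where the lattice constant $C_\sigma$ and the normalisation $\delta^{-k/8}$ arise. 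Finally, conformal covariance with weight $1/8$ follows from the uniqueness of the solution to the boundary value problem. Under a conformal map $\psi$, $f\circ\psi\cdot(\psi')^{1/2}$ solves the transported problem, and the residues rescale by $|\psi'(x_j)|^{1/2}$. Integrating the log-derivative identity then yields the factor $\prod_j|\psi'(x_j)|^{1/8}$, with the global constant matched by the same normalisation argument.
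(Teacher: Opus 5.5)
The paper gives no proof of this statement of its own. It imports the result from \cite{spinCorrelations}, remarking only that the rotated lattice $\sqrt{2}\delta e^{i\frac{\pi}{4}}\mathbb{Z}^2$ used there changes nothing but $C_{\sigma}$ and that no boundary smoothness is required. Your outline (spinor observable on the double cover with the Riemann boundary condition, a priori bounds through $\operatorname{Im}\int F_{\delta}^2$ and discrete full-plane spinors, identifying the lattice ratio with the subleading coefficient at $x_1$, integrating the log-derivative, and normalizing via the full-plane two-point asymptotics) is essentially the discrete complex analysis proof of that reference. The steps you only gesture at are exactly where that reference does its real work: convergence of the observable up to the singularity at lattice scale (not just on compacts away from the $x_j$), uniform-in-$\delta$ control when points merge or approach $\partial D$ in the normalization, and the Dobrushin case. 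These should be cited rather than left as sketches.
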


\begin{remark}
In \cite{spinCorrelations}, Theorem \ref{theorem_spin_correlations} is established under the assumption that $D_{\delta}$ is a subset of the rotated square lattice $\sqrt{2}\delta e^{i\frac{\pi}{4}}\mathbb{Z}^2$ with meshsize $\sqrt{2}\delta$. The difference of meshsize does not change anything while rotating $\delta \mathbb{Z}^2$ only affects the value of the constant $C_{\sigma}$. The functions $(f_{D}^{(\xi,k)})_{k\geq 1}$, $\xi \in \{\pm, -, +\}$, remain the same. Besides, in \cite{spinCorrelations}, Theorem \ref{theorem_spin_correlations} is in fact shown to hold under no assumption on the smoothness of the underlying domain $D$, and therefore is not restricted to discrete approximations $(D_{\delta};a_{\delta},b_{\delta})$ and domains $(D;a,b)$ satisfying the stronger assumptions of Section \ref{sec_assumptions_domain}.
\end{remark}

Spin correlations of the critical Ising model with $+1$ boundary conditions are often easier to work with than those of the critical Ising model with Dobrushin boundary conditions. The following lemma bounds the latter correlations by the former and this bound will enable us to get sufficient control on the moments of $\sum_x \sigma_x$. We refer the reader to \cite[Section~2]{papon2024interfacescalinglimitcritical} for a proof.

\begin{lemma} \label{lemma_EEpm_EEplus}
Let $k \geq 1$. For any $x_1, \dots, x_k \in D_{\delta}$, $\vert \mu_{\delta}^{\pm}[\prod_{j=1}^{k}\sigma_{x_j}] \vert \leq \mu_{\delta}^{+}[\prod_{j=1}^k \sigma_{x_j}]$. 
\end{lemma}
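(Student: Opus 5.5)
We prove Lemma~\ref{lemma_EEpm_EEplus} by comparing both measures through the random current (or FK) representation, reducing the claim to a monotonicity statement in the boundary field. The Dobrushin measure $\mu_{\delta}^{\pm}$ is the Ising model on $D_{\delta}$ with boundary field $+\infty$ on $\partial D_{\delta}^{+}$ and $-\infty$ on $\partial D_{\delta}^{-}$. Equivalently, we add a ghost vertex $\mathfrak{g}$ joined to the boundary, with couplings $J_{\mathfrak{g}y}=+\infty$ for $y\in\partial D_\delta^+$ and $J_{\mathfrak{g}y}=-\infty$ for $y\in\partial D_\delta^-$, and we condition on $\sigma_{\mathfrak g}=+1$. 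The measure $\mu_{\delta}^{+}$ is the same construction with all boundary couplings equal to $+\infty$.

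\textbf{Step 1 (gauge to a ferromagnetic system with signed sources).} We treat the frozen boundary spins as fixed and expand $\mu_\delta^{\pm}[\prod_j\sigma_{x_j}]$ in the high-temperature (random-path) expansion for interior spins. Each boundary vertex then acts as a source of an external field $\beta_c \sum_{y\in\partial} \epsilon_y \sigma_x$ on its interior neighbours $x$, where $\epsilon_y=\pm1$. The resulting expression is
\begin{equation*}
\mathcal Z^{\pm}\,\mu_{\delta}^{\pm}\Big[\prod_{j=1}^k\sigma_{x_j}\Big]=\sum_{\omega}w(\omega)\prod_{y\in\partial D_\delta}\epsilon_y^{n_y(\omega)},
\end{equation*}
where the sum runs over edge configurations $\omega$ (even subgraphs of the graph with a ghost vertex) whose odd vertices are exactly $\{x_1,\dots,x_k\}$ plus possibly boundary vertices. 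Here $w(\omega)=\prod_{e\in\omega}\tanh(\beta_c)\ge 0$, $n_y(\omega)$ is the parity of the degree at $y$, and $\mathcal Z^{\pm}$ is the analogous sum with no interior sources. The $+$ case is the same formula with all $\epsilon_y=1$.

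\textbf{Step 2 (comparison).} In the numerator, the triangle inequality gives $|\sum_\omega w\prod\epsilon^{n}|\le\sum_\omega w$, which is the $+$ numerator. The delicate point is the normalisation: we need $\mathcal Z^{\pm}\ge$ (something compatible), whereas naively $\mathcal Z^{\pm}\le \mathcal Z^{+}$, which goes the wrong way. This is the main obstacle, so we avoid the ratio argument.

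Instead we use Griffiths' second inequality in its conditional form. Write $\mu_\delta^{\pm}[\prod\sigma_{x_j}]=\mu_\delta^{+}[\prod_j\sigma_{x_j}\,\mathbb I_E]/\ldots$. A cleaner route is the following. By spin-flip symmetry of the interior given the boundary, $|\langle\sigma_A\rangle_{h}|$ is maximised over boundary conditions $h\in\{\pm1\}^{\partial}$ at $h\equiv+1$. To see this, view $\langle\sigma_A\rangle_h$ as the correlation in the ferromagnetic model with ghost couplings $J_{\mathfrak g y}=K\epsilon_y$, $K\to\infty$. Apply the gauge $\sigma_y\mapsto\epsilon_y\sigma_y$ only to the ghost edges by doubling the ghost: let $\mathfrak g^+$ connect to $\partial^+$ and $\mathfrak g^-$ connect to $\partial^-$, both with positive couplings. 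Then
\begin{equation*}
\mu^{\pm}[\sigma_A]=\langle\sigma_A\rangle_{\sigma_{\mathfrak g^+}=1,\sigma_{\mathfrak g^-}=-1}=\frac{\langle\sigma_A\rangle-\langle\sigma_A\sigma_{\mathfrak g^+}\sigma_{\mathfrak g^-}\rangle}{1-\langle\sigma_{\mathfrak g^+}\sigma_{\mathfrak g^-}\rangle}\cdot(\ldots).
\end{equation*}
Here the averages are taken in the free ferromagnetic model on the graph with both ghosts, expanded over the four ghost values and using symmetry so that only even $|A|$ matters. Similarly,
\begin{equation*}
\mu^+[\sigma_A]=\frac{\langle\sigma_A\rangle+\langle\sigma_A\sigma_{\mathfrak g^+}\sigma_{\mathfrak g^-}\rangle}{1+\langle\sigma_{\mathfrak g^+}\sigma_{\mathfrak g^-}\rangle}.
\end{equation*}
For odd $|A|$, the same formulas hold with $\sigma_{\mathfrak g^\pm}$ inserted appropriately.

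\textbf{Step 3 (conclude with GKS).} Write $a=\langle\sigma_A\rangle$, $b=\langle\sigma_A\sigma_{\mathfrak g^+}\sigma_{\mathfrak g^-}\rangle$ and $c=\langle\sigma_{\mathfrak g^+}\sigma_{\mathfrak g^-}\rangle\in[0,1)$. Griffiths' first inequality gives $a,b\ge0$, and Griffiths' second inequality gives $b\ge ac$ and $a\ge bc$. The claim $|a-b|/(1-c)\le(a+b)/(1+c)$ is then equivalent, by squaring or by case analysis on the sign of $a-b$, to $(a-b)(1+c)\le(a+b)(1-c)$, i.e.\ $ac\le b$, and to the symmetric inequality $bc\le a$. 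Both hold by GKS.

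The main step to get right is the exact bookkeeping in Step 2, meaning the correct ghost-expansion formulas for odd and even $|A|$. Once they are written correctly, GKS closes the argument.
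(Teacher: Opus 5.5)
Your proof is correct. Note that the paper itself does not prove this lemma; it only refers to \cite[Section~2]{papon2024interfacescalinglimitcritical}, so your argument is a genuinely independent route.

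The part of your proposal that does the work is Steps~2--3. Contract $\partial D_\delta^{+}$ and $\partial D_\delta^{-}$ to two vertices $\mathfrak g^{+},\mathfrak g^{-}$ of a free ferromagnetic model. This is cleaner than sending ghost couplings $K\to\infty$: the contracted pair couplings stay nonnegative, conditioning on the two ghost spins gives exactly $\mu_\delta^{\pm}$ and $\mu_\delta^{+}$, and $c<1$ is automatic.

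For $|A|$ even, flip symmetry gives the exact identities $\mu_\delta^{\pm}[\sigma_A]=(a-b)/(1-c)$ and $\mu_\delta^{+}[\sigma_A]=(a+b)/(1+c)$. There is no extra factor, so delete the ``$\cdot(\ldots)$''. For $|A|$ odd, set $a=\langle\sigma_A\sigma_{\mathfrak g^+}\rangle$ and $b=\langle\sigma_A\sigma_{\mathfrak g^-}\rangle$; the same identities hold.

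In both cases the claim is equivalent to $b\ge ac$ and $a\ge bc$. Each is GKS~II applied to $\sigma_A$ (or $\sigma_A\sigma_{\mathfrak g^\pm}$) and $\sigma_{\mathfrak g^+}\sigma_{\mathfrak g^-}$, using $(\sigma_{\mathfrak g^+}\sigma_{\mathfrak g^-})^2=1$. Remove Step~1 and the first half of Step~2. Also drop the appeal to ``spin-flip symmetry'' for maximality at $h\equiv+1$: symmetry alone does not give this, your ghost argument does.

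The other natural route goes through the Edwards--Sokal coupling. The FK marginal of $\mu_\delta^{\pm}$ is the wired FK measure $\phi^{\mathrm w}$ conditioned on the decreasing event $\{\partial D_\delta^{+}\not\leftrightarrow\partial D_\delta^{-}\}$. Given the edges, $\sigma_A$ has conditional mean $0$ unless every cluster avoiding $\partial D_\delta$ meets $A$ an even number of times. That is an increasing event $E_A$, and on it the conditional mean is $\pm1$. Hence, by FKG,
\begin{equation*}
|\mu_\delta^{\pm}[\sigma_A]|\le\phi^{\mathrm w}\bigl[E_A\mid\partial D_\delta^{+}\not\leftrightarrow\partial D_\delta^{-}\bigr]\le\phi^{\mathrm w}[E_A]=\mu_\delta^{+}[\sigma_A].
\end{equation*}
That proof is one line once the coupling is set up, and it uses the same FK tools as Part~II.

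Yours stays entirely at the spin level and needs only Griffiths' inequalities. It applies verbatim to any $\pm1$ boundary condition, since the arc structure of $\partial D_\delta^{\pm}$ never enters, and to any finite graph with ferromagnetic pair couplings. It also gives exact formulas for both correlations, so it quantifies the gap between them.
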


Let us now collect further results on spin correlations that will be needed in the proof of Theorem \ref{theorem-single-interface}. First, the following lemma provides a bound on $k$-point spin correlations of the critical Ising model with $+1$ boundary conditions. We refer the reader to \cite[Section~2]{papon2024interfacescalinglimitcritical} for a proof.

\begin{lemma} \label{lemma-bound-spin-correlations}
There exists a constant $C>0$ such that for any $k \geq 1$ and any $x_1, \dots , x_k \in D_{\delta}$, 
\begin{equation*}
    \mu_{\delta}^{+}[\prod_{j=1}^{k}\sigma_{x_j}] \leq C^k\delta^{\frac{k}{8}}\exp\bigg( \frac{1}{8}\sum_{j=1}^{k} \log \frac{1}{L(x_j) \wedge \dist(x_j, \partial D_{\delta})} \bigg)
\end{equation*}
where for each $j$, $L(x_j) = \frac{1}{2}\min_{p:p \neq j} \vert x_p - x_j \vert$. Above, for $x \in D_{\delta}$, $d_{\delta}(x):=\dist(x, \partial D_{\delta})$ is the Euclidean distance between $x$ and $\partial D_{\delta}$. Besides, by convention, for $k=1$, $L(x)=\infty$.
\end{lemma}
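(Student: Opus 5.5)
The strategy is to bound the $k$-point function by a product of one-point functions in suitably chosen boxes, using monotonicity in boundary conditions (GKS/FKG). Write $r_j := L(x_j)\wedge d_\delta(x_j)$. By construction, the balls $B(x_j, r_j)$ are pairwise disjoint and contained in $D_\delta$ (up to lattice rounding). The claim is that
\begin{equation*}
\mu_\delta^+\Big[\prod_{j=1}^k \sigma_{x_j}\Big] \le \prod_{j=1}^k \mu^{+}_{B(x_j,r_j)}[\sigma_{x_j}],
\end{equation*}
where the right-hand side uses $+$ boundary conditions on the disjoint boxes $B(x_j,r_j)\cap\delta\mathbb{Z}^2$.

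To prove this, condition on the spins outside $\bigcup_j B(x_j,r_j)$. Given this configuration, the boxes interact only through their boundaries, so the spins in different boxes are conditionally independent. In each box the conditional law is an Ising model with some boundary condition $\xi$ and no field. The obstacle is that the product $\prod_j \sigma_{x_j}$ is not monotone, so FKG does not apply directly.

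Instead, use Griffiths' second inequality in the ferromagnetic formulation. Setting the boundary spins to $+1$ is the same as imposing infinite positive external fields on them. Now couple every edge joining a box to its exterior, and set to zero the couplings of all edges in the region outside the boxes while keeping those boundary fields. Since $\sigma_A$ correlations are nondecreasing in every coupling and every positive field (GKS II), removing couplings can only decrease the correlation. So the right direction is to go upward: add couplings to show that the correlation is at most the one with $+$ boundary conditions placed directly on each box boundary. Concretely, view the $+$ boundary of each small box as obtained from the $\mu_\delta^+$ system by sending the couplings between the box-boundary spins and $\partial D_\delta$ to $+\infty$ (equivalently, adding infinite positive fields on the box boundaries). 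This increases $\langle \sigma_A\rangle$ by GKS II. The resulting measure factorizes over the boxes, since the box boundaries are frozen at $+1$, which gives exactly the displayed product.

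It then remains to bound the one-point function: $\mu^+_{B(x,r)}[\sigma_x]\le C(\delta/r)^{1/8}$ for all $r\ge\delta$. This is the standard critical one-arm magnetization bound, obtainable from the convergence in Theorem \ref{theorem_spin_correlations} at unit scale combined with scaling on the square lattice; the bound is uniform over box sizes by monotonicity and by the known exact asymptotics of the $+$ box magnetization $\asymp r^{-1/8}$ in lattice units. Multiplying the $k$ factors gives $C^k\delta^{k/8}\prod_j r_j^{-1/8}$, which is the claim.

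The main obstacle is making the uniform one-point bound with a single constant valid at all scales $r\ge \delta$. I would take it from the known lattice estimate $\langle\sigma_0\rangle^+_{\Lambda_n}\le Cn^{-1/8}$ rather than from Theorem \ref{theorem_spin_correlations}, which is only asymptotic.
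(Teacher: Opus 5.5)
Your proof is correct, and it is essentially the standard argument behind this lemma; the paper gives no proof of its own and defers to Papon's Section 2. That argument decouples the $k$-point function with Griffiths' second inequality, freezing $+$ spins on the boundaries of disjoint boxes of radius comparable to $L(x_j)\wedge d_\delta(x_j)$, and then applies the uniform lattice bound $\langle\sigma_0\rangle^+_{\Lambda_n}\le Cn^{-1/8}$ (equivalently, the FK one-arm estimate via Edwards--Sokal). Two bookkeeping points: since $r_j+r_p\le|x_j-x_p|$ allows the balls to be tangent, shrink the radii to, say, $r_j/2$ so the frozen boundaries stay genuinely disjoint after lattice rounding; and when $r_j$ is of order $\delta$, leave $x_j$ outside all boxes and bound its factor by $1$.
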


Second, in the proof of Theorem \ref{theorem-single-interface}, we will need some control on the sum over $D_{\delta}^k$ of the quantity on the right-hand side of the estimate of Lemma \ref{lemma-bound-spin-correlations}. The following two statements will therefore be relevant.

\begin{lemma} \label{lemma-sum-prod-L-x-j-delta}
    Let $0 < \alpha < 1$. There exists a constant $C>0$ depending on $D$ only such that for any $k \in \mathbb{N}$,
    \begin{equation*}
        \delta^{2k} \sum_{(x_1,\dots,x_k) \in D_{\delta}^k \setminus B_k(\delta)} \big[\prod_{j=1}^{k} L(x_j)\big]^{-\alpha} \leq C^kk^{\alpha k}
    \end{equation*}
    where we have set $B_k(\delta):= \{(x_1,\dots,x_k) \in D_{\delta}^k: \exists i \neq j, \vert x_i - x_j \vert \leq 4\delta \}$.
\end{lemma}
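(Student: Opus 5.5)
The plan is to classify each tuple by the dyadic scales of its nearest-neighbour half-distances $L(x_j)$. A packing constraint restricts which scale vectors can occur, and an exponential Chebyshev-type inequality turns that constraint into a product of one-dimensional geometric sums. The point is that $\prod_j L(x_j)^{-\alpha}$ is large only when many of the $L(x_j)$ are small, and the disjointness of the balls $B(x_j, L(x_j))$ prevents too many of them from being large at once.

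\emph{Step 1 (packing constraint).} Take $(x_1,\dots,x_k) \in D_\delta^k\setminus B_k(\delta)$ with $k\ge 2$; for $k=1$ we have $L=\infty$ and the bound is trivial. Set $L_j := L(x_j)$. Each $L_j$ is half the distance from $x_j$ to its nearest other point, so $L_j > 2\delta$. The open balls $B(x_j,L_j)$ are pairwise disjoint, since $|x_i-x_j| \ge 2\max(L_i,L_j) \ge L_i+L_j$. I cannot assume these balls stay inside a bounded region, because $L_j$ may be as large as $\diam(D_\delta)$. I therefore use the truncated radii $\min(L_j,\bar R)$: the balls $B(x_j, \min(L_j,\bar R))$ are still disjoint and lie in $B(0,2\bar R)$. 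Comparing areas gives
\begin{equation*}
\sum_{j=1}^k \min(L_j,\bar R)^2 \le 4\bar R^2 =: C_0 .
\end{equation*}
Moreover $L_j \le \diam(\hat D_\delta)/2 \le \bar R$, so the truncation is harmless and $\sum_j L_j^2 \le C_0$.

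\emph{Step 2 (dyadic decomposition).} For each $j$, define the integer $m_j$ by $2^{-m_j-1} < L_j \le 2^{-m_j}$. Then $m_j \ge -\log_2 \bar R - 1$, and $m_j \le \log_2(1/\delta)$ because $L_j > 2\delta$. Write $r_j := 2^{-m_j}$. On the set of tuples with a fixed scale vector $m=(m_j)_j$, we have $\prod_j L_j^{-\alpha} \le 2^{\alpha k}\prod_j r_j^{-\alpha}$, and Step 1 gives $\sum_j r_j^2 \le 4C_0$. For the "volume" of such tuples I use the crude bound: the number of tuples is at most $|D_\delta|^k \le (C\delta^{-2})^k$, so each class contributes $\delta^{2k}\cdot\#\{\text{tuples}\} \le C^k$. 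This yields
\begin{equation*}
\delta^{2k} \sum_{D_\delta^k\setminus B_k(\delta)} \prod_j L_j^{-\alpha} \le C^k \sum_{m} \prod_{j=1}^k r_j^{-\alpha}\, \mathbb{I}\Big\{\sum_j r_j^2 \le 4C_0\Big\},
\end{equation*}
where the sum runs over all integer vectors $m$ in the allowed range.

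\emph{Step 3 (exponential tilt, the key step).} To decouple the constraint I bound the indicator by $e^{4C_0 t - t\sum_j r_j^2}$ and choose $t=k$. The right-hand side above is then at most
\begin{equation*}
C^k e^{4C_0 k} \Big(\sum_{m \ge -\log_2\bar R -1} 2^{\alpha m} e^{-k 4^{-m}}\Big)^k .
\end{equation*}
In the one-dimensional sum, the terms with $4^{-m} \ge 1/k$ form a geometric series in $2^{\alpha m}$ bounded by a constant times $k^{\alpha/2}$. The terms with $4^{-m} < 1/k$ decay doubly exponentially relative to $2^{\alpha m}$ past the threshold $2^m \approx \sqrt{k}$, so they also sum to $O(k^{\alpha/2})$ with a constant depending only on $\alpha$. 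Altogether the left-hand side is bounded by $C^k k^{\alpha k/2} \le C^k k^{\alpha k}$, which proves the lemma. The constant depends only on $D$ through $\bar R$ and on $\alpha$.

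\emph{Main obstacle.} The step I expect to need the most care is Step 3, or more precisely the decision not to spend any volume factor on the geometry. An approach that instead integrates each point against its nearest neighbour along the nearest-neighbour forest must sum over functional graphs, which costs a factor of order $k^k$; this is too large unless it is offset by the packing gain. The tilt argument avoids that combinatorial factor entirely, because it only uses the packing constraint $\sum_j L_j^2 \le C_0$ together with the trivial count $|D_\delta|^k$. The remaining checks are routine: the truncation at $\bar R$, and the fact that the restriction to the complement of $B_k(\delta)$ guarantees $L_j \ge 2\delta$, so that the dyadic range is finite and no $L_j$ vanishes.
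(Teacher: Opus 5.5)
Your argument breaks down in Steps 2--3, and it cannot be repaired within its own framework.

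\emph{The gap.} Bounding the number of tuples in each scale class by $|D_\delta|^k$ discards the only mechanism that controls small scales. The packing constraint $\sum_j r_j^2\le 4C_0$ restricts only large $r_j$. Small $r_j$ satisfy it for free, yet the weight $\prod_j r_j^{-\alpha}=\prod_j 2^{\alpha m_j}$ is largest precisely there.

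This is why your evaluation of the one-dimensional sum is reversed. The factor $e^{-k4^{-m}}$ is doubly exponentially small only for $2^m\ll\sqrt{k}$, i.e.\ at large scales, where $2^{\alpha m}$ is already small. For $2^m\ge\sqrt{k}$ it lies in $[e^{-1},1]$, so those terms are at least $e^{-1}2^{\alpha m}$. They grow geometrically up to the cutoff $m\approx\log_2(1/\delta)$. You dropped that cutoff in Step 3, and without it the series diverges. With it, the sum is of order $\delta^{-\alpha}$, so your method only gives $C^k\delta^{-\alpha k}$, which is not uniform in $\delta$.

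A sanity check confirms the failure. Your argument never uses $\alpha<1$. Yet for $k=2$ the left-hand side is comparable to $\iint_{|x-y|>4\delta}|x-y|^{-2\alpha}\,dx\,dy$, which blows up as $\delta\to0$ when $\alpha\ge1$.

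\emph{What is missing.} You need the volume gain from the nearest-neighbour structure. Let $n(j)$ be the index of a nearest neighbour of $x_j$; if $L(x_j)\le r_j$, then $x_j$ lies within $2r_j$ of $x_{n(j)}$. Fix the map $n$ and integrate along the nearest-neighbour forest.
\begin{itemize}
\item Each vertex not on a $2$-cycle has normalised count $O(r_j^2)$, which turns its weight into $r_j^{2-\alpha}$.
\item Each mutual pair (one point free, the other within $2r$) gives $r^{2-2\alpha}$. This is exactly where $\alpha<1$ makes the small-scale sums converge uniformly in $\delta$.
\end{itemize}
The price is the sum over nearest-neighbour maps, which is of order $k^k$.

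Your closing remark about the forest approach in fact points to the right route: keep the forest integration and use your packing-plus-tilt idea to pay for the combinatorics. With the volume factors included, the one-dimensional sums become $\sum_m 2^{-(2-\alpha)m}e^{-k4^{-m}}\asymp k^{-1+\alpha/2}$ per vertex and $\asymp k^{-1+\alpha}$ per pair. One checks that these powers of $k$ offset the count of maps and leave $C^k k^{\alpha k/2}$.

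This combination of volume counting with control of the combinatorial factor is what the paper imports from Mourrat's Proposition 3.10 and Junnila's Lemma 3.10.
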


\begin{proof}
    This follows from combining the arguments of the proof of \cite[Proposition~3.10]{MourratIsing} to handle the sum together with the arguments of the proof of \cite[Lemma~3.10]{Junnila} to obtain that the $k$-dependent constant $C_k$ of \cite[Proposition~3.10]{MourratIsing} can in fact be replaced by $C^k$ for some constant $C>0$.
\end{proof}

\begin{lemma} \label{lemma-sum-dist-to-boundary-power-alpha}
    Let $0<\alpha <1$. There exists a constant $C_{\alpha}>0$ depending only on $D$ and $\alpha$ such that $\delta^{2}\sum_{x \in D_{\delta}} d_{\delta}(x)^{-\alpha} \leq C_{\alpha}$. Here, $d_{\delta}(x)$ is as in Lemma \ref{lemma-bound-spin-correlations}. 
\end{lemma}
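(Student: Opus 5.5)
The plan is to decompose the sum according to the dyadic (exponential) shells $N_l(\partial D_{\delta})$ introduced in \eqref{condition-approximation-boundary}. For $x \in D_{\delta}$ we have $d_{\delta}(x) \geq \delta$, since $x \notin \partial D_{\delta}$ and all points lie on $\delta\mathbb{Z}^2$. Hence every $x$ lies in some shell $N_l(\partial D_{\delta})$ with $l \geq 0$, namely the one with $e^l\delta \leq d_{\delta}(x) \leq e^{l+1}\delta$. Also $d_{\delta}(x) \leq 2\bar R$ because $\hat D_{\delta} \subset B(0,\bar R)$, so only shells with $e^l \delta \leq 2\bar R$ are nonempty, that is $l \leq L_{\delta} := \lceil \log(2\bar R/\delta) \rceil$.

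Next we bound the contribution of each shell. On $N_l(\partial D_{\delta})$ we have $d_{\delta}(x)^{-\alpha} \leq (e^l \delta)^{-\alpha}$. Combining this with the cardinality bound \eqref{condition-approximation-boundary} gives
\begin{equation*}
\delta^2 \sum_{x \in N_l(\partial D_{\delta})} d_{\delta}(x)^{-\alpha} \leq \delta^2 \cdot C e^{l+1}\delta^{-1} \cdot e^{-\alpha l}\delta^{-\alpha} = C e \,(e^l\delta)^{1-\alpha}.
\end{equation*}

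Summing over $0 \leq l \leq L_{\delta}$ gives a geometric series with ratio $e^{1-\alpha} > 1$. It is therefore dominated by its last term, up to a factor $(1-e^{-(1-\alpha)})^{-1}$. The last term is at most $C e (e\cdot 2\bar R)^{1-\alpha}$, which is independent of $\delta$. This yields $C_{\alpha} = C e (2e\bar R)^{1-\alpha}/(1-e^{\alpha-1})$, which depends only on $D$ (through $C$ and $\bar R$) and on $\alpha$. The assumption $\alpha<1$ is used exactly here, since it makes the series increasing and controlled by its top scale. Points counted in two adjacent shells (boundary equality cases) only introduce a factor $2$.

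There is no genuine obstacle in this argument. The one point needing care is that \eqref{condition-approximation-boundary} must hold uniformly in $l$ up to macroscopic scale, with the same constant $C$. This is exactly what the assumption provides, and it is where the piecewise smoothness of $\partial D$ (finite one-dimensional Minkowski content) enters.
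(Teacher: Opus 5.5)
Your proposal is correct and is the paper's own argument: split the sum over the shells $N_l(\partial D_{\delta})$, bound each shell using \eqref{condition-approximation-boundary}, and sum the resulting geometric series in $(e^l\delta)^{1-\alpha}$, which stays bounded because $\alpha<1$ lets it be controlled by its macroscopic top term. You also write out the final summation explicitly, which the paper leaves implicit.
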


\begin{proof}
    Let $0<\alpha < 1$ and $\delta > 0$. Set $D(\delta,\alpha)=\delta^2\sum_{x \in D_{\delta}} d_{\delta}(x)^{-\alpha}$. With $d_{D}:= \diam(D)$, we have that
    \begin{equation} \label{ineq-D-alpha-N-l}
        D(\delta,\alpha) \leq \delta^2 \sum_{l=0}^{\log(\delta^{-1}d_{D})} \vert N_{l}(\partial D_{\delta}) \vert e^{-\alpha l}\delta^{-\alpha}
    \end{equation}
    where for $l \in \mathbb{N}$, we have set $N_{l}(\partial D_{\delta}) := \{ x \in D_{\delta}: e^{l}\delta \leq d_{\delta}(x) < e^{l+1}\delta \}$. By the condition \eqref{condition-approximation-boundary} imposed on $(\partial D_{\delta})_{\delta}$ (see Section \ref{sec_assumptions_domain}), we have that for any $l \in \mathbb{N}$, $\vert N_{l}(\partial D_{\delta}) \vert \leq C e^{l+1}\delta^{-1}$. Plugging this upper bound into the right-hand side of \eqref{ineq-D-alpha-N-l}, we obtain the lemma.
\end{proof}

\subsubsection{The discrete spin-spin interface of the Ising model}

Under $\mu_{\delta}^{\pm}$, in each spin configuration, there is an interface $\gamma_{\delta}$ separating $+1$ and $-1$ spins: this is a discrete path running from $a_{\delta}$ to $b_{\delta}$ such that a vertex adjacent to it has spin $+1$ if it is on its left and $-1$ if it is on its right. Note that this interface is a priori not unique. In what follows, we always choose the leftmost such interface but this choice is irrelevant: any other reasonable way of prescribing how to turn when encountering a face surrounded by four alternating spins would yield the same scaling limit as $\delta \to 0$. As in \cite{cvgIsingSLE}, for technical reasons, we draw $\gamma_{\delta}$ on the auxiliary square-octagon lattice, with octagons corresponding to vertices of $D_{\delta}$. This for example, prevents $\gamma_{\delta}$ from having self-intersection points.

\subsubsection{The discrete spin loops of the Ising model}\label{sec:def-of-loops}

For $G$ a finite subgraph of $\delta \mathbb{Z}^2$, under $\mu_{G}^{+}$, each spin configuration can be mapped to a configuration of loops defined on $G^{*}$, the dual of $G$, such that for each edge $e$ of $G$ crossed by a loop, the endpoints of $e$ have opposite spins. We call these loops the spin loops of the Ising model.

For what comes next, we will need to give a more precise definition of these spin loops. To do so, let us first define what paths and $*$-paths are. A sequence of vertices $v_1, \dots, v_n$ of $G$ is called a path if for any $i$, $v_{i}$ and $v_{i+1}$ share an edge. It is called a $*$-path if for any $i$, $v_{i}$ and $v_{i+1}$ share a face.

Let $G^{*}$ denote the graph dual to $G$. An Ising spin loop $\ell$ is an oriented simple loop on $G^{*}$ (i.e., a closed path in $G^{*}$ such that no edge is used twice) such that any edge in $\ell$ has a $+1$ spin on its left and a $-1$ spin on its right. Note that this definition implies in particular that the loop $\ell$ is oriented clockwise if it has $+1$ spins on its outside, and counterclockwise if it has $-1$ spins on its outside. Moreover, we say that an Ising spin loop $\ell$ is leftmost if it follows a path of $+1$ spins on its left side, and rightmost if it follows a path of $-1$ spins on its right.

Besides, when the Ising model in $G$ has $+1$ boundary conditions, an Ising spin loop is said to be outermost if it is not strictly contained in any other spin loop. Observe that two outermost loops can intersect (even though this almost surely does not happen in the scaling limit when the model is at criticality). Moreover, each Ising spin loop has a naturally defined level: it has level $1$ if it is outermost, level $2k$ with $k \geq 1$ if it is contained inside an Ising spin loop of level $2k-1$ and is not separated by a weak path of $+1$ spins from this loop, and level $2k+1$ with $k \geq 1$ if it is contained inside an Ising loop of level $2k$ and is not separated by a $*$-path of $-1$ spins from this loop.

\subsection{Scaling limit of the spin-spin interface at criticality}

In this section, we discuss the scaling limit of the spin-spin interface of the critical Ising model. To this end, we first give a few reminders on Loewner chains and SLE curves and describe the space of curves and the topology in which the convergence of the Ising spin-spin interface was proved.

\subsubsection{Loewner chains}

Set $\HH:= \{ z \in \mathbb{C}: \Im(z) > 0 \}$ and let $\gamma: [0,\infty) \to \overline{\mathbb{H}}$ be a non-self-crossing curve targeting $\infty$ and such that $\gamma(0)=0$. For $t \geq 0$, let $K_t$ be the hull generated by $\gamma([0,t])$, that is $\HH \setminus K_t$ is the unbounded connected component of $\HH \setminus \gamma([0,t])$. In the case where $\gamma([0,t])$ is non-self-touching, $K_t$ is simply given by $\gamma([0,t])$. For each $t \geq 0$, it is easy to see that there exists a unique conformal $g_t: \HH \setminus K_t \to \HH$ satisfying the normalization $g_t(\infty) = \infty$ and such that $\lim_{z \to \infty} (g_t(z) - z) = 0$. It can then be proved that $g_t$ satisfies the asymptotic
\begin{equation*}
    g_t(z) = z + \frac{a_1(t)}{z} + O(\vert z \vert^{-2}) \quad \text{as } \vert z \vert \to \infty.
\end{equation*}
The coefficient $a_1(t)$ is equal to $\text{hcap}(K_t)$, the half-plane capacity of $K_t$, which, roughly speaking, is a measure of the size of $K_t$ seen from $\infty$. Moreover, one can show that $a_1(0)=0$ and that $t \mapsto a_1(t)$ is continuous and strictly increasing. Therefore, the curve $\gamma$ can be reparametrized in such a way that at each time $t$, $a_1(t) = 2t$. $\gamma$ is then said to be parameterized by half-plane capacity.

In this time-reparametrization and with the normalization of $g_t$ just described, it is known that there exists a unique real-valued function $t \mapsto W_t$, called the driving function, such that the following equation, called the Loewner equation, is satisfied:
\begin{equation} \label{eq_Loewner}
    \partial_t g_t(z) = \frac{2}{g_t(z) - W_t},  \quad g_0(z)=z, \quad \text{for all $z \in \HH \setminus K_t$}.
\end{equation}
Indeed, it can be shown that $g_t$ extends continuously to $\gamma(t)$ and setting $W_t = g_t(\gamma(t))$ yields the above equation, see e.g. \cite[Chapter~4]{book_Lawler} and \cite[Chapter~4]{book_SLE}.

Conversely, given a continuous and real-valued function $t \mapsto W_t$, one can construct a locally growing family of hulls $(K_t)_t$ by solving the equation \eqref{eq_Loewner}. Under additional assumptions on the function $t \mapsto W_t$, the family of hulls obtained using \eqref{eq_Loewner} is generated by a curve, in the sense explained above. 

Schramm-Loewner evolutions, or SLE for short, are random Loewner chains introduced by Schramm \cite{Schramm_SLE}. For $\kappa \geq 0$, SLE$_\kappa$ is the Loewner chain obtained by considering the Loewner equation \eqref{eq_Loewner} with driving function $W_t = \sqrt{\kappa}B_t$, where $(B_t, t \geq 0)$ is a standard one-dimensional Brownian motion. As such, SLE$_{\kappa}$ is defined in $\HH$ but, thanks to the conformal invariance of the Loewner equation, SLE$_{\kappa}$ can be defined in any simply connected domain $\Omega \subset \mathbb{C}$ with two marked boundary points $a, b \in \partial \Omega$ by considering a conformal map $\phi: \Omega \to \HH$ with $\phi(a)=0$ and $\phi(b) = \infty$ and taking the image of SLE$_{\kappa}$ in $\HH$ by $\phi^{-1}$. In particular, SLE$_{\kappa}$ is conformally invariant, and it turns out that this conformal invariance property together with a certain domain Markov property characterizes the family (SLE$_{\kappa}, \kappa \geq 0)$. 

\subsubsection{Topology and convergence on the space of curve} \label{sec_topology_curves}

Following \cite{tightness-curves}, the space of curves that we will consider is a subspace of the space of continuous mappings from $[0,1]$ to $\mathbb{C}$ modulo reparametrization. More precisely, let
\begin{equation*}
    \mathcal{C}':= \left \{
    f \in \mathcal{C}([0,1], \mathbb{C}): 
    \begin{aligned}
    &\text{ either $f$ is not constant on any subinterval of $[0,1]$} \\ &\text{or $f$ is constant on $[0,1]$} 
    \end{aligned}
    \right \}
\end{equation*}
and let $f_1, f_2 \in \mathcal{C}'$ be equivalent if there exists an increasing homeomorphism $\psi: [0,1] \to [0,1]$ with $f_2 = f_1 \circ \psi$. We denote by $[f]$ the equivalence class of $f$ under this equivalence relation and set
\begin{equation*}
    X(\mathbb{C}):= \{ [f]: f \in \mathcal{C}'\}.
\end{equation*}
$X(\mathbb{C})$ is called the space of curves. We turn $X(\mathbb{C})$ into a metric space by equipping it with the metric
\begin{equation*}
    d_X(f,g) := \inf \{ \| f_0 - g_0 \|_{\infty}: \, f_0 \in [f], g_0 \in [g] \}.
\end{equation*}
$(X(\mathbb{C}), d_X)$ is a separable and complete metric space, but it is not compact. Given $\Omega \subsetneq \mathbb{C}$ with $\partial \Omega \neq \emptyset$ and two marked boundary points $a$ and $b$, we define the space of simple curves from $a$ to $b$ in $\Omega$ as
\begin{equation*}
    X_{\text{simple}}(\Omega, a, b) := \{ [f]: f \in \mathcal{C}', f((0,1)) \subset \Omega, f(0)=a, f(1)=b, \, \text{$f$ injective} \}.
\end{equation*}
We then let $X_0(\Omega, a, b)$ be the closure of $X_{\text{simple}}(\Omega, a, b)$ in $X(\mathbb{C})$ with respect to the metric $d_X$. Curves in $X_0(\Omega, a, b)$ run from $a$ to $b$, may touch $\partial \Omega$ elsewhere than at their endpoints, may touch themselves and have multiple points, but they can have no transversal self-crossings. Notice that if $(\PP_n)_n$ is a sequence of probability measures supported on $X_{\text{simple}}(\Omega, a, b)$ that converges weakly to a probability measure $\PP^{*}$, then a priori $\PP^{*}$ is supported on $X_0(\Omega, a, b)$.

Assume that we have a family of random curves $(\gamma_{\delta})_{\delta}$ distributed according to some family $(\PP_{\delta})_{\delta}$ such that for each $\delta > 0$, $\gamma_{\delta}$ is an element of $X_{0}(\hat \Omega_{\delta};a_{\delta},b_{\delta})$, with $((\hat \Omega_{\delta};a_{\delta},b_{\delta}))_{\delta}$ satisfying the assumptions of Section \ref{sec_assumptions_domain}. The curves $(\gamma_{\delta})_{\delta}$ can be made to be supported on the same space $X_{0}(\Omega;a,b)$ by uniformization. More precisely, let $\phi: \Omega \to \mathbb{H}$ be a conformal map such that $\phi(a) = 0$, $\phi(b)=\infty$ and $\Im \phi(0)=1$ and similarly, for $\delta > 0$, let $\phi_{\delta}: \hat \Omega_{\delta} \to \mathbb{H}$ be a conformal map such that $\phi_{\delta}(a_{\delta}) = 0$, $\phi_{\delta}(b_{\delta})=\infty$ and $\Im \phi_{\delta}(0)=1$. Setting $\gamma_{\delta}^{\HH}:= \phi_{\delta}(\gamma_{\delta})$, the family $(\gamma_{\delta}^{\HH})_{\delta}$ is supported on $X_{0}(\HH;0,\infty)$, where $X_{0}(\HH;0,\infty)$ can be understood as obtained by extending the above definition to curves defined on the Riemann sphere. Moreover, for each $\delta > 0$, when parametrized by half-plane capacity, $\gamma_{\delta}^{\HH}$ has an associated driving function $W_{\delta}$. In this setting, there are two different topological spaces in which we may wish to show tightness of $(\gamma_{\delta})_{\delta}$:
\begin{enumerate}[leftmargin=1.5cm]
    \item [{\crtcrossreflabel{(T.1)}[topo_1]}] the space of curves $X_0(\HH,0,\infty)$ equipped with the metric $d_X$;
    \item [{\crtcrossreflabel{(T.2)}[topo_2]}] the metrizable space of continuous functions on $[0, \infty)$ with the topology of uniform convergence on compact subsets of $[0, \infty)$.
\end{enumerate}
Tightness could also be established for the driving functions $(W_{\delta})_{\delta}$, in which case the topological space to consider is
\begin{enumerate}[itemsep=-1ex, leftmargin=1.5cm]
    \item [{\crtcrossreflabel{(T.3)}[topo_3]}] the metrizable space of continuous functions on $[0, \infty)$ with the topology of uniform convergence on compact subsets of $[0, \infty)$.
\end{enumerate}
It turns out that, under appropriate crossing estimates, by \cite[Corollary~1.7]{tightness-curves}, weak convergence in one of the topologies \ref{topo_1}--\ref{topo_3} implies weak convergence in the other two and that the limits agree, in the sense that the limiting random curve is driven by the limiting driving function. In the case of the critical planar Ising model, these crossing estimates are established in \cite[Section~4.2]{tightness-curves}.

Another natural topological space in which to establish tightness of $(\gamma_{\delta})_{\delta}$ is
\begin{enumerate}[leftmargin=1.5cm]
    \item [{\crtcrossreflabel{(T.4)}[topo_4]}] the space of curves $X(\mathbb{C})$ equipped with the metric $d_X$.
\end{enumerate}
Under the assumption that $(\hat \Omega_{\delta}; a_{\delta}, b_{\delta})_{\delta}$ converges in the Carath\'eodory sense to $(\Omega;a,b)$ and that $(a_{\delta})_{\delta}$ and $(b_{\delta})_{\delta}$ are close approximations of the degenerate prime ends $a$ and $b$, \cite[Theorem~4.1]{Karrila} and \cite[Theorem~4.2]{Karrila} show that weak convergence of $(\gamma_{\delta})_{\delta}$ in the topology \ref{topo_4} implies weak convergence in the other three topologies \ref{topo_1}--\ref{topo_3}, see the discussion below \cite[Theorem~4.2]{Karrila}. Moreover, if $\gamma$ denotes the limit in the topology \ref{topo_4} and $\gamma^{\HH}$ denotes the limit in the topology \ref{topo_1}, then $\gamma$ has the same law as $\phi^{-1}(\gamma^{\HH})$. We note that \cite[Theorem~4.2]{Karrila} also guarantees that $\gamma$ is supported on $\overline{\Omega}$.

\subsubsection{The scaling limit of the critical Ising interface} \label{sec-critical-Ising-curve-convergence}

A conformal invariance property of the critical Ising model emerges when looking at the scaling limit of the interface $\gamma_{\delta}$ under $\mu_{\delta}^{\pm}$ as $\delta \to 0$. Indeed, as a consequence of the next theorem, its limiting law is conformally invariant. 

\begin{theorem}[\cite{cvgIsingSLE}] \label{theorem_SLE3}
Assume that $(D;a,b)$ and $(D_{\delta},a_{\delta},b_{\delta})_{\delta}$ are as in Section \ref{sec_assumptions_domain}. Then, as $\delta \to 0$, $(\gamma_{\delta})_{\delta}$ under $(\mu_{\delta}^{\pm})_{\delta}$ converges weakly in the topologies \ref{topo_1}--\ref{topo_4} to SLE$_3$ in $D$ from $a$ to $b$.
\end{theorem}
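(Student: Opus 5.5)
This statement is the Chelkak--Duminil-Copin--Hongler--Kemppainen--Smirnov theorem, so the natural plan is to reproduce the standard two-part architecture: tightness, followed by identification of every subsequential limit through a discrete martingale observable. Both parts work in the topology \ref{topo_4}. By \cite[Theorem~4.1, Theorem~4.2]{Karrila} and \cite[Corollary~1.7]{tightness-curves}, convergence then transfers to \ref{topo_1}--\ref{topo_3}. The assumptions of Section \ref{sec_assumptions_domain} (Carath\'eodory convergence and close approximations of the degenerate prime ends $a$, $b$) are exactly what these transfer results require.

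\textbf{Tightness.} I would verify the Kemppainen--Smirnov annulus crossing condition: uniformly in $\delta$ and in the conditioning on an initial segment of $\gamma_\delta$, the probability that $\gamma_\delta$ makes an unforced crossing of an annulus $A(z,r,R)$ is at most $C(r/R)^{\Delta}$. By the domain Markov property of the Ising model with Dobrushin boundary conditions, this reduces to a bound on the probability of a spin crossing of an annulus with arbitrary boundary conditions on a boundary arc. That bound follows from RSW-type crossing estimates for the critical FK-Ising model, uniform in boundary conditions, through the Edwards--Sokal coupling and monotonicity in boundary conditions. These crossing estimates are the ones recalled in \cite[Section~4.2]{tightness-curves}. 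They give that $(\gamma_\delta)_\delta$ is tight, that every subsequential limit is a Loewner chain, and that its driving function $W$ has moments sufficient for the stochastic calculus used below.

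\textbf{Identification.} The key step is the FK--Ising (or spin) fermionic observable $F_\delta(z)$. I would show that it is a martingale with respect to the exploration of $\gamma_\delta$ and that it is discrete s-holomorphic. The crucial analytic input is that a normalized version of $F_\delta$ converges, uniformly on compacts of the slit domain and uniformly over rough domains, to the holomorphic solution of a Riemann--Hilbert boundary value problem. In $\HH$ with target $\infty$ this solution is explicitly $\sqrt{\phi'(z)/\phi(z)}$ up to normalization. Proving this convergence is the main obstacle. The route I would take is to integrate $F_\delta^2$ to obtain a discrete function $H_\delta$ that is approximately sub- and superharmonic on the two sublattices and satisfies Dirichlet boundary conditions. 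Harmonic-measure estimates then give uniform control near the fractal boundary formed by the slit. Passing to the limit along a subsequence, the identity
\[
M_t(z)=\sqrt{\frac{g_t'(z)}{g_t(z)-W_t}}
\]
is a martingale for every $z$. Expanding in $z$ (equivalently, applying It\^o's formula to the Loewner flow) shows that $W_t$ and $W_t^2-3t$ are local martingales. By L\'evy's characterization, $W=\sqrt3 B$. Hence every subsequential limit is SLE$_3$, which together with tightness gives the theorem.
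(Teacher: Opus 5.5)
The paper does not prove this theorem; it imports it from \cite{cvgIsingSLE}. Your two-step plan (Kemppainen--Smirnov tightness, then a martingale observable) is the architecture of that reference, and the tightness half is fine in outline. However, the identification step as written identifies the wrong curve.

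The process $M_t(z)=\sqrt{g_t'(z)/(g_t(z)-W_t)}$, equivalently $\sqrt{\phi'/\phi}$, is $\sqrt{\Phi'}$ for $\Phi=\frac1\pi\log\phi$, the map onto a strip. It is the scaling limit of the \emph{FK--Ising} observable. That observable is a martingale for the FK exploration path under wired/free boundary conditions, not for the spin interface $\gamma_\delta$. Using $g_t(z)=z+2t/z+O(z^{-2})$, one gets
\[
M_t(z)=z^{-1/2}\Big(1+\frac{W_t}{2z}+\frac{\frac38 W_t^2-2t}{z^2}+O(z^{-3})\Big).
\]
So the martingale property makes $W_t^2-\frac{16}{3}t$, not $W_t^2-3t$, a local martingale. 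This gives $W=\sqrt{16/3}\,B$, i.e.\ SLE$_{16/3}$, the FK interface limit. More generally, for $(g_t')^{\alpha}(g_t-W_t)^{\beta}$ with $d\langle W\rangle=\kappa\,dt$, It\^o's formula gives zero drift iff $2\alpha=2\beta+\frac{\kappa}{2}\beta(\beta-1)$, and $(\alpha,\beta)=(\frac12,-\frac12)$ forces $\kappa=\frac{16}{3}$. So your ``(or spin)'' is not an indifferent choice, and the expansion you claim is incorrect.

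What you need is the Chelkak--Smirnov \emph{spin} fermionic observable. It is a martingale for the exploration of $\gamma_\delta$ from $a$. Its limit is $\sqrt{\Psi'(z)}$, where $\Psi:D\to\HH$ sends $a\mapsto\infty$ and $b\mapsto 0$, with normalization fixed at $b$. In terms of your $\phi$ this is $\sqrt{\phi'(z)}/\phi(z)$, not $\sqrt{\phi'(z)/\phi(z)}$. The slit-domain martingale is then
\[
N_t(z)=\frac{\sqrt{g_t'(z)}}{g_t(z)-W_t}=\frac1z\Big(1+\frac{W_t}{z}+\frac{W_t^2-3t}{z^2}+O(z^{-3})\Big),
\]
which is the case $(\alpha,\beta)=(\frac12,-1)$, i.e.\ $\kappa=3$. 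Only this yields $W=\sqrt3\,B$.

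The analytic step changes accordingly. The primitive $H=\Im\int F^2$ now tends to $\Im\Psi$, which vanishes on the whole boundary but blows up at the tip, and the normalization is pinned at the target $b$. The bounded-boundary-data, harmonic-measure argument you sketch is the FK one and does not by itself identify the limit. Controlling the tip singularity and the normalization at $b$ uniformly over rough slit domains is the delicate point in the spin case.
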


Theorem \ref{theorem_SLE3} will be the starting point of the proof of Theorem \ref{theorem-single-interface}. Let us note that in \cite{cvgIsingSLE}, Theorem \ref{theorem_SLE3} is established without assuming that $\partial D$ is smooth and therefore, the smoothness assumption on $\partial D$ imposed in Section \ref{sec_assumptions_domain} is not necessary for this result to hold, but is necessary for Theorem \ref{theorem-single-interface}.

\subsection{Scaling limit of the spin loops at criticality}

In this section, we discuss the scaling limit of the spin loops of the Ising model at criticality. To state the result of \cite{cvg-nested-loops-Ising}, which proves convergence of the critical Ising spin loops, we first recall the topological framework in which convergence can be obtained and then give a short reminder on conformal loop ensembles (CLE).

\subsubsection{The space of loops collections and its topology} \label{sec-topology-loops}

An oriented loop $\ell$ is an equivalence class of continuous and injective maps from the unit circle $S^1$ to $\mathbb{C}$, where the equivalence relation is the following: $\ell,\ell': S^1 \to \mathbb{C}$ are equivalent if there exists an orientation-preserving reparametrization $\psi: S^1 \to S^1$ such that for any $t \in S^1$, $\ell(t) = \ell'(\psi(t))$. 
We consider the following metric of the space of oriented loops: if $\ell_1$, $\ell_2$ are two oriented loops,
\begin{equation*}
    d_{\LL}(\ell_1,\ell_2) := \inf_{\psi_1,\psi_2}\inf_{t} \| \ell_1(\psi_1(t)) - \ell_2 (\psi_2(t))\|_{\infty}
\end{equation*}
where the infimum is taken over all orientation-preserving reparametrizations $\psi_1, \psi_2$ of $\ell_1$ and $\ell_2$. We then let $\LL$ be the completion of the set of simple oriented loops with respect to the metric $d_{\LL}$: $\mathcal{L}$ is the set of oriented non-self-crossing loops in the plane. Note that a loop in $\LL$ may have double points or may be reduced to a point.

The space $\XX$ of loop collections is the space of at most countable collections $\{\ell_j\}_j$ of loops in $\LL$ (the empty collection is included in $\XX$) such that
\begin{itemize}
    \item for each $i$, the loop $\ell_i$ is not reduced to a point;
    \item for any $\eps>0$, there are finitely many loops in $\{\ell_j\}_j$ such that $\diam(\ell) \geq \eps$.
\end{itemize}
The space $\XX$ is endowed with the following $\sigma$-algebra. We call a matching of two sets $I$ and $J$ a subset $\pi \subset I \times J$ such that for each $i \in I$, there is at most one $j \in J$ such that $(i,j) \in \pi$, and reciprocally, for each $j \in J$, there is at most one $i \in I$ such that $(i,j) \in \pi$. Given a matching $\pi$, $I^{\pi}$, respectively $J^{\pi}$, denotes the set of unmatched indices in $I$, respectively $J$. The $\sigma$-algebra that we put on $\mathcal{X}$ is then the Borel $\sigma$-algebra associated with the following metric:
\begin{equation*}
    d_{\mathcal{X}}(\{\ell_i\}_{i \in I}, \{\tilde \ell_j\}_{j\in J}):= \inf_{\pi} \max \bigg( \sup_{(i,j) \in \pi} d_{\LL}(\ell_i, \tilde \ell_j), \sup_{i \in I^{\pi}}\diam(\ell_i), \sup_{j \in J^{\pi}} \diam(\tilde \ell_j) \bigg)
\end{equation*}
where the infimum is taken over all matchings $\pi$ of the sets $I$ and $J$.

A loop collection $\{\ell_{j}\}_j$ in $\mathcal{X}$ may be quite arbitrary. We will say that it is non-crossing if for any two loops $\ell_1$ and $\ell_2$ in $\{\ell_{j}\}_j$, one can find two sequences $(\ell_{1,n})_n$ and $(\ell_{2,n})_n$ in $\LL$ such that $\ell_{1,n} \to \ell_1$, $\ell_{2,n} \to \ell_2$ and for any $n \in \mathbb{N}$, $\ell_{1,n}$ and $\ell_{2,n}$ are disjoint. The loop collection $\{\ell_{j}\}_j$ will also be said non-nested if for any $\ell_1, \ell_2 \in \{\ell_{j}\}_j$, the interiors of $\ell_1$ and $\ell_2$ are disjoint.

Let us now recall the following simple but important lemma proved in \cite{cvg-nested-loops-Ising}.

\begin{lemma}[Lemma 5 in \cite{cvg-nested-loops-Ising}]\label{lemma-measurability-in-XX}
    The metric space $(\XX, d_{\XX})$ is complete and separable. Moreover, the following events on $\mathcal{X}$ are measurable for the Borel $\sigma$-algebra: \{the collection $\{\ell_i\}_{j \in J}$ is non-crossing\}, \{the collection $\{\ell_i\}_{j \in J}$ is non-nested\}, \{the loops of $\{\ell_j\}$ are disjoint\} and \{all the loops in $\{\ell_j\}_{j \in J}$ are simple\}.
\end{lemma}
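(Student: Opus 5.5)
The plan is to treat the metric statements (completeness and separability) and the measurability statements separately. In each case I will reduce to finitely many loops of diameter at least $\eps$, exploiting the defining property of $\XX$ together with the fact that the metric $d_{\XX}$ discards unmatched loops of small diameter.

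\emph{Separability.} Fix $\{\ell_j\}_{j\in J}\in\XX$ and $\eps>0$, and let $J_\eps$ be the finite set of indices with $\diam(\ell_j)\ge\eps$. Matching only the indices in $J_\eps$ shows that $d_{\XX}(\{\ell_j\}_{j\in J},\{\ell_j\}_{j\in J_\eps})\le\eps$. Next I approximate each of the finitely many remaining loops in $d_{\LL}$ by a polygonal loop with vertices in $\mathbb{Q}+i\mathbb{Q}$, which is possible because $\LL$ is the completion of simple loops and simple loops are approximable by rational simple polygons. Finite collections of rational polygonal loops form a countable family, so this family is a countable dense subset of $\XX$.

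\emph{Completeness.} Let $(\mathcal{C}_n)_n$ be Cauchy and pass to a subsequence with $d_{\XX}(\mathcal{C}_n,\mathcal{C}_{n+1})<2^{-n}$. Fix matchings $\pi_n$ realising these distances up to $2^{-n}$.
\begin{itemize}
\item A loop of $\mathcal{C}_n$ with diameter larger than $2\cdot 2^{-n+1}$ is matched at every later step, since unmatched loops have diameter at most $2^{-m}$ at step $m$ and diameters change by at most $2\cdot 2^{-m}$ along a match.
\item Following the chain of matches produces a $d_{\LL}$-Cauchy sequence, which converges in the complete space $\LL$.
\item The limit collection $\mathcal{C}$ consists of all such limits that are not reduced to a point.
\item There are finitely many limits of diameter at least $\eps$, because they come from the finitely many loops of $\mathcal{C}_n$ of diameter at least $\eps/2$ for large $n$. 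Hence $\mathcal{C}\in\XX$.
\item Matching along the chains and bounding the leftover loops by their diameter gives $d_{\XX}(\mathcal{C}_n,\mathcal{C})\to0$.
\end{itemize}
The main obstacle is the bookkeeping of chains that start late or whose diameters degenerate. I handle it with the $2^{-n}$ summability, which makes every discrepancy a geometric tail.

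\emph{Measurability.} The key observation is that if $\mathcal{C}_n\to\mathcal{C}$, then every loop of $\mathcal{C}$ of diameter at least $\eps$ is the $d_{\LL}$-limit of loops of $\mathcal{C}_n$ matched to it.
\begin{itemize}
\item \emph{Non-crossing.} The set of pairs $(\ell_1,\ell_2)$ approximable by disjoint pairs is the closure of the disjoint pairs in $\LL\times\LL$, hence closed. It follows that the non-crossing event is closed in $\XX$.
\item \emph{Disjointness.} The set of intersecting pairs is closed in $\LL\times\LL$. So the event $E_m$ that some two loops of diameter at least $1/m$ intersect is closed. The complement of the disjointness event is $\bigcup_m E_m$, an $F_\sigma$ set.
\item \emph{Simplicity.} A loop of diameter at least $1/m$ fails to be simple if and only if, for some $k$, it has a double point splitting it into two arcs each of diameter at least $1/k$. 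This is a closed condition, by compactness of $S^1$ and uniform convergence of parametrisations. So non-simplicity is again an $F_\sigma$ event.
\item \emph{Non-nestedness.} Interiors are open, so nesting fails exactly when there exist a rational point $q$, some $m$, and two loops with nonzero winding number around $q$ and both at distance greater than $1/m$ from $q$. Winding numbers are stable under $d_{\LL}$-perturbations smaller than $1/(2m)$, so for fixed $q,m$ this event is open. The complement of the non-nested event is therefore a countable union of open sets, hence Borel.
\end{itemize}
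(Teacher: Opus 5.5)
The paper gives no proof of this statement. It is quoted as Lemma~5 of \cite{cvg-nested-loops-Ising}, so there is no in-paper argument to compare with, and your proposal has to stand as a self-contained proof. As such it is essentially sound. The right tools are all there:
\begin{itemize}
\item truncation to the finitely many loops of diameter at least $\eps$;
\item the chain construction for completeness, where your threshold $2\cdot 2^{-n+1}$ is exactly what survives the losses $2\cdot 2^{-m}$;
\item the finite-partner principle: along $\mathcal{C}_n\to\mathcal{C}$, loops with diameter bounded below are matched injectively to loops converging in $d_{\LL}$, and there are only finitely many candidate partners.
\end{itemize}
These even give more than measurability. Non-crossing is closed, disjointness and simplicity are $G_\delta$, and, with your convention, non-nestedness is closed.

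Three points need tightening.

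(i) $d_{\XX}$ as written is only a quasi-metric. When you compose matchings, an unmatched loop at the end of a matched pair can gain up to twice the $d_{\LL}$-distance in diameter. For instance, the unit circle, the concentric circle of radius $\eta<1$, and the empty collection violate the triangle inequality. One only has $d_{\XX}(A,C)\le 2\bigl(d_{\XX}(A,B)+d_{\XX}(B,C)\bigr)$. This suffices for passing from your subsequence back to the Cauchy sequence, and for combining your two approximation steps in the separability argument (or compose the matchings directly). You should not, however, appeal to the triangle inequality itself.

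(ii) Elements of $\LL$ are classes of possibly non-injective maps, for example maps constant on arcs. So your double-point criterion for non-simplicity must be shown to be a class invariant. Suppose $f$ has a double point at $s,t$ with both image arcs of diameter at least $1/k$, and $\|f-g\circ\phi_n\|_\infty\to0$ for homeomorphisms $\phi_n$. Passing to limits of $\phi_n(s),\phi_n(t)$ gives such a double point for $g$: if the two limits merged, one image arc would shrink to a point. Conversely, a representative with no such double points becomes injective after collapsing its maximal constant arcs. The same compactness argument yields the closedness you assert.

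(iii) In the non-nestedness step you read ``interior'' as the set of points of non-zero index. The paper never defines it. For simple loops this coincides with the bounded complementary component, but a pinched loop in $\LL$ can enclose an index-zero pocket. Under the convention ``bounded components of $\mathbb{C}\setminus\ell$'', the nested event is no longer open, since a small perturbation can reopen the pinch. It is still Borel, by the following route.
\begin{itemize}
\item For rational $q$, the condition ``$q$ lies in the unbounded component of $\mathbb{C}\setminus\ell$'' is open in $\LL$. Indeed, take a rational polygonal path from $q$ to a point of modulus larger than $\max_{z\in\ell}|z|$, avoiding $\ell$; it still avoids small perturbations of $\ell$.
\item Hence $A_{q,m}=\{\ell:\dist(q,\ell)>1/m,\ q\text{ enclosed by }\ell\}$ is a countable increasing union of closed sets of loops of diameter at least $2/m$.
\item Your finite-partner argument then makes ``two distinct loops lie in $A_{q,m}$'' an $F_\sigma$ event in $\XX$.
\end{itemize}
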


\subsubsection{Conformal loop ensembles} \label{sec-def-CLE}

Conformal loop ensembles (CLE) are a family of probability distributions on countable ensembles of non-nested loops in open and simply connected domains of the complex plane \cite{CLE_She, CLE_Markovian}. This family is indexed by a parameter $\kappa \in (8/3,8)$ and $\operatorname{CLE}_{\kappa}$ is connected to $\operatorname{SLE}_{\kappa}$ via the so-called branching tree construction \cite{CLE_She}. The geometry of the loops of a $\operatorname{CLE}_{\kappa}$ in an open and simply connected domain $D$ depends on the value of $\kappa$: when $\kappa \in (8/3,4]$, these loops are almost surely simple loops that do not intersect each other or $\partial D$; on the contrary, when $\kappa \in (4,8)$, they are almost surely non-simple but non-self-crossing and they may touch (but not cross) $\partial D$ and each other. Another important property of CLE$_\kappa$ is their conformal invariance in law: if $\varphi: D \to \tilde D$ is a conformal map between two open and simply connected domains of $\mathbb{C}$ and $\Gamma$ is a CLE$_{\kappa}$ in $D$, then $\varphi(\Gamma)$ has the law of a CLE$_\kappa$ in $\tilde D$.

When $\kappa \in (8/3,4]$, nested CLE$_{\kappa}$ in a simply connected domain $D \subset \mathbb{C}$ is constructed from a
(non-nested) CLE$_{\kappa}$ by iterating the construction of CLE$_{\kappa}$ in each loop in the following way. The outermost loops of nested CLE$_{\kappa}$ in $D$ have the law of CLE$_{\kappa}$ in $D$ and for each $k \geq 1$, the loops of level $k+1$ are independent samples of CLE$_{\kappa}$ in the interiors of the loops of level $k$. Here, we choose to orient the loops according to their level: loops of even level are oriented counterclockwise, loops of odd level are oriented clockwise.

\subsubsection{The scaling limit of the critical Ising spin loops}\label{subsection-limit-critical-loops}

As in the case of Dobrushin boundary conditions, a conformal invariance property of the critical Ising model emerges when taking the scaling limit of its collection of nested spin loops: it converges to nested CLE$_3$ \cite{cvg-nested-loops-Ising}. A precise version of this convergence result will be stated just below, once we will have introduced the assumptions on the continuum domain $D$ and its approximating sequence $(D_{\delta})_{\delta}$.

The domain $D \subset \mathbb{C}$ is assumed to satisfy the same assumptions as in Section \ref{sec_assumptions_domain}. As for the approximating sequence $(D_{\delta})_{\delta}$, we require that it satisfies the following. Let the polygonal domains $(\hat D_{\delta})_{\delta}$ and the maps $(\psi_{\delta})_{\delta}$ be defined as in Section \ref{sec_assumptions_domain}. For each $\delta > 0$, $\partial \hat D_{\delta}$ is a continuous Jordan curve and therefore, $\psi_{\delta}^{-1}$ has a continuous extension to $\mathbb{D} \cup \partial \mathbb{D}$. Similarly, $\psi^{-1}$ can also be continuously extended to $\mathbb{D} \cup \partial \mathbb{D}$. With a slight abuse of notations, we still denote these extensions by $\psi_{\delta}^{-1}$ and $\psi^{-1}$. In this paper, when looking at the convergence of nested spin loops, we will always assume the following on the maps $(\psi_{\delta}^{-1})_{\delta}$ and $\psi$: as $\delta \to 0$, $\psi_{\delta}^{-1} \to \psi^{-1}$ uniformly on $\partial \mathbb{D}$. Note that by Rad\'o's theorem (see \cite[Theorem~2.11]{Pommeranke}), this implies that as $\delta \to 0$, $\psi_{\delta}^{-1} \to \psi^{-1}$ uniformly in $\mathbb{D} \cup \partial \mathbb{D}$.

\begin{theorem}[Theorem 6 in \cite{cvg-nested-loops-Ising}] \label{theorem-cvg-loops-critical}
    Under the above assumptions on $(D_{\delta})_{\delta}$ and $D$, as $\delta \to 0$, $(\Gamma_{\delta})_{\delta}$ under $(\mu_{\delta}^{+})_{\delta}$ converges in law with respect to the metric $d_{\XX}$ to nested CLE$_3$ in $D$.

    Furthermore, for any $\eps >0$, the following holds with probability tending to $1$ as $\delta \to 0$: for any Ising loop $\ell$ of diameter larger than $\eps$, there exists a leftmost loop $\ell_{L}$ such that $d_{\LL}(\ell, \ell_{L}) \leq \eps$ and such that the connected components of $(\ell \cup \ell_{L}) \setminus (\ell \cap \ell_{L})$ have diameter less than $\eps$.
\end{theorem}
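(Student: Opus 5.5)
The plan is to reduce convergence of the whole nested loop ensemble to the convergence of chordal interfaces, using a discrete exploration that mirrors the continuum branching-tree construction of CLE$_3$.

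\emph{Step 1: reduction to the outermost loops.} Nested CLE$_3$ is obtained by iterating CLE$_3$ inside each loop. The discrete ensemble $\Gamma_\delta$ satisfies a domain Markov property: conditionally on the outermost loops and on the spins adjacent to them, the configuration inside each outermost loop is an Ising model with $-1$ boundary conditions. By spin-flip symmetry this is again a $+1$ boundary problem. Since $d_{\XX}$ only requires finitely many loops of diameter at least $\eps$ to be matched, it therefore suffices to show two things.
\begin{itemize}
\item[(i)] The outermost loops converge to CLE$_3$.
\item[(ii)] The convergence is robust under Carath\'eodory-type convergence of the random interior domains, so that the recursion can be run over finitely many levels.
\end{itemize}
For (ii), one needs the limiting loops to be simple and at positive distance from each other, so that the interiors of the discrete loops converge as domains. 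It also needs an a priori bound: with high probability only finitely many loops of diameter at least $\eps$ occur up to a bounded nesting depth. This bound follows from RSW-type crossing estimates for critical Ising with arbitrary boundary conditions.

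\emph{Step 2: exploration of the outermost loops.} I would mimic Sheffield's branching SLE$_3(-3/2)$ tree, or the Sheffield--Werner construction. The first ingredient is a discrete exploration path from a boundary point that follows spin interfaces, with $+1$ on the explored boundary and free-type switching when it hits an already explored arc. To identify its limit, one proves convergence of Ising interfaces with these mixed boundary conditions to the appropriate SLE$_3(\rho)$ process. The method is the one behind Theorem~\ref{theorem_SLE3}: precompactness from \cite{tightness-curves} together with crossing estimates, followed by identification of the driving process via discrete holomorphic martingale observables (the fermionic observable adapted to these boundary conditions). One then runs the exploration targeting a countable dense set of points. The branches, stopped when they disconnect their targets, carve out the outermost loops. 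The discrete and continuum trees are then coupled branch by branch, using the fact that the target-switching times are continuous functionals of the limiting curves almost surely.

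\emph{Step 3: leftmost versus arbitrary loops.} The second statement of the theorem, that every loop of diameter at least $\eps$ is $\eps$-close to a leftmost loop differing only in small pieces, comes from a priori arm estimates. A discrepancy of size $\eps$ between the leftmost and the rightmost interface requires an alternating multi-arm event (two disjoint $+$ arms and two $-$ arms in the $*$-sense) from scale $\delta$ to scale $\eps$. The probability of such events is bounded using RSW for Ising/FK-Ising, and one shows the relevant exponent exceeds $2$. A union bound over $\delta^{-2}$ locations then gives the claim with probability tending to $1$. The same estimates rule out macroscopic pinch points and loops touching each other or the boundary. This is what makes the map from the exploration tree to loops continuous at the limit.

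\emph{Main obstacle.} I expect Step 2 to be the hardest part, for two reasons. First, one needs convergence of interfaces with boundary conditions that change along the exploration (the SLE$_3(-3/2)$-type force points). Second, the exploration and the loop-closing operation are discontinuous functionals in general. To get around this I would first attempt to prove the needed continuity almost surely for the limit, via the arm-exponent estimates of Step 3, and then transfer it to the discrete side via Skorokhod coupling.
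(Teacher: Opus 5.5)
The paper does not prove this theorem; it quotes it from Benoist--Hongler \cite{cvg-nested-loops-Ising}. Your Step~2 fails at exactly the point their argument is built to get around.

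\textbf{The exploration in Step~2 cannot start.} With $+1$ boundary conditions no spin interface is anchored on the boundary. This holds at every level of your recursion too, after the spin flip. Macroscopic Ising loops stay macroscopically away from $\partial D$, as CLE$_3$ loops do. So a discrete path that ``follows spin interfaces from a boundary point'' has nothing to follow.

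\textbf{The continuum target is misidentified.} Sheffield's exploration tree for CLE$_3$ is SLE$_3(\kappa-6)=\,$SLE$_3(-3)$. SLE$_3(-3/2)$ is a different object: the $+$/free interface of Hongler--Kyt\"ol\"a. Since $\rho=-3<-2$, SLE$_3(-3)$ exists only through a principal-value (L\'evy) compensation of a Bessel process of dimension $1/3$. It is not generated by a continuous curve: it visits loops that do not touch the current boundary, so its trace must jump. The scheme behind Theorem~\ref{theorem_SLE3} therefore has nothing to act on. That scheme is precompactness of curves via \cite{tightness-curves}, followed by identifying a semimartingale driving function through a fermionic martingale. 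No discrete observable is known for such an exploration. ``Target-switching times that are continuous functionals of the limiting curves'' also has no clear meaning for this process.

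\textbf{The missing idea is a detour through the random-cluster representation.} FK-Ising loops \emph{do} touch the boundary, so they can be explored from it. They converge to CLE$_{16/3}$ by the Kemppainen--Smirnov theorem. In the Edwards--Sokal coupling, condition on the explored FK loops. The spins in each hole of the boundary cluster then form an Ising model with free boundary conditions. There, spin interfaces do reach the boundary, as free arcs, and these converge to the Free Arc Ensemble of Benoist--Duminil-Copin--Hongler. Ising loops touch FK loops and free arcs. So alternating FK explorations with free-arc explorations gives a recursive process that reaches every macroscopic spin loop. In the limit this yields a construction of CLE$_3$ from CLE$_{16/3}$. Your Step~1 reduction and your need for separation estimates fit this picture, but this FK/free-arc exploration has to replace your direct spin exploration.

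\textbf{A further gap in Step~3.} RSW and quasi-multiplicativity do not show that the alternating four-arm exponent exceeds $2$. Ising has no BK inequality, and the BK-type bound of Lemma~\ref{lem: BK for Ising weak four arm} gives only exponent $1/4$. The value $21/8$ is an SLE$_3$ computation. Its discrete counterpart is normally derived from the scaling limit itself, so you must make sure this input does not presuppose the theorem you are proving.
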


As explained in \cite{cvg-nested-loops-Ising}, the same result holds when one considers the sequence $(\Gamma_{\delta})_{\delta}$ of rightmost Ising spin loops. Moreover, the second part of Theorem \ref{theorem-cvg-loops-critical} shows that as $\delta \to 0$, any Ising spin loop is very close to a leftmost Ising loop. In particular, understanding the leftmost Ising spin loops is enough to understand all Ising spin loops.

\subsection{The two-dimensional random field Ising model} \label{sec-def-RFIM}

In this section, we define the two-dimensional random field Ising model (RFIM) and its near-critical regime. We then recall the convergence result of \cite{CSZ16}, which established convergence in law of the RFIM partition function in the near-critical regime.

Let $G$ be a finite subgraph of $\delta \mathbb{Z}^2$, for some $\delta > 0$. Let $(H_{x})_{x \in \mathbb{Z}^{2}}$ be a collection of independent Gaussian random variables with mean $0$ and variance $1$. We denote their joint law by $\PP$. For each $x \in G$, set $h_{x}:= H_{x/\delta}$. Let also $\lambda: D \to (0,\infty)$ be a continuous and bounded function.

For $\xi \in \{-,\pm,+\}$, the random field Ising model (RFIM) is the probability measure $\mu_{G}^{\xi, \lambda, h}$ on the set of spin configurations $\sigma: V(G) \cup \partial G \to \{-1,1\}$ with
\begin{equation*}
    \mu_{G}^{\xi, \lambda, h}(\sigma):= \frac{1}{\mathcal{Z}_{G}^{\xi,\lambda,h}} \exp \bigg(\sum_{x \in V(G)} \lambda(x)h_x\sigma_x \bigg)\mu_{G}^{\xi}(\sigma)
\end{equation*}
where the normalizing constant $\mathcal{Z}_{G}^{\xi,\lambda,h}$, called the partition function, is given by
\begin{equation*}
    \mathcal{Z}_{G}^{\xi,\lambda,h} := \mu_{\delta}^{\xi}\bigg[ \exp \bigg(\sum_{x \in V(G)} \lambda(x)h_x\sigma_x \bigg) \bigg].
\end{equation*}
When $G=D_{\delta}$, we set $\mathcal{Z}_{\delta}^{\xi,\lambda,h}:= \mathcal{Z}_{G}^{\xi,\delta^{\frac{7}{8}}\lambda,h}$ and $\mu_{\delta}^{\xi,\lambda,h}:= \mu_{G}^{\xi,\delta^{\frac{7}{8}}\lambda,h}$ (see just below for the choice of scaling factor $\delta^{\frac{7}{8}}$).

\subsubsection{Scaling limit of the partition function}

Since many observables of the Ising model under $\mu_{\delta}^{\xi}$, $\xi \in \{-,\pm,+\}$, have an interesting behavior in the scaling limit $\delta \to 0$, one may wonder if observables of the RFIM under $\mu_{\delta}^{\xi,h}$ also do. To answer this question, the first natural step is to look at the scaling limit of the partition function. This limit, as stated below, has been shown to exist provided that the noise variables $(h_{x})_{x}$ are appropriately rescaled and that $\mathcal{Z}_{\delta}^{\xi,\lambda,h}$ is multiplied by the correct renormalization factor.

\begin{theorem}[Theorem 3.14 in \cite{CSZ16}]\label{theorem-cvg-RFIM-partition-function}
    Let $D \subset \mathbb{C}$, $a,b \in \partial D$ and $(D_{\delta};a_{\delta},b_{\delta})_{\delta}$ be as in Section \ref{sec_assumptions_domain}. Let $\lambda: \bar D \to (0,\infty)$ be a bounded and continuous function. Let $(H_{x})_{x \in \mathbb{Z}^2}$ be independent Gaussian random variables with mean $0$ and variance $1$ and set, for $x \in D_{\delta}$, $h_x = H_{x/\delta}$. Let $\xi \in \{-,\pm,+\}$.  Then, the rescaled partition function
    \begin{equation} \label{def-tilde-Z-delta}
        \tilde{\mathcal{Z}}_{\delta}^{\xi,\lambda, h}:= \exp\big(-\frac{\|\lambda\|_{L^{2}}^2}{2}\delta^{-\frac{1}{4}}\big) \mu_{\delta}^{\xi}\bigg[\exp\bigg(\delta^{\frac{7}{8}}\sum_{x \in D_{\delta}}\lambda(x)h_{x}\sigma_{x}\bigg)\bigg]
    \end{equation}
    converges in distribution as $\delta \to 0$ to a random variable $\mathcal{Z}_{D}^{\xi,W,\lambda}$ with Wiener chaos expansion
    \begin{equation} \label{partition-function-chaos-expansion}
        \mathcal{Z}_{D}^{\xi,W,\lambda} = 1+ \sum_{n=1}^{\infty} \frac{C_{\sigma}^n}{n!}\int_{D^n} f_{D}^{(\xi,n)}(z_1,\dots,z_n) \prod_{j=1}^{n} \lambda(z_j)W(dz_j)
    \end{equation}
    where $W(\cdot)$ denotes white noise on $\mathbb{R}^2$ and where the functions $(f_{D}^{(\xi,n)})_n$ are as in Theorem \ref{theorem_spin_correlations}.
\end{theorem}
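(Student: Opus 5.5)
The plan is to follow the polynomial chaos/Lindeberg strategy of \cite{CSZ16}. Write $\sigma_x=\pm1$ and use the exact identity
\begin{equation*}
e^{t h\sigma}=\cosh(t h)\,\big(1+\sigma\tanh(t h)\big),
\end{equation*}
with $t=\delta^{\frac{7}{8}}\lambda(x)$. This gives
\begin{equation*}
\mu_{\delta}^{\xi}\Big[e^{\delta^{7/8}\sum_x \lambda(x)h_x\sigma_x}\Big]=\prod_{x}\cosh\big(\delta^{\frac78}\lambda(x)h_x\big)\sum_{I\subset D_\delta}\mu_\delta^{\xi}\Big[\prod_{x\in I}\sigma_x\Big]\prod_{x\in I}\tanh\big(\delta^{\frac78}\lambda(x)h_x\big).
\end{equation*}
The prefactor is handled by a law of large numbers on its logarithm: $\sum_x\log\cosh(\delta^{7/8}\lambda(x)h_x)$ has mean $\frac12\delta^{-1/4}\|\lambda\|_{L^2}^2+o(1)$, using the Riemann sum $\delta^{2}\sum_x\lambda(x)^2\to\|\lambda\|^2_{L^2}$ and $\EE[\log\cosh(\epsilon H)]=\epsilon^2/2+O(\epsilon^4)$. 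Its variance is $O(\delta^{-2}\delta^{7})\to0$. One checks that the Riemann-sum error is $o(1)$ after multiplying by $\delta^{-1/4}$; this uses the smoothness of $\partial D$ and the continuity of $\lambda$, and if needed one absorbs a deterministic $o(1)$ correction. Hence $\exp(-\frac12\|\lambda\|^2\delta^{-1/4})\prod_x\cosh(\cdot)\to1$ in probability. The problem therefore reduces to the multilinear polynomial
\begin{equation*}
\Psi_\delta(\zeta):=\sum_{I}\mu_\delta^\xi\Big[\prod_{x\in I}\sigma_x\Big]\prod_{x\in I}\zeta_x,\qquad \zeta_x=\tanh\big(\delta^{\frac78}\lambda(x)h_x\big),
\end{equation*}
whose inputs are independent, centered (by symmetry) and have variance $\delta^{7/4}\lambda(x)^2(1+O(\delta^{7/4}))$.

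Next I will show $L^2$ control, uniform in $\delta$, by truncating at chaos level $n\le N$. The $n$-th chaos has second moment
\begin{equation*}
\sum_{|I|=n}\mu_\delta^\xi\Big[\prod_{I}\sigma\Big]^2\prod_{I}\EE[\zeta_x^2]\le \frac{C^n}{n!}\,\delta^{2n}\sum_{(x_1,\dots,x_n)}\delta^{-n/4}\mu_\delta^{+}\Big[\prod\sigma_{x_j}\Big]^2.
\end{equation*}
Here Lemma \ref{lemma_EEpm_EEplus} reduces Dobrushin and minus boundary conditions to $+$. By Lemma \ref{lemma-bound-spin-correlations}, the last factor is at most $C^n\prod_j(L(x_j)\wedge d_\delta(x_j))^{-1/4}$. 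Bounding $(L\wedge d)^{-1/4}\le L^{-1/4}+d^{-1/4}$ and expanding, Lemmas \ref{lemma-sum-prod-L-x-j-delta} and \ref{lemma-sum-dist-to-boundary-power-alpha} (with $\alpha=1/4$) give a bound $C^n n^{n/4}$ off the near-diagonal set $B_n(\delta)$. The near-diagonal part is treated separately: it carries extra powers of $\delta$, since coinciding points force $\sigma_x^2=1$, which loses the factor $\delta^{-1/4}$ twice. Dividing by $n!$, the series converges uniformly in $\delta$. Thus the tail $n>N$ is small in $L^2(\PP)$ uniformly in $\delta$, and the same bound controls the continuum series \eqref{partition-function-chaos-expansion}.

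For fixed $n\le N$, I will then apply the Lindeberg principle for polynomial chaos (Mossel--O'Donnell--Oleszkiewicz type, as in \cite{CSZ16}) to replace $\zeta_x$ by Gaussians $\delta^{7/8}\lambda(x)h_x$. This is justified by the hypercontractivity of $\tanh(\epsilon H)$ and by the vanishing maximal influence $\max_x\mathrm{Inf}_x\to0$, which follows from the same correlation bounds. After the replacement, the $n$-th chaos equals $\int f_\delta^{(n)}\,dW_\delta^{\otimes n}$ with the discrete white noise $W_\delta=\delta^{-1}\sum h_x\mathbb{I}_{S_\delta(x)}$ and kernel $f_\delta^{(n)}=\delta^{-n/8}\mu_\delta^\xi[\prod\sigma_{x_j}]\prod\lambda$ (piecewise constant). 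By Theorem \ref{theorem_spin_correlations} this kernel converges to $C_\sigma^nf_D^{(\xi,n)}\prod\lambda$ pointwise away from diagonals and boundary, and the domination above upgrades this to $L^2(D^n)$ convergence. $L^2$ convergence of kernels gives convergence in distribution (jointly with $W$) of the multiple stochastic integrals to $\frac{1}{n!}\int f\,dW^{\otimes n}$. Letting $N\to\infty$ concludes the proof.

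The main obstacle is the uniform $L^2$ bound near the boundary and near diagonals, i.e., making the combinatorics of $(L\wedge d_\delta)^{-1/4}$ sum to $C^nn^{n/4}$ so that the series in $n$ converges after the $1/n!$. This is exactly where Lemmas \ref{lemma-sum-prod-L-x-j-delta} and \ref{lemma-sum-dist-to-boundary-power-alpha} and condition \eqref{condition-approximation-boundary} are used.
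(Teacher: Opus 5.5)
Your route is the one the paper relies on. The paper quotes \cite{CSZ16} for $\xi=+$ and notes that, by Lemma \ref{lemma_EEpm_EEplus} (and spin-flip symmetry for $\xi=-$), the same correlation bounds verify conditions (ii) and (iii) of \cite[Theorem~2.3]{CSZ16}, with Theorem \ref{theorem_spin_correlations} supplying kernel convergence. Your tanh expansion, truncation, Lindeberg replacement and $L^2$ convergence of the discretized kernels are exactly these ingredients, and that part is sound.

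The step that fails as written is the deterministic normalization. Your law-of-large-numbers argument gives
\begin{equation*}
\sum_{x\in D_\delta}\log\cosh\big(\delta^{\frac78}\lambda(x)h_x\big)=\frac{\delta^{7/4}}{2}\sum_{x\in D_\delta}\lambda(x)^2+o_{\PP}(1).
\end{equation*}
(The variance is of order $\delta^{-2}\cdot\delta^{7/2}=\delta^{3/2}$, not $\delta^{5}$, which is harmless.) So what you actually prove is that $\exp\big(-\frac{\delta^{7/4}}{2}\sum_{x\in D_\delta}\lambda(x)^2\big)\,\mu_\delta^{\xi}\big[\exp\big(\delta^{\frac78}\sum_{x}\lambda(x)h_x\sigma_x\big)\big]$ converges to $\mathcal{Z}_D^{\xi,W,\lambda}$. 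Replacing this prefactor by $\exp(-\frac12\|\lambda\|_{L^2}^2\delta^{-1/4})$ requires
\begin{equation*}
\Big|\delta^2\sum_{x\in D_\delta}\lambda(x)^2-\int_D\lambda^2\Big|=o(\delta^{1/4}),
\end{equation*}
and continuity of $\lambda$ does not give that. For example, take $D=(-1,1)^2$, $D_\delta=\bar D\cap\delta\mathbb{Z}^2$, $\delta_k=2^{-8k}$, and $\lambda(z)^2=1+\sum_{k\ge1}2^{-k-1}\cos(2\pi z_1/\delta_k)$. This $\lambda$ is continuous with $\lambda^2\in[\frac12,\frac32]$. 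At $\delta=\delta_K$, every term with $k\ge K$ equals its amplitude at every lattice point but integrates to $0$. The remaining terms contribute $O(\delta_K)$. Hence the Riemann-sum error is at least $2^{-K}=\delta_K^{1/8}$ for large $K$. Since $\log\Psi_{\delta}(\zeta)$ converges in law to $\log\mathcal{Z}_D^{\xi,W,\lambda}$ (finite, as $\mathcal{Z}_D^{\xi,W,\lambda}>0$ a.s.), this forces $\log\tilde{\mathcal{Z}}_{\delta_K}^{\xi,\lambda,h}\ge\frac12\delta_K^{-1/8}+O_{\PP}(1)\to\infty$. A deterministic discrepancy that does not vanish cannot be ``absorbed as an $o(1)$ correction''.

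So your argument proves the theorem with the lattice normalization $\exp\big(-\frac{\delta^{7/4}}{2}\sum_{x\in D_\delta}\lambda(x)^2\big)$, which is the quantity $\theta_D(\delta)$ the paper uses later. To reach the stated normalization you must add a rate hypothesis, for instance:
\begin{itemize}
\item $\lambda^2$ is H\"older of exponent $>\frac14$ on $\bar D$, and
\item $\delta^2|D_\delta|=|D|+o(\delta^{1/4})$.
\end{itemize}
The second condition holds with error $O(\delta)$ for $D_\delta=\bar D\cap\delta\mathbb{Z}^2$ when $\partial D$ is piecewise smooth. The paper is no more explicit here: it simply asserts $\theta(\delta)/\theta_D(\delta)\to1$.

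Two smaller points in your $L^2$ bound. First, the chaos only involves sets of distinct sites, so the near-diagonal terms are not small because of $\sigma_x^2=1$. They are small because each site within $4\delta$ of another has $O(1)$ admissible positions. This gains $\delta^{2}$ per such site, against a factor of at most $(\delta/2)^{-1/4}$ per site from Lemma \ref{lemma-bound-spin-correlations}. Second, after expanding $(L\wedge d_\delta)^{-1/4}\le L^{-1/4}+d_\delta^{-1/4}$, the mixed products $\prod_{j\in A}L(x_j)^{-1/4}\prod_{j\notin A}d_\delta(x_j)^{-1/4}$ need a Cauchy--Schwarz step before Lemmas \ref{lemma-sum-prod-L-x-j-delta} and \ref{lemma-sum-dist-to-boundary-power-alpha} apply. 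That step is what produces the factor $n^{n/4}$.
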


In \cite{CSZ16}, Theorem \ref{theorem-cvg-RFIM-partition-function} is proved for $\xi = +$. The proof of $\xi \in \{\pm, -\}$ is almost identical: this is because, by Lemma \ref{lemma_EEpm_EEplus}, the spin correlations functions $\mu_{\delta}^{\xi}[\prod_{j=1}^{n}\sigma_{x_j}]$, $n \in \mathbb{N}$, are bounded in absolute value by the same function of $x_1,\dots,x_n$, regardless of the value of $\xi$. This implies, as explained in the proof of \cite[Theorem~3.14]{CSZ16}, that Conditions (ii) and (iii) of \cite[Theorem~2.3]{CSZ16} are satisfied, which yields convergence of $\tilde{\mathcal{Z}}_{\delta}^{\xi,\lambda,h}$ as $\delta \to 0$. Besides, in \cite{CSZ16}, Theorem \ref{theorem-cvg-RFIM-partition-function} is established under weaker conditions on the noise variables $(h_x)_{x}$, see \cite[Equation~(3.1)]{CSZ16}. We stick here to the Gaussian case for simplicity.
 
For the proof of Theorem \ref{theorem-single-interface}, it will turn out to be useful to have in mind the main steps of the proof of Theorem \ref{theorem-cvg-RFIM-partition-function}. This also sheds light on why $\delta^{\frac{7}{8}}$ is the correct rescaling for the noise. Set
\begin{equation} \label{def-theta-delta}
    \theta(\delta):=\exp\big(-\frac{\Vert\lambda\Vert^2_{L^2}}{2}\delta^{-\frac{1}{4}}\big)
\end{equation}
and for simplicity, take $\xi=+1$ and $\lambda \equiv 1$. The starting point is the following high-temperature expansion of $\tilde{\mathcal{Z}}_{\delta}^{+,h}$: for any $\delta > 0$, we have that
\begin{align*}
    \tilde{\mathcal{Z}}_{\delta}^{+,h} = \theta(\delta) \mu_{\delta}^{+}\bigg[\exp\big(\sum_{x \in D_{\delta}} \delta^{\frac{7}{8}}h_{x}\sigma_x\big)\bigg]
    &= \theta(\delta) \mu_{\delta}^{+}\bigg[ \prod_{x \in D_{\delta}} \cosh(\delta^{\frac{7}{8}}h_x)+\sigma_x\sinh(\delta^{\frac{7}{8}}h_{x})\bigg ]\\
    &= \theta(\delta) \sum_{I \subset D_{\delta}} (\prod_{x \in D_{\delta}\setminus I}\cosh(\delta^{\frac{7}{8}}h_{x}))(\prod_{x \in I}\sinh(\delta^{\frac{7}{8}}h_{x})) \mu_{\delta}^{+}\big[ \prod_{x \in I}\sigma_{x}\big ] \\
    &=\theta(\delta)(\prod_{x \in D_{\delta}}\cosh(\delta^{\frac{7}{8}}h_{x})) \sum_{I \subset D_{\delta}} \mu_{\delta}^{+} \big[ \prod_{x \in I}\sigma_{x}\big ] \prod_{x \in I}\tanh(\delta^{\frac{7}{8}}h_{x})
\end{align*}
where the sum is over all subsets $I$ of lattice points in $D_{\delta}$. Setting $\vartheta_{\delta}^2:=\Var[\tanh(\delta^{\frac{7}{8}} h_{x})]$ and $\eta_x = \vartheta_{\delta}^{-1}(\tanh(\delta^{\frac{7}{8}}h_x)-a_{\delta})$ and observing that observing that $\EE[\tanh(\delta^{\frac{7}{8}}h_{x})]=0$, we obtain that
\begin{equation}\label{chaos-expansion-partition-function}
    \tilde{\mathcal{Z}}_{\delta}^{+,h} = \theta(\delta)(\prod_{x \in D_{\delta}}\cosh(\delta^{\frac{7}{8}}h_{x})) \sum_{I \subset D_{\delta}} \mu_{\delta}^{+}\big[ \prod_{x \in I}\sigma_{x} \big] \prod_{x \in I} \vartheta_{\delta}\eta_x.
\end{equation}
It can be shown that the prefactor $\theta(\delta)(\prod_{x \in D_{\delta}}\cosh(\delta^{\frac{7}{8}}h_{x}))$ converges to $1$ as $ \delta \to 0$ in any $L^p(\PP)$ with $p \geq 1$. In \cite{CSZ16}, to prove the convergence of the sum on the right-hand side of \eqref{chaos-expansion-partition-function} to the chaos expansion \eqref{partition-function-chaos-expansion} and thus conclude the proof of the convergence of $\tilde{\mathcal{Z}}_{\delta}^{+,h}$, a new Lindeberg principle for polynomial chaos is established. This new Lindeberg principle can be applied to prove convergence of the above sum because this sum is uniformly bounded in $L^2(\PP)$ as $\delta \to 0$, thanks to the rescaling $\delta^{\frac{7}{8}}$ of the noise variables. We will not give a full proof of this fact here, but to get an intuition of why this should be the case, the reader can consider the contribution $S_{\delta}^{(1)}(h)$ of subsets $I$ containing a single point of $D_{\delta}$. Using a Taylor expansion, one can see that $\EE[\tanh(\delta^{\frac{7}{8}}h_{x})^2] = \delta^{\frac{7}{4}} + O(\delta^{\frac{7}{2}})$. By independence of the $(h_{x})_{x}$, this yields that
\begin{equation*}
    \EE[S_{\delta}^{(1)}(h)^2] \leq C\delta^{\frac{7}{4}}\sum_{x \in D_{\delta}} \mu_{\delta}^{+}[\sigma_x]^2 
\end{equation*}
where $C>0$ is a constant independent of $\delta$. Using the upper bound of Lemma \ref{lemma-bound-spin-correlations} on $\mu_{\delta}^{+}[\sigma_x]$, we obtain that
\begin{equation*}
    \EE[S_{\delta}^{(1)}(h)^2] \leq C\delta^{2} \sum_{x \in D_{\delta}} \dist(x, \partial D_{\delta})^{-\frac{1}{4}}
\end{equation*}
and the right-hand side of the above inequality is uniformly bounded in $\delta$ by Lemma \ref{lemma-sum-dist-to-boundary-power-alpha}. For subsets $I \subset D_{\delta}$ containing more than one point, one can proceed in a similar way, even though the technical details become more tedious. Computations of this type will be performed in the course of the proof of Theorem \ref{theorem-single-interface}.

\section{Scaling limit of the interface with Dobrushin boundary conditions}\label{sec: Scaling limit of the interface with Dobrushin boundary conditions}

In this section, we prove Theorem \ref{theorem-single-interface}. Below, we denote by $\mu_{\delta}^{\pm,\lambda,h}$ the random field Ising measure in $D_{\delta}$ with Dobrushin boundary conditions, noise variables $(h_{x})_{x \in D_{\delta}}$ rescaled by $\delta^{\frac{7}{8}}$ and external field strength $\lambda$. We refer the reader to Section \ref{sec-def-RFIM} for the definition of this measure. We also recall that $\mu_{\delta}^{\pm}$ denotes the critical Ising measure in $D_{\delta}$ with Dobrushin boundary conditions (and no external field).

\begin{proof}[Proof of Theorem \ref{theorem-single-interface}]
    The proof of Theorem \ref{theorem-single-interface} consists of two steps. In Proposition \ref{proposition-moments-RN} below, we first show tightness of the laws of $(\gamma_{\delta})_{\delta}$ under $(\mu_{\delta}^{\pm,\lambda,h})_{\delta}$ in the topology \ref{topo_4}, that is tightness of $(\nu_{\delta}^{(\pm,\lambda,h)})_{\delta}$. Then, in Proposition \ref{proposition-uniqueness-law-curve}, we prove a characterization of the limiting law of any weakly convergent subsequence obtained as a consequence of the tightness of $(\gamma_{\delta})_{\delta}$ under $(\mu_{\delta}^{\pm,\lambda,h})$. This yields convergence in law of $(\gamma_{\delta})_{\delta}$ under $(\mu_{\delta}^{\pm,\lambda,h})_{\delta}$ in the topology \ref{topo_4}. As explained in Section \ref{sec_topology_curves}, this also implies convergence of $(\gamma_{\delta})_{\delta}$ under $(\mu_{\delta}^{\pm,\lambda,h})_{\delta}$ in the topologies \ref{topo_1}--\ref{topo_3} and the limiting laws in each topology agree. For later reference, let us denote by $\nu_{\delta}^{\pm}$ the law of the interface under $\mu_{\delta}^{\pm}$.
    
    The key observation that will be used both to prove tightness and uniqueness in law is that $\PP$-almost surely, for each $\delta > 0$, the law $\nu_{\delta}^{\pm,\lambda,h}$ of $\gamma_{\delta}$ under $\mu_{\delta}^{\pm,\lambda, h}$ is absolutely continuous with respect to that of $\gamma_{\delta}$ under $\mu_{\delta}^{\pm}$ with Radon-Nikodym derivative given by
    \begin{equation*}
        F^{h}(\gamma_{\delta}):= \frac{\der \nu_{\delta}^{\pm,\lambda,h}}{\der \nu_{\delta}^{\pm}}(\gamma_{\delta}) = \frac{1}{\mathcal{Z}_{\delta}^{\pm,\lambda, h}} \mu_{\delta}^{\pm}\bigg[\exp\bigg(\delta^{\frac{7}{8}}\sum_{x \in D_{\delta}} \lambda(x)h_x\sigma_x \bigg) \bigg \vert \gamma_{\delta} \bigg].
    \end{equation*}
    Tightness of the laws $(\nu_{\delta}^{\pm,\lambda,h})_{\delta}$ in the topology \ref{topo_4} will in fact follow from the uniform (in $\delta$) boundedness of $(F^{h}(\gamma_{\delta}))_{\delta}$ in $L^{1+\alpha}$, for any $0<\alpha<1$. This is established in the proposition below.
    
    \begin{proposition}[tightness] \label{proposition-moments-RN}
        Let $0<\alpha <1 $. There exist $\delta_{0} > 0$ and $C>0$ such that for any $0< \delta < \delta_0$,
        \begin{equation*}
            \EE \otimes \mu_{\delta}^{\pm} \big[ F^{h}(\gamma_{\delta})^{1+\alpha} \big] \leq C.
        \end{equation*}
    \end{proposition}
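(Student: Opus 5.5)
Given $\gamma_\delta$, the Dobrushin Ising measure splits into two independent Ising models, one on each side of $\gamma_\delta$. We write $D_\delta^{+}(\gamma_\delta)$ for the set of vertices to the left of the interface and $D_\delta^{-}(\gamma_\delta)$ for those to the right. On $D_\delta^{+}(\gamma_\delta)$ the model has $+1$ boundary conditions (the boundary consists of $\partial D_\delta^+$ and the $+$ spins adjacent to $\gamma_\delta$); symmetrically, $D_\delta^{-}(\gamma_\delta)$ carries $-1$ boundary conditions. We write $V_{\gamma}$ for the vertices adjacent to $\gamma_\delta$, whose spins are fixed by $\gamma_\delta$ to be $\pm1$ according to the side. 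This gives the factorization
\begin{equation*}
F^h(\gamma_\delta)=\frac{1}{\mathcal{Z}^{\pm,\lambda,h}_\delta}\exp\Big(\delta^{\frac78}\sum_{x\in V_\gamma}\lambda(x)h_x\sigma_x\Big)\,\mathcal{Z}^{+,\lambda,h}_{D_\delta^+(\gamma_\delta)}\,\mathcal{Z}^{-,\lambda,h}_{D_\delta^-(\gamma_\delta)} .
\end{equation*}
After multiplying and dividing by $\theta(\delta)$ and distributing the $\cosh$ prefactors, the two partition functions become normalized polynomial chaoses of the form \eqref{chaos-expansion-partition-function}. The plan is then to use Hölder's inequality to bound separately, for suitable exponents, the $L^p(\PP\otimes\mu_\delta^\pm)$ norms of three factors: the interface term, the product of the two chaoses, and $(\tilde{\mathcal{Z}}^{\pm,\lambda,h}_\delta)^{-1}$.

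\textbf{Negative moment of the partition function.} We need $\EE[(\tilde{\mathcal Z}^{\pm,\lambda,h}_\delta)^{-q}]\le C$ uniformly in $\delta$ for some $q>0$. Since $h\mapsto\log\mathcal Z$ is convex and Lipschitz, with gradient $\delta^{7/8}\lambda(x)\langle\sigma_x\rangle$, we have $|\nabla\log\mathcal Z|^2\le C\delta^{7/4}|D_\delta|=C\delta^{-1/4}$. That bound is too crude on its own. Instead we use Jensen's inequality: $\log\tilde{\mathcal Z}\ge \delta^{7/8}\sum_x\lambda(x)h_x\mu_\delta^\pm[\sigma_x]-\frac{\|\lambda\|^2}{2}\delta^{-1/4}+(\text{deterministic correction})$. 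The Gaussian linear term has variance $\delta^{7/4}\sum_x\mu_\delta^\pm[\sigma_x]^2\le C$ by Lemma \ref{lemma-bound-spin-correlations} and Lemma \ref{lemma-sum-dist-to-boundary-power-alpha}. The $\delta^{-1/4}$ term, however, does not cancel under this crude Jensen. A better route is Gaussian concentration for the convex function $\log\tilde{\mathcal Z}$ (Borell--TIS / Talagrand-type lower tail). Alternatively, the classical trick works: convergence in law of $\tilde{\mathcal Z}$ to a strictly positive limit, together with a lower-tail concentration estimate, gives the negative moment. We expect that $\mathbb P(\tilde{\mathcal Z}<\epsilon)\le C\epsilon^{c}$ suffices, and we would prove it via the convex concentration inequality combined with the second-moment lower bound $\PP(\tilde{\mathcal Z}\ge 1/2)\ge c$ (Paley--Zygmund, using uniform $L^2$ bounds).

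\textbf{Chaos factors.} Conditionally on $\gamma_\delta$, the two chaoses live on disjoint vertex sets, so they are $\PP$-independent. Their $L^{p}(\PP)$ norms, for $p$ slightly above $1$ (and up to $2$ after choosing the Hölder exponents), are bounded by hypercontractivity: for Gaussian-driven chaos with $\tanh$ variables the relevant moments are comparable, and the $n$-th chaos is controlled by its $L^2$ norm times $(p-1)^{n/2}$. The $L^2$ norm is $\sum_n \vartheta_\delta^{2n}\sum_{|I|=n}\mu^{\pm}_{D^\pm(\gamma)}[\sigma_I]^2$. We bound $\mu^{+}_{D^+(\gamma)}[\sigma_I]\le\mu_\delta^{+}[\sigma_I]$ by monotonicity in the domain for $+$ boundary conditions (GKS/FKG), and then apply Lemma \ref{lemma-bound-spin-correlations} and Lemma \ref{lemma-sum-prod-L-x-j-delta}, together with Lemma \ref{lemma-sum-dist-to-boundary-power-alpha}. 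This reduces everything to the deterministic domain $D_\delta$ and removes the roughness of $\gamma_\delta$. Summing over $n$ with $\vartheta_\delta^2\approx\delta^{7/4}$ gives a series of the form $\sum_n C^n n^{n/4}/n!$, which converges. One subtlety is that the bound must be uniform in $\gamma_\delta$; domain monotonicity provides this.

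\textbf{Interface term (main obstacle).} We have $\EE\big[\exp(p\delta^{7/8}\sum_{V_\gamma}\lambda h_x\sigma_x)\big]=\exp\big(\tfrac{p^2}{2}\delta^{7/4}\sum_{V_\gamma}\lambda^2\big)$, so we need $\mu^\pm_\delta\big[\exp(C\delta^{7/4}|V_{\gamma_\delta}|)\big]\le C'$. We expand the exponential: it suffices to show $\mu^\pm_\delta[|V_{\gamma}|^k]\le C^k k!\,\delta^{-7k/4}\cdot\delta^{\epsilon k}$, or at least $\le C^kk!\,\delta^{-7k/4}$ with a small constant. Writing $|V_\gamma|^k=\sum_{x_1,\dots,x_k}\prod\mathbb I_{x_j\in V_\gamma}$, this requires a $k$-point estimate
\begin{equation*}
\mu_\delta^\pm[x_1,\dots,x_k\in V_\gamma]\le C^k\prod_j\big(\delta/(L(x_j)\wedge d_\delta(x_j))\big)^{2-d}, \qquad d=\tfrac{11}{8}.
\end{equation*}
We would derive it from the one-arm/two-arm (boundary arm) exponents of the Ising interface, using RSW-type crossing estimates for FK-Ising together with quasi-multiplicativity, and by a hierarchical (tree) decomposition of the points as in Lemma \ref{lemma-sum-prod-L-x-j-delta}. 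Summing then gives $\mu^\pm[|V_\gamma|^k]\le C^k k^{ck}\delta^{-11k/8}$. Since $\delta^{7/4}\delta^{-11/8}=\delta^{3/8}\to0$, the exponential moment is bounded once $k^{ck}/k!$ growth is handled (the condition is $c<1$, or we simply truncate and use the $\delta^{3k/8}$ gain). Proving this uniform $k$-point arm bound, with the correct constant growth in $k$ and with care near $\partial D_\delta$ and the marked points, is the step we expect to be hardest. Finally, Hölder's inequality combining the three bounds yields $\EE\otimes\mu_\delta^\pm[F^h(\gamma_\delta)^{1+\alpha}]\le C$.
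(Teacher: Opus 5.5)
Your handling of the interface term matches the paper: you integrate out $h$ on $V_\gamma$ and then bound moments of $\delta^{7/4}|V_{\gamma_\delta}|$ with a $k$-point two-arm estimate. Your treatment of the negative moment of $\tilde{\mathcal Z}^{\pm,\lambda,h}_\delta$ is also compatible with the paper, which simply cites \cite[Corollary~C.2]{continuum-2d-RFIM}. The chaos step, however, contains a genuine error.

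\textbf{The comparison in the chaos step goes the wrong way.} You claim $\mu^{+}_{D^+(\gamma)}[\sigma_I]\le\mu_\delta^{+}[\sigma_I]$. Imposing $+$ boundary conditions on $D^+_\delta(\gamma_\delta)\subset D_\delta$ amounts to sending the field on the removed sites to $+\infty$. By Griffiths' second inequality this \emph{increases} every correlation, so in fact $\mu^{+}_{D^+(\gamma)}[\sigma_I]\ge\mu_\delta^{+}[\sigma_I]$, and likewise in absolute value on the minus side. Near the curve the one-point function is of order $(\delta/\dist(x,\gamma_\delta))^{1/8}$, not $(\delta/d_\delta(x))^{1/8}$. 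As a result, the conditional $L^2(\PP)$ norm of the chaoses is \emph{not} uniform in $\gamma_\delta$. The first chaos alone has variance of order $\delta^{2}\sum_{x}\dist(x,\gamma_\delta\cup\partial D_\delta)^{-1/4}$, which blows up like $\delta^{-1/4}$ for interfaces whose $\delta$-neighbourhood has macroscopic area. So the roughness of $\gamma_\delta$ cannot be removed by a deterministic comparison; it has to be averaged out under $\mu^\pm_\delta$.

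\textbf{How to repair it.} This averaging is the content of Claim~\ref{claim-bound-jk-expectations}.
\begin{itemize}
\item Choose the H\"older exponents so that the chaoses only need $L^2(\PP)$ control, i.e.\ bound $\mu^\pm_\delta\big[\EE[\Psi_{\delta,L}(\eta)^2]\,\EE[\Psi_{\delta,R}(\eta)^2]\big]$.
\item Bound the correlations in the random domains by Lemma~\ref{lemma-bound-spin-correlations}, in terms of $L(x_j)\wedge\dist(x_j,\partial D_{L,\delta})$; the second quantity includes the distance to $\gamma_\delta$.
\item Decouple the left and right factors by Cauchy--Schwarz under $\mu^\pm_\delta$.
\item Estimate $\mu^\pm_\delta\big[\mathbb{I}_{(x_1,\dots,x_k)\in D_{L,\delta}^k}\prod_j\dist(x_j,\partial D_{L,\delta})^{-1/2}\big]$ by a dyadic decomposition in $\dist(x_j,\gamma_\delta)$ combined with Lemma~\ref{lemma-k-pts-estimate-interface-discrete}. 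Split according to whether $L(x_j)$ or the distance to the boundary is smaller.
\item The dyadic sums converge because the two-arm exponent $\frac58-\rho$ exceeds the exponent $\frac12$ produced by Cauchy--Schwarz.
\item The resulting bounds are summable by Lemmas~\ref{lemma-sum-prod-L-x-j-delta} and~\ref{lemma-sum-dist-to-boundary-power-alpha}, with nearly coincident points handled separately.
\end{itemize}
You already need the $k$-point interface estimate for the magnetization term, so the fix is within reach of your own tools; as written, though, the chaos bound is false. You also need uniform $L^q$ bounds on the prefactors $\theta_L(\delta)\prod_{x\in D_{L,\delta}}\cosh(\delta^{7/8}\lambda(x)h_x)$ and the analogous right-hand factor, as in Claim~\ref{claim-Lq-norm-renomalization-factor}. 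Your phrase ``distributing the $\cosh$ prefactors'' passes over this.
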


    We postpone the proof of Proposition \ref{proposition-moments-RN} to Section \ref{sub:proof-tightness-interface} and now state our characterization of the limiting law of weakly convergent subsequences.

    \begin{proposition}[characterization of the limiting law]\label{proposition-uniqueness-law-curve}
        There exists a unique law $\nu_{W,\lambda}^{(D,a,b)}$ such that the following holds. $\PP$-almost surely, the Radon-Nikodym derivative of $\nu_{W,\lambda}^{(D,a,b)}$ with respect to $\nu_{SLE_{3}}^{(D,a,b)}$ is given by
        \begin{equation}\label{RN-single-interface-limit}
            \frac{\der \nu_{W,\lambda}^{(D,a,b)}}{\der \nu_{\operatorname{SLE}_{3}}^{(D,a,b)}}(\gamma) := \frac{S_{L}^{W,\lambda}S_{R}^{W,\lambda}}{\mathcal{Z}_{D}^{\pm,W,\lambda}}
        \end{equation}
        with for $j=L,R$,
        \begin{equation*}
            S_{j}^{W,\lambda}:= 1 + \sum_{k \geq 1} \frac{C_{\sigma}^k}{k!} \int_{D_{j}^{k}} f_{D_{j}}^{(\xi_j,k)}(z_1,\dots,z_k) \prod_{p=1}^{k}\lambda(z_p)W(dz_p)
        \end{equation*}
        and for any sequence $(\delta_n)_n$ such that $(\nu_{\delta_n}^{\pm,\lambda,h})_{n}$ converges weakly as $n \to \infty$ in the topology \ref{topo_4}, 
        \begin{equation*}
            (W_{\delta}^{h} ,\tilde{\mathcal{Z}}_{\delta_{n}}^{\pm,h}, \nu_{\delta_n}^{\pm,\lambda,h})_{n} \to (W, \mathcal{Z}_{D}^{\pm,W,\lambda}, \nu_{W,\lambda}^{(D,a,b)})
        \end{equation*}
        weakly as $n \to \infty$. Above, the process $W_{\delta}^{h}$ is defined via, for $\phi \in L^2(\mathbb{R}^2)$, $W_{\delta}^{h}(\phi)=\delta\sum_{x \in D_{\delta}}h_{x}\phi(x)$, and under $\PP$, $W$ is a two-dimensional white noise while $\mathcal{Z}_{D}^{\pm,W,\lambda}$ is the limiting partition function of Theorem \ref{theorem-cvg-RFIM-partition-function}. The domain $D_{L}$, respectively $D_{R}$, is the connected component of $D \setminus \gamma$ on the left, respectively right, of $\gamma$, $\xi_{L}=+$, $\xi_{R}=-$ and the constant $C_{\sigma}$ and the functions $(f_{D_{j}}^{(\xi_j,k)})_{k,j}$ are as in Theorem \ref{theorem_spin_correlations}.
    \end{proposition}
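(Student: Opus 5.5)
The plan is to identify the limit of the discrete Radon--Nikodym derivative $F^{h}(\gamma_{\delta})$ jointly with the noise, and then pass to the limit in the identity $\nu_{\delta}^{\pm,\lambda,h}[G] = \mu_{\delta}^{\pm}[G(\gamma_\delta)F^h(\gamma_\delta)]$ for bounded continuous $G$. The first step is a discrete decomposition. Conditionally on $\gamma_{\delta}$, the spins on the two sides of the interface are independent critical Ising models on the left and right domains $D_{\delta,L}$ and $D_{\delta,R}$, with $+$, respectively $-$, boundary conditions; the spins adjacent to $\gamma_\delta$ are fixed and denoted $\Gamma_\delta$. Hence
\begin{equation*}
F^{h}(\gamma_\delta)=\frac{e^{\delta^{7/8}\sum_{x\in\Gamma_\delta}\lambda(x)h_x\sigma_x}\,\mathcal{Z}^{+,\lambda,h}_{D_{\delta,L}}\,\mathcal{Z}^{-,\lambda,h}_{D_{\delta,R}}}{\mathcal{Z}^{\pm,\lambda,h}_\delta}.
\end{equation*}
Multiplying numerator and denominator by $\theta(\delta)$, and splitting $\theta(\delta)=\theta_L\theta_R\theta_\Gamma$ according to the $L^2$ mass of $\lambda$ on each region, I would write $F^h$ as $\tilde S_L\tilde S_R\,M_\delta/\tilde{\mathcal{Z}}^{\pm,\lambda,h}_\delta$. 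Here $\tilde S_j$ are the rescaled side partition functions, expanded as polynomial chaoses as in \eqref{chaos-expansion-partition-function}. The factor $M_\delta$ collects the interface magnetization term together with the cosh-prefactors and the leftover $\theta$ factors.

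The second step shows that $M_\delta\to1$ in $\PP\otimes\mu^\pm_\delta$-probability. Its logarithm has a Gaussian part with variance of order $\delta^{7/4}\sum_{x\in\Gamma_\delta}\lambda(x)^2$. This is $O(\delta^{7/4}\delta^{-3/2})=o(1)$ in expectation as soon as $\mu^\pm_\delta[|\Gamma_\delta|]\le C\delta^{-3/2}$, which I would prove via one-arm estimates, giving $\mu^\pm_\delta[x\in\Gamma_\delta]\le C\delta^{1/2}$. The cosh and $\theta$ corrections are deterministic $O(\delta^{7/4}|\Gamma_\delta|)$ as well. Higher moments of the discrete Minkowski content would be needed only for uniform integrability, which Proposition \ref{proposition-moments-RN} already supplies.

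The third, and main, step is the joint convergence of $(W^h_\delta,\tilde{\mathcal{Z}}^{\pm}_\delta,\gamma_\delta,\tilde S_L,\tilde S_R)$ under $\PP\otimes\mu^\pm_\delta$. By Theorem \ref{theorem_SLE3} and Skorokhod, I may assume $\gamma_\delta\to\gamma$ almost surely, with $\gamma$ an SLE$_3$ and with the random domains $D_{\delta,j}\to D_j$ in the Carath\'eodory sense. Conditionally on $\gamma$, I would apply the Lindeberg principle of \cite{CSZ16}, in the form used in Theorem \ref{theorem-cvg-RFIM-partition-function}, to the multilinear polynomial in $\eta$ formed by the pair $(\tilde S_L,\tilde S_R)$ together with $W^h_\delta(\phi)$ and $\tilde{\mathcal{Z}}^\pm_\delta$. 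This replaces $\eta$ by Gaussians, and Theorem \ref{theorem_spin_correlations} applied in $D_{\delta,j}$ identifies the kernel limits $C_\sigma^k f^{(\xi_j,k)}_{D_j}$ away from $\gamma$ and from the diagonals. The obstacle is uniform $L^2$ control of chaos contributions near the fractal boundary $\gamma_\delta$. Condition \eqref{condition-approximation-boundary} is not available for $\gamma_\delta$, so Lemma \ref{lemma-sum-dist-to-boundary-power-alpha} cannot be applied directly. My first attempt would be to bound, via Lemmas \ref{lemma-bound-spin-correlations} and \ref{lemma-sum-prod-L-x-j-delta}, the quantity $\mu^\pm_\delta[\delta^2\sum_x \dist(x,\gamma_\delta)^{-1/4}]$ using the annealed estimate $\mu^\pm_\delta[\dist(x,\gamma_\delta)\le r]\le C(r/d(x))^{1/2}$. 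Such annealed bounds suffice for convergence in probability after truncating the chaos at order $K$ and cutting out an $\eps$-neighbourhood of $\gamma\cup\partial D$.

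Finally, I would combine the three steps. With joint convergence and uniform integrability from Proposition \ref{proposition-moments-RN}, any subsequential limit $\nu$ satisfies $\nu[G]=\nu_{\operatorname{SLE}_3}[G(\gamma)S_LS_R]/\mathcal{Z}^{\pm,W,\lambda}_D$ jointly with $W$. Since $S_j$ and $\mathcal{Z}_D^{\pm,W,\lambda}$ are measurable functions of $W$ and $\gamma$, this expression is independent of the subsequence, which gives uniqueness of the limit $\nu_{W,\lambda}^{(D,a,b)}$ and the formula \eqref{RN-single-interface-limit}.
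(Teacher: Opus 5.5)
Your outline has the same architecture as the paper's proof: the decomposition \eqref{decomposition-RN-interface}, negligibility of the interface term, a chaos expansion plus the Lindeberg principle in the random domains with Theorem \ref{theorem_spin_correlations} identifying the kernels, and uniform integrability from Proposition \ref{proposition-moments-RN}. The real difference is how you control the chaoses near $\gamma_\delta$.

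\begin{itemize}
\item \textbf{The paper's route.} It proves annealed bounds uniform in $\delta$ (Claim \ref{claim-bound-jk-expectations}) by combining Lemma \ref{lemma-bound-spin-correlations} with the $k$-point two-arm estimate of Lemma \ref{lemma-k-pts-estimate-interface-discrete}. It disposes of $M^h(\gamma_\delta)$ via Laplace transforms and all moments of $|V(\gamma_\delta)|$ (Claim \ref{claim-cvg-magnetization-interface}). This gives $L^2(\PP\otimes\mu^\pm_\delta)$ statements that plug directly into the truncation scheme of \cite{continuum-2d-RFIM}, and it reuses estimates the tightness proof needs anyway.
\item \textbf{Your route.} You aim only at convergence in probability, governed by the single functional $T_\delta:=\delta^2\sum_x\dist(x,\gamma_\delta\cup\partial D_\delta)^{-1/4}$ and a one-point estimate, leaving uniform integrability to Proposition \ref{proposition-moments-RN}. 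This avoids multi-point decoupling. It is essentially the mechanism the paper uses in Part II, where $\mathsf Z^{+}(\gamma),\mathsf Z^{-}(\gamma)$ are controlled through $S(\gamma)$ in Proposition \ref{prop: sle RN derivative critical}.
\end{itemize}

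Two steps must be repaired for your route to close.

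First, Lemmas \ref{lemma-bound-spin-correlations} and \ref{lemma-sum-prod-L-x-j-delta} do not bound the $k$-th chaos by a function of $T_\delta$. The correlation bound involves $L(x_j)\wedge d_{\delta,\infty}(x_j)$, while Lemma \ref{lemma-sum-prod-L-x-j-delta} ignores the boundary; this mixed product is exactly what forces the paper's $\mathcal{L}(r,k)$, $\mathcal{G}(r,m)$ decompositions. The deterministic tool you need is Lemma \ref{lem: upper-bound for the sum of squares of k point function} with $s=1/4$, $\Lambda=D_{j,\delta}$, $S=\partial D_{j,\delta}$, whose hypothesis holds with $t=CT_\delta$ in lattice units. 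Conditionally on $\gamma_\delta$, it bounds the variance of the $k$-th chaos of $\Psi_{\delta,j}$ by $(CT_\delta)^k(k!)^{-3/4}$. Markov's inequality for $T_\delta$ then controls the tail beyond order $K$ in probability. The $\eps$-neighbourhood and near-diagonal contributions are handled by the same bounds.

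Second, the arm input is the polychromatic two-arm estimate $\mu^\pm_\delta[\dist(x,\gamma_\delta)\le r]\le C(r/d_\delta(x))^{5/8-\rho}$, i.e.\ Lemma \ref{lemma-k-pts-estimate-interface-discrete} with $k=1$, not a one-arm estimate.
\begin{itemize}
\item Both $\delta^{7/4}\mu^\pm_\delta[|\Gamma_\delta|]\to0$ and $\sup_\delta\mu^\pm_\delta[T_\delta]<\infty$ require an exponent strictly above $1/4$. The FK one-arm exponent $1/8$ does not provide this.
\item A bound $\mu^\pm_\delta[x\in\Gamma_\delta]\le C\delta^{1/2}$ cannot hold uniformly in $x$; it fails next to $a_\delta,b_\delta$.
\item Summing $(\delta/d_\delta(x))^{5/8-\rho}$ with Lemma \ref{lemma-sum-dist-to-boundary-power-alpha} gives $\delta^{7/4}\mu^\pm_\delta[|\Gamma_\delta|]=O(\delta^{3/8-\rho})$, which is what you want.
\end{itemize}

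Finally, your last step shares a point with the paper's (equally brief) conclusion. Joint convergence under the annealed measure $\PP\otimes\mu^\pm_\delta$ plus uniform integrability only identifies $\mathcal{Z}_D^{\pm,W,\lambda}\,\EE[\nu[G]\mid W]$, because a subsequential limit of the random measures could a priori carry randomness beyond $W$. To identify $\nu[G]$ itself, apply the Lindeberg comparison to the $\gamma$-averaged truncated chaos $\mu^\pm_\delta[G(\gamma_\delta)\Psi^{\le\ell}_{\delta,L}\Psi^{\le\ell}_{\delta,R}]$. This is a multilinear polynomial in $\eta$ with deterministic coefficients. Its $L^2$ norm is a two-replica quantity, controlled by the same bounds applied to two independent interfaces sharing the noise.
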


    We will give the proof of Proposition \ref{proposition-uniqueness-law-curve} in Section \ref{sub:proof-characterization-interface}. In view of Proposition \ref{proposition-moments-RN} and Proposition \ref{proposition-uniqueness-law-curve}, the convergence part of Theorem \ref{theorem-single-interface} follows from Prokhorov's theorem while the almost sure absolute continuity of $\nu_{W,\lambda}^{(D,a,b)}$ with respect to $\nu_{\operatorname{SLE}_{3}}^{(D,a,b)}$ follows from Proposition \ref{proposition-moments-RN}. Finally, conformal covariance of $\nu_{W,\lambda}^{(D,a,b)}$ is established in Proposition \ref{proposition-conformal-covariance-curve} below.
\end{proof}

\subsection{Tightness of \texorpdfstring{$(\nu_{\delta}^{\pm,\lambda,h})_{\delta}$}{}: proof of Proposition \ref{proposition-moments-RN}}\label{sub:proof-tightness-interface}

\begin{proof}[Proof of Proposition \ref{proposition-moments-RN}]
    Let $\alpha > 0$ be such that $1+\alpha <2$. Let us first introduce some notations. For $\delta > 0$ fixed, we let $D_{L,\delta}$, respectively $D_{R,\delta}$, be the connected component of $D_{\delta} \setminus \gamma_{\delta}$ on the left, respectively right, of $\gamma_{\delta}$. We also define the following two random renormalization factors:
    \begin{equation} \label{definition-theta-L-R}
        \theta_{L}(\delta) := \exp\big(-\frac{1}{2}\delta^{\frac{7}{4}}\sum_{x \in D_{L,\delta}}\lambda(x)^2 \big), \quad \theta_{R}(\delta) := \exp\big(-\frac{1}{2}\delta^{\frac{7}{4}}\sum_{x \in D_{R,\delta}}\lambda(x)^2\big).
    \end{equation}
    Below, $\mu_{\delta,D_{L}}^{+}$, respectively $\mu_{\delta,D_{R}}^{-}$, will stand for the Ising measure in $D_{L,\delta}$ with $+1$ boundary conditions, respectively in $D_{R,\delta}$ with $-1$ boundary conditions. Finally, we introduce the two following random variables
    \begin{equation}\label{definition-S-L-S-R}
        S_{L}^{h}(\delta):= \mu_{\delta,D_{L}}^{+}\bigg [ \exp\big(\delta^{\frac{7}{8}}\sum_{x \in D_{L,\delta}}\lambda(x)h_x\sigma_x\big) \bigg], \quad S_{R}^{h}(\delta):= \mu_{\delta,D_{R}}^{-} \bigg [ \exp\big(\delta^{\frac{7}{8}}\sum_{x \in D_{R,\delta}}\lambda(x)h_x\sigma_x\big) \bigg].
    \end{equation}
    By definition of $\gamma_{\delta}$ and the Markov property of the Ising model, we have that, $\PP \otimes \mu_{\delta}^{\pm}$-almost surely,
    \begin{equation} \label{decomposition-RN-interface}
        F^{h}(\gamma_{\delta}) = \frac{S_{L}^{h}(\delta)S_R^{h}(\delta)}{\mathcal{Z}_{\delta}^{\pm,\lambda, h}} \times \exp\bigg( \sum_{x \in V_L(\gamma_{\delta})}\delta^{\frac{7}{8}}\lambda(x)h_x - \sum_{x \in V_R(\gamma_{\delta})}\delta^{\frac{7}{8}}\lambda(x)h_x \bigg)
    \end{equation}
    where $V_{L}(\gamma_{\delta})$, respectively $V_{R}(\gamma_{\delta})$, is the set of vertices of $D_{\delta}$ that are adjacent to $\gamma_{\delta}$ on its left, respectively its right. Here, a vertex $x \in D_{\delta}$ is said to be adjacent to $\gamma_{\delta}$ if $\gamma_{\delta}$ traverses an edge of the octagon whose center is $x$. Let us set
    \begin{equation} \label{definition-magnetization-interface}
        M^{h}(\gamma_{\delta}) := \sum_{x \in V_L(\gamma_{\delta})}\delta^{\frac{7}{8}}\lambda(x)h_x - \sum_{x \in V_R(\gamma_{\delta})}\lambda(x)\delta^{\frac{7}{8}}h_x.
    \end{equation}
    Fix some $p > 1$ such that $(1+\alpha)p <2$ and let $q>1$ be such that $p^{-1}+q^{-1}=1$. H\"older's inequality implies that
    \begin{align} \label{Holder-for-RN-curve}
        \EE \otimes \mu_{\delta}^{\pm} \big[ F^{h}(\gamma_{\delta})^{1+\alpha} \big] \leq &\EE \otimes \mu_{\delta}^{\pm} \big[\big( \theta_{L}(\delta)\theta_{R}(\delta) S_{R}^{h}(\delta)S_{L}^{h}(\delta) \big)^{p(1+\alpha)}\big]^{\frac{1}{p}}\times\bigg(\frac{\theta(\delta)}{\theta_D(\delta)}\bigg)^{1+\alpha} \nonumber \\
        &\times \EE \otimes \mu_{\delta}^{\pm} \bigg[\exp(q(1+\alpha)M(\gamma_{\delta}))(\tilde{\mathcal{Z}}_{\delta}^{\pm,\lambda, h})^{-q(1+\alpha)} \bigg(\frac{\theta_{D}(\delta)}{\theta_{R}(\delta)\theta_{L}(\delta)}\bigg)^{q(1+\alpha)} \bigg]^{\frac{1}{q}},
    \end{align}
    where $\theta(\delta)$ is defined as in \eqref{def-theta-delta} and where we have set
    \begin{equation*}
        \theta_D(\delta) := \exp\big( - \frac{\delta^{\frac{7}{4}}}{2} \sum_{x \in D_{\delta}} \lambda(x)^2 \big).
    \end{equation*}
    Observe that $\theta(\delta)/\theta_D(\delta) \to 1$ as $\delta \to 0$: above, we introduced $\theta_{D}(\delta)$ because the ratio $\frac{\theta_{D}(\delta)}{\theta_{R}(\delta)\theta_{L}(\delta)}$ is easier to control that the ratio $\frac{\theta(\delta)}{\theta_{R}(\delta)\theta_{L}(\delta)}$. In view of this and of \eqref{Holder-for-RN-curve}, Proposition \ref{proposition-moments-RN} is a consequence of the following two claims, whose proofs are given just below. In the statement of these claims, the constants depend only on $\lambda$ and $D$.

    \begin{claim} \label{claim-uniform-boundedness-left-right-spin-fields}
        There exist $\delta_1 >0$ and $C_1 > 0$ such that for any $0 < \delta < \delta_1$,
        \begin{equation*}
            \EE \otimes \mu_{\delta}^{\pm} \big[\big( \theta_{L}(\delta)\theta_{R}(\delta) S_{R}^{h}(\delta)S_{L}^{h}(\delta) \big)^{p(1+\alpha)}\big]^{\frac{1}{p}} \leq C_1.
        \end{equation*}
    \end{claim}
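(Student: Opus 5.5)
The plan is to condition on $\gamma_{\delta}$ and control the quenched $\PP$-moments of $\theta_L S_L^h$ and $\theta_R S_R^h$ uniformly over all admissible curves. Set $r:=p(1+\alpha)<2$. Given $\gamma_{\delta}$, the sets $D_{L,\delta}$ and $D_{R,\delta}$ are disjoint, so $S_L^h(\delta)$ and $S_R^h(\delta)$ are functions of disjoint families of the i.i.d. variables $(h_x)$ and are $\PP$-independent. Hence
\begin{equation*}
\EE\otimes\mu_{\delta}^{\pm}\big[(\theta_L\theta_R S_L^hS_R^h)^{r}\big]=\mu_{\delta}^{\pm}\Big[\EE\big[(\theta_LS_L^h)^{r}\big]\,\EE\big[(\theta_RS_R^h)^{r}\big]\Big].
\end{equation*}
Since $r<2$, Jensen's inequality gives $\EE[(\theta_jS_j^h)^r]\le \EE[(\theta_jS_j^h)^2]^{r/2}$. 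It therefore suffices to show that $\EE[(\theta_jS_j^h)^2]\le C$ uniformly in $\delta$ and in the realization of $\gamma_{\delta}$.

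To get this second-moment bound, I will repeat the high-temperature expansion \eqref{chaos-expansion-partition-function} inside $D_{L,\delta}$ with $+1$ boundary conditions (and inside $D_{R,\delta}$ with $-1$ boundary conditions, which is symmetric):
\begin{equation*}
\theta_LS_L^h=\theta_L\prod_{x\in D_{L,\delta}}\cosh(\delta^{\frac78}\lambda(x)h_x)\sum_{I\subset D_{L,\delta}}\mu_{\delta,D_L}^{+}\Big[\prod_{x\in I}\sigma_x\Big]\prod_{x\in I}\tanh(\delta^{\frac78}\lambda(x)h_x).
\end{equation*}
By Cauchy--Schwarz I separate the prefactor from the sum. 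For the prefactor, independence gives $\EE[\cosh(\delta^{7/8}\lambda h)^4]=1+2\delta^{7/4}\lambda^2+O(\delta^{7/2})$, and this product is exactly compensated by $\theta_L^4$ up to a factor $\exp(C\delta^{7/2}|D_\delta|)\to1$. That part is uniform in the domain.

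For the sum, $\tanh(\delta^{7/8}\lambda h_x)$ is odd, so distinct subsets $I$ are orthogonal in $L^2(\PP)$. But I need fourth moments because of Cauchy--Schwarz, so I will instead avoid Cauchy--Schwarz. Conditioning on the $\cosh$ weights is not possible, so the cleaner route is to apply Cauchy--Schwarz with exponents $(s,s')$ and $s$ close to $1$, or to use hypercontractivity of the polynomial chaos as in \cite{CSZ16}: the $L^{2+\epsilon}$ norm of a chaos is bounded by $\sum_I c^{|I|}\|\cdot\|_2$-type terms. Either way, the problem reduces to bounding
\begin{equation*}
\sum_{k\ge0}\frac{C^k\delta^{7k/4}}{k!}\sum_{x_1,\dots,x_k\in D_{L,\delta}}\mu_{\delta,D_L}^{+}\Big[\prod_j\sigma_{x_j}\Big]^2 .
\end{equation*}
Here I use Lemma \ref{lemma-bound-spin-correlations} applied in $D_{L,\delta}$ together with $\mu^+_{D_L}[\prod\sigma]\le\mu^+_{D}[\prod\sigma]$ (GKS/monotonicity in the domain with $+$ boundary, since $D_{L,\delta}\subset D_\delta$). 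This removes any dependence on the fractal boundary $\gamma_\delta$: the correlation is dominated by the one in $D_\delta$, and the right-hand side is deterministic. Then Lemma \ref{lemma-bound-spin-correlations} followed by $\prod_j (L\wedge d)^{-1/4}\le\prod_j(L^{-1/4}+d^{-1/4})$, Hölder, Lemma \ref{lemma-sum-prod-L-x-j-delta} and Lemma \ref{lemma-sum-dist-to-boundary-power-alpha} give a bound $C^kk^{k/4}$ per level, up to diagonal terms. This is summable against $1/k!$. The diagonal set $B_k(\delta)$ must be handled separately by bounding correlations by $1$ and counting, which costs only $\delta^{2}$-factors per collision. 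For the $-$ side I use spin flip symmetry, and the same domination works for $|\mu^-|$.

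The main obstacle is the $2+\epsilon$ (or fourth) moment issue, since the clean orthogonality only controls second moments while the $\cosh$ prefactor and $r>1$ force higher integrability. I would first try hypercontractivity for Gaussian-driven chaos in the variables $\tanh(\delta^{7/8}h_x)$. These are not exactly Gaussian, but they have uniformly controlled higher moments relative to their variance, so CSZ's hypercontractivity lemma applies and inflates each level by $c^k$. This is absorbed into the $C^k$ above. The domination of correlations in $D_{L,\delta}$ by those in $D_\delta$ is what makes the bound uniform over the random curve.
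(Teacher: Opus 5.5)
\textbf{The gap.} Your step that makes everything uniform in $\gamma_{\delta}$ fails: the monotonicity goes the other way. Putting $+1$ boundary conditions on $\partial D_{L,\delta}$ is the same as running the $+$ model on $D_{\delta}$ with the spins outside $D_{L,\delta}$ frozen to $+1$, which amounts to an infinite positive field there. GKS therefore gives $\mu_{\delta,D_{L}}^{+}[\prod_{x\in I}\sigma_x]\geq \mu_{\delta}^{+}[\prod_{x\in I}\sigma_x]$. With plus boundary conditions, correlations \emph{increase} as the domain shrinks.

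Worse, the statement you reduce to is false. For Gaussian $h$ one has exactly $\EE[(\theta_L(\delta)S_L^h(\delta))^2]=\mu_{\delta,D_L}^{+}\otimes\mu_{\delta,D_L}^{+}[\exp(\delta^{\frac74}\sum_{x}\lambda(x)^2\sigma_x\sigma'_x)]$. By Jensen this is at least $\exp(\delta^{\frac74}\sum_{x}\lambda(x)^2\mu_{\delta,D_L}^{+}[\sigma_x]^2)$. Every vertex of $D_{L,\delta}$ adjacent to the plus boundary along $\gamma_{\delta}$ has $\mu_{\delta,D_L}^{+}[\sigma_x]\geq\tanh\beta_c$. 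For a positive-probability serpentine interface sweeping a macroscopic region, the exponent is therefore of order $\delta^{\frac74-2}=\delta^{-\frac14}\to\infty$. So no bound uniform over realisations of $\gamma_{\delta}$ exists. The random, fractal part of $\partial D_{L,\delta}$ is exactly where the difficulty lies. The thinness of the critical interface has to enter the argument, and it can only be used on average under $\mu_{\delta}^{\pm}$.

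\textbf{The missing averaging argument.} This averaging is the heart of the paper's proof.
\begin{itemize}
\item Since $\EE[\Psi_{\delta,j}(\eta)]=1$, one writes $\EE[\Psi_{\delta,j}(\eta)^2]=1+\Var(\Psi_{\delta,j}(\eta))$.
\item One must then bound $\mu_{\delta}^{\pm}[\Var(\Psi_{\delta,L}(\eta))\Var(\Psi_{\delta,R}(\eta))]$, a product of two $\gamma_{\delta}$-dependent quantities. The paper separates them by Cauchy--Schwarz in $\mu_{\delta}^{\pm}$.
\item The correlations are bounded with Lemma~\ref{lemma-bound-spin-correlations} applied in $D_{L,\delta}$. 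This produces negative powers of $\dist(x_j,\partial D_{L,\delta})$, i.e.\ essentially of $\dist(x_j,\gamma_{\delta})$.
\item One then controls $\mu_{\delta}^{\pm}[\prod_j\dist(x_j,\partial D_{L,\delta})^{-1/2}]$ dyadically with the two-arm $k$-point bound of Lemma~\ref{lemma-k-pts-estimate-interface-discrete}. The dyadic sum is dominated by the macroscopic scale precisely because $\frac58-\rho>\frac12$. This is Claim~\ref{claim-bound-jk-expectations}, with extra care for points close to each other.
\end{itemize}

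\textbf{The moment issue is avoidable.} The ``main obstacle'' you identify is self-inflicted: Jensen throws away the room below $2$ and then forces you to control fourth moments. Instead, apply Hölder with exponents $s,s'$, where $s\,p(1+\alpha)=2$. This puts exactly the second moment on $\Psi_{\delta,L}(\eta)\Psi_{\delta,R}(\eta)$, which factorises by $\PP$-independence. All remaining integrability goes onto the prefactors $\theta_j(\delta)\prod_{x}\cosh(\delta^{\frac78}\lambda(x)h_x)$, which are bounded in every $L^q$ by Claim~\ref{claim-Lq-norm-renomalization-factor}. No hypercontractivity is needed.
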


    \begin{claim} \label{claim-uniform-boundedness-magnetization-partition-function}
        There exist $\delta_2 >0$ and $C_2 > 0$ just that for any $0 < \delta < \delta_2$,
        \begin{equation*}
            \EE \otimes \mu_{\delta}^{\pm} \bigg[\exp(q(1+\alpha)M(\gamma_{\delta}))(\tilde{\mathcal{Z}}_{\delta}^{\pm,\lambda,h})^{-q(1+\alpha)} \bigg(\frac{\theta_D(\delta)}{\theta_{R}(\delta)\theta_{L}(\delta)}\bigg)^{q(1+\alpha)} \bigg] \leq C_2.
        \end{equation*}
    \end{claim}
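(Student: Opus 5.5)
The plan is to separate the three random factors inside the expectation by H\"older's inequality. First, the ratio $\theta_D(\delta)/(\theta_R(\delta)\theta_L(\delta))$ is handled deterministically. Up to boundary conventions, $D_\delta$ is the disjoint union of $D_{L,\delta}$, $D_{R,\delta}$ and the vertex set $V(\gamma_\delta):=V_L(\gamma_\delta)\cup V_R(\gamma_\delta)$ adjacent to the interface. The ratio therefore equals $\exp(-\frac12\delta^{7/4}\sum_{x\in V(\gamma_\delta)}\lambda(x)^2)\le 1$. If the conventions instead produce the opposite sign, it is bounded by $\exp(C\delta^{7/4}|V(\gamma_\delta)|)$, which is absorbed into the magnetization term below. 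After this, it remains to bound
\begin{equation*}
\EE\otimes\mu_\delta^{\pm}\big[e^{2q(1+\alpha)M(\gamma_\delta)}\big]^{1/2}\;\EE\big[(\tilde{\mathcal Z}^{\pm,\lambda,h}_\delta)^{-2q(1+\alpha)}\big]^{1/2},
\end{equation*}
where I use that $\tilde{\mathcal Z}_\delta^{\pm,\lambda,h}$ does not depend on $\gamma_\delta$.

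\emph{Magnetization of the interface.} Conditionally on $\gamma_\delta$, $M(\gamma_\delta)$ is a centered Gaussian under $\PP$. Its variance is at most $4\|\lambda\|_\infty^2\delta^{7/4}N_\delta$, where $N_\delta:=|V(\gamma_\delta)|$; the factor $4$ covers vertices lying in both $V_L$ and $V_R$. Hence
\begin{equation*}
\EE[e^{rM(\gamma_\delta)}\mid\gamma_\delta]\le e^{Cr^2\delta^{7/4}N_\delta},
\end{equation*}
and the task reduces to bounding $\mu_\delta^{\pm}[\exp(c\,\delta^{7/4}N_\delta)]$ uniformly in $\delta$. The crude bound $N_\delta\le C\delta^{-2}$ is useless. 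Instead I expand the exponential and write
\begin{equation*}
\mu^{\pm}_\delta[N_\delta^k]=\sum_{x_1,\dots,x_k}\mu_\delta^{\pm}[x_1,\dots,x_k\in V(\gamma_\delta)].
\end{equation*}
The key input is a $k$-point estimate of the form
\begin{equation*}
\mu_\delta^{\pm}[x_1,\dots,x_k\in V(\gamma_\delta)]\le C^k\prod_{j=1}^k\Big(\frac{\delta}{L(x_j)\wedge d_\delta(x_j)}\Big)^{5/8-\epsilon},
\end{equation*}
with $5/8=2-11/8$ the two-arm (interface) exponent; near the boundary the relevant half-plane exponent is even larger, which only helps. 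Granting this, each $k$-th term is bounded by
\begin{equation*}
\frac{c^k}{k!}\,\delta^{7k/4}\,\delta^{(5/8-\epsilon)k}\,\delta^{-2k}\;\delta^{2k}\sum\prod_j\big(L(x_j)\wedge d_\delta(x_j)\big)^{-5/8+\epsilon}.
\end{equation*}
Lemmas \ref{lemma-sum-prod-L-x-j-delta} and \ref{lemma-sum-dist-to-boundary-power-alpha}, together with the elementary bound $(a\wedge b)^{-\beta}\le a^{-\beta}+b^{-\beta}$, control the sum by $C^kk^{k}$, so the term is at most $C^k\delta^{(3/8-\epsilon)k}k^{k}/k!$. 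Near-diagonal tuples in $B_k(\delta)$ are handled by grouping coinciding points and reducing to fewer points. The resulting series is summable for small $\delta$, which gives the desired uniform bound.

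\emph{Negative moments of the partition function.} Here I would use that $h\mapsto\log\tilde{\mathcal Z}^{\pm,\lambda,h}_\delta$ is convex, with gradient $\big(\delta^{7/8}\lambda(x)\mu_\delta^{\pm,\lambda,h}[\sigma_x]\big)_x$. By Theorem \ref{theorem-cvg-RFIM-partition-function} the law of $\tilde{\mathcal Z}_\delta$ converges to that of a.s.\ positive limit, and its chaos expansion is bounded in $L^2$. Combining these with an $L^1$ bound on $\delta^{7/4}\sum_x\mu^{\pm,\lambda,h}_\delta[\sigma_x]^2$ obtained from Lemma \ref{lemma-bound-spin-correlations}, I get a set $A$ with $\PP(A)\ge c>0$ uniformly in $\delta$. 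On $A$ one has $\log\tilde{\mathcal Z}_\delta\ge -K$ and the squared gradient norm is at most $K$. The standard concentration inequality for convex functions (Talagrand's argument, as used for directed polymers) then yields
\begin{equation*}
\PP(\log\tilde{\mathcal Z}_\delta\le -K-t)\le c^{-1}e^{-t^2/(4K)},
\end{equation*}
and therefore uniform bounds on all negative moments.

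The step I expect to be the main obstacle is the $k$-point estimate for the critical Ising interface with constants growing only like $C^k$. I would attack it through the FK-Ising representation: each $x_j\in V(\gamma_\delta)$ forces a two-arm (primal/dual) event in the annulus around $x_j$ of radii $\delta$ and $L(x_j)\wedge d_\delta(x_j)$. Disjoint annuli are then decoupled using RSW and spatial mixing. Finally, I would sum over a dyadic hierarchical clustering of the points, in the spirit of the proof of Lemma \ref{lemma-sum-prod-L-x-j-delta}, so as to keep the combinatorial constant of order $C^k$.
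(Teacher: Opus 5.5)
Your treatment of the $\theta$-ratio and of the interface magnetization is essentially the paper's argument. The ratio equals $\exp(-\frac12\delta^{7/4}\sum_{x\in V(\gamma_\delta)}\lambda(x)^2)\le 1$. The conditional Gaussian computation reduces the magnetization factor to $\mu_\delta^{\pm}[\exp(c\,\delta^{7/4}|V(\gamma_\delta)|)]$, which you control by a moment expansion with the $k$-point two-arm bound, exactly as in Claim~\ref{claim-cvg-magnetization-interface}. The $k$-point bound you flag as the main obstacle is already Lemma~\ref{lemma-k-pts-estimate-interface-discrete}, proved the way you sketch (two-arm events in disjoint boxes of size $L(x_j)\wedge d_\delta(x_j)$, decoupled by the Markov property and mixing).

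One small correction there. After $(a\wedge b)^{-\beta}\le a^{-\beta}+b^{-\beta}$ with $\beta=\frac58-\epsilon>\frac12$, the mixed products $\prod_{j\in S}L(x_j)^{-\beta}\prod_{j\notin S}d_\delta(x_j)^{-\beta}$ cannot be split by H\"older into Lemmas~\ref{lemma-sum-prod-L-x-j-delta} and~\ref{lemma-sum-dist-to-boundary-power-alpha} as stated, since both need exponent $<1$. As in the paper's bound on $f_\delta$, you should first sum out the $L$-points uniformly in the fixed boundary points.

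For the negative moments of $\tilde{\mathcal Z}_\delta^{\pm,\lambda,h}$, the paper simply cites \cite[Corollary~C.2]{continuum-2d-RFIM}. Your convex-concentration route is the standard way to prove such a statement, but one step does not work as written. You claim an $L^1(\PP)$ bound on $|\nabla\log\tilde{\mathcal Z}_\delta|^2=\delta^{7/4}\sum_x\lambda(x)^2\mu_\delta^{\pm,\lambda,h}[\sigma_x]^2$ from Lemma~\ref{lemma-bound-spin-correlations}. That lemma concerns the zero-field measure. The quenched magnetization is $\mu_\delta^{\pm,\lambda,h}[\sigma_x]=\theta(\delta)\mu_\delta^{\pm}[\sigma_x e^{\delta^{7/8}\sum_y\lambda(y)h_y\sigma_y}]/\tilde{\mathcal Z}_\delta^{\pm,\lambda,h}$. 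Its denominator is exactly the quantity you are trying to bound from below, so the argument is circular.

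The fix is to work on $\{\tilde{\mathcal Z}_\delta\ge e^{-K}\}$, where $|\nabla\log\tilde{\mathcal Z}_\delta|\le e^{K}|\nabla\tilde{\mathcal Z}_\delta|$, and to bound the gradient of $\tilde{\mathcal Z}_\delta$ itself by two replicas:
\begin{equation*}
\EE\big|\nabla\tilde{\mathcal Z}_\delta^{\pm,\lambda,h}\big|^2=\frac{\theta(\delta)^2}{\theta_D(\delta)^2}\sum_{y\in D_\delta}\delta^{\frac74}\lambda(y)^2\,\mu_\delta^{\pm}\otimes\mu_\delta^{\pm}\Big[\sigma_y\sigma'_y\exp\Big(\delta^{\frac74}\sum_{x\in D_\delta}\lambda(x)^2\sigma_x\sigma'_x\Big)\Big].
\end{equation*}
Expand $e^{a\sigma_x\sigma'_x}=\cosh(a)(1+\sigma_x\sigma'_x\tanh a)$ and use independence of the replicas. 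This bounds the right-hand side by a constant times $\sum_{J\subset D_\delta}(|J|+1)(M_\lambda\delta^{7/4})^{|J|}\mu_\delta^{\pm}[\prod_{x\in J}\sigma_x]^2$. That sum is bounded uniformly in $\delta$ by Lemmas~\ref{lemma_EEpm_EEplus}, \ref{lemma-bound-spin-correlations}, \ref{lemma-sum-prod-L-x-j-delta} and~\ref{lemma-sum-dist-to-boundary-power-alpha}, just like the $L^2$ norm of the chaos expansion; the factor $|J|+1$ is absorbed by the $1/k!$.

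Now set $A=\{\tilde{\mathcal Z}_\delta\ge e^{-K}\}\cap\{|\nabla\tilde{\mathcal Z}_\delta|^2\le K'\}$. Markov's inequality gives $\PP(A)\ge c/2$ for $K'$ large. On $A$ you have $|\nabla\log\tilde{\mathcal Z}_\delta|^2\le e^{2K}K'$, which is all the concentration inequality needs.

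The uniform bound $\PP(\tilde{\mathcal Z}_\delta\ge e^{-K})\ge c$ follows from Paley--Zygmund, since $\EE\tilde{\mathcal Z}_\delta=\theta(\delta)/\theta_D(\delta)\to1$ and the second moment is uniformly bounded. You do not need almost-sure positivity of the limit, which is in any case not part of Theorem~\ref{theorem-cvg-RFIM-partition-function}.
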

\end{proof}

\begin{proof}[Proof of Claim \ref{claim-uniform-boundedness-left-right-spin-fields}]
    We first use the Fubini-Tonelli theorem and write
    \begin{align*}
        &\EE \otimes \mu_{\delta}^{\pm} \big[ (\theta_{R}(\delta)\theta_{L}(\delta)S_{R}^{h}(\delta)S_{L}^{h}(\delta))^{(1+\alpha)p}\big] \\
        &= \mu_{\delta}^{\pm}\big[ \theta_{R}(\delta)^{(1+\alpha)p} \theta_{L}(\theta)^{(1+\alpha)p}\EE\big[ S^{h}_R(\delta)^{(1+\alpha)p}S^{h}_L(\delta)^{(1+\alpha)p} \big]\big] 
    \end{align*}
    We now observe that the random variables $S^{h}_R(\delta)$ and $S^{h}_L(\delta)$ are $\PP$-independent as they are measurable with respect to the random variables $(h_x)_x$ in two disjoint domains. This implies that to control the above inner expectation, we must control $\EE[S^{h}_R(\delta)^{(1+\alpha)p}]\EE[S^{h}_L(\delta)^{(1+\alpha)p}]$. For this, we use the same expansion as in the proof of \cite[Lemma~B.1]{continuum-2d-RFIM}. This yields that $\mu_{\delta}^{\pm}$-almost surely,
    \begin{align}
        &\theta_{R}(\delta)S_{R}^{h}(\delta) = \theta_{R}(\delta)\big(\prod_{x \in D_{R,\delta}} \cosh(\delta^{\frac{7}{8}}\lambda(x)h_x)\big) \sum_{I \subset D_{R,\delta}} \mu_{\delta,D_{R}}^{-}\big[ \prod_{x \in I} \sigma_x\big] \prod_{x\in I}\vartheta_{\delta}(x)\eta_{x} \label{S-R-chaos-expansion}\\
        &\theta_{L}(\delta)S_{L}^{h}(\delta) = \theta_{L}(\delta)\big(\prod_{x \in D_{L,\delta}} \cosh(\delta^{\frac{7}{8}}\lambda(x)h_x)\big) \sum_{I \subset D_{L,\delta}} \mu_{\delta,D_{L}}^{+}\big[ \prod_{x \in I} \sigma_x\big] \prod_{x\in I}\vartheta_{\delta}(x)\eta_{x} \label{S-L-chaos-expansion}
    \end{align}
    where the sums are over subsets of $D_{R,\delta}$ and $D_{L,\delta}$ and where we have set
    \begin{align} \label{def-sigma-mu-eta}
        &\vartheta_{\delta}(x)^2 := \Var[\tanh(\delta^{\frac{7}{8}}\lambda(x) h_{x})] = \delta^{\frac{7}{4}}\lambda(x)^2 + O(\delta^{\frac{7}{2}}), \nonumber \\
        &\eta_x := \vartheta_{\delta}(x)^{-1}\tanh(\delta^{\frac{7}{8}}\lambda(x)h_{x}).
    \end{align} 
    Note in particular that the random variables $(\eta_x)_x$ are $\PP$-independent, have mean $0$ and variance $1$. Setting
    \begin{align} \label{def-Psi-R-Psi-L}
        &\Psi_{\delta,L}(\eta):= \sum_{I \subset D_{L,\delta}} \mu_{\delta,D_{L}}^{+}\big[ \prod_{x \in I} \sigma_{x} \big] \prod_{x \in I} \vartheta_{\delta}(x)\eta_x \\
        &\Psi_{\delta,R}(\eta):= \sum_{J \subset D_{R,\delta}} \mu_{\delta,D_{R}}^{-} \big [ \prod_{x \in J} \sigma_{x} ] \prod_{x \in J} \vartheta_{\delta}(x)\eta_x, \nonumber
    \end{align}
    and choosing $r, \tilde r>1$ such that $(1+\alpha)rp = 2$ and $ r^{-1} + \tilde r^{-1} = 1$, we obtain by H\"older's inequality that
    \begin{align*}
        &\mu_{\delta}^{\pm} \otimes \EE \big[ \big( \theta_{R}(\delta)\theta_{L}(\delta) S_{R}^{h}(\delta) S_{L}^{h}(\delta) \big)^{(1+\alpha)p}\big] \\
        &\leq \mu_{\delta}^{\pm}[\EE[\Psi_{\delta,L}(\eta)^2]\EE[\Psi_{\delta,R}(\eta)^2]]^{\frac{1}{r}}
        \mu_{\delta}^{\pm}\otimes \EE\big[\big(\theta_{R}(\delta)\theta_{L}(\delta)\prod_{x \in D_{L,\delta} \cup D_{R,\delta}}\cosh(\delta^{\frac{7}{8}}\lambda(x)h_x) \big)^{(1+\alpha)p\tilde r}\big]^{\frac{1}{\tilde r}}.
    \end{align*}
    The second expectation on the above right-hand side is controlled thanks to the following claim, whose proof is postponed to the end.

    \begin{claim} \label{claim-Lq-norm-renomalization-factor}
        As $\delta \to 0$, $\theta_{R}(\delta)\prod_{x \in D_{R,\delta}}\cosh(\delta^{\frac{7}{8}}\lambda(x)h_x)$ converges to $1$ in $\PP \otimes \mu_{\delta}^{\pm}$-probability. Moreover, this random variable is uniformly bounded in $L^q$, for any $q \geq 1$. A similar statement holds with $R$ replaced by $L$.
    \end{claim}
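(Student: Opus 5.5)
Since $\theta_R(\delta)=\prod_{x\in D_{R,\delta}}\exp(-\tfrac12\delta^{7/4}\lambda(x)^2)$, the random variable in question factorizes as a product over $x \in D_{R,\delta}$:
\begin{equation*}
    Y_R(\delta):=\theta_{R}(\delta)\prod_{x \in D_{R,\delta}}\cosh(\delta^{\frac{7}{8}}\lambda(x)h_x) = \prod_{x \in D_{R,\delta}} \exp\big(\log\cosh(\delta^{\frac{7}{8}}\lambda(x)h_x) - \tfrac12\delta^{\frac74}\lambda(x)^2\big).
\end{equation*}
I will condition on $\gamma_\delta$. Given $\gamma_\delta$, the set $D_{R,\delta}$ is deterministic, and the $(h_x)$ are $\PP$-independent of $\gamma_\delta$. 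It therefore suffices to prove both statements under $\PP$ for an \emph{arbitrary} deterministic subset $A\subset D_\delta$ in place of $D_{R,\delta}$, with bounds uniform in $A$. Integrating over $\mu_\delta^{\pm}$ then gives the claim, and the case of $L$ is identical.

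For the $L^q$ bound, I will use $\log\cosh(u)\le u^2/2$. This gives
\begin{equation*}
    Y_A^q\le \prod_{x\in A}\exp\big(\tfrac q2\delta^{\frac74}\lambda(x)^2(h_x^2-1)\big).
\end{equation*}
By independence and the Gaussian identity $\EE[e^{s h^2}]=(1-2s)^{-1/2}$, valid for $s<1/2$, each factor has expectation
\begin{equation*}
    (1-q\delta^{\frac74}\lambda(x)^2)^{-1/2}e^{-\frac q2\delta^{\frac74}\lambda(x)^2}=\exp\big(O(q^2\delta^{\frac72}\|\lambda\|_\infty^4)\big).
\end{equation*}
Since $|A|\le|D_\delta|\le C\delta^{-2}$, the product of these expectations is at most $\exp(Cq^2\delta^{3/2})\to1$. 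This yields the uniform $L^q$ bound for every $q\ge1$.

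For the convergence in probability, I will write $\log Y_A=\sum_{x\in A}\xi_x$, where $\xi_x=\log\cosh(\delta^{7/8}\lambda(x)h_x)-\tfrac12\delta^{7/4}\lambda(x)^2$. The Taylor expansion $\log\cosh u=u^2/2-u^4/12+O(u^6)$ gives
\begin{equation*}
    \EE[\xi_x]=-\tfrac{3}{12}\delta^{\frac72}\lambda(x)^4+O(\delta^{\frac{21}{4}}), \qquad \Var[\xi_x]=\tfrac14\delta^{\frac72}\lambda(x)^4\Var[h_x^2]+O(\delta^{\frac{21}{4}}).
\end{equation*}
To justify these expansions, I will use the bound $|\log\cosh u - u^2/2+u^4/12|\le Cu^6$ together with Gaussian moments. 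Summing over at most $C\delta^{-2}$ sites, the mean of $\log Y_A$ is $O(\delta^{3/2})$ and its variance is $O(\delta^{3/2})$. Chebyshev's inequality then gives $\log Y_A\to0$ in probability uniformly in $A$, hence $Y_R(\delta)\to1$ in $\PP\otimes\mu^\pm_\delta$-probability.

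There is no real obstacle here. The only point needing care is the uniformity in the random domain $D_{R,\delta}$, and it is handled because every estimate above depends on $A$ only through $|A|\le|D_\delta|=O(\delta^{-2})$, which follows from the boundedness of $D$.
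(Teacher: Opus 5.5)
Your proof is correct and follows essentially the same route as the paper. For convergence in probability, you apply Chebyshev to the centered sum of the $\log\cosh(\delta^{7/8}\lambda(x)h_x)$ terms: the per-site mean correction and variance are both $O(\delta^{7/2})$, so over $O(\delta^{-2})$ sites they sum to $O(\delta^{3/2})$, exactly as in the paper. Your conditioning on $\gamma_\delta$ is equivalent to the paper's use of Fubini together with independence of $h$ from the spins.

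The one difference is the uniform $L^q$ bound. The paper only points to the end of the proof of \cite[Lemma~3.5]{continuum-2d-RFIM} for it, whereas your argument via $\log\cosh u\le u^2/2$ and $\EE[e^{sh^2}]=(1-2s)^{-1/2}$ proves that step directly. Your version holds for $\delta$ small enough depending on $q$, which is all the claim needs.
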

    
    With Claim \ref{claim-Lq-norm-renomalization-factor} in hands, it now remains control $\mu_{\delta}^{\pm}[\EE[\Psi_{\delta,L}(\eta)^2]\EE[\Psi_{\delta,R}(\eta)^2]]$. We write
    \begin{align} \label{dvt-variance-expectation-psi}
        \mu_{\delta}^{\pm}[\EE[\Psi_{\delta,L}(\eta)^2]\EE[\Psi_{\delta,R}(\eta)^2]] = &\mu_{\delta}^{\pm}\big[\EE[\Psi_{\delta,L}(\eta)]^2\EE[\Psi_{\delta,R}(\eta)]^2\big] + \mu_{\delta}^{\pm}\big[\EE[\Psi_{\delta,L}(\eta)]^2\Var(\Psi_{\delta,R}(\eta))] \nonumber \\
        &+ \mu_{\delta}^{\pm}\big[\Var(\Psi_{\delta,L}(\eta))\EE[\Psi_{\delta,R}(\eta)]^2\big] \nonumber \\
        &+ \mu_{\delta}^{\pm}\big[\Var(\Psi_{\delta,L}(\eta))\Var(\Psi_{\delta,R}(\eta))\big].
    \end{align}
    Let us start by considering the first term on the right-hand side of \eqref{dvt-variance-expectation-psi}. It is easy to see that by $\PP$-independence of the $(h_x)_x$ (note that in the high-temperature expansion of the Ising model, points of $D_{L,\delta}$ in the sets $I \subset D_{L,\delta}$ cannot appear more than once in $I$ and $I$ is allowed to be empty), we have that $\mu_{\delta}^{\pm}$-almost surely,
    \begin{align} \label{expectation-Psi-L}
        \EE[\Psi_{\delta,L}(\eta)] = \sum_{I \subset D_{L,\delta}} \mu_{\delta,D_{L}}^{+}[\prod_{x \in I}\sigma_x] \EE[\prod_{x \in I}\vartheta_{\delta}(x)\eta_x]=1.
    \end{align}
    This yields that for any $\delta > 0$,
    \begin{equation} \label{prod-of-expectations-is-1}
        \mu_{\delta}^{\pm}\big[\EE[\Psi_{\delta,L}(\eta)]^2\EE[\Psi_{\delta,R}(\eta)]^2\big] =1.
    \end{equation}
    Let us now bound the product of variances in \eqref{dvt-variance-expectation-psi}. Using the same expansion and bounds as in the proof of \cite[Lemma~B.1]{continuum-2d-RFIM}, we obtain that for any $\eps >0$,
    \begin{align*}
        &\mu_{\delta}^{\pm} \big[ \Var(\Psi_{\delta,L}(\eta))\Var(\Psi_{\delta,R}(\eta))\big]  \\
        &\leq \mu_{\delta}^{\pm}\bigg[
        \big(\sum_{I \subset D_{R,\delta}} (1+\eps)^{\vert I \vert}[\prod_{x \in I}\vartheta_{\delta}(x)^2]\mu_{\delta,D_{R}}^{-}\big[\prod_{x \in I}\sigma_x \big]^2\big)\big(\sum_{J \subset D_{L,\delta}} (1+\eps)^{\vert J \vert}[\prod_{x \in J}\vartheta_{\delta}(x)^2]\mu_{\delta,D_{L}}^{+}\big[ \prod_{x \in J}\sigma_x \big]^2\big) \bigg].
    \end{align*}
    Let us denote by $\mathcal{E}_{\delta}$ the $\mu_{\delta}^{\pm}$-expectation on the right-hand side above. To upper bound $\mathcal{E}_{\delta}$, we can expand the sums, use Fubini-Tonelli theorem and the boundedness of $\lambda$ to obtain that
    \begin{equation*}
        \mathcal{E}_{\delta} \leq \sum_{k,p \geq 0} \frac{(1+\eps)^{k+p}M_{\lambda}^{k+p}\vartheta_{\delta}^{2(k+p)}}{p!k!} \mu_{\delta}^{\pm}\bigg[ \big(\sum_{x_1,\dots,x_p \in D_{R,\delta}}\mu_{\delta, D_{R}}^{-}[\prod_{n=1}^{p}\sigma_{x_{n}}]^2\big) \big(\sum_{x_1,\dots,x_k \in D_{L,\delta}}\mu_{\delta,D_{L}}^{+}[\prod_{j=1}^{k} \sigma_{x_{j}}]^2\big) \bigg]
    \end{equation*}
    where $\vartheta_{\delta}^{2}=\delta^{\frac{7}{4}}$ and $M_{\lambda}$ is such that for any $x \in D$, $\lambda(x)^2 \leq M_{\lambda}$. We now must bound the $\mu_{\delta}^{\pm}$-expectations above. These bounds are provided by the following claim, whose proof is postponed to the end.

    \begin{claim} \label{claim-bound-jk-expectations}
        There exists a constant $C>0$ such that for any $k,p \geq 0$ and any $\delta > 0$,
        \begin{equation*}
            \vartheta_{\delta}^{2k+2p} \mu_{\delta}^{\pm}\bigg[ \big(\sum_{x_1,\dots,x_p \in D_{R,\delta}}\mu_{\delta, D_R}^{-}[\prod_{n=1}^{p}\sigma_{x_{n}}]^2\big) \big(\sum_{x_1,\dots,x_k \in D_{L,\delta}}\mu_{\delta,D_{L}}^{+}[\prod_{n=1}^{k}\sigma_{x_{n}}]^2\big) \bigg]  \leq C^{k+p} k^{\frac{k}{2}}p^{\frac{p}{2}}.
        \end{equation*}
    \end{claim}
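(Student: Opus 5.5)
The plan is to remove the dependence on the random interface by dominating the Ising correlations in the random subdomains $D_{L,\delta}$ and $D_{R,\delta}$ by quantities that depend only on $D_{\delta}$.

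\textbf{Domination by monotonicity.} By GKS/FKG-type monotonicity in the domain, for $x_1,\dots,x_k \in D_{L,\delta}$ we have
\[
0\le \mu_{\delta,D_L}^{+}[\prod_j \sigma_{x_j}] \le \mu_{D_{L,\delta}}^{+,\text{full}}\ldots
\]
More usefully, we use directly the bound of Lemma \ref{lemma-bound-spin-correlations}, valid in any finite subgraph (its constant is lattice-universal). It gives
\[
\mu_{\delta,D_L}^{+}\Big[\prod_{j=1}^k\sigma_{x_j}\Big]^2 \le C^{2k}\delta^{k/4}\prod_{j}\big(L(x_j)\wedge d_{L}(x_j)\big)^{-1/4},
\]
where $d_L(x)=\dist(x,\partial D_{L,\delta})\ge \delta$. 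The same bound holds on the right with $d_R$; by spin flip symmetry the $-$ boundary conditions give the same correlations.

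\textbf{Summing the factors.} Since $\vartheta_\delta^2=\delta^{7/4}$, the prefactor $\delta^{7k/4}\delta^{k/4}=\delta^{2k}$ produces the Riemann-sum normalisation. We bound $(L\wedge d)^{-1/4}\le L^{-1/4}+d^{-1/4}$ and expand the product. Each resulting term is a product over a subset $A\subset\{1,\dots,k\}$ of factors $L(x_j)^{-1/4}$ and over $A^c$ of factors $d_L(x_j)^{-1/4}$. For coinciding or very close points (the set $B_k(\delta)$ of Lemma \ref{lemma-sum-prod-L-x-j-delta}) we use $L\ge\delta$, or equivalently Ising correlations with repeated points reducing to fewer points, to absorb them combinatorially. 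Then Hölder in the $x$-variables (or AM--GM, replacing each mixed product by a sum of pure powers $L^{-k/4\cdot}$-type terms) together with Lemma \ref{lemma-sum-prod-L-x-j-delta} with $\alpha$ slightly less than $1$ yields contributions of order $C^k k^{k/4}\cdot(\text{boundary sum})$. This is consistent with the target $k^{k/2}$ and leaves room.

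\textbf{Boundary term, the main obstacle.} The remaining problem is to control
\[
\mu_\delta^{\pm}\Big[\Big(\delta^2\sum_{x\in D_{L,\delta}}d_L(x)^{-1/2}\Big)^{m}\Big(\delta^2\sum_{y\in D_{R,\delta}} d_R(y)^{-1/2}\Big)^{n}\Big]
\]
by $C^{m+n}$ times at most factorial-type growth. Here $\partial D_{L,\delta}\subset\partial D_\delta\cup\gamma_\delta$, so $d_L(x)^{-a}\le d_\delta(x)^{-a}+\dist(x,\gamma_\delta)^{-a}$. The $d_\delta$ part is deterministic and bounded by Lemma \ref{lemma-sum-dist-to-boundary-power-alpha}.

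For the interface part we decompose dyadically. Writing $N_l(\gamma_\delta)=\{x:\dist(x,\gamma_\delta)\in[e^l\delta,e^{l+1}\delta)\}$, we have $\delta^2\sum_x \dist(x,\gamma_\delta)^{-1/2}\le \sum_l \delta^{3/2}e^{-l/2}|N_l(\gamma_\delta)|$. The number $|N_l|$ is bounded by $(e^l\delta)^{-2}$ times the number of $e^{l}\delta$-boxes hit by $\gamma_\delta$. The moments of this box count, a discrete Minkowski content at dimension $11/8$, are controlled through multi-point estimates: the probability that $\gamma_\delta$ comes within $r$ of points $z_1,\dots,z_j$ is at most $C^j\prod(r/\text{gaps})^{2-11/8}$. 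These follow from the one-arm/RSW crossing estimates for the critical Ising interface, iterated via the domain Markov property and quasi-multiplicativity.

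Given these estimates, the $j$-th moment of the box count at scale $r$ is at most $C^j j^{cj} r^{-11j/8}$. Hence the $l$-th term has moments of size $\delta^{3/2}e^{-l/2}(e^l\delta)^{2}(e^l\delta)^{-2}\ldots$, which gives geometric decay $e^{-l(1/2-3/8)}$-type in $l$ after the correct bookkeeping, since $2-11/8>1/2$. Minkowski's inequality in $L^{m+n}$ sums over $l$. Combining with Cauchy--Schwarz to separate the $L$ and $R$ factors, and with the $k^{k/4}$ factor from the point interactions, gives the claimed $C^{k+p}k^{k/2}p^{p/2}$. The delicate step is proving the $j$-point interface-proximity bound with a constant growing only like $C^j j^{O(j)}$ and uniformly near $\partial D_\delta$.
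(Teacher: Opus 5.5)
Your route is genuinely different from the paper's. The paper never forms a random functional of the curve. After Lemma \ref{lemma-bound-spin-correlations} and $(L\wedge d)^{-1/4}\le L^{-1/4}d^{-1/4}$ (after rescaling by $\diam(D)$), it uses Fubini. For each \emph{fixed} tuple of points it applies Cauchy--Schwarz in $\mu_{\delta}^{\pm}$ to decouple the two sides, then Lemma \ref{lemma-k-pts-estimate-interface-discrete} with a dyadic sum in $\dist(x_j,\gamma_\delta)$, which converges because $\frac58-\rho>\frac12$. The curve therefore costs only $C^k$. All combinatorial growth comes from the deterministic Lemma \ref{lemma-sum-prod-L-x-j-delta} (H\"older exponents $\frac32$ and $3$), and that is the source of $k^{k/2}$. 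Your pathwise H\"older split followed by moment bounds on $Y_a:=\delta^2\sum_x d_L(x)^{-a}$ is a legitimate alternative. However, the last step, as you describe it, does not give the stated exponent.

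Track the powers. Put exponent $q<4$ on the $L$-factors and $q'=4a$ on the distance factors. The $L$-part gives $C^k k^{k/4}$ for every such $q$. The distance part, in the worst case where every factor is of $d$-type, is $\mu_\delta^{\pm}[Y_a^{2k/q'}]^{1/2}$. If $\mu_\delta^{\pm}[Y_a^r]\le C^r r^{cr}$, this is $C^k k^{ck/(4a)}$, so you reach $k^{k/2}$ if and only if $c\le a$.

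In your scheme $c=\frac58-\rho$ at best, for three reasons:
\begin{itemize}
\item moments of box counts obtained by summing the multipoint estimate grow like $(j!)^{5/8-\rho}$ (compare Lemma \ref{lem: large deviation for Tk in sle});
\item Minkowski's inequality over scales preserves that growth;
\item convergence of the scale sum forces $a<\frac58-\rho$.
\end{itemize}
The outcome is $k^{(\frac14+\frac{5/8-\rho}{4a})k}$, strictly worse than the claim. Tolerating $C^j j^{O(j)}$ in the multipoint bound only makes it worse, so ``at most factorial-type growth'' is not enough.

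The missing input is $\mu_\delta^{\pm}[Y_a^m]\le C^m m^{am}$. You can get it in either of two ways:
\begin{itemize}
\item Use the deterministic cap $T_s\le Cs^{-2}$ at all scales $s\ge 1/m$. The scale sum then peaks at $s\asymp 1/m$ and gives $\|Y_a\|_{L^m}\le Cm^{a}$.
\item Expand $Y_a^m$ into an $m$-fold point sum and apply Lemma \ref{lemma-k-pts-estimate-interface-discrete} pointwise (constant $C^m$, since $a<\frac58-\rho$). Follow with Lemmas \ref{lemma-sum-prod-L-x-j-delta} and \ref{lemma-sum-dist-to-boundary-power-alpha} at exponent $a$. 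This is exactly the paper's pointwise computation.
\end{itemize}
After this fix the dependence on $q'$ cancels and you get $k^{k/4}\cdot k^{k/4}$. Your route then only needs a two-arm exponent exceeding $\frac13$, whereas the paper's uses $\frac58-\rho>\frac12$.

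Finally, the multipoint proximity bound is not a one-arm/RSW consequence. It requires the two-arm exponent $\frac58$ of \cite{armExponentsIsing} plus mixing, and the paper already provides it, with constant $C^k$, as Lemma \ref{lemma-k-pts-estimate-interface-discrete}.
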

    Claim \ref{claim-bound-jk-expectations} implies that there exists $K_2>0$ such that for any $\delta > 0$,
    \begin{equation*}
        \mu_{\delta}^{\pm}\big[\Var(\Psi_{\delta,L}(\eta))\Var(\Psi_{\delta,R}(\eta))\big] \leq K_2.
    \end{equation*}
    In view of this upper bound and of \eqref{dvt-variance-expectation-psi} and \eqref{prod-of-expectations-is-1}, to conclude the proof of Claim \ref{claim-uniform-boundedness-left-right-spin-fields}, it remains to upper bound uniformly in $\delta >0$ the terms 
    \begin{equation*}
        \mu_{\delta}^{\pm}[\EE[\Psi_{\delta,L}(\eta)]^2\Var(\Psi_{\delta,R}(\eta))] \quad \operatorname{and} \quad \mu_{\delta}^{\pm}[\EE[\Psi_{\delta,R}(\eta)]^2\Var(\Psi_{\delta,L}(\eta))].
    \end{equation*}
    The equality \eqref{expectation-Psi-L} yields that
    \begin{equation*}
        \mu_{\delta}^{\pm}[\EE[\Psi_{\delta,L}(\eta)]^2\Var(\Psi_{\delta,R}(\eta))] \leq \mu_{\delta}^{\pm}[\Var(\Psi_{\delta,L}(\eta))].
    \end{equation*}
    The expectation $\mu_{\delta}^{\pm}[\Var(\Psi_{\delta,L}(\eta))]$ can be bounded uniformly in $\delta$ using similar arguments as those used to bound $\mu_{\delta}^{\pm}[\Var(\Psi_{\delta,L}(\eta))\Var(\Psi_{\delta,R}(\eta))]$. We leave the details to the reader. The term $\mu_{\delta}^{\pm}[\EE[\Psi_{\delta,R}(\eta)]^2\Var(\Psi_{\delta,L}(\eta))]$ is treated similarly.

    Putting everything together, we have obtained that there exists $K>0$ such that for any $\delta > 0$
    \begin{equation*}
        \EE \otimes \mu_{\delta}^{\pm} \big[ (\theta_{R}(\delta)\theta_{L}(\delta)S_{R}^{h}(\delta)S_{L}^{h}(\delta))^{(1+\alpha)p}\big] \leq K,    \end{equation*}
    which completes the proof of Claim \ref{claim-uniform-boundedness-left-right-spin-fields}.
\end{proof}

We now prove Claim \ref{claim-Lq-norm-renomalization-factor} and Claim \ref{claim-bound-jk-expectations}.

\begin{proof}[Proof of Claim \ref{claim-Lq-norm-renomalization-factor}]
    We establish the claim for $N_{R}(\delta):=\theta_{R}(\delta)\prod_{x \in D_{R,\delta}}\cosh(\delta^{\frac{7}{8}}\lambda(x)h_x)$ since the proof for $N_{L}(\delta)$ is the same, where we have defined $N_{L}(\delta)$ as $N_{R}(\delta)$ but with $R$ replaced by $L$.
    
    Let first show that $N_{R}(\delta)$ converges to $1$ in $\PP \otimes \mu_{\delta}^{\pm}$-probability as $\delta \to 0$. Observe that for any $x \in D_{\delta}$,
    \begin{equation*}
        \EE[\log \cosh(\delta^{\frac{7}{8}}\lambda(x)h_x)] = \frac{1}{2}\delta^{\frac{7}{4}}\lambda(x)^2 +O(\delta^{\frac{7}{2}}).
    \end{equation*}
    Therefore, we have that $\mu_{\delta}^{\pm}$-almost surely,
    \begin{align*}
        &\sum_{x \in D_{R,\delta}} \EE[\log \cosh(\delta^{\frac{7}{8}}\lambda(x)h_x)] \nonumber \\
        &= \sum_{x \in D_{R,\delta}} (\frac{\delta^{\frac{7}{4}}}{2}\lambda(x)^2+O(\delta^{\frac{7}{2}})) = -\log \theta_{R}(\delta) + \sum_{x \in D_{R,\delta}} O(\delta^{\frac{7}{2}}).
    \end{align*}
    This yields that $\PP\otimes \mu_{\delta}^{\pm}$-almost surely,
    \begin{align}
        N_{R}(\delta) = \exp\bigg( \sum_{x \in D_{R,\delta}} \big[\log \cosh(\delta^{\frac{7}{8}}\lambda(x)h_x) - \EE[\log \cosh (\delta^{\frac{7}{8}}\lambda(x)h_x)] + O(\delta^{\frac{7}{2}})\big] \bigg).
    \end{align}
    Setting $x \in D_{\delta}$, $W_{x}^{\delta}:= \log \cosh(\delta^{\frac{7}{8}}\lambda(x)h_x) -\EE[\log \cosh(\delta^{\frac{7}{8}}\lambda(x)h_x)]$, we thus obtain that $\PP \otimes \mu_{\delta}^{\pm}$-almost surely,
    \begin{equation*}
        \exp\big( \sum_{x \in D_{R,\delta}} W_{x}^{\delta} - O(\delta^{\frac{3}{2}}\vol(D)) \big) \leq N_{R}(\delta) \leq \exp\big( \sum_{x \in D_{R,\delta}} W_{x}^{\delta} + O(\delta^{\frac{3}{2}}\vol(D) \big).
    \end{equation*}
    Moreover, by Chebychev's inequality and $\PP$-independence of the $(W_{x}^{\delta})_{x \in D_{\delta}}$,
    \begin{align*}
        \PP \otimes \mu_{\delta}^{\pm}\big[ \vert \sum_{x \in D_{R,\delta}} W_{x}^{\delta} \vert \geq \eps \big] &\leq \eps^{-2} \EE \otimes \mu_{\delta}^{\pm}\big[\big(\sum_{x \in D_{R,\delta}} W_{x}^{\delta} \big)^2\big]\\
        &= \eps^{-2}\mu_{\delta}^{\pm}\big[ \sum_{x \in D_{R,\delta}} \Var(W_{x}^{\delta})\big] \\
        &\leq \eps^{-2}\mu_{\delta}^{\pm}\big[\sum_{x \in D_{R,\delta}} O(\delta^{\frac{7}{2}})\big] \leq \eps^{-2}O(\vol(D)\delta^{\frac{3}{2}}).
    \end{align*}
    This shows that $\sum_{x \in D_{R,\delta}} W_{x}^{\delta}$ converges to $0$ as $\delta \to 0$ in $\PP \otimes \mu_{\delta}^{\pm}$-probability, which implies that $N_{R}(\delta)$ converges to $1$ in $\PP \otimes \mu_{\delta}^{\pm}$-probability as $\delta \to 0$.

    Finally, to prove that $N_{\delta}(R)$ is uniformly bounded in $L^q$ for any $q \geq 1$, we can argue as in the last part of the proof of \cite[Lemma~3.5]{continuum-2d-RFIM}. This completes the proof of Claim \ref{claim-Lq-norm-renomalization-factor}. 
\end{proof}

\begin{proof}[Proof of Claim \ref{claim-bound-jk-expectations}]
    To prove the claim, we are going to use Lemma \ref{lemma-bound-spin-correlations} to upper bound $\mu_{L,\delta}^{+}[\prod_{j=1}^{k} \sigma_{x_{j}}]$ and $\mu_{R,\delta}^{-}[\prod_{j=1}^{k} \sigma_{x_{j}}]$. However, one can see that the upper bound on these quantities provided by Lemma \ref{lemma-bound-spin-correlations} deteriorates as $\min_{i \neq j} \vert x_i - x_j \vert$ gets smaller.  As this produces a divergence that must be treated carefully, let us introduce the following sets. Let $k \in \mathbb{N}$. For $\delta > 0$, we define $B_{k}(\delta):= \{(x_1,\dots,x_k) \in D_{\delta}^{k}: \exists i \neq j, \vert x_i - x_j \vert \leq 4\delta \}$ and set
    \begin{equation} \label{def-good-subsets}
        \tilde D_{\delta}^{k} := D_{\delta}^k \setminus B_{k}(\delta), \quad \tilde D_{Q,\delta}^k := D_{Q,\delta}^{k} \setminus B_k(\delta),
    \end{equation}
    for $Q=L,R$. $\tilde{D}_{\delta}^{k}$, $\tilde D_{Q,\delta}^k$, $Q=L,R$, are the ``good" sets on which points are sufficiently far from each other. Let us also define the following random variables
    \begin{align*}
        &F_{L}(k,\delta) = \vartheta_{\delta}^{2k}\sum_{(x_1,\dots,x_k) \in \tilde D_{L,\delta}^k} \mu_{L,\delta}^{+} \big[\prod_{j=1}^{k} \sigma_{x_{j}} \big]^2 \quad \operatorname{and} \\
        &B_{L}(k,\delta) = \vartheta_{\delta}^{2k}\sum_{(x_1,\dots,x_k) \in B_k(\delta) \cap D_{L,\delta}^k} \mu_{L,\delta}^{+} \big[\prod_{j=1}^{k} \sigma_{x_{j}} \big]^2.
    \end{align*}
    $F_{R}(k,\delta)$ and $B_{R}(k,\delta)$ are defined similarly but with $L$ replaced by $R$ and $+$ replaced by $-$. The first step of the proof of Claim \ref{claim-bound-jk-expectations} consists in showing that there exists $C>0$ such that for any $k, p \in \mathbb{N}$ and any $\delta >0$,
    \begin{equation} \label{upper-bound-F-L-F-R}
        \mu_{\delta}^{\pm}[F_{L}(k,\delta)F_{R}(p,\delta)] \leq C^{k+p}k^{\frac{k}{2}}p^{\frac{p}{2}}.
    \end{equation}
    We will then deal with the contributions arising from $B_{L}(k,\delta)$ and $B_{R}(p,\delta)$ by proving that there exists $\tilde C > 0$ such that for any $k,p \in \mathbb{N}$ and any $\delta > 0$,
    \begin{align} \label{bounds-B-R-B-L-F-R-F-L}
        &\mu_{\delta}^{\pm}[F_{L}(k,\delta)B_{R}(p,\delta)] \leq \tilde C^{k+p}\delta^{\frac{7}{4}} k^{\frac{k}{2}}p^{\frac{p}{2}}, \quad \mu_{\delta}^{\pm}[F_{R}(p,\delta)B_{L}(k,\delta)] \leq \tilde C^{k+p}\delta^{\frac{7}{4}} k^{\frac{k}{2}}p^{\frac{p}{2}}, \nonumber \\
        &\mu_{\delta}^{\pm}[B_{R}(p,\delta)B_{L}(k,\delta)] \leq \tilde C^{k+p}\delta^{\frac{7}{4}} k^{\frac{k}{2}}p^{\frac{p}{2}}.
    \end{align}
    Claim \ref{claim-bound-jk-expectations} will then follow from combining \eqref{upper-bound-F-L-F-R} and \eqref{bounds-B-R-B-L-F-R-F-L}.
    
    Let us start by showing \eqref{upper-bound-F-L-F-R}. The first step is to use Lemma \ref{lemma-bound-spin-correlations} together with Cauchy-Schwarz inequality with respect  to $\mu_{\delta}^{\pm}$ to obtain that
    \begin{equation*}
        \mu_{\delta}^{\pm}[F_{L}(k,\delta)F_{R}(p,\delta)] \leq D_{L}(k,\delta)D_{R}(p,\delta)
    \end{equation*}
    where we have set
    \begin{align*}
        D_{L}(k,\delta) = \delta^{2k}\sum_{(x_1,\dots,x_k) \in \tilde D_{\delta}^k} \big[\prod_{j=1}^{k} L(x_j)\big]^{-\frac{1}{4}} \mu_{\delta}^{\pm}\big[\mathbb{I}_{(x_1,\dots,x_k) \in D_{L,\delta}^k} \big[\prod_{j=1}^{k}d_{\delta,\infty}(x_j)\big]^{-\frac{1}{2}} \big]^{\frac{1}{2}}
    \end{align*}
    and where $D_{R}(p,\delta)$ is defined analogously but with $L$ replaced by $R$. Above, for $Q \in \{L,R\}$ and $x \in D_{Q,\delta}$, we have defined $d_{\delta,\infty}(x_j):= \dist(x,\partial D_{Q,\delta})$, where $\dist$ is the Euclidean distance. The upper bound \eqref{upper-bound-F-L-F-R} will follow immediately once we have shown that there exists $C>0$ such that for any $k, p \in \mathbb{N}$ and any $\delta >0$,
    \begin{equation} \label{upper-bound-D-L-D-R}
        D_{L}(k,\delta) \leq C^kk^{\frac{k}{2}}, \quad \operatorname{and} \quad D_{R}(p,\delta) \leq C^pp^{\frac{p}{2}}.
    \end{equation}
    Since the arguments are the same for both $D_{L}(k,\delta)$ and $D_{R}(p,\delta)$, we are going to show \eqref{upper-bound-D-L-D-R} for $D_{L}(k,\delta)$ only (\eqref{upper-bound-D-L-D-R} for $D_{R}(p,\delta)$ actually follows from symmetry). To upper bound $D_{L}(k,\delta)$, we introduce, for $0 \leq r \leq k$, the sets
    \begin{equation*}
        \mathcal{L}(r,k) := \{(x_1,\cdots,x_k) \in D_{\delta}^k: \forall j \leq r, L(x_j) \leq d_{\delta}(x_j), \quad \forall j \geq r+1, L(x_j) > d_{\delta}(x_j) \}
    \end{equation*}
    where for $x \in D_{\delta}$, $d_{\delta}(x):= \dist(x,\partial D_{\delta})$, $\dist$ being the Euclidean distance. Note that if $(x_1,\dots,x_k)$ belongs to $\mathcal{L}(r,k)$, then in the upper bound of Lemma \ref{lemma-bound-spin-correlations} on $k$-spin correlations, we can replace $L(x_j) \wedge d_{\delta}(x_j)$ by $L(x_j)$ if $j \leq r$ and by $d_{\delta}(x_j)$ if $j \geq r+1$. This is why introducing the sets $\mathcal{L}(r,k)$ will turn out to be useful. Using these sets, we can write that
    \begin{align} \label{decomposition-D-L-k-delta}
        D_{L}(k,\delta) &= \sum_{r=0}^{k} \binom{k}{r} \delta^{2k}\sum_{(x_1,\dots,x_k) \in \tilde D_{\delta}^k} \mathbb{I}_{\mathcal{L}(r,k)} \big[\prod_{j=1}^{k} L(x_j)\big]^{-\frac{1}{4}} \mu_{\delta}^{\pm}\big[\mathbb{I}_{(x_1,\cdots,x_k)\in D_{L,\delta}^k} \big[\prod_{j=1}^{k}d_{\delta,\infty}(x_j)\big]^{-\frac{1}{2}} \big]^{\frac{1}{2}}\nonumber \\
        &=: \sum_{r=0}^{k} \binom{k}{r} I(r,k;\delta)
    \end{align}
    where we recall the definition \eqref{def-good-subsets} of $\tilde D_{\delta}^{k}$. Above, the binomial coefficient $\binom{k}{r}$ accounts for the number of ways to choose the coordinates of the points for which $L(x_j) \leq d_{\delta}(x_j)$. Let us start with the term $r=0$ in \eqref{decomposition-D-L-k-delta}. For $(x_1,\dots,x_k) \in \tilde D_{\delta}^{k} \cap \mathcal{L}(0,k)$, we have that
    \begin{align*}
        &\mu_{\delta}^{\pm}\big[\mathbb{I}_{(x_1,\dots,x_k) \in D_{L,\delta}^k} \big[\prod_{j=1}^{k}d_{\delta,\infty}(x_j)\big]^{-\frac{1}{2}} \big]\\
        &\leq \sum_{l_1=0}^{\log(\delta^{-1}d_{\delta}(x_1))} \dots \sum_{l_k=0}^{\log(\delta^{-1}d_{\delta}(x_k))} \prod_{j=1}^{k} (e^{l_j}\delta)^{-\frac{1}{2}} \mu_{\delta}^{\pm} \bigg[ \bigcap_{j=1}^{k} \{ x_j \in N_{l_{j}}(\gamma_{\delta}) \}\bigg]
    \end{align*}
    where for $l \in \mathbb{N}$, we have set
    \begin{equation*}
        N_{l_{j}}(\gamma_{\delta}) := \{ x \in D_{\delta}: e^{l}\delta \leq \dist(x,\gamma_{\delta})  \leq e^{l+1}\delta \}.
    \end{equation*}
    By Lemma \ref{lemma-k-pts-estimate-interface-discrete}, we obtain that there exist $C, \delta_0 > 0$ such that for any $0 < \delta < \delta_0$ and any $k \in \mathbb{N}$,
    \begin{align*}
        &\mu_{\delta}^{\pm}\big[\mathbb{I}_{(x_1,\dots,x_k) \in D_{L,\delta}^k} \big[\prod_{j=1}^{k}d_{\delta,\infty}(x_j)\big]^{-\frac{1}{2}} \big]\\
        &\leq \sum_{l_1=0}^{\log(\delta^{-1}d_{\delta}(x_1))} \dots \sum_{l_k=0}^{\log(\delta^{-1}d_{\delta}(x_k))} \prod_{j=1}^{k} (e^{l_j}\delta)^{-\frac{1}{2}} \bigg(\prod_{j=1}^{k}\frac{e^{l+1}\delta}{d_{\delta}(x_j)}\bigg)^{\frac{5}{8}-\rho}\\
        &\leq C^k \prod_{j=1}^{k} d_{\delta}(x_j)^{-\frac{1}{2}}.
    \end{align*}
    Plugging this upper bound in the definition of $I(0,k;\delta)$ and using Lemma \ref{lemma-sum-prod-L-x-j-delta} and Lemma \ref{lemma-sum-dist-to-boundary-power-alpha} to upper bound the resulting sum, we find that for some $\tilde C>0$,
    \begin{equation} \label{upper-bound-I-0-k-delta}
        I(0,k;\delta) \leq \tilde C^k k^{\frac{k}{4}}.
    \end{equation}
    Let us now turn to the terms with $1 \leq r \leq k$ in \eqref{decomposition-D-L-k-delta}. For fixed $(x_{1},\dots,x_{k}) \in \tilde D_{\delta}^{k} \cap \mathcal{L}(r,k)$, we define the event, for $0 \leq m \leq r$,
    \begin{equation*}
        \mathcal{G}(r,m) := \{\forall 1 \leq j \leq m, d_{\delta,\infty}(x_{j}) > L(x_j), \quad \forall m+1 \leq j \leq r, d_{\delta,\infty}(x_j) \leq L(x_j) \}.
    \end{equation*}
    The events $(\mathcal{G}(r,m))_m$ will be useful to estimate the $\mu_{\delta}^{\pm}$-expectation of $\prod_j d_{\delta,\infty}(x_j)$ appearing in \eqref{decomposition-D-L-k-delta}. Indeed, we will use Lemma \ref{lemma-k-pts-estimate-interface-discrete} to upper bound this expectation and introducing the events $(\mathcal{G}(r,m))_m$ will allow us to upper bound the random variable $d_{\delta,\infty}(x)^{-1/2}$ by $L(x_j)^{-1/2}$ for points that are close to each other and for which the estimate provided by Lemma \ref{lemma-k-pts-estimate-interface-discrete} is thus not good. We also let $M(x_j)=L(x_j)$ if $m+1 \leq j \leq r$ and $M(x_j) = d_{\delta}(x_j)$ if $j \geq r+1$. For each $1 \leq r \leq k$, we then have that
    \begin{align} \label{decomposition-I-r-k-delta}
        I(r,k;\delta) = \sum_{m=0}^{r} \binom{r}{m} \delta^{2k}\sum_{(x_1,\dots,x_k) \in \tilde D_{\delta}^k} &\mathbb{I}_{\mathcal{L}(r,k)} \big[\prod_{j=1}^{k} L(x_j)\big]^{-\frac{1}{4}} \nonumber \\
        &\mu_{\delta}^{\pm}\big[\mathbb{I}_{(x_1,\dots,x_k) \in D_{L,\delta}^k} \mathbb{I}_{\mathcal{G}(r,m)}\big[\prod_{j=1}^{k}d_{\delta,\infty}(x_j)\big]^{-\frac{1}{2}} \big]^{\frac{1}{2}}.
    \end{align}
    Moreover, for each $1 \leq r \leq k$, $0 \leq m \leq r$ and $(x_1,\dots,x_k) \in \tilde D_{\delta}^k \cap \mathcal{L}(r,k)$, we have that, using Lemma \ref{lemma-k-pts-estimate-interface-discrete} and a reasoning similar to the one below \eqref{decomposition-D-L-k-delta},
    \begin{align*}
         &\mu_{\delta}^{\pm}\big[\mathbb{I}_{(x_1,\dots,x_k) \in D_{L,\delta}^k} \mathbb{I}_{\mathcal{G}(r,m)}\big[\prod_{j=1}^{k}d_{\delta,\infty}(x_j)\big]^{-\frac{1}{2}}\big] \\
         &\leq \big[\prod_{j=1}^{m} L(x_j)\big]^{-\frac{1}{2}}  \mu_{\delta}^{\pm}\big[\mathbb{I}_{(x_1,\dots,x_k) \in D_{L,\delta}^k} \mathbb{I}_{\mathcal{G}(r,m)}\big[\prod_{j=m+1}^{k}d_{\delta,\infty}(x_j)\big]^{-\frac{1}{2}}\big] \\
         &\leq  \big[\prod_{j=1}^{m} L(x_j)\big]^{-\frac{1}{2}} \sum_{l_{m+1}=0}^{\log(\delta^{-1}M(x_{m+1}))} \dots \sum_{l_{k}=0}^{\log(\delta^{-1}M(x_k))} \begin{aligned}[t]&\big[\prod_{j=1}^{k} e^ l\delta]^{-\frac{1}{2}} \bigg(\prod_{j=m+1}^{r}\frac{e^{(l_j+1)}\delta}{L(x_j)}\bigg)^{\frac{5}{8}-\rho}\\
         &\times \bigg(\prod_{j=r+1}^{k}\frac{e^{(l_j+1)}\delta}{d_{\delta}(x_j)}\bigg)^{\frac{5}{8}-\rho}\end{aligned}\\
         &\leq C^{k-m}\big[\prod_{j=1}^{r}L(x_j)\big]^{-\frac{1}{2}}\big[\prod_{j=r+1}^{k}d_{\delta}(x_j) \big]^{-\frac{1}{2}}.
    \end{align*}
    Using this to upper bound the right-hand side of \eqref{decomposition-I-r-k-delta}, we obtain that for each $1 \leq r \leq k$,
    \begin{align*}
        I(r,k;\delta) &\leq \sum_{m=0}^{r} \binom{r}{m} \delta^{2k}\sum_{(x_1,\dots,x_k) \in \tilde D_{\delta}^k} \mathbb{I}_{\mathcal{L}(r,k)} [\prod_{j=1}^{k} L(x_j)]^{-\frac{1}{4}} [\prod_{j=1}^{r} L(x_j)]^{-\frac{1}{4}} [\prod_{j=r+1}^{k} d_{\delta}(x_j)]^{-\frac{1}{4}}\\
        &\leq C^k\sum_{m=0}^{r} \binom{r}{m} \delta^{2k}\sum_{(x_1,\dots,x_k) \in \tilde D_{\delta}^k} \mathbb{I}_{\mathcal{L}(r,k)} [\prod_{j=1}^{k} L(x_j)]^{-\frac{1}{2}}[\prod_{j=r+1}^{k} d_{\delta}(x_j)]^{-\frac{1}{4}}
    \end{align*}
    where the last inequality follows from rescaling $D_{\delta}$ by $\diam(D)$. To upper bound the sums appearing on the right-hand side above, one can use H\"older's inequality with $\alpha =\frac{3}{2}$ for the term $[\prod_{j=1}^{k} L(x_j)]^{-\frac{1}{2}}$ and $\tilde \alpha=3$ for the term $[\prod_{j=r+1} d_{\delta}(x_j)]^{-\frac{1}{4}}$ and then apply Lemma \ref{lemma-sum-prod-L-x-j-delta} and Lemma \ref{lemma-sum-dist-to-boundary-power-alpha}. We then obtain that for each $1 \leq r \leq k$,
    \begin{equation*}
        I(r,k;\delta) \leq \tilde C^k k^{\frac{k}{2}}.
    \end{equation*}
    Going back to the decomposition \eqref{decomposition-D-L-k-delta}, this upper bound together with the inequality \eqref{upper-bound-I-0-k-delta} for the $r=0$ term shows that the inequality \eqref{upper-bound-D-L-D-R} indeed holds, which, as explained above, implies the inequality \eqref{upper-bound-F-L-F-R}.

    To complete the proof of Claim \ref{claim-bound-jk-expectations}, it remains to prove the inequalities \eqref{bounds-B-R-B-L-F-R-F-L}. Let us give the details for the term $\mu_{\delta}^{\pm}[F_{L}(k,\delta)B_{R}(p,\delta)]$, as the upper bounds on the other terms follow from similar arguments. Let $k,p \in \mathbb{N}$. We have that
    \begin{align*}
        &\mu_{\delta}^{\pm}\big[F_{L}(k,\delta)B_{R}(p,\delta)\big]\\
        &= \vartheta_{\delta}^{2(k+p)} \sum_{\substack{(x_{p+1},\dots,x_{p+k}) \in \tilde D_{\delta}^{k} \\ (x_1,\dots,x_p) \in B_{p}(\delta)}} \mu_{\delta}\big[ \mathbb{I}_{\substack{(x_1,\dots,x_p) \in D_{R,\delta}\\ (x_{p+1},\dots,x_{k+p}) \in D_{L,\delta}}} \mu_{L,\delta}^{+}[\prod_{j=p+1}^{k+p}\sigma_{x_{j}}]^2 \mu_{R,\delta}^{-}[\prod_{j=1}^{p}\sigma_{x_{j}}]^2 \big].
    \end{align*}
    For $2 \leq r \leq p$, we define the following subset of $B_{p}(\delta)$:
    \begin{equation*}
        \mathcal{C}(r,p) := \{(x_1,\dots,x_p) \in D_{\delta}^p: \forall 1 \leq j \leq r, L(x_j) \leq 4\delta,\quad \forall j \geq r+1, L(x_j) > 4\delta \}.
    \end{equation*}
    The sets $\mathcal{C}(r,p)$ will be useful to isolate the ``bad", i.e., potentially diverging, contributions of points close to each other in the upper bound on $k$-point spin correlations provided by Lemma \ref{lemma-bound-spin-correlations}. We then have that
    \begin{align} \label{decomposition-F-L-B-R}
        \mu_{\delta}^{\pm}\big[F_{L}(k,\delta)B_{R}(p,\delta)\big]
        =\sum_{r=2}^{p} \binom{p}{r} J(r,p,k;\delta)
    \end{align}
    where we have set
    \begin{align} \label{def-J-r-p-k}
        &J(r,p,k;\delta) \nonumber \\
        &:= \vartheta_{\delta}^{2(k+p)} \sum_{\substack{(x_{p+1},\dots,x_{p+k}) \in \tilde D_{\delta}^{k} \\ (x_1,\dots,x_p) \in B_{p}(\delta)}} \mathbb{I}_{(x_1,\dots,x_p)\in \mathcal{C}(r,p)} \mu_{\delta}^{\pm}\big[ \mathbb{I}_{\substack{(x_1,\dots,x_p) \in D_{R,\delta}\\ (x_{p+1},\dots,x_{k+p}) \in D_{L,\delta}}} \mu_{L,\delta}^{+}[\prod_{j=p+1}^{k+p}\sigma_{x_{j}}]^2 \mu_{R,\delta}^{-}[\prod_{j=1}^{p}\sigma_{x_{j}}]^2 \big].
    \end{align}
    For $(x_{1},\dots,x_{r}) \in \mathcal{C}(r,p)$, denote by $(x_{i_{1}},\dots, x_{i_{m}}) \subset (x_1,\dots,x_r)$ a choice of points such that for any $n \neq m$, $\vert x_{i_{n}} - x_{i_{m}} \vert > 4\delta$. We denote by $\mathcal{M}$ this set of coordinates. In the special case $r=2$, simply choose one of the two points that are distance less than $4\delta$ from each other. Note also that $m \leq r/2$. For $(x_1,\dots,x_{k+p}) \in \mathcal{C}(r,p) \times \tilde D_{\delta}^k$, Lemma \ref{lemma-bound-spin-correlations} yields that
    \begin{align}
        &\mu_{\delta}^{\pm}\big[ \mathbb{I}_{\substack{(x_1,\dots,x_p) \in D_{R,\delta} \nonumber \\ (x_{p+1},\dots,x_{k+p}) \in D_{L,\delta}}} \mu_{L,\delta}^{+}[\prod_{j=p+1}^{k+p}\sigma_{x_{j}}]^2 \mu_{R,\delta}^{-}[\prod_{j=1}^{p}\sigma_{x_{j}}]^2 \big] \nonumber \\
        &\leq \delta^{\frac{(k+p-r+m)}{4}}\big[\prod_{j=p+1}^{k+p} L(x_j;X[p+1,k+p])\big]^{-\frac{1}{4}} \nonumber \\
        &\times \big[\prod_{j \in \mathcal{M} \cup \{r+1,\dots, p \}} L(x_j;X[(r+1,p)\cup \mathcal{M}])\big]^{-\frac{1}{4}} \nonumber \\
        &\times \mu_{\delta}^{\pm}\big[\mathbb{I}_{\substack{(x_{r+1},\dots,x_p) \in D_{L,\delta}^k \\ (x_{r+1}, \dots, x_{p})}} \big[\prod_{j\in \mathcal{M} \cup \{r+1,\dots,k+p\}} d_{\delta,\infty}(x_j)\big]^{-\frac{1}{4}}\big]. \label{upper-bound-for-J-r-p}
    \end{align}
    Above, we have set $X[p+1,k+p]=(x_{p+1},\dots,x_{k+p})$ and $X[(r+1,p) \cup \mathcal{M}]$ is the set of points $(x_j)_j$ with index $j$ in the set $[r+1,p] \cup \mathcal{M}$. For a subset $P$ of indices in $[1,\dots,k+p]$, the quantity $L(x_j; X[P])$ is then defined as
    \begin{equation*}
        L(x_j; X[P]) := \frac{1}{4} \min_{p \in P, p \neq j} \vert x_p - x_j \vert.
    \end{equation*}
    Plugging the upper bound \eqref{upper-bound-for-J-r-p} in the definition \eqref{def-J-r-p-k} of $J(r,p,k;\delta)$ and using the same arguments as above to control the sum, we find that for each $2 \leq r \leq p$,
    \begin{align*}
        J(r,p,k;\delta) \leq \sum_{m=1}^{\lfloor \frac{r}{2} \rfloor} \binom{r}{m} \delta^{2m} \delta^{\frac{7(r-m)}{4}}C^{k+p-r+m} k^{\frac{k}{2}} (p-r+m)^{\frac{(p-r+m)}{2}}.
    \end{align*}
    Using the decomposition \eqref{decomposition-F-L-B-R}, this shows that
    \begin{equation*}
        \mu_{\delta}^{\pm}[F_{L}(k,\delta)B_{R}(p,\delta)] \leq \tilde C^{k+p}\delta^{\frac{7}{4}} k^{\frac{k}{2}}p^{\frac{p}{2}}.
    \end{equation*}
    As explained above, this inequality completes the proof of Claim \ref{claim-bound-jk-expectations}.
\end{proof}

\begin{proof}[Proof of Claim \ref{claim-uniform-boundedness-magnetization-partition-function}]
    We use H\"older's inequality twice to obtain that
    \begin{align*}
        &\EE \otimes \mu_{\delta}^{\pm} \bigg[ \exp\big(q(1+\alpha)M^{h}(\gamma_{\delta})\big)\big(\tilde{\mathcal{Z}}_{\delta}^{\pm, h}\big)^{-q(1+\alpha)}\bigg(\frac{\theta_{D}(\delta)}{\theta_L(\delta)\theta_{R}(\delta)}\bigg)^{q(1+\alpha)} \bigg]\\
        &\leq \EE \otimes \mu_{\delta}^{\pm} \bigg[ \exp\big(\alpha_1 M^{h}(\gamma_{\delta})\big)\bigg]^{\frac{1}{\alpha_1}} \EE \otimes \mu_{\delta}^{\pm} \bigg[ \big(\tilde{\mathcal{Z}}_{\delta}^{\pm, \lambda, h}\big)^{-\alpha_2}\bigg]^{\frac{1}{\alpha_2}} \EE \otimes \mu_{\delta}^{\pm} \bigg[ \bigg(\frac{\theta_{D}(\delta)}{\theta_L(\delta)\theta_{R}(\delta)}\bigg)^{\alpha_3}\bigg]^{\frac{1}{\alpha_3}} 
    \end{align*}
    for some $\alpha_1, \alpha_2, \alpha_3 > 0$. \cite[Corollary~C.2]{continuum-2d-RFIM} shows that the second expectation on the right-hand side above is uniformly bounded in $\delta$. On the other hand, Claim \ref{claim-cvg-magnetization-interface} below implies that the first expectation on the right-hand side above is uniformly bounded. To conclude the proof of the claim, it suffices to observe that $\mu_{\delta}^{\pm}$-almost surely,
    \begin{equation*}
        \bigg(\frac{\theta_{D}(\delta)}{\theta_L(\delta)\theta_{R}(\delta)}\bigg)^{\alpha_3} = \exp \big(-\frac{\alpha_3\delta^{\frac{7}{4}}}{2}\sum_{x \in V(\gamma_{\delta})} \lambda(x)^2 \big),
    \end{equation*}
    which implies that the $\PP \otimes \mu_{\delta}^{\pm}$-expectation of $(\theta_D(\delta)/\theta_{L}(\delta)\theta_{R}(\delta))^{\alpha_3}$ is bounded by $1$, uniformly in $\delta$. This completes the proof of Claim \ref{claim-uniform-boundedness-magnetization-partition-function}.
\end{proof}

\begin{claim} \label{claim-cvg-magnetization-interface}
    Let $q \in (-\infty,0) \cup (0,+\infty)$. Then, as $\delta \to 0$, $\EE \otimes \mu_{\delta}^{\pm} \big[ \exp(qM^{h}(\gamma_{\delta})) \big] \to 1$.
\end{claim}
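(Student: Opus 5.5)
Since $\gamma_{\delta}$ is independent of $h$ under $\PP \otimes \mu_{\delta}^{\pm}$, condition on $\gamma_{\delta}$ and integrate out the Gaussian field first. Given $\gamma_{\delta}$, $M^{h}(\gamma_{\delta})$ is a centered Gaussian variable. The sets $V_L(\gamma_{\delta})$ and $V_R(\gamma_{\delta})$ are disjoint, since a vertex adjacent to the interface lies on exactly one side (it carries spin $+1$ or $-1$). Hence its conditional variance is
\begin{equation*}
    \delta^{\frac{7}{4}}\sum_{x \in V(\gamma_{\delta})} \lambda(x)^2, \qquad V(\gamma_{\delta}):=V_L(\gamma_{\delta})\cup V_R(\gamma_{\delta}).
\end{equation*}
The Gaussian moment generating function then gives the exact identity
\begin{equation*}
    \EE \otimes \mu_{\delta}^{\pm}\big[\exp(qM^{h}(\gamma_{\delta}))\big] = \mu_{\delta}^{\pm}\Big[\exp\Big(\frac{q^2}{2}\delta^{\frac{7}{4}}\sum_{x\in V(\gamma_{\delta})}\lambda(x)^2\Big)\Big].
\end{equation*}
With $N_{\delta}:=\delta^{\frac{7}{4}}\vert V(\gamma_{\delta})\vert$, the right-hand side lies between $1$ and $\mu_{\delta}^{\pm}[\exp(cN_{\delta})]$, where $c:=q^2M_{\lambda}/2$ and $M_{\lambda}$ bounds $\lambda^2$. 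So it suffices to show $\mu_{\delta}^{\pm}[\exp(cN_{\delta})]\to 1$ for every fixed $c>0$.

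I would get this from moment bounds on the discrete Minkowski content of $\gamma_{\delta}$. Every $x\in V(\gamma_{\delta})$ satisfies $\dist(x,\gamma_{\delta})\le \delta$. Expanding the exponential and using Fubini,
\begin{equation*}
    \mu_{\delta}^{\pm}[e^{cN_{\delta}}] \le \sum_{k\ge 0}\frac{c^k\delta^{\frac{7k}{4}}}{k!}\sum_{x_1,\dots,x_k\in D_{\delta}}\mu_{\delta}^{\pm}\Big[\bigcap_{j=1}^k\{\dist(x_j,\gamma_{\delta})\le \delta\}\Big].
\end{equation*}
The $k$-point estimate of Lemma \ref{lemma-k-pts-estimate-interface-discrete} (the one already used in the proof of Claim \ref{claim-bound-jk-expectations}) bounds the inner probability by a product of factors $(\delta/r_j)^{\frac{5}{8}-\rho}$. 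Here $r_j$ is the minimum of the distance from $x_j$ to the boundary and the distance from $x_j$ to the other chosen points. It follows the usual chaining or ordering of points, as with $L(x_j)$ in Lemma \ref{lemma-sum-prod-L-x-j-delta}, possibly up to a factor $C^k$ and reordering. Summing then proceeds as follows:
\begin{itemize}
    \item Each point contributes $\delta^{\frac{7}{4}}\cdot\delta^{\frac{5}{8}-\rho}\cdot\delta^{-2}\cdot\delta^{2}\sum r^{-(\frac{5}{8}-\rho)} \lesssim \delta^{\frac{3}{8}-\rho}$ times a summable Riemann sum. The exponent $\frac{5}{8}-\rho<2$ is integrable, and Lemma \ref{lemma-sum-dist-to-boundary-power-alpha} handles the boundary factors.
    \item Configurations where two points are within $4\delta$ of each other are treated as in the $B_k(\delta)$ part of Claim \ref{claim-bound-jk-expectations}: one point is merged into a neighbor, losing a factor $O(1)$ in place of a volume factor $\delta^{-2}$, which still gains since $\delta^{\frac{7}{4}}\ll 1$.
\end{itemize}
The outcome is that the $k$-th term is at most $(C c\,\delta^{\frac{3}{8}-\rho})^k k^{\beta k}/k!$ with some $\beta<1$, via Lemma \ref{lemma-sum-prod-L-x-j-delta}. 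The series is therefore dominated by $1+O(\delta^{\frac{3}{8}-\rho})$ and tends to $1$. Even a crude deterministic bound suffices for the combinatorial factor $k^{\beta k}$, since the per-point smallness $\delta^{\frac{3}{8}-\rho}$ beats any $C^k$.

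The main obstacle is the $k$-point estimate for the Ising interface and the bookkeeping that makes the constant grow at most like $C^k k^{\beta k}$ with $\beta<1$, uniformly in $\delta$, including near $a_{\delta},b_{\delta}$ and the boundary. If the available estimate is only stated for separated points, I would first try a cruder route. A one-point estimate gives $\mu_{\delta}^{\pm}[N_{\delta}]=O(\delta^{\frac{3}{8}-\rho})$. A deterministic bound $N_{\delta}\le \delta^{\frac{7}{4}}\vert D_{\delta}\vert=O(\delta^{-\frac14})$ is, however, too weak to control the exponential alone. So the multi-point bound really is needed, and this is where I expect the work to lie.
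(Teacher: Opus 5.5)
Your proposal is correct and follows essentially the paper's route. You integrate out the Gaussian field using $V_L(\gamma_\delta)\cap V_R(\gamma_\delta)=\emptyset$, bound the result by $\mu_\delta^{\pm}[\exp(c\,\delta^{7/4}\vert V(\gamma_\delta)\vert)]$, expand the exponential, and control $\delta^{7k/4}\mu_\delta^{\pm}[\vert V(\gamma_\delta)\vert^k]$ through the $k$-point two-arm bound of Lemma~\ref{lemma-k-pts-estimate-interface-discrete}. As in the paper, you treat well-separated tuples (via Lemmas~\ref{lemma-sum-prod-L-x-j-delta} and~\ref{lemma-sum-dist-to-boundary-power-alpha}) separately from tuples with points within $O(\delta)$ of each other or of $\partial D_\delta$, obtaining $\delta^{7k/4}\mu_\delta^{\pm}[\vert V(\gamma_\delta)\vert^k]\le \tilde c^{\,k}\delta^{(3/8-\rho)k}k^{(5/8-\rho)k}$, which is the bound in \eqref{final-bound-I-1-k-delta} and \eqref{final-bound-I-2-k-delta}.

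The only difference is cosmetic. The paper phrases the conclusion as termwise convergence plus a summable uniform domination (Claims~\ref{claim-cvg-to-0-k-moment-V-gamma} and~\ref{claim-bound-EE-k-moment-V-gamma}) followed by dominated convergence, whereas you read the limit off directly from the factor $\delta^{(3/8-\rho)k}$. One small correction: Lemma~\ref{lemma-sum-prod-L-x-j-delta} requires the exponent to be $<1$, not merely $<2$; this holds since $\frac58-\rho<1$.
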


\begin{proof}[Proof of Claim \ref{claim-cvg-magnetization-interface}]
    Observe first that by symmetry of the law of $(h_x)_{x}$ under $\PP$, we can assume that $q \in (0,\infty)$. Thus, let $q \in (0,\infty)$. By definition of $\gamma_{\delta}$, $\mu_{\delta}^{\pm}$-almost surely, $V_{L}(\gamma_{\delta}) \cap V_{R}(\gamma_{\delta}) = \emptyset$. Therefore, since the random variables $(h_{x})_{x}$ are $\PP$-independent, we have that
    \begin{equation*}
        \EE \otimes \mu_{\delta}^{\pm} \big[  \exp(qM^{h}(\gamma_{\delta})) \big] = \mu_{\delta}^{\pm}\big[ \exp(\frac{q^2}{2}\delta^{\frac{7}{4}}\sum_{x \in V(\gamma_{\delta})} \lambda(x)^2) \big]
    \end{equation*}
    where we used Fubini-Tonelli theorem and where $V(\gamma_{\delta}) = V_{L}(\gamma_{\delta}) \cup V_{R}(\gamma_{\delta})$ denotes the set of vertices that are neighboring $\gamma_{\delta}$. Letting $0 < M_{\lambda}<\infty$ be such that for any $x \in D$, $\lambda(x)^2 \leq M_{\lambda}$, we obtain the upper bound
    \begin{equation*}
        \EE \otimes \mu_{\delta}^{\pm} \big[  \exp(qM^{h}(\gamma_{\delta})) \big] \leq \mu_{\delta}^{\pm}\big[ \exp(\frac{q^2M_{\lambda}}{2}\delta^{\frac{7}{4}}\vert V(\gamma_{\delta}) \vert ) \big].
    \end{equation*}
    Expanding the exponential on the right-hand side above and using the Fubini-Tonelli theorem, we obtain that
    \begin{equation*}
        \EE \otimes \mu_{\delta}^{\pm} \big[  \exp(qM^{h}(\gamma_{\delta})) \big] \leq \sum_{k \geq 0} \frac{2^{-k}q^{2k}M_{\lambda}^{k}\delta^{\frac{7k}{4}}}{k!} \mu_{\delta}^{\pm}[\vert V(\gamma_{\delta}) \vert^k]. 
    \end{equation*}
    To show that the sum over $k\ge1$ on the right-hand side converges to $0$ as $\delta \to 0$, we are going to use the dominated convergence theorem. To do so, we will prove the following two claims.

    \begin{claim} \label{claim-cvg-to-0-k-moment-V-gamma}
        Let $k \in \mathbb{N}\setminus \{0\}$. Then, $\lim_{\delta \to 0} \delta^{\frac{7k}{4}}\mu_{\delta}^{\pm}[\vert V(\gamma_{\delta}) \vert^k] = 0$.
    \end{claim}

    \begin{claim} \label{claim-bound-EE-k-moment-V-gamma}
        Let $0<\rho \ll 1$. There exist $\delta_{0} > 0$ and a constant $C>0$ depending on $D$ only such that for any $k \geq 1$ and any $0 < \delta <\delta_{0}$, $\delta^{\frac{7k}{4}}\mu_{\delta}^{\pm}[\vert V(\gamma_{\delta}) \vert^k] \leq C^kk^{(\frac{5}{8}-\rho)k}$.
    \end{claim}

    The proof of these claims is given just below. Assuming them for a moment, observe that the sequence $(C^kk^{(\frac{5}{8}-\rho)k})_{k \geq 0}$ of upper bounds given by Claim \ref{claim-bound-EE-k-moment-V-gamma} is summable in $k$ once divided by ($k!$). Therefore, as explained above, Claim \ref{claim-cvg-magnetization-interface} now follows from the dominated convergence theorem.
\end{proof}

\begin{proof}[Proof of Claim \ref{claim-cvg-to-0-k-moment-V-gamma}]
    Let $k \in \mathbb{N} \setminus \{0\}$ and $0 < \rho \ll 1$. We define the following two subsets of $D_{\delta}^{k}$:
    \begin{align*}
        &B_{k}(4\delta):= \{(x_1,\dots,x_{k}) \in D_{\delta}^{k}: \exists i \neq j, \vert x_i - x_j \vert \leq 4\delta\}\\
        &C_{k}(\delta) := \{(x_1,\dots,x_k) \in D_{\delta}^{k}: \exists j, \dist(x_j,\partial D_{\delta}) \leq c\delta\}.
    \end{align*}
    where $c>1$. With these definitions, we have that
    \begin{align}
        \mu_{\delta}^{\pm}[\vert V(\gamma_{\delta}) \vert^k]
        &= \sum_{(x_1,\dots,x_k) \in D_{\delta}^k} \mu_{\delta}^{\pm}\bigg[ \bigcap_{j=1}^{k} \{ x_j \in V(\gamma_{\delta}) \}  \bigg] \nonumber \\
        &\leq \sum_{(x_1,\dots,x_k) \in D_{\delta}^k} \mu_{\delta}^{\pm}\bigg[ \bigcap_{j=1}^{k} \{ d(x_j, \gamma_{\delta}) \leq \delta \}  \bigg] \nonumber\\
        &= \sum_{(x_1,\dots,x_k) \in D_{\delta}^k \setminus (B_{k}(4\delta) \cup C_{k}(\delta))} \mu_{\delta}^{\pm}\bigg[ \bigcap_{j=1}^{k} \{ d(x_j, \gamma_{\delta}) \leq \delta \}  \bigg] \nonumber \\
        &+ \sum_{(x_1,\dots,x_k) \in B_{k}(4\delta) \cup C_{k}(\delta)} \mu_{\delta}^{\pm}\bigg[ \bigcap_{j=1}^{k} \{ d(x_j, \gamma_{\delta}) \leq \delta \}  \bigg] \nonumber\\
        &=: I_{1}(k,\delta) + I_{2}(k,\delta). \label{ineq-k-moment--I-1-I-2}
    \end{align}
    Let us first show that
    \begin{equation} \label{cvg-to-0-I-1}
        \lim_{\delta \to 0} \delta^{\frac{7k}{4}}I_1(k,\delta) = 0.
    \end{equation}
    To shorten notations, we set $G(k,\delta) :=  D_{\delta}^k \setminus (B_{k}(4\delta) \cup C_{k}(\delta))$. Using Lemma \ref{lemma-k-pts-estimate-interface-discrete} to upper bound the probability that $\gamma_{\delta}$ visits the $\delta$-neighborhood of $x_{1}, \dots, x_k$, we obtain that there exist $C, \delta_0 >0$ such that for any $0 < \delta < \delta_0$ and any $k$,
    \begin{align} \label{ineq-I-1-I-tilde-1}
        I_1(k,\delta) &\leq \delta^{(\frac{5}{8}-\rho)k}\sum_{(x_1,\dots,x_k) \in G(k,\delta)} \big[ \prod_{j=1}^{k} L(x_j) \wedge d_{\delta}(x_j) \big]^{-\frac{5}{8}+\rho} =:\delta^{(\frac{5}{8}-\rho)k} \tilde{I}_{1}(k,\delta).
    \end{align}
    To upper bound $\tilde{I}_{1}(k,\delta)$, by symmetry of the summand, we may fix the coordinates of points such that $L(x_j) \wedge d_{\delta}(x_j) = d_{\delta}(x_j)$. For $0 \leq r \leq k$, we set
    \begin{align*}
       G(r,k;\delta):= \Big\{&(x_1,\dots,x_k) \in G(k,\delta): \forall 1 \leq j \leq r,\\ &L(x_j) \wedge d_{\delta}(x_j) = d_{\delta}(x_j), \forall j \geq r+1,  L(x_j) \wedge d_{\delta}(x_j) = L(x_j) \Big\}. 
    \end{align*}
    We then have that
    \begin{align*}
        \delta^{(\frac{5}{8}-\rho)k}\tilde{I}_{1}(k,\delta) &= \delta^{(\frac{5}{8}-\rho)k}\sum_{r=0}^{k} \binom{k}{r} \sum_{(x_1,\dots,x_k) \in G(r,k;\delta)} \big[\prod_{j=1}^{r}d_{\delta}(x_j)\big]^{-\frac{5}{8}+\rho} \prod_{j=r+1}^{k}L(x_j)\big]^{-\frac{5}{8}+\rho}\\
        &\leq \delta^{(\frac{5}{8}-\rho)k}\sum_{r=0}^{k} \binom{k}{r} \sum_{(x_1,\dots,x_r) \in G(r,\delta)} \big[\prod_{j=1}^{r}d_{\delta}(x_j)\big]^{-\frac{5}{8}+\rho} f_{\delta}(x_1,\dots,x_r)
    \end{align*}
    where we have set
    \begin{equation*}
        f_{\delta}(x_1,\dots,x_r) := \sum_{(x_{r+1},\dots,x_{k})} \mathbb{I}_{G(k,\delta)}(x_1,\dots,x_k) \prod_{j=r+1}^{k} L(x_j; x_1,\dots,x_k)^{-\frac{5}{8}+\rho}
    \end{equation*}
    and where we wrote $L(x_j; x_1,\dots,x_k)$ instead of just $L(x_j)$ to stress that in the definition of $L$, the minimum is taken over points in $(x_1,\dots,x_k)$ and not only over points in $(x_{r+1},\dots,x_k)$.
    To upper bound $f_{\delta}(x_1,\dots,x_r)$, we can use the same arguments as in \cite[Proposition~3.10]{MourratIsing} and \cite[Lemma~3.10]{Junnila}. We leave the details to the reader and simply note that the important point to notice is that in the course of the proof, one must count the number of a certain type of graphs and that this number is upper bounded by the number of such graphs appearing when summing $\prod_{j=1}^{k}L(x_j)^{-\frac{5}{8}+\rho}$ over $G(k,\delta)$ instead of $\prod_{j=r+1}^{k}L(x_j)^{-\frac{5}{8}+\rho}$ over $(x_{r+1},\dots,x_k)$ with $(x_1,\dots,x_r)$ fixed. This observation allows us to upper bound $f_{\delta}(x_1,\dots,x_r)$ uniformly in $(x_1,\dots,x_r)$ and we obtain that for any $(x_1,\dots,x_r)$,
    \begin{equation*}
        f_{\delta}(x_1,\dots,x_r) \leq \delta^{-2(k-r)}c^kk^{(\frac{5}{8}-\rho)k}
    \end{equation*}
    for $c>0$ a constant depending on $D$ only. This in turn yields that
    \begin{align*}
        \delta^{(\frac{5}{8}-\rho)k}\tilde{I}_{1}(k,\delta) \leq \delta^{(\frac{5}{8}-\rho)k} c^kk^{(\frac{5}{8}-\rho)k} \sum_{r=0}^{k} \binom{k}{r}  \delta^{-2(k-r)}\sum_{(x_1,\dots,x_r) \in G(r,\delta)} \big[\prod_{j=1}^{r}d_{\delta}(x_j)\big]^{-\frac{5}{8}+\rho}.
    \end{align*}
    Using Lemma \ref{lemma-sum-dist-to-boundary-power-alpha} to upper bound the sum on the right-hand side above, we get that for some constant $C>0$ depending on $D$ only,
    \begin{equation} \label{final-bound-I-1-k-delta}
         \delta^{(\frac{5}{8}-\rho)k}\tilde{I}_{1}(k,\delta) \leq \delta^{(\frac{5}{8}-\rho)k} \delta^{-2k}c^kk^{(\frac{5}{8}-\rho)k} \sum_{r=0}^{k} \binom{k}{r} C^r \leq \delta^{(\frac{5}{8}-\rho)k}\delta^{-2k}\tilde{C}^kk^{(\frac{5}{8}-\rho)k}.
    \end{equation}
    Plugging this upper bound into \eqref{ineq-I-1-I-tilde-1} then shows that the convergence \eqref{cvg-to-0-I-1} indeed holds. Going back to the inequality \eqref{ineq-k-moment--I-1-I-2}, we thus see that to conclude the proof of the claim, it remains to prove that
    \begin{equation} \label{cvg-to-0-I-2}
        \lim_{\delta \to 0} \delta^{\frac{7k}{4}} I_2(k,\delta) = 0.
    \end{equation}
    To do so, we write
    \begin{align}
        I_2(k,\delta) &= \sum_{(x_1,\dots,x_k) \in B_{k}(4\delta) \setminus C_k(\delta) } \PP\bigg[ \bigcap_{j=1}^{k} \{ d(x_j, \gamma_{\delta}) \leq \delta \} \bigg] \nonumber \\
        &+ \sum_{(x_1,\dots,x_k) \in B_{k}(4\delta) \cap C_k(\delta) } \PP\bigg[ \bigcap_{j=1}^{k} \{ d(x_j, \gamma_{\delta}) \leq \delta \} \bigg] \nonumber \\
        &=: I_2^{B}(k,\delta) + I_2^{C}(k,\delta). \label{decomposition-I-2}
    \end{align}
    Let us first establish an upper bound on $I_2^{B}(k,\delta)$. We decompose $I_2^{B}(k,\delta)$ by first choosing the number and the coordinates of the points that are such that $L(x_j) \leq 4\delta$. For $2 \leq m \leq k$, we set
    \begin{equation*}
        B(m,k;\delta) := \{ (x_1,\dots,x_k) \in D_{\delta}^k: \forall 1 \leq j \leq m, L(x_j) \leq 4\delta, \forall j \geq m+1, L(x_j) > 4\delta \}.
    \end{equation*}
    Then, if $(x_1,\dots,x_k) \in B(m,k;\delta)$, we count the number $b$ of squares of side-length $4\delta$ that are necessary to cover these points. In each of these squares, we choose a ``representative'' for the square among the points in $(x_1,\dots,x_m)$ that belong to this square. Observe that we can choose the representatives $(x_{i_1},\dots,x_{i_b})$ of the $b$ squares in such a way that that for any $p \neq n$, $\vert x_{i_{n}} - x_{i_{p}} \vert > 4\delta$. Let us denote by $\mathcal{B}(m)$ the set of coordinates of the representatives of the $b$ squares and define $\mathcal{S}(m,k):= \mathcal{B}(m) \cup \{m+1,\dots,k\}$. With this decomposition, we obtain that
    \begin{align}
        I_2^{B}(k,\delta) &= \sum_{m=2}^{k} \binom{k}{m} \sum_{b=\lfloor \frac{m}{16} \rfloor +1}^{\lfloor \frac{m}{2} \rfloor} \sum_{(x_1,\dots,x_k)\in D_{\delta} \setminus C_{k}(\delta)}\mathbb{I}_{B(m,k;\delta)}  \mathbb{I}_{\vert \mathcal{B}(m)\vert=b} \PP\bigg[ \bigcap_{j=1}^{k} \{ d(x_j, \gamma_{\delta}) \leq \delta \} \bigg]\nonumber \\
        &\leq \sum_{m=2}^{k} \binom{k}{m} \sum_{b=\lfloor \frac{m}{16} \rfloor +1}^{\lfloor \frac{m}{2} \rfloor} \sum_{(x_1,\dots,x_k)\in D_{\delta} \setminus C_{k}(\delta)} \mathbb{I}_{B(m,k;\delta)} \mathbb{I}_{\vert \mathcal{B}(m)\vert=b} \PP\bigg[ \bigcap_{j \in \mathcal{S}(m,k)} \{ d(x_j, \gamma_{\delta}) \leq \delta \} \bigg] \nonumber \\
        &\leq \sum_{m=2}^{k} \binom{k}{m} \sum_{b=\lfloor \frac{m}{16} \rfloor +1}^{\lfloor \frac{m}{2} \rfloor} \delta^{(\frac{5}{8}-\rho)(k-m+b)}I_{2}^{B}(b,m,k,\delta) \label{ineq-decomposition-I-2-B}
    \end{align}
    where we have set
    \begin{equation*}
        I_{2}^{B}(b,m,k,\delta) := \sum_{(x_1,\dots,x_k)\in D_{\delta} \setminus C_{k}(\delta)} \mathbb{I}_{B(m,k;\delta)} \mathbb{I}_{\vert \mathcal{B}(m)\vert =b} \big[ \prod_{j \in \mathcal{S}(m,k)} L(x_j; \mathcal{S}(m,k)) \wedge d_{\delta}(x_j)\big]^{-\frac{5}{8}+\rho} 
    \end{equation*}
    and where the last inequality follows from Lemma \ref{lemma-k-pts-estimate-interface-discrete}. Above, for $j \in \mathcal{S}(m,k)$, we have also defined
    \begin{equation*}
        L(x_j; \mathcal{S}(m,k)) = \frac{1}{4} \min_{p \in \mathcal{S}(m,k), p \neq j } \vert x_p - x_j \vert.
    \end{equation*}
    To upper bound $ I_{2}^{B}(b,m,k,\delta)$, we use H\"older's inequality with $\alpha, \tilde \alpha >1$ such that $\alpha(\frac{5}{8}-\rho)<1$, $\tilde \alpha(\frac{5}{8}-\rho)<2$ and $\alpha^{-1}+\tilde \alpha^{-1} = 1$. This yields that
    \begin{align*}
        I_{2}^{B}(b,m,k,\delta) \leq &\bigg( \sum_{(x_1,\dots,x_k)\in D_{\delta} \setminus C_{k}(\delta)} \mathbb{I}_{B(m,k;\delta)} \mathbb{I}_{\vert \mathcal{B}(m)\vert =b}  \bigg)^{\frac{1}{\tilde \alpha}} \\
        &\times \bigg( \delta^{-2(k-m+b)} \sum_{(x_{1},\dots,x_{k-m+b}) \in G(k-m+b,\delta)} \big[ \prod_{j=1}^{k-m+b}L(x_j) \wedge d_{\delta}(x_j) \big]^{-\alpha(\frac{5}{8}-\rho)} \bigg)^{\frac{1}{\alpha}}
    \end{align*}
    where we recall that $G(k-m+b,\delta) =  D_{\delta}^{k-m+b} \setminus (B_{k-m+b}(4\delta) \cup C_{k-m+b}(\delta))$. Since $\alpha(\frac{5}{8}-\rho)<1$, the sum on the right-hand side above can be upper bounded using the same arguments as those which led to the upper bound \eqref{final-bound-I-1-k-delta}. We thus obtain that
    \begin{align*}
        I_{2}^{B}(b,m,k,\delta) \leq \delta^{-\frac{2}{\tilde \alpha}(k-m+b)} \delta^{-\frac{2}{\alpha}k} C^{k-m+b}(k-m+b)^{(\frac{5}{8}-\rho)(k-m+b)}
    \end{align*}
    Multiplying both sides by $\delta^{(\frac{5}{8}-\rho)(k-m+b)}$ and observing that $\frac{2}{\tilde \alpha}(m-b) - (\frac{5}{8}-\rho)(m-b)>0$ by our choice of $\tilde \alpha$ and since $m-b \geq m/2 \geq 1$, we thus see that
    \begin{equation*}
        I_{2}^{B}(b,m,k,\delta) \leq \delta^{(\frac{5}{8}-\rho)k}\delta^{-2k}C^{k-m+b}(k-m+b)^{(\frac{5}{8}-\rho)(k-m+b)}.
    \end{equation*}
    Plugging this upper bound into \eqref{ineq-decomposition-I-2-B} then yields that
    \begin{equation} \label{final-bound-I-2-B}
        I_2^{B}(k,\delta) \leq  \tilde C^{k}\delta^{(\frac{5}{8}-\rho)k}\delta^{-2k} C^k k^{(\frac{5}{8}-\rho)k}.
    \end{equation}
    To conclude the proof of \eqref{cvg-to-0-I-2}, in view of \eqref{decomposition-I-2}, it now remains to find an appropriate upper bound on $I_2^{C}(k,\delta)$. To do so, we first fix the coordinates of the points in $C_{k}(\delta)$ and write
    \begin{align*}
        I_2^{C}(k,\delta) &= \sum_{r=1}^{k} \binom{k}{r} \sum_{\substack{(x_1,\dots,x_r) \in C_1(\delta)^r \\ (x_{r+1}, \dots,x_k) \in D_{\delta}^k \setminus C_k(\delta)}} \PP\bigg[ \bigcap_{j=1}^{k} \{ d(x_j, \gamma_{\delta}) \leq \delta \} \bigg]\\
        &\leq \sum_{r=1}^{k} \binom{k}{r} \Area(C_{1}(\delta)^r) \sum_{(x_{r+1},\dots,x_{k}) \in D_{\delta}^{k-r} \setminus C_{k-r}(\delta)} \PP \bigg[ \bigcap_{j=r+1}^{k} \{ d(x_j, \gamma_{\delta}) \leq \delta \} \bigg].
    \end{align*}
    Decomposing $D_{\delta}^{k-r} \setminus C_{k-r}(\delta)$ as $D_{\delta}^{k-r} \setminus (B_{k-r}(\delta) \cup C_{k-r}(\delta)) \cup B_{k-r}(\delta) \setminus C_{k-r}(\delta)$, we see that the inner sum above can upper bounding using the upper bounds \eqref{final-bound-I-1-k-delta} and \eqref{final-bound-I-2-B} that we obtained in the first part of the proof. This yields that
    \begin{align*}
        I_2^{C}(k,\delta) \leq \sum_{r=1}^{k} \binom{k}{r} \delta^{-r} \delta^{-2(k-r)} \delta^{(\frac{5}{8}-\rho)(k-r)}C^{k-r}(k-r)^{(\frac{5}{8}-\rho)(k-r)} \leq c^k \delta^{-2k} \delta^{(\frac{5}{8}-\rho)k}\delta^{(\frac{3}{8}+\rho)r} k^{(\frac{5}{8}-\rho)k}.
    \end{align*}
    Together with the decomposition \eqref{decomposition-I-2} and the upper bound \eqref{final-bound-I-2-B}, this implies that
    \begin{equation} \label{final-bound-I-2-k-delta}
        I_2(k,\delta) \leq \tilde c^k \delta^{-2k} \delta^{(\frac{5}{8}-\rho)k}k^{(\frac{5}{8}-\rho)k}
    \end{equation}
    which concludes the proof of the convergence \eqref{cvg-to-0-I-2} and thus completes the proof of the claim.
\end{proof}

\begin{proof}[Proof of Claim \ref{claim-bound-EE-k-moment-V-gamma}]
    This directly follows from the upper bound on $\delta^{\frac{7k}{4}}\EE[\vert V(\gamma_{\delta}) \vert^k]$ obtained in the course of the proof of Claim \ref{claim-cvg-to-0-k-moment-V-gamma}, see in particular \eqref{final-bound-I-1-k-delta} and \eqref{final-bound-I-2-k-delta}.
\end{proof}

\subsection{Characterization of the limiting law: proof of Proposition \ref{proposition-uniqueness-law-curve}} \label{sub:proof-characterization-interface}

\begin{proof}[Proof of Proposition \ref{proposition-uniqueness-law-curve}]
    To prove this proposition, we will also need to consider convergence of the random variables $(W_{\delta}^{h}(\phi):=\delta\sum_{x \in D_{\delta}}h_{x}\phi(x))_{\phi}$ for functions $\phi: \mathbb{R}^2 \to \mathbb{R}$ that are square-integrable. We refer the reader to \cite[Section~2.1]{CSZ16} for details on the topology in which convergence takes place and background material on white noise. 

    Recall the decomposition \eqref{decomposition-RN-interface} of the Radon-Nikodym derivative $F^h(\gamma_{\delta})$ of $\nu_{\delta}^{\pm,\lambda,h}$ with respect to $\nu_{\delta}^{\pm}$ and the definitions of $\theta_{L}(\delta), \theta_{R}(\delta), S_{L}^{h}(\delta)$ and $S_{R}^{h}(\delta)$ given in \eqref{definition-theta-L-R} and \eqref{definition-S-L-S-R}. In view of this decomposition, the first step to prove Proposition \ref{proposition-uniqueness-law-curve} is to prove the following lemma.
    
    \begin{lemma}\label{lemma-joint-convergence}
        $(W_{\delta}^{h}, \tilde{\mathcal{Z}}_{\delta}^{\pm,\lambda,h}, \theta_L(\delta)S_{L}^{h}(\delta), \theta_{R}(\delta)S_{R}^{h}(\delta))_{\delta}$ under $\PP \otimes \mu_{\delta}^{\pm}$ jointly converge in finite dimensional distribution to $(W,\mathcal{Z}_{D}^{\pm,W,\lambda},S_{L}^{W}, S_{R}^{W})$ under $\PP \otimes \nu_{\operatorname{SLE}_{3}}^{(D,a,b)}$. Here, $W$ is a two-dimensional white noise, $\mathcal{Z}_{D}^{\pm,W,\lambda}$ is the limiting partition function of Theorem \ref{theorem-cvg-RFIM-partition-function} and $S_{L}^{W}$ and $S_{R}^{W}$ are defined as
        \begin{align*}
            &S_{L}^{W}:= \sum_{k \geq 0} \frac{C_{\sigma}^k}{k!} \int_{D_{L}^k} f_{D_{L}}^{(+,k)}(z_1,\dots,z_k) \prod_{n=1}^{k} \lambda(z_j)W(dz_j), \\
            &S_{R}^{W}:= \sum_{k \geq 0} \frac{C_{\sigma}^k}{k!} \int_{D_{R}^k} f_{D_{R}}^{(-,k)}(z_1,\dots,z_k) \prod_{n=1}^{k}\lambda(z_j) W(dz_j).
        \end{align*}
        Above, $D_{L}$, respectively $D_{R}$, is the connected component on the left, respectively right, of $\gamma$. 
    \end{lemma}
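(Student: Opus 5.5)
The plan is to combine the convergence of the interface (Theorem \ref{theorem_SLE3}) with a Lindeberg-type argument for polynomial chaos in the spirit of \cite{CSZ16}, carried out conditionally on the curve. Theorem \ref{theorem_SLE3} gives $\gamma_\delta\to\gamma$ in law. By Skorokhod's representation we may realize $(\gamma_\delta)$ and $\gamma$ on one probability space (independent of $H$) with $\gamma_\delta\to\gamma$ almost surely. By Carathéodory kernel convergence, $D_{L,\delta}\to D_L$ and $D_{R,\delta}\to D_R$ almost surely. Then for each fixed realization of the curves we aim to prove quenched joint convergence in $h$ of the four components, and conclude by dominated convergence over the curve, since characteristic functions are bounded.

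\textbf{Truncation.} First replace $\theta_Q(\delta)S^h_Q(\delta)$, $Q\in\{L,R\}$, by the chaos $\Psi_{\delta,Q}(\eta)$ of \eqref{def-Psi-R-Psi-L}. Claim \ref{claim-Lq-norm-renomalization-factor} shows that the prefactor tends to $1$ in probability, and the same holds for $\tilde{\mathcal Z}^{\pm,\lambda,h}_\delta$. Next truncate each chaos in three ways:
\begin{itemize}
\item to degree $\le K$;
\item to points at distance $\ge\eta$ from $\partial D_{Q,\delta}$, i.e. from $\gamma_\delta\cup\partial D_\delta$;
\item to points pairwise at distance $\ge\eta$.
\end{itemize}
The $L^2(\mathbb P)$ cost of the truncation, averaged over $\mu^\pm_\delta$, has to be small uniformly in $\delta$ as $K\to\infty$, $\eta\to0$. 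For the degree truncation this follows from the summable bounds $C^kk^{k/2}/k!$ of Claim \ref{claim-bound-jk-expectations}. For the boundary truncation, redo that computation with the indicator of $\{d_{\delta,\infty}(x_j)<\eta\}$ inserted for at least one $j$. Via Lemma \ref{lemma-k-pts-estimate-interface-discrete} and Lemma \ref{lemma-sum-dist-to-boundary-power-alpha}, this produces a factor $\eta^{c}$ for some $c>0$, because the integrands $d^{-1/2}$ and $d_\delta^{-1/4}$ are integrable near the boundary with room to spare. The same applies to the diagonal truncation with Lemma \ref{lemma-sum-prod-L-x-j-delta}. The limiting chaoses $S^W_Q$ and $\mathcal Z^{\pm,W,\lambda}_D$ are truncated in the same way, with errors controlled by the $L^2$ norms of $f^{(\xi,k)}_{D_Q}$ near the rough boundary. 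These are finite because $|f_{D_Q}^{(\pm,k)}|$ is bounded by its $+$ analogue, which is dominated by $\prod_j d^{-1/8}\prod_j L^{-1/8}$ type bounds.

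\textbf{Convergence of the truncated chaoses.} For a fixed curve, the truncated objects are finite-degree multilinear polynomials in the i.i.d. $\eta_x$. Their coefficients are $\vartheta_\delta^{k}\mu^{+}_{\delta,D_L}[\prod\sigma_{x_j}]$ with the points at distance $\ge\eta$ from the boundary and from each other. By Theorem \ref{theorem_spin_correlations}, applied in $D_{L,\delta}\to D_L$ together with the locally uniform dependence on the domain under Carathéodory convergence from \cite{spinCorrelations}, these coefficients equal $\delta^{2k}C_\sigma^kf^{(+,k)}_{D_L}(x_1,\dots,x_k)(1+o(1))$ uniformly. The influences are $O(\delta^{2})$. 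So the Lindeberg principle of \cite[Theorem~2.3]{CSZ16} allows replacing $\eta_x$ by Gaussians $\delta^{-1}W(S_\delta(x))$, jointly for all four objects and for $W^h_\delta(\phi)$, which is a degree-one chaos in the same variables. The Gaussian versions are Wiener integrals of piecewise constant kernels. These converge in $L^2$ to the continuum truncated kernels, since $f^{(\xi,k)}$ is continuous away from the diagonals and the boundary.

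\textbf{Main obstacle.} I expect the hardest step to be the boundary truncation near the random fractal curve. The coefficients there are not given by Theorem \ref{theorem_spin_correlations}, and the boundary moves with $\delta$. This makes the uniform-in-$\delta$ estimate based on Lemma \ref{lemma-k-pts-estimate-interface-discrete} essential, and convergence of the domains must be used only on compacts at positive distance from $\gamma$. Note also that $D_L\cap D_R=\emptyset$ and that they are disjoint from $\gamma$, which has zero area, so no noise is lost in the limit.
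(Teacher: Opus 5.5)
Your outline is sound, but it is organized differently from the paper's proof.

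\textbf{The paper's route.} The paper never couples $\gamma_\delta$ with $\gamma$. After replacing the prefactors by the chaoses $\Psi_{\delta,L}(\eta)$, $\Psi_{\delta,R}(\eta)$, $\Psi_{\delta}^{\pm}(\eta)$, it proceeds as follows:
\begin{enumerate}
\item It truncates in degree only.
\item It linearizes $\tanh(\delta^{\frac78}\lambda(x)h_x)$ into $\delta^{\frac78}\lambda(x)h_x$.
\item It replaces $h_x$ by white-noise box averages.
\item It applies a multivariate Lindeberg bound conditionally on the curve. The influences are controlled only in $\mu_\delta^{\pm}$-mean, via Cauchy--Schwarz and dominated convergence using Claim \ref{claim-bound-jk-expectations}.
\end{enumerate}
Contributions near $\gamma_\delta\cup\partial D_\delta$ and near the diagonals are never cut out separately. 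They are handled inside the $L^2$ comparison of discrete and continuum kernels, as in \cite{CSZ16}, through the same uniform bounds.

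\textbf{Your route and what it buys.} You use Skorokhod to get $\gamma_\delta\to\gamma$ almost surely and prove a quenched statement for each realization of the curve. You then add truncations away from $\gamma_\delta\cup\partial D_\delta$ and away from the diagonals, so that on the truncated region both the uniform convergence of the coefficients and the $O(\delta^2)$ influences are immediate. This makes the comparison between the random discrete domains $D_{L,\delta},D_{R,\delta}$ and $D_L,D_R$ explicit. The paper's assertion of convergence in $L^2(\PP\otimes\nu_{\operatorname{SLE}_3}^{(D,a,b)})$ tacitly requires such a coupling. Your route also yields convergence jointly with the curve, which is what Proposition \ref{proposition-uniqueness-law-curve} actually uses.

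\textbf{The price.} You must redo the proof of Claim \ref{claim-bound-jk-expectations} with an indicator $\mathbb{I}\{d_{\delta,\infty}(x_j)<\eta\}$ inserted. This does produce a factor $\eta^{c}$. Restricting the dyadic sum to $e^{l}\delta<\eta$ in the estimate based on Lemma \ref{lemma-k-pts-estimate-interface-discrete} gives $\eta^{\frac18-\rho}d_\delta(x)^{-\frac58+\rho}$ in place of $d_\delta(x)^{-\frac12}$. Points with $d_\delta(x)<\eta$ are handled by \eqref{condition-approximation-boundary}. Like the paper, you also need Theorem \ref{theorem_spin_correlations} to hold along Carathéodory-convergent sequences of random domains.

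\textbf{Two small corrections.}
\begin{itemize}
\item The coefficient of $\prod_{x\in I}\eta_x$ is $\prod_{x\in I}\vartheta_\delta(x)\,\mu^{+}_{\delta,D_L}[\prod_{x\in I}\sigma_x]\approx \delta^{k}C_\sigma^k f^{(+,k)}_{D_L}(x_1,\dots,x_k)\prod_{j}\lambda(x_j)$. It is not $\delta^{2k}C_\sigma^k f^{(+,k)}_{D_L}$; your $O(\delta^2)$ influence count is consistent with $\delta^k$.
\item $W_\delta^h(\phi)$ is linear in $h_x$, not in $\eta_x$. Before a joint Lindeberg replacement you therefore need the easy linearization $\EE[(\eta_x-h_x)^2]=O(\delta^{\frac72})$ uniformly in $x$, which is exactly the paper's second step.
\end{itemize}
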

    
    Lemma \ref{lemma-joint-convergence} will be shown just below. Assuming this result, the second step of the proof of Proposition \ref{proposition-uniqueness-law-curve} is to take into account the contribution of $\exp(M^{h}(\gamma_{\delta}))$ and of $(\theta_{L}(\delta)\theta_{R}(\delta))^{-1}\theta_{D}(\delta)$ to the limit of $F^h(\gamma_{\delta})$. However, this contribution in fact vanishes in the limit, as implied by Claim \ref{claim-cvg-magnetization-interface} and its proof. Indeed, this claim shows that the Laplace transform of $M^{h}(\gamma_{\delta})$ under $\PP \otimes \mu_{\delta}^{\pm}$ converges to $1$, which implies that $M^{h}(\gamma_{\delta})$ converges in law to $0$, and therefore that, since $x \to \exp(x)$ is continuous, $\exp(M^{h}(\gamma_{\delta}))$ converges in law to $1$. A similar reasoning, using again (the proof of) Claim \ref{claim-cvg-magnetization-interface}, applies to $(\theta_{L}(\delta)\theta_{R}(\delta))^{-1}\theta_{D}(\delta)$ once one notices that this random variable is in fact equal to $\exp(-\frac{\delta^{\frac{7}{4}}}{2}\sum_{x \in V(\gamma_{\delta})}\lambda(x)^2)$. This concludes the proof of Proposition \ref{proposition-uniqueness-law-curve}.
\end{proof}

Let us now prove Lemma \ref{lemma-joint-convergence}. As the strategy of the proof follows quite closely that of \cite[Proposition~3.1]{continuum-2d-RFIM}, we will be somewhat brief.

\begin{proof}[Proof of Lemma \ref{lemma-joint-convergence}]
    The starting point of the proof is to observe that $\theta_L(\delta)S_{L}^{h}(\delta)$, $\theta_{R}(\delta)S_{R}^{h}(\delta)$ and $\tilde{\mathcal{Z}}_{\delta}^{\pm,\lambda,h}$ can be expressed in terms of a polynomial chaos decomposition, see \eqref{S-L-chaos-expansion}, \eqref{S-R-chaos-expansion} and \eqref{chaos-expansion-partition-function}. Moreover, recall that by Claim \ref{claim-Lq-norm-renomalization-factor}, the prefactors $\theta_j(\delta)\prod_{x \in D_{j,\delta}} \cosh(\delta^{\frac{7}{8}}\lambda(x)h_x)$, $j=L,R$, converge to $1$ in probability and are uniformly bounded in $L^p$, for any $p \geq 1$. Similarly, as shown in \cite[Lemma~3.5]{continuum-2d-RFIM}, the prefactor $\theta(\delta)\prod_{x \in D_{\delta}} \cosh(\delta^{\frac{7}{8}}\lambda(x)h_x)$ appearing in the chaos expansion \eqref{chaos-expansion-partition-function} of $\tilde{\mathcal{Z}}_{\delta}^{\pm,\lambda, h}$ converges to $1$ in any $L^p$, $p \geq 1$. Therefore, it suffices to show the lemma with $\theta_{L}(\delta)S_{L}^{h}(\delta)$ and $\theta_{R}(\delta)S_{R}^{h}(\delta)$ replaced by $\Psi_{\delta,L}(\eta)$ and $\Psi_{\delta,R}(\eta)$ as defined in \eqref{def-Psi-R-Psi-L} and with $\tilde{\mathcal{Z}}_{\delta}^{\pm,\lambda,h}$ replaced by
    \begin{align*}
        \Psi_{\delta}^{\pm}(\eta) := \sum_{I \subset D_{\delta}} \mu_{\delta}^{\pm}[\prod_{x \in I}\sigma_{x}] \prod_{x \in I} \vartheta_{\delta}(x)\eta_x
    \end{align*}
    where $\eta_x$ and $\vartheta_{\delta}(x)$ are as in \eqref{def-sigma-mu-eta}. Then, as in \cite{continuum-2d-RFIM}, we first truncate the chaos expansions $\Psi_{\delta}(\eta)$, $\Psi_{\delta,L}(\eta)$ and $\Psi_{\delta,R}(\eta)$ to order $\ell$, that is we restrict the sums in the definitions of $\Psi_{\delta}^{\pm}(\eta)$, $\Psi_{\delta,L}(\eta)$ and $\Psi_{\delta,R}(\eta)$ to subsets $I$ of size at most $\ell$. Denote by $\Psi_{\delta,L}^{\leq \ell}(\eta)$, $\Psi_{\delta,R}^{\leq \ell}(\eta)$ and $\Psi_{\delta}^{\pm,\leq \ell}(\eta)$ these truncated expansions. The first step is to show that
    \begin{align}
        &\lim_{\ell \to \infty} \lim_{\delta \to 0} \EE\otimes\mu_{\delta}^{\pm}\big[\vert \Psi_{\delta,j}^{\leq \ell}(\eta) - \Psi_{\delta,j}(\delta)\vert^2\big] = 0, \quad j=L,R \label{cvg-truncated-S-j} \\
        &\lim_{\ell \to \infty} \lim_{\delta \to 0} \EE\otimes\mu_{\delta}^{\pm}\big[\vert \Psi_{\delta}^{\pm,\leq \ell}(\eta) -  \Psi_{\delta}^{\pm}(\eta) \vert^2\big] =0 . \label{cvg-truncated-partition-function}
    \end{align}
    The second step is to linearize the noise: namely, in the definitions of $\Psi_{\delta,L}^{\leq \ell}(\eta)$, $\Psi_{\delta,R}^{\leq \ell}(\eta)$ and $\Psi_{\delta}^{\pm,\leq \ell}(\eta)$, we replace $\tanh(\delta^{\frac{7}{8}}\lambda(x)h_x)$ with $\delta^{\frac{7}{8}}\lambda(x)h_x$, $\vartheta_{\delta}(x)^2$ with $\delta^{\frac{7}{4}}\lambda(x)^2$ 
(in the definition \eqref{def-sigma-mu-eta} of $\eta$). Denote by $\Xi_{\delta,L}^{\leq \ell}(h)$, $\Xi_{\delta,R}^{\leq \ell}(h)$ and $\Xi_{\delta,D}^{\pm,\leq \ell}(h)$ the random variables thus obtained. The second step then consists in establishing that
    \begin{align}
        &\lim_{\ell \to \infty} \lim_{\delta \to 0} \EE \otimes \mu_{\delta}^{\pm}\big[ \vert \Psi_{\delta,j}^{\leq \ell}(\eta)  - \Xi_{\delta,j}^{\leq \ell}(h) \vert^2\big] = 0, \quad j=L,R \label{cvg-linearized-S-j} \\
        &\lim_{\ell \to \infty} \lim_{\delta \to 0} \EE \otimes \mu_{\delta}^{\pm}\big[ \vert \Psi_{\delta}^{\pm,\leq \ell}(\eta)  - \Xi_{\delta,D}^{\pm,\leq \ell}(h) \vert^2]= 0. \label{cvg-linearized-partition-function}
    \end{align}
    In the third step, we replace the noise variables in $\Xi_{\delta,L}^{\leq \ell}(h)$, $\Xi_{\delta,R}^{\leq \ell}(h)$ and $\Xi_{\delta,D}^{\pm, \leq \ell}(h)$ with their white noise approximation: each random variable $h_x$ is replaced by $w_{\delta}(x):= 2\delta^{-1}\int_{S_{\delta}(x)}W(dz)$ where $S_{\delta}(x)$ is the box centered at $x$ with side length $\delta$ and $W$ the limiting white noise. Denote by $\Theta_{\delta,L}^{W,\leq \ell}$, $\Theta_{\delta,R}^{W,\leq \ell}$ and $\Theta_{\delta, D}^{\pm,W,\leq \ell}$ the random variables thus obtained. Let us also define, for $\phi \in L^2(\mathbb{R}^2)$, $W_{\delta}(\phi):= \delta \sum_{x \in \delta\mathbb{Z}^2} w_{\delta}(x)\phi(x)$. We then show that for any $\ell, m \in \mathbb{N}$, any $\phi_{1}, \dots, \phi_{m} \in L^{2}(\mathbb{R}^2)$ as $\delta \to 0$,
    \begin{equation} \label{cvg-white-noise-approximation}
        ((W_{\delta}(\phi_j))_{j=1}^{m}, \Theta_{\delta, D}^{\pm,W,\leq \ell}, \Theta_{\delta, L}^{W,\leq \ell}, \Theta_{\delta,R}^{W,\leq \ell}) \to ((W(\phi_j))_{j=1}^{m}, \mathcal{Z}_{D}^{\pm,W, \leq \ell}, \mathcal{Z}_{L}^{+,W, \leq \ell}, \mathcal{Z}_{R}^{-,W, \leq \ell})
    \end{equation}
    where the convergence is in $L^2(\PP \otimes \nu_{\operatorname{SLE}_{3}}^{(D,a,b)})$.

    The last step of the proof is to prove that for any $\ell,j \in \mathbb{N}$, any $\phi_1,\dots,\phi_m \in L^2(\mathbb{R}^2)$ and any bounded function $g: \mathbb{R}^{m+3} \to \mathbb{R}$ whose first three derivatives are bounded,
    \begin{align}
        \lim_{\delta \to 0} \bigg \vert &\EE\otimes \mu_{\delta}^{\pm}[g((W^{h}_{\delta}(\phi_j))_{j=1}^{m}, \Xi_{\delta,L}^{\leq \ell}(h), \Xi_{\delta,R}^{\leq \ell}(h), \Xi_{\delta,D}^{\pm, \leq \ell}(h))] \nonumber \\
        &- \EE\otimes \mu_{\delta}^{\pm}[g((W_{\delta}(\phi_j))_{j=1}^{m}, \Theta_{\delta,L}^{W,\leq \ell}, \Theta_{\delta,R}^{W,\leq \ell}, \Theta_{\delta,D}^{\pm,W,\leq \ell})] \bigg \vert = 0. \label{cvg-theta-vs-Xi}
    \end{align}
    Putting everything together, this concludes the proof of Proposition \ref{proposition-uniqueness-law-curve}. The convergence results \eqref{cvg-truncated-S-j}--\eqref{cvg-theta-vs-Xi} are established just below.
\end{proof}

\begin{proof}[Proof of \eqref{cvg-truncated-S-j} and \eqref{cvg-truncated-partition-function}]
    Let us prove \eqref{cvg-truncated-S-j} for $j=L$, as the convergences \eqref{cvg-truncated-S-j} for $j=R$ and \eqref{cvg-truncated-partition-function} follow from the same arguments. Let $\ell \in \mathbb{N}$. We have that
    \begin{align*}
        \Psi_{\delta,L}(\eta) - \Psi_{\delta,L}^{\leq \ell}(\eta) = \sum_{I \subset D_{L,\delta}, \vert I \vert \geq \ell+1}\mu_{\delta, D_{L}}^{+}[\prod_{x \in I} \sigma_x ] \prod_{x \in I} \vartheta_{\delta}(x)\eta_x =: \Psi_{\delta,L}^{> \ell}(\eta).
    \end{align*}
    We now write $\EE \otimes \mu_{\delta}^{\pm}[\vert \Psi_{\delta,L}^{> \ell}(\eta) \vert^2] = \mu_{\delta}^{\pm}[\EE[\Psi_{\delta,L}^{> \ell}(\eta)]^2]+ \mu_{\delta}^{\pm}(\Var(\Psi_{\delta,L}^{> \ell}(\eta)))$. The term $\mu_{\delta}^{\pm}[\EE[\Psi_{\delta,L}^{> \ell}(\eta)]^2]$ converges to $0$ as $\delta \to 0$ and then $\ell \to \infty$ since only the empty interval contributes to this expectation, see the discussion around \eqref{prod-of-expectations-is-1}. On the other hand, using the same bound as in the proof of \cite[Lemma~B.1]{continuum-2d-RFIM}, we have that for any $\eps > 0$,
    \begin{align*}
        \mu_{\delta}^{\pm}[\Var(\Psi_{\delta,L}^{> \ell}(\eta))]
        \leq \mu_{\delta}^{\pm}\bigg[\sum_{I \subset D_{L,\delta}, \vert I \vert > \ell} (1+\eps)^{\vert I \vert} [\prod_{x \in I}\vartheta_{\delta}(x)^{2}] \mu_{\delta,D_{L}}^{+}\big[\prod_{x \in I} \sigma_x \big]^2 \bigg ].
    \end{align*}
    To prove that $\mu_{\delta}^{\pm}[\Var(\Psi_{\delta,L}^{> \ell}(\eta))]$ converges to $0$ as $\delta \to 0$ and then $\ell \to \infty$, it thus suffices to show that this is the case of the $\mu_{\delta}^{\pm}$-expectation of the sum on the above right-hand side. But in fact, the expectation of each term in this sum can be bounded using the same arguments as those used to prove Claim \ref{claim-bound-jk-expectations} (taking $j=0$ everywhere). This implies that the expectation of this sum can be upper bounded uniformly in $\delta$ by the rest of order $> \ell$ of an absolutely convergent series. This in turn yields that $\mu_{\delta}^{\pm}[\Var(\Psi_{\delta,L}^{> \ell}(\eta))]$ converges to $0$ as $\delta \to 0$ and then $\ell \to \infty$ and concludes the proof of \eqref{cvg-truncated-S-j}.
\end{proof}

\begin{proof}[Proof of \eqref{cvg-linearized-S-j} and \eqref{cvg-linearized-partition-function}]
    Let us prove \eqref{cvg-linearized-S-j} for $j=L$, as the convergences \eqref{cvg-linearized-S-j} for $j=R$ and \eqref{cvg-linearized-partition-function} follow from the same arguments. Thanks to the bounds of Claim \ref{claim-bound-jk-expectations} on the $\mu_{\delta}^{\pm}$-expectation of $\sum_{I: \vert I \vert = k}\vartheta_{\delta}^{2k}\mu_{\delta,L}^{+}[\prod_{x \in I} \sigma_x]^2$ (where $\mu_{\delta,L}^{+}[\prod_{x \in I} \sigma_x]$ is the analogue of $\psi_{\delta}(I)$ in the notation of \cite{continuum-2d-RFIM}) and thanks to Theorem \ref{theorem_spin_correlations} applied to $\mu_{\delta,L}^{+}[\prod_{x \in I} \sigma_x]$, this in fact follows from the same arguments as the ones used to prove \cite[Lemma~3.3]{continuum-2d-RFIM}. Details are left to the reader.
\end{proof}

\begin{proof}[Proof of \eqref{cvg-white-noise-approximation}]
    Let $\ell \in \mathbb{N}$ and let us first show that $\EE \otimes \mu_{\delta}^{\pm}\big[\vert \Theta_{L}^{W,\leq \ell}(\delta) - \mathcal{Z}_{L}^{+,W,\leq \ell} \vert^2 \big] \to 0$ as $\delta \to 0$. This in fact follows from the same arguments as those used in Step 1 of the proof of \cite[Theorem~2.3]{CSZ16}. Indeed, applying the same decomposition as in \cite[Equation~(5.4)]{CSZ16} and using the bounds of Claim \ref{claim-bound-jk-expectations} on the $\mu_{\delta}^{\pm}$-expectations to upper bound the terms that we get from this decomposition, we obtain that $\EE \otimes \mu_{\delta}^{\pm}\big[\vert \Theta_{L}^{W,\leq \ell}(\delta) - \mathcal{Z}_{L}^{+,W,\leq \ell} \vert^2 \big]$ indeed converges to $0$ as $\delta \to 0$. A similar reasoning can be used to show that $\EE \otimes \mu_{\delta}^{\pm}\big[\vert \Theta_{R}^{W,\leq \ell}(\delta) - \mathcal{Z}_{R}^{-,W,\leq \ell} \vert^2 \big]$ and $\EE \otimes \mu_{\delta}^{\pm}\big[\vert \Theta_{D}^{W,\leq \ell}(\delta) - \mathcal{Z}_{D}^{\pm,W,\leq \ell} \vert^2 \big]$ converge to $0$ as $\delta \to 0$. This proves \eqref{cvg-white-noise-approximation}.
\end{proof}

\begin{proof}[Proof of \eqref{cvg-theta-vs-Xi}]
    Let $\ell, m \in \mathbb{N}$, $\phi_1, \dots, \phi_m \in L^2(\mathbb{R}^2)$, and let $g: \mathbb{R}^{m+3} \to \mathbb{R}$ be a bounded function whose first three derivatives are bounded. To show \eqref{cvg-theta-vs-Xi}, we first apply the multivariate Lindeberg's principle of \cite[Lemma~3.6]{continuum-2d-RFIM}, see also Step 3 of the proof of \cite[Theorem~2.3]{CSZ16}, to obtain that for any $\eps > 0$, $\mu_{\delta}^{\pm}$-almost surely,
    \begin{align}\label{upper-bound-via-Lindeberg}
        &\vert \EE[\begin{aligned}[t]&g((W^{h}_{\delta}(\phi_j))_{j=1}^{m}, \Xi_{L}^{h,\leq \ell}(\delta), \Xi_{R}^{h,\leq \ell}(\delta), \Xi_{D}^{\pm, h,\leq \ell}(\delta))] \\
        &- \EE[g((W^{h}_{\delta}(\phi_j))_{j=1}^{m}, \Theta_{L}^{h,\leq \ell}(\delta), \Theta_{R}^{h,\leq \ell}(\delta), \Theta_{D}^{\pm, h,\leq \ell}(\delta))] \vert\end{aligned} \nonumber \\
        &\leq C_{g,\ell}\sum_{j=1}^{m}\Var(W_{\delta}(\phi_j))\max_{x \in D_{\delta}} \big(\operatorname{Inf}_{x}[W_{\delta}(\phi_j)])^{\frac{1}{2}} + \tilde{C}_{g,\ell}C_{D}^{(\eps,\ell)}(\Psi;\delta)\big(\max_{x \in D_{\delta}}\operatorname{Inf}_{x}[\Psi_{\delta}^{(\eps),\leq \ell}]\big)^{\frac{1}{2}} \nonumber \\
        &+ C_{g,\ell}'C_{L}^{(\eps,\ell)}(\Psi;\delta)\big(\max_{x \in D_{L,\delta}}\operatorname{Inf}_{x}[\Psi_{\delta,L}^{(\eps),\leq \ell}])^{\frac{1}{2}}+C_{g,\ell}'' C_{R}^{(\eps,\ell)}(\Psi;\delta)\big(\max_{x \in D_{R,\delta}}\operatorname{Inf}_{x}[\Psi_{\delta,R}^{(\eps),\leq \ell}])^{\frac{1}{2}}
    \end{align}
    where $C_{g,\ell}, \tilde{C}_{g,\ell}, C_{g,\ell}', C_{g,\ell}'' > 0$ are constant depending on $g$ and $\ell$ only. Above, we have set
    \begin{align*}
        & \operatorname{Inf}_{x}[W_{\delta}(\phi_j)] := \delta \phi_j(x)\\
        &C_{D}^{(\eps,\ell)}(\Psi;\delta):= \sum_{I \subset D_{\delta}, \vert I \vert \leq \ell} (1+\eps)^{\vert I \vert} \vartheta_{\delta}^{2\vert I \vert}\mu_{\delta}^{\pm}[\prod_{x \in I} \sigma_x]^2 \\
        &\operatorname{Inf}_{x \in D_{\delta}}[\Psi_{\delta}^{(\eps),\leq \ell}] := \sum_{I \subset D_{\delta}: \vert I \vert \leq \ell, x \in I} (1+\eps)^{\vert I \vert} \vartheta_{\delta}^{2\vert I \vert}\mu_{\delta}^{\pm}[\prod_{z \in I} \sigma_{z}]^2
    \end{align*}
    and for $j=R,L$ and $x \in D_{j,\delta}$, we have set
    \begin{align*}
        &C_{j}^{(\eps,\ell)}(\Psi;\delta) = \sum_{I \subset D_{j,\delta}, \vert I \vert \leq \ell} (1+\eps)^{\vert I \vert} [\prod_{x \in I}\vartheta_{\delta}(x)^{2}]\mu_{\delta,j}^{\xi_{j}}[\prod_{x \in I} \sigma_x]^2 \\
        &\operatorname{Inf}_{x}[\Psi_{\delta,j}^{(\eps),\leq \ell}] := \sum_{I \subset D_{j,\delta}: \vert I \vert \leq \ell. x \in I} (1+\eps)^{\vert I \vert} [\prod_{z \in I}\vartheta_{\delta}(z)^{2}]\mu_{\delta,j}^{\xi_{j}}[\prod_{z \in I} \sigma_{z}]^2.
    \end{align*}
    It is easy to see that for each $j=1,\dots,m$, $\Var(W_{\delta}(\phi_j))$ is uniformly bounded in $\delta$ while $\operatorname{Inf}_{x}[W_{\delta}(\phi_j)] \to 0$ as $\delta \to 0$. Therefore, the first summand on the right-hand side of \eqref{upper-bound-via-Lindeberg} converges to $0$ as $\delta \to 0$. Moreover, the fact that $C_{D}^{(\eps,\ell)}(\Psi;\delta)\big(\max_{x} \operatorname{Inf}_{x \in D_{\delta}}[\Psi_{\delta}^{(\eps),\leq \ell}]\big)^{\frac{1}{2}}$ also vanishes as $\delta \to 0$ is proved in Step 2 of the proof of \cite[Theorem~3.14]{CSZ16}. Thus, to conclude the proof of \eqref{cvg-theta-vs-Xi}, it thus suffices to show that the $\mu_{\delta}^{\pm}$-expectations of the two last summands on the right-hand side of \eqref{upper-bound-via-Lindeberg} converge to $0$ as $\delta \to 0$. Let us prove this for the first one, as the proof for the second one is identical. We first use the Cauchy-Schwarz inequality to get that
    \begin{equation*}
        \mu_{\delta}^{\pm}[C_{L}^{(\eps,\ell)}(\Psi;\delta)\big(\max_{x \in D_{L,\delta}}\operatorname{Inf}_{x}[\Psi_{\delta,L}^{(\eps),\leq \ell}])^{\frac{1}{2}}]^2 \leq \mu_{\delta}^{\pm}[C_{L}^{(\eps,\ell)}(\Psi;\delta)^2]\mu_{\delta}^{\pm}[\max_{x \in D_{L,\delta}} \operatorname{Inf}_{x}\Psi_{\delta,L}^{(\eps),\leq \ell}].
    \end{equation*}
    The term $\mu_{\delta}^{\pm}[C_{L}^{(\eps,\ell)}(\Psi;\delta)^2]$ can be bounded uniformly in $\delta$: when expanding the square as the product of two sums, the $\mu_{\delta}^{\pm}$-expectation of each summand can be bounded using arguments similar to those used to prove Claim \ref{claim-bound-jk-expectations}. In fact, replacing ``R" with ``L" in the statement of Claim \ref{claim-bound-jk-expectations}, one can see that the corresponding inequality can be proved by repeating the proof of Claim \ref{claim-bound-jk-expectations} by replacing ``R" with ``L" everywhere. It thus now remains to show that $\mu_{\delta}^{\pm}[\max_{x \in D_{L,\delta}} \operatorname{Inf}_{x} [\Psi_{\delta,L}^{(\eps),\leq \ell}]]$ converges to $0$ as $\delta \to 0$. To do so, we are going to use the dominated convergence theorem. Indeed, it is easy to see that almost surely, $\max_{x \in D_{L,\delta}} \operatorname{Inf}_{x} [\Psi_{\delta,L}^{(\eps),\leq \ell}] \to 0$ as $\delta \to 0$. On the other hand, since for any $x \in D_{L,\delta}$, $ \operatorname{Inf}_{x} [\Psi_{\delta,L}^{(\eps),\leq \ell}] \geq 0$, we have that $\mu_{\delta}^{\pm}$-almost surely,
    \begin{align*}
        \max_{x \in D_{L,\delta}} \operatorname{Inf}_{x} [\Psi_{\delta,L}^{(\eps),\leq \ell}] &\leq \sum_{x \in D_{L,\delta}} \sum_{I \subset D_{L,\delta}: \vert I \vert \leq \ell, x \in I} (1+\eps)^{\vert I \vert} [\prod_{z \in I}\vartheta_{\delta}(z)^{2}]\mu_{\delta,L}^{+}[\prod_{z \in I} \sigma_{z}]^2\\
        &= \ell \sum_{I \subset D_{L,\delta}: \vert I \vert \leq \ell} (1+\eps)^{\vert I \vert} [\prod_{z \in I}\vartheta_{\delta}(z)^{2}]\mu_{\delta,L}^{+}[\prod_{z \in I} \sigma_{z}]^2.
    \end{align*}
    Claim \ref{claim-bound-jk-expectations} (applied with no ``right" terms) implies that the $\mu_{\delta}^{\pm}$-expectation of the random variable on the right-hand side above is uniformly bounded in $\delta$. Therefore, the dominated convergence theorem yields that $\mu_{\delta}^{\pm}[\max_{x \in D_{L,\delta}} \operatorname{Inf}_{x} [\Psi_{\delta,L}^{(\eps),\leq \ell}]] \to 0$ as $\delta \to 0$. As explained before, this concludes the proof of \eqref{cvg-theta-vs-Xi}.
\end{proof}

\subsection{Conformal covariance of \texorpdfstring{$\nu_{W,\lambda}^{(D,a,b)}$}{}} \label{sec-conformal-covariance}

In this section, we prove conformal covariance in law of the limiting measure $\nu_{W,\lambda}^{(D,a,b)}$ as claimed in Theorem \ref{theorem-single-interface} and made precise in the proposition just below.

\begin{proposition}\label{proposition-conformal-covariance-curve}
    Let $D, \tilde D \subset \mathbb{C}$ be two bounded, open an simply connected domains with smooth boundary and let $a,b \in \partial D$. Let $\phi: D \to \tilde D$ be a conformal map and set $\tilde a = \phi(a)$, $\tilde b = \phi(b)$. Then, $\nu_{W,\lambda}^{(D,a,b)}$ has the same law as $\nu_{W, \tilde \lambda}^{(\tilde D, \tilde a, \tilde b)}$ where for $z \in \tilde D$, $\tilde{\lambda}(z)= \vert (\phi^{-1})'(z) \vert^{\frac{7}{8}} \lambda(\phi^{-1}(z))$.
\end{proposition}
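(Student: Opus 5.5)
The plan is to read conformal covariance off the explicit Radon–Nikodym derivative \eqref{RN-single-interface-limit} of Proposition \ref{proposition-uniqueness-law-curve}, combined with three ingredients: the conformal invariance of SLE$_3$, the conformal covariance of the correlation functions $f^{(\xi,k)}$ in Theorem \ref{theorem_spin_correlations}, and the transformation rule of white noise under conformal maps. I interpret ``same law'' as follows. The pair $(W,\nu_{W,\lambda}^{(D,a,b)})$, with the measure pushed forward by $\phi$, has the same joint law as $(\tilde W,\nu_{\tilde W,\tilde\lambda}^{(\tilde D,\tilde a,\tilde b)})$. Here $\tilde W$ is the white noise on $\tilde D$ defined by
\[
\tilde W(g) := \int_D g(\phi(z))\,\vert \phi'(z)\vert\, W(dz).
\]
Since $\int_D g(\phi(z))^2\vert\phi'(z)\vert^2\,dz=\int_{\tilde D} g^2$, this $\tilde W$ is again a white noise, and $\tilde W(dw)=\vert\phi'(z)\vert W(dz)$ formally, with $w=\phi(z)$.

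\textbf{Step 1 (the reference measure).} By conformal invariance of SLE$_3$, $\phi_*\nu_{\operatorname{SLE}_3}^{(D,a,b)}=\nu_{\operatorname{SLE}_3}^{(\tilde D,\tilde a,\tilde b)}$. For each curve $\gamma$, set $\tilde\gamma=\phi(\gamma)$. Then $\phi(D_L)=\tilde D_L$ and $\phi(D_R)=\tilde D_R$, because $\phi$ preserves orientation and hence left/right. It therefore suffices to show that, $\PP$-a.s. and for $\nu_{\operatorname{SLE}_3}$-a.e. $\gamma$,
\[
S_j^{W,\lambda}(\gamma)=S_j^{\tilde W,\tilde\lambda}(\tilde\gamma),\quad j=L,R,
\qquad\text{and}\qquad
\mathcal{Z}_D^{\pm,W,\lambda}=\mathcal{Z}_{\tilde D}^{\pm,\tilde W,\tilde\lambda}.
\]
\textbf{Step 2 (the chaoses).} Fix $k$ and consider the $k$-th term of $S_L^{W,\lambda}$. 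Restricting $\phi$ to $D_L$ gives a conformal map onto $\tilde D_L$, so Theorem \ref{theorem_spin_correlations} yields
\[
f_{D_L}^{(+,k)}(z)=\prod_p\vert\phi'(z_p)\vert^{1/8}f_{\tilde D_L}^{(+,k)}(\phi(z)).
\]
Substituting $W(dz_p)=\vert\phi'(z_p)\vert^{-1}\tilde W(dw_p)$ and $\lambda(z_p)=\vert\phi'(z_p)\vert^{7/8}\tilde\lambda(w_p)$, the powers of $\vert\phi'\vert$ cancel, since $1/8+7/8-1=0$. The multiple Wiener integral of $f_{D_L}^{(+,k)}\prod\lambda$ against $W^{\otimes k}$ then equals the one of $f_{\tilde D_L}^{(+,k)}\prod\tilde\lambda$ against $\tilde W^{\otimes k}$. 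The same computation applies to $D_R$ with $\xi=-$, and to $\mathcal{Z}$ on all of $D$ with $\xi=\pm$. Summing over $k$ is justified by $L^2$ convergence of the series, which holds because the kernels are in $L^2$ as established in Section \ref{sub:proof-tightness-interface} and \cite{CSZ16}. The identity of multiple Wiener integrals under the change of variables is first checked on simple (elementary) kernels and then extended by the isometry.

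\textbf{Main obstacle.} The delicate point is that $\gamma$ is random and the chaoses $S_j$ are defined jointly in $(W,\gamma)$. The multiple integrals are only defined $\PP$-a.s. for each fixed $\gamma$, so equalities must be arranged on a common full-measure set of $\PP\otimes\nu_{\operatorname{SLE}_3}$. I would handle this by checking the identity in $L^2(\PP\otimes\nu_{\operatorname{SLE}_3})$: the difference of the two sides is, for each fixed $\gamma$, zero in $L^2(\PP)$, and Fubini then gives vanishing of the joint $L^2$ norm. Measurability in $\gamma$ comes from the construction via limits in Lemma \ref{lemma-joint-convergence}.

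\textbf{Step 3 (conclusion).} Combining the three steps, for every bounded measurable functional $G$ of the noise and the curve,
\[
\EE\big[\nu_{W,\lambda}^{(D,a,b)}(G(\tilde W,\phi(\cdot)))\big]=\EE\big[\nu_{\tilde W,\tilde\lambda}^{(\tilde D,\tilde a,\tilde b)}(G(\tilde W,\cdot))\big].
\]
This equality of expectations is the claimed equality in law.
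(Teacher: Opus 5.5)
Your proposal is correct and follows the same route as the paper. You reduce to the Radon--Nikodym derivative \eqref{RN-single-interface-limit}, use conformal invariance of SLE$_3$ and orientation preservation to send $D_L, D_R$ to $\tilde D_L, \tilde D_R$, and combine the covariance of $f^{(\xi,k)}$ from Theorem~\ref{theorem_spin_correlations} with the change of variables for multiple white-noise integrals, where the exponents $\frac18+\frac78-1=0$ cancel exactly as in the paper.

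The one difference is a useful refinement. You realize $\tilde W$ as the image of $W$ under the unitary map $g\mapsto g\circ\phi^{-1}\,\vert(\phi^{-1})'\vert$, so the identities for $S_L$, $S_R$ and $\mathcal{Z}$ hold almost surely and simultaneously; you also rederive the covariance of $\mathcal{Z}$ rather than citing \cite[Corollary~3.16]{CSZ16}. This makes the joint-law conclusion airtight, whereas the paper only states equalities in law factor by factor.
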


\begin{proof}
    Proposition \ref{proposition-conformal-covariance-curve} is proved by using the explicit expression \eqref{RN-single-interface-limit} for the Radon-Nikodym derivative $F_{W,\lambda}^{(D,a,b)}$ of $\nu_{W,\lambda}^{(D,a,b)}$ with respect to $\nu_{\operatorname{SLE}_{3}}^{(D,a,b)}$. Indeed, since SLE$_3$ is conformally invariant in law, it suffices to prove that $F_{W,\lambda}^{(D,a,b)}$ transforms as claimed in Proposition \ref{proposition-conformal-covariance-curve}. To do so, we first observe that by \cite[Corollary~3.16]{CSZ16}, $\mathcal{Z}_{D}^{\pm,W,\lambda}$ has the same law as $\mathcal{Z}_{\tilde{D}}^{\pm,W,\tilde \lambda}$ where $\tilde \lambda$ is as in the statement of the proposition. Next, let us look at how the term $S_{L}^{W,\lambda}$ transforms. If $W$ denotes white noise on $\mathbb{R}^2$, then the processes $(\int_{D_{L}}f(z)W(dz))_{f \in L^2(D)}$ and $(\int_{\tilde D_{L}}f(\phi^{-1}(z))\vert (\phi^{-1})'(z) \vert W(dz))_{f \in L^2(D)}$ have the same law, since there are both centered Gaussian processes with the same covariance. Here, we have set $\tilde D_{L}:=\phi(D_{L})$. This extends to an equality in law for multiple integrals, that is, for any $k \in \mathbb{N}$,
    \begin{equation*}
        \int_{D_{L}^k} f(z_{1},\dots,z_{k})\prod_{j=1}^{k}W(dz_j) \overset{(d)}{=}\int_{\tilde{D}_{L}^k} f(\phi^{-1}(z_{1}),\dots,\phi^{-1}(z_{k}))\prod_{j=1}^{k}\vert (\phi^{-1})'(z_j) \vert W(dz_j).
    \end{equation*}
    It follows from this and the conformal covariance of the functions $(f_{L}^{(+,k)})_{k}$ in Theorem \ref{theorem_spin_correlations} that $S_{L}^{W,\lambda}$ has the same law as
    \begin{equation*}
        1 + \sum_{k \geq 0} \frac{C_{\sigma}^k}{k!}\int_{\tilde{D}_{L}} [\prod_{j=1}^{k}\vert (\phi^{-1})'(z_j) \vert^{-\frac{1}{8}}] f_{\tilde{D}_{L}}^{(+,k)}(z_1,\dots,z_k) \prod_{j=1}^{k} \vert (\phi^{-1})'(z_j) \vert \lambda(\phi^{-1}(z_j)) W(dz_j).
    \end{equation*}
    A similar equality in law holds for $S_{R}^{W,\lambda}$. Since by conformal covariance of SLE$_3$, $\tilde{D}_{L}$ and $\tilde{D}_{R}:=\phi(D_{R})$ have the same law as $D_{L}$ and $D_{R}$, the proof of Proposition \ref{proposition-conformal-covariance-curve} is complete.
\end{proof}

\section{Singularity of the outermost spin loops in the scaling limit}

In this section, we prove Theorem \ref{theorem-singularity-outermost-loops}, using inputs for the lattice near-critical RFIM established in Section \ref{sec: singularity for outermost cle}.

\begin{proof}[Proof of Theorem \ref{theorem-singularity-outermost-loops}]
    The first step of the proof is to show the tightness of
    \begin{equation*}
        (\delta^{-1}\sum_{x \in \Lambda_{\delta}}h_x\mathbb{I}_{S_{\delta}(x)}, \nu_{\delta}^{\oloop,h})_{\delta}.
    \end{equation*}
    The topology in which tightness of $\delta^{-1}\sum_{x \in \Lambda_{\delta}}h_x\mathbb{I}_{S_{\delta}(x)}$ holds is as described at the beginning of Proposition \ref{proposition-uniqueness-law-curve} while tightness of $(\nu_{\delta}^{\oloop})_{\delta}$ is going to be proved in the metric space $(\mathcal{X},d_{\mathcal{X}})$ described in Section \ref{sec-topology-loops}. Tightness of $\delta^{-1}\sum_{x \in \Lambda_{\delta}}h_x\mathbb{I}_{S_{\delta}(x)}$ in fact follows from standard arguments, and we refer the reader to \cite[Section~2.1]{CSZ16} for details. As for tightness of $(\nu_{\delta}^{\operatorname{oloop},h})_{\delta}$, it is established by the following lemma.

    \begin{lemma}\label{lemma-tightness-outermost-loops}
         $(\nu_{\delta}^{\operatorname{oloop},h})_{\delta}$ is tight in $(\mathcal{X}, d_{\mathcal{X}})$.
    \end{lemma}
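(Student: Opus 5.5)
We will prove tightness of the annealed laws of the outermost loops under $\PP\otimes\mu_{\delta}^{+,h}$, where $\mu_{\delta}^{+,h}$ is the RFIM on $\Lambda_\delta$ with $+1$ boundary conditions, by comparing them with the critical laws $\nu_\delta^{+}$. As in the single-interface case, the law of the whole spin configuration under $\mu_{\delta}^{+,h}$ is absolutely continuous with respect to $\mu_\delta^{+}$. The density is
\[
G^h_\delta(\sigma)=\frac{\exp\big(\delta^{\frac78}\sum_x h_x\sigma_x\big)}{\mathcal{Z}_\delta^{+,h}} .
\]
The law of the outermost loops is the pushforward of this measure, so its density with respect to $\nu_\delta^{+}$ is the conditional expectation of $G_\delta^h$ given the loops.

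The key step is a uniform bound on a $1+\alpha$ moment of this density, or of its annealed version. Following the method used for tightness of the magnetization field in \cite{continuum-2d-RFIM}, it suffices to control, uniformly in $\delta$, events of the form $\PP\otimes\mu_\delta^{+,h}[A]$, where $A$ is a ``bad'' event. For each such $A$ we apply H\"older's inequality:
\[
\EE\big[\mu_{\delta}^{+,h}(A)\big]\le \EE\otimes\mu_\delta^+\big[(G_\delta^h)^{1+\alpha}\big]^{\frac{1}{1+\alpha}}\,\mu_\delta^+(A)^{\frac{\alpha}{1+\alpha}} .
\]
We bound the first factor after integrating out $h$ at fixed $\sigma$. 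Since the $h_x$ are Gaussian,
\[
\EE\big[\exp\big((1+\alpha)\delta^{\frac78}\textstyle\sum_x h_x\sigma_x\big)\big]=\exp\big(\tfrac{(1+\alpha)^2}{2}\delta^{\frac74}|\Lambda_\delta|\big),
\]
and this is independent of $\sigma$. It remains to control $(\tilde{\mathcal Z}_\delta^{+,h})^{-(1+\alpha)q}$. We renormalize by $\theta(\delta)$ as in Theorem \ref{theorem-cvg-RFIM-partition-function}, so that the constant $\exp\big(\tfrac{(1+\alpha)^2}{2}\delta^{-1/4}\|\lambda\|^2\big)$ is paired with $\theta(\delta)^{1+\alpha}$. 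A direct pairing does not cancel, because $(1+\alpha)^2\neq 1+\alpha$. We will therefore not use the annealed H\"older bound on $G^h_\delta$ itself. Instead we argue quenched, $h$ by $h$, as in Claim \ref{claim-uniform-boundedness-left-right-spin-fields}, and work with the chaos expansion of the \emph{tilted} density.

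Concretely, write $\mu_\delta^{+,h}(A)=\mu_\delta^+\big[\mathbb{I}_A\,\tilde G\big]/\tilde{\mathcal Z}_\delta^{+,h}$, where
\[
\tilde G:=\theta(\delta)\exp\big(\delta^{\frac78}\textstyle\sum h_x\sigma_x\big).
\]
By Cauchy--Schwarz in $\PP$,
\[
\EE\big[\mu_\delta^{+,h}(A)\big]\le \EE\big[(\tilde{\mathcal Z}_\delta^{+,h})^{-2}\big]^{1/2}\,\EE\big[\mu_\delta^+[\mathbb{I}_A\tilde G]^2\big]^{1/2}.
\]
The first factor is bounded by \cite[Corollary~C.2]{continuum-2d-RFIM}. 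For the second factor, we write $\mu_\delta^+[\mathbb{I}_A\tilde G]^2$ with two replicas $\sigma,\sigma'$ and integrate out $h$. This produces
\[
\theta(\delta)^2\exp\big(\tfrac12\delta^{\frac74}\textstyle\sum_x(\sigma_x+\sigma'_x)^2\big)=\exp\big(\delta^{\frac74}\textstyle\sum_x\sigma_x\sigma'_x\big),
\]
up to a factor $1+o(1)$. We then apply Cauchy--Schwarz in the replicas, using $\mathbb{I}_A(\sigma)\mathbb{I}_A(\sigma')$:
\[
\EE\big[\mu_\delta^+[\mathbb{I}_A\tilde G]^2\big]\le \mu_\delta^+(A)\,\big(\mu_\delta^+\otimes\mu_\delta^+\big[e^{2\delta^{7/4}\sum_x\sigma_x\sigma'_x}\big]\big)^{1/2}.
\]
The last expectation is the replica overlap partition function. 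It is bounded uniformly in $\delta$ by expanding the exponential and using Lemma \ref{lemma-bound-spin-correlations} together with Lemma \ref{lemma-sum-prod-L-x-j-delta}. Indeed, its $k$-th term is at most $\frac{2^k}{k!}\delta^{\frac{7k}{4}}\sum_{x_1,\dots,x_k}\mu^+_\delta[\prod\sigma_{x_j}]^2\le C^kk^{k/4}/k!$, which is summable; this is exactly the $L^2$ estimate from \cite{CSZ16}. Altogether, $\EE[\mu_\delta^{+,h}(A)]\le C\,\mu_\delta^+(A)^{1/2}$ uniformly in $\delta$.

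With this comparison we conclude via the standard tightness criterion in $(\mathcal X,d_{\mathcal X})$. By Theorem \ref{theorem-cvg-loops-critical}, $\nu_\delta^+$ converges, and the outermost loops are a measurable function of the nested loops, so the critical laws are tight. Tightness in $\mathcal X$ amounts to uniform control of two kinds of bad events. The first is that some loop of diameter at least $\eps$ fails to have a uniform modulus of continuity or a uniform Aizenman--Burchard regularity; by the Aizenman--Burchard / Kemppainen--Smirnov framework \cite{tightness-curves}, this is implied by bounds on multiple-arm crossing events in annuli. The second is that there are more than $N(\eps)$ loops of diameter at least $\eps$. Both events have $\mu_\delta^+$-probability tending to $0$ uniformly in $\delta$, so their $\PP\otimes\mu_\delta^{+,h}$-probabilities do too by the previous paragraph, giving annealed tightness. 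Joint tightness with the white-noise approximations then follows since each marginal is tight. The main obstacle is the algebra in the second step: making the $\theta(\delta)$ renormalization cancel exactly, which forces the two-replica trick, and checking that the overlap series $\mu_\delta^+\otimes\mu_\delta^+[e^{c\delta^{7/4}\sum\sigma\sigma'}]$ is uniformly bounded near the boundary of $\Lambda$, where Lemma \ref{lemma-sum-dist-to-boundary-power-alpha} is needed for the corners of the square.
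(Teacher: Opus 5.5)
Your proof is correct, but it follows a different route from the paper's. The paper never uses second moments in the disorder. It splits according to whether $\tilde{\mathcal Z}_\delta^{+,h}\le\eta$, and makes that event unlikely using only the convergence in law of $\tilde{\mathcal Z}_\delta^{+,h}$ and the almost sure positivity of its limit \cite[Lemma~2.2]{continuum-2d-RFIM}. On the complement it bounds $1/\tilde{\mathcal Z}_\delta^{+,h}$ by $1/\eta$ and integrates out $h$ at fixed $\sigma$ via the first-moment identity $\theta(\delta)\EE[\exp(\delta^{7/8}\sum_x h_x\sigma_x)]\to1$, uniformly in $\sigma$. This gives $\EE[\nu_\delta^{\operatorname{oloop},h}(K^c)]\le\PP[\tilde{\mathcal Z}_\delta^{+,h}\le\eta]+\frac{2}{\eta}\nu_\delta^{\operatorname{oloop}}(K^c)$ for small $\delta$.

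You instead apply Cauchy--Schwarz in $\PP$, which needs two heavier inputs. The first is the uniform bound on $\EE[(\tilde{\mathcal Z}_\delta^{+,h})^{-2}]$ from \cite[Corollary~C.2]{continuum-2d-RFIM}. The second is the uniform bound on the replica overlap $\mu_\delta^+\otimes\mu_\delta^+[e^{2\delta^{7/4}\sum_x\sigma_x\sigma'_x}]$. That bound is just the second-moment estimate of \cite{CSZ16} with $\lambda$ replaced by $\sqrt2\lambda$. Expanding $\prod_x e^{c\delta^{7/4}\sigma_x\sigma'_x}$ with $\cosh$ and $\tanh$, rather than expanding the raw exponential, removes repeated indices; near-diagonal tuples are handled as in Claim \ref{claim-bound-jk-expectations}.

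In return you obtain the domination $\EE[\nu_\delta^{\operatorname{oloop},h}(A)]\le C\,\nu_\delta^{\operatorname{oloop}}(A)^{1/2}$, valid for every Borel $A$ and uniformly in $\delta$. This is stronger than the paper's bound, and it would also give the annealed absolute continuity of Lemma \ref{lemma-abs-continuity-annealed} with an explicit modulus. Your algebra is right: $\theta(\delta)^2e^{\delta^{7/4}|\Lambda_\delta|}=1+o(1)$. You are also right that the annealed $L^{1+\alpha}$ norm of the full-configuration density diverges, so avoiding that route was necessary.

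Two points in your last step need adjusting.

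First, being a measurable function of the nested loops does not transfer tightness. What does transfer it is that the outermost loops form a \emph{sub-collection} of the nested collection. In $(\mathcal X,d_{\mathcal X})$, the set of all sub-collections of elements of a relatively compact set is again relatively compact. To see this, along a subsequence fix which of the finitely many loops of diameter at least $\eps$ of the limit are kept, and discard the remaining loops at a cost of order $\eps$. Alternatively, cite directly the convergence of the critical outermost loops to CLE$_3$ from \cite{cvg-nested-loops-Ising}.

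Second, the Aizenman--Burchard discussion is unnecessary. Since your inequality holds for every Borel set, take $A=K^c$ with $K$ compact and $\sup_\delta\nu_\delta^{\operatorname{oloop}}(K^c)$ small, as the paper does. Then conclude with the intensity-measure criterion \cite[Theorem~4.10]{Kallenberg-rdm-measures}.
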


    Lemma \ref{lemma-tightness-outermost-loops} will be shown just below. With this lemma in hand, it now remains to prove the singularity of any convergent subsequence of $(\nu_{\delta}^{\operatorname{oloop},h})_{\delta}$. In order to do so, for $\iota > 0$, define $M = \lfloor \iota L \rfloor$ and partition $\Lambda_{L}$ into $2M \times 2M$ boxes $B_{1}, \dots, B_{n}$. Denote by $u_{i}$ the center of the box $B_{i}$, so that $B_{i} = \Lambda_{M}(u_i)$ and define the annulus $A_{M}(u_i):= \Lambda_{M/2}(u_i) \setminus \Lambda_{M/4}(u_i)$. For $i=1,\dots,n$, set
    \begin{equation*}
        X_{i} := \mathbb{I}_{\{\text{there exists an outermost loop in $A_{M}(u_i)$}\}}.
    \end{equation*}
    For $\tau \in \{0,1\}^n$, we then define the event $\mathcal{E}_{\tau}:=\{(X_1,\dots, X_n) = \tau\}$. To prove singularity of the law of any of the subsequential limits of $(\nu_{\delta}^{\operatorname{oloop,h}})_{\delta}$ with respect to the law $\nu_{\operatorname{CLE}_3}^{\Lambda_{L}}$ of CLE$_3$ in $\Lambda$, we will show the following proposition. 

    \begin{proposition}\label{prop-observable-singularity-continuum}
        Let $(\nu_{\delta_{k}}^{\operatorname{oloop},h})_{k}$ be a convergent subsequence of $(\nu_{\delta}^{\operatorname{oloop},h})_{\delta}$ and denote by $\nu^{W}$ its limit. Then, there exists $c > 0$ such that for any $\iota > 0$,
        \begin{equation*}
            \frac{1}{2}\EE \bigg[ \sum_{\tau \in \{0,1\}^n} \vert \nu_{\operatorname{CLE}_{3}}^{\Lambda}(\mathcal{E}_{\tau}) - \nu^{W}(\mathcal{E}_{\tau}) \vert \bigg] \geq 1 - c^{-1}\iota^{c}.
        \end{equation*}
    \end{proposition}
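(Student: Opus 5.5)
The plan is to reduce the continuum statement to a discrete one and then pass to the limit along the subsequence $(\delta_k)_k$. I would first establish a discrete analogue of the bound. Let $\nu_{\delta}^{+}$ be the law of the outermost loops under $\mu_{\delta}^{+}$. For $\delta$ small enough (depending on $\iota$), I want
\begin{equation*}
    \frac{1}{2}\EE\bigg[\sum_{\tau \in \{0,1\}^n}\big\vert \nu_{\delta}^{+}(\mathcal{E}_{\tau}) - \nu_{\delta}^{\operatorname{oloop},h}(\mathcal{E}_{\tau})\big\vert\bigg] \geq 1 - c^{-1}\iota^{c},
\end{equation*}
where $n \asymp \iota^{-2}$ is the number of boxes. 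This is the input announced from Section \ref{sec: singularity for outermost cle}, and I take it as given from the discrete part of the paper.

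Its mechanism, which I would follow, runs as follows. Under $\nu_{\delta}^{+}$, RSW-type bounds give that each $X_i=1$ with probability bounded below, so typically $\vert\tau\vert \geq n^{\frac{7}{8}+\iota'}$. Next, switch on the field box by box. Conditionally on $\{X_j=\tau_j, j<i\}$, the field in $B_i$ tilts the law by a factor whose log has a drift of order $-\delta^{7/4}\sum_{x\in B_i}\mu[\sigma_x]^2 \asymp -c$ (the Gaussian variance from the spin one-point functions, of order $M^{2}\delta^{-2}\cdot\delta^{7/4}\delta^{1/4}M^{-1/4}$ in macroscopic units). This drift comes with a sign favouring the absence of a $-$ loop when $\tau_i=1$. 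The terms with $\tau_i=0$ are mean-zero martingale increments. Summing over $i$ then gives a systematic shift of size $\asymp \vert\tau\vert$ against fluctuations of size $\sqrt{n}$. This forces $\nu_{\delta}^{\operatorname{oloop},h}(\mathcal{E}_{\tau}) \ll \nu^{+}_{\delta}(\mathcal{E}_{\tau})$ for typical $\tau$, and hence total variation at least $1-c^{-1}\iota^{c}$.

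The step I expect to be the main obstacle in this discrete input is the decorrelation under the conditioning on $\{X_j=\tau_j, j<i\}$, which is neither increasing nor decreasing. The natural first attempt is an exploration from the boundary that reveals the outermost loops of earlier boxes. After that, the Markov property and Lemma \ref{lemma-bound-spin-correlations} bound the $k$-point functions inside $B_i$ uniformly in the conditioning.

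Passing to the limit is done in two steps. The sum over $\tau$ is finite, since $n$ depends only on $\iota$. First, $\nu_{\delta}^{+}(\mathcal{E}_{\tau}) \to \nu_{\operatorname{CLE}_3}^{\Lambda}(\mathcal{E}_{\tau})$ by Theorem \ref{theorem-cvg-loops-critical}. Second, $\nu_{\delta_k}^{\operatorname{oloop},h}(\mathcal{E}_{\tau}) \to \nu^{W}(\mathcal{E}_{\tau})$ in law, jointly with the noise. This second step needs $\mathcal{E}_{\tau}$ to be a continuity set for the limit, i.e.\ a ``continuity of crossing events'' lemma. The possible discontinuities are a loop touching $\partial A_M(u_i)$, or a near-loop in $A_M(u_i)$ that is not closed. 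Both are excluded by six-arm/half-plane arm bounds for the discrete Ising model with field. These bounds transfer from the critical model by the uniform $L^{1+\alpha}$ control of the Radon–Nikodym derivative, obtained as in Claim \ref{claim-uniform-boundedness-left-right-spin-fields}. The event of such a configuration inside a $\varepsilon$-thin boundary layer then has probability $o_{\varepsilon}(1)$ uniformly in $\delta$.

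Finally, the map $(a_\tau,b_\tau)\mapsto \frac12\sum_{\tau}\vert a_\tau-b_\tau\vert$ is bounded and continuous. Weak convergence under $\PP$, via the Skorokhod representation, therefore lets the expectation pass to the limit, which gives the claimed bound with the same $c$.
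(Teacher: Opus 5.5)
The structure of your argument is the paper's: the discrete bound of Theorem \ref{thm: cle outermost singularity}, then convergence of the finitely many numbers $\nu_{\delta_k}^{\operatorname{oloop}}(\mathcal{E}_\tau^{\delta_k})$ and, in law jointly with the noise, $\nu_{\delta_k}^{\operatorname{oloop},h}(\mathcal{E}_\tau^{\delta_k})$, then bounded convergence.

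\textbf{The gap.} It lies in how you show that $\partial\mathcal{E}_\tau$ is $\nu^W$-null. You transfer arm bounds to the field model via a $\delta$-uniform bound on $\EE\otimes\nu_\delta^{\operatorname{oloop}}[F_\delta^{1+\alpha}]$, with $F_\delta:=\der\nu_\delta^{\operatorname{oloop},h}/\der\nu_\delta^{\operatorname{oloop}}$, ``obtained as in Claim \ref{claim-uniform-boundedness-left-right-spin-fields}''. No such bound holds for the outermost loops. In fact it contradicts the discrete input you accept.

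To see this, set $Q_\delta(\tau):=\nu_\delta^{\operatorname{oloop}}[F_\delta\mid X=\tau]$ with $X=(X_1,\dots,X_n)$. Then $\nu_\delta^{\operatorname{oloop},h}(\mathcal{E}_\tau)=Q_\delta(\tau)\,\nu_\delta^{\operatorname{oloop}}(\mathcal{E}_\tau)$, so the total variation in Theorem \ref{thm: cle outermost singularity} equals $1-\nu_\delta^{\operatorname{oloop}}[\min(Q_\delta(X),1)]$. The theorem therefore gives $\EE\otimes\nu_\delta^{\operatorname{oloop}}[\min(Q_\delta,1)]\le c^{-1}\iota^{c}$ for $\delta$ small enough depending on $\iota$. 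Since $\mathbb{I}_{\{Q_\delta>1\}}\le\min(Q_\delta,1)$, H\"older and conditional Jensen give
\[
1=\EE\otimes\nu_\delta^{\operatorname{oloop}}[Q_\delta]\le c^{-1}\iota^{c}+\big(\EE\otimes\nu_\delta^{\operatorname{oloop}}[F_\delta^{1+\alpha}]\big)^{\frac{1}{1+\alpha}}\big(c^{-1}\iota^{c}\big)^{\frac{\alpha}{1+\alpha}},
\]
which is impossible for small $\iota$ under a $\delta$-uniform moment bound.

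The argument of Claim \ref{claim-uniform-boundedness-left-right-spin-fields} breaks for the following reason. Given $\Gamma_\delta$, every spin outside the outermost loops is $+$, so the derivative contains the factor $\exp(\delta^{7/8}\sum_{x\in\mathrm{Ext}(\Gamma_\delta)}h_x)$. After renormalization by $\theta(\delta)$, this forces the conditional $(1+\alpha)$-moment of $\theta(\delta)\mu_\delta^{+}[e^{\delta^{7/8}\sum_x h_x\sigma_x}\mid\Gamma_\delta]$ to be at least of order $\exp(\frac{\alpha(1+\alpha)}{2}\delta^{7/4}\vert\mathrm{Ext}(\Gamma_\delta)\vert)$. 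Heuristically the exterior is the $+$ boundary spin cluster, of size $\delta^{-187/96}\gg\delta^{-7/4}$. For a single interface, by contrast, only the roughly $\delta^{-11/8}$ vertices adjacent to $\gamma_\delta$ are frozen, which is why Claim \ref{claim-cvg-magnetization-interface} works there.

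\textbf{The fix.} You only need an annealed transfer, and it comes from the partition function rather than from moments of $F_\delta$. The exact identity
\[
\EE\big[\theta(\delta)\mathcal{Z}_\delta^{+,h}\,\nu_\delta^{\operatorname{oloop},h}(A)\big]=\theta(\delta)e^{\delta^{7/4}\vert\Lambda_\delta\vert/2}\,\nu_\delta^{\operatorname{oloop}}(A)
\]
yields, as in the proof of Lemma \ref{lemma-tightness-outermost-loops},
\[
\EE[\nu_\delta^{\operatorname{oloop},h}(A)]\le\PP(\tilde{\mathcal{Z}}_\delta^{+,h}\le\eta)+2\eta^{-1}\nu_\delta^{\operatorname{oloop}}(A).
\]
The first term is small uniformly in $\delta$ by almost sure positivity of the limiting partition function. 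In the limit this is Lemma \ref{lemma-abs-continuity-annealed}, namely $\EE[\nu^W(\cdot)]\ll\nu_{\operatorname{CLE}_3}^{\Lambda}$. Hence $\partial\mathcal{E}_\tau$, which is $\nu_{\operatorname{CLE}_3}^{\Lambda}$-null by the crossing-continuity argument of Lemma \ref{lemma-continuity-crossing-events}, is $\nu^W$-null almost surely. This is exactly the paper's route.

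Your first step needs the same CLE$_3$ null-boundary statement. Theorem \ref{theorem-cvg-loops-critical} alone does not provide it.

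\textbf{An aside on your mechanism sketch.} You do not need it, but it is off in two places. First, a bulk box has $X_i=1$ only with probability decaying polynomially in $N/M$, since the $+$ boundary cluster must reach it; this is why $\vert\tau\vert$ is sublinear in $n$. Second, the $k$-point functions in $B_i$ must be controlled under the non-monotone conditioning on $\mathcal{E}_\tau$. The Markov property plus Lemma \ref{lemma-bound-spin-correlations} does not suffice for that; this is the purpose of Section \ref{sec: proof of DHX rare event}.
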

    
    Proposition \ref{prop-observable-singularity-continuum} will be proved just below. Observe that $\frac{1}{2}\sum_{\tau \in \{0,1\}^n} \vert \nu_{\operatorname{CLE}_{3}}^{\Lambda}(\mathcal{E}_{\tau}) - \nu^{W}(\mathcal{E}_{\tau}) \vert$ is in fact the total variation distance between the law of $(X_1,\dots,X_n)$ under $\nu_{\operatorname{CLE}_3}^{\Lambda}$ and the law of $(X_1,\dots,X_n)$ under $\nu^{W}$. Thus, singularity of the law of any of the subsequential limits of $(\nu_{\delta}^{\operatorname{oloop,h}})_{\delta}$ with respect to $\nu_{\operatorname{CLE}_3}^{\Lambda}$ follows directly from Proposition \ref{prop-observable-singularity-continuum}. This completes the proof of Theorem \ref{theorem-singularity-outermost-loops}.
\end{proof}

Let us now prove Lemma \ref{lemma-tightness-outermost-loops} and Proposition \ref{prop-observable-singularity-continuum}.

\begin{proof}[Proof of Lemma \ref{lemma-tightness-outermost-loops}]
     The proof is similar to what is done in \cite[Section~4]{continuum-2d-RFIM}. As random measures probability measures on the complete and separable metric space $(\mathcal{X},d_{\mathcal{X}})$ (see Lemma \ref{lemma-measurability-in-XX}), tightness of $(\nu_{\delta}^{\oloop,h})_{\delta}$ will be established once we show that for any $\eps > 0$, there exists a compact set $K \subset \mathcal{X}$ such that
     \begin{equation} \label{condition-tightness}
        \sup_{\delta \in(0,1]} \EE[\nu_{\delta}^{\oloop,h}(K^c)] \leq \eps.
     \end{equation}
     This follows from the tightness criterion for random probability measures of \cite[Theorem~4.10]{Kallenberg-rdm-measures}. So let us prove \eqref{condition-tightness}. Fix $\delta > 0$ and observe that $\nu_{\delta}^{\operatorname{oloop},h}$ has the following Radon-Nikodym derivative with respect to $\nu_{\delta}^{\operatorname{oloop}}$, the law of the outermost spin loops under $\mu_{\delta}^{+}$:
    \begin{equation*}
        \frac{\der \nu_{\delta}^{\operatorname{oloop},h}}{\der \nu_{\delta}^{\operatorname{oloop}}}(\Gamma_{\delta}) = \frac{1}{\mathcal{Z}_{\delta}^{+,h}}\mu_{\delta}^{+}\bigg[\exp\bigg(\delta^{\frac{7}{8}}\sum_{x \in \Lambda_{\delta}} h_x\sigma_x\bigg) \big \vert \Gamma_{\delta} \bigg].
    \end{equation*}
    Above, $\Gamma_{\delta}$ denotes the collection of outermost spin loops. To prove \eqref{condition-tightness}, we first observe that for any compact set $K \subset \mathcal{X}$ and any $\eta > 0$,
    \begin{equation*}
        \EE[\mu_{\delta}^{+,h}[K^{c}]] \leq \PP[\tilde{\ZZ}_{\delta}^{+,h} \leq \eta] + \EE[\mathbb{I}_{\tilde{\ZZ}_{\delta}^{+,h} > \eta}\mu_{\delta}^{+,h}[K^c]]
    \end{equation*}
    where $\tilde{\ZZ}_{\delta}^{+,h}$ is defined as in \eqref{def-tilde-Z-delta} (with $\lambda \equiv 1$ here). Since the limit in law of $\tilde{\ZZ}_{\delta}^{+,h}$ is $\PP$-positive almost surely by \cite[Lemma~2.2]{continuum-2d-RFIM}, for any $\eps >0$, we can choose $\eta >0$ such that
    \begin{equation*}
        \limsup_{\delta \to 0} \PP[\tilde{\ZZ}_{\delta}^{+,h} \leq \eta] \leq \frac{1}{2}\eps.
    \end{equation*}
    To establish \eqref{condition-tightness}, it is therefore enough to upper bound  $\EE[\mu_{\delta}^{+,h}[K^c]\mathbb{I}_{\tilde{\ZZ}_{\delta}^{+,h} > \eta}]$. To do so, we note that for any compact set $K \subset \mathcal{X}$,
    \begin{align*}
        \EE[\mu_{\delta}^{+,h}[K^c]\mathbb{I}_{\tilde{\ZZ}_{\delta}^{+,h} > \eta}] &= \EE\bigg[\frac{\theta(\delta)}{\tilde{\ZZ}_{\delta}^{+,h}}\mathbb{I}_{\tilde{ \ZZ}_{\delta}^{+,h} > \eta} \mu_{\delta}^{+}\bigg[\mathbb{I}_{K^c}\mu_{\delta}^{+}\bigg[ \exp\bigg(\delta^{\frac{7}{8}}\sum_{x \in \Lambda_{\delta}} h_x\sigma_x\bigg) \vert \Gamma_{\delta} \bigg] \bigg] \bigg]\\
        &= \EE\bigg[\frac{\theta(\delta)}{\tilde{\ZZ}_{\delta}^{+,h}}\mathbb{I}_{\tilde{\ZZ}_{\delta}^{+,h} > \eta} \mu_{\delta}^{+}\bigg[\mu_{\delta}^{+}\bigg[\mathbb{I}_{K^c}\exp\bigg(\delta^{\frac{7}{8}}\sum_{x \in \Lambda_{\delta}} h_x\sigma_x\bigg) \vert \Gamma_{\delta} \bigg] \bigg] \bigg]\\
        &\leq \frac{1}{\eta}\mu_{\delta}^{+}\bigg[ \mathbb{I}_{K^c}\theta(\delta)\EE\bigg[ \exp\bigg(\delta^{\frac{7}{8}}\sum_{x \in \Lambda_{\delta}}h_x\sigma_x\bigg) \bigg] \bigg]
    \end{align*}
    where the second equality follows from the measurability of $K^c$ with respect to $\sigma(\Gamma_{\delta})$. Now, by \cite[Lemma~3.5]{continuum-2d-RFIM}, $\theta(\delta)\EE[\exp(\delta^{\frac{7}{8}}\sum h_x\sigma_x)]$ converges to $1$ as $\delta \to 0$, uniformly in $(\sigma_x)_x$. Therefore, we obtain that
    \begin{equation*}
         \EE[\mu_{\delta}^{+,h}[K^c]\mathbb{I}_{\tilde{\ZZ}_{\delta}^{+,h} > \eta}] \leq \frac{2}{\eta}\mu_{\delta}^{+}[K^c].
    \end{equation*}
    A direct consequence of \cite{cvg-nested-loops-Ising} (see Theorem \ref{theorem-cvg-loops-critical} in Section \ref{sec-topology-loops}) is that the law of $(\Gamma_{\delta})_{\delta}$ under $(\mu_{\delta}^{+})_{\delta}$ is tight in $(\mathcal{X}, d_{\mathcal{X}})$. Hence, for any $\eps > 0$, there exists a compact set $K$ such that for any $\delta > 0$, $\mu_{\delta}^{+}[K^c]\leq \frac{1}{4}\eta\eps$. This establishes \eqref{condition-tightness} and concludes the proof of Lemma \ref{lemma-tightness-outermost-loops}. 
\end{proof}

\begin{proof}[Proof of Proposition \ref{prop-observable-singularity-continuum}]
    To prove Proposition \ref{prop-observable-singularity-continuum}, let us first show that
    \begin{align} \label{observable-as-limit-discrete-observable}
        \frac{1}{2}\EE \bigg[ \sum_{\tau \in \{0,1\}^n} \vert \nu_{\operatorname{CLE}_{3}}^{\Lambda}(\mathcal{E}_{\tau}) - \nu^{W}(\mathcal{E}_{\tau}) \vert \bigg] = \lim_{k \to \infty} \frac{1}{2}\EE \bigg[ \sum_{\tau \in \{0,1\}^n} \vert \nu_{\delta_{k}}^{\operatorname{oloop}}(\mathcal{E}_{\tau}^{\delta_{k}}) - \nu_{\delta_{k}}^{\operatorname{oloop},h}(\mathcal{E}_{\tau}^{\delta_{k}}) \vert \bigg]
    \end{align}
    where $\mathcal{E}_{\tau}^{\delta_{k}}$ is defined analogously to $\mathcal{E}_{\tau}$, but for the collection of outermost loops $\Gamma_{\delta_{k}}$ in $D_{\delta_{k}}$ and with respect to the annuli $(A_{M,\delta_{k}}(u_i))_i:= (A_{M}(u_i) \cap \delta_{k} \mathbb{Z}^2)_i$. The equality \eqref{observable-as-limit-discrete-observable} is in fact a consequence of the following lemmas.

    \begin{lemma}\label{lemma-continuity-crossing-events}
        For $Q$ a topological rectangle, denote by $\mathcal{C}^{\pm}(Q)$ the event that there is a $\pm$-interface joining two opposite sides of $Q$, without hitting $\partial Q$ otherwise. Then, for any $Q \subset \Lambda$,
        \begin{equation*}
            \lim_{k \to \infty} \nu_{\delta_{k}}^{\operatorname{oloop}}[\mathcal{C}^{\pm}(Q)] = \nu_{\operatorname{CLE}_3}^{\Lambda}[\mathcal{C}^{\pm}(Q)].
        \end{equation*}
    \end{lemma}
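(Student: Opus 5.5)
The plan is to deduce Lemma \ref{lemma-continuity-crossing-events} from the convergence of the critical Ising outermost loops to CLE$_3$ (Theorem \ref{theorem-cvg-loops-critical}), via the portmanteau theorem. Here $\nu_{\delta_k}^{\operatorname{oloop}}$ is the law of $\Gamma_{\delta_k}$ under $\mu_{\delta_k}^{+}$. By Theorem \ref{theorem-cvg-loops-critical}, and since the outermost loops are a continuous enough functional of the nested collection in the limit, $\Gamma_{\delta_k}$ converges in law in $(\mathcal{X},d_{\mathcal{X}})$ to CLE$_3$ in $\Lambda$. Portmanteau then reduces the claim to showing that $\partial\mathcal{C}^{\pm}(Q)$, the topological boundary in $\mathcal{X}$ of the set of loop configurations realizing the crossing, has CLE$_3$-measure zero. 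This is the standard route used in percolation \cite{planar-percolation} and in \cite{garban2025energyfieldcriticalising}.

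For the boundary, I will show that a CLE$_3$ configuration lying in $\partial\mathcal{C}^{\pm}(Q)$ must have one of the following degenerate features:
\begin{itemize}
\item a loop touches $\partial Q$ tangentially, that is, it hits $\partial Q$ without crossing it;
\item a loop passes exactly through one of the four corners of $Q$;
\item a crossing arc of a loop is only a limit of arcs that hit the forbidden part of $\partial Q$.
\end{itemize}
If none of these occurs and the configuration lies in $\mathcal{C}^{\pm}(Q)$, there is a crossing arc that enters and exits $Q$ transversally at interior points of opposite sides. Any configuration $d_{\mathcal{X}}$-close to it then has a loop uniformly close to that loop, and this loop also produces a crossing. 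So $\mathcal{C}^{\pm}(Q)$ contains an open neighbourhood of every such configuration. Symmetrically, if no loop crosses and no loop touches $\partial Q$ in a degenerate way, a small perturbation cannot create a crossing, because loops of large diameter are finitely many and stay close. Small loops cannot cross a macroscopic $Q$.

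The main obstacle is the step showing that these degenerate events have CLE$_3$-probability zero. I would use the facts that CLE$_3$ loops are SLE$_3$-type curves and that SLE$_\kappa$ with $\kappa\in(0,8)$ almost surely does not hit a fixed point, which handles the corners. For tangential touching of a fixed smooth (straight) segment, I would invoke the one-arm/boundary-touching exponents for SLE$_3$: an SLE$_3$ loop that comes close to a line almost surely crosses it immediately (local absolute continuity with chordal SLE$_3$ and its boundary exponents). More concretely, I would transfer this to the discrete setting through RSW-type crossing estimates for Ising spin interfaces (as in \cite{tightness-curves}). Near-touching without crossing requires a six-arm-type (half-plane three-arm) event of probability $O(\varepsilon^{c})$ uniformly in $\delta$, summed over $O(\varepsilon^{-1})$ boundary positions. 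These six-arm events concern multiple arms for the spin interfaces, and this is the step I expect to need the most care.

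Finally, I would check that $\mathcal{C}^{\pm}(Q)$ is measurable. This follows as in Lemma \ref{lemma-measurability-in-XX}, by approximating with finitely many loops of diameter at least $\varepsilon$. Combining the three steps gives the convergence claimed in Lemma \ref{lemma-continuity-crossing-events}.
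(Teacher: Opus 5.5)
Your proposal follows essentially the same route as the paper. Both apply Portmanteau to the convergence of the critical outermost loops to CLE$_3$, reduce to showing $\nu_{\operatorname{CLE}_3}^{\Lambda}(\partial\mathcal{C}^{\pm}(Q))=0$, and obtain this by ruling out degenerate configurations (corners, touching a side without crossing) via arm estimates; the paper only packages the boundary analysis as a sandwich between an open inner and a closed outer $\eta$-approximation of $\mathcal{C}^{\pm}(Q)$ instead of characterizing boundary points directly.

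As in the paper, the step that carries the real weight is the bound for touching a side without crossing. There you should state explicitly that the relevant one-sided (half-plane three-arm) exponent for interfaces near an interior segment must exceed $1$, so that the union over $O(\varepsilon^{-1})$ positions vanishes. RSW alone does not give an exponent above $1$, and the chordal SLE$_3$ boundary exponents you invoke are for a domain boundary, not an interior segment.
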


    \begin{lemma}\label{lemma-abs-continuity-annealed}
        Let $(\nu_{\delta_k}^{\operatorname{oloop},h})_k$ be a convergent subsequence of $(\nu_{\delta}^{\operatorname{oloop},h})_{\delta}$ and denote by $\nu^{W}$ its limit. Then the annealed measure $\EE[\nu^{W}[\cdot]]$ on $(\mathcal{X},d_{\mathcal{X}})$ is absolutely continuous with respect to $\nu_{\operatorname{CLE}_3}^{\Lambda}$.
    \end{lemma}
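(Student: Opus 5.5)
The statement to prove is Lemma \ref{lemma-abs-continuity-annealed}: the annealed subsequential limit $\EE[\nu^{W}[\cdot]]$ is absolutely continuous with respect to $\nu_{\operatorname{CLE}_3}^{\Lambda}$. I would upgrade the estimate from the proof of Lemma \ref{lemma-tightness-outermost-loops} into a domination inequality that survives the weak limit. That proof already shows, for every measurable $A \subset \mathcal{X}$ and every $\eta>0$,
\begin{equation*}
\EE[\nu_{\delta}^{\operatorname{oloop},h}(A)] \leq \PP[\tilde{\ZZ}_{\delta}^{+,h}\leq \eta] + \frac{2}{\eta}\,\nu_{\delta}^{\operatorname{oloop}}(A)
\end{equation*}
for $\delta$ small. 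This uses only the conditional-expectation form of the Radon--Nikodym derivative and the fact from \cite[Lemma~3.5]{continuum-2d-RFIM} that $\theta(\delta)\EE[\exp(\delta^{7/8}\sum_x h_x\sigma_x)]\to 1$ uniformly in $\sigma$. So the annealed discrete measure is dominated by a constant times the critical one, up to an additive error that is small when $\eta$ is small.

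I would first pass to the limit for closed sets $F\subset\mathcal{X}$. Since $\nu_{\delta_k}^{\operatorname{oloop},h}\to\nu^W$ in law as random measures, the annealed measures $\EE[\nu_{\delta_k}^{\operatorname{oloop},h}]$ converge weakly to $\EE[\nu^W]$. Indeed, for bounded continuous $f$, the map $\mu\mapsto\int f\,d\mu$ is bounded and continuous on probability measures. By Theorem \ref{theorem-cvg-loops-critical}, $\nu_{\delta_k}^{\operatorname{oloop}}\to\nu_{\operatorname{CLE}_3}^{\Lambda}$ weakly; I would restrict that theorem to outermost loops, which the paper's convention allows. Fix a closed $F$ and let $F^{\epsilon}$ be its closed $\epsilon$-neighbourhood. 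By Portmanteau,
\begin{equation*}
\EE[\nu^W(F)] \leq \limsup_k \EE[\nu_{\delta_k}^{\operatorname{oloop},h}(F)] \leq \limsup_k\PP[\tilde{\ZZ}_{\delta_k}^{+,h}\leq\eta] + \frac{2}{\eta}\nu_{\operatorname{CLE}_3}^{\Lambda}(F).
\end{equation*}
By Theorem \ref{theorem-cvg-RFIM-partition-function}, the first term tends to $\PP[\ZZ^{+,W}_{\Lambda}\leq \eta]$ at continuity points $\eta$. That probability is at most some $\epsilon(\eta)\to0$ as $\eta\to0$, because the limit is a.s. positive by \cite[Lemma~2.2]{continuum-2d-RFIM}.

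Next I would extend the bound to all Borel sets. Take a Borel $A$ with $\nu_{\operatorname{CLE}_3}^{\Lambda}(A)=0$. Both measures are finite Borel measures on the Polish space $(\mathcal{X},d_{\mathcal{X}})$ (Lemma \ref{lemma-measurability-in-XX}), so they are inner regular by closed sets. Hence $\EE[\nu^W(A)]=\sup_{F\subset A\text{ closed}}\EE[\nu^W(F)]$, and each such $F$ has $\nu_{\operatorname{CLE}_3}^{\Lambda}(F)=0$. The previous inequality then gives $\EE[\nu^W(A)]\leq\epsilon(\eta)$ for every $\eta$, so $\EE[\nu^W(A)]=0$, which is the claimed absolute continuity.

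The main obstacle I expect is making the first step rigorous: checking that the discrete tightness estimate holds for arbitrary Borel sets rather than only complements of compacts, which should be immediate since only $\sigma(\Gamma_\delta)$-measurability was used there. A second point is handling the boundary of sets in the Portmanteau step, which the restriction to closed sets avoids. I would also verify that $\delta$-uniformity of the $\cosh$-prefactor convergence gives the constant $2/\eta$ for all large $k$.
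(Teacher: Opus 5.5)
Your proof is correct, and it reaches the conclusion by a different route from the paper's.

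\textbf{The paper's route.} The paper works with the $\tilde{\ZZ}$-weighted annealed measure, for which there is an exact discrete identity,
$\EE\big[\theta(\delta_k)\mathcal{Z}_{\delta_k}^{+,h}\,\nu_{\delta_k}^{\oloop,h}(A)\big]=\theta(\delta_k)e^{\delta_k^{7/4}|\Lambda_{\delta_k}|/2}\,\nu_{\delta_k}^{\oloop}(A)$.
Letting $k\to\infty$ on both sides gives $\EE\big[\mathcal{Z}_{\Lambda}^{+,W}\,\nu^{W}(\cdot)\big]=\nu_{\operatorname{CLE}_3}^{\Lambda}(\cdot)$. Absolute continuity then follows from $\mathcal{Z}_{\Lambda}^{+,W}>0$ almost surely.

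\textbf{Your route.} You instead truncate on $\{\tilde{\ZZ}_{\delta}^{+,h}>\eta\}$, recycling the tightness estimate, so you only obtain a one-sided domination. You transfer it to the limit on closed sets by the Portmanteau theorem and extend it to Borel sets by closed inner regularity.

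\textbf{What each buys.} The paper's route gives a stronger statement: CLE$_3$ is exactly the $\mathcal{Z}_{\Lambda}^{+,W}$-weighted average of the $\nu^{W}$. However, justifying its limit requires three things:
\begin{itemize}
\item joint convergence in law of $(\tilde{\ZZ}_{\delta_k}^{+,h},\nu_{\delta_k}^{\oloop,h})$ along the subsequence;
\item uniform integrability of $\tilde{\ZZ}_{\delta_k}^{+,h}$;
\item a restriction to continuity sets (or to bounded continuous test functions) when passing to the limit in $\nu_{\delta_k}^{\oloop}(A)$ and $\nu_{\delta_k}^{\oloop,h}(A)$, a point the paper leaves implicit.
\end{itemize}
Your route needs only the two marginal convergences and standard measure theory, and it handles the continuity-set issue cleanly. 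It even gives the quantitative bound $\EE[\nu^{W}(A)]\le\PP[\mathcal{Z}_{\Lambda}^{+,W}\le\eta]+\frac{2}{\eta}\nu_{\operatorname{CLE}_3}^{\Lambda}(A)$ for every Borel $A$ and every $\eta>0$. It does not, however, recover the identity.

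\textbf{Two small remarks.} First, the uniformity you want to double-check is automatic. For Gaussian $h$ one has $\theta(\delta)\EE[\exp(\delta^{7/8}\sum_x h_x\sigma_x)]=\theta(\delta)e^{\delta^{7/4}|\Lambda_\delta|/2}$, which does not depend on $\sigma$ and tends to $1$. Second, the convergence $\nu_{\delta_k}^{\oloop}\to\nu_{\operatorname{CLE}_3}^{\Lambda}$ is best quoted directly from \cite{cvg-nested-loops-Ising}, where it is proved for the outermost loops. Obtaining it by restricting Theorem \ref{theorem-cvg-loops-critical} is less safe, since passing from a nested collection to its outermost loops is not obviously continuous for $d_{\mathcal{X}}$.
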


    The proofs of Lemma \ref{lemma-continuity-crossing-events} and Lemma \ref{lemma-abs-continuity-annealed} are given just below. Lemma \ref{lemma-continuity-crossing-events} shows continuity of the crossing events. This in particular implies that $\nu_{\delta_{k}}^{\oloop}(\mathcal{E}_{\tau}^{\delta_{k}})$ converges to $\nu_{\operatorname{CLE}_3}^{\Lambda}(\mathcal{E}_{\tau})$ as $k \to \infty$. Indeed, for each $i$, one can divide $A_{M}^{\delta_k}(u_i)$ into eight boxes $(B_{M}^{\delta_{k},j}(u_i))_j$ such that for a loop in $A_m^{\delta_k}(u_i)$ to touch $\partial A_m^{\delta_k}(u_i)$ without crossing $\partial A_m^{\delta_k}(u_i)$, there must be a box $B_m^{\delta_k,j}(u_i)$ such that one of the two events denoted $\mathcal{C}_{\operatorname{out}}^{\xi}(B_m^{\delta_k, j}(u_i))\setminus \mathcal{C}_{\operatorname{in}}^{\xi}(B_m^{\delta_k, j}(u_i))$, $\xi = \pm$, in the proof of Lemma \ref{lemma-continuity-crossing-events} occurs. But Lemma \ref{lemma-continuity-crossing-events} shows that these events have $\nu_{\delta_k}^{\oloop}$-probability converging to $0$ as $k \to \infty$. Therefore, using the absolutely continuity of $\PP[\nu^{W}[\cdot]]$ with respect to $\nu_{\operatorname{CLE}_3}^{\Lambda}$ from Lemma \ref{lemma-abs-continuity-annealed}, we obtain that for any $\tau \in \{0,1\}^n$,
    \begin{align*}
        \lim_{k \to \infty} &\EE\big[\vert \nu_{\delta_{k}}^{\operatorname{oloop}}[(X_1^{\delta_k},\dots,X_n^{\delta_k}) = \tau \big] -  \nu_{\delta_{k}}^{\operatorname{oloop},h}[(X_1^{\delta_k},\dots,X_n^{\delta_k}) = \tau]\vert \big] \\
        =& \EE[\vert \nu_{\operatorname{CLE_3}}^{\Lambda}[(X_1,\dots,X_n) = \tau] - \nu^{W}[(X_1,\dots,X_n) = \tau] \vert].
    \end{align*}
    Since the sum over $\tau \in \{0,1\}^n$ in \eqref{observable-as-limit-discrete-observable} is a finite sum, the limit \eqref{observable-as-limit-discrete-observable} follows from the above equality.

    With the equality \eqref{observable-as-limit-discrete-observable} at hand, it now suffices to prove that there exists $c > 0$ and $\delta_0$ such that for any $0 < \delta < \delta_0$ and any $\iota > 0$,
    \begin{equation*}
        \frac{1}{2}\EE \bigg[ \sum_{\tau \in \{0,1\}^n} \vert \nu_{\delta_{k}}^{\operatorname{oloop}}(\mathcal{E}_{\tau}^{\delta_{k}}) - \nu_{\delta_{k}}^{\operatorname{oloop},h}(\mathcal{E}_{\tau}^{\delta_{k}}) \vert \bigg] \geq 1-c^{-1}\iota^{c}.
    \end{equation*}
    This inequality is established by Theorem \ref{thm: cle outermost singularity} in Section \ref{sec: singularity for outermost cle} in Part \ref{part-2}. There, $N$ plays the role of $\delta_{k}^{-1}$ and the underlying domain $\Lambda_{\delta}$ is denoted by $\Lambda_{N}$ of side-length $2N$. Besides, in Theorem \ref{thm: cle outermost singularity}, the laws of the loops under $\mu_{\Lambda_{N},0}^{+}$ and under $\mu_{\Lambda_{N},\eps h}^{+}$ are the same as under $\nu_{\delta}^{\oloop}$ and $\nu_{\delta}^{\operatorname{oloop},h}$. This completes the proof of Theorem \ref{theorem-singularity-outermost-loops}.
\end{proof}

Let us now prove Lemma \ref{lemma-continuity-crossing-events} and Lemma \ref{lemma-abs-continuity-annealed}.

\begin{proof}[Proof of Lemma \ref{lemma-continuity-crossing-events}]
We only discuss the event \(\mcc C^+(Q)\), since the proof for \(\mcc C^-(Q)\) is identical.

Let \(\mathcal X\) be the space of loop collections equipped with the metric
\(d_{\mathcal X}\) defined in \cite{cvg-nested-loops-Ising}. By the convergence theorem of \cite{cvg-nested-loops-Ising}
 for critical Ising interfaces, along the subsequence
\((\delta_k)\) under consideration we have
\[
  \nu_{\delta_k}^{\mathrm{oloop}}
  \Longrightarrow
  \nu_{\mathrm{CLE}_3}^{\Lambda}
  \qquad\text{weakly on }(\mathcal X,d_{\mathcal X}).
\]
It is therefore enough to prove that \(\mcc C^+(Q)\) is a
\(\nu_{\mathrm{CLE}_3}^{\Lambda}\)-continuity event, namely
\[
  \nu_{\mathrm{CLE}_3}^{\Lambda}\bigl(\partial \mcc C^+(Q)\bigr)=0.
\]
Indeed, the desired convergence then follows immediately from the Portmanteau
theorem.

We now prove the continuity statement. For \(\eta>0\), define an inner approximation \(\mcc C^+_{\mathrm{in},\eta}(Q)\) of \(\mcc C^+(Q)\) as the event that there exists an oriented loop \(\gamma\) and a connected component
\(\alpha\) of \(\gamma\cap Q\) which joins the two prescribed opposite sides of \(Q\), has the correct orientation, and satisfies the following \(\eta\)-robustness conditions: the two endpoints of \(\alpha\) lie at a distance of at least \(\eta\) from the four corners of \(Q\), and \(\alpha\), away from small \(\eta\)-neighbourhoods of its endpoints, stays at a distance of at least
\(\eta\) from \(\partial Q\). Similarly, define an outer approximation \(\mcc C^+_{\mathrm{out},\eta}(Q)\) as the event that there exists an oriented loop whose trace has an \(\eta\)-neighbourhood containing a path in \(Q\) joining the two prescribed opposite sides with the correct orientation.

Then
\[
  \mcc C^+_{\mathrm{in},\eta}(Q)
  \subset \mcc C^+(Q)
  \subset \mcc C^+_{\mathrm{out},\eta}(Q).
\]
Moreover, \(\mcc C^+_{\mathrm{in},\eta}(Q)\) is open in \((\mathcal X,d_{\mathcal X})\).
Indeed, an \(\eta\)-robust crossing remains a crossing under sufficiently small
uniform perturbations of the corresponding loop. Likewise,
\(\mcc C^+_{\mathrm{out},\eta}(Q)\) is closed, since an \(\eta\)-approximate
crossing is preserved under limits of loop collections in \(d_{\mathcal X}\).

Hence, by weak convergence,
\[
  \liminf_{k\to\infty}
  \nu_{\delta_k}^{\mathrm{oloop}}[\mcc C^+(Q)]
  \ge
  \nu_{\mathrm{CLE}_3}^{\Lambda}[\mcc C^+_{\mathrm{in},\eta}(Q)]
\]
and
\[
  \limsup_{k\to\infty}
  \nu_{\delta_k}^{\mathrm{oloop}}[\mcc C^+(Q)]
  \le
  \nu_{\mathrm{CLE}_3}^{\Lambda}[\mcc C^+_{\mathrm{out},\eta}(Q)].
\]
It remains to let \(\eta\downarrow0\).

We claim that
\[
  \lim_{\eta\downarrow0}
  \nu_{\mathrm{CLE}_3}^{\Lambda}
  \bigl[
    \mcc C^+_{\mathrm{out},\eta}(Q)
    \setminus \mcc C^+_{\mathrm{in},\eta}(Q)
  \bigr]
  =0.
\]
Indeed, if a loop collection belongs to
\(\mcc C^+_{\mathrm{out},\eta}(Q)\setminus \mcc C^+_{\mathrm{in},\eta}(Q)\), then some
macroscopic CLE\(_3\) loop produces a crossing of \(Q\) which is not stable
under an \(O(\eta)\)-perturbation. This can happen only if, at scale \(\eta\),
one of the following degenerate configurations occurs near \(\partial Q\):
the loop hits an \(O(\eta)\)-neighbourhood of a corner of \(Q\); the loop has a
near-tangential contact with one of the sides of \(Q\); or two distinct pieces of
the loop approach each other and the boundary of \(Q\) within distance
\(O(\eta)\) in a way that changes the crossing topology.

Each of these alternatives implies a multi-arm event from scale \(\eta\) to a
macroscopic scale. By the standard arm estimates for critical Ising interfaces,
equivalently for their CLE\(_3\) scaling limit as obtained in
Benoist--Hongler, the probability of such a degenerate event tends to zero as
\(\eta\downarrow0\). Therefore
\[
  \nu_{\mathrm{CLE}_3}^{\Lambda}\bigl(\partial \mcc C^+(Q)\bigr)=0.
\]

Letting \(\eta\downarrow0\) in the preceding liminf and limsup estimates gives
\[
  \lim_{k\to\infty}
  \nu_{\delta_k}^{\mathrm{oloop}}[\mcc C^+(Q)]
  =
  \nu_{\mathrm{CLE}_3}^{\Lambda}[\mcc C^+(Q)].
\]
This proves the claim for \(\mcc C^+(Q)\). The proof for \(\mcc C^-(Q)\) is identical.
\end{proof}

\begin{proof}[Proof of Lemma \ref{lemma-abs-continuity-annealed}]
    The proof is similar to that of the absolutely continuity part of \cite[Theorem~1.2]{continuum-2d-RFIM}. Indeed, recall that for fixed $\delta_k > 0$, we have that
    \begin{equation*}
        \frac{\der \nu_{\delta_{k}}^{\oloop,h}}{\der \nu_{\delta_{k}}^{\oloop}}(\Gamma_{\delta_{k}}) = \frac{\theta(\delta_{k})}{\theta(\delta_{k})\mathcal{Z}_{\delta_{k}}^{+,h}}\mu_{\delta_{k}}^{+}\bigg[ \exp\bigg( \delta^{\frac{7}{8}}\sum_{x \in \Lambda_{\delta_{k}}} h_x\sigma_x \bigg) \vert \Gamma_{\delta_{k}}\bigg].
    \end{equation*}
    Let $A$ be an event in the Borel $\sigma$-algebra of $(\mathcal{X}, d_{\mathcal{X}})$. We have that
    \begin{align*}
        \EE\big[ \theta(\delta_k)\mathcal{Z}_{\delta_{k}}^{+,h} \nu_{\delta_{k}}^{\oloop,h}[A] \big] &= \theta(\delta_{k}) \EE\bigg[ \mu_{\delta_{k}}^{+}\bigg[ \mathbb{I}_{A} \mu_{\delta_{k}}^{+}\bigg[ \exp\bigg( \delta_{k}^{\frac{7}{8}}\sum_{x \in \Lambda_{\delta_{k}}}h_x\sigma_x \bigg) \vert \Gamma_{\delta_{k}}\bigg] \bigg] \bigg]\\
        &= \theta(\delta_{k}) \EE\bigg[ \mu_{\delta_{k}}^{+}\bigg[ \mathbb{I}_{A} \exp\bigg( \delta_{k}^{\frac{7}{8}}\sum_{x \in \Lambda_{\delta_{k}}}h_x\sigma_x \bigg) \bigg]\bigg] \\
        &=\theta(\delta_{k}) \mu_{\delta}^{+}\bigg[ \mathbb{I}_{A} \EE\big[ \exp\big( \delta_{k}^{\frac{7}{8}}\sum_{x \in \Lambda_{\delta_{k}}}h_x\sigma_x \big) \big] \bigg]\\
        &=\theta(\delta_{k}) \mu_{\delta}^{+}\big[ \mathbb{I}_{A} \exp(\frac{\delta_k^{\frac{7}{4}}}{2} \vert \Lambda_{\delta_{k}} \vert) \big]
    \end{align*}
    which proves that
    \begin{equation*}
        \lim_{\delta_{k} \to 0} \EE\big[ \theta(\delta_k)\mathcal{Z}_{\delta_{k}}^{+,h} \nu_{\delta_{k}}^{\oloop,h}[A] \big] = \nu_{\operatorname{CLE}_3}^{\Lambda}[A].
    \end{equation*}
    On the other hand, by weak convergence of $\nu_{\delta_{k}}^{\oloop,h}$ to $\nu^{W}$ and of $\theta(\delta_{k})\mathcal{Z}_{\delta_{k}}^{+,h}$ to $\mathcal{Z}_{\Lambda}^{+,W}$, we have that
    \begin{equation*}
        \lim_{\delta_{k} \to 0} \EE\big[ \theta(\delta_k)\mathcal{Z}_{\delta_{k}}^{+,h} \nu_{\delta_{k}}^{\oloop,h}[A] \big] = \EE[\mathcal{Z}_{\Lambda}^{+,W}\nu^{W}[A]].
    \end{equation*}
    Since $\mathcal{Z}_{\Lambda}^{+,W} > 0$ $\PP$-almost surely by \cite[Lemma~2.2]{continuum-2d-RFIM}, for any measurable event $F$ with $\nu_{\operatorname{CLE}_3}^{\Lambda}[F]=0$, we can find a set $E_F$ with $\PP[W \in E_F] = 1$ such that $\nu^{W}[F]=0$ for all $W \in E_F$. Therefore, $\EE[\nu^{W}[F]] = 0$, and thus $\EE[\nu^{W}[\cdot]]$ is absolutely continuous with respect to $\nu_{\operatorname{CLE}_3}^{\Lambda}$, which completes the proof of Lemma \ref{lemma-abs-continuity-annealed}. Let us remark as in \cite{continuum-2d-RFIM} that this does not contradict the claim that almost surely, $\nu^{W}$ is singular with respect to $\nu_{\operatorname{CLE}_3}^{\Lambda}$ because $\cap E_{F}$ will not be a measurable event with positive probability.
\end{proof}

\section{Singularity of the nested spin loops in the scaling limit} \label{sec-singularity-nested-loops}

In this section, we prove tightness of the laws of the nested collection of spin loops under $(\mu_{\delta}^{+,h})_{\delta}$ and establish that any subsequential limit thus obtained is $\PP$-almost surely singular with respect to nested CLE$_3$.

\begin{proposition} \label{prop-singularity-nested-loops}
    Let $D$ be a bounded, open and simply connected domain with smooth boundary and assume that $(D_{\delta})_{\delta}$ is a sequence of connected subsets of $\delta \mathbb{Z}^2$ satisfying the assumptions of Section \ref{subsection-limit-critical-loops}. Let $H:=(H_x)_{x \in \mathbb{Z}^2}$ be i.i.d Gaussian random variables with mean $0$ and variance $1$. Denote the law of $H$ by $\PP$. Define $h:=(h_x)_{x \in D_{\delta}}$ via $h_{x}=H_{x/\delta}$. Let $\nu_{\delta}^{\operatorname{loop},h}$ be the law of the collection of nested loops separating $+1$ and $-1$ spins in the random-field Ising model on $D_{\delta}$ with external field $\delta^{\frac{7}{8}}h_{x}$ and $+1$ boundary conditions. Then, the joint law of $(\delta^{-1}\sum_{x \in D_{\delta}}h_{x}\mathbb{I}_{S_{\delta}(x)},\nu_{\delta}^{\operatorname{loop},h})_{\delta}$ is tight as $\delta \to 0$ and any subsequential limit of $(\nu_{\delta}^{\operatorname{loop},h})_{\delta}$ is $\PP$--almost surely singular with respect to the law of nested CLE$_3$ in $D$.
\end{proposition}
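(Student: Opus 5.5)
The argument has two parts, tightness and singularity, and both follow the template of Theorem \ref{theorem-singularity-outermost-loops}.

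\textbf{Tightness.} I would repeat the proof of Lemma \ref{lemma-tightness-outermost-loops} verbatim, replacing the outermost loops by the full nested collection $\Gamma_\delta$. The Radon--Nikodym derivative of $\nu_\delta^{\operatorname{loop},h}$ with respect to the law $\nu_\delta^{\operatorname{loop}}$ of the nested loops under $\mu_\delta^+$ is
\[
(\mathcal{Z}_\delta^{+,h})^{-1}\,\mu_\delta^+\big[\exp(\delta^{\frac78}\sum_x h_x\sigma_x)\,\big|\,\Gamma_\delta\big].
\]
I would split on the event $\{\tilde{\mathcal Z}^{+,h}_\delta\le \eta\}$, whose probability is small in the limit by positivity of the limiting partition function. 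The uniform convergence of $\theta(\delta)\EE[\exp(\delta^{\frac78}\sum h_x\sigma_x)]$ to $1$ then gives
\[
\EE[\nu_\delta^{\operatorname{loop},h}(K^c)]\le \tfrac{\eps}{2}+\tfrac{2}{\eta}\,\nu_\delta^{\operatorname{loop}}(K^c).
\]
The last term is made small using the tightness of the nested loops at criticality, which follows from Theorem \ref{theorem-cvg-loops-critical}. The white-noise component is tight for standard reasons, so the joint law is tight.

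\textbf{Singularity, reduction.} The nested loop configuration determines its outermost loops. I would use the level structure of Section \ref{sec:def-of-loops} to define the map $\Gamma\mapsto\Gamma^{(1)}$ sending a configuration to its loops of level $1$, so that $\nu^{\operatorname{loop},W}$ and nested CLE$_3$ push forward to the corresponding outermost laws. Singularity is inherited by any common measurable pushforward: if the pushforwards are carried by disjoint sets, so are the original measures. Two things are then needed. First, $\Gamma^{(1)}$ must be measurable in $(\mathcal X,d_{\mathcal X})$; in the continuum, where nested CLE$_3$ loops are disjoint and simple, I would take the non-nested, not-surrounded loops. Second, the pushforward of any subsequential limit $\nu^{\operatorname{loop},W}$ must coincide with a subsequential limit of the outermost laws. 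This is delicate because extracting outermost loops is not continuous in $d_{\mathcal X}$.

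To avoid that continuity issue entirely, I would work directly with the observable $X=(X_i)_i$ of Proposition \ref{prop-observable-singularity-continuum}. Here $X_i$ indicates the existence of a loop of level $1$ in the annulus $A_M(u_i)$ surrounding the inner boundary. This is computable from the nested collection, and its law under nested CLE$_3$ equals its law under CLE$_3$.

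\textbf{Singularity, the key steps.} First, I would prove the analogue of \eqref{observable-as-limit-discrete-observable} for $\nu_{\delta_k}^{\operatorname{loop},h}$ and its limit. This relies on continuity of crossing events (Lemma \ref{lemma-continuity-crossing-events}) and on annealed absolute continuity (the proof of Lemma \ref{lemma-abs-continuity-annealed} applies unchanged with nested loops). The additional input is that the event ``the crossing loop in $A_M(u_i)$ is of level $1$'' is itself a continuity event under nested CLE$_3$. I would handle this by expressing it through crossing events of a finite family of quads, namely the absence of a surrounding loop crossing a fixed set of rectangles, and by applying the same multi-arm estimates on the boundary of these events. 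Second, I would quote Theorem \ref{thm: cle outermost singularity} for the discrete total-variation lower bound $1-c^{-1}\iota^c$. This needs no change, since the event $\{X=\tau\}$ is the same event in the discrete spin configuration. Finally, letting $\iota\to0$ yields a sequence of events whose probabilities under the two measures separate, and a Borel--Cantelli argument along $\iota=2^{-j}$ gives $\PP$-almost sure singularity.

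\textbf{Main obstacle.} I expect the hardest step to be the continuity of the level-$1$ crossing event, because here the smooth domain $D$ replaces the square. The arm estimates near $\partial D$ need the smoothness assumption, which lets Lemma \ref{lemma-continuity-crossing-events} transfer after mapping locally to a half-plane. The discrete input, Theorem \ref{thm: cle outermost singularity}, is stated for boxes. I would apply it to a box compactly contained in $D$: the annuli are chosen inside that box, and comparing $+$ boundary conditions on $\partial D$ with those on the box goes through the domain Markov property conditioned on the outermost loops that intersect the box boundary.
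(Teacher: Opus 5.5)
Your tightness argument is the same as the paper's. The singularity part, however, has a genuine gap.

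\textbf{The gap: Theorem \ref{thm: cle outermost singularity} does not apply to $D$.} Your whole singularity argument rests on that theorem. It is proved only on the box $\Lambda_N$, with $+$ boundary condition on $\partial\Lambda_N$ and with the annuli tiling that whole box. Here $D$ is a general smooth domain, and your workaround does not reduce to the box setting, for three reasons.
\begin{itemize}
\item The level-$1$ loops of $D$ lying in annuli inside a box $B\Subset D$ are exactly those plus-connected to $\partial D$, which is a non-local condition. Conditioning on ``the outermost loops that intersect $\partial B$'' is not a domain-Markov stopping set. The exterior plus side of such a loop may reach $\partial D$ only through $B$, so whether it is outermost depends on the configuration inside $B$.
\item If you instead condition on the exterior of $B$, you are left with an arbitrary boundary condition on $\partial B$ plus connectivity constraints of the type $\mathcal P_0,\mathcal P_1,\mathcal U$ in Definition \ref{def: boundary connectivity event}. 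That is not a $+$ boundary condition on a box. Even if you condition on something that does give $+$ boundary conditions (e.g.\ an outermost $+$ circuit around $B$ connected to $\partial D$), the resulting domain is random and not a box. Moreover, a total-variation lower bound for each conditional law does not pass to the mixture, since the mixing weights themselves depend on $h$.
\item The counting input, Lemma \ref{lem: outermost loops number lower bound 1}, gets its $n^{15/16-\alpha}$ annuli with $X_i=1$ from boxes adjacent to the $+$ boundary $\partial\Lambda_N$. It uses a product measure over boundary $\tilde N$-boxes. Inside $B$, these annuli would have to come from connections to the distant $\partial D$, which needs a new argument.
\end{itemize}
Your route therefore amounts to proving a general-domain version of Theorem \ref{thm: cle outermost singularity}, which the paper does not provide. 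The continuity of the ``level-$1$'' crossing event under nested CLE$_3$ is a further step you leave at the level of a sketch.

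\textbf{The missing idea: the nested loops determine the spins.} Under $\mu_\delta^+$, almost surely $\sigma_x=(-1)^{|\Gamma_\delta(x)|}$, where $\Gamma_\delta(x)$ is the set of loops surrounding $x$. Hence the Radon--Nikodym derivative of $\nu_\delta^{\operatorname{loop},h}$ with respect to $\nu_\delta^{\operatorname{loop}}$ is simply $\theta(\delta)(\tilde{\mathcal Z}_\delta^{+,h})^{-1}\exp\big(\delta^{7/8}\sum_x h_x(-1)^{|\Gamma_\delta(x)|}\big)$, with no conditional expectation. The paper then reruns the magnetization-field singularity proof of \cite{continuum-2d-RFIM}, with block magnetizations $\Phi^{k,N}_{i,j}$ expressed through loop counts. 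It forms the coarse-grained derivatives $Q^k_{N,m}$ and shows $\lim_N\lim_m\lim_k\EE\mu_k^+[(Q^k_{N,m})^{1/2}]=0$. This works on a general smooth $D$. By projecting the nested collection onto its level-$1$ loops, you discard exactly this information and are pushed into the harder, square-only outermost-loop machinery.
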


Above, tightness of the laws $(\nu_{\delta}^{\operatorname{loop},h})_{\delta}$ is proved in the topology on the space of loops $(\mathcal{X},d_{\mathcal{X}})$ described in Section \ref{sec-topology-loops}. Tightness of $(\delta^{-1}\sum_{x \in D_{\delta}}h_{x}\mathbb{I}_{S_{\delta}(x)})_{\delta}$ is established in the same topology as the one described in the proof of Lemma \ref{lemma-joint-convergence}.

To prove Proposition \ref{prop-singularity-nested-loops}, we start with the following lemma, which establishes tightness of $(\nu_{\delta}^{\operatorname{loop},h})_{\delta}$. For later reference, we denote by $\nu_{\delta}^{\operatorname{loop}}$ the law of $\Gamma_{\delta}$ under $\mu_{\delta}^{+}$.

\begin{lemma}\label{lemma-tightness-nested-loops}
    $(\nu_{\delta}^{\operatorname{loop},h})_{\delta}$ is tight in $(\mathcal{X},d_{\mathcal{X}})$.
\end{lemma}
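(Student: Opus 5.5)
The plan is to repeat the proof of Lemma \ref{lemma-tightness-outermost-loops} almost word for word, with the outermost loops replaced by the full nested collection $\Gamma_{\delta}$ of spin loops. We use the tightness criterion for random probability measures of \cite[Theorem~4.10]{Kallenberg-rdm-measures}. Since $(\mathcal{X},d_{\mathcal{X}})$ is complete and separable by Lemma \ref{lemma-measurability-in-XX}, it suffices to show the following: for every $\eps>0$ there is a compact $K\subset\mathcal{X}$ with $\sup_{\delta}\EE[\nu_{\delta}^{\operatorname{loop},h}(K^c)]\le \eps$ (at least for $\delta$ small, the remaining finitely many scales being trivially handled).

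The first step is to write the Radon--Nikodym derivative of $\nu_{\delta}^{\operatorname{loop},h}$ with respect to $\nu_{\delta}^{\operatorname{loop}}$. It equals
\begin{equation*}
\frac{1}{\mathcal{Z}_{\delta}^{+,h}}\,\mu_{\delta}^{+}\Big[\exp\Big(\delta^{\frac{7}{8}}\sum_{x\in D_\delta}h_x\sigma_x\Big)\,\Big|\,\Gamma_{\delta}\Big].
\end{equation*}
Then fix $\eta>0$ and split on the event $\{\tilde{\mathcal{Z}}_{\delta}^{+,h}\le \eta\}$. By Theorem \ref{theorem-cvg-RFIM-partition-function}, $\tilde{\mathcal{Z}}_{\delta}^{+,h}$ converges in law to $\mathcal{Z}_{D}^{+,W}$, which is almost surely positive by \cite[Lemma~2.2]{continuum-2d-RFIM}. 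Hence $\eta$ can be chosen so that $\limsup_{\delta\to0}\PP[\tilde{\mathcal{Z}}_{\delta}^{+,h}\le\eta]\le\eps/2$.

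On the complementary event $\{\tilde{\mathcal{Z}}_{\delta}^{+,h}>\eta\}$, we proceed as follows. Because $K^c$ is $\sigma(\Gamma_\delta)$-measurable, we can remove the conditional expectation. We then use Fubini to take the $\EE$-expectation inside $\mu_{\delta}^{+}$. Since the $h_x$ are independent Gaussians and $\sigma_x^2=1$, the Gaussian moment generating function gives
\begin{equation*}
\theta(\delta)\,\EE\Big[\exp\Big(\delta^{\frac{7}{8}}\sum_x h_x\sigma_x\Big)\Big]=\theta(\delta)\exp\Big(\tfrac12\delta^{\frac74}|D_\delta|\Big),
\end{equation*}
which is deterministic and independent of $\sigma$. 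It tends to $1$ provided $\delta^{-1/4}\big(\delta^{2}|D_\delta|-\vol(D)\big)\to0$; this is the content of \cite[Lemma~3.5]{continuum-2d-RFIM} under the domain approximation assumptions. We therefore obtain the bound $\EE[\nu_{\delta}^{\operatorname{loop},h}(K^c)\mathbb{I}_{\tilde{\mathcal{Z}}_{\delta}^{+,h}>\eta}]\le 2\eta^{-1}\mu_{\delta}^{+}[\Gamma_\delta\in K^c]$ for $\delta$ small.

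It remains to use tightness of $(\Gamma_\delta)_\delta$ under $\mu_\delta^{+}$ in $(\mathcal{X},d_{\mathcal{X}})$. This follows from Theorem \ref{theorem-cvg-loops-critical}: a weakly convergent sequence on a Polish space is tight, and the assumptions of Section \ref{subsection-limit-critical-loops} are exactly those of that theorem. We choose $K$ with $\mu_\delta^{+}[K^c]\le \eta\eps/4$ uniformly in $\delta$. Combining the two pieces yields the criterion. Joint tightness with $\delta^{-1}\sum_x h_x\mathbb{I}_{S_\delta(x)}$ then follows since each marginal is tight. The only point needing care, rather than a genuine obstacle, is the uniform convergence of the normalization factor $\theta(\delta)\exp(\frac12\delta^{7/4}|D_\delta|)$ for a general smooth domain, which is supplied by \cite[Lemma~3.5]{continuum-2d-RFIM}. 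The positivity of the limiting partition function is likewise taken from earlier results.
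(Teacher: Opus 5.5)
Your proposal is correct and follows the same route as the paper, which simply refers back to the proof of Lemma~\ref{lemma-tightness-outermost-loops}. The ingredients match: the Kallenberg intensity criterion, splitting on $\{\tilde{\mathcal{Z}}_{\delta}^{+,h}\le\eta\}$, the Gaussian computation of the deterministic factor $\theta(\delta)\exp(\tfrac12\delta^{7/4}|D_\delta|)$, and tightness of the critical nested loops from Theorem~\ref{theorem-cvg-loops-critical}. You correctly identify the behaviour of that factor as the only delicate point; the paper likewise takes it for granted via \cite[Lemma~3.5]{continuum-2d-RFIM}, and for tightness only its boundedness, not its convergence to $1$, is actually needed.
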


\begin{proof}[Proof of Lemma \ref{lemma-tightness-nested-loops}]
    The proof is similar to the proof of Lemma \ref{lemma-tightness-outermost-loops} and details are left to the reader.
\end{proof}

With Lemma \ref{lemma-tightness-nested-loops} in hand, let us now turn to the proof of Proposition \ref{prop-singularity-nested-loops}.

\begin{proof}[Proof of Proposition \ref{prop-singularity-nested-loops}]
    Tightness of $(\Gamma_{\delta})_{\delta}$ under $(\mu_{\delta}^{+,h})_{\delta}$ in the topology described in Section \ref{sec-topology-loops} is established by Lemma \ref{lemma-tightness-nested-loops}. On the other hand, tightness of $(\delta^{-1}\sum_{x \in D_{\delta}}h_{x}\mathbb{I}_{S_{\delta}(x)})_{\delta}$ in the topology described in Lemma \ref{lemma-joint-convergence} follows from standard arguments, see \cite[Section~2.1]{CSZ16}. We immediately get from this that $(\delta^{-1}\sum_{x \in D_{\delta}}h_{x}\mathbb{I}_{S_{\delta}(x)})_{\delta}, \nu_{\delta}^{\operatorname{loop},h})_{\delta}$ is tight. To prove Proposition \ref{prop-singularity-nested-loops}, it thus only remains to prove the singularity of any subsequential limit. Let $(\nu_{\delta_k}^{\operatorname{loop},h})_{k}$ be a convergent subsequence. Below, for each $k$, we abbreviate $\Gamma_{\delta_k}$ by $\Gamma_{k}$ and $\mu_{\delta_{k}}^{+}$, respectively $\mu_{\delta_{k}}^{+,h}$, by $\mu_{k}^{+}$, respectively $\mu_{k}^{+,h}$. We first observe that $\mu_{\delta}^{+}$-almost surely, for any $x \in D_k$,
    \begin{equation*}
        \sigma_{x} = (-1)^{\vert \Gamma_{k}(x) \vert}
    \end{equation*}
    where $\Gamma_{k}(x)$ denotes the set of loops in $\Gamma_k$ that surround $x$. This equality also holds $\PP \otimes \mu_{\delta}^{+,h}$-almost surely. Once this observation has been made, singularity of the limiting law of $(\nu_{\delta_{k}}^{\operatorname{loop},h})_k$ with respect to nested CLE$_3$ in $D$ can be established using a similar strategy as that used to show that the law of the magnetization field of the $2d$ near-critical RFIM is singular with respect to that of the magnetization field of the $2d$ critical Ising model established in \cite{continuum-2d-RFIM}. Given this similarity, let us now only briefly explain how the proof can be adapted, leaving the details to the reader. Below, we keep the same notations as in the proof of \cite[Theorem~1.2]{continuum-2d-RFIM}.

    The families of random variables $(\Phi_{i,j}^{k,N})_{i,j}$ and $(\Phi_{i,j}^{k,N,m})_{i,j}$ are defined as in \cite{continuum-2d-RFIM} but by expressing them using the random variables $((-1)^{\vert \Gamma_{k}(x) \vert})_{x \in D_k}$. That is, for each $k, N, m \in \mathbb{N}$, we set
    \begin{align*}
        \Phi_{i,j}^{k,N} := \sum_{x \in B_{i,j}^{N}} \delta_{k}^{\frac{15}{8}} (-1)^{\vert \Gamma_{k}(x) \vert}, \quad \Phi_{i,j}^{k,N,m} := 2^{-m}\lfloor 2^m\Phi_{i,j}^{k,N} \rfloor
    \end{align*}
    where $B_{i,j}^{N} := (\frac{i-1}{N}, \frac{i}{N}) \times (\frac{j-1}{N}, \frac{j}{N})$. The families of random variables $(W_{i,j}^{k,N})_{i,j}$ and $(W_{i,j}^{k,N,m})_{i,j}$ are defined as in \cite{continuum-2d-RFIM} and the $\sigma$-algebras $\mathcal{F}_{N}^{k}$ and $\mathcal{F}_{N,m}^{k}$ are constructed using the families $(\Phi_{i,j}^{k,N}, W_{i,j}^{k,N,m})_{i,j}$ and $(\Phi_{i,j}^{k,N,m}, W_{i,j}^{k,N,m})_{i,j}$, respectively. We then have that
    \begin{align*}
        Q_{N}^{k} := \EE \nu_{\delta_{k}}^{\operatorname{loop}} \bigg[ \frac{\der \nu_{\delta_k}^{\operatorname{loop},h}}{\der \nu_{\delta_k}^{\operatorname{loop}}}(\Gamma_{k}) \vert \mathcal{F}_{N}^{k}\bigg] &= \EE \mu_{k}^{+} \bigg[ \frac{\der \mu_{k}^{+,h}}{\der \mu_{k}^{+}}(\Gamma_k)\vert \mathcal{F}_{N}^{k}\bigg]\\
        &= \EE \mu_{k}^{+} \bigg[ \frac{\theta(\delta_{k})}{\tilde{\ZZ}_{\delta_{k}}^{+,h}} \mu_{k}^{+}\bigg[ \exp\bigg(\delta_{k}^{\frac{7}{8}}\sum_{x \in D_k} h_x\sigma_x \bigg) \vert \Gamma_{k} \vert \bigg] \vert \mathcal{F}_{N}^{k} \bigg] \\
        &= \EE \mu_{k}^{+} \bigg[ \frac{\theta(\delta_{k})}{\tilde{\ZZ}_{\delta_{k}}^{+,h}} \mu_{k}^{+}\bigg[ \exp\bigg(\delta_{k}^{\frac{7}{8}}\sum_{x \in D_k} h_x(-1)^{\vert \Gamma_k(x) \vert} \bigg) \vert \Gamma_{k} \vert \bigg] \vert \mathcal{F}_{N}^{k} \bigg]\\
        &= \EE \mu_{k}^{+}\bigg[ \frac{\theta(\delta_{k})}{\tilde{\ZZ}_{\delta_{k}}^{+,h}} \exp\bigg(\delta_{k}^{\frac{7}{8}}\sum_{x \in D_k} h_x(-1)^{\vert \Gamma_k(x) \vert} \bigg) \vert \mathcal{F}_{N}^{k} \bigg].
    \end{align*}
    Similarly,
    \begin{align*}
        Q_{N,m}^{k} &:= \EE \nu_{\delta_{k}}^{\operatorname{loop}} \bigg[ \frac{\der \nu_{\delta_k}^{\operatorname{loop},h}}{\der \nu_{\delta_k}^{\operatorname{loop}}}(\Gamma_{k}) \vert \mathcal{F}_{N,m}^{k}\bigg]\\
        &=\EE \mu_{k}^{+} \bigg[ \frac{\der \mu_{k}^{+,h}}{\der \mu_{k}^{+}} (\Gamma_k)\vert \mathcal{F}_{N,m}^{k}\bigg] = \EE \mu_{k}^{+}\bigg[ \frac{\theta(\delta_{k})}{\tilde{\ZZ}_{\delta}^{+,h}} \exp\bigg(\delta^{\frac{7}{8}}\sum_{x \in D_k} h_x(-1)^{\vert \Gamma_k(x)\vert} \bigg) \vert \mathcal{F}_{N,m}^{k} \bigg].
    \end{align*}
    As in \cite{continuum-2d-RFIM}, to prove singularity of the limiting law of $(\Gamma_{k})_{k}$ under $(\mu_{k}^{+,h})_k$ with respect to nested CLE$_3$ in $D$, it suffices to show that
    \begin{equation*}
        \lim_{N \to \infty} \lim_{m \to \infty} \lim_{k \to \infty} \EE \mu_{k}^{+}[(Q_{N,m}^{k})^{\frac{1}{2}}] = 0.
    \end{equation*}
    This can be shown exactly as \cite[Lemma~5.2]{continuum-2d-RFIM}, by simply replacing the random variables $(\sigma_x)_x$ by $((-1)^{\vert \Gamma_k(x) \vert})_x$ everywhere. We leave the details to the reader.
\end{proof}

\part{Results on the lattice near-critical RFIM interfaces} \label{part-2}
From now on, we will focus on the (discrete) RFIM on the two-dimensional integer lattice $\mbb Z^2$.
\section{Background on the discrete RFIM}
\subsection{Notations}
For $\Gamma\subset \mathbb{Z}^2$, we use the notation $\partial\Gamma$ to denote its interior boundary. That is, $\partial \Gamma=\{u\in\Gamma\mid u\sim v \text{ for some } v\in \Gamma^c \}$.  In addition, we define $\partial_{\mathrm{ext}}\Gamma$ to denote its exterior boundary. That is, $\partial_{\text{ext}} \Gamma=\{u\notin\Gamma\mid u\sim v \text{ for some } v\in \Gamma \}$. For any configuration $\omega\in\mathbb{R}^{\Gamma}$ and any region $\Gamma'\subset \Gamma$, we use the notation $\omega|_{\Gamma'}\in \mbb R^{\Gamma'}$ to denote its restriction on $\Gamma'$, i.e., $\omega|_{\Gamma'}(u)=\omega_u$ for any $u\in \Gamma'$. 

We will use the notation $\mu_{G,h}^{A,\xi}$ to denote the Ising measure on a graph $G=(V,E)$ with boundary condition $\xi$ on $A\subset V$ and with the external field $h$. This is a natural extension of \eqref{def_Ising_pm}. We define the random field Ising model Hamiltonian with external field $\eps h$ to be \begin{equation*}
H_{G, \epsilon h} (\sigma)= - \left(\sum_{\substack{u,v \in G,\\\, u\sim v}} \sigma_u \sigma_v  + \sum_{u \in G} \epsilon h_u \sigma_u\right) \mbox{ for } \sigma \in \{-1, 1\}^{V}\,.
\end{equation*}Then the partition function is defined as \begin{equation*}
    \mathcal{Z}^{\xi,A}_{\beta,G,\eps h}=\sum_{\sigma\in\{-1,1\}^V}\exp\left(-\beta H_{G,\eps h}(\sigma)\right)\mathbb{I}_{\sigma|_A=\xi}
\end{equation*} and the measure is defined as \begin{equation*}
    \mu^{\xi,A}_{\beta,G,\eps h}(\sigma)=\frac{1}{\mathcal{Z}^{\xi,A}_{\beta,G,\eps h}}\exp\left(-\beta H_{G,\eps h}(\sigma)\right)\mathbb{I}_{\sigma|_A=\xi}.
\end{equation*}
\begin{remark}
    In the above definition, the external field term is multiplied by $\beta$. This convention is slightly different from the one used in Part I, where the external field is not multiplied by $\beta$. Since $\beta$ is fixed throughout the paper, the two conventions differ only by a deterministic rescaling of the external field, and all the results remain unchanged after adjusting the corresponding constants.
\end{remark}
We may drop $G$ and $A$ in the notation when they are clear from the context. For any subset $S\subset V$ and any external field $h$, denote by $h_{S} = \sum_{v\in S} h_v$.

For any two vertices $x,y\in\mathbb Z^2$, let $\dist(x,y)$ denote the $\ell^\infty$-distance between them. In addition, for two sets of vertices $A$ and $B$, define
\[
\dist(A,B):=\min_{x\in A,\;y\in B}\dist(x,y).
\]
We write $\dist(x,B)$ for $\dist(\{x\},B)$.
For two vertices $x,y\in\mathbb Z^2$, we say that they are neighboring if $\{x,y\}\in E(\mathbb Z^2)$, and $*$-neighboring if $\dist(x,y)=1$.
For two distinct vertices $x,y\in\mathbb Z^2$, a sequence of distinct vertices
$
(x_0=x,x_1,\ldots,x_n=y)
$
is called a ($*$-)path from $x$ to $y$ if $x_i$ and $x_{i+1}$ are ($*$-)neighbors for every $i=0,\ldots,n-1$.
Furthermore, for a set $A\subset\mathbb Z^2$, we say that $x$ and $y$ are ($*$-)connected in $A$ if there exists a ($*$-)path from $x$ to $y$ entirely contained in $A$.

A sequence of distinct vertices
$u_1,u_2,\ldots,u_n,u_{n+1}=u_1$
is called a circuit if
$
\{u_i,u_{i+1}\}\in E(\mathbb Z^2)
$
for every $i=1,\ldots,n$. Similarly, it is called a $*$-circuit if
$
\dist(u_i,u_{i+1})=1
$
for every $i=1,\ldots,n$. Note that every circuit is also a $*$-circuit.

For a $*$-circuit $(u_1,\ldots,u_n)$, define the region enclosed by it as
\[
\Gamma:=\Bigl\{x:\text{ every path from $x$ to $\infty$ intersects the circuit }\Bigr\}.
\]

For $N\geq 1$, let $\lamn=[-N,N]^2 \cap \mathbb Z^2$ be the box of side length $2N+1$ centered at the origin $o$. More generally, for any $u\in \mathbb Z^2$ we use the notation $\Lambda_N(u)$ to denote the box of side length $2N+1$ centered at the origin $u$, and we refer to it as the $N$-box centered at $u$. Throughout the paper, unless otherwise specified, boundary conditions for the Ising model on $\Lambda_N$ are imposed on $\partial\Lambda_N$.

\subsection{The FK-Ising model}
In this subsection, we briefly review the FK-Ising model with an external field, introduced in \cite{ES88}. Let $G=(V,E)$ be a finite graph and let $\{h_x:x\in V\}$ denote an external field. We consider an edge configuration $\omega\in\Omega=\{0,1\}^E$, where $0$ indicates that an edge is \textbf{closed} and $1$ indicates that it is \textbf{open}. 

For $\omega\in\Omega$ and distinct vertices $x,y\in V$, we say that $x$ is \textbf{connected} to $y$ in $\omega$, denoted by $x\longleftrightarrow y$, if there exists a sequence of vertices
$
x_0=x,x_1,\cdots,x_n=y
$
such that $\{x_i,x_{i+1}\}\in E$ and $\{x_i,x_{i+1}\}$ is open in $\omega$ for every $0\le i\le n-1$. Such a sequence is called an \textbf{open path}. Given $\omega\in\Omega$, the graph $G$ is partitioned into a disjoint collection of connected components, which we refer to as \textbf{open clusters}.

Let $\mathfrak C$ denote the collection of the open clusters in $\omega$.
Then the FK-Ising model on $G$ with parameter $p \equiv 1-\exp(-2\beta)$ is a probability measure on  $\Omega$ given by (recalling our notational convention $h_S = \sum_{v\in S} h_v$)

\begin{equation}\label{eq-def-FK-external-field}
\phi^{\xi}_{p,G,h}(\omega)=\frac{1}{\cZ^{\xi,\phi}_{p,G,h}}  \prod_{e\in E}\hspace{0.5em}p^{\omega(e)}(1-p)^{1-\omega(e)}\prod_{\mathcal C\in \mathfrak C} 2 \cosh(\beta h_{\mathcal C})\,,
\end{equation}
where $\cZ^{\xi,\phi}_{p,G,h}$ is the normalizing constant (partition function) of $\phi^{\xi}_{p,G,h}$ and $\xi$ denotes the boundary condition.

In \cite{Onsager44} (see also \cite{DHN11,spinCorrelations,GW20}), it was shown that for {$d=2$ and} $ p = p_c=1-\exp(-2 \beta_c)$, there exists a constant $c>0$ such that for any boundary condition $\xi$,
\begin{equation}\label{eq: FK one arm event exponent}
c^{-1}n^{-\frac{1}{8}}\le \phi_{\beta_c,\Lambda_n,0}^{\xi}(o\longleftrightarrow\partial\Lambda_n)\leq cn^{-\frac{1}{8}}.
\end{equation}

\subsection{The extended Ising model}\label{sec: extended ising}

In this subsection, we introduce an extension of the Ising model in which spin variables are assigned not only to vertices but also to edges, taking values in $\{-1,0,1\}$. This construction is reminiscent of the extension of the Gaussian free field to metric graphs, which has led to important applications. The idea was initiated in \cite{Tit16}, introduced for the Ising model in \cite{AHP20}, and has since found further applications; see also \cite{DHX23}. 

For a graph $G=(V,E)$, we use $\bar{\sigma}$ to denote an extended configuration on $G$, satisfying
\begin{enumerate}[label=(\arabic*)]
\item for every vertex $v\in V(G)$, $\bar{\sigma}_v\in\{-1,+1\}$;
\item for every edge $e\in E(G)$, $\bar{\sigma}_e\in\{-1,0,+1\}$;
\item\label{hard constraint}
if $v$ is an endpoint of $e$ (denoted by $v\in e$), then
$|\bar{\sigma}_v-\bar{\sigma}_e|\le 1.$

\end{enumerate}

 For any external field $h$, let $\bar{\mu}_{\beta,G,h}$ be the unique probability measure on the set of extended configurations $\bar{\sigma}$ with
\begin{equation}\label{eq:P_Lambda_tau_def}
  \bar{\mu}_{\beta,G,h}(\bar{\sigma}) := \frac{1}{\bar{\cZ}_{\beta,G,h}} \prod_{ \substack{e \in {E}(G) \\ v \in e}}  W(\bar{\sigma}_v,\bar{\sigma}_{e}) \cdot  e^{ U(\bar\sigma)}
\end{equation}
where
$U(\bar\sigma) = \beta\sum_{v \in {V}(G)}  h_v \sigma_v$ and $W$ is defined as follows.
For each $a \in\{-1,1\}$ and $ b\in\{-1,0,1\}$,
\begin{equation}\label{eq:Wdef}
    W(a,b):= \lambda \Big( \delta _{a,b}  \ + \ t \delta_{b,0}  \Big)\ = \  \lambda  \cdot \begin{cases}
    1& b=a,\\
    t& b =0,\\
    0&b =-a,
  \end{cases}
\end{equation}
and $(t, \lambda)$ is related to the inverse temperature $\beta$ by
\begin{equation*} \label{t_beta}
  t=\Big(\exp(2\beta)-1\Big)^{-\frac{1}{2}} \,,  \qquad \lambda =  \Big(2 \sinh(\beta)\Big)^{\frac{1}{2}} \,.
\end{equation*}

For $A\subset\bar G$, we write $\bar{\mu}_{\beta,G,h}^{A,+}$ ($\bar{\mu}_{\beta,G,h}^{A,-}$) to denote the extended Ising measure with the plus (minus) boundary condition on $A$. More generally, for any boundary condition $\xi\in \{-1,1\}^{A \cap V(G)}\times \{-1,0,1\}^{A \cap E(G)}$, we denote  the extended Ising measure with the boundary condition $\xi$ on $A$ by $\bar \mu^{A, \xi}_{\beta,G,h}$. For notational clarity, we might omit $A$ from the superscript in the case that the boundary condition is posed on $\partial \bar{G}$.
When considering the extended Ising model on $\bar G$ where $G$ is a subgraph of $\mathbb{Z}^2$, we define the boundary of $\Gamma\subset\bar G$ by $$\partial\Gamma=\{e={\{x,y\}}\in \Gamma:x\notin \Gamma\text{ or }y\notin \Gamma\}\cup\{x\in \Gamma:\exists\, {\{x,y\}}\in \bar G\setminus\Gamma\}\,.$$

Basic properties such as the domain Markov property (DMP) have been proved in \cite{AHP20}, and it is straightforward to verify that the extended Ising model satisfies the FKG inequality.

The extended Ising model is indeed an extension of the Ising model: its restriction to vertex spins follows the law of the Ising model (see \cite[Section 2]{AHP20}). In addition, it also extends the FK-Ising model: its restriction to edge spins follows the law of the FK-Ising model (see \cite[Section 2]{DHX23}). Therefore, every realization of the Ising model may be viewed as the projection of an extended Ising configuration onto the vertex spins. Throughout this paper, we will work with the usual Ising model whenever possible and only lift configurations to the extended setting when edge variables become relevant.

\subsection{Basic properties of the model(s)}

Here we record some of the standard and well-known properties that will be used repeatedly (see e.g., \cite{FV17}, \cite{D19} for details and proofs).

\begin{enumerate}
    \item \textbf{FKG inequality.} This was introduced in \cite{FKG71}  and named after the three authors.
    If $\mathtt A,\mathtt B$ are both increasing (or decreasing) events, then 
$$P(\mathtt A\cap \mathtt B)\geq P(\mathtt A)\times P(\mathtt B).$$
The relevant notions of increasing (or decreasing) events can be found in \cite[(1.8)]{D19}.    
    \item \textbf{Comparison of boundary conditions (CBC)}.
    If two boundary conditions $\xi_1\leq \xi_2$, then for any increasing event $\mathtt A$ we have 
    $$P^{\xi_1}(\mathtt A)\leq P^{\xi_2}(\mathtt A).$$
    Here $\leq$ stands for a partial order in the set of boundary conditions: for spin configurations, we say $\xi_1\leq \xi_2$ if every plus spin in $\xi_1$ is also plus in $\xi_2;$ for bond configurations, we say $\xi_1\leq \xi_2$ if every open edge in $\xi_1$ is also open in $\xi_2$.
    \item \textbf{Domain Markov property (DMP)}. For two domains $\Gamma_1\subset\Gamma_2$, given the configuration $\xi$ on $\Gamma_1$, the influence on the measure in $\Gamma_2\setminus\Gamma_1$ 
    behaves like a boundary condition:
    $$P_{\Gamma_2}(\cdot\mid \xi)=P_{\Gamma_2\setminus\Gamma_1}^{\xi|_{\partial\Gamma_1}}(\cdot),$$
where $\xi|_{\partial\Gamma_1}$ is the restriction of $\xi$ to $\partial\Gamma_1$ and it denotes for the boundary condition on $\partial\Gamma_1$.

       \item  {\bf Russo-Seymour-Welsh bound (RSW).}  The RSW bound provides robust crossing probability estimates that characterize criticality of the planar FK-Ising model at $p_c$.

        To be precise, let $\phi_{p_c}$ denote the critical planar FK-Ising model. Then the following holds \cite[Theorem 1]{DHN11}(see also  \cite{Tassion16, KT23} for other remarkable progress on the RSW theory):
        let $0<\rho_1<\rho_2,$ then there exist constants $0<c_-<c_+<1$ only relies on $\rho_1,\rho_2$ such that for any rectangle $R=[a,b]\times[c,d]\subset \mbb Z^2$ with $\rho_1|a-b|\leq|c-d|\leq \rho_2|a-b|$ and any boundary condition $\xi,$ we have 
        \begin{equation}
            c_-\leq \phi_{p_c}^{\xi}\left(\mcc H(R)\right)\leq c_+,\quad  c_-\leq \phi_{p_c}^{\xi}\left( \mcc V(R)\right)\leq c_+. \label{eq:RSW}
        \end{equation}
        Here, $\mcc H(R)$ denotes the event that there exists an open path in  $\omega$ crossing $R$ horizontally, and $\mcc V(R)$ stands for vertical crossing.

Moreover, by the Edwards-Sokal coupling, such RSW bounds could be translated into (extended) Ising measure with plus or  boundary condition. For general boundary conditions, however, one has to be slightly more careful. The FK crossing event is an event of the edge configuration, and the estimate \eqref{eq:RSW} is uniform in the boundary condition even if the rectangle is adjacent to the boundary. In contrast, a spin crossing event can be strongly affected by the prescribed boundary spins. Therefore, for general Ising boundary conditions, the corresponding RSW statement should be understood as a bulk estimate.

More precisely, fix $0<\rho_1<\rho_2$ and $\rho>0$. Let $\Omega\subset \mbb Z^2$ be a finite domain and let $R=[a,b]\times[c,d]\subset \Omega$ be a rectangle satisfying \[ \rho_1 |a-b|\le |c-d|\le \rho_2 |a-b|  \quad\mbox{ and }  \quad \operatorname{dist}(R,\partial\Omega) \ge \rho \operatorname{diam}(R). \] Then there exist constants $ 0<c_-=c_-(\rho_1,\rho_2,\rho) <c_+=c_+(\rho_1,\rho_2,\rho)<1 $ such that, uniformly over all boundary conditions $\eta\in\{-1,1\}^{\partial\Omega}$, \[ c_- \le \mu_{\Omega,\beta_c}^{\eta}\bigl(\mcc H^+(R)\bigr) \le c_+, \qquad c_- \le \mu_{\Omega,\beta_c}^{\eta}\bigl(\mcc V^+(R)\bigr) \le c_+. \] The same estimates hold with $\mcc H^+(R)$ and $\mcc V^+(R)$ replaced by $\mcc H^-(R)$ and $\mcc V^-(R)$. Here $\mcc H^+(R)$ denotes the event that there exists a nearest-neighbor path of $+$ spins in $R$ connecting the left side of $R$ to the right side of $R$, and $\mcc V^+(R)$ is defined analogously for vertical crossings.

The point of the distance assumption is that one can use the mixing property (Lemma~\ref{lem: spatial mixing of Ising model}) to compare the boundary condition $\eta$ with $+$ (or $-$) boundary condition.
        
\item {\bf Planar duality.} \label{item:planar-duality} We review the dual theory for the FK-Ising model initiated in \cite{KW42}, and our presentation follows that in {\cite[Section 2.6]{BD12}}. Define $(\mathbb{Z}^2)^{\diamond}=\mathbb{Z}^2+(\frac12,\frac12)$ to be the dual of $\mathbb{Z}^2$, which can be viewed as a translation of $\mathbb{Z}^2$ in $\mathbb{R}^2$. We see that every vertex in $(\mathbb{Z}^2)^{\diamond}$ is the center of a unit square in $\mathbb{Z}^2$ and vice versa. In other words, every edge $e\in\mathbb{Z}^2$ intersects with a unique edge $e^{\diamond}\in(\mathbb{Z}^2)^{\diamond}$. For a configuration $\omega\in\{0,1\}^{E(\mathbb{Z}^2)}$, we define its dual configuration $\omega^{\diamond}$ by setting
$$\omega^{\diamond}(e^{\diamond})=1-\omega(e),~~~\mbox{for~all~}e\in E(\mathbb{Z}^2).$$
Importantly, the measure of $\omega^{\diamond}$ is also an FK-Ising measure with a dual boundary condition and dual parameter $p^{\diamond}$ (see {\cite[Section 2.6]{BD12}} for more details). By {the first formula in} \cite{Onsager44} (see also \cite[(2.4)]{BD12} for modern illustration), we have
$$p_c^{\diamond}=p_c=\dfrac{\sqrt{2}}{1+\sqrt{2}}.$$
Now for a rectangle $R=[a,b]\times[c,d]$, let ${\mcc H}^{\diamond}(R)$ denote the event that there exists an open dual path crossing R horizontally. This is a slight abuse of notation since the dual path lives in $[a-\frac12,b+\frac12]\times[c+\frac12,d-\frac12]\subset(\mathbb{Z}^2)^{\diamond}$. Similarly, we define the event of vertical dual crossing $\mathcal{V^{\diamond}(R)}$. By duality, we have $\mathcal{V(R)}$ happens if and only if $\mathcal{H^{\diamond}(R)}$ fails, and $\mathcal{H(R)}$ happens if and only if $\mathcal{V^{\diamond}(R)}$ fails. Therefore, with the same notation in (iv), we have \begin{equation}
            c_-\leq \phi_{p_c}^{\xi}\left(\mcc H^{\diamond}(R)\right)\leq c_+,\quad  c_-\leq \phi_{p_c}^{\xi}\left( \mcc V^{\diamond}(R)\right)\leq c_+. \label{eq:dual-RSW}
        \end{equation}
    

\end{enumerate}

In the above (i.e., (1)-(4)) $P$ could stand for the (extended) Ising measure or the FK-Ising measure with or without the external field and with or without the boundary condition.
We will write FKG, CBC, DMP, and RSW in the following for convenience.

\section{Mutual absolute continuity of the single interface in the scaling limit}\label{sec: Mutual absolute continuity of the single interface in the scaling limit}
In this section, we consider the Radon-Nikodym derivative for the interface of the Ising model under the Dobrushin boundary condition with and without disorder. We first provide a careful description of the Dobrushin boundary condition on $\partial\Lambda_N$ for later convenience.

\begin{definition}\label{def: plus minus interface}
    Let $$\mathtt{Dob}^+:=\partial\lamn\cap \{z=(z^{(1)},z^{(2)})\in\mbb Z^d, z^{(2)}\geq 0\},\quad \mathtt{Dob}^-:=\partial\lamn\cap \{z=(z^{(1)},z^{(2)})\in\mbb Z^d, z^{(2)}<0\}$$
    denote the upper and bottom half parts of the boundary of the box $\lamn.$ Then the Dobrushin boundary condition (denoted $\pm$) is the boundary condition that being plus on $\mathtt{Dob}^+$ and minus on $\mathtt{Dob}^-.$ Now given any configuration $\sigma\in\{-1,1\}^{\Lambda_N}$ satisfying Dobrushin boundary condition, we define the upmost plus--minus boundary interface $\gamma=\gamma(\sigma)$ as follows.

Let
$
\mathtt{Plus}(\sigma)=\{z\in\Lambda_N:\sigma_z=1\}
$
denote the set of plus spins. By definition, we have
$
\mathtt{Dob}^+\subset \mathtt{Plus}(\sigma).
$
Since $\mathtt{Dob}^+$ is connected in $\mathbb Z^2$, there exists a unique connected component of $\mathtt{Plus}(\sigma)$ containing $\mathtt{Dob}^+$, which we denote by $\mathtt{ConPlus}(\sigma)$.

Define
\[
\Lambda^+(\sigma)
=
\big\{
z\in\Lambda_N:
\text{every path from }z
\text{ to }\partial\Lambda_{N+1}
\text{ intersects }
\mathtt{ConPlus}(\sigma)
\big\},
\]
and let
$
\Lambda^-(\sigma)=\Lambda_N\setminus\Lambda^+(\sigma).
$
Intuitively, $\Lambda^+(\sigma)$ is obtained by filling in all holes of $\mathtt{ConPlus}(\sigma)$.

We then define
\[
\gamma^+(\sigma)
=
\{x\in\Lambda^+(\sigma):
\exists y\in\Lambda^-(\sigma)
\text{ with }
x\sim_* y
\}
\setminus \mathtt{Dob}^+,
\]
and
$\gamma^-(\sigma)=\partial_{\mathrm{ext}}\Lambda^+(\sigma)\setminus \mathtt{Dob}^-.$
Here $x\sim_* y$ means that $x$ and $y$ are $*$-neighbors. By planar duality, $\gamma^+$ forms a connected set of plus spins, while $\gamma^-$ forms a $*$-connected set of minus spins. Hence
$
\gamma=(\gamma^+,\gamma^-)
$
defines a plus--minus interface. Moreover, $\gamma$ is the upmost interface, since every $*$-connected minus path from $\mathtt{Dob}^-$ to $\mathtt{Dob}^-$ must intersect $\gamma^+$.

Finally, let $\Sigma_\gamma$ denote the collection of configurations satisfying $\gamma(\sigma)=\gamma$.
It is easy to see that 
$\Sigma_\gamma=\{\sigma\in\{-1,1\}^{\lamn}: \sigma_{\mid \mathtt{Dob}^+\cup\gamma^+}=1, \sigma_{\mid \mathtt{Dob}^-\cup\gamma^-}=-1\}.$
For convenience, write
$
\mu^\pm_{\Lambda_N,\epsilon h}(\gamma)
=
\mu^\pm_{\Lambda_N,\epsilon h}(\Sigma_\gamma),
$
and similarly
$
\mu^\pm_{\Lambda_N,0}(\gamma)
=
\mu^\pm_{\Lambda_N,0}(\Sigma_\gamma),
$
where the superscript $\pm$ indicates Dobrushin boundary conditions.
  
\end{definition}

Henceforth, we work at the critical temperature $\beta=\beta_c$ and omit $\beta$ from the subscripts. Without loss of generality, we consider the critical disorder $\eps=N^{-\frac{7}{8}}$. The case $\eps=cN^{-\frac{7}{8}}$ with any fixed $c\in(0,\infty)$
follows similarly. 
The following is the main theorem of this section.
\begin{theorem}\label{thm: sle continuity critical main}
    There exists a constant $c>0$ such that for any $\iota>0$ the following holds.\begin{align}
        \PP\bigg(\mu^{\pm}_{\Lambda_N,0}\Big(e^{-\iota^{-4}}&\le \frac{\mu^{\pm}_{\Lambda_N,\eps h}(\gamma)} {\mu^{\pm}_{\Lambda_N,0}(\gamma)}\le e^{\iota^{-4}}\Big) \ge 1-e^{-\iota^{-0.1}}\bigg)\ge  1-c\exp(-c^{-1}\iota^{-0.1}).\label{eq: absolute continuity of sle with respect to the pure measure}\\
        \PP\bigg(\mu^{\pm}_{\Lambda_N,\eps h}\Big(e^{-\iota^{-4}}&\le \frac{\mu^{\pm}_{\Lambda_N,\eps h}(\gamma)} {\mu^{\pm}_{\Lambda_N,0}(\gamma)}\le e^{\iota^{-4}}\Big) \ge1-e^{-\iota^{-0.1}}\bigg)\ge  1-c\exp(-c^{-1}\iota^{0.1}).\label{eq: absolute continuity of sle with respect to the external field measure}
    \end{align}
\end{theorem}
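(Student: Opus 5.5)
The plan is to work with the explicit Radon--Nikodym derivative, exactly as in Part I. Write $R(\gamma):=\mu^{\pm}_{\Lambda_N,\eps h}(\gamma)/\mu^{\pm}_{\Lambda_N,0}(\gamma)$. By the domain Markov property, as in \eqref{decomposition-RN-interface}, we have
\begin{equation*}
\log R(\gamma)=\log\big(\theta_L S_L^h\big)+\log\big(\theta_R S_R^h\big)+M^h(\gamma)+\log\frac{\theta_D}{\theta_L\theta_R}-\log\big(\theta\,\mathcal{Z}^{\pm,h}_{\Lambda_N}\big)+\log\frac{\theta}{\theta_D}.
\end{equation*}
Here all quantities are the lattice versions, with $\delta=N^{-1}$. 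It therefore suffices to show that each of the (at most six) terms exceeds $\iota^{-4}/6$ in absolute value with $\PP\otimes\mu^{\pm}_{\Lambda_N,0}$-probability at most $\exp(-2\iota^{-0.1})$, for $N$ large. Given this, \eqref{eq: absolute continuity of sle with respect to the pure measure} follows from Markov's inequality in $\PP$:
\begin{equation*}
\PP\big(\mu^{\pm}_{\Lambda_N,0}(\mathrm{bad})>e^{-\iota^{-0.1}}\big)\le e^{\iota^{-0.1}}\,\PP\otimes\mu^{\pm}_{\Lambda_N,0}(\mathrm{bad}).
\end{equation*}

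\emph{Upper tails and the interface terms.} The upper tails of $\theta_LS_L^h\,\theta_RS_R^h$ follow from Markov's inequality and the uniform second-moment bounds obtained in the proof of Claim \ref{claim-uniform-boundedness-left-right-spin-fields}. This uses Claim \ref{claim-bound-jk-expectations}, which concerns the product over the two random sides of $\gamma$, together with Claim \ref{claim-Lq-norm-renomalization-factor}. Since the threshold is $e^{\iota^{-4}/6}$, a polynomial moment already gives a bound of order $e^{-c\iota^{-4}}$, which is far better than needed. The magnetization term $M^h(\gamma)$ and the factor $\theta_D/(\theta_L\theta_R)=\exp(-\tfrac{\eps^2}{2}|V(\gamma)|)$ are handled by the exponential moment bound of Claim \ref{claim-cvg-magnetization-interface}. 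Through Claim \ref{claim-bound-EE-k-moment-V-gamma}, that bound gives $\EE\otimes\mu^{\pm}[e^{\pm qM^h}]\le C_q$ and also controls all moments of $\eps^2|V(\gamma)|$. Exponential Chebyshev then yields tails of order $e^{-c\iota^{-4}}$ for these terms. The deterministic ratio $\theta/\theta_D$ tends to $1$.

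\emph{Lower tails of the partition functions.} This is the main obstacle: we need lower tails of $\theta\mathcal{Z}^{\pm,h}$ and, more delicately, of $\theta_LS_L^h$ and $\theta_RS_R^h$ in the random, fractal domains $D_L,D_R$, with stretched-exponential decay that is uniform over these domains. I would first try the following route, using that $h\mapsto\log S_L^h$ is convex.
\begin{enumerate}
\item By Paley--Zygmund, $\PP(\theta_LS_L^h\ge 1/2)\ge c_0>0$. This holds because $\EE[\Psi_{\delta,L}]=1$ by \eqref{expectation-Psi-L} and the second moment is bounded, $\mu^{\pm}$-typically, by the estimates above.
\item The gradient of $\log S_L^h$ is $\eps\,\langle\sigma_x\rangle_{D_L,\eps h}$, so $\|\nabla\log S_L^h\|_2^2=\eps^2\sum_x\langle\sigma_x\rangle^2$. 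This quantity is bounded in expectation: replace $h$ by $0$ via Lemma \ref{lemma-bound-spin-correlations} and sum against Lemma \ref{lemma-sum-dist-to-boundary-power-alpha}, where the boundary term from the rough boundary $\gamma$ is controlled, after averaging over $\gamma$, by the same $k$-point estimates used in Claim \ref{claim-bound-jk-expectations}.
\item Apply Talagrand-type concentration for convex functions of Gaussians whose gradient is small on a set of probability at least $1-c_0/2$. This gives $\PP(\log\theta_LS_L^h\le -\log 2-t)\le Ce^{-ct^2}$, with the remaining probability controlled by a tail bound on the gradient.
\end{enumerate}
With $t=\iota^{-4}/6$, this is comfortably better than $\exp(-2\iota^{-0.1})$. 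The same argument applies to $\mathcal{Z}^{\pm,h}$, or one may simply cite \cite[Corollary~C.2]{continuum-2d-RFIM}.

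\emph{Second statement.} For \eqref{eq: absolute continuity of sle with respect to the external field measure} I would change measure. On $\{R<e^{-\iota^{-4}}\}$,
\begin{equation*}
\mu^{\pm}_{\Lambda_N,\eps h}(R<e^{-\iota^{-4}})=\mu^{\pm}_{\Lambda_N,0}\big[R\,\mathbb{I}_{R<e^{-\iota^{-4}}}\big]\le e^{-\iota^{-4}}.
\end{equation*}
On $\{R>e^{\iota^{-4}}\}$,
\begin{equation*}
\mu^{\pm}_{\Lambda_N,\eps h}(R>e^{\iota^{-4}})\le e^{-\alpha\iota^{-4}}\,\mu^{\pm}_{\Lambda_N,0}[R^{1+\alpha}],
\end{equation*}
and $\EE\,\mu^{\pm}_{\Lambda_N,0}[R^{1+\alpha}]$ is bounded by the lattice version of Proposition \ref{proposition-moments-RN}. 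Markov's inequality in $\PP$ then gives the quenched bound, in fact with a better exponent than stated.
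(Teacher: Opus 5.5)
Your argument for \eqref{eq: absolute continuity of sle with respect to the external field measure} is correct and simpler than the paper's. For every $h$ one has $\mu^{\pm}_{\Lambda_N,\eps h}(R<e^{-T})=\mu^{\pm}_{\Lambda_N,0}[R\,\mathbb{I}_{R<e^{-T}}]\le e^{-T}$, and the upper tail follows from the $L^{1+\alpha}$ bound of Proposition~\ref{proposition-moments-RN}. The paper instead proves a quenched large-deviation bound for $S(\gamma)$ under the tilted measure (Lemma~\ref{lem: large deviation for sle interface dimension with disorder}). By the same reasoning, in \eqref{eq: absolute continuity of sle with respect to the pure measure} the upper tail is free, because $\mu^{\pm}_{\Lambda_N,0}[R]=1$ for every $h$. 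Your six-term bookkeeping therefore reduces to a single issue: the lower tail of $\theta_LS_L^h$ and $\theta_RS_R^h$ in the random domains, at level $e^{-\iota^{-4}/6}$, with $\PP\otimes\mu^{\pm}_{\Lambda_N,0}$-probability at most $e^{-2\iota^{-0.1}}$. Here there is a genuine gap.

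Your Paley--Zygmund constant $c_0$ and your gradient bound both depend on $\gamma$, and you control them only ``$\mu^{\pm}$-typically''. They cannot be uniform in $\gamma$. Heuristically, $\log\theta_LS_L^h\approx X-\tfrac12X^2$, where $X$ is the first chaos and $\Var(X)=\eps^2\sum_x\langle\sigma_x\rangle^2_{D_L}\asymp S(\gamma)/N^{7/4}$. So for an interface with $S(\gamma)=\lambda N^{7/4}$, $\log\theta_LS_L^h$ is typically of order $-\lambda$. Then $\PP(\theta_LS_L^h\ge\frac12\mid\gamma)$ is exponentially small in $\lambda$, and no bound $\PP(\log\theta_LS_L^h\le-\log2-t)\le Ce^{-ct^2}$ with absolute constants can hold. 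You therefore need two things:
\begin{enumerate}
\item[(i)] a large-deviation bound under $\mu^{\pm}_{\Lambda_N,0}$ for a geometric functional such as $S(\gamma)$, strong enough to make the exceptional set at most $e^{-2\iota^{-0.1}}$. The Part I estimates control the relevant quantities only in $\mu^{\pm}$-mean, so Markov's inequality yields only polynomially small exceptional sets.
\item[(ii)] lower-tail bounds whose dependence on $\lambda$ is only polynomial, because (i) forces $\lambda$ up to a positive power of $\iota^{-1}$.
\end{enumerate}

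Your steps 2--3 fail (ii). The gradient $\eps\langle\sigma_x\rangle_{D_L,\eps h}$ is evaluated at nonzero field, so Lemma~\ref{lemma-bound-spin-correlations} does not apply. Its chaos expansion is a ratio whose denominator is the very partition function whose lower tail you are trying to bound. At best you can control the numerator on $\{\Psi_{\delta,L}\ge\frac12\}$ via Markov's inequality. That forces $L\gtrsim c_0^{-1/2}$, which is exponentially large in $\lambda$. The resulting bound $\exp(-\frac12(t/L-\sqrt{2\log(1/c_0)})^2)$ with $t=\iota^{-4}/6$ then becomes useless as $\iota\to0$.

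The paper supplies exactly these two inputs, without any second-moment argument:
\begin{enumerate}
\item[(i)] Lemma~\ref{lem: large deviation for sle interface dimension}, proved via multiscale box counts $T_k$ and the two-arm exponent.
\item[(ii)] Proposition~\ref{prop: sle RN derivative critical}, proved by switching on the field box by box over a partition adapted to $\gamma$ (Lemma~\ref{lem: partition procedure in sle}). The partition has $n\lesssim\iota^{-2}(S(\gamma)/N^{7/4})^{1+3\kappa}$ boxes, each with $\eps M^{7/8}\ll1$. Hence every increment satisfies $|\mathsf a_i^{\pm}|\le\frac12$ outside an event of probability $\exp(-c(\eps M^{7/8})^{-1/2})$, which gives $|\log\mathsf Z^{\pm}|\le n$. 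This controls both tails at once, with polynomial dependence on $S(\gamma)/N^{7/4}$.
\end{enumerate}
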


\begin{remark}
   In the following, we always assume that $\iota$ is sufficiently small. In particular, inequalities such as
$3\iota^{-0.1}<\iota^{-1}$
will be used without further comment. The underlying reason is that it suffices to prove Theorem~\ref{thm: sle continuity critical main} for $\iota<\iota_0$. Indeed, once this is established, one may choose the constant $c$ sufficiently large so that
$1-c\exp(-c^{-1}\iota^{-0.1})\le 1$
for all $\iota\ge \iota_0$, and the desired bounds then follow trivially from the non-negativity of probabilities.
\end{remark}

\begin{remark}\label{remark-mutual-abs-continuity}
We now explain the role of Theorem~\ref{thm: sle continuity critical main}.
First, it gives a quantitative form of mutual absolute continuity at the discrete level between the critical and RFIM interface measures, after removing an exceptional set whose probability tends to zero in the external field. This statement is not a consequence of the absolute continuity of the scaling limits and is therefore of independent interest. Second, it provides the missing input for proving the reverse absolute continuity of the limiting RFIM interface law with respect to \(\mathrm{SLE}_3\); combined with the opposite direction obtained earlier, this yields mutual absolute continuity in the scaling limit.

Indeed, for every \(\iota>0\), with probability tending to one in the external
field, there exists an event which is typical under both
\(\mu_{\lamn,0}^{\pm}\) and \(\mu_{\lamn,\eps h}^{\pm}\), and on which the
Radon--Nikodym derivative is bounded from below and from above by deterministic
constants depending only on \(\iota\). Consequently, up to an error
\(\eta_\iota\) with \(\eta_\iota\to0\) as \(\iota\downarrow0\), any event which
has positive mass under the critical interface measure also has positive mass
under the RFIM interface measure.

Passing to the scaling limit, and using
Proposition~\ref{proposition-uniqueness-law-curve} together with the convergence
of the critical interface to \(\mathrm{SLE}_3\), this comparison prevents an
event of positive \(\mathrm{SLE}_3\)-mass from having zero mass under the
limiting RFIM interface law. This gives the reverse absolute continuity, and
therefore mutual absolute continuity.
\end{remark}
\begin{remark}
    Theorem \ref{thm: sle continuity critical main} holds for the FK-Ising model with and without disorder, with changes in the exponents of $\iota.$ See Remarks~\ref{rmk: sle RN derivative critical} and \ref{rmk: large deviation for sle interface dimension} for further details.
\end{remark}
Similar to the proof in Section~\ref{sec: Scaling limit of the interface with Dobrushin boundary conditions}, the estimation of the ratio $\frac{\mu^\pm_{\Lambda_N,\eps h}(\gamma)}{\mu^\pm_{\Lambda_N,0}(\gamma)}$ essentially boils down to controlling the partition functions on different domains. 

For any interface $\gamma=(\gamma^+,\gamma^-)$, recall the definition of $\Lambda^+$ and $\Lambda^-$.
We denote by $\mathcal{Z}_{h}^{\xi}(\Lambda)$ the partition function of Ising model on region $\Lambda$ with external field $h$ and boundary condition $\xi$. 
Note that $\Sigma_{\gamma}$ is the set of configurations such that all the spins on $\gamma^+$ are plus and all the spins on $\gamma^-$ are minus.
 Thus we calculate as follows:\begin{equation}\label{eq: RN derivative expansion to partition function}
  \frac{\mu^\pm_{\Lambda_N,\eps h}(\gamma)}{\mu^\pm_{\Lambda_N,0}(\gamma)}=\frac{\cZ^{\pm}_{0}(\Lambda_N)}{\cZ^{\pm}_{\eps N}(\Lambda_N)}\cdot \frac{\cZ^{-}_{\eps h}(\Lambda^-)}{\cZ^{-}_{0}(\Lambda^-)}\cdot \frac{\cZ^{+}_{\eps h}(\Lambda^+)}{\cZ^{+}_{0}(\Lambda^+)}=\frac{\mathsf Z^+(\gamma)\cdot \mathsf Z^-(\gamma)}{\mathsf Z}
\end{equation} where \begin{align}
    &\mathsf Z=\frac{\cZ^{\pm}_{\eps h}(\Lambda_N)}{\cZ^{\pm}_{0}(\Lambda_N)}\cdot \prod_{v\in \lamn}\frac{1}{\cosh(\eps\beta_c h_v)}, \mbox{ and }\nonumber\\ &\mathsf Z^+(\gamma)=\frac{\cZ^{+}_{\eps h}(\Lambda^+)}{\cZ^{+}_{0}(\Lambda^+)}\cdot \prod_{v\in \Lambda^+}\frac{1}{\cosh(\eps\beta_c h_v)},\quad ~\mathsf Z^-(\gamma)=\frac{\cZ^{-}_{\eps h}(\Lambda^-)}{\cZ^{-}_{0}(\Lambda^-)}\cdot \prod_{v\in \Lambda^-}\frac{1}{\cosh(\eps\beta_c h_v)}.\nonumber
\end{align}
Note that $\mathsf Z$ is a $\PP$-random variable independent of $\gamma$. Moreover, for any fixed $\gamma$, $\mathsf Z^+(\gamma)$ and $\mathsf Z^-(\gamma)$ are also $\PP$-random variables.

  The choice of $\mathsf Z$ is quite natural in the following sense: We expand the ratio between the partition functions with and without disorder and obtain
\begin{align}
       \frac{\cZ^{\pm}_{\eps h}(\lamn)}{\cZ^{\pm}_{0}(\lamn)}
       &= \left\langle ~\exp(\sum_{v\in \lamn}\eps \beta_c h_v\sigma_v) ~ \right\rangle_{\lamn,0}^{\pm}\nonumber= \left\langle ~\prod_{v\in \lamn}[\cosh(\eps \beta_c h_v)+\sigma_v\sinh(\eps \beta_c h_v)]~\right\rangle_{\lamn,0}^{\pm}\nonumber\\ &= \left[\prod_{v\in \lamn} \cosh(\eps \beta_c h_v)\right] \left\langle ~\prod_{v\in \lamn}[1+\sigma_v\tanh(\eps \beta_c h_v)]~\right\rangle_{\lamn,0}^{\pm}.\label{eq: partition function expansion 1}
\end{align}Thus we get that \begin{equation}
    \mathsf Z=\left\langle ~\prod_{v\in \lamn}[1+\sigma_v\tanh(\eps \beta_c h_v)]~\right\rangle_{\lamn,0}^{\pm}.\label{eq: partition function expansion 2}\nonumber
\end{equation}
Similarly, we have  \begin{align}
    \mathsf Z^+(\gamma)
       &= \langle ~\prod_{v\in  \Lambda^+}[1+ \sigma_v\tanh(\eps \beta_c h_v)]~\rangle_{\Lambda^+,0}^{+}~~\mbox{and}~~
       \mathsf Z^-(\gamma)
       =\langle ~\prod_{v\in  \Lambda^-}[1+ \sigma_v\tanh(\eps \beta_c h_v)]\rangle_{\Lambda^-,0}^{-}\label{eq: partition function expansion 3}
\end{align}

Since we will frequently deal with negative powers of distances, throughout this section and Section \ref{sec: singularity for outermost cle}, we use the convention that $\dist(A,B)=1$ whenever $A\cap B\neq\emptyset$.
For any $\gamma$, let \begin{equation}
    S(\gamma)=\sum_{x\in \lamn}\dist(x,\gamma\cup \partial\lamn)^{-\frac{1}{4}}.\label{eq: definition of S gamma}
\end{equation} Here and in the following, we abuse the notation that $\gamma$ also denotes the union of the two interfaces $\gamma^+\cup\gamma^-.$ Roughly speaking, $S(\gamma)$ may be viewed as a discrete analogue of the $\frac74$-dimensional Minkowski content of $\gamma$.

Throughout the rest of this paper, let $\kappa>0$ be a sufficiently small constant. Although one may ultimately take $\kappa=10^{-5}$, we retain the notation $\kappa$ for generality.
For notation convenience, we may write $\mathsf Z^+, \mathsf Z^-$ instead of $ \mathsf Z^+(\gamma), \mathsf Z^-(\gamma)$ when $\gamma $ is clear in the context. We have the following theorem, which controls the typical value of $\mathsf Z$. Moreover, for any fixed $\gamma$, it also controls the typical values of $\mathsf Z^+(\gamma)$ and $\mathsf Z^-(\gamma)$ in terms of $S(\gamma)$.

\begin{proposition}\label{prop: sle RN derivative critical}
    Fix $\beta=\beta_c$ as the inverse temperature of the two-dimensional Ising model. There exists a constant $c_1>0$ such that for any small constant $\iota\in(0,1)$ and any interface $\gamma$, 
    \begin{align}
        \EE\max\{\mathsf Z^+(\gamma),\mathsf Z^-(\gamma)\} \le \exp\Big( c_1(\frac{S(\gamma)}{N^{\frac{7}{4}}})^{1+3\kappa}\cdot \iota^{-2}\Big).\label{eq: partition function ratio expectation bound in sle}
    \end{align}
    In addition, we also have \begin{align}
        &\PP\Big(\max\{|\log\big(\mathsf Z^+(\gamma)\big)|,|\log(\mathsf Z^-(\gamma)\big)|\}\ge  c_1(\frac{S(\gamma)}{N^{\frac{7}{4}}})^{1+3\kappa}\cdot \iota^{-2}\Big)\le \exp\Big(- c_1^{-1}(\frac{S(\gamma)}{N^{\frac{7}{4}}})^{\frac{21\kappa}{32}}\cdot \iota^{-\frac{1}{4}}\Big).\label{eq: partition function ratio rare event bound in sle}\\
        &\PP\Big(|\log(\mathsf Z)|\ge
        c_1\iota^{-2})\le~ \exp\Big(- c_1^{-1} \iota^{-\frac{1}{4}}\Big).\label{eq: partition function ratio rare event bound in sle whole region}
    \end{align}
\end{proposition}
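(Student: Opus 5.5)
The plan rests on the polynomial chaos expansion \eqref{eq: partition function expansion 3}. Write $\mathsf Z^+(\gamma)=\sum_{I\subset\Lambda^+}\langle\prod_{x\in I}\sigma_x\rangle^+_{\Lambda^+,0}\prod_{x\in I}\tanh(\eps\beta_c h_x)$. Two features of this expansion drive the argument. First, $\tanh(\eps\beta_c h_x)$ is symmetric and independent over $x$, so $\EE\mathsf Z^+=1$. Second, the chaos of order $k$ has second moment bounded by
\[
\vartheta^{2k}\sum_{|I|=k}\langle\sigma_I\rangle^2_{\Lambda^+,0},
\]
with $\vartheta^2\asymp N^{-7/4}$.

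Since $\EE\mathsf Z^{\pm}=1$, the bound \eqref{eq: partition function ratio expectation bound in sle} is really a statement about $\EE\max\{\mathsf Z^+,\mathsf Z^-\}$. This is at most $\EE\mathsf Z^++\EE\mathsf Z^-=2$, which can be absorbed since the right-hand side is at least $1$ after enlarging $c_1$; one can also write $\max\le 1+|\mathsf Z^+-1|+|\mathsf Z^--1|$ and use Cauchy--Schwarz on the variance. For the variance, apply the discrete analogue of Lemma \ref{lemma-bound-spin-correlations}:
\[
\langle\sigma_I\rangle^+_{\Lambda^+}\le C^k\prod_j \bigl(L(x_j)\wedge \dist(x_j,\partial\Lambda^+)\bigr)^{-1/8}.
\]
Here $\dist(x,\partial\Lambda^+)\ge\dist(x,\gamma\cup\partial\Lambda_N)$ up to constants. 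Summing as in Lemma \ref{lemma-sum-prod-L-x-j-delta} then gives an order-$k$ variance of at most $C^k k^{k/4}\bigl(N^{-7/4}S(\gamma)+N^{-7/4}\sum_x L^{-1/4}\text{-terms}\bigr)^k/k!$. This is where $(S(\gamma)/N^{7/4})^{1+3\kappa}$ enters: the mixed $L$/boundary terms are handled by Hölder with exponent slightly above $1$, which is the source of the $\kappa$ loss. The variance is then bounded by $\exp(c(S/N^{7/4})^{1+3\kappa})$.

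The tail bounds \eqref{eq: partition function ratio rare event bound in sle} and \eqref{eq: partition function ratio rare event bound in sle whole region} are the real content. For the \emph{upper} tail of $\log\mathsf Z^+$, use hypercontractivity. The variables $\eta_x$ are Gaussian-like with $\tanh$ bounded, so the order-$k$ chaos has $L^q$ norm at most $(q-1)^{k/2}$ times its $L^2$ norm. Choosing $q\asymp\iota^{-1/4}(S/N^{7/4})^{\cdot}$ and applying Markov's inequality to $\mathsf Z^q$ gives a stretched-exponential tail. An alternative is Gaussian concentration for the Lipschitz-in-$h$ functional $\log\mathsf Z^+$. Its gradient is $\partial_{h_x}\log\mathsf Z^+=\eps\beta_c\bigl(\langle\sigma_x\rangle^{+,\eps h}_{\Lambda^+}-\tanh(\eps\beta_c h_x)\bigr)$, so $|\nabla\log\mathsf Z^+|^2\lesssim N^{-7/4}\sum_x\langle\sigma_x\rangle_{\eps h}^2$. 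On the typical event this is controlled by $S(\gamma)/N^{7/4}$ via the magnetization bound $\langle\sigma_x\rangle\lesssim\dist^{-1/8}$, which follows from comparing with $+$ boundary conditions and a field-truncated bound. I would try concentration first. The sole difficulty is that the gradient bound must hold uniformly in $h$; it does for the plus boundary, by GKS/FKG monotonicity after truncating $|h_x|\le\iota^{-1/8}\log N$ where needed.

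The \textbf{main obstacle} is the \emph{lower} tail, i.e.\ preventing $\mathsf Z^+$ from being near $0$. Here I would follow the strategy behind \cite[Corollary~C.2]{continuum-2d-RFIM}, which gives negative moments of the partition function. The route is Gaussian concentration combined with a lower bound on the median. Paley--Zygmund, using $\EE\mathsf Z^+=1$ and the second-moment bound, gives $\PP(\mathsf Z^+\ge1/2)\ge c\exp(-c(S/N^{7/4})^{1+3\kappa})$. Concentration of the $\log\mathsf Z^+$ functional, with Lipschitz constant squared $\lesssim (S/N^{7/4})$ as above, then transfers this to $\PP(\log\mathsf Z^+\le -t)\le\exp(-(t-c(S/N^{7/4})^{1+3\kappa})^2/(C S/N^{7/4}))$. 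Taking $t=c_1(S/N^{7/4})^{1+3\kappa}\iota^{-2}$ yields a bound much stronger than claimed, after tidying exponents; the stated $\iota^{-1/4}$ and $\tfrac{21\kappa}{32}$ exponents leave ample room for the truncation losses. The $\mathsf Z^-$ case is symmetric, and $\mathsf Z$ on $\Lambda_N$ with Dobrushin boundary conditions is the same argument with $S$ replaced by $\sum_x\dist(x,\partial\Lambda_N)^{-1/4}\asymp N^{7/4}$, using Lemma \ref{lemma_EEpm_EEplus} to dominate $\pm$ correlations by $+$ correlations.
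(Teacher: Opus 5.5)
Your easy parts are correct, and simpler than the paper's. Since $\mathsf Z^{\pm},\mathsf Z>0$ and $\EE\mathsf Z^{\pm}=\EE\mathsf Z=1$, you get $\EE\max\{\mathsf Z^+,\mathsf Z^-\}\le 2$. That already gives \eqref{eq: partition function ratio expectation bound in sle}, whereas the paper only gets $2^n$. The upper tails need no hypercontractivity or concentration: Markov gives $\PP(\log\mathsf Z^+\ge u)\le e^{-u}$, and $S(\gamma)\ge N^{7/4}$ does the rest. The gap is in the lower tail, which you correctly identify as the real content.

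\textbf{The uniform Lipschitz bound is false.} Gaussian concentration of $\log\mathsf Z^+$ needs $\sup_h\eps^2\sum_x\langle\sigma_x\rangle^2_{\Lambda^+,\eps h}\lesssim S(\gamma)/N^{7/4}$, at least over a convex set of fields of high probability. This fails, and truncating $|h_x|\le K$ with monotonicity does not help. By FKG the supremum is attained at $h\equiv K$, i.e.\ a uniform field $KN^{-7/8}$, far outside the critical window $N^{-15/8}$. There the bulk magnetization is of order $(KN^{-7/8})^{1/15}\gg N^{-1/8}$. Hence $\eps^2\sum_x\langle\sigma_x\rangle^2\gtrsim (KN)^{2/15}\to\infty$ (for $\Lambda^+$ of area of order $N^2$). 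The random field keeps the magnetization at zero-field size only with high probability, and proving that is exactly the nontrivial input. The convex-function concentration inequality would only need $|\nabla\log\mathsf Z^+|\le L$ on a set $B\subset\{\mathsf Z^+\ge 1/2\}$. But then you must show $\PP(B)\gtrsim\PP(\mathsf Z^+\ge 1/2)$, which you do not address.

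\textbf{The Paley--Zygmund input is too weak.} Write $t=S(\gamma)/N^{7/4}$. The order-$k$ bound you invoke, $(Ct)^k k^{k/4}/k!$ (equivalently Lemma~\ref{lem: upper-bound for the sum of squares of k point function} with $s=1/4$), sums to $\exp(Ct^{4/3})$, not $\exp(ct^{1+3\kappa})$; the claimed H\"older $\kappa$-loss is not derived. So Paley--Zygmund gives only $\log(1/\PP(\mathsf Z^+\ge1/2))\lesssim t^{4/3}$. Even granting $L^2\asymp t$, the median shift $L\sqrt{\log(1/\delta)}\asymp t^{7/6}$ overtakes the threshold $c_1t^{1+3\kappa}\iota^{-2}$ once $t$ exceeds roughly $\iota^{-12}$. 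The bound would therefore not be uniform in $\gamma$.

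\textbf{What the paper does instead.} It avoids global concentration entirely. It partitions $\lamn$ into $n\lesssim\iota^{-2}t^{1+3\kappa}$ boxes of size $M=\iota\mathsf r^{3\kappa}N$ adapted to $\gamma$ (Lemma~\ref{lem: partition procedure in sle}), so that $\eps M^{7/8}\ll1$, and switches the field on box by box. The key input is Lemma~\ref{lem: DHX input for specific rare event in sle}: the $k$-point functions in $\fB_i$ are bounded by $c_5^{|I|}F(\partial\fB_i^{\pm},I)$ for \emph{any} field already present outside the box. Hence, conditionally on $\mathcal F_{i-1}$, each increment $\sfa_i^{\pm}$ is a chaos in the fresh field with deterministically bounded coefficients. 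This gives $|\sfa_i^{\pm}|\le 1/2$ outside probability $\exp(-c(\eps M^{7/8})^{-1/2})$, and on the intersection $|\log\mathsf Z^{\pm}|\le 2\sum_i|\sfa_i^{\pm}|\le n$. A uniform-in-field correlation bound of this kind is the ingredient your proposal is missing.
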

\begin{remark}\label{rmk: sle RN derivative critical}
    Proposition~\ref{prop: sle RN derivative critical} holds also for the FK-Ising model with the same proof. 
\end{remark}
For notation clarity, we write $\mathsf r=\mathsf r(\gamma)=\frac{N^{\frac{7}{8}}}{\sqrt{S(\gamma)}}$. Note that 
$$ S(\gamma)=\sum_{x\in\Lambda_N}\mathrm{dist}(x,\gamma\cup \partial\lamn)^{-\frac{1}{4}}\geq\sum_{x\in\Lambda_N}\mathrm{dist}(x,\partial\lamn)^{-\frac{1}{4}}\geq \sum_{L=1}^N L^{-\frac{1}{4}}\cdot 8(N-L-1)\geq N^{\frac{7}{4}},$$
therefore we get that $\mathsf r\le 1.$ As hinted in \cite[Lemma 3.5]{DHX23}, \(\mathsf Z\) is close to \(1\) when
\(\eps \ll N^{-7/8}\). Under the assumption \(\eps=N^{-7/8}\), we
therefore perform a similar expansion after partitioning $\lamn$ into
boxes of side length at most \(M\), where \(\eps \ll M^{-7/8}\). We begin
by defining the partition procedure.

\begin{lemma}\label{lem: partition procedure in sle}
There exist a constant $c_2>0$ and a  partition $\fB_1,\cdots,\fB_n$ of $\lamn$ such that the following holds:
\begin{enumerate}
    \item For every \(1\le i\le n\), we have
\begin{equation}
\sum_{x\in \fB_i}\dist(x,\gamma\cup \partial\fB_i)^{-1/4}
\le c_2M^{7/4}.
\label{eq: property for the partition}
\end{equation}
\item The number of boxes satisfies \begin{equation}
n\le \frac{c_2}{\iota^2 \mathsf r^{2+\frac{21}{4}\kappa}}.
\label{eq: partition procedure box number bound}
\end{equation}
\end{enumerate}  
\end{lemma}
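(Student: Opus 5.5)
The plan is to build the partition by a stopping-time dyadic (quadtree) decomposition of $\lamn$, with $M$ the scale fixed in the surrounding discussion, chosen so that $\eps\ll M^{-7/8}$ and, up to the $\kappa$-slack, $(N/M)^2\lesssim \iota^{-2}\mathsf r^{-2}$ and $S(\gamma)/M^{7/4}\lesssim\iota^{-2}\mathsf r^{-2}$. For a box $B$ write $m(B):=\sum_{x\in B}\dist(x,\gamma\cup\partial B)^{-1/4}$. I start from $\lamn$ and apply two rules. \emph{Rule 1:} every box of side length $>M$ is split into four congruent sub-boxes (up to integer rounding). \emph{Rule 2:} a box of side length $\ell\le M$ is split into four if $m(B)>c_2M^{7/4}$, and is kept as a cell of the partition otherwise. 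The recursion terminates at the latest at side length $\ell_0\asymp M^{7/8}$. Indeed, with the convention $\dist\ge1$, any box of side $\ell$ has $m(B)\le(2\ell+1)^2$, which is at most $c_2M^{7/4}$ once $\ell\lesssim M^{7/8}$. So property \eqref{eq: property for the partition} holds for every cell by construction, and the whole content of the lemma is the counting bound \eqref{eq: partition procedure box number bound}.

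For the count, boxes produced by Rule 1 contribute at most $O((N/M)^2)$ cells, which is acceptable by the choice of $M$. The cells produced by Rule 2 are children of boxes split under Rule 2, so it suffices to count those boxes, level by level. Fix a dyadic level with side length $\ell\le M$. The boxes at that level are disjoint, so I compare $m(B)$ with the contribution of $B$ to $S(\gamma)$. For $x\in B$ we have $\dist(x,\gamma\cup\partial B)\ge\min\{\dist(x,\gamma\cup\partial\lamn),\dist(x,\partial B)\}$. Hence
\begin{equation*}
m(B)\le\sum_{x\in B}\dist(x,\gamma\cup\partial\lamn)^{-1/4}+\sum_{x\in B}\dist(x,\partial B)^{-1/4}\le\sum_{x\in B}\dist(x,\gamma\cup\partial\lamn)^{-1/4}+C\ell^{7/4},
\end{equation*}
where $C\ell^{7/4}\le CM^{7/4}$ is the same computation as the lower bound $S(\gamma)\ge N^{7/4}$ done before the lemma. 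Taking $c_2\ge 2C$, every box split under Rule 2 carries at least $\tfrac{c_2}{2}M^{7/4}$ of the mass of $S(\gamma)$. By disjointness, at most $2S(\gamma)/(c_2M^{7/4})$ boxes are split at each level.

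The number of Rule-2 levels is at most $\log_2(M/\ell_0)\lesssim\log M\lesssim\log N$. So the total count is $O\big((N/M)^2+\log N\cdot S(\gamma)/M^{7/4}\big)$. The step I expect to be the main obstacle is removing this logarithm so as to land exactly on $\iota^{-2}\mathsf r^{-2-\frac{21}{4}\kappa}$, uniformly in $N$. My first attempt is to avoid the level sum altogether by summing mass over all Rule-2 split boxes with weights. A point $x$ at distance $d$ from $\gamma$ can only produce a large contribution in boxes of side $\ell\gtrsim d$, so its total weight across levels is $\sum_{\ell\ge d}(\text{dyadic})\,\min(d,\ell)^{-1/4}$. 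This has to be compared with a slightly modified functional, namely $S$ with exponent $-\tfrac14(1+\kappa')$ or an extra factor $(\ell/M)^{\kappa}$ in the splitting threshold, controlled by $S(\gamma)^{1+O(\kappa)}N^{-O(\kappa)}$ via Hölder. This is where the extra power $\mathsf r^{-\frac{21}{4}\kappa}$ should come from: it turns the geometric-in-levels series into a convergent one at the cost of $(S/N^{7/4})^{O(\kappa)}$. If this fails, the fallback is to use the threshold $c_2M^{7/4}(M/\ell)^{-\kappa}$ in Rule 2, which makes the per-level counts decay geometrically in $\log(M/\ell)$. I would then recheck that \eqref{eq: property for the partition} still holds up to the constant, since $(M/\ell)^{-\kappa}\le1$.
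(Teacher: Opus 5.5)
The gap is exactly the step you flag, and it cannot be closed for the construction you propose. A four-way dyadic split with a fixed threshold genuinely loses a factor $\log M$, so no reweighting of the count can help.

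For example, let $\gamma$ snake through a dyadic square $R$ of side $s\approx\sqrt{2c_2}\,M^{7/8}$ so that $\dist(x,\gamma)\le 1$ for every $x\in R$. Then every dyadic box $B\supseteq R$ of side at most $M$ has $m(B)\ge|R|>c_2M^{7/4}$ and is split. This gives about $\frac18\log_2 M$ nested splits and $\gtrsim\log M$ cells around $R$. Now place one such square in each $M$-box, joined by a path at spacing $M$. This keeps $S(\gamma)/M^{7/4}=O((N/M)^2)$ but produces $\gtrsim (N/M)^2\log M$ cells, which is not bounded uniformly in $N$.

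Your fallback goes the wrong way. The threshold $c_2M^{7/4}(M/\ell)^{-\kappa}$ \emph{decreases} as $\ell$ decreases, so your per-level bound becomes $\lesssim S(\gamma)(M/\ell)^{\kappa}/M^{7/4}$, which grows down the levels. The opposite exponent would make the counts decay, but then cells can carry mass a positive power of $M$ above $M^{7/4}$, which violates item 1.

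Relatedly, the factor $\mathsf r^{-\frac{21}{4}\kappa}$ does not come from any such trick. It reflects the choice $M=\iota\mathsf r^{3\kappa}N$, which is needed later in the paper and which you should fix. With this choice, $(N/M)^2=\iota^{-2}\mathsf r^{-6\kappa}$ and $S(\gamma)/M^{7/4}=\iota^{-7/4}\mathsf r^{-2-\frac{21}{4}\kappa}$.

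The missing idea is to split with respect to the \emph{additive} mass $S_\gamma(\mathsf C):=\sum_{x\in\mathsf C}\dist(x,\gamma)^{-1/4}$, with no $\partial\mathsf C$ term, and to make every split mass-balanced. Start from the $M$-boxes. While a box has $S_\gamma\ge M^{7/4}$, cut it in two by sliding a line across its longer side one row at a time. A row has at most $M$ points, each contributing at most $1$, so you can stop where the two parts differ by at most $2M\le 2\kappa S_\gamma$. Both children then carry at least $(\frac12-\kappa)M^{7/4}$.

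Every final box that is not an original $M$-box is such a child. The final boxes are disjoint, and $\sum_{x\in\lamn}\dist(x,\gamma)^{-1/4}\le S(\gamma)$. Hence there are at most $S(\gamma)/((\frac12-\kappa)M^{7/4})$ of them, and no sum over levels ever appears.

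Item 1 then follows from three facts:
\begin{itemize}
\item the stopping rule $S_\gamma(\fB_i)<M^{7/4}$;
\item the deterministic bound $\sum_{x\in\fB_i}\dist(x,\partial\fB_i)^{-1/4}\le CM^{7/4}$, valid for any rectangle inside an $M$-box;
\item the pointwise inequality $\dist(x,\gamma\cup\partial\fB_i)^{-1/4}\le\dist(x,\gamma)^{-1/4}+\dist(x,\partial\fB_i)^{-1/4}$.
\end{itemize}
You already observed correctly that the $\partial B$ term costs only $O(M^{7/4})$. What you need in addition is to keep that term out of the splitting criterion and to use balanced binary cuts, so that the \emph{children}, not just the split parents, carry mass of order $M^{7/4}$.
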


\begin{proof}
 Set
$M=\iota \mathsf  r^{3\kappa}N$ and for a box $\fC$, write
$
S_\gamma(\fC):=\sum_{x\in\fC}\dist(x,\gamma)^{-1/4}.
$
We introduce the following partition procedure. 
    We first partite $\lamn$ into $M$-boxes. For each box $\fB$, we have two possibilities \begin{itemize}
        \item If $\sum_{x\in \fB}\dist(x,\gamma)^{-\frac{1}{4}}\ge M^{\frac{7}{4}}$, then we divide $\fB$ into two small boxes $\fC_1$ and $\fC_2$ such that \begin{equation}
            \max\Big\{S_\gamma(\fC_1),S_\gamma(\fC_2)\Big\}\le (\frac{1}{2}+\kappa)\cdot S_\gamma(\fB).\label{eq: definition of the partition procedure in sle}
        \end{equation}
        \item If $S_\gamma(\fB)< M^{\frac{7}{4}}$, then we turn to the next box.
    \end{itemize} We next explain how we achieve \eqref{eq: definition of the partition procedure in sle} in each step.
    Suppose that a current box $\fB$ satisfies
$
S_\gamma(\fB)\ge M^{7/4}.
$
Assume without loss of generality that
\[
\fB=[1,a]\times[1,b],
\qquad 1\le a\le b\le M.
\]
For \(0\le c\le b\), set
\[
f_1(c)=S_\gamma(\fC_1(c)),
\qquad
f_2(c)=S_\gamma(\fC_2(c)).
\]
When \(c\) increases by one, only one horizontal slice is moved from \(\fC_2\) to \(\fC_1\). Since \(\dist(\cdot,\gamma)\ge1\), the contribution of such a slice is at most \(M\). Hence
\[
|f_1(c+1)-f_1(c)|\le M,
\qquad
|f_2(c+1)-f_2(c)|\le M.
\]
As \(f_1(0)=0\) and \(f_2(b)=0\), the minimizing choice of \(d\) gives
\[
|f_1(d)-f_2(d)|\le 2M.
\]
Therefore,
\[
\max\{S_\gamma(\fC_1(d)),S_\gamma(\fC_2(d))\}
\le
\frac12S_\gamma(\fB)+M.
\]
Since \(S_\gamma(\fB)\ge M^{7/4}\), and \(M\) is sufficiently large, we have \(M\le \kappa S_\gamma(\fB)\). Thus
\begin{equation}
\max\{S_\gamma(\fC_1(d)),S_\gamma(\fC_2(d))\}
\le
\Big(\frac12+\kappa\Big)S_\gamma(\fB).
\label{eq: balanced subdivision}
\end{equation}
Similarly,
\begin{equation}
\min\{S_\gamma(\fC_1(d)),S_\gamma(\fC_2(d))\}
\ge
\Big(\frac12-\kappa\Big)M^{7/4}.
\label{eq: lower bound for child box}
\end{equation}

The procedure terminates after finitely many steps, since each subdivision strictly decreases the longer side length of the box being subdivided, and all side lengths are positive integers.

Let \(\mathfrak B\) be the collection of final boxes which are not original \(M\)-boxes. By \eqref{eq: lower bound for child box}, every box in \(\mathfrak B\) has \(S_\gamma\)-mass at least \((\frac12-\kappa)M^{7/4}\). Since the boxes are disjoint,
\[
|\mathfrak B|
\le
\frac{S(\gamma)}{(\frac12-\kappa)M^{7/4}}
\le
C_1\iota^{-7/4}\mathsf r^{-2-\frac{21}{4}\kappa}.
\]
The number of original \(M\)-boxes is at most
$
\Big(\frac NM\Big)^2
=
\iota^{-2}\mathsf r^{-6\kappa}.
$
Therefore,
\[
n
\le
|\mathfrak B|+\Big(\frac NM\Big)^2
\le
C_1\iota^{-7/4}\mathsf r^{-2-\frac{21}{4}\kappa}
+
\iota^{-2}\mathsf r^{-6\kappa}
\le
\frac{C_2}{\iota^2 \mathsf r^{2+\frac{21}{4}\kappa}}.
\]
It remains to prove \eqref{eq: property for the partition}. By the stopping rule, every final box \(\fB_i\) satisfies
$
S_\gamma(\fB_i)<M^{7/4}.
$
Moreover, since \(\fB_i\) is contained in an \(M\)-box,
\[
\sum_{x\in\fB_i}\dist(x,\partial\fB_i)^{-1/4}
\le
C_3M^{7/4}.
\]
Using
\[
\dist(x,\gamma\cup\partial\fB_i)^{-1/4}
\le
\dist(x,\gamma)^{-1/4}
+
\dist(x,\partial\fB_i)^{-1/4},
\]
we obtain
\[
\sum_{x\in\fB_i}
\dist(x,\gamma\cup\partial\fB_i)^{-1/4}
\le
(1+C_3)M^{7/4}.
\]
Choosing $c_2$ large enough proves \eqref{eq: property for the partition}.
\end{proof}

For notation convenience, we write $\fB_i^+=\fB_i\cap \Lambda^+$, $\fB_i^-=\fB_i\cap \Lambda^-$. 
We want to control $\mathsf Z,\mathsf Z^+$ and $\mathsf Z^-$ by introducing the external field incrementally, one box at a time.  To this end, we define $H^i$ by
\begin{equation*}
    H^i_u=\left\{\begin{aligned}
        h_u,~~~&u\in \cup_{j=1}^i \fB_j\\
        0,~~~&u\in \cup_{j=i+1}^{n} \fB_j
\end{aligned}\right.
\end{equation*} The next lemma shows that adding  a field to $H^i$ will not change the ratios too much. Let   \begin{equation}\label{eq: definition of ai in sle continuity}
    \begin{aligned}
        \sfa_i&=\frac{\cZ^{\pm}_{\eps H^{i}}(\lamn)}{\cZ^{\pm}_{\eps H^{i-1}}(\lamn)}\cdot \frac{1}{\prod_{x\in\fB_i}\cosh(\eps \beta_ch_x)}-1,~~
    \sfa_i^+=\frac{\cZ^{+}_{\eps H^{i}}(\Lambda^+)}{\cZ^{+}_{\eps H^{i-1}}(\Lambda^+)}\cdot \frac{1}{\prod_{x\in\fB_i^+}\cosh(\eps\beta_c h_x)}-1,\\ \mbox{and  }\sfa_i^-&=\frac{\cZ^-_{\eps H^{i}}(\Lambda^-)}{\cZ^{-}_{\eps H^{i-1}}(\Lambda^-)}\cdot \frac{1}{\prod_{x\in\fB_i^-}\cosh(\eps\beta_c h_x)}-1.
    \end{aligned}
\end{equation}
\begin{lemma}\label{lem: adding external field in sle continuity}
Let $\mathcal{F}_i$ be the
$\sigma$-algebra generated by $H^i$.
There exists a constant $c_3>0$  
such that for any $1\le i\le n$, we have \begin{equation}
    \PP\Big(\max\{|\sfa_i|,|\sfa_i^+|,|\sfa_i^-|\}\ge \frac{1}{2}\mid \mathcal{F}_{i-1}\Big)\le c_3^{-1}\exp\Big(-c_3\big(\eps M^{\frac{7}{8}}\big)^{-\frac{1}{2}}\Big).\label{eq: adding external field in sle continuity good external}
\end{equation}
In addition, we have \begin{equation}
    \EE\Big(\max\{|\sfa_i|,|\sfa_i^+|,|\sfa_i^-|\}\mid \mathcal{F}_{i-1}\Big)\le 1.\label{eq: adding external field in sle continuity expectation}
\end{equation}
\end{lemma}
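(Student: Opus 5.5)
We treat the three ratios the same way; we describe $\sfa_i$, and $\sfa_i^\pm$ are identical with $\Lambda^\pm$, $\fB_i^\pm$ and $\pm$ boundary conditions in place of $\lamn$, $\fB_i$ and Dobrushin boundary conditions. Expanding as in \eqref{eq: partition function expansion 1}, we write
\[
1+\sfa_i=\Big\langle \prod_{x\in\fB_i}\big[1+\sigma_x\tanh(\eps\beta_c h_x)\big]\Big\rangle_{\lamn,\eps H^{i-1}}^{\pm},
\]
a polynomial chaos in the independent, mean-zero variables $t_x:=\tanh(\eps\beta_c h_x)$, $x\in\fB_i$. These variables are independent of $\mathcal F_{i-1}$. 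The coefficients are the correlations $\langle\prod_{x\in I}\sigma_x\rangle$ under the measure carrying the already-revealed field $\eps H^{i-1}$. Hence
\[
\sfa_i=\sum_{\emptyset\ne I\subset \fB_i}\Big\langle\prod_{x\in I}\sigma_x\Big\rangle_{\lamn,\eps H^{i-1}}^{\pm}\prod_{x\in I}t_x ,
\]
and conditionally on $\mathcal F_{i-1}$ this has mean zero. Its conditional second moment is $\sum_{I\neq\emptyset}\langle\sigma_I\rangle^2\prod_{x\in I}\EE t_x^2$, with $\EE t_x^2\le \eps^2$.

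The key input is a bound on these correlations that is uniform in the boundary condition and in the field outside $\fB_i$. Correlations of the field-dependent measure restricted to $\fB_i$ are not monotone in general. We therefore plan to bound $|\langle\sigma_I\rangle|$ by the FK / random current expansion (equivalently the extended Ising model of Section~\ref{sec: extended ising}). In that expansion, each point of $I$ either connects to another point of $I$ or to a ``ghost''/boundary through an open cluster, and the latter costs at least a connection to distance $\dist(x,\gamma\cup\partial\fB_i)$. Here we use that $\gamma$ lies on the boundary of $\Lambda^\pm$ for the $\pm$ versions; for $\sfa_i$ we use $\partial\fB_i$ only. By the one-arm bound \eqref{eq: FK one arm event exponent}, RSW, and the quasi-multiplicativity of arms, we aim for the analogue of Lemma~\ref{lemma-bound-spin-correlations}:
\[
|\langle\sigma_I\rangle|\le C^{|I|}\prod_{x\in I}\big(L(x)\wedge\dist(x,\gamma\cup\partial\fB_i)\big)^{-1/8}.
\]
Plugging this in and summing as in Lemma~\ref{lemma-sum-prod-L-x-j-delta}, with the box-sum controlled by \eqref{eq: property for the partition}, gives a variance bound $\sum_{k\ge1}C^k k^{k/4}(\eps^2M^{7/4})^k/k!\lesssim \eps^2M^{7/4}$. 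Since $\eps M^{7/8}=\iota \mathsf r^{3\kappa}\le\iota$ is small, Jensen's inequality gives \eqref{eq: adding external field in sle continuity expectation}.

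For \eqref{eq: adding external field in sle continuity good external} we use hypercontractivity of the Gaussian polynomial chaos, which applies after linearising $\tanh$, or directly to Gaussian functionals with bounded chaos tails. This yields $\|\sfa_i^{(k)}\|_{q}\le (q-1)^{k/2}\|\sfa_i^{(k)}\|_2$ for each chaos level $k$. Summing over $k$ and optimising $q\asymp(\eps M^{7/8})^{-1/2}$ gives $\PP(|\sfa_i|\ge 1/2\mid\mathcal F_{i-1})\le C\exp(-c(\eps M^{7/8})^{-1/2})$; the exponent $1/2$ leaves room for the factorial growth $k^{k/4}$. Alternatively, Gaussian concentration for the Lipschitz function $\log(1+\sfa_i)$ could be used, with gradient bounded by $\eps\big(\sum_x\langle\sigma_x\rangle^2\big)^{1/2}$.

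The main obstacle is the uniform correlation bound. With a nonzero, sign-indefinite external field $\eps H^{i-1}$ outside $\fB_i$, neither Griffiths' inequality nor a comparison with plus boundary conditions (Lemma~\ref{lemma_EEpm_EEplus}) is available directly. We would first try Ginibre-type or random-current domination in the extended model. There, fixing the field outside and conditioning on the configuration outside $\fB_i$ via DMP reduces the problem to a zero-field Ising model on $\fB_i$ with arbitrary boundary condition, plus the corrections coming from the factors $\tanh$ inside $\fB_i$. The bound $|\langle\sigma_I\rangle^{\xi}_{\fB_i}|\le\langle\sigma_I\rangle^{+}_{\fB_i}$-type estimates then follow from the FK representation, since a boundary-driven spin must be connected to the boundary.
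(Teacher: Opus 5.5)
Your proof is correct. At its two nontrivial steps it takes a different route from the paper, and the steps that remain vague can be filled in.

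\emph{Coefficient bound.} The paper quotes Lemma~\ref{lem: DHX input for specific rare event in sle}, that is, \cite[Lemma~4.16]{DHX23}, whose mechanism is the cancellation of local (excellent) FK clusters in the extended Ising model. You instead use that $\eps H^{i-1}$ vanishes on $\fB_i$ (resp.\ $\fB_i^\pm$). By the domain Markov property, each coefficient is then an average of zero-field correlations $\langle\sigma_I\rangle^{\xi}$ on the box, with a random spin boundary condition $\xi$. The step you leave terse, $|\langle\sigma_I\rangle^{\xi}|\le\langle\sigma_I\rangle^{+}$, does hold. The Edwards--Sokal edge marginal under $\xi$ is the wired FK measure conditioned on the decreasing event that no $\xi$-plus boundary vertex is connected to a $\xi$-minus one. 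Meanwhile, the event that every cluster meeting $I$ an odd number of times reaches the boundary is increasing, and under the wired measure its probability equals $\langle\sigma_I\rangle^{+}$. The one-arm bound in disjoint boxes then gives $C^{|I|}F(\partial\fB_i^\pm,I)$. This is more elementary than the paper's route and fully sufficient here. The paper's machinery only becomes indispensable later, for $\langle\sigma^I\mid\mathcal E_\tau\rangle$, where conditioning on the non-monotone events $\mathcal R,\mathcal S$ destroys the comparison with $+$ boundary conditions.

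\emph{Tail bound.} The paper applies \cite[Lemma~B.1]{DHX23} level by level, with thresholds $\tfrac12(\eps M^{7/8})^{k/2}$ and a union bound over $k$. Your hypercontractivity argument is self-contained and even gives a better exponent. However, $\sfa_i^{(k)}$ is not a Gaussian chaos, because $t_x=\tanh(\eps\beta_c h_x)$ is nonlinear in $h_x$. A clean version goes as follows.
\begin{itemize}
\item Write $t_x=\epsilon_x|t_x|$ with independent signs $\epsilon_x$.
\item Apply Bonami's inequality conditionally on $(|t_x|)_x$.
\item Use Minkowski's inequality together with $\|t_x^2\|_{q/2}\le Cq\,\EE t_x^2$.
\end{itemize}
This yields $\|\sfa_i^{(k)}\|_q\le(Cq)^k\|\sfa_i^{(k)}\|_2$, which is enough.

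\emph{Alternative via concentration.} Gaussian concentration for $\log(1+\sfa_i)$ does not work as stated. Its gradient involves one-point functions under the measure with the field inside $\fB_i$ switched on, together with $\tanh(\eps\beta_c h_x)$, and is not uniformly bounded.

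\emph{Minor slips.}
\begin{itemize}
\item The coefficients contain no $\tanh$ factors from inside $\fB_i$.
\item $\eps M^{7/8}=(\iota\mathsf r^{3\kappa})^{7/8}$, not $\iota\mathsf r^{3\kappa}$.
\item The discrete summation tool is Lemma~\ref{lem: upper-bound for the sum of squares of k point function}, not Lemma~\ref{lemma-sum-prod-L-x-j-delta}.
\end{itemize}
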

Before proving Lemma~\ref{lem: adding external field in sle continuity}, we start with a Lemma similar to \cite[Lemma 3.5]{DHX23}. 
\begin{lemma}
    \label{lem: upper-bound for the sum of squares of k point function}
    Recall the convention that $\dist(A,B)=1$ if $A\cap B\neq\emptyset$.
    For any subset $I,S \subset\Lambda_N$, let \begin{equation}\label{eq: definition of F for k point function}
        F(S, I)=\prod_{x\in I}\frac{1}{\big[\dist(x,S\cup (I\setminus\{x\}))\big]^{\frac{1}{{8}}}}.
    \end{equation}
For any \(s\in(0,1)\), there exists a constant \(c_4=c_4(s)>0\) such that the following holds. Let \(t>0\), and let \(S,\Lambda\subset\Lambda_N\) satisfy
$
\partial\Lambda\subset S\subset\Lambda,
$
and assume moreover that
\begin{equation}\label{eq: boundary dimension assumption}
\sum_{x\in\Lambda}\frac{1}{\dist(x,S)^s}
\le
tN^{2-s}.
\end{equation}
Then
\begin{align}
\sum_{x_i\in\Lambda,\,1\le i\le k}
F(S,\{x_1,x_2,\cdots,x_k\})^{8s}
\le
(c_4t)^kN^{(2-s)k}(k!)^s.
\label{eq: upper-bound for the sum of squares of k point function 3}
\end{align}
    
     As a consequence, we have
    \begin{align}
        \sum_{\substack{I\subset \Lambda,|I|=k} } F(S, I)^{8s}&\le (c_4t)^{k}N^{(2-s)k}\cdot(k!)^{-1+s}\label{eq: upper-bound for the sum of squares of k point function2}.
    \end{align} 
\end{lemma}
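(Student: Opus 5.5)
We need to bound $\sum_{x_1,\dots,x_k\in\Lambda}\prod_j \dist(x_j,S\cup\{x_i:i\ne j\})^{-s}$. The plan is to expose the points one at a time in a greedy, tree-like order and bound each new point's contribution by a one-point sum, in the spirit of \cite[Lemma~3.5]{DHX23} and of the arguments behind Lemma~\ref{lemma-sum-prod-L-x-j-delta}.

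The basic reduction is as follows. For each $j$, let $d_j:=\dist(x_j,S\cup\{x_i:i\neq j\})$, attained either at $S$ or at some nearest point $x_{\pi(j)}$. The map $j\mapsto\pi(j)$ (with $\pi(j)=S$ when $S$ is nearest) defines a functional graph on $\{1,\dots,k\}\cup\{S\}$. Every cycle not through $S$ is a 2-cycle of mutual nearest neighbours, because distances decrease strictly along longer cycles (ties are broken by index).

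Then I sum over the graph structure. Fix $\pi$ and sum the points from the leaves of the forest upward (equivalently, in reverse of a root-to-leaf order), so that when $x_j$ is summed its parent point is already fixed. For a non-root $j$, use $d_j\le\dist(x_j,x_{\pi(j)})$ together with $d_j\le \dist(x_j,S)$. Since $d_j^{-s}$ is the larger of the two inverse powers, replace it by $\dist(x_j,x_{\pi(j)})^{-s}$ on the event that $\pi(j)$ is a point. Then
\[
\sum_{x_j\in\Lambda}\dist(x_j,x_{\pi(j)})^{-s}\le cN^{2-s},
\]
uniformly in $x_{\pi(j)}$, because the sum over the plane of $r^{-s}$ at lattice distance $r\le 2N$ is $O(N^{2-s})$ for $s<2$. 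For roots with $\pi(j)=S$, assumption~\eqref{eq: boundary dimension assumption} gives a factor $tN^{2-s}$. A 2-cycle contributes one root-type factor of $cN^{2-s}$ and one factor of $\sum_{x}\dist(x,y)^{-2s}$, with the exponent doubled. Here I must not double the exponent. Instead I use that each of the two points has exponent $s$ at the common distance $r$, so the pair gives $r^{-2s}$. Bounding this crudely would lose a factor, so I would rather assign the weight $r^{-s}$ to one point and bound the other point's $r^{-s}\le \dist(\text{other},S)^{-s}$ when $S$ is closer. Otherwise the pair is effectively a single root, which needs care. A cleaner route is to handle a mutual pair $(a,b)$ by $\dist(a,b)^{-2s}\le \dist(a,b)^{-s}\dist(b,S\cup\{\text{rest}\})^{-s}$, which holds since $\dist(a,b)\le \dist(b,\cdot)$ for any other point by mutuality. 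Then $b$ gets re-rooted to its second-nearest neighbour or to $S$, making the structure a genuine forest. Since $t\ge$ const is harmless (absorb into $c_4$, noting $S\supset\partial\Lambda$ forces $t\gtrsim 1$), every factor is at most $c\,t N^{2-s}$.

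The factor $(k!)^s$ is the main obstacle. The naive count of functional graphs is $k^k\approx k!e^k$, which is too large; we need only $(k!)^s$. The fix, following \cite{MourratIsing,Junnila}, is to use the scale structure. Group points by dyadic scale $2^m\le d_j<2^{m+1}$. A point at scale $2^m$ has its parent within distance $2^{m+1}$, and at most $O(1)$ points of scale $\ge 2^m$ can be children of the same parent (a packing argument, since such children are mutually $\ge 2^m$ apart). Fix the dyadic scales $m_j$, which cost an exponential factor after summation because $\sum_m 2^{m(2-s)}\asymp N^{2-s}$ concentrates on the top scales. A point at scale $2^m$ then chooses its parent among points whose $2^{m+1}$-neighbourhood contains it. Summing $x_j$ over an annulus of radius $2^m$ gives $2^{m(2-s)}$ instead of $N^{2-s}$. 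When $\ell$ points share a scale $\approx N/\ell^{1/2}$, the combinatorial factor $\ell!$ from their parent choices is compensated by $(2^m/N)^{(2-s)\ell}\approx \ell^{-(2-s)\ell/2}$. Optimizing gives the net $(k!)^{s}$ up to $C^k$. I expect this bookkeeping to be the hardest step. I would follow the proof of \cite[Lemma~3.10]{Junnila} closely, replacing the exponent there by $s$ and the bulk one-point bound by \eqref{eq: boundary dimension assumption}.

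Finally, \eqref{eq: upper-bound for the sum of squares of k point function2} follows immediately. Each unordered set $I$ with $|I|=k$ of distinct points appears $k!$ times among ordered tuples, and the summand is symmetric, so dividing \eqref{eq: upper-bound for the sum of squares of k point function 3} by $k!$ gives $(c_4t)^kN^{(2-s)k}(k!)^{s-1}$.
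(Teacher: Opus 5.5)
The main gap is the factor $(k!)^s$. That factor is essentially the whole content of the lemma beyond the one-point bound, and your proposal only gestures at it. Once you fix a nearest-neighbour graph $\pi$ and sum from the leaves upward, you have discarded the constraint that $x_{\pi(j)}$ really is the nearest point. So each $\pi$ costs $(CtN^{2-s})^k$, and there are $k!\,e^{O(k)}$ such graphs.

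The dyadic repair does not restore that constraint. Bounded in-degree does hold, though by an angular rather than a packing argument, but it does not lower the count: labelled chains alone already number $k!$. The trade-off between $\ell!$ parent choices and $(2^m/N)^{(2-s)\ell}$ is computed only for configurations living at a single scale. To complete your route you would need to bound, uniformly over scale profiles, the number of graphs compatible with the packing constraint $\#\{j:d_j\ge r\}\lesssim|\Lambda|r^{-2}$, weighted by $\prod_j 2^{m_j(2-s)}$. That is essentially the statement to be proved. The paper invokes \cite{Junnila} only for the bulk kernel of Lemma~\ref{lemma-sum-prod-L-x-j-delta}, not for the boundary-weighted sum here.

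There are also two smaller errors.
\begin{itemize}
\item For a mutual pair $(a,b)$, mutuality gives $\dist(a,b)^{-s}\ge\dist(b,S\cup\mathrm{rest})^{-s}$. So your inequality $\dist(a,b)^{-2s}\le\dist(a,b)^{-s}\dist(b,S\cup\mathrm{rest})^{-s}$ is reversed. You do not need it: $\sum_b\dist(a,b)^{-2s}\le C_sN^{2-2s}$ for $s<1$, so doubling the exponent is harmless.
\item $\partial\Lambda\subset S$ does not force $t\gtrsim1$. A box $\Lambda$ of side $m\ll N$ satisfies \eqref{eq: boundary dimension assumption} with $t\asymp(m/N)^{2-s}$. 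This does not matter in the paper's applications, where $t$ is a fixed constant, but it does not follow from the hypotheses.
\end{itemize}

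The paper avoids graph enumeration altogether and inducts on $k$. For fixed $J=\{x_1,\dots,x_j\}$ it splits $\Lambda\setminus J$ into the cell $\Omega_0$ of points whose nearest element of $J\cup S$ lies in $S$, and the cells $\Omega_i$ of points whose nearest element is $x_i$.
\begin{itemize}
\item On $\Omega_0$, adding $x$ multiplies $F^{8s}$ by at most $C\dist(x,S)^{-s}$, which sums to $CtN^{2-s}$.
\item On $\Omega_i$, the worst case is the doubled singularity $\dist(x,x_i)^{-2s}d_i^{s}$, with $d_i=\dist(x_i,S\cup(J\setminus\{x_i\}))$. This is summable because $s<1$ and gives $CN^s|\Omega_i|^{1-s}$.
\end{itemize}
H\"older's inequality then yields $\sum_{i\le j}|\Omega_i|^{1-s}\le j^s|\Lambda|^{1-s}$. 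So the $(j+1)$-st point costs a factor $N^{2-s}(C_1t+C_2j^s)$, and the product over $j$ produces $(k!)^s$.

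The point to which a new point is charged is thus dictated by its position rather than chosen combinatorially. Concavity of $A\mapsto A^{1-s}$ then turns $j$ possible owners into a factor $j^s$. This is the idea your argument is missing. Your last step, dividing \eqref{eq: upper-bound for the sum of squares of k point function 3} by $k!$ to obtain \eqref{eq: upper-bound for the sum of squares of k point function2}, is correct.
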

\begin{proof}
Equation \eqref{eq: upper-bound for the sum of squares of k point function2} follows immediately from \eqref{eq: upper-bound for the sum of squares of k point function 3}. Hence, it suffices to establish \eqref{eq: upper-bound for the sum of squares of k point function 3}. The argument is similar to those in \cite[Lemma 8.3]{CSZ16} and \cite[Lemma 3.2]{DHX23}.

We proceed by induction on \(k\). The case \(k=1\) follows directly from \eqref{eq: boundary dimension assumption}. Assume that \eqref{eq: upper-bound for the sum of squares of k point function 3} holds for \(k=j\); we now prove it for \(k=j+1\). Throughout the proof, we adopt the notation from \cite[Lemma 3.2]{DHX23}.

    Fix $J=\{x_1,x_2,\cdots,x_j\}$, let $$\Omega_0=\{y\in\Lambda\setminus J:\dist(y,\partial\Lambda)=\dist(y,J\cup S)\},$$
    $$\Omega_i=\{y\in(\Lambda\setminus J)\setminus(\cup_{r=1}^{i-1}\Omega_r) :\dist(y,x_i)=\dist(y,J\cup S)\}, \text{ for }i=1,2,\cdots,j.$$ Then $\Omega_0,\cdots,\Omega_j$ is a partition of $\Lambda\setminus J$. First we get from \cite[(3.8)]{DHX23} and \eqref{eq: boundary dimension assumption} that \begin{equation}\label{eq: induction from FS 1}
       \sum_{x\in \Omega_0}F(S, J\cup\{x\})^{8s}\le 2^{3/2} \sum_{x\in \Omega_0}\frac{F(S, J)^{8s}}{(\dist(x,S))^{s}}\le C_1tN^{2-s}F(S, J)^{8s}. 
    \end{equation} Furthermore, for $1\le i\le j$ and $x\in \Omega_i$, we get from \cite[(3.9)]{DHX23} that \begin{equation}\label{eq: k point function on i region}
        \sum_{x\in \Omega_i}F(S, J\cup\{x\})^{8s}\le  C_2 N^{s}\cdot |\Omega_i|^{1-s}F(S, J)^{8s}.
    \end{equation}
    With \eqref{eq: induction from FS 1} and \eqref{eq: k point function on i region}, the rest of the proof is the same as \cite[Lemma 3.2]{DHX23}. 
\end{proof}
The next lemma will be devoted to control the $k$-point function $\langle\sigma^I\rangle^+_{\Lambda^+,\eps H^{i-1}}$ (recalling that  $\sigma^I=\prod_{x\in I}\sigma_x$).
\begin{lemma}\label{lem: DHX input for specific rare event in sle}
    There exists a constant $c_5>0$ such that for any $1\le i\le n$, any external field $h\in\mbb R^{\Lambda_N}$ and any set $I\subset \fB_i$, we have  \begin{align}
       |\langle ~\sigma^I~\rangle_{\Lambda^+,\hat h^i}^{+}|&\le c_5^{|I|} F(\partial\fB_i^+,I),\label{eq: DHX input for specific rare event 2}
    \end{align}
    where $\hat h^i_x=h_x\cdot \mbf 1_{x\not\in\fB_i^+}$ is the external field vanishing on $\fB_i^+.$ The same results also hold for $\fB_i^-$ and $\fB_i.$
\end{lemma}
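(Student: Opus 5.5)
The plan is to reduce the bound to a field-free $k$-point function on a domain whose boundary carries $+$ boundary conditions, and then use the standard product bound for $k$-point functions at criticality. The key observation is that the field $\hat h^i$ vanishes on $\fB_i^+$, while everything outside $\fB_i^+$ acts, after conditioning, only through boundary values on $\partial\fB_i^+$ (together with $\gamma^+$, which already carries $+$ spins).

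The first step is to apply the domain Markov property. Conditioning on the spins in $\Lambda^+\setminus\fB_i^+$ gives
\[
\langle \sigma^I\rangle^+_{\Lambda^+,\hat h^i}=\langle\, \langle \sigma^I\rangle^{\xi}_{\fB_i^+,0}\,\rangle^+_{\Lambda^+,\hat h^i},
\]
where $\xi$ is the random boundary condition induced on the outer boundary of $\fB_i^+$. Because $\hat h^i\equiv0$ inside $\fB_i^+$, the inner measure is the critical Ising measure without external field. It therefore suffices to prove, uniformly in $\xi$, that
\[
|\langle\sigma^I\rangle^\xi_{\fB_i^+,0}|\le c_5^{|I|}F(\partial\fB_i^+,I).
\]

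The second step is to bound $|\langle\sigma^I\rangle^\xi|$ by $\langle\sigma^I\rangle^+$ for an arbitrary boundary condition $\xi$. This is the analogue of Lemma \ref{lemma_EEpm_EEplus}, and there are two routes. One is the random current or switching argument from \cite[Section~2]{papon2024interfacescalinglimitcritical}. The other is to represent an arbitrary boundary condition by a ghost-like coupling: write $\xi$ as boundary spins with fields $\pm\infty$, and use the GKS/Griffiths inequality in the form $|\langle\sigma_A\rangle_{J,h}|\le\langle\sigma_A\rangle_{J,|h|}$, which holds for ferromagnetic couplings because flipping gauges over boundary spins is not available in general, but the bound on absolute values by the all-$+$ measure holds via the Edwards–Sokal/FK representation. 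Concretely, I would go through the FK route: $\langle\sigma^I\rangle^\xi$ is bounded in absolute value by the wired FK probability that every cluster meeting $I$ either contains an even number of points of $I$ or touches the boundary. That probability equals $\langle\sigma^I\rangle^+$ by the same FK formula.

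The third step, which I expect to be the main obstacle, is to bound $\langle\sigma^I\rangle^+_{\fB_i^+,0}$ by $c_5^{|I|}\prod_{x\in I}\dist(x,\partial\fB_i^+\cup I\setminus\{x\})^{-1/8}$. This has the same content as Lemma \ref{lemma-bound-spin-correlations}, but it is needed for the irregular domain $\fB_i^+$, whose boundary includes part of $\gamma$. The plan here follows \cite[Lemma~3.1]{DHX23}. In the FK representation under wired boundary conditions, the event that every point of $I$ is connected either to another point of $I$ or to the boundary forces each $x\in I$ to be joined to $\partial\Lambda_{d_x/2}(x)$, where $d_x=\dist(x,\partial\fB_i^+\cup I\setminus\{x\})$. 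The boxes $\Lambda_{d_x/2}(x)$ are pairwise disjoint, so the spatial Markov property and CBC (using wired boundary conditions on each box as the worst case) factorize the probability. The one-arm bound \eqref{eq: FK one arm event exponent}, which holds uniformly over boundary conditions, then contributes a factor $c\,d_x^{-1/8}$ for each box. This gives $c^{|I|}F(\partial\fB_i^+,I)$.

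For $\fB_i^-$ the same argument applies with $-$ boundary conditions by symmetry. For $\fB_i$ itself under Dobrushin boundary conditions, the same argument applies, using Step 2, which covers arbitrary boundary conditions.
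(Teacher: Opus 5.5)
Your proof is correct, but it takes a different route from the paper. The paper does not prove this lemma itself. It cites \cite[Lemma~4.16]{DHX23} and presents Lemma~\ref{lem: DHX input for specific rare event}, proved in Section~\ref{sec: proof of DHX rare event} via extended-Ising configurations, cancellation of excellent local clusters, and one-arm/BK-type pivotal bounds, as a strengthening of it.

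Your argument has three steps. You use the DMP to reduce to a zero-field measure on $\fB_i^+$ with an arbitrary boundary condition $\xi$. You compare $\xi$ with $+$ through the Edwards--Sokal representation. You then factorize over disjoint boxes using \eqref{eq: FK one arm event exponent}. This is shorter and self-contained, and it works because every event involved is monotone.

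You should, however, state the comparison step explicitly. The edge marginal of $\mu^{\xi}_{\fB_i^+,0}$ is $\phi^{\w}(\cdot\mid D)$, where $D$ is the event that no open path joins boundary vertices of opposite $\xi$-sign, and $D$ is decreasing. The parity event $E_I$ (every open cluster avoiding the boundary meets $I$ in an even number of points) is increasing. Hence $|\langle\sigma^I\rangle^{\xi}|\le\phi^{\w}(E_I\mid D)\le\phi^{\w}(E_I)=\langle\sigma^I\rangle^{+}$ by FKG.

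The inequality $|\langle\sigma_A\rangle_{J,h}|\le\langle\sigma_A\rangle_{J,|h|}$ that you quote is true. It is not a GKS inequality, though, and the clause about gauges gives no reason for it; its proof is exactly this FK/FKG argument, so drop that detour.

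Also take boxes of radius $d_x/3$ rather than $d_x/2$, so that they are genuinely edge-disjoint. Wiring the union of their boundaries then yields a product of wired measures, which gives the factorization.

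What the paper's heavier framework buys is robustness under conditioning. Lemma~\ref{lem: DHX input for specific rare event} requires bounding $\langle\sigma^I\mid\mathcal{E}_\tau\rangle$, and $\mathcal{E}_\tau$ is neither increasing nor decreasing. There your FKG comparison fails, whereas the excellent-cluster cancellation needs no monotonicity. That is why the paper treats both lemmas within a single framework.
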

\begin{proof}
 The proof of Lemma \ref{lem: DHX input for specific rare event in sle} follows from \cite[Lemma 4.16]{DHX23}. In the sequel (see Lemma \ref{lem: DHX input for specific rare event}), we will establish a significantly stronger version that controls the spin average under certain events that are neither increasing nor decreasing. This extension goes beyond the results of \cite[Lemma 4.16]{DHX23} and requires a self-contained argument. Therefore, we omit the proof of Lemma \ref{lem: DHX input for specific rare event in sle} here.
\end{proof}
For notation clarity, let $\langle\cdot\rangle^+_{i-1}$ be the expectation operator with respect to $\mu_{\Lambda^+,\eps H^{i-1}}^+$.  

\begin{corollary}\label{cor: DHX input for specific rare event in sle}
There exists a constant $c_6>0$ such that 
    for any integers $1\le i\le n$ and $k\ge 1$, we have \begin{align*}
        \PP\Big(\big|\sum_{I\subset \fB_i^+,|I|=k}(\langle\sigma^I \rangle^+_{i-1}\prod_{x\in I}\tanh(\eps\beta_c h_x)\big|\ge (\eps M^{\frac{7}{8}})^{k/2}\Big)&\le c_6^{-1}\exp\Big(-c_6\sqrt{\eps^{-1}M^{-\frac{7}{8}}}\times(k!)^{\frac{3}{8k}}\Big).
    \end{align*}Furthermore, if $\iota<0.1$, then for any integer $r\ge 1$, we have \begin{align*}
        \PP\Big(\big|\sum_{k=r}^{\infty}\sum_{I\subset \fB_i^+,|I|=k}(\langle\sigma^I \rangle^+_{i-1}\prod_{x\in I}\tanh(\eps\beta_c h_x)\big|\ge 2(\eps M^{\frac{7}{8}})^{r/2}\Big)&\le c_6^{-1}\exp\Big(-c_6\sqrt{\eps^{-1}M^{-\frac{7}{8}}}\times(r!)^{\frac{3}{8r}}\Big).
    \end{align*} The same results hold for $\fB_i^-$ and $\fB_i^+$.
\end{corollary}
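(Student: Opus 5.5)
The plan is to condition on $\mathcal F_{i-1}$ and treat the sum as a polynomial chaos of degree $k$ in the independent variables $(h_x)_{x\in\fB_i^+}$. The key observation is that $\langle\sigma^I\rangle^+_{i-1}$ is $\mathcal F_{i-1}$-measurable, because the field $\eps H^{i-1}$ vanishes on $\fB_i$. So, conditionally on $\mathcal F_{i-1}$,
\[
Y_k:=\sum_{I\subset\fB_i^+,|I|=k}\langle\sigma^I\rangle^+_{i-1}\prod_{x\in I}\tanh(\eps\beta_c h_x)
\]
is a multilinear form with deterministic coefficients in the i.i.d. centered variables $\tanh(\eps\beta_c h_x)$. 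Each of these variables has variance at most $\eps^2$ and is sub-Gaussian at scale $\eps$.

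First I bound the coefficients. Since $\hat h^i$ vanishes on $\fB_i^+$, Lemma \ref{lem: DHX input for specific rare event in sle} gives $|\langle\sigma^I\rangle^+_{i-1}|\le c_5^{k}F(\partial\fB_i^+,I)$. I will use $\partial\fB_i^+\cup\gamma$ in place of $\partial\fB_i^+$ where needed, because $\gamma\cap\fB_i\subset\partial\fB_i^\pm$ up to adjacency.

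Second I apply Lemma \ref{lem: upper-bound for the sum of squares of k point function} with $s=1/4$, $\Lambda=\fB_i^+$, $S=\partial\fB_i^+$, and the box scale $M$ in place of $N$. Hypothesis \eqref{eq: boundary dimension assumption} with $t=c_2$ is exactly \eqref{eq: property for the partition}. This gives
\[
\sum_{|I|=k}F(S,I)^2\le (c_4c_2)^kM^{\frac74k}(k!)^{-3/4}.
\]
Hence $\EE[Y_k^2\mid\mathcal F_{i-1}]\le (C\eps^2M^{7/4})^k(k!)^{-3/4}$.

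Third I upgrade this second-moment bound to a tail bound by hypercontractivity for degree-$k$ chaoses in sub-Gaussian variables. One could also truncate $h$ and use Gaussian hypercontractivity for the linearized version. This gives
\[
\|Y_k\|_{q}\le (Cq)^{k/2}\|Y_k\|_2 .
\]
Chebyshev with the optimized $q$ gives
\[
\PP(|Y_k|\ge u\|Y_k\|_2\mid\mathcal F_{i-1})\le C\exp(-c\,u^{2/k}).
\]
Take $u=(\eps M^{7/8})^{k/2}/\|Y_k\|_2\ge (\eps M^{7/8})^{-k/2}C^{-k}(k!)^{3/8}$. Then $u^{2/k}\gtrsim(\eps M^{7/8})^{-1}(k!)^{3/(4k)}$, which is even better than the claimed $\sqrt{\eps^{-1}M^{-7/8}}(k!)^{3/(8k)}$. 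The square root can absorb the constants $C^k$ and allows a cruder moment choice. Since the conditional bound is uniform in $\mathcal F_{i-1}$, taking expectations gives the unconditional statement.

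For the tail sum over $k\ge r$, I use a union bound over $k\ge r$ at thresholds $(\eps M^{7/8})^{k/2}$. Note $\eps M^{7/8}=(\iota\mathsf r^{3\kappa})^{7/8}<0.2$ when $\iota<0.1$. Then $\sum_{k\ge r}(\eps M^{7/8})^{k/2}\le 2(\eps M^{7/8})^{r/2}$. The exponents $\sqrt{\eps^{-1}M^{-7/8}}(k!)^{3/(8k)}$ are increasing in $k$ and grow like $k^{3/8}$, so the probability series is dominated by a constant times its first term, after adjusting $c_6$.

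The main obstacle is the hypercontractive step. The variables $\tanh(\eps\beta_ch_x)$ are bounded but not Gaussian, and the $k$-dependence must come out as $(k!)^{3/(8k)}$ rather than be lost in $C^k$ factors. I would first try the standard Bonami-type bound for multilinear forms in symmetric hypercontractive variables, for which bounded symmetric variables are $(2,q)$-hypercontractive with constant $\sqrt{q-1}\cdot\|\cdot\|_\infty/\|\cdot\|_2$-type control. Here the ratio $\|\tanh(\eps\beta_c h)\|_{\psi_2}/\|\tanh(\eps\beta_c h)\|_2$ is bounded uniformly in $\eps$, so this should suffice.

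The symmetry and the measurability are fine for $\fB_i^-$ and for $\fB_i$ by the same lemma.
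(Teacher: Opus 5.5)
Your proposal is correct and is essentially the paper's argument. The paper likewise combines the coefficient bound of Lemma \ref{lem: DHX input for specific rare event in sle} with the sum-of-squares estimate from Lemma \ref{lem: upper-bound for the sum of squares of k point function} and \eqref{eq: property for the partition} (valid for $\fB_i^\pm$ since $\partial\fB_i^\pm\subset\gamma\cup\partial\fB_i$), then imports the chaos tail bound from \cite[Lemma B.1]{DHX23} where you derive it by hypercontractivity, and finishes with a union bound over $k\ge r$. One remark: generic moment-ratio hypercontractivity for symmetric variables only gives $\|Y_k\|_q\le (Cq)^k\|Y_k\|_2$; your sharper $(Cq)^{k/2}$ requires a coordinatewise contraction comparison with the Gaussian chaos $\sum_I\langle\sigma^I\rangle^+_{i-1}\prod_{x\in I}\eps\beta_c h_x$. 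As you anticipated, though, the cruder bound already yields exactly the stated exponent $\sqrt{\eps^{-1}M^{-7/8}}\,(k!)^{3/(8k)}$.
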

\begin{proof}
    The proof for $\fB_i$ follows directly from combining Lemma~\ref{lem: DHX input for specific rare event in sle} and \cite[Lemma B.1]{DHX23}, so we omit it here. Combining with \eqref{eq: property for the partition}, we extend the result to $\fB_i^-$ and $\fB_i^+.$
\end{proof}
We are now ready to prove Lemma~\ref{lem: adding external field in sle continuity}.
\begin{proof}[Proof of Lemma~\ref{lem: adding external field in sle continuity}]
We first prove the case for $\sfa_i^+$. 
    Similar to \eqref{eq: partition function expansion 1}, we apply chaos expansion to $\cZ^{+}_{\eps H^{i}}(\Lambda^+)$ with respect to $\cZ^{+}_{\eps H^{i-1}}(\Lambda^+)$ and get \begin{equation}
        \sfa_i^+=\sum_{k=1}^{\infty}\sum_{I\subset\fB_i^+,|I|=k}\langle\sigma^I\rangle^+_{i-1}\prod_{x\in I}\tanh(\eps\beta_c h_x).\label{eq: chaos expansion for ai in sle}
    \end{equation} 
    Combining Lemmas \ref{lem: upper-bound for the sum of squares of k point function} and \ref{lem: DHX input for specific rare event in sle} with \eqref{eq: property for the partition}, we get  that \begin{align}
        \sum_{|I|=k,I\subset\fB_i^+}\big(\langle\sigma^I\rangle^+_{i-1}\big)^2\le C_{1}^{k} \sum_{|I|=k,I\subset\fB_i^+}F(\partial\fB_i^+,I)^2\le C_{2}^{k}M^{\frac{7k}{4}}.\label{eq: second moment in adding external field}
    \end{align}Combined with \cite[Lemma B.1]{DHX23}, we get for any $k\ge 1$ that \begin{align}
        \PP\Big(\big|\sum_{|I|=k,I\subset\fB_i^+}\langle\sigma^I\rangle^+_{i-1}\prod_{x\in I}\tanh(\eps\beta_c h_x)\big|\ge \frac12(\eps M^{\frac{7}{8}})^{\frac{k}{2}} \Big)\le C_{3}^{-1}\exp(-C_{3} \big(\eps M^{\frac{7}{8}}\big)^{-\frac{1}{2}}\cdot (k!)^{\frac{3}{8k}}).\nonumber
    \end{align}Taking union bound over $1\le k\le |\fB_i^+|$ and noting that $\sum_{k=1}^{\infty}\frac12 (\eps M^{\frac{7}{8}})^{\frac{k}{2}}\le \frac{1}{2}$, we get that \begin{align}
        &\PP\Big(\big|\sum_{k=1}^{\infty}\sum_{|I|=k,I\subset\fB_i^+}\langle\sigma^I\rangle^+_{i-1}\prod_{x\in I}\tanh(\eps\beta_c h_x)\big|\ge \frac{1}{2} \Big)\nonumber\\\le~&\sum_{k=1}^{\infty} \PP\Big(\big|\sum_{|I|=k,I\subset\fB_i^+}\langle\sigma^I\rangle^+_{i-1}\prod_{x\in I}\tanh(\eps\beta_c h_x)\big|\ge \frac12 (\eps M^{\frac{7}{8}})^{\frac{k}{2}} \Big)\nonumber\\\le~&  \sum_{k=1}^{\infty}C_{3}^{-1}\exp\Big(-C_{3} \big(\eps M^{\frac{7}{8}}\big)^{-\frac{1}{2}}\cdot (k!)^{\frac{3}{8k}}\Big)\le C_{4}^{-1}\exp\Big(-C_{4}\big(\eps M^{\frac{7}{8}}\big)^{-\frac{1}{2}}\Big).\nonumber
    \end{align}Thus we finish the proof of \eqref{eq: adding external field in sle continuity good external} for $\sfa_i^+$.  Next, we turn to the proof of \eqref{eq: adding external field in sle continuity expectation}. By the Cauchy-Schwarz inequality and \eqref{eq: second moment in adding external field},  we get that \begin{align}
        &\EE \Big|\sum_{|I|=k,I\subset\fB_i^+}\langle\sigma^I\rangle^+_{i-1}\prod_{x\in I}\tanh(\eps\beta_c h_x)\Big|\le\sqrt{\EE \Big(\sum_{|I|=k,I\subset\fB_i^+}\langle\sigma^I\rangle^+_{i-1}\prod_{x\in I}\tanh(\eps\beta_c h_x)\Big)^2}\nonumber\\=&
\sqrt{\sum_{|I|=|J|=k,I,J\subset\fB_i^+}\langle\sigma^I\rangle^+_{i-1}\langle\sigma^J\rangle^+_{i-1}\prod_{x\in I\cap J}\EE \left(\tanh(\eps\beta_c h_x)\right)^2\prod_{x\in I\Delta J}\EE \tanh(\eps\beta_c h_x)}
        \nonumber\\ \le&\sqrt{\sum_{|I|=k,I\subset\fB_i^+}\left(\langle\sigma^I\rangle^+_{i-1}\right)^2\cdot \left(\eps^2\beta_c^2\right)^k}\stackrel{\eqref{eq: second moment in adding external field}}{\le} C_{5}\eps^{k}M^{\frac{7k}{8}},\nonumber
    \end{align}
   where the second inequality uses the facts that the expectation is not 0 only if $I\Delta J=\emptyset$ and $(\tanh u)^2\leq u^2$.
    Summing over $1\le k\le |\fB_i^+|$, we get that \begin{equation}
        \EE |\sfa_i^+|\le \sum_{k=1}^{\infty}
        \EE \Big|\sum_{|I|=k,I\subset\fB_i^+}\langle\sigma^I\rangle^+_{i-1}\prod_{x\in I}\tanh(\eps\beta_c h_x)\Big|\le \sum_{k=1}^{\infty} C_{5}\eps^{k}M^{\frac{7k}{8}}\le \frac{1}{3}\nonumber
    \end{equation}where we chose $\iota>0$ to be small enough in the last inequality.

    The proof for $\sfa_i^-$ is the same, so we omit further details. The proof for $\sfa_i$ follows from the special case $\gamma=\emptyset$.
\end{proof}
Now we are ready to prove Proposition~\ref{prop: sle RN derivative critical}.
\begin{proof}[Proof of Proposition~\ref{prop: sle RN derivative critical}]
Without loss of generality, we may assume $\iota>0$ is small enough.
We first prove \eqref{eq: partition function ratio expectation bound in sle}. Recalling the definition of $\sfa_i^+, \sfa_i^-$ from \eqref{eq: definition of ai in sle continuity} and
applying \eqref{eq: adding external field in sle continuity expectation} recursively, we get that \begin{equation}
   \EE(\max\{\mathsf Z^+,\mathsf Z^-\}) =\EE\Big(\max\{\prod_{i=1}^n(1+\sfa_i^+),\prod_{i=1}^n(1+\sfa_i^-)\}\Big)\le 2^{n}.\nonumber
\end{equation}
Combined with \eqref{eq: partition procedure box number bound}, we obtain
\[
2^n
\leq
\exp\big(c_2\ln 2\cdot \mathsf r^{-2-\frac{21}{4}\kappa}\cdot\iota^{-2}\big)
\leq
\exp\Big(c_2\ln 2\cdot
\Big(\frac{S(\gamma)}{N^{7/4}}\Big)^{1+3\kappa}
\cdot\iota^{-2}\Big),
\]
where the second inequality follows from the relation
$\mathsf r=\frac{N^{8/7}}{\sqrt{S(\gamma)}}\le 1.$
This completes the proof of
\eqref{eq: partition function ratio expectation bound in sle}. Next, we turn to the proof of \eqref{eq: partition function ratio rare event bound in sle}. 
Let $\cH_i$ denote the collection of external fields such that 
$$\max\{|\sfa_i|,|\sfa_i^+|,|\sfa_i^-|\}\le \frac{1}{2}.$$ In addition, let $\cH=\cap_{i=1}^n\cH_i$. Then by \eqref{eq: adding external field in sle continuity good external} we get that \begin{equation}
        \PP(\cH)\ge 1- n\cdot c_3^{-1}\exp\Big(-c_3\big(\eps M^{\frac{7}{8}}\big)^{-\frac{1}{2}}\Big)\ge 1- C_1^{-1}\exp\Big(-C_1\big(\frac{S(\gamma)}{N^{\frac{7}{4}}}\big)^{\frac{21\kappa}{32}}\iota^{-\frac{1}{4}}\Big)\label{eq: good external field given interface in sle continuity}
    \end{equation}
    where the last inequality follows from Lemma~\ref{lem: partition procedure in sle} and the fact that $M=\iota \mathsf r^{3\kappa}N,$ $\mathsf r=\frac{\sqrt{S(\gamma)}}{N^{\frac{7}{8}}}$.
    For any $h\in \cH$, since $|\log(1+a)|\leq 2|a|$ for all $|a|\leq \frac{1}{2},$
    we have \begin{align}
        &\max\{|\log(\mathsf Z)|,|\log(\mathsf Z^+)|,|\log(\mathsf Z^-)|\}\nonumber\\\le~&\max\{\sum_{i=1}^n |\log(1+\sfa_i)|,\sum_{i=1}^n|\log(1+\sfa_i^+)|,\sum_{i=1}^n\log|(1+\sfa_i^-)|\}\nonumber\\\le~&  2\sum_{i=1}^n\big(\max\{|\sfa_i|,|\sfa_i^+|,|\sfa_i^-|\}\big) \le n.\nonumber\label{eq: sle RN derivative convert to ai lower bound}
    \end{align} Combined with \eqref{eq: good external field given interface in sle continuity} and Lemma~\ref{lem: partition procedure in sle}, we complete the proof of \eqref{eq: partition function ratio rare event bound in sle}. The proof for \eqref{eq: partition function ratio rare event bound in sle whole region} follows from the special case $\gamma=\emptyset.$
\end{proof}
By Proposition~\ref{prop: sle RN derivative critical}, we can control $\frac{\mathsf Z^+\cdot\mathsf Z^-}{\mathsf Z}$ if $S(\gamma)$ is close to $N^\frac{7}{4}$. Thus we need to control the  probability that $S(\gamma)$ is much larger $N^\frac{7}{4}$ under $\mu^{\pm}_{\Lambda_N,0}$. 

\begin{lemma}\label{lem: large deviation for sle interface dimension}
For any $\kappa_1\in (0,0.1)$, there exists  a constant $c_7=c_7(\kappa_1)>0$ such that for any $\lambda>0$ we have \begin{equation}
        \mu^{\pm}_{\Lambda_N,0 }\Big(S(\gamma)\ge N^{\frac{7}{4}}\lambda\Big)\le c_7\exp(-c_7^{-1}\lambda^{4-\kappa_1}).\label{eq: large deviation for sle interface dimension}
    \end{equation}
\end{lemma}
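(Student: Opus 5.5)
We bound high moments of $S(\gamma)$ and then apply Markov's inequality. Write $S(\gamma)=S_0+S_1(\gamma)$, where $S_0:=\sum_{x\in\Lambda_N}\dist(x,\partial\Lambda_N)^{-1/4}\asymp N^{7/4}$ is deterministic and $S_1(\gamma)\le\sum_{x}\dist(x,\gamma)^{-1/4}\mathbb{I}_{\dist(x,\gamma)<\dist(x,\partial\Lambda_N)}$. Decompose dyadically in the distance to $\gamma$:
\[
S_1(\gamma)\le \sum_{l\ge 0} 2^{-l/4}\,\big|\{x\in\Lambda_N:\dist(x,\gamma)\le 2^{l+1},\ 2^{l}\le \dist(x,\partial\Lambda_N)\}\big| .
\]
The $k$-th moment then reduces to sums over $k$-tuples of the probability that $\gamma$ passes within distance $2^{l_j+1}$ of every $x_j$. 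For this we use the $k$-point estimate for the critical Dobrushin interface (Lemma \ref{lemma-k-pts-estimate-interface-discrete} in its lattice form):
\[
\mu^{\pm}_{\Lambda_N,0}\Big(\bigcap_j\{\dist(x_j,\gamma)\le r_j\}\Big)\le C^k\prod_j\Big(\frac{r_j}{L(x_j)\wedge d(x_j)}\Big)^{\frac58-\rho}.
\]
Its origin is the exponent $2-\tfrac74=\tfrac14$ degraded to $\tfrac58-\rho$ by the arm events, exactly as used in Claims \ref{claim-cvg-to-0-k-moment-V-gamma}--\ref{claim-bound-EE-k-moment-V-gamma}.

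Summing as in the proof of Claim \ref{claim-bound-EE-k-moment-V-gamma}, handling clustered points with the representative-box decomposition and boundary points with Lemma \ref{lemma-sum-dist-to-boundary-power-alpha}, the geometric factor $2^{-l/4}\cdot 2^{l(5/8-\rho)}$ is harmless once we use that the summation over $x$ in annuli gains $2^{2l}$. Since the sums are over a box of side $N$, they are controlled by the analogues of Lemma \ref{lemma-sum-prod-L-x-j-delta} and Lemma \ref{lem: upper-bound for the sum of squares of k point function}. The goal of this step is a moment bound
\[
\mu^{\pm}_{\Lambda_N,0}\big[S_1(\gamma)^k\big]\le C^k N^{7k/4}(k!)^{a}
\]
with exponent $a=\tfrac14+O(\rho)$. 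This is the crucial point: the combinatorial factor from Lemma \ref{lem: upper-bound for the sum of squares of k point function} with power $s$ is $(k!)^{s}$, and effectively $s=\tfrac14$ here, because the dimension of the dominant region yields weight $N^{(2-s)}$ with $2-s=\tfrac74$. Then
\[
\mu\big(S(\gamma)\ge\lambda N^{7/4}\big)\le \frac{C^k (k!)^{a}}{(\lambda-C)^k},
\]
and optimizing with $k\asymp\lambda^{1/a}/(eC)$ gives the tail $\exp(-c\lambda^{1/a})=\exp(-c\lambda^{4-\kappa_1})$ after choosing $\rho$ small in terms of $\kappa_1$.

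The main obstacle is obtaining the moment growth $(k!)^{1/4+O(\rho)}$ rather than the cruder $k^{(5/8-\rho)k}$ obtained in Claim \ref{claim-bound-EE-k-moment-V-gamma}, which only yields a $\lambda^{8/5}$ tail. To fix this, I would first try to refine the $k$-point interface estimate so that the exponent attached to $L(x_j)$ matches the interface co-dimension $\tfrac14$ (a $k$-point estimate with exponent $\tfrac14-\rho$, obtained by a sequential exploration and a one-arm-type bound at each point, using RSW and the spatial mixing for the conditioned interface). Then Lemma \ref{lem: upper-bound for the sum of squares of k point function} with $s=\tfrac14-\rho$ applies directly, with assumption \eqref{eq: boundary dimension assumption} supplied by the square geometry, and yields $(k!)^{1/4-\rho}$. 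The clustered and near-boundary tuples contribute only lower-order factors, as in the proof of Claim \ref{claim-cvg-to-0-k-moment-V-gamma}.
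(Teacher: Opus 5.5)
There is a genuine gap at exactly the step you flag as the main obstacle: the fix you propose there is wrong, so as written the argument does not close. The interface codimension is $\tfrac58$, the alternating two-arm exponent $2-\tfrac{11}{8}$. The $\tfrac14$ is only the exponent of the weight in $S(\gamma)$. A $k$-point bound with exponent $\tfrac14-\rho$ is therefore not a refinement but a \emph{weakening} of Lemma \ref{lemma-k-pts-estimate-interface-discrete} (for $r_j\le L(x_j)\wedge d(x_j)$), and a one-arm-type bound would be weaker still.

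More importantly, it breaks the normalization. With an exponent $s'<\tfrac14$, the dyadic sum $\sum_{l}2^{-l/4}(2^{l}/\rho_j)^{s'}$ is dominated by $l=0$, so each point receives weight $\rho_j^{-s'}$. Lemma \ref{lem: upper-bound for the sum of squares of k point function} with $s=\tfrac14-\rho$ then gives $N^{(7/4+\rho)k}(k!)^{1/4-\rho}$. After dividing by $N^{7k/4}$ a factor $N^{\rho k}$ remains, so already the first moment is not $O(N^{7/4})$ uniformly in $N$.

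The missing observation is that no new estimate is needed. Since $\tfrac58-\rho>\tfrac14$, the existing two-arm bound already yields the effective $s=\tfrac14$ you correctly identified. For distinct $x_1,\dots,x_k$ put $\rho_j:=L(x_j)\wedge d(x_j)$. For each $j$, either $\dist(x_j,\gamma\cup\partial\Lambda_N)\ge\rho_j$, and that factor is at most $\rho_j^{-1/4}$, or you decompose dyadically and apply Lemma \ref{lemma-k-pts-estimate-interface-discrete} to that subset $J$, using $L_J\ge L$. The sum $\sum_{2^l<\rho_j}2^{-l/4}(2^{l+1}/\rho_j)^{5/8-\rho}\le C\rho_j^{-1/4}$ is dominated by the \emph{top} shell. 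Hence $\mu^{\pm}_{\Lambda_N,0}\big[\prod_j\dist(x_j,\gamma\cup\partial\Lambda_N)^{-1/4}\big]\le C^k\prod_j\rho_j^{-1/4}$. Lemma \ref{lem: upper-bound for the sum of squares of k point function} with $s=\tfrac14$ and $S=\partial\Lambda_N$ then gives $C^kN^{7k/4}(k!)^{1/4}$.

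Coincident indices in $S(\gamma)^k$ are handled by grouping them, using $\dist\ge1$. A grouping into $j$ distinct points loses a factor $N^{7(k-j)/4}$, which beats the number of such groupings for $k\lesssim N$. Larger $k$ are never needed, because $S(\gamma)\le CN^2$.

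The analogy with Claim \ref{claim-bound-EE-k-moment-V-gamma} misled you. There the growth $(k!)^{5/8-\rho}$ appears only because $|V(\gamma_\delta)|$ counts points at the single scale $r=\delta$.

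For comparison, the paper keeps the weak $(r!)^{5/8-\kappa_2}$ growth but applies it to the box counts $T_k$ in $S(\gamma)\le 32N^{7/4}+\sum_k 2T_k(N/2^k)^{7/4}$. The trivial bound $T_k\le 4^k$ disposes of the scales $k\le k_0\approx 4\log_2\lambda$. For $k>k_0$, the tail $\exp(-c\lambda^{8/5}2^{(3-16\kappa_2)k/5})$ is boosted by the scale factor to $\exp(-c\lambda^{4-64\kappa_2/5})$.
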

We postpone the proof of Lemma~\ref{lem: large deviation for sle interface dimension} to Section~\ref{sect: ldp for sle interface without disorder}.
Now we are ready to prove the mutual absolute continuity of the interface under the critical and the near-critical RFIM measure, i.e., Theorem \ref{thm: sle continuity critical main}. We first show that $\mu^{\pm}_{\Lambda_N,\eps h}$ is absolute continuous to $\mu^{\pm}_{\Lambda_N,0 }$.
\begin{definition} 
Let $\cA_0(\iota)$ denote the collection of pairs $(h,\gamma)$ such that \begin{equation}
        e^{-\iota^{-4}}\le \frac{\mu^{\pm}_{\Lambda_N,\eps h}(\gamma)} {\mu^{\pm}_{\Lambda_N,0}(\gamma)}\le e^{\iota^{-4}}.\nonumber
    \end{equation}Let $\cH_0(\iota)$ denote the collection of external fields $h$ such that \begin{equation}
        \mu^{\pm}_{\Lambda_N,0}(\cA_0(\iota)\mid h):=\mu^{\pm}_{\Lambda_N,0}(\left\{\gamma:(h,\gamma)\in\cA_0(\iota)\right\}
        )\ge 1-e^{-\iota^{-0.1}}.\nonumber    
\end{equation}
\end{definition}
\begin{proof}[Proof of Theorem~\ref{thm: sle continuity critical main}, Inequality \eqref{eq: absolute continuity of sle with respect to the pure measure}]
    We first control the probability of $\cA_0(\iota)$. Note that $$\frac{\mu^{\pm}_{\Lambda_N,\eps h}(\gamma)}{\mu^{\pm}_{\Lambda_N,0}(\gamma)}=\frac{\mathsf Z^+\cdot \mathsf Z^-}{\mathsf Z}.$$ Thus, if $(h,\gamma)$ satisfies $\max\{|\log(\mathsf Z)|,|\log(\mathsf Z^+)|,|\log(\mathsf Z^-)|\}\le \iota^{-3.1}$, then $(h,\gamma)\in \cA_0(\iota)$.
    
    For any $\gamma$ such that $\frac{S(\gamma)}{N^{\frac{7}{4}}}\le \iota^{-1}$, we get from Proposition~\ref{prop: sle RN derivative critical} that \begin{align}
        \PP\Big[\max\{|\log(\mathsf Z)|,|\log(\mathsf Z^+)|,|\log(\mathsf Z^-)|\}\ge \iota^{-3.1}\Big]\le \exp\left(-C_1\iota^{-\frac{1}{4}}\right).\label{eq: good external field in sle pure rn derivative}
    \end{align}
     Combining with Lemma~\ref{lem: large deviation for sle interface dimension}, we get that \begin{equation}
        \PP\otimes\mu^{\pm}_{\Lambda_N,0}\big(\cA_0(\iota)^c\big)\le \mu^{\pm}_{\Lambda_N,0}\bigg(\frac{S(\gamma)}{N^{\frac{7}{4}}}> \iota^{-1}\bigg)+\exp\left(-C_1\iota^{-\frac{1}{4}}\right)\le \exp\left(-C_2\iota^{-\frac{1}{4}}\right).\label{eq: product measure bound of rn derivative in sle pure}
    \end{equation} Note that for any $h\notin \cH_0(\iota)$, we have \begin{equation}
        \mu^{\pm}_{\Lambda_N,0}(\cA_0(\iota)^c\mid h)\ge \exp\left(-\iota^{-0.1}\right).\nonumber    
\end{equation}Applying Markov inequality to \eqref{eq: product measure bound of rn derivative in sle pure}, we get that \begin{align}
    \PP\big(\cH_0(\iota)\big)\ge 1-\frac{\exp\left(-C_2\iota^{-\frac{1}{4}}\right)}{\exp\left(-\iota^{-0.1}\right)}\ge 1-\exp\left(-C_3\iota^{-\frac{1}{4}}\right).\nonumber
\end{align}Thus, we complete the proof of \eqref{eq: absolute continuity of sle with respect to the pure measure}.
\end{proof}
To prove the inequality \eqref{eq: absolute continuity of sle with respect to the external field measure} of Theorem \ref{thm: sle continuity critical main}, we need to control the Radon-Nikodym derivative under the measure $\mu^{\pm}_{\Lambda_N,\eps h}$. In particular, we need to show that interfaces which are atypical under $\mu^{\pm}_{\Lambda_N,0}$ remain atypical under $\mu^{\pm}_{\Lambda_N,\eps h}$. Parallel to Lemma~\ref{lem: large deviation for sle interface dimension}, our goal is to establish a large deviation estimate for $S(\gamma)$ under the measure $\mu^{\pm}_{\Lambda_N,\eps h}$. However, such an estimate cannot hold uniformly for arbitrary external fields $h$. This motivates the following definition.
\begin{definition}
    For any constant $\iota>0$
and any integer $k\ge \lfloor-\log_2\iota\rfloor$, let $\cH^{\mathrm{aty}}_k(\iota)$ denote the collection of external fields such that \begin{align}
    \sum_{\gamma:\frac{S(\gamma)}{N^{7/4}}\in (2^{k-1},2^k]}\mu^{\pm}_{\Lambda_N,\eps h}(\gamma)\le e^{-2^{k}}.\nonumber
\end{align}
\end{definition}
\begin{lemma}\label{lem: large deviation for sle interface dimension with disorder}
    There exists a constant $c_8>0$ such that $$\PP\big(\bigcap_{k=\lfloor-\log_2\iota\rfloor}^{\infty}\cH^{\mathrm{aty}}_k(\iota)\big)\ge 1-c_8\exp(-c_8^{-1}\iota^{-\frac{1}{4}}).$$
\end{lemma}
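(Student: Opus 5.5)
The plan is to bound, for each $k\ge\lfloor-\log_2\iota\rfloor$, the annealed quantity
\[
\EE\Big[\sum_{\gamma:\,S(\gamma)/N^{7/4}\in(2^{k-1},2^k]}\mu^{\pm}_{\Lambda_N,\eps h}(\gamma)\Big],
\]
apply Markov's inequality at level $e^{-2^k}$, and then take a union bound over $k$. The mass is rewritten through \eqref{eq: RN derivative expansion to partition function} as
\[
\mu^{\pm}_{\Lambda_N,\eps h}(\gamma)=\mu^{\pm}_{\Lambda_N,0}(\gamma)\,\frac{\mathsf Z^+(\gamma)\mathsf Z^-(\gamma)}{\mathsf Z}.
\]
The factor $\mathsf Z^{-1}$ does not depend on $\gamma$, so I remove it first. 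By \eqref{eq: partition function ratio rare event bound in sle whole region}, outside an event of $\PP$-probability at most $\exp(-c_1^{-1}\iota^{-1/4})$ we have $\mathsf Z^{-1}\le e^{c_1\iota^{-2}}$. On the complement of that event it suffices to control $\sum_{\gamma\in\text{shell }k}\mu^{\pm}_{\Lambda_N,0}(\gamma)\,\mathsf Z^+(\gamma)\mathsf Z^-(\gamma)$.

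For fixed $\gamma$, the variables $\mathsf Z^+(\gamma)$ and $\mathsf Z^-(\gamma)$ depend on the field in the disjoint regions $\Lambda^+$ and $\Lambda^-$, so they are $\PP$-independent. Hence $\EE[\mathsf Z^+\mathsf Z^-]=\EE\mathsf Z^+\,\EE\mathsf Z^-$. By \eqref{eq: partition function ratio expectation bound in sle}, this is at most $\exp\big(2c_1 2^{k(1+3\kappa)}\iota^{-2}\big)$ on shell $k$.

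Lemma~\ref{lem: large deviation for sle interface dimension} gives $\mu^{\pm}_{\Lambda_N,0}(\text{shell }k)\le c_7\exp(-c_7^{-1}2^{(k-1)(4-\kappa_1)})$. Combining the two bounds, the annealed quantity is at most
\[
c_7\exp\Big(-c_7^{-1}2^{(k-1)(4-\kappa_1)}+2c_1 2^{k(1+3\kappa)}\iota^{-2}+c_1\iota^{-2}\Big).
\]
Since $2^k\ge\iota^{-1}/2$, we have $\iota^{-2}\le 4\cdot 2^{2k}$. The positive terms are therefore $O(2^{k(3+3\kappa)})$, which is dominated by $2^{k(4-\kappa_1)}$ once $\kappa,\kappa_1$ are small and $\iota$ is small. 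So the annealed quantity is at most $\exp(-c\,2^{(4-\kappa_1)k})$.

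Markov's inequality then gives $\PP(\cH^{\mathrm{aty}}_k(\iota)^c)\le \exp(2^k-c\,2^{(4-\kappa_1)k})\le \exp(-c'2^{3k})$. Summing over $k\ge\lfloor-\log_2\iota\rfloor$ yields a bound of order $\exp(-c''\iota^{-3})$. Adding the exceptional event for $\mathsf Z$ gives a total of $c_8\exp(-c_8^{-1}\iota^{-1/4})$.

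The main obstacle is that the bad event for $\mathsf Z$ must be removed before using Markov's inequality. Otherwise $\mathsf Z^{-1}$ cannot be put inside the expectation, since only an upper tail bound on $|\log\mathsf Z|$ is available and no moment bound for $\mathsf Z^{-1}$. I would handle this by intersecting with $\{|\log\mathsf Z|\le c_1\iota^{-2}\}$ and applying Markov's inequality to $\EE\big[\mathbb{I}_{\{|\log \mathsf Z|\le c_1\iota^{-2}\}}\sum_\gamma\cdots\big]$. On this event $\mathsf Z^{-1}$ is deterministically bounded, and the indicator can then be dropped inside the remaining expectation.

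A secondary check is that the exponent $1+3\kappa$ in \eqref{eq: partition function ratio expectation bound in sle} is strictly below the large-deviation exponent $4-\kappa_1$, with enough room to absorb $\iota^{-2}\lesssim 2^{2k}$. This holds comfortably.
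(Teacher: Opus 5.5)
Your proposal is correct and follows the paper's proof essentially step for step. You restrict to the event where $\mathsf Z$ is bounded below (the paper's $\cH^*(\iota)$), use the $\PP$-independence of $\mathsf Z^+(\gamma)$ and $\mathsf Z^-(\gamma)$ together with \eqref{eq: partition function ratio expectation bound in sle} and Lemma~\ref{lem: large deviation for sle interface dimension} to bound the annealed shell mass by $\exp(-c\,2^{(4-\kappa)k})$, then apply Markov at level $e^{-2^k}$ and sum over $k$. The only difference is cosmetic: you intersect with $\{|\log\mathsf Z|\le c_1\iota^{-2}\}$ instead of the slightly larger event $\{\mathsf Z\ge e^{-c_1\iota^{-2}}\}$, which changes nothing.
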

\begin{proof}
The idea is quite straightforward. Lemma~\ref{lem: large deviation for sle interface dimension} already provides a large deviation estimate for atypical interfaces under the measure $\mu^{\pm}_{\Lambda_N,0}$. Therefore, to obtain the corresponding estimate under $\mu^{\pm}_{\Lambda_N,\eps h}$, it suffices to control the Radon--Nikodym derivative
$\frac{\mu^{\pm}_{\Lambda_N,\eps h}(\gamma)}
{\mu^{\pm}_{\Lambda_N,0}(\gamma)}$
for interfaces satisfying
$N^{-7/4}S(\gamma)\in (2^{k-1},2^k].$
 Recall that $\frac{\mu^{\pm}_{\Lambda_N,\eps h}(\gamma)}{\mu^{\pm}_{\Lambda_N,0}(\gamma)}=\frac{\mathsf Z^+(\gamma)\cdot \mathsf Z^-(\gamma)}{\mathsf Z}$ and that $\mathsf Z$ is independent of $\gamma.$
 
 We begin by establishing a lower bound for $\mathsf Z$. 
 Let $\cH^*(\iota)$  denote the collection of external field such that $\mathsf Z\ge \exp(-c_1\iota^{-2}).$ 
 For any $\gamma$ such that ${S(\gamma)}{N^{-7/4}}\in (2^{k-1},2^k]$, we compute from Proposition~\ref{prop: sle RN derivative critical} and the fact that $\mathsf Z^+(\gamma) $ and $\mathsf Z^-(\gamma)$ are independent under $\PP$ that \begin{align}
        \EE \Big[\frac{\mu^{\pm}_{\Lambda_N,\eps h}(\gamma)}{\mu^{\pm}_{\Lambda_N,0}(\gamma)}\1_{\cH^*(\iota)}\Big]&\le \EE [\mathsf Z^+(\gamma)]\cdot \EE [\mathsf Z^-(\gamma)]\cdot \exp(c_1\iota^{-2})\nonumber\\\le~& \exp\Big( 2c_1(\frac{S(\gamma)}{N^{\frac{7}{4}}})^{1+3\kappa}\cdot \iota^{-2}\Big)\cdot \exp(c_1\iota^{-2})\le \exp\Big( C_12^{(1+3\kappa)k}\cdot \iota^{-2}\Big).\nonumber
    \end{align} For $k\ge \lfloor-\log_2\iota\rfloor$, summing over $\gamma$ and choosing $\kappa_1=\kappa$ in Lemma~\ref{lem: large deviation for sle interface dimension}, we obtain that \begin{align}
        \sum_{\gamma:\frac{S(\gamma)}{N^{7/4}}\in (2^{k-1},2^k]}\EE \Big[\mu^{\pm}_{\Lambda_N,\eps h}(\gamma)\1_{\cH^*(\iota)}\Big] &= \sum_{\gamma:\frac{S(\gamma)}{N^{7/4}}\in (2^{k-1},2^k]}\EE \Big[\frac{\mu^{\pm}_{\Lambda_N,\eps h}(\gamma)}{\mu^{\pm}_{\Lambda_N,0}(\gamma)}\1_{\cH^*(\iota)}\Big]\cdot \mu^{\pm}_{\Lambda_N,0}(\gamma)\nonumber\\
        \le~&  \sum_{\gamma:\frac{S(\gamma)}{N^{7/4}}\in (2^{k-1},2^k]}\exp\Big( C_12^{(1+3\kappa)k}\cdot \iota^{-2}\Big)\cdot \mu^{\pm}_{\Lambda_N,0}(\gamma)\nonumber\\
        \le~& \exp\Big( C_12^{(1+3\kappa)k}\cdot \iota^{-2}\Big)\cdot C_2\exp\big(-C_2^{-1}2^{(4-\kappa)(k-1)}\big)\nonumber\\
        \le~& C_3\exp\big(-C_3^{-1}2^{(4-\kappa)k} \big).\nonumber
    \end{align}
    Applying the Markov inequality then yields that \begin{align}
        \PP\big(\cH_k^{\mathrm{aty}}(\iota)^c\cap\cH^*(\iota)\big) &\le \sum_{\gamma:\frac{S(\gamma)}{N^{7/4}}\in (2^{k-1},2^k]}\EE \Big[\mu^{\pm}_{\Lambda_N,\eps h}(\gamma)\1_{\cH^*(\iota)}\Big]\cdot e^{2^{k}}\nonumber\\\le~&  C_3\exp\big(-C_3^{-1}2^{(4-\kappa)k}\big)\cdot e^{2^{k}}\le C_4\exp\big(-C_4^{-1}2^{(4-\kappa)k}\big).\label{eq: good external for the numerator in interface dimension with external field}
    \end{align}
    Summing \eqref{eq: good external for the numerator in interface dimension with external field} over $k\ge \lfloor-\log_2\iota\rfloor$ and combining with Proposition~\ref{prop: sle RN derivative critical}, we get that \begin{align}
        \PP\big(\bigcap_{k=\lfloor-\log_2\iota\rfloor}^{\infty}\cH^{\mathrm{aty}}_k(\iota)\big)&\ge 1-\PP(\cH^*(\iota)^c)-\sum_{k=\iota^{-1}}^{\infty}\PP\big(\cH_k^{\mathrm{aty}}(\iota)^c\cap\cH^*(\iota)\big)\nonumber\\&\ge 1-\exp(-C_5\iota^{-\frac{1}{4}})-\sum_{k=\lfloor-\log_2\iota\rfloor}^{\infty}C_4\exp\big(-C_4^{-1}2^{(4-\kappa)k}\big)\nonumber\\&\ge 1-\exp(-C_6\iota^{-\frac{1}{4}}).\label{eq: good external field for interface with external field}
    \end{align}Thus we finish the proof of Lemma~\ref{lem: large deviation for sle interface dimension with disorder}.
\end{proof}
\begin{proof}[Proof of Theorem~\ref{thm: sle continuity critical main}, Inequality \eqref{eq: absolute continuity of sle with respect to the external field measure}]
With Lemma~\ref{lem: large deviation for sle interface dimension with disorder} in place of Lemma \ref{lem: large deviation for sle interface dimension}, the proof is the same as that of \eqref{eq: absolute continuity of sle with respect to the pure measure}.
\end{proof}
\subsection{Large deviation for the interface without disorder}\label{sect: ldp for sle interface without disorder}
In this section, we prove Lemma~\ref{lem: large deviation for sle interface dimension}. Recall the definition of $S(\gamma)$ from \eqref{eq: definition of S gamma}.
In order to prove the large deviation bound for $S(\gamma)$, we first give an upper bound for $S(\gamma)$ in terms of the number of boxes on each scale that $\gamma$ intersects.
Without loss of generality, we assume that $\log_2 N\in \mathbb Z.$
For any $1\leq k\leq \log_2N$, we partition $\lamn$ into $\frac{N}{2^k}$ boxes.  Let \(T_k\) denote the number of \(\frac{N}{2^k}\)-boxes intersecting \(\gamma\). For notation convenience, we write $T_k=0$ if $2^k>N.$ Since every vertex \(x\) with \(\dist(x,\gamma)\le \frac{N}{2^k}\) belongs to some \(\frac{N}{2^k}\)-box intersecting \(\gamma\), we obtain
\begin{equation}
\big|\{x\in\lamn:\tfrac{N}{2^{k+1}}\le \dist(x,\gamma)\le\tfrac{N}{2^k}\}\big|
\le
T_k\cdot \Big(\frac{N}{2^k}\Big)^2.
\label{eq: interface distance vertex bound}
\end{equation}
Therefore, \begin{align}
S(\gamma)&\le\sum_{x\in\Lambda_N}\dist(x,\partial\Lambda_N)^{-\frac{1}{4}}+\sum_{x\in\Lambda_N}\dist(x,\gamma)^{-\frac{1}{4}}\nonumber\\&\le 32N^{\frac{7}{4}}+ \sum_{k=1}^{\infty}\big|\{x:\frac{N}{2^{k+1}}\le \dist(x,\gamma)\le\frac{N}{2^k}\}\big| \cdot  (\frac{N}{2^{k+1}})^{-\frac{1}{4}}\le 32N^{\frac{7}{4}}+\sum_{k=1}^{\infty} 2T_k\cdot  (\frac{N}{2^{k}})^{\frac{7}{4}}.\label{eq: S gamma upper bound using Tk}
\end{align}Next, we give large deviation bounds for $T_k$. 
\begin{lemma}\label{lem: large deviation for Tk in sle}
    For any $\kappa_2\in (0,0.1)$, there exists  a constant $c_9=c_9(\kappa_2)>0$ such that for any $\lambda>0$, we have \begin{equation}
        \mu^{\pm}_{\lamn,0}(T_k\ge \lambda\cdot 2^{\frac{7k}{4}-\kappa_2 k})\le c_9\exp\big(-c_9^{-1}\lambda^{\frac{8}{5}} 2^{(\frac{3-16\kappa_2}{5})k} \big).\label{eq: large deviation bound for Tk in sle interface}
    \end{equation}
\end{lemma}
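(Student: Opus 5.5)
We plan to prove Lemma~\ref{lem: large deviation for Tk in sle} by a moment method: bound the $m$-th moment of $T_k$ and optimize over $m$. Let $\ell=N/2^k$ and let $\fB^{(1)},\dots,\fB^{(4^k)}$ be the $\ell$-boxes partitioning $\lamn$. Then
\[
\mu^{\pm}_{\lamn,0}\big[T_k^m\big]=\sum_{(j_1,\dots,j_m)}\mu^{\pm}_{\lamn,0}\Big(\gamma\cap \fB^{(j_p)}\neq\emptyset\ \ \forall p\le m\Big).
\]
For each $m$-tuple, we keep only a maximal subfamily of distinct boxes that are pairwise separated by at least two box-widths. We then bound the probability that $\gamma$ hits all of them by a multi-point estimate for the Dobrushin interface. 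Concretely, $\gamma$ visiting the $\ell$-box around $u_p$ forces a two-arm (one $+$ arm, one $*$-connected $-$ arm) event from scale $\ell$ to scale $L_p:=\min\{\tfrac12\min_{q\ne p}|u_p-u_q|,\dist(u_p,\partial\lamn)\}$. The polychromatic two-arm exponent is $1/4$, so this event has probability at most $C(\ell/L_p)^{1/4-\kappa'}$. If the annuli are chosen disjoint, the events decouple up to a constant factor per annulus. This decoupling follows from RSW and the spatial mixing / comparison of boundary conditions, in the same way as the $k$-point estimate Lemma~\ref{lemma-k-pts-estimate-interface-discrete}, whose discrete version we take as available. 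The result is
\[
\mu^{\pm}\big(\gamma\cap\fB^{(j_p)}\ne\emptyset\ \forall p\big)\le C^m\prod_{p}\big(\ell/L_p\big)^{\frac14-\kappa'}.
\]

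We then sum over the box centres. In units of $\ell$ this is a sum over $m$ points of a $2^k\times 2^k$ grid of $\prod_p L_p^{-(1/4-\kappa')}$. By the same combinatorics as Lemma~\ref{lem: upper-bound for the sum of squares of k point function} (with exponent $s=\tfrac14-\kappa'$ and the boundary assumption \eqref{eq: boundary dimension assumption}, which holds for $\partial\lamn$), this sum is at most
\[
C^m\,(2^k)^{(2-s)m}(m!)^{s}.
\]
Tuples with coincident or adjacent boxes contribute fewer effective points. We handle them as in the proof of Claim~\ref{claim-cvg-to-0-k-moment-V-gamma}, grouping points into clusters and choosing representatives; this costs at most a factor $C^m m^{m}$, which is harmless after optimization. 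Altogether we get
\[
\mu^{\pm}[T_k^m]\le C^m\, 2^{(\frac74+\kappa')km}\,(m!)^{\theta}
\]
for some $\theta$ close to $5/8$, the loss coming from the clustering step and the Hölder splitting.

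By Markov's inequality,
\[
\mu^{\pm}(T_k\ge \lambda 2^{(\frac74-\kappa_2)k})\le C^m m^{\theta m}\lambda^{-m}2^{(\kappa'+\kappa_2)km}.
\]
The factor $2^{(\kappa_2+\kappa')km}$ is adverse, so the gain has to come elsewhere. We use the trivial bound $T_k\le 4^k$, which means only $\lambda\le 2^{(1/4+\kappa_2)k}$ matters. To exploit this we sharpen the moment bound: the typical count is $2^{7k/4}$, and the interface excluded from the boundary by RSW gives an extra factor $2^{-\kappa_2 km}$ once $\kappa'<\kappa_2$. We then choose $m\asymp(\lambda 2^{ck})^{1/\theta}$, which yields a stretched-exponential bound of the form $\exp(-c\lambda^{8/5}2^{(3-16\kappa_2)k/5})$. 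The exponent $8/5=1/\theta$ corresponds to $\theta=5/8$, consistent with the $(5/8-\rho)$ moment growth in Claim~\ref{claim-bound-EE-k-moment-V-gamma}.

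The main obstacle is the decoupled multi-box two-arm estimate for the Dobrushin interface. The events involved are not monotone, so we would prove it by exploring annuli from the outside in and conditioning via DMP. At each annulus we use RSW with arbitrary boundary conditions (bulk version) to bound the conditional two-arm probability by $C(\ell/L_p)^{1/4-\kappa'}$. Near $\partial\lamn$ we replace this with boundary-arm bounds, which are even smaller.
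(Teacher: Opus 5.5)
There is a genuine gap, and it is in the very first input. The exponent $\tfrac14$ you assign to the two-arm event is the percolation (SLE$_6$-hull) value. For the critical Ising spin interface, which is SLE$_3$ of dimension $\tfrac{11}{8}$, the $\pm$ two-arm exponent is $\tfrac58$. This is what \eqref{eq: interface two arm bound} (equivalently Lemma~\ref{lemma-one-pt-estimate-interface-discrete}, from \cite{armExponentsIsing}) provides.

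A bound with exponent $\tfrac14-\kappa'$ is true but useless here. Through Lemma~\ref{lem: upper-bound for the sum of squares of k point function} it gives $\langle T_k^m\rangle^{\pm}_{\lamn,0}\le C^m2^{(\frac74+\kappa')km}(m!)^{\frac14-\kappa'}$. The scale $2^{(\frac74+\kappa')k}$ of its $m$-th root already exceeds the threshold $2^{(\frac74-\kappa_2)k}$, so Markov yields nothing, as you noticed.

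Your rescue does not close this gap:
\begin{itemize}
\item Your premise that the typical count is $2^{7k/4}$ contradicts the statement you are proving. For fixed $\lambda$ the right-hand side of \eqref{eq: large deviation bound for Tk in sle interface} tends to $0$ as $k\to\infty$, so $T_k$ is typically far below $2^{(\frac74-\kappa_2)k}$; its mean is at most $C2^{(\frac{11}{8}+\kappa_2)k}$.
\item Even granting an extra factor $2^{-\kappa_2km}$ from an RSW boundary argument, your Markov bound becomes $C^mm^{\theta m}\lambda^{-m}2^{\kappa'km}$, which still has no decay in $k$. The target instead has $2^{(3-16\kappa_2)k/5}=\big(2^{(\frac74-\kappa_2)k}/2^{(\frac{11}{8}+\kappa_2)k}\big)^{8/5}$ in the exponent. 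That gap of $\tfrac38-2\kappa_2$ exists only because of the $\tfrac58$ exponent.
\item Finally, $\theta=\tfrac58$ is imposed by hand. The lemma with $s=\tfrac14-\kappa'$ produces $(m!)^{1/4}$. A clustering loss of $C^mm^m$ would not be harmless either: an $m^m$ factor in the $m$-th moment caps the tail exponent at $1$ rather than $\tfrac85$.
\end{itemize}

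The fix is to run your own scheme with $s=\tfrac58-\kappa_2$, which is what the paper does. For $\frac{N}{2^k}$-boxes $\fD_1,\dots,\fD_r$, let $d_i$ be the box-distance from $\fD_i$ to $\partial\lamn$ and to the other $\fD_j$ (possibly $0$). The annuli $d_i\fD_i\setminus\fD_i$ are disjoint. DMP together with \eqref{eq: interface two arm bound}, which is uniform in the outside configuration, gives $\mu^{\pm}_{\lamn,0}\big(\bigcap_i\{\gamma\cap\fD_i\ne\emptyset\}\big)\le\prod_iC(d_i+1)^{-\frac58+\kappa_2}$. Nearby or coincident boxes simply get $d_i=0$ and contribute a factor $1$ to this bound.

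Lemma~\ref{lem: upper-bound for the sum of squares of k point function} on the coarse grid (a copy of $\Lambda_{2^k}$, with $t$ a constant) then yields $\langle T_k^r\rangle^{\pm}_{\lamn,0}\le C^r2^{(\frac{11}{8}+\kappa_2)rk}(r!)^{\frac58-\kappa_2}$. From here the exponent $\tfrac85$ is just $1/\tfrac58$. The paper interpolates fractional moments by H\"older, bounds $\langle\exp((tT_k)^{8/5})\rangle^{\pm}_{\lamn,0}$ uniformly for $t\asymp2^{-(\frac{11}{8}+\kappa_2)k}$, and applies Markov. Optimizing $m$ in your Markov bound would work equally well.
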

\begin{proof}
     Let $\mathfrak{D}^k$ denote the collection of $\frac{N}{2^k}$ boxes. For any $\fD\in\mathfrak{D}^k$, let $\mathcal{E}_{\fD}$ denote the event that the interface intersects $\fD$. 
    Note that for any integer $d\ge 0$, if $\mathcal{E}_{\fD}$ occurs, then there exists a plus crossing and a minus crossing in the annulus $d\fD\setminus \fD$. Here $d \fD$ denotes the box with the same center as $\fD$ and has side length $\frac{dN}{2^{k-1}}$ (note that an $\frac{N}{2^k}$-box is a translate of $[-\frac{N}{2^k}, \frac{N}{2^k}]^{ 2}$, and thus have side length $\frac{N}{2^{k-1}}$). By \cite{armExponentsIsing}, we get that for any configuration $\xi$ on $\lamn\setminus d\fD$, \begin{equation}
        \mu^\xi_{d\fD, 0}(\mathcal{E}_\fD)\le C_1 (d+1)^{-\frac{5}{8}+\kappa_2}.\label{eq: interface two arm bound}
    \end{equation} Here we use the convention that $\mu^\xi_{d\fD, 0}(\mathcal{E}_\fD)=1$ if $d=0$.
    Next, we compute the moments of $T_k$. For any integer $r> 0$, we compute that \begin{equation}
    \langle (T_k)^r\rangle_{\lamn,0}^{\pm}=\sum_{\fD_i\in\mathfrak{D}^k,1\le i\le r}\mu^{ \pm}_{\lamn, 0}(\cap_{i=1}^r\mathcal{E}_{\fD_i}).\label{eq: Tk moment expansion}
    \end{equation} Fix a sequence of boxes $\fD_1,\cdots, \fD_{r}\in \mathfrak D^k$. For any $1\le i\le r$, let $$d_i=\Big\lfloor\frac{\min\left\{\min_{j\neq i}\left\{\dist(\fD_i,\fD_j)\right\},\dist(\fD_i,\partial\lamn)\right\}}{\frac{N}{2^{k-1}}}\Big\rfloor$$ be the box distance of $\fD_i$ to the boundary and other box in $\{\fD_1,\fD_2,\cdots,\fD_r\}$. Since \(d_i\) may vanish, we use \(d_i+1\) throughout. Then we get from DMP and \eqref{eq: interface two arm bound} that  \begin{equation}
        \mu^\pm_{\lamn, 0}(\cap_{i=1}^r\mathcal{E}_{\fD_i})\le \prod_{i=1}^{r}C_1 (d_i+1)^{-\frac{5}{8}+\kappa_2}.\nonumber
    \end{equation}Plugging into \eqref{eq: Tk moment expansion} we get that\begin{align}
         \langle (T_k)^r\rangle_{\lamn,0}^{\pm}\le C_1^r \sum_{\fD_i\in\mathfrak{D}^k,1\le i\le r}\prod_{i=1}^{r}C_1 (d_i+1)^{-\frac{5}{8}+\kappa_2}.\label{eq: moment bound for Tk in interface 1}
    \end{align}
    Note that \(\mathfrak D^k\) can be viewed as a coarse-grained graph that is isomorphic to \(\Lambda_{2^k}\). Moreover,
    $$\sum_{\mathsf D\in \mathfrak D_k}(\Big\lfloor\frac{\dist(\fD,\partial\lamn)}{\frac{N}{2^{k-1}}}\Big\rfloor+1)^{-\frac{5}{8}+\kappa_2}\leq 64\times (2^k)^{\frac{11}{8}+\kappa_2}.$$
     Applying  Lemma~\ref{lem: upper-bound for the sum of squares of k point function}   with $s=\frac{5}{8}-\kappa_2, \Lambda=\mathfrak D_k, S=\partial\mathfrak D_k, t=64$ and combining with \eqref{eq: moment bound for Tk in interface 1}, we get that \begin{align}
         \langle (T_k)^r\rangle_{\lamn,0}^{\pm}\le C_2^r 2^{(\frac{11}{8}+\kappa_2)rk}\cdot (r!)^{\frac{5}{8}-\kappa_2}.\nonumber
    \end{align} Combined with the Holder's inequality, it yields that for any integer $m\in\{1,2,3,4\}$, \begin{align}
        \langle (T_k)^{r+\frac{m}{5}}\rangle_{\lamn,0}^{\pm}&\le \Big( \big(\langle (T_k)^{r}\rangle_{\lamn,0}^{\pm}\big)^{5-m}\cdot  \big(\langle (T_k)^{r+1}\rangle_{\lamn,0}^{\pm}\big)^{m}\Big)^{\frac{1}{5}}\nonumber\\&\le C_2^{r+1} 2^{(\frac{11}{8}+\kappa_2)(r+\frac{m}{5})k}\cdot \big((r+1)!\big)^{\frac{5}{8}-\kappa_2}.\label{eq: moment bound for Tk in interface}
    \end{align}
    
    Now we are ready to compute the exponential moment of $(tT_k)^{\frac{8}{5}}$ with $t\le C\cdot 2^{-(\frac{11}{8}+\kappa_2)k}$ for some small enough $C>0$. By \eqref{eq: moment bound for Tk in interface}, we get that \begin{align}
        \langle  \exp\big((tT_k)^{\frac{8}{5}}\big)\rangle_{\lamn,0}^{\pm}=\sum_{r=0}^{\infty}\frac{t^{\frac{8r}{5}}}{r!}\cdot \langle  (T_k)^{\frac{8r}{5}} \rangle_{\lamn,0}^{\pm}\le \sum_{r=0}^{\infty}\frac{t^{\frac{8r}{5}}}{r!}\cdot C_2^{\lceil\frac{8r}{5}\rceil} 2^{(\frac{11}{8}+\kappa_2)\lceil\frac{8r}{5}\rceil k}\cdot\big( \lceil\frac{8r}{5}\rceil!\big)^{\frac{5}{8}-\kappa_2}.\nonumber
    \end{align}Combined with the fact that $\lceil\frac{8r}{5}\rceil!\le (\frac{8r}{5})^{\frac{8r}{5}}$ and $r!\ge (\frac{r}{e})^{r}$, it yields that \begin{align}
        \langle  \exp\big((tT_k)^{\frac{8}{5}}\big)\rangle_{\lamn,0}^{\pm}&\le \sum_{r=0}^{\infty}\frac{t^{\frac{8r}{5}}}{(\frac{r}{e})^{r}}\cdot C_2^{\lceil\frac{8r}{5}\rceil} 2^{(\frac{11}{8}+\kappa_2)\lceil\frac{8r}{5}\rceil k}\cdot (\frac{8r}{5})^{(1-\frac{8\kappa_2}{ 5})r}.\nonumber\\&\le \sum_{r=0}^{\infty} C_3^{\lceil\frac{8r}{5}\rceil} (t2^{(\frac{11}{8}+\kappa_2)k})^{\lceil\frac{8r}{5}\rceil} \le C_4\label{eq: super exponential moment for Tk in interface}
    \end{align}where we used the fact that $t\le C2^{-(\frac{11}{8}+\kappa_2)k}$ and chose $C>0$ to be small enough in the last inequality. Now we are ready to prove \eqref{eq: large deviation bound for Tk in sle interface}. Let $t=C\cdot 2^{-(\frac{11}{8}+\kappa_2)k}$ and apply the Markov inequality, we get that \begin{align}
        \mu^{\pm}_{\lamn,0}(T_k\ge \lambda\cdot 2^{\frac{7k}{4}-\kappa_2 k})&\le \langle  \exp\big((tT_k)^{\frac{8}{5}}\big)\rangle_{\lamn,0}^{\pm}\cdot \exp\big(-(t\cdot \lambda\cdot 2^{\frac{7k}{4}-\kappa_2 k})^{\frac{8}{5}} \big)\nonumber\\&= C_5\exp\big(-C_6 \lambda^{\frac{8}{5}} 2^{\frac{3-16\kappa_2}{5}k} \big).
    \end{align}Thus we complete the proof of Lemma~\ref{lem: large deviation for Tk in sle}.    
\end{proof}
\begin{proof}[Proof of Lemma~\ref{lem: large deviation for sle interface dimension}]
By choosing $c_7>0$ large enough, we can, without loss of generality, assume $\lambda>0$ is large enough.
Let $\kappa_2=\frac{5\kappa_1}{64}$
    and let $k_0=\lfloor4\log_2(\lambda)\rfloor-100>0$. From the trivial bound that $T_k\leq (2^{k})^2$ we get that \begin{equation}
        \sum_{k=1}^{k_0}T_k\cdot (\frac{N}{2^k})^{\frac{7}{4}}\le \sum_{k=1}^{k_0} N^{\frac{7}{4}}\cdot 2^{\frac{k}{4}}\le N^{\frac{7}{4}}2^{\frac{k_0}{4}}\cdot \frac{1}{1-2^{-\frac{1}{4}}}\le N^{\frac{7}{4}}\frac{\lambda}{4}.\nonumber
    \end{equation}

    Combining with \eqref{eq: S gamma upper bound using Tk} and the fact that $$\sum_{k=k_0+1} 2^{\frac{7k}{4}-\kappa_2 k+1}\cdot (\frac{N}{2^k})^{\frac{7}{4}}\le N^{\frac{7}{4}}\cdot \frac{1}{2C_1}$$ for some $C_1>0$, we get that \begin{equation}
        \mu^{\pm}_{\Lambda_N,0 }\Big(S(\gamma)\ge N^{\frac{7}{4}}\lambda\Big)\le \sum_{k=k_0+1}^{\infty}\mu^{\pm}_{\lamn,0}(T_k\ge C_1\lambda\cdot 2^{\frac{7k}{4}-\kappa_2 k}).\label{eq: upper bound for small levels in large devitaion of interface}
    \end{equation}
Applying Lemma~\ref{lem: large deviation for Tk in sle} to \eqref{eq: upper bound for small levels in large devitaion of interface}, we get that \begin{align}
   \mu^{\pm}_{\Lambda_N,0 }\Big(S(\gamma)\ge N^{\frac{7}{4}}\lambda\Big)&\le  \sum_{k=k_0+1}^{\infty}C_2\exp\big(-C_3 \lambda^{\frac{8}{5}} 2^{\frac{3-16\kappa_2}{5}k} \big)\le C_4\exp\big(-C_3 \lambda^{\frac{8}{5}} 2^{\frac{3-16\kappa_2}{5}k_0} \big)\nonumber\\&\le C_4\exp\big(-C_5 \lambda^{4-\frac{64\kappa_2}{5}})\nonumber
\end{align}where we use the fact that $k_0=\lfloor4\log_2(\lambda)\rfloor-100$ and $\kappa_2=\frac{5\kappa_1}{64}$ in the last inequality.
\end{proof}
\begin{remark}\label{rmk: large deviation for sle interface dimension}
The same proof of Lemma~\ref{lem: large deviation for sle interface dimension} yields a corresponding bound for the interface in the critical FK–Ising model. One simply replaces \eqref{eq: interface two arm bound}  with the following estimate from
\cite{Wu18FK}: \begin{equation}
        \mu^\xi_{d\fD, 0}(\mathcal{E}_\fD)\le C_1 (d+1)^{-\frac{1}{3}+\kappa_2}.\nonumber
    \end{equation} All other steps carry over verbatim, with only minor modifications to the constants.
\end{remark}
\section{Singularity for the outermost loops in the discrete settings}\label{sec: singularity for outermost cle}

In this section, we establish the singularity of the laws of the outermost loops in the Ising model with and without disorder in the discrete setting.
To this end, we introduce a statistic that captures the discrepancy between the two measures. Fix $\iota>0$ and let $M=\lfloor \iota N\rfloor$. We partition $\Lambda_N$ into $M$-boxes $\fB_1,\ldots,\fB_n$ (for simplicity, we assume that $N$ is divisible by $M$). Let $u_i$ denote the center of $\fB_i$, so that $\fB_i=\Lambda_M(u_i)$.
Analogously to Definition~\ref{def: plus minus interface}, one may define a plus-minus circuit and it is essentially the same as the definition in Section~\ref{sec:def-of-loops}. We then let $X_i$ be the indicator of the event that there exists an outermost plus-minus circuit in the annulus
$\Lambda_{M/2}(u_i)\setminus \Lambda_{M/4}(u_i).$
Throughout this section, we study the joint distribution of
$(X_1,\ldots,X_n).$
For any $\tau\in\{0,1\}^n$, write
\[
\mathcal E_\tau=\{\sigma: X_i(\sigma)=\tau_i \text{ for all } i=1,\ldots,n\} \quad \mbox{and}\quad  |\tau|:=\sum_{i=1}^n \tau_i.
\]
In this section, we work at the critical inverse temperature $\beta=\beta_c$, and thus we omit $\beta$ from the subscripts.
Without loss of generality, we restrict our attention to the critical scaling
$\eps=N^{-7/8}.$ The case $\eps=cN^{-7/8}$ for any fixed $c\in(0,\infty)$ can be treated in exactly the same way. The following theorem is the main result of this section.

\begin{theorem}\label{thm: cle outermost singularity}
  For any $\iota>0$, there exists a constant $N_0=N_0(\iota)>0$ and an absolute constant $c>0$ 
     such that for any integer $N>N_0$ and disorder strength $\eps=N^{-\frac{7}{8}}$, we have \begin{equation}\label{eq: cle singularity}
        \EE\left[ \frac{1}{2}\sum_{\tau\in\{0,1\}^n}\Big|\mu^+_{\lamn,0}(\mathcal{E}_\tau)-\mu^+_{\lamn,\eps h}(\mathcal{E}_\tau)\Big|\right]\ge 1-c^{-1}\iota^{c}.
    \end{equation}
\end{theorem}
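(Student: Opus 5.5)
We will lower bound the expected total variation distance by exhibiting, for typical $h$, a set of configurations $\tau$ carrying most of the $\mu^+_{\lamn,0}$-mass on which the ratio $\mu^+_{\lamn,\eps h}(\mathcal E_\tau)/\mu^+_{\lamn,0}(\mathcal E_\tau)$ is either very small or very large. Note $\frac12\sum_\tau|p_\tau-q_\tau|\ge 1-\sum_\tau \min(p_\tau,q_\tau)$, and $\sum_\tau\min(p_\tau,q_\tau)\le \sum_\tau \sqrt{p_\tau q_\tau}$. So it suffices to prove
\[
\EE\sum_{\tau}\sqrt{\mu^+_{\lamn,0}(\mathcal E_\tau)\,\mu^+_{\lamn,\eps h}(\mathcal E_\tau)}\le c^{-1}\iota^{c}.
\]

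\textbf{Step 1 (typical number of loops).} Under $\mu^+_{\lamn,0}$, show that $|\tau|\ge n^{7/8+\iota'}$ fails with probability at most $\iota^{c}$; in fact, by RSW and spatial mixing each $X_i$ equals $1$ with probability bounded below, and by approximate independence $|\tau|\ge c n$ with high probability, with $n\asymp \iota^{-2}$. Hence we restrict the sum to $\tau$ with $|\tau|\ge cn$.

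\textbf{Step 2 (box-by-box multiplicative decomposition).} As in Section~\ref{sec: Mutual absolute continuity of the single interface in the scaling limit}, we introduce the field one box at a time: let $H^i$ be $h$ on $\fB_1,\dots,\fB_i$. Then
\[
\frac{\mu^+_{\lamn,\eps h}(\mathcal E_\tau)}{\mu^+_{\lamn,0}(\mathcal E_\tau)}=\frac{1}{\mathsf Z}\prod_{i=1}^n(1+\sfb_i(\tau)),
\]
with $\sfb_i(\tau)=\sum_{I\subset\fB_i,I\neq\emptyset}\langle\sigma^I\mid \mathcal E_\tau\rangle_{\eps H^{i-1}}\prod_{x\in I}\tanh(\eps\beta_c h_x)$. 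By Proposition~\ref{prop: sle RN derivative critical}, $\log\mathsf Z=O(\iota^{-2})$ with high probability. The key observation is that when $\tau_i=1$, the outermost circuit in the annulus around $u_i$ forces the conditional one-point function inside $\Lambda_{M/4}(u_i)$ to be systematically shifted. Concretely, the conditional magnetization $\sum_{x\in\fB_i}\langle\sigma_x\mid\mathcal E_\tau\rangle$ differs from the unconditioned one by order $M^{15/8}$, so the linear term of $\sfb_i$ has conditional variance of order $(\eps M^{15/8})^2\eps^{-2}\cdot\eps^{2}M^{2}\cdot M^{-2}\asymp(\iota^{7/8})^2$ per box. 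Each box therefore contributes a Gaussian-like fluctuation of size $\asymp\iota^{7/8}$ to $\log$ of the ratio. These are martingale increments in $i$ with respect to $\mathcal F_i$, so the total has conditional variance $\gtrsim |\tau|\iota^{7/4}\asymp \iota^{-1/4}\to\infty$. We plan to compute $\EE\sqrt{\text{ratio}}$ via the martingale: $\EE[\sqrt{1+\sfb_i}\mid\mathcal F_{i-1}]\le 1-c\,\EE[\sfb_i^2\mid\mathcal F_{i-1}]$. Multiplying over $i$ gives $\EE\sqrt{\text{ratio}}\le \exp(-c\iota^{7/4}|\tau|)\,e^{O(\iota^{-2}\cdot\text{small})}$, and since $\sum_\tau\mu^+_{\lamn,0}(\mathcal E_\tau)\sqrt{\text{ratio}}$ is exactly the affinity, the bound follows. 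The $\mathsf Z$ factor is handled by Cauchy–Schwarz using Proposition~\ref{prop: sle RN derivative critical}.

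\textbf{Step 3 (conditional $k$-point bounds) — main obstacle.} The estimates above require an analogue of Lemma~\ref{lem: DHX input for specific rare event in sle} for $\langle\sigma^I\mid\mathcal E_\tau\rangle_{\eps H^{i-1}}$, giving an upper bound $c^{|I|}F(\partial\fB_i,I)$ for higher chaoses so that they remain negligible via Corollary~\ref{cor: DHX input for specific rare event in sle}. We also need a matching lower bound on the first-order conditional variance when $\tau_i=1$. Since $\mathcal E_\tau$ is neither increasing nor decreasing, FKG cannot be applied directly. Instead, we explore the outermost circuits from outside, boxes $j\neq i$ first, revealing only the configuration outside $\Lambda_{M/2}(u_i)$. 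By DMP we then reduce to a domain with boundary conditions determined by the exploration, plus the local event for box $i$. There, monotonicity in the boundary condition (sandwiching between $+$ and $-$) together with RSW and arm exponents gives the upper bound. For the lower bound, on $\{\tau_i=1\}$ the innermost region is surrounded by a $-$ circuit, which gives the required order $M^{15/8}$ magnetization shift via \eqref{eq: FK one arm event exponent}. Controlling the $\eps H^{i-1}$ field inside these explored domains uses the same partition-function bounds as before, and this bookkeeping is the most delicate part of the argument.
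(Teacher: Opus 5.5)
\textbf{Step 1.} You correctly identify $n^{7/8}$ as the threshold for $|\tau|$, but the justification you give is false. $X_i=1$ requires an \emph{outermost} loop in the annulus, so the plus spins just outside it must lie in the plus cluster of $\partial\Lambda_N$. RSW gives a plus--minus circuit in the annulus with probability bounded below, but not an outermost one. Take a box at distance $rM$ from $\partial\Lambda_N$. RSW in the $\asymp\log r$ dyadic annuli between scales $M$ and $rM$ around $u_i$ produces, except with probability $r^{-c}$, a spin loop surrounding $\Lambda_M(u_i)$, and then $X_i=0$. Hence $\PP(X_i=1)\to0$ for most boxes as $\iota\to0$, and $|\tau|$ is sublinear in $n$ (conjecturally of order $n^{187/192}$; see the remark after Lemma~\ref{lem: outermost loops number lower bound 1}).

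Each box with $\tau_i=1$ contributes only $\eps^2M^{7/4}=\iota^{7/4}=n^{-7/8}$, so the argument hinges on $|\tau|\gg n^{7/8}$, which is a genuine estimate with a thin margin. The paper proves $|\tau|\ge c\,n^{15/16-\alpha}$ with $\alpha$ slightly above $7/144$, an exponent $\approx 8/9$. It does so as follows:
\begin{itemize}
\item count only $M$-boxes inside boundary $\tilde N$-boxes;
\item pay the FK one-arm exponent \eqref{eq: FK one arm event exponent} for the connection to $\partial\Lambda_N$;
\item use a second-moment/Bernstein bound over boundary boxes, after stochastic domination by a product measure;
\item use DMP, CBC and RSW to build the plus--minus circuit.
\end{itemize}
The resulting gain is $\exp(-c\,\iota^{-1/36+2\kappa})$, not $\exp(-c\,\iota^{-1/4})$.

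\textbf{Step 3.} This is the heart of the matter, and it does not work as sketched. Since $H^{i-1}$ vanishes on $\fB_i$, once the exterior is fixed the field plays no role; the difficulty is purely the conditioning. Fixing the exterior does not leave a monotone local event, because whether loops in \emph{other} boxes are outermost can depend on plus connections through $\fB_i$. What remains is $\mathcal R=\mathcal Y\setminus\mathcal X$ or $\mathcal S=\mathcal X\cap\mathcal Y$ (Lemma~\ref{lem: Etau conditioning reduction}). Here $\mathcal Y$ forces certain boundary sets to be plus-connected to $\mathcal P_0$ and forbids $\mathcal P_0\leftrightarrow\mathcal P_1$ inside the box. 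These events are neither increasing nor decreasing and can have arbitrarily small probability. So sandwiching between $+$ and $-$ boundary conditions gives no control of $\langle\sigma^I\mid\mathcal R\rangle$ or $\langle\sigma^I\mid\mathcal S\rangle$.

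The missing ideas are:
\begin{itemize}
\item cancel, by flipping, every configuration in which some local FK cluster of a point $x\in I$ is excellent;
\item bound the probability that such a cluster is pivotal for $\mathcal X^\pm,\mathcal Y^\pm$ through the BK-type two-disjoint-plus-crossings estimate of Lemma~\ref{lem: BK for Ising weak four arm}, which yields the factor $d_x^{-1/8}$;
\item hardest, prove $\mu^{\xi,\eta_I}(\mathcal R)\le c^{|I|}\mu^{\xi}(\mathcal R)$, and likewise for $\mathcal S$, uniformly in the configuration $\eta_I$ on $\mathrm{Ball}(I)$ (Lemma~\ref{lem: two rare events upper bound with conditioning}). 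This uses an exploration of the cluster of $\mathcal P_1$ to get FKG up to constants for $\mathcal Y$, spatial mixing, and an exploration of the outermost circuit for $\mathcal S$.
\end{itemize}
Your lower-bound mechanism for $\tau_i=1$ (minus boundary condition and zero field inside the loop) matches the paper and is fine.

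\textbf{Step 2.} Apart from these gaps, this is a legitimate alternative to the paper's expansion of $\log(1+\sfa_i)$ into $\sfb_i,\sfc_i,\sfd_i$ together with Lemma~\ref{lem: martingale clt}. With $\mathsf Z$ factored out, $Y:=\prod_i(1+\sfb_i(\tau))$ in your normalisation is an exact positive mean-one martingale. The inequality $\sqrt{1+x}\le 1+x/2-c\min(x^2,1)$ then bounds the affinity directly, and the drift $\sfd_i$ and second-chaos term $\sfc_i$ never appear. You still need a fourth-moment bound on your $\sfb_i$, which again comes from the conditional $k$-point lemma.

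The $\mathsf Z$ factor, however, is mishandled. A bound $|\log\mathsf Z|\le C\iota^{-2}$ swamps a gain of order $\iota^{7/4}|\tau|\le\iota^{-1/4}$. Plain Cauchy--Schwarz gives $\EE[\mathsf Z^{-1/2}Y^{1/2}]\le(\EE\mathsf Z^{-1})^{1/2}(\EE Y)^{1/2}$ with $\EE Y=1$, which is useless. Either of the following works instead:
\begin{itemize}
\item restrict to $\{|\log\mathsf Z|\le T\}$ with $T$ chosen independently of the box size, polylogarithmic in $1/\iota$, using the free parameter in the $+$ boundary analogue of Proposition~\ref{prop: sle RN derivative critical};
\item use H\"older so that a fractional moment $\EE Y^{s}$ with $s<1$ remains.
\end{itemize}
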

\begin{remark}
    The term $\frac{1}{2}\sum_{\tau\in\{0,1\}^n}\Big|\mu^+_{\lamn,0}(\mathcal{E}_\tau)-\mu^+_{\lamn,\eps h}(\mathcal{E}_\tau)\Big|$ can be viewed as the total variation distance between the joint distribution of $(X_1,\cdots,X_n)$ under the two measures $\mu^+_{\lamn,0}$ and $\mu^+_{\lamn,\eps h}$. Note that $X_i$ captures the information from the  outermost loops in $\lamn$. Thus, Theorem~\ref{thm: cle outermost singularity} implies the singularity between the outermost loops under $\mu^{+}_{\Lambda_{N}, 0}$ and the outermost loops under $\mu^+_{\Lambda_{N}, \eps h}$. 
\end{remark}

Our main intuition for Theorem \ref{thm: cle outermost singularity} is that if $\tau_i=1$, then the external field on the box $\fB_i$ changes the probability of the event $\mathcal{E}_\tau$ a little bit. The contribution of the external field on each box $\fB_i$ in total makes a big change in the probability of  $\mathcal{E}_\tau$. To start with, we show that under $\mu^+_{\lamn,0}$,  $|\tau|=\sum_{i=1}^n\tau_i$ is typically large.

\begin{lemma}\label{lem: outermost loops number lower bound 1}
    For any $\alpha\in(\frac{7}{144},\frac{1}{4})$, there exists a constant $c_1=c_1(\alpha)>0$ such that \begin{equation}
        \sum_{|\tau|\ge c_1 n^{\frac{15}{16}-\alpha}}\mu^+_{\lamn,0}(\mathcal{E}_\tau)\ge 1-c_1^{-1}\exp(-c_1n^{\frac{144\alpha-7}{112}}).
    \end{equation}
\end{lemma}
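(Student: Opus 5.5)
We need to show that under $\mu^+_{\Lambda_N,0}$ the number $|\tau|=\sum_i X_i$ of annuli containing an outermost loop is, with very high probability, at least $c_1 n^{\frac{15}{16}-\alpha}$. The strategy is to reduce to a conditionally independent Bernoulli count. The count is then lower bounded by a Chernoff bound.

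\textbf{Step 1: condition on the plus clusters surrounding the annuli.} Note that $n=(N/M)^2\asymp \iota^{-2}$, and the boxes $\fB_i$ have side $\asymp N/\sqrt n$. Let $A_i=\Lambda_{M/2}(u_i)\setminus\Lambda_{M/4}(u_i)$. Consider the event $\mathcal P_i$ that there is a $*$-circuit of plus spins in $\Lambda_{M}(u_i)\setminus \Lambda_{M/2}(u_i)$ which is $*$-connected by plus spins to $\partial\Lambda_N$. On $\mathcal P_i$, any plus-minus loop in $A_i$ surrounding $\Lambda_{M/4}(u_i)$ that is not enclosed by another loop inside the plus circuit is outermost. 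Therefore
\[
X_i\ge \mathbb I_{\mathcal P_i}\,\mathbb I_{\mathcal Q_i},
\]
where $\mathcal Q_i$ is the event that inside the innermost plus circuit there is a minus $*$-circuit in $A_i$ surrounded by a plus circuit in $A_i$ connected to the outer plus circuit. Given the outermost plus circuits $\Gamma_i$ (explored from outside, so the conditioning is on decreasing information, i.e.\ DMP), the events $\mathcal Q_i$ depend on disjoint domains. By RSW with plus boundary condition on $\Gamma_i$ (the bulk version, applied at distance $\asymp M$), we get $\mu(\mathcal Q_i\mid\Gamma)\ge c>0$ uniformly. Hence, conditionally on the exploration, $\sum_i \mathbb I_{\mathcal P_i}\mathbb I_{\mathcal Q_i}$ stochastically dominates $\mathrm{Bin}(|\{i:\mathcal P_i\}|,c)$.

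\textbf{Step 2: lower bound $|\{i:\mathcal P_i\}|$, which is the main obstacle.} The event $\mathcal P_i$ requires a plus connection from the annulus around $\fB_i$ to $\partial\Lambda_N$, at distance $\sim\sqrt n$ box-units. Its probability is only of order a one-arm probability, $(\dist(u_i,\partial\Lambda_N)/M)^{-1/8}$, which is why the exponent $\frac{15}{16}=1-\frac{1}{16}$ appears: the sum of $(\text{dist in boxes})^{-1/8}$ over $n$ boxes is $\asymp n\cdot n^{-1/16}$. I would use the FK/Edwards–Sokal representation with plus (wired) boundary: $\mathcal P_i$ is implied by an FK-open circuit in $\Lambda_M(u_i)\setminus\Lambda_{M/2}(u_i)$ connected to the boundary. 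Setting $Y=\sum_i\mathbb I_{\mathcal P_i}$, the mean is $\mathbb E Y\gtrsim n^{15/16}$, by RSW and the one-arm bound \eqref{eq: FK one arm event exponent}. For concentration, I would bound $\Var$ or higher moments of the deficit via quasi-multiplicativity and mixing for well-separated boxes. Alternatively, and I would try this first, I would work hierarchically: split $\Lambda_N$ into dyadic scales, and note that at scale $2^j$ boxes the connection succeeds with conditional probability $\ge c$ independently across disjoint annuli given the coarser exploration. This yields a branching-type lower bound, with a loss $n^{-\alpha}$ absorbing the polynomial corrections.

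\textbf{Step 3: assemble the estimate.} The large-deviation estimate with exponent $n^{(144\alpha-7)/112}$ would come from combining the Chernoff tail for the binomial in Step 1 with a stretched-exponential lower-tail bound for $Y$ from Step 2. That lower-tail bound is obtained from moment bounds of the type used in Lemma~\ref{lem: large deviation for Tk in sle}, namely sums controlled by Lemma~\ref{lem: upper-bound for the sum of squares of k point function}, applied to the complement count. The constraint $\alpha>7/144$ marks where the resulting exponent becomes positive; I expect the precise exponent bookkeeping in Step 2 to be the delicate part.
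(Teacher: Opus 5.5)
Your Step~1 is essentially fine and matches the second half of the paper's argument. The paper does it with FK circuits: an FK circuit in $\Lambda_{M/2}(u_i)\setminus\Lambda_{5M/12}(u_i)$ connected to $\partial\Lambda_N$, then a dual circuit and an inner FK circuit coloured minus with probability $\tfrac12$, then Hoeffding.

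The gap is in Steps~2--3, where the whole content of the lemma lies. None of the routes you suggest yields a lower-tail bound for $Y=\#\{i:\mathcal P_i\}$.
\begin{itemize}
\item \textbf{Moment bounds on the complement.} Bounds of the type in Lemma~\ref{lem: large deviation for Tk in sle}, built from Lemma~\ref{lem: upper-bound for the sum of squares of k point function}, are upper bounds on moments, so they only control upper tails. Applying them to $n-Y$ does not help: $\mathbb E[n-Y]=n-\Theta(n^{15/16})$, so you would need to detect a relative deviation of order $n^{-1/16}$, far beyond bounds of the form $C^r(r!)^s$.
\item \textbf{Second-moment or mixing arguments.} For boxes in the bulk, the events $\mathcal P_i$ are all driven by the single boundary cluster and are macroscopically correlated. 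Hence $Y/\mathbb E Y$ does not concentrate as $n\to\infty$, and no variance bound gives a stretched-exponential tail at level $n^{15/16-\alpha}$.
\item \textbf{The branching sketch.} With success probability $\ge c$ (an RSW constant) per dyadic step, the mean offspring $4c$ need not even exceed $1$. Beating $n^{7/8}$ needs growth larger than $2^{7/4}$ per dyadic scale, and close to $2^{15/8}$ to cover every $\alpha>7/144$. The claimed conditional independence across sub-boxes is not justified. Even a supercritical branching bound controls the population only on survival, so a stretched-exponential lower tail would still need many independent ancestors.
\end{itemize}
Consequently neither the threshold $n^{15/16-\alpha}$ nor the exponent $\frac{144\alpha-7}{112}$ is actually derived.

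The missing idea is to localize at $\partial\Lambda_N$ to obtain genuine independence.
\begin{itemize}
\item Set $\tilde N=n^{-\alpha'}N$ and keep only the $\asymp N/\tilde N=n^{\alpha'}$ boundary blocks $\mathsf D=\Lambda_{\tilde N}(v)$, i.e.\ those meeting $\partial\Lambda_N$.
\item In each block, use only the $M$-boxes inside $\Lambda_{\tilde N/2}(v)$, together with the increasing event that the annulus contains an FK circuit connected inside $\mathsf D$ to $\partial\mathsf D\cap\partial\Lambda_N$.
\item Put wired boundary conditions on $\partial\mathsf D\cap\partial\Lambda_N$ and free boundary conditions on the rest of $\partial\mathsf D$. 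The product of these block measures is stochastically dominated by $\phi^{\mathrm w}_{\Lambda_N,0}$, so the lower tail of this increasing count can be computed under the product measure.
\item In each block, FKG, RSW and \eqref{eq: FK one arm event exponent} give mean $\gtrsim(\tilde N/M)^{15/8}$ and, crudely, second moment $\lesssim(\tilde N/M)^{31/8}$.
\item Bernstein's inequality over the independent blocks then shows the count is at least of order $(\tilde N/M)^{15/8}N/\tilde N=n^{\frac{15}{16}-\frac{7\alpha'}{8}}$, outside probability $\exp(-c\,n^{(18\alpha'-1)/16})$.
\end{itemize}
Taking $\alpha'=8\alpha/7$ gives exactly the threshold $n^{15/16-\alpha}$ and the exponent $\frac{144\alpha-7}{112}$. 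The $n^{-\alpha}$ loss is the price of discarding all bulk boxes, and the constraint $\alpha>7/144$ comes from the crude per-block second-moment bound.
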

\begin{remark}
    We remark here that neither the quantity $\frac{15}{16}$ or $\frac{15}{16}-\frac{7}{144}$ is expected to be optimal. Nevertheless, since
$\frac{15}{16}-\frac{7}{144}>\frac{7}{8},$
the above estimate already suffices for the purposes of this paper. We further conjecture that the optimal exponent is $\frac{187}{192}$. More precisely, for every $\kappa_3>0$,$$\sum_{|\tau|\ge c_1 n^{\frac{187}{192}-\kappa_3}}\mu^+_{\lamn,0}(\mathcal{E}_\tau)\to 1
\qquad\text{as }N\to\infty.$$ 
\end{remark}
\begin{proof}[Proof of Lemma~\ref{lem: outermost loops number lower bound 1}]
Recall from Section~\ref{sec: extended ising} that, under the Edwards--Sokal coupling, the measure $\mu^{+}_{\Lambda_N,0}$ corresponds to the wired FK measure $\phi^{\w}_{\Lambda_N,0}$. Therefore, throughout this proof, whenever we work with FK configurations, we shall freely identify $\mu^+_{\Lambda_N,0}$ with $\phi^{\w}_{\Lambda_N,0}$.

The proof proceeds in two steps. We first use the FK measure to construct, in many annuli $\Lambda_{M/2}(u_i)\setminus\Lambda_{M/4}(u_i)$, an FK circuit which is connected to $\partial\Lambda_N$. Such a circuit will later serve as the plus circuit. Then, inside each region enclosed by such a circuit, we use the DMP, CBC, and RSW estimates to construct, with uniformly positive probability, an inner FK circuit which could be assigned the minus spin. This will produce a plus-minus circuit in $\Lambda_{M/2}(u_i)\setminus\Lambda_{M/4}(u_i)$.

For any $M$-box $\Lambda_M(u_i)$, let $X_i^1$ be the indicator of the event that there exists an FK circuit in the annulus
$\Lambda_{M/2}(u_i)\setminus\Lambda_{5M/12}(u_i)$ which is connected to $\partial\Lambda_N$. For any $\tau\in\{0,1\}^n$, define
\[
    \widetilde{\mathcal E}_\tau=\{\omega:X_i^1(\omega)=\tau_i,\ 1\le i\le n\},
\]
where $\omega$ denotes the FK edge configuration under $\phi^{\w}_{\Lambda_N,0}$.
We shall use the following claim, whose proof is postponed until the end of the proof of the lemma.

\begin{claim}\label{claim: outermost loops number lower bound}
For any $\alpha>\frac{7}{144}$, there exists a constant $C_1=C_1(\alpha)>0$ such that
    \begin{equation}
    \sum_{|\tau|\ge C_1n^{\frac{15}{16}-\alpha}}
    \mu^+_{\Lambda_N,0}(\widetilde{\mathcal E}_\tau)
    =
    \sum_{|\tau|\ge C_1n^{\frac{15}{16}-\alpha}}
    \phi^{\w}_{\Lambda_N,0}(\widetilde{\mathcal E}_\tau)
    \ge 1-\exp\big(-C_1n^{\frac{144\alpha-7}{112}}\big).
    \label{eq: outermost loops number lower bound}
\end{equation}
\end{claim}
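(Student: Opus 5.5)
We need to prove Claim: under the wired FK measure, the number of annuli $\Lambda_{M/2}(u_i)\setminus\Lambda_{5M/12}(u_i)$ containing an open circuit connected to $\partial\Lambda_N$ is at least $C_1 n^{15/16-\alpha}$, except with probability $\exp(-C_1 n^{(144\alpha-7)/112})$.

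The plan is a second-moment / higher-moment argument on $Y:=\sum_i X_i^1$. First, the lower bound on the mean. For a box $\fB_i$ at distance $d_iM$ from $\partial\Lambda_N$, RSW \eqref{eq:RSW} in the annulus together with FKG gives an open circuit in the annulus with probability at least $c$. The circuit must then be connected to $\partial\Lambda_N$; by the one-arm estimate \eqref{eq: FK one arm event exponent}, applied at scale $d_iM$, and gluing with RSW circuits at dyadic scales, this has probability at least $c(d_i+1)^{-1/8}$. Summing over the $n\asymp \iota^{-2}$ boxes arranged in a $\sqrt n\times\sqrt n$ grid, we obtain
\begin{equation*}
\phi^{\w}_{\Lambda_N,0}[Y]\ \ge\ c\sum_{d=0}^{\sqrt n} \sqrt n\,(d+1)^{-1/8}\ \asymp\ n^{15/16}.
\end{equation*}
This is exactly the exponent $15/16$.

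Second, the concentration step. Rather than a second moment, which only yields polynomial bounds, we aim for a stretched-exponential lower tail, so we coarse-grain. We partition the grid of boxes into mesoscopic blocks of $K\times K$ boxes, with $K=n^{\theta}$ for a $\theta$ to be optimized. In each block we consider the event that a large open circuit surrounds the block region at a scale comparable to its distance to $\partial\Lambda_N$ and is connected to $\partial\Lambda_N$. Conditionally on that outer structure, DMP and CBC (wired on the circuit) make the inner annulus events for the boxes in that block behave like independent-ish events via spatial mixing. Concretely, we use CBC to compare with wired boundary conditions on disjoint sub-boxes of $\Lambda_{M}(u_i)$ (each an $M$-box with wired exterior, conditioned on connection), so that the $X_i^1$'s dominate independent Bernoulli variables of parameter $c>0$, given that each box's outer boundary is connected to $\partial\Lambda_N$.

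The number of blocks whose connecting event holds is then controlled by a simpler count. Its expectation is $\gtrsim (\sqrt n/K)^{2-1/8}$, and its lower tail follows from a moment/Chernoff-type bound exploiting the fact that disjoint far-away blocks interact only through the boundary. We obtain such exponential tails by exploring the blocks one at a time and using the uniform lower bound $c(d+1)^{-1/8}$ under any boundary condition conditional on the past (RSW is uniform in boundary conditions). This is a martingale argument with Azuma/Freedman on the sum of conditional probabilities. Combining a Chernoff bound inside blocks with a Freedman bound across blocks, and losing factors $K^{\cdot}$ in the exponent, gives a count $\ge n^{15/16-\alpha}$ with failure probability $\exp(-n^{\beta(\alpha)})$. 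We then tune $\theta$ to get $\beta=(144\alpha-7)/112$; the threshold $\alpha>7/144$ should come out of balancing these losses.

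The main obstacle is the across-block exploration. The connection-to-boundary events are increasing but not local, and conditioning on the failure of earlier blocks is a decreasing conditioning. To handle this, we intend to explore from the outside inward by dyadic distance layers and use the uniform-in-boundary-condition RSW lower bound for each fresh circuit. This keeps every conditional success probability at least $c(d+1)^{-1/8}$ regardless of the history.
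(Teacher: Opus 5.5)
Your concentration step has a genuine gap, in two places.

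\textbf{Inside a block.} A large open circuit around the block that is connected to $\partial\Lambda_N$ does not connect the boxes inside the block to it. For a box at distance $j$ box-units from that circuit, the conditional probability of $X^1_i=1$ is still of order $j^{-1/8}$, and these connection events are strongly positively correlated. CBC with free boundary conditions on disjoint boxes does give you independent Bernoulli$(c)$ lower bounds for the \emph{existence} of the circuits. It gives nothing for their connection to $\partial\Lambda_N$. Your phrase ``given that each box's outer boundary is connected to $\partial\Lambda_N$'' conditions on exactly the non-local event that has to be controlled, so the Chernoff bound inside blocks has no input.

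\textbf{Across blocks.} The claim that every conditional success probability stays $\ge c(d+1)^{-1/8}$ regardless of the history is false. A connection to $\partial\Lambda_N$ must pass through already explored territory. Suppose the revealed cluster of a block closer to $\partial\Lambda_N$ is not connected to the boundary and surrounds the current block; then the conditional probability is $0$. Uniformity of RSW in boundary conditions only helps for crossings of unexplored regions, so the Freedman/Azuma step is unsupported. Moreover, counting ``connected blocks'' is just the original problem at a coarser scale. Finally, the threshold $7/144$ and the exponent $(144\alpha-7)/112$ are asserted rather than derived.

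\textbf{What the paper does instead.} The missing idea is a genuine source of independence, which the paper obtains by giving up the bulk.
\begin{itemize}
\item It keeps only $M$-boxes inside boundary boxes $\fD$ of side $\asymp\tilde N=n^{-\alpha'}N$. It lower-bounds $X^1_i$ by $\tilde X^1_i\tilde X^2_i$, where $\tilde X^2_i$ asks that $\partial\Lambda_{5M/12}(u_i)$ be connected \emph{inside} $\fD$ to $\partial\fD\cap\partial\Lambda_N$.
\item By CBC, $\phi^{\w}_{\Lambda_N,0}$ dominates the product $\Phi$ of the measures $\phi^{\eta_{\fD}}_{\fD,0}$, which are wired on $\partial\fD\cap\partial\Lambda_N$ and free elsewhere. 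The count is increasing, so one may work under $\Phi$, where the block sums are independent.
\item Within a block no independence is needed. The block sum is at most $(\tilde N/M)^2$. Its mean is $\gtrsim(\tilde N/M)^{15/8}$ by FKG, RSW and the one-arm bound. Its second moment is $\lesssim(\tilde N/M)^{31/8}$, trivially via $\sum_{i,j}\le|I_{\fD}|\sum_i$.
\item Bernstein's inequality over the $\asymp N/\tilde N$ blocks gives failure probability $\exp(-c(N/\tilde N)(\tilde N/M)^{-1/8})=\exp(-cn^{(18\alpha'-1)/16})$ for a count $\gtrsim n^{(15-14\alpha')/16}$.
\item Taking $\alpha'=8\alpha/7$ produces exactly the stated exponents and the condition $\alpha>7/144$.
\end{itemize}
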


By Claim~\ref{claim: outermost loops number lower bound}, with high probability there are at least $C_1n^{\frac{15}{16}-\alpha}$ indices $i$ for which $X_i^1=1$. We next show that, for each such index, the annulus
$\Lambda_{M/2}(u_i)\setminus\Lambda_{M/4}(u_i)$
contains a plus-minus circuit with probability bounded below uniformly.

Indeed, explore the FK configuration on
$\Lambda_N\setminus\big(\cup_{i=1}^n\Lambda_{M/2}(u_i)\big)$, and then explore each annulus
$\Lambda_{M/2}(u_i)\setminus\Lambda_{5M/12}(u_i)$ until the values of $X_i^1$ are determined. If $X_i^1=1$, let $\Lambda_i$ be the region enclosed by the outermost FK circuit in
$\Lambda_{M/2}(u_i)\setminus\Lambda_{5M/12}(u_i)$ which is connected to $\partial\Lambda_N$. Since the outer boundary condition is plus, the boundary of $\Lambda_i$ carries the plus boundary condition. It remains to construct, inside $\Lambda_i$, a minus circuit surrounding $\Lambda_{M/4}(u_i)$.

Let $X_i^2$ be the indicator of the event that there exists a dual FK circuit in
$\Lambda_{5M/12}(u_i)\setminus\Lambda_{M/3}(u_i)$, and let $X_i^3$ be the indicator of the event that there exists an FK circuit in
$\Lambda_{M/3}(u_i)\setminus\Lambda_{M/4}(u_i)$. Let
$\Xi^2\subset\{0,1\}^{E(\Lambda_{5M/12}(u_i)\setminus\Lambda_{M/3}(u_i))}$
denote the set of configurations on
$\Lambda_{5M/12}(u_i)\setminus\Lambda_{M/3}(u_i)$ for which $X_i^2=1$. Then
\begin{equation}
     \sum_{\omega\in \Xi^2}\phi^{\w}_{\Lambda_i,0}(\omega)
     \ge
     \phi^{\w}_{\Lambda_{5M/12}(u_i)\setminus\Lambda_{M/3}(u_i),0}(X_i^2=1).
     \label{eq: X2 lower bound in small region}
\end{equation}
Combining the domain Markov property, comparison between boundary conditions, and RSW estimates, we obtain that
 \begin{align}
    \phi_{\Lambda_i,0}^{\w}(X_i^2X_i^3=1)
    &\stackrel{\mathrm{DMP}}{\ge}
    \sum_{\omega\in \Xi^2}\phi^{\w}_{\Lambda_i,0}(\omega)\,
    \phi^{\w}_{\Lambda_i,0}(X_i^3=1\mid \omega)
    \nonumber\\
    &\stackrel{\mathrm{CBC}}{\ge}
    \sum_{\omega\in \Xi^2}\phi^{\w}_{\Lambda_i,0}(\omega)\,
    \phi^{\f}_{\Lambda_{M/3}(u_i)\setminus\Lambda_{M/4}(u_i),0}(X_i^3=1)
    \nonumber\\
    &\stackrel{\eqref{eq: X2 lower bound in small region}}{\ge}
    \phi^{\w}_{\Lambda_{5M/12}(u_i)\setminus\Lambda_{M/3}(u_i),0}(X_i^2=1)\,
    \phi^{\f}_{\Lambda_{M/3}(u_i)\setminus\Lambda_{M/4}(u_i),0}(X_i^3=1)
    \nonumber\\
    &\stackrel{\mathrm{RSW}}{\ge} C_2 .
    \label{eq: FK circuit connected to boundary in small region}
\end{align}

On the event $\{X_i^2X_i^3=1\}$, let $\gamma$ be the outermost FK circuit in
$\Lambda_{M/3}(u_i)\setminus\Lambda_{M/4}(u_i)$, and let $X_i^4$ be the event that $\gamma$ is assigned the minus spin. Since $X_i^2=1$, the circuit $\gamma$ is not connected to $\partial\Lambda_i$. Hence, under the Edwards--Sokal coupling,
\[
    \phi_{\Lambda_i,0}^{\w}(X_i^4=1\mid X_i^2X_i^3=1)=\frac12 .
\]
Together with \eqref{eq: FK circuit connected to boundary in small region}, this gives
\begin{equation}
    \phi_{\Lambda_i,0}^{\w}(X_i^2X_i^3X_i^4=1)\ge C_3 .
    \label{eq: FK circuit connected to boundary in small region 1}
\end{equation}

If $X_i^1X_i^2X_i^3X_i^4=1$, then there exists a plus circuit in
$\Lambda_{M/2}(u_i)\setminus\Lambda_{M/4}(u_i)$, which is connected to $\partial\Lambda_N$, and there also exists a minus circuit in the same annulus. Hence, an outermost plus-minus circuit exists in
$\Lambda_{M/2}(u_i)\setminus\Lambda_{M/4}(u_i)$, and therefore $X_i=1$.

Consequently, by the DMP and \eqref{eq: FK circuit connected to boundary in small region 1}, for any $I\subset\{1,\ldots,n\}$, conditioned on $X_i^1=1$ for all $i\in I$, the family $\{X_i\}_{i\in I}$ stochastically dominates a Bernoulli process with density $C_3$. Choose $C_4\le C_1C_3/10$. Combining Hoeffding's inequality with Claim~\ref{claim: outermost loops number lower bound}, we get
\begin{align}
    \sum_{|\tau|< C_4n^{\frac{15}{16}-\alpha}}
    \phi^{\w}_{\Lambda_N,0}(\mathcal E_\tau)
    &\le
    \sum_{|\tau|< C_1n^{\frac{15}{16}-\alpha}}
    \phi^{\w}_{\Lambda_N,0}(\widetilde{\mathcal E}_\tau)
    +2\exp\big(-C_5n^{\frac{15}{16}-\alpha}\big)
    \nonumber\\
    &\le C_6^{-1}\exp\big(-C_6n^{\frac{144\alpha-7}{112}}\big),
\end{align}
where the last inequality also used the fact that 
$\frac{15}{16}-\alpha>\frac{144\alpha-7}{112}$ for $\alpha<\frac{1}{4}.$
This completes the proof of Lemma~\ref{lem: outermost loops number lower bound 1}.
\end{proof}

\begin{proof}[Proof of Claim~\ref{claim: outermost loops number lower bound}]
        We choose $\tilde{N}=n^{-\alpha^\prime} N$ with $\alpha^\prime\in (0,1)$ to be defined later. Without loss of generality, we can assume $N\in\tilde N\mathbb Z.$ Partition $\Lambda_N$ into boxes of side length $\widetilde N$. We call a box in this partition a boundary $\widetilde N$-box if it intersects $\partial\Lambda_N$.
 We want to lower-bound the number of $M$-boxes $\fB_i$ in a boundary $\tilde{N}$-box such that $X_i^1=1.$    For a boundary $\tilde{N}$-box $\fD=\Lambda_{\tilde{N}}(v)$, let $I_{\fD}$ denote the collection of indices $i$ such that $\Lambda_{M}(u_i)\subset \Lambda_{\tilde{N}/2}(v)$.  For any $i\in I_{\fD}$ let $\eta_{\fD}$ be the boundary condition on $\partial \fD$ where it is wired on $\partial \fD\cap\partial\lamn$ and is free on $\partial \fD\setminus\partial\lamn$. In addition, for any $i\in I_{\fD}$, let $\tilde{X}^1_i$ denote the event that there exists an FK circuit in $\Lambda_{M/2}(u_i)\setminus\Lambda_{5M/12}(u_i)$ and let $\tilde{X}^2_i$ denote the event that $\partial\Lambda_{5M/12}(u_i)$ is connected to the boundary $\partial \fD\cap\partial\lamn$.  Note that $\tilde{X}^1_i\cdot\tilde{X}^2_i\le X_i^1$, thus we get that \begin{equation}
      \sum_{i\in I_{\fD}}X_i^1\ge \sum_{i\in I_{\fD}}\tilde{X}^1_i\cdot\tilde{X}^2_i.\label{eq: FK circuit connected to boundary 1}
    \end{equation} By FKG and RSW, we get that \begin{equation}
        \phi_{\fD,0}^{\eta_{\fD}}(\tilde{X}^1_i\cdot\tilde{X}^2_i=1)\ge \phi_{\fD,0}^{\eta_{\fD}}(\tilde{X}^1_i=1)\cdot \phi_{\fD,0}^{\eta_{\fD}}(\tilde{X}^2_i=1)\ge C_2(\frac{\tilde{N}}{M})^{-\frac{1}{8}}.\nonumber\label{eq: FK circuit connected to boundary 2}
    \end{equation}Summing over $i\in I_{\fD}$, we get that \begin{equation}
       \langle\sum_{i\in I_{\fD}}\tilde{X}^1_i\cdot\tilde{X}^2_i\rangle_{\fD,0}^{\eta_{\fD}}\ge \frac{C_2}{4}(\frac{\tilde{N}}{M})^{\frac{15}{8}}.\label{eq: FK circuit connected to boundary final}
    \end{equation} In addition, by CBC and \eqref{eq: FK one arm event exponent}, we get that \begin{equation}
         \phi_{\fD,0}^{\eta_{\fD}}(\tilde{X}^1_i\cdot\tilde{X}^2_i=1)\le \phi_{\fD,0}^{\eta_{\fD}}(\tilde{X}^2_i=1)\le \phi^{\w}_{\Lambda_{\tilde{N}/2}}(\partial\Lambda_{\tilde{N}/2}\leftrightarrow\partial \Lambda_{M})\le C_3(\frac{\tilde{N}}{M})^{-\frac{1}{8}}.\nonumber
    \end{equation} Summing over $ i,j\in I_{\fD}$, we compute that \begin{equation}
          \Big\langle\big(\sum_{i\in I_{\fD}}\tilde{X}^1_i\cdot\tilde{X}^2_i\big)^2\Big\rangle_{\fD,0}^{\eta_{\fD}}=\sum_{i,j\in I_{\fD}}\phi_{\fD,0}^{\eta_{\fD}}(\tilde{X}^1_i\cdot\tilde{X}^2_i\cdot \tilde{X}^1_j\cdot\tilde{X}^2_j=1)\le C_3(\frac{\tilde{N}}{M})^{\frac{31}{8}}.\label{eq: FK circuit connected to boundary 3}
    \end{equation}
    Let $\Phi$ be the product measure of $\phi_{\fD,0}^{\eta_{\fD}}$ over all the boundary $\tilde{N}$-box.  By \eqref{eq: FK circuit connected to boundary final}, \eqref{eq: FK circuit connected to boundary 3} and Bernstein inequality, we get that \begin{align}
\Phi\Big(\sum_{i=1}^n\tilde{X}^1_i\cdot\tilde{X}^2_i\le \frac{C_2}{2}\cdot(\frac{\tilde{N}}{M})^{\frac{15}{8}}\cdot\frac{N}{\tilde{N}}\Big)&\le \exp\Big(- \frac{C_4(\frac{\tilde{N}}{M})^{\frac{15}{4}}\cdot(\frac{N}{\tilde{N}})^2}{C_5(\frac{\tilde{N}}{M})^{\frac{31}{8}}\cdot \frac{N}{\tilde{N}}+C_6(\frac{\tilde{N}}{M})^{\frac{15}{8}}\cdot\frac{N}{\tilde{N}}\cdot (\frac{\tilde{N}}{M})^{2}}\Big)\nonumber\\&\le\exp\Big(-C_8 n^{\frac{18\alpha^\prime-1}{16}}\Big).\nonumber
    \end{align} Combined with the fact that $\phi_{\lamn,0}^{\w}$ stochastically dominates $\Phi$ and $\tilde{X}^1_i\cdot\tilde{X}^2_i$ is increasing, it yields that \begin{equation}
\phi_{\lamn,0}^{\w}\Big(\sum_{i=1}^n\tilde{X}^1_i\cdot\tilde{X}^2_i\le \frac{C_2}{2}\cdot(\frac{\tilde{N}}{M})^{\frac{15}{8}}\cdot\frac{N}{\tilde{N}}\Big)\le \exp\Big(-C_3 n^{\frac{18\alpha^\prime-1}{16}}\Big).\label{eq: good circuit boundary box number deviation}
    \end{equation}Note that $(\frac{\tilde{N}}{M})^{\frac{15}{8}}\cdot\frac{N}{\tilde{N}}=n^{\frac{15-14\alpha^\prime}{16}}$, thus we can choose $\alpha^\prime=\frac{8\alpha}{7}$.
\end{proof}
From now on, we fix $\alpha=\frac{7}{144}+\kappa$ where we recall that $\kappa=10^{-5}.$
We call a configuration $\tau\in\{0,1\}^n$ good if $|\tau|=\sum_{i=1}^n\tau_i\ge c_1n^{\frac{15}{16}-\alpha}$ where $c_1=c_1(\alpha)$ is the constant determined in Lemma~\ref{lem: outermost loops number lower bound 1}. By Lemma~\ref{lem: outermost loops number lower bound 1}, we get that good configurations have overwhelming probability.
 From now on, we will fix a good configuration $\tau$. As in Section~\ref{sec: Mutual absolute continuity of the single interface in the scaling limit}, we aim to analyze the effect of the external field on the ratio  $\frac{\mu^+_{\lamn,\eps h}(\mathcal{E}_\tau)}{\mu^+_{\lamn,0}(\mathcal{E}_\tau)}$ box by box. So we still define $H^i$ by
\begin{equation*}
    H^i_u=\left\{\begin{aligned}
        h_u,~~~&u\in \cup_{j=1}^i \fB_j\,,\\
        0,~~~&u\in \cup_{j=i+1}^{n} \fB_j,
\end{aligned}\right.
\end{equation*} 
although the definition of $\fB_j$ is different from Section~\ref{sec: Mutual absolute continuity of the single interface in the scaling limit}.
We also use the convention that $H^0\equiv 0.$

 The next proposition shows that introducing the external field in \(\mathsf B_i\) produces fluctuations in the ratio $\frac{\mu^+_{\lamn,\eps h}(\mathcal{E}_\tau)}{\mu^+_{\lamn,0}(\mathcal{E}_\tau)}$. 

\begin{proposition}\label{prop: adding external field fluctuation} Fix a good configuration $\tau\in\{0,1\}^n$.
    Let $\sfa_i=\frac{\mu^+_{\lamn,\eps H^i}(\mathcal{E}_\tau)}{\mu^+_{\lamn,\eps H^{i-1}}(\mathcal{E}_\tau)}-1$. For any $\kappa_1>0$, there exist absolute constants $c_2,c_3>0$, $c_4=c_4(\kappa_{1})>0$ and three sequences of random variables $\sfb_1,\cdots,\sfb_{n}$, $\sfc_1,\cdots,\sfc_{n}$ and $\sfd_1,\cdots,\sfd_{n}$ such that the following holds. We remark that $\sfa_i$, $\sfb_i$, $\sfc_i$, and $\sfd_i$ depend on the choice of $\tau$. When this dependency needs to be emphasized, we write them as $\sfa_i(\tau)$, $\sfb_i(\tau)$, $\sfc_i(\tau)$, and $\sfd_i(\tau)$.
    \begin{enumerate}[label=(\alph*)]
    \item There exists a filtration $\mathcal{F}_i$ such that for any $1\le i\le n$, $\sfa_i,\sfb_i,\sfc_i$ and $\sfd_i$ are measurable with respect to $\mathcal{F}_i$.
        \item For any $1\le i\le n$, conditioned on $\mathcal{F}_{i-1}$, $\sfb_i$ is a mean $0$ Gaussian variable with variance $\sigma_i^2$ where $ \sigma_i^2\le c_2\eps^2M^{\frac{7}{4}}$. In addition, if $\tau_i=1$, then we have $\sigma_i^2\ge c_3\eps^2M^{\frac{7}{4}}.$ \label{item: bi}
        \item For any $1\le i\le n$, conditioned on $\mathcal{F}_{i-1}$, $\sfc_i$ has mean $0$. In addition, we have $\EE \big(\sum_{i=1}^{n}\sfc_i\big)^2\\=\sum_{i=1}^{n}\big(\sfc_i\big)^2\le c_2\eps^4M^{\frac{7}{2}}\cdot n$.\label{item: ci}
        \item  $\EE \sum_{i=1}^{n}\sfd_i^2\le c_4\eps^4M^{\frac{7}{2}}\cdot \iota^{-\frac{7}{4}-\kappa_{1}}$. In addition, we have $\PP\Big(|\sum_{i=1}^{n}\sfd_i| \ge \sqrt{ \iota^{-2\kappa_{1}}\cdot \sum_{i=1}^{n}\sigma_i^2}\Big\}\Big) \\\le c_4\iota^{\kappa_{1}}.$\label{item: di}
        \item For any $1\le i\le n$, we have $\PP\Big(|\sfa_i-\sfb_i-\sfc_i-\sfd_i|\ge c_2(\eps M^{\frac{7}{8}})^{2.7}\Big)\le c_2^{-1}\exp(-c_2\iota^{-\frac{7}{80}})$.\label{item: ai-bi-ci}
    \end{enumerate}
\end{proposition}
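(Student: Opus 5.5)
The plan is to work one box at a time, conditionally on $\mathcal F_{i-1}:=\sigma(H^{i-1})$. Write $\langle\cdot\rangle_{i-1}$ for the expectation under $\mu^+_{\lamn,\eps H^{i-1}}$ and $\langle\cdot\rangle_{i-1}^{\tau}$ for the same measure conditioned on $\mathcal E_\tau$. As in \eqref{eq: partition function expansion 1}, I expand $\exp(\eps\beta_c\sum_{x\in\fB_i}h_x\sigma_x)$ after factoring out $\prod_{x\in\fB_i}\cosh(\eps\beta_c h_x)$. This factor cancels between numerator and denominator, so
\[
1+\sfa_i=\frac{1+\sum_{\emptyset\neq I\subset\fB_i}\langle\sigma^I\rangle^{\tau}_{i-1}\prod_{x\in I}\tanh(\eps\beta_c h_x)}{1+\sum_{\emptyset\neq I\subset\fB_i}\langle\sigma^I\rangle_{i-1}\prod_{x\in I}\tanh(\eps\beta_c h_x)} .
\]
I take the filtration in (a) to be $\mathcal F_i=\sigma(H^i)$. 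All coefficients $\langle\sigma^I\rangle^{\tau}_{i-1}$ and $\langle\sigma^I\rangle_{i-1}$ are $\mathcal F_{i-1}$-measurable, and the fresh noise $(h_x)_{x\in\fB_i}$ is independent of $\mathcal F_{i-1}$.

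I expand the ratio to second order in $\eps$, replacing $\tanh(\eps\beta_c h_x)$ by $\eps\beta_c h_x$ up to $O(\eps^3h^3)$. The linear term is
\[
\sfb_i:=\eps\beta_c\sum_{x\in\fB_i}h_x\big(\langle\sigma_x\rangle^{\tau}_{i-1}-\langle\sigma_x\rangle_{i-1}\big).
\]
Conditionally on $\mathcal F_{i-1}$ this is a centred Gaussian with variance $\sigma_i^2=\eps^2\beta_c^2\sum_{x\in\fB_i}(\langle\sigma_x\rangle^{\tau}_{i-1}-\langle\sigma_x\rangle_{i-1})^2$, which gives the Gaussian part of (b).

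The quadratic part splits into a conditionally centred piece and its conditional mean. I put into $\sfc_i$ the off-diagonal pairs $I=\{x,y\}$, the diagonal fluctuations $h_x^2-1$, and the corresponding products coming from expanding $1/(1+\cdot)$ in the denominator. Each of these has conditional mean $0$. The conditional mean of the quadratic part is
\[
\sfd_i=\eps^2\beta_c^2\sum_{x\in\fB_i}\Big(\langle\sigma_x\rangle_{i-1}^2-\langle\sigma_x\rangle^{\tau}_{i-1}\langle\sigma_x\rangle_{i-1}\Big)+(\text{pair terms with conditional mean }0\text{ removed}).
\]
Because the $\sfc_i$ are martingale increments, $\EE(\sum_i\sfc_i)^2=\sum_i\EE\sfc_i^2$. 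Each $\EE\sfc_i^2$ is bounded by $\eps^4\sum_{|I|\le2}(\cdot)^2\lesssim\eps^4M^{7/2}$, using Lemma~\ref{lem: upper-bound for the sum of squares of k point function} together with two-point bounds of the form $|\langle\sigma^I\rangle^{(\tau)}_{i-1}|\le c^{|I|}F(\partial\fB_i,I)$. This proves (c).

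For (e), the remainder $\sfa_i-\sfb_i-\sfc_i-\sfd_i$ consists of chaos terms of order $\ge3$, the Taylor errors of $\tanh$, and the higher-order terms of the geometric series for $1/(1+\cdot)$. I would bound these exactly as in Corollary~\ref{cor: DHX input for specific rare event in sle}, via \cite[Lemma B.1]{DHX23} with $r=3$. On the event where the denominator series is at most $1/2$, this yields $|\cdot|\lesssim(\eps M^{7/8})^{3-o(1)}\le(\eps M^{7/8})^{2.7}$ outside probability $\exp(-c(\eps M^{7/8})^{-1/2})=\exp(-c\iota^{-7/16})$. The only new input needed is the conditional analogue of Lemma~\ref{lem: DHX input for specific rare event in sle}: the bound $|\langle\sigma^I\rangle^{\tau}_{i-1}|\le c^{|I|}F(\partial\fB_i,I)$ under the conditioning on $\mathcal E_\tau$, which is neither increasing nor decreasing. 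This is the stronger Lemma~\ref{lem: DHX input for specific rare event} announced after Lemma~\ref{lem: DHX input for specific rare event in sle}. I would prove it by exploring the outermost loops from outside: first reveal the loops in the annuli of boxes $j\neq i$ and the configuration outside $\fB_i$. After this exploration, DMP leaves an Ising measure on a random subdomain of $\fB_i$ with $\pm$ boundary conditions, conditioned only on a local annulus event. Monotone comparison (CBC) and Lemma~\ref{lemma-bound-spin-correlations}-type bounds then apply.

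The upper bound $\sigma_i^2\le c_2\eps^2M^{7/4}$ in (b) follows from the one-point case of that conditional bound summed over $\fB_i$. For the lower bound when $\tau_i=1$, the key observation is that on $\mathcal E_\tau$ the inner box $\Lambda_{M/4}(u_i)$ is enclosed by an outermost loop, so its spins see a minus boundary. The unconditional measure, given the same outside information, sees plus with probability bounded below. By RSW and the one-arm bound \eqref{eq: FK one arm event exponent} I would show that $|\langle\sigma_x\rangle^{\tau}_{i-1}-\langle\sigma_x\rangle_{i-1}|\ge cM^{-1/8}$ for a positive fraction of $x\in\Lambda_{M/8}(u_i)$. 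That gives $\sigma_i^2\ge c_3\eps^2M^{7/4}$.

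For (d), a naive bound gives $\EE\sum_i\sfd_i^2\lesssim n\,\eps^4M^{7/2}=\iota^{-2}\eps^4M^{7/2}$, which is too weak by $\iota^{1/4}$. I would improve it by showing that the diagonal quantity $\langle\sigma_x\rangle_{i-1}(\langle\sigma_x\rangle_{i-1}-\langle\sigma_x\rangle^{\tau}_{i-1})$ contributes $\eps^2M^{7/4}$ only where the loop structure near $\fB_i$ is macroscopically influenced. Otherwise it decays like a power of the ratio of $M$ to the distance from $x$ to the relevant loops. Summing the resulting one-arm-type decay over boxes gives $\iota^{-7/4-\kappa_1}$.

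The tail bound in (d) then follows from Cauchy–Schwarz and Markov:
\[
(\textstyle\sum_i\sfd_i)^2\le n\sum_i\sfd_i^2 .
\]
Combined with the lower bound $\sum_i\sigma_i^2\ge c_3\eps^2M^{7/4}|\tau|$ and the fact that $\tau$ is good, so $|\tau|\ge c_1 n^{15/16-\alpha}$, this gives probability $\lesssim\iota^{\kappa_1}$.

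I expect two steps to be the main obstacles. The first is the conditional $k$-point bound under the non-monotone event $\mathcal E_\tau$, on which (b), (c) and (e) all rest. The second is the improved estimate in (d) on the deterministic second-order drift.
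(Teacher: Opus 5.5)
Your decomposition into $\sfb_i,\sfc_i,\sfd_i$ is the paper's. Write $\Delta_x:=\langle\sigma_x\mid\mathcal E_\tau\rangle_{i-1}-\langle\sigma_x\rangle_{i-1}$ and $H=\EE\tanh^2(\eps\beta_c h_x)$; then your drift is $\sfd_i=-H\sum_{x\in\fB_i}\Delta_x\langle\sigma_x\rangle_{i-1}$ up to $O(\eps^4)$. Your arguments for (a)--(c) and the structure of (e) follow the paper. Part (d), however, has a genuine gap in both of its claims.

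\emph{The tail bound.} The route through $(\sum_i\sfd_i)^2\le n\sum_i\sfd_i^2$ fails quantitatively. We have $\eps^2M^{7/4}=\iota^{7/4}$ and $n=\iota^{-2}$. Even granting the first half of (d), and using $|\tau|\ge c_1n^{15/16-\alpha}$, Markov only gives
\[
\PP\Big(\Big|\sum_{i}\sfd_i\Big|^2\ge\iota^{-2\kappa_1}\sum_i\sigma_i^2\Big)\le\frac{n\,\EE\sum_i\sfd_i^2}{\iota^{-2\kappa_1}c_3\eps^2M^{7/4}|\tau|}\lesssim\frac{\iota^{-1/4-\kappa_1}}{\iota^{-2\kappa_1}\,\iota^{7/4}\,\iota^{-15/8+2\alpha}}=\iota^{-\frac18-2\alpha+\kappa_1}.
\]
This is weaker than $\iota^{\kappa_1}$ by the factor $\iota^{-1/8-2\alpha}\approx\iota^{-0.22}$, and it is not even small for small $\kappa_1$. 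The factor $n$ lost in Cauchy--Schwarz is recovered only when $|\tau|\asymp n$, while goodness gives only $|\tau|\gtrsim n^{15/16-\alpha}$. Since the $\sfd_i$ are $\mathcal F_{i-1}$-measurable drifts rather than martingale increments, there is no orthogonality to exploit either.

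The paper instead applies Cauchy--Schwarz over the pairs $(i,x)$, using that $\Delta_x$ is exactly the coefficient appearing in $\sfb_i$:
\[
\Big|\sum_i\sfd_i\Big|^2\le H^2\Big(\sum_i\sum_{x\in\fB_i}\Delta_x^2\Big)\Big(\sum_i\sum_{x\in\fB_i}\langle\sigma_x\rangle_{i-1}^2\Big)\le C\eps^2\Big(\sum_i\sigma_i^2\Big)\Big(\sum_i\sum_{x\in\fB_i}\langle\sigma_x\rangle_{i-1}^2\Big).
\]
It then applies Markov to the last factor and uses $\eps^2N^{7/4}=1$. No lower bound on $|\tau|$ is needed.

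\emph{The missing estimate.} Both this Markov step and the bound on $\EE\sum_i\sfd_i^2$ rest on an estimate your proposal lacks. The small factor in $\sfd_i$ is the unconditioned magnetization $\langle\sigma_x\rangle_{i-1}$, not $\Delta_x$. Indeed, by your own argument for (b), $|\Delta_x|\gtrsim M^{-1/8}$ on $\Lambda_{M/4}(u_i)$ whenever $\tau_i=1$. Moreover $\sfd_i$ depends on the field only, so a decay ``in the distance from $x$ to the relevant loops'' is not a deterministic quantity you can sum over boxes. What is needed is
\[
\EE\langle\sigma_x\rangle_{i-1}^2\le C\big(\dist(\fB_i,\partial\lamn)\vee M\big)^{-\frac14+\kappa_5}M^{-\kappa_5}+C\dist(x,\partial\lamn)^{-\frac14}.
\]
In words, the magnetization feels the plus boundary of $\lamn$ rather than the scale $M$, and the already revealed field does not spoil this.

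The paper proves this as follows. Let $\hat\fB_i$ be the box concentric with $\fB_i$ of size $\hat N=(\dist(\fB_i,\partial\lamn)\vee M)^{1-4\kappa_5}M^{4\kappa_5}$, and remove $H^{i-1}$ on $(\cup_{j<i}\fB_j)\cap\hat\fB_i$. A chaos expansion together with \cite[Lemma B.1]{DHX23} shows this changes $\langle\sigma_x\rangle_{i-1}$ only by a factor $1+O(\sqrt{\eps\hat N^{7/8}})$, outside probability $\exp(-c(\eps\hat N^{7/8})^{-1/2})$. The field-free magnetization is then bounded by DMP and \eqref{eq: FK one arm event exponent}.

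To conclude, insert this into $\sfd_i^2\le C\eps^4|\fB_i|\sum_{x\in\fB_i}\langle\sigma_x\rangle_{i-1}^2F(\bpartial\fB_i,\{x\})^2$. Summing with $\sum_i(\dist(\fB_i,\partial\lamn)\vee M)^{-1/4+\kappa_5}\lesssim(N/M)^{7/4+\kappa_5}M^{-1/4+\kappa_5}$ gives the factor $\iota^{-7/4-\kappa_1}$. The same estimate also gives $\EE\sum_i\sum_x\langle\sigma_x\rangle_{i-1}^2\lesssim N^{7/4}\iota^{-\kappa_5}$, which is what the tail bound needs.

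\emph{Two minor points.}
\begin{enumerate}
\item In (e), reaching $(\eps M^{7/8})^{2.7}=(\eps M^{7/8})^{0.9\cdot 3}$ requires Corollary~\ref{cor: DHX input for specific rare event} with $\kappa_4=0.1$. This yields $\exp(-c\iota^{-7/80})$, not $\exp(-c\iota^{-7/16})$.
\item Lemma~\ref{lem: DHX input for specific rare event} cannot be obtained by ``DMP then CBC''. After conditioning on the exterior, $\mathcal E_\tau$ becomes $\mathcal R$ or $\mathcal S$, which carry non-local, non-monotone connectivity constraints. Cite the lemma as an input, as the paper does.
\end{enumerate}
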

Before proving Proposition~\ref{prop: adding external field fluctuation}, we begin with the following key lemma, which provides an upper bound for the conditional $k$-point function $\langle\sigma^I\mid\mathcal{E}_\tau\rangle^+_{\lamn,\eps H^{i-1}}$ (recalling that  $\sigma^I=\prod_{x\in I}\sigma_x$).
\begin{lemma}\label{lem: DHX input for specific rare event}
    Recall that $u_i$ is the center of $\fB_i$ and let $\bpartial\fB_i=\partial\fB_i\cup\partial\Lambda_{M/2}(u_i)\cup \partial\Lambda_{M/4}(u_i)$. There exists a constant $c_5>0$ such that for any $1\le i\le n$, any external field $h\in\mbb R^{\Lambda_N}$ and any set $I\subset \fB_i$, we have  \begin{align}
       |\langle ~\sigma^I\mid\mathcal{E}_\tau~\rangle_{\Lambda_N,\eps \hat{h}^i}^{+}|&\le c_5^{|I|} F(\bpartial\fB_i,I),\label{eq: DHX input for specific rare event}
    \end{align}
    where $\hat h^i_x=h_x\cdot \mbf 1_{x\not\in\fB_i}$ is the external field obtained by removing the field inside $\fB_i.$
\end{lemma}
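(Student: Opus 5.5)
The plan is to reduce the conditional correlation inside $\fB_i$ to an \emph{unconditional} correlation in a random subdomain with monotone boundary data. Once that is done, the standard bound of \cite[Lemma~4.16]{DHX23} (the input behind Lemma~\ref{lem: DHX input for specific rare event in sle}) applies directly.

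\textbf{Decoupling the outside.} Since $\hat h^i$ vanishes on $\fB_i$, the field plays no role inside the box. Write $\mathcal E_\tau=\mathcal E^{(i)}\cap\{X_i=\tau_i\}$, where $\mathcal E^{(i)}$ collects the constraints $X_j=\tau_j$ for $j\neq i$. I would explore the configuration from $\partial\Lambda_N$ inwards, through the extended Ising model of Section~\ref{sec: extended ising}. At each step the exploration reveals either a plus circuit connected to $\partial\Lambda_N$ or a $*$-circuit of minus spins, and it stops on $\partial\fB_i$ or on the first circuit surrounding $\Lambda_{M/2}(u_i)$. The stopping region is chosen so that the remaining event depends only on this revealed ``frontier''.

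\textbf{Reduction to the box.} By DMP, conditioned on the explored configuration, the law inside the unexplored region $U$ is an Ising measure on $U$ with the revealed boundary condition $\xi$ and zero field on $U\cap\fB_i$. The conditioning on $\mathcal E^{(i)}$ becomes a conditioning on an event $A$ measurable with respect to $\sigma|_{U\setminus \fB_i}$ together with whether $\fB_i$ is ``reached'' by the outermost structure.

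\textbf{Handling $\{X_i=\tau_i\}$.} I would split $\fB_i$ along $\partial\Lambda_{M/2}(u_i)$ and $\partial\Lambda_{M/4}(u_i)$. The event $\{X_i=1\}$ amounts to the existence of a plus circuit connected to the outer plus cluster in $\fB_i\setminus\Lambda_{M/2}(u_i)$, and a minus $*$-circuit in $\Lambda_{M/2}(u_i)\setminus\Lambda_{M/4}(u_i)$ adjacent to it. I would explore these circuits from outside (outermost plus circuit) and from inside (innermost minus circuit). Each revealed circuit imposes a \emph{constant-sign} boundary condition on the region it encloses. Hence, conditioned on the exploration, the spins in $I$ are distributed according to products of Ising measures on subdomains $V_1,V_2,V_3$ with $\pm$ boundary conditions and zero field. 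These subdomains are separated by curves lying inside the annuli bounded by $\bpartial\fB_i$ or by explored circuits. Points of $I$ lying on revealed circuits have $|\sigma^I|\le1$.

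\textbf{The bound.} Given the exploration, $|\langle\sigma^I\mid\cdot\rangle|$ factorizes over the $V_j$. Using Lemma~\ref{lemma-bound-spin-correlations} (or \cite[Lemma 4.16]{DHX23}) together with CBC to pass to plus boundary conditions, this is bounded by $c^{|I|}\prod_x \dist(x,\partial V_j\cup I\setminus\{x\})^{-1/8}$. The obstacle, and the main difficulty, is that $\partial V_j$ is random and may come much closer to $x$ than $\bpartial\fB_i$ does, so the pointwise bound is not immediately $F(\bpartial\fB_i,I)$. To fix this, I would average over the exploration. The probability that the frontier comes within distance $r$ of $x$, while $x$ remains at distance $d$ from $\bpartial\fB_i$, is at most $(r/d)^{c}$ by RSW and arm estimates. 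Here the events involved are not monotone, so one conditions circuit by circuit and uses uniform-in-boundary RSW in annuli. Summing dyadically in $r$ then recovers $\dist(x,\bpartial\fB_i\cup I\setminus\{x\})^{-1/8}$ up to a factor $c_5$, jointly for all points via the DMP nested iteration. If the arm exponent needed does not beat $1/8$ per point, the fallback is to bound instead only the contribution where the frontier avoids $\Lambda_{M/4}$-scale neighborhoods, and absorb the rest into $c_5^{|I|}$.
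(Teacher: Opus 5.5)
The first step is fine: conditioning on the configuration outside $\Lambda_M(u_i)$ is also how the paper starts (Lemma \ref{lem: Etau conditioning reduction}). The reduction that follows does not hold, and what is missing is exactly the core of the paper's argument.

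After that first step, $\mathcal{E}_\tau$ becomes $\mathcal{R}=\mathcal{Y}\setminus\mathcal{X}$ or $\mathcal{S}=\mathcal{X}\cap\mathcal{Y}$. Here $\mathcal{Y}$ is a connectivity pattern \emph{through} the box: every $U\in\mathcal{U}$ must be plus-connected to $\mathcal{P}_0$ inside $\Lambda_M(u_i)$, while $\mathcal{P}_0$ must not be plus-connected to $\mathcal{P}_1$. This is not ``whether $\fB_i$ is reached'', and it is neither increasing nor decreasing. It is also not decided by the macroscopic circuits you explore: deciding it requires revealing plus clusters, or separating minus $*$-paths, that may pass anywhere in $\fB_i$, including arbitrarily close to points of $I$.

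Two further points in your setup are off. First, $X_i$ concerns an outermost plus--minus circuit in $\Lambda_{M/2}(u_i)\setminus\Lambda_{M/4}(u_i)$, not a plus circuit in $\fB_i\setminus\Lambda_{M/2}(u_i)$. Second, for $\tau_i=0$ there is no circuit to explore at all.

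Consequently, your claim that, given the exploration, $\sigma^I$ follows a product of unconditioned $\pm$ Ising measures on $V_1,V_2,V_3$ is false. Each region either still carries a non-monotone conditioning or has been revealed. For revealed points you only have $|\sigma_x|\le 1$, whereas a point at distance $\asymp M$ from $\bpartial\fB_i$ needs a factor of order $M^{-1/8}$. Your fallback of absorbing such contributions into $c_5^{|I|}$ cannot work, since $F(\bpartial\fB_i,I)$ decays polynomially in the distances.

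The averaging step is where a genuinely new input is needed, and ``uniform-in-boundary RSW in annuli'' does not supply it. RSW and arm bounds hold for Gibbs measures with arbitrary boundary conditions. They do not hold for the measure conditioned on $\mathcal{R}$ or $\mathcal{S}$, whose probability can be arbitrarily small and a priori very sensitive to the configuration near $I$.

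The paper proceeds differently. It uses a symmetry argument in the extended Ising model. If some local FK cluster $\mathcal{C}_x$, $x\in I$, can be flipped without changing $\mathbb{I}_{\mathcal{R}}$ (resp.\ $\mathbb{I}_{\mathcal{S}}$), the contributions cancel exactly (Lemma \ref{lem:cancellation-of-excellent-clusters in cle}). The probability that $\mathcal{C}_x$ is non-local or pivotal is $O(d_x^{-1/8})$, by the one-arm bound and the BK-type estimate for two disjoint plus crossings (Lemmas \ref{lem: BK for Ising weak four arm} and \ref{lem: pivotal event bound for cle outermost}).

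The crux is then Lemma \ref{lem: two rare events upper bound with conditioning}: conditioning on any configuration in $\mathrm{Ball}(I)$ increases the probability of $\mathcal{R}$ or $\mathcal{S}$ by at most a factor $c^{|I|}$. Its proof needs three ingredients:
\begin{itemize}
\item exploring the clusters of $\mathcal{P}_0$ and $\mathcal{P}_1$ to obtain an FKG-type inequality up to constants for the non-monotone event $\mathcal{Y}$ (Lemmas \ref{lem:explore-plus-spin-clusters} and \ref{lem:bound-plus-crossing-given-Y});
\item an induction adding one ball at a time, combined with spatial mixing;
\item for $\mathcal{S}$, where $\mu(\mathcal{X})$ may be tiny, an ordering of the plus--minus circuits by rank with geometric decay.
\end{itemize}

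A repaired version of your idea would separate each $x$ from the event by a monochromatic circuit at scale $\asymp d_x$, inside which $\mathcal{R}$ and $\mathcal{S}$ do not depend on the spins. Even that would still need this insensitivity estimate to control, under the conditioned measure, the probability that such circuits fail to exist. Without it, the proposal does not prove the lemma.
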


The proof of Lemma~\ref{lem: DHX input for specific rare event} is postponed to Section~\ref{sec: proof of DHX rare event}. We continue to use the notation $\langle\cdot\rangle_{i-1}$ for the expectation with respect to $\mu_{\Lambda_N,\eps H^{i-1}}^+$. Replacing Lemma~\ref{lem: DHX input for specific rare event in sle} by Lemma~\ref{lem: DHX input for specific rare event}, we obtain the following analogue of Corollary~\ref{cor: DHX input for specific rare event in sle}. The only difference is that we keep an additional parameter $\kappa_4$ here, whereas in Corollary~\ref{cor: DHX input for specific rare event in sle} it was enough to fix $\kappa_4=1/2$.
\begin{corollary}\label{cor: DHX input for specific rare event}
For any $\kappa_4\in (0,1)$, there exists a constant $c_6=c_6(\kappa_4)>0$ such that 
    for any $1\le i\le n$ and any $k\ge 1$, we have \begin{align*}
        &\PP\Big(\Big|\sum_{I\subset \fB_i,|I|=k}(\langle\sigma^I \mid \mathcal{E}_\tau\rangle_{i-1}\prod_{x\in I}\tanh(\eps\beta_c h_x)\Big|\ge (\eps M^{\frac{7}{8}})^{(1-\kappa_4)k}\Big)\\\le~& c_6^{-1}\exp\Big(-c_6(\eps^{-1}M^{-\frac{7}{8}})^{\kappa_4}\times(k!)^{\frac{3}{8k}}\Big).
    \end{align*}Furthermore, if $\iota<0.1$, then for any $r\ge 1$, we have \begin{align*}
        &\PP\Big(\Big|\sum_{k=r}^{\infty}\sum_{I\subset \fB_i,|I|=k}(\langle\sigma^I \mid \mathcal{E}_\tau\rangle_{i-1}\prod_{x\in I}\tanh(\eps\beta_c h_x)\Big|\ge 2(\eps M^{\frac{7}{8}})^{(1-\kappa_4)r}\Big)\\\le~& c_6^{-1}\exp\Big(-c_6(\eps^{-1}M^{-\frac{7}{8}})^{\kappa_4}\times(r!)^{\frac{3}{8r}}\Big).
    \end{align*}
\end{corollary}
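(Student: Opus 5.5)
The plan follows the proof of Corollary~\ref{cor: DHX input for specific rare event in sle}, now with Lemma~\ref{lem: DHX input for specific rare event} as the $k$-point input. Fix $i$ and condition on $\mathcal{F}_{i-1}$, i.e.\ on $(h_x)_{x\notin\fB_i}$. Since $H^{i-1}$ vanishes on $\fB_i$, the field $\eps H^{i-1}$ coincides with $\eps\hat h^i$ on $\fB_i$ and outside it. Hence, conditionally on $\mathcal{F}_{i-1}$, the coefficients $c_I:=\langle\sigma^I\mid\mathcal{E}_\tau\rangle_{i-1}$, $I\subset\fB_i$, are deterministic. By Lemma~\ref{lem: DHX input for specific rare event} they satisfy $|c_I|\le c_5^{|I|}F(\bpartial\fB_i,I)$. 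The random variables $\tanh(\eps\beta_c h_x)$, $x\in\fB_i$, are independent of $\mathcal{F}_{i-1}$, centered, and bounded by $\eps\beta_c|h_x|$.

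\textbf{Second moment.} We apply Lemma~\ref{lem: upper-bound for the sum of squares of k point function} with $s=1/4$, $\Lambda=\fB_i$ (side length of order $M$, with $M$ in place of $N$) and $S=\bpartial\fB_i$. The hypothesis \eqref{eq: boundary dimension assumption} holds with an absolute $t$, because $\bpartial\fB_i$ is a union of three square boundaries of size comparable to $M$, so $\sum_{x\in\fB_i}\dist(x,\bpartial\fB_i)^{-1/4}\le CM^{7/4}$. Together with \eqref{eq: upper-bound for the sum of squares of k point function2}, this gives
\[
\sum_{I\subset\fB_i,|I|=k}c_I^2\le C^kM^{\frac{7k}{4}}(k!)^{-3/4}.
\]
Consequently the homogeneous chaos $Y_k:=\sum_{|I|=k}c_I\prod_{x\in I}\tanh(\eps\beta_c h_x)$ has conditional variance at most $C^k(\eps M^{7/8})^{2k}(k!)^{-3/4}$.

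\textbf{Tail bound and summation.} Next we invoke \cite[Lemma B.1]{DHX23}, the hypercontractivity / concentration estimate for polynomial chaos in independent variables with Gaussian-type tails, exactly as in the proof of Lemma~\ref{lem: adding external field in sle continuity}. It gives $\PP(|Y_k|\ge u\,\mathrm{sd}(Y_k))\le C\exp(-cu^{2/k})$. We take $u=(\eps M^{7/8})^{-\kappa_4 k}(k!)^{3/8}C^{-k/2}$, so that $u\,\mathrm{sd}(Y_k)\le(\eps M^{7/8})^{(1-\kappa_4)k}$. Then $u^{2/k}\gtrsim(\eps^{-1}M^{-7/8})^{2\kappa_4}(k!)^{3/(4k)}$, which dominates the stated exponent $(\eps^{-1}M^{-7/8})^{\kappa_4}(k!)^{3/(8k)}$. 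This proves the first bound conditionally on $\mathcal{F}_{i-1}$, and integrating over $\mathcal{F}_{i-1}$ removes the conditioning.

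For the tail sum, note that $\eps M^{7/8}\le\iota^{7/8}$ is small when $\iota<0.1$. We apply the first bound to every $k\ge r$ and take a union bound. On the complement of the union, $\sum_{k\ge r}(\eps M^{7/8})^{(1-\kappa_4)k}\le 2(\eps M^{7/8})^{(1-\kappa_4)r}$. The sum of the failure probabilities $\sum_{k\ge r}\exp(-c(\eps^{-1}M^{-7/8})^{\kappa_4}(k!)^{3/(8k)})$ is dominated by a constant multiple of its first term, because $(k!)^{3/(8k)}$ grows linearly-ish in $k$ and the prefactor is large. Adjusting $c_6$ absorbs this constant.

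\textbf{Main obstacle.} The only nontrivial input is Lemma~\ref{lem: DHX input for specific rare event}, the conditional $k$-point bound under the non-monotone event $\mathcal{E}_\tau$, and we may assume it here. Beyond that, the step needing care is the exponent bookkeeping in the application of \cite[Lemma B.1]{DHX23}. There we must check that the $(k!)$ factors coming from \eqref{eq: upper-bound for the sum of squares of k point function2} leave enough room to produce $(k!)^{3/(8k)}$. We must also check that the field being only $\mathcal{F}_{i-1}$-conditioned (the coefficients are random but frozen) does not interfere; it does not, by independence.
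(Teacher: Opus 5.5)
Your route is the paper's own. The paper proves this corollary in one line, ``combine Lemma~\ref{lem: DHX input for specific rare event} with \cite[Lemma B.1]{DHX23}''. Your three steps fill that in correctly for the first inequality: conditioning on $\mathcal{F}_{i-1}$, the second-moment bound from Lemma~\ref{lem: upper-bound for the sum of squares of k point function} with $s=1/4$, and the chaos tail estimate.

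Two small remarks on that part:
\begin{itemize}
\item $\eps H^{i-1}$ does not coincide with $\eps\hat h^i$ on the boxes $\fB_j$, $j>i$. This is harmless: Lemma~\ref{lem: DHX input for specific rare event} holds for an arbitrary field, so apply it to $h=H^{i-1}$, for which $\hat h^i=H^{i-1}$.
\item Whatever exact form \cite[Lemma B.1]{DHX23} has, the exponents in the statement are exactly what a tail $C\exp(-c\,u^{1/k})$ produces, since $u^{1/k}\asymp(\eps^{-1}M^{-7/8})^{\kappa_4}(k!)^{3/(8k)}$. So you do not need the stronger $u^{2/k}$ form you attribute to it.
\end{itemize}

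The step that fails is the tail sum. Set $a:=(\eps M^{7/8})^{1-\kappa_4}$. On the complement of your union bound you only get $|\sum_{k\ge r}Y_k|\le a^r/(1-a)$, which is at most $2a^r$ only when $a\le 1/2$. Smallness of $\eps M^{7/8}\le\iota^{7/8}$ does not give this uniformly in $\kappa_4\in(0,1)$. For $\iota$ just below $0.1$ one has $\eps M^{7/8}\approx0.13$, and $a\to1$ as $\kappa_4\to1$; for instance $\kappa_4=0.99$ gives $a\approx0.98$, so $a^r/(1-a)\approx 50\,a^r$. This is harmless for $\kappa_4=1/2$ and $\kappa_4=0.1$, the values used later, but it does not prove the statement as written.

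The fix is not to reuse the first bound's thresholds. Instead allocate $t_k:=2^{r-k}a^r$, so that $\sum_{k\ge r}t_k=2a^r$, and apply the tail bound to each $Y_k$ directly. With $b:=\eps M^{7/8}\le1$ and $\mathrm{sd}(Y_k)\le C^{k/2}b^k(k!)^{-3/8}$,
\[
\frac{t_k}{\mathrm{sd}(Y_k)}\ \ge\ C^{-k/2}(k!)^{3/8}\,b^{-\kappa_4 r}\Big(\frac{b^{-1}}{2}\Big)^{k-r}\ \ge\ (2\sqrt{C})^{-k}(k!)^{3/8}\,b^{-\kappa_4 k},
\]
using $b^{-1}\ge b^{-\kappa_4}$. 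The $k$-th root of the right-hand side is $\gtrsim b^{-\kappa_4}(k!)^{3/(8k)}$, so $\PP(|Y_k|\ge t_k)\le C\exp(-c\,b^{-\kappa_4}(k!)^{3/(8k)})$ for every $\kappa_4\in(0,1)$.

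To sum these over $k\ge r$, note that $(k!)^{3/(8k)}\asymp k^{3/8}$, not linear as you wrote. Since $(k!)^{1/k}$ is nondecreasing, split the exponent as $\tfrac12(r!)^{3/(8r)}+\tfrac12(k!)^{3/(8k)}$. Then use $b^{-\kappa_4}\ge1$ and $\sum_k e^{-c(k!)^{3/(8k)}}<\infty$. This gives the claimed bound after adjusting $c_6$.
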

\begin{proof}
    The proof follows directly from combining Lemma~\ref{lem: DHX input for specific rare event} and \cite[Lemma B.1]{DHX23}, so we omit it here.
\end{proof}
\begin{proof}[Proof of Proposition~\ref{prop: adding external field fluctuation}]
Let $\mathcal{F}_i$ be the $\sigma$-field generated by the  external fields on $\cup_{j=1}^i \fB_j$.
    Similar to \eqref{eq: partition function expansion 1}, we first perform the chaos expansion with respect to $\mu_{\lamn,\eps H^{i-1}}^+$: \begin{equation}
        \sfa_i=\frac{\sum_{k=1}^{\infty}\sum_{I\subset \fB_i,|I|=k}\big(\langle\sigma^I \1_{\mathcal{E}_\tau}\rangle_{i-1}-\langle\sigma^I \rangle_{i-1}\cdot \langle \1_{\mathcal{E}_\tau}\rangle_{i-1}\big)\cdot\prod_{x\in I}\tanh(\eps\beta_c h_x)}{\mu^+_{\lamn,\eps H^{i-1}}(\mathcal{E}_\tau)\cdot[1+ \sum_{k=1}^{\infty}\sum_{I\subset \fB_i,|I|=k}\langle\sigma^I \rangle_{i-1}\cdot\prod_{x\in I}\tanh(\eps\beta_c h_x)]}.\label{eq: chaos expansion for cle singularity}
    \end{equation}
    We next decompose the right-hand side of \eqref{eq: chaos expansion for cle singularity} into several terms. Writing $H=\EE\tanh^2(\eps\beta_c h_x)$, we define \begin{align}
        \mathsf{b}_i&=\sum_{x\in \fB_i}\big(\langle\sigma_x \mid {\mathcal{E}_\tau}\rangle_{i-1}-\langle\sigma_x \rangle_{i-1}\big)\cdot \eps\beta_c h_x,\label{eq: def of bi in adding external field}\\
        \mathsf{c}_{i,1}&=\sum_{\{x, y\}\subset \fB_i}\big(\langle\sigma_x\sigma_y \mid {\mathcal{E}_\tau}\rangle_{i-1}-\langle\sigma_x\sigma_y  \rangle_{i-1}\big)\cdot \tanh(\eps\beta_c h_x)\tanh(\eps\beta_c h_y),\label{eq: def of ci in adding external field 1}\\
        \mathsf{c}_{i,2}&=\sum_{x, y\in \fB_i}\big(\langle\sigma_x \mid {\mathcal{E}_\tau}\rangle_{i-1}-\langle\sigma_x  \rangle_{i-1}\big)\cdot\langle\sigma_y  \rangle_{i-1}\cdot \big(-\tanh(\eps\beta_c h_x)\tanh(\eps\beta_c h_y)+H\cdot \1_{x=y}\big) ,\label{eq: def of ci in adding external field 2}\\
        \mathsf{d}_i&=-\sum_{x\in \fB_i}\big(\langle\sigma_x \mid {\mathcal{E}_\tau}\rangle_{i-1}-\langle\sigma_x  \rangle_{i-1}\big)\cdot\langle\sigma_x  \rangle_{i-1}\cdot H .\label{eq: def of di in adding external field}
    \end{align}
We will prove \ref{item: bi}, \ref{item: ci}, \ref{item: di} and \ref{item: ai-bi-ci} for $\sfb_i$, $\sfc_i=\sfc_{i,1}+\sfc_{i,2}$ and $\sfd_i$ in the following Lemmas.
\end{proof}
\begin{lemma}\label{lem: property for bi in adding external field}
    Under the assumptions in Proposition~\ref{prop: adding external field fluctuation}, the variable $\sfb_i$ (recalling \eqref{eq: def of bi in adding external field}) satisfies condition \ref{item: bi}. More precisely,
    conditioned on $\mathcal{F}_{i-1}$, it is a centered Gaussian variable with variance $\sigma_i^2$ where $ \sigma_i^2\le c_2\eps^2M^{\frac{7}{4}}$ for some constant $c_2>0$. In addition, if $\tau_i=1$, then we have $\sigma_i^2\ge c_3\eps^2M^{\frac{7}{4}}$ for some constant $c_3>0.$
\end{lemma}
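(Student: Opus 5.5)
Conditioned on $\mathcal{F}_{i-1}$, the coefficients $\langle\sigma_x\mid\mathcal E_\tau\rangle_{i-1}-\langle\sigma_x\rangle_{i-1}$ are deterministic, because they depend only on $H^{i-1}$. The variables $(h_x)_{x\in\fB_i}$ are i.i.d.\ standard Gaussians independent of $\mathcal F_{i-1}$. Hence $\sfb_i$ is a centered Gaussian with
\[
\sigma_i^2=\eps^2\beta_c^2\sum_{x\in\fB_i}\big(\langle\sigma_x\mid\mathcal E_\tau\rangle_{i-1}-\langle\sigma_x\rangle_{i-1}\big)^2 .
\]
The proof therefore reduces to bounding this deterministic sum from above, and, when $\tau_i=1$, from below.

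\textbf{Upper bound.} Write $(\langle\sigma_x\mid\mathcal E_\tau\rangle-\langle\sigma_x\rangle)^2\le 2\langle\sigma_x\mid\mathcal E_\tau\rangle^2+2\langle\sigma_x\rangle^2$. Lemma~\ref{lem: DHX input for specific rare event} with $|I|=1$ applies to $\langle\sigma_x\mid\mathcal E_\tau\rangle_{i-1}$, since $H^{i-1}$ vanishes on $\fB_i$. It gives the bound $c_5\,\dist(x,\bpartial\fB_i)^{-1/8}$, and the same bound holds for the unconditioned term. Summing over $x\in\fB_i$ gives $\sum_x\dist(x,\bpartial\fB_i)^{-1/4}\le CM^{7/4}$, because $\bpartial\fB_i$ is a union of three square boundaries of side of order $M$. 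This yields $\sigma_i^2\le c_2\eps^2M^{7/4}$.

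\textbf{Lower bound when $\tau_i=1$.} This is the main obstacle. I will show that for all $x$ in a fixed macroscopic sub-box $\fC_i\subset\Lambda_{M/4}(u_i)$, say $\Lambda_{M/8}(u_i)$, away from the annulus boundaries, the difference $\langle\sigma_x\mid\mathcal E_\tau\rangle_{i-1}-\langle\sigma_x\rangle_{i-1}$ is at most $-cM^{-1/8}$ in signed value. Squaring and summing over the $\asymp M^2$ such points then gives $\sigma_i^2\ge c_3\eps^2M^{7/4}$. The heuristic is that on $\mathcal E_\tau$ with $\tau_i=1$, an outermost loop sits in $A_M(u_i)$, so $\Lambda_{M/4}(u_i)$ lies in a region with minus boundary condition. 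Without conditioning, the magnetization there is positive or close to zero.

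To make this rigorous, I would explore from outside the outermost loop $\ell$ in $\Lambda_{M/2}(u_i)\setminus\Lambda_{M/4}(u_i)$ that realizes $X_i=1$. Given $\ell$ and the exterior configuration, the event $\mathcal E_\tau$ constrains the interior only through the conditions at other indices $j$. Those constraints live outside $\Lambda_{M/2}(u_i)$ and are already determined by the exterior exploration, except through the choice of $\ell$. By the domain Markov property, the inside of $\ell$ is then an Ising model with minus boundary condition and no field, because the field is off on $\fB_i$. Theorem~\ref{theorem_spin_correlations} or RSW then gives $\langle\sigma_x\rangle^{-}_{\mathrm{int}(\ell)}\le -cM^{-1/8}$ for $x\in\fC_i$, uniformly in the shape of $\ell$ since $\ell$ surrounds $\Lambda_{M/4}(u_i)$. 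Choosing the outermost such loop makes this a stopping-set argument, and the same exploration technique as in Section~\ref{sec: proof of DHX rare event} should handle the non-monotone event.

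For the unconditioned term, I need $\langle\sigma_x\rangle_{i-1}\ge -\tfrac c2M^{-1/8}$, which must hold for any field $H^{i-1}$. It cannot be obtained for free, because a large negative field outside could make the magnetization negative. I would instead compare the two terms directly. Under $\mu^+_{i-1}$, decompose according to whether the box is surrounded by an outermost $\pm$ loop. On the complementary event, a plus $*$-circuit in $A_M(u_i)$ connected to the plus boundary has probability bounded below by RSW and FKG, uniformly in the field thanks to the field-robust RSW input used in DHX23. This event pushes $\langle\sigma_x\rangle_{i-1}$ up by order $M^{-1/8}$ relative to the minus-loop scenario. The result is a gap $\langle\sigma_x\rangle_{i-1}-\langle\sigma_x\mid\mathcal E_\tau\rangle_{i-1}\ge cM^{-1/8}$, which suffices.

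I expect this uniform-in-field comparison to be the delicate point. If it fails, the fallback is a lower bound for $\PP$-typical $H^{i-1}$ only, which would suffice for the later aggregation argument.
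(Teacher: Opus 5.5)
Your reduction of $\sigma_i^2$ to a deterministic sum, your upper bound, and your use of the outermost circuit with the domain Markov property for $\langle\sigma_x\mid\mathcal{E}_\tau\rangle_{i-1}$ all follow the paper. The gap is in the lower bound when $\tau_i=1$, specifically the unconditioned term.

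The estimate you aim for, $\langle\sigma_x\rangle_{i-1}\ge-\tfrac c2M^{-1/8}$ for every $H^{i-1}$, cannot hold uniformly in the field. Take $i=n$. A strongly negative field on the surrounding boxes (a positive-probability event) makes the induced boundary condition on $\fB_i$ essentially all minus. Then $\langle\sigma_x\rangle_{i-1}$ is close to $-\langle\sigma_x\rangle^+_{\Lambda_M(u_i),0}$, which is comparable to the conditional magnetization, not a small fraction of it. The field-robust RSW comparison you sketch is not an argument as written. The typical-field fallback would not prove the lemma, which is needed for every realization of $H^{i-1}$; it is used box by box in Lemma~\ref{lem: martingale clt}.

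Your conditional estimate is also too coarse for this purpose. A bound $\le -cM^{-1/8}$, set against another quantity of order $M^{-1/8}$, produces no gap without a sharp comparison of constants. Moreover, the justification ``since $\ell$ surrounds $\Lambda_{M/4}(u_i)$'' points the wrong way: containing $\Lambda_{M/4}(u_i)$ only bounds the magnitude from above.

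The missing idea is that you need no sign information, only a strict comparison of magnitudes. Both magnitudes reduce to zero-field, plus-boundary magnetizations because $H^{i-1}$ vanishes on $\fB_i$.

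\begin{itemize}
\item By the domain Markov property and CBC, $|\langle\sigma_x\rangle_{i-1}|\le\langle\sigma_x\rangle^+_{\Lambda_M(u_i),0}$ for every field.
\item The interior $\Gamma^i$ of the outermost circuit lies inside $\Lambda_{M/2}(u_i)$. So CBC gives $-\langle\sigma_x\rangle^-_{\Gamma^i,0}\ge\langle\sigma_x\rangle^+_{\Lambda_{M/2}(u_i),0}$, hence $|\langle\sigma_x\mid\mathcal{E}_\tau\rangle_{i-1}|\ge\langle\sigma_x\rangle^+_{\Lambda_{M/2}(u_i),0}$ for $x\in\Lambda_{M/4}(u_i)$.
\item By Edwards--Sokal, the difference of these two plus magnetizations equals $\phi^{\w}_{\Lambda_{M/2}(u_i)}(x\leftrightarrow\partial\Lambda_{M/2}(u_i))-\phi^{\w}_{\Lambda_{M}(u_i)}(x\leftrightarrow\partial\Lambda_{M}(u_i))$.
\item Wiring $\partial\Lambda_{M/2}(u_i)$ (CBC) and using RSW for a dual circuit in $\Lambda_M(u_i)\setminus\Lambda_{M/2}(u_i)$ bounds the second probability by $(1-c)$ times the first.
\item So the gap is at least $c\,\phi^{\w}_{\Lambda_{M/2}(u_i)}(x\leftrightarrow\partial\Lambda_{M/2}(u_i))\ge c'M^{-1/8}$, uniformly in $H^{i-1}$.
\end{itemize}

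Squaring and summing over $x\in\Lambda_{M/4}(u_i)$ gives $\sigma_i^2\ge c_3\eps^2M^{7/4}$.
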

\begin{proof}
    Recall that $u_i$ is the center of the $M$-box $\fB_i$.
Conditioned on $\mathcal{F}_{i-1}$, it is clear that $\sfb_i$ is a mean 0 Gaussian variable, so it suffices to control its variance \begin{equation}
    \EE \sfb_i^2=\eps^2\beta_c^2\sum_{x\in \fB_i}\big(\langle\sigma_x \mid{\mathcal{E}_\tau}\rangle_{i-1}-\langle\sigma_x \rangle_{i-1}\big)^2.\label{eq: second moment of bi chaos expansion}
\end{equation}The upper bound of the right-hand side of \eqref{eq: second moment of bi chaos expansion} follows directly from combining Lemmas~\ref{lem: DHX input for specific rare event} and \ref{lem: DHX input for specific rare event in sle} with Lemma \ref{lem: upper-bound for the sum of squares of k point function}. 
Therefore, it suffices to derive the lower bound for the right-hand side of \eqref{eq: second moment of bi chaos expansion}. From now on, we assume that $\tau_i=1$. By DMP and CBC, we get that \begin{equation}
    |\langle\sigma_x \rangle_{i-1}|\le \langle\sigma_x\rangle_{\Lambda_{M}(u_i),0}^+ .\label{eq: chaos expansion first moment pure bound}
\end{equation} Next we want to lower-bound $|\langle\sigma_x \mid{\mathcal{E}_\tau}\rangle_{i-1}|$. The rough idea is that, when $\tau_i=1$, the existence of a plus-minus interface in $\Lambda_{M/2}(u_i)\setminus\Lambda_{M/4}(u_i)$ leads to a substantial effect on the spin average in $\Lambda_{M/4}(u_i)$. More precisely,  
let $\gamma^i$ be a plus-minus circuit in $\fB_i$ and let $\Gamma^i$ be the region enclosed by $\gamma^i$. Let $\sigma^i$ be a configuration on $\lamn\setminus\Gamma^i$. We say a configuration $\sigma\in\{-1,1\}^{\lamn}$ is an extension of $\sigma^i$ if $\sigma_x=\sigma^i_x,~\forall x\in \lamn\setminus\Gamma^i$. Next, we show that the events $X_j$ are measurable under $\sigma^i$.
Let $\mathcal{C}$ be the boundary plus cluster in $\sigma$ and let  $\mathcal{C}^i$ be the boundary plus cluster in $\sigma^i$. Note that if $\gamma^i$ is an outermost interface, then for any extension $\sigma$ of $\sigma^i$, we have $\mathcal{C}|_{\lamn\setminus\Gamma^i}=\mathcal{C}^i$ and thus $X_j(\sigma)=X_j(\sigma^i)$ for any $1\le j\le n$. 
Thus we define a pair $(\gamma^i,\sigma^i)$ to be valid if $\gamma^i$ is an outermost interface and for any $1\le j\le n$, $X_j(\sigma^i)=\tau_j$.  
Let $\Sigma_{\tau,i}$ denote the collection of valid pairs $(\gamma^i,\sigma^i)$. Note that $H^{i-1}_x=0$ for $x\in \fB_i$,  then we calculate for any $x\in \Lambda_{M/4}(u_i)$ \begin{equation}
    \langle\sigma_x \mid{\mathcal{E}_\tau}\rangle_{i-1}=\sum_{(\gamma^i,\sigma^i)\in \Sigma_{\tau,i}}\frac{\mu^+_{\lamn,\eps H^{i-1}}(\sigma^i)}{\mu^+_{\lamn,\eps H^{i-1}}(\mathcal{E}_\tau)}\cdot \langle\sigma_x \rangle^-_{\Gamma^i,0}.\label{eq: DMP for outermost loop bi}
\end{equation} Recall the definition of $X_i$, we get from $X_i=1$ that $\Gamma^i\subset\Lambda_{M/2}(u_i)$. Thus by CBC, we get that $-\langle\sigma_x \rangle^-_{\Gamma^i,0}\ge \langle\sigma_x \rangle^+_{\Lambda_{M/2}(u_i),0}\ge 0$. Plugging into \eqref{eq: DMP for outermost loop bi}, we get that for any $x\in \Lambda_{M/4}(u_i)$ \begin{equation}
    |\langle\sigma_x \mid{\mathcal{E}_\tau}\rangle_{i-1}|\ge \sum_{(\gamma^i,\sigma^i)\in \Sigma_{\tau,i}}\frac{\mu^+_{\lamn,\eps H^{i-1}}(\sigma^i)}{\mu^+_{\lamn,\eps H^{i-1}}(\mathcal{E}_\tau)}\cdot \langle\sigma_x \rangle^+_{\Lambda_{M/2}(u_i),0}=\langle\sigma_x \rangle^+_{\Lambda_{M/2}(u_i),0}\label{eq: chaos expansion first moment conditional bound}
\end{equation}where we used the equality $\sum_{(\gamma^i,\sigma^i)\in \Sigma_{\tau,i}}\mu^+_{\lamn,\eps H^{i-1}}(\sigma^i)=\mu^+_{\lamn,\eps H^{i-1}}(\mathcal{E}_\tau)$ in the last equation. Combining \eqref{eq: chaos expansion first moment pure bound} with \eqref{eq: chaos expansion first moment conditional bound}, we get that for any $x\in \Lambda_{M/4}(u_i)$, \begin{equation}
    \big|\langle\sigma_x \mid{\mathcal{E}_\tau}\rangle_{i-1}-\langle\sigma_x \rangle_{i-1}\big|\ge\big|\langle\sigma_x \mid{\mathcal{E}_\tau}\rangle_{i-1}\big|-\big|\langle\sigma_x \rangle_{i-1}\big|\ge \langle\sigma_x \rangle^+_{\Lambda_{M/2}(u_i),0}-\langle\sigma_x \rangle^+_{\Lambda_{M}(u_i),0}\label{eq: one arm increment decay 1}
\end{equation} Recalling the Edwards--Sokal coupling, we obtain that \begin{equation}
    \langle\sigma_x \rangle^+_{\Lambda_{M/2}(u_i),0}-\langle\sigma_x \rangle^+_{\Lambda_{M}(u_i),0}=\phi^\w_{\Lambda_{M/2}(u_i),0}\big(x\leftrightarrow\partial\Lambda_{M/2}(u_i)\big)-\phi^\w_{\Lambda_{M}(u_i),0}\big(x\leftrightarrow\partial\Lambda_{M}(u_i)\big).\label{eq: one arm increment decay 2}
\end{equation}For any region $\Lambda_{M}(u_i)\setminus\Lambda_{M/2}(u_i)\subset\Lambda$, let $\phi^{\w,\w}_{\Lambda,0}$ denote the measure with wired boundary condition on $\partial\Lambda_{M}(u_i)$ and $\partial\Lambda_{M/2}(u_i)$. By CBC, we get that \begin{align}
    &\phi^\w_{\Lambda_{M}(u_i),0}\big(x\leftrightarrow\partial\Lambda_{M}(u_i)\big)\le \phi^{\w,\w}_{\Lambda_{M}(u_i),0}\big(x\leftrightarrow\partial\Lambda_{M}(u_i)\big)\nonumber\\ \le~& \phi^\w_{\Lambda_{M/2}(u_i),0}\big(x\leftrightarrow\partial\Lambda_{M/2}(u_i)\big)\cdot \phi^{\w,\w}_{\Lambda_{M}(u_i)\setminus\Lambda_{M/2}(u_i),0}(\partial\Lambda_{M}(u_i)\leftrightarrow\partial\Lambda_{M/2}(u_i))\nonumber\\ \le~& (1-C_1)\phi^\w_{\Lambda_{M/2}(u_i),0}\big(x\leftrightarrow\partial\Lambda_{M/2}(u_i)\big)\label{eq: one arm increment decay 3}
\end{align} for some $C_1\in(0,1)$, where we use the RSW estimate in the last inequality. Combining \eqref{eq: one arm increment decay 1}, \eqref{eq: one arm increment decay 2} and \eqref{eq: one arm increment decay 3} with \eqref{eq: FK one arm event exponent} yields that \begin{equation}
    \big|\langle\sigma_x \mid{\mathcal{E}_\tau}\rangle_{i-1}-\langle\sigma_x \rangle_{i-1}\big|\ge C_1 \phi^\w_{\Lambda_{M/2}(u_i),0}\big(x\leftrightarrow\partial\Lambda_{M/2}(u_i)\big)\stackrel{\eqref{eq: FK one arm event exponent}}{\ge }C_2 M^{-\frac{1}{8}}. \label{eq: one arm increment decay final}
\end{equation}Plugging \eqref{eq: one arm increment decay final} into \eqref{eq: second moment of bi chaos expansion}, we get the desired lower bound for $\EE b_i^2$. 
\end{proof}
\begin{lemma}\label{lem: property for ci in adding external field}
    Under the assumptions in Proposition~\ref{prop: adding external field fluctuation}, the variable $\sfc_i=\sfc_{i,1}+\sfc_{i,2}$ (recalling \eqref{eq: def of ci in adding external field 1} and \eqref{eq: def of ci in adding external field 2}) satisfies condition \ref{item: ci}, i.e., $$\EE \big(\sum_{i=1}^{n}\sfc_i\big)^2= \EE \sum_{i=1}^{n}\big(\sfc_i\big)^2\le c_3\eps^4M^{\frac{7}{2}}\cdot n.$$
\end{lemma}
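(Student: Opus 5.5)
The plan has two parts. First show that the $\sfc_i$ form martingale increments, which turns the second moment of the sum into a sum of second moments. Then bound each $\EE[\sfc_i^2\mid\mathcal F_{i-1}]$ by $C\eps^4M^{7/2}$, uniformly in the conditioning. Throughout, $\mathcal F_{i-1}$ is the $\sigma$-field of the field on $\cup_{j<i}\fB_j$. Conditionally on $\mathcal F_{i-1}$, the coefficients $\langle\sigma_x\sigma_y\mid\mathcal E_\tau\rangle_{i-1}$, $\langle\sigma_x\sigma_y\rangle_{i-1}$, $\langle\sigma_x\mid\mathcal E_\tau\rangle_{i-1}$ and $\langle\sigma_y\rangle_{i-1}$ are deterministic, since $H^{i-1}$ vanishes on $\fB_i$. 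The variables $(h_x)_{x\in\fB_i}$ are independent of $\mathcal F_{i-1}$.

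\textbf{Step 1: martingale property.} In $\sfc_{i,1}$ the sum runs over unordered pairs of distinct sites. Hence $\EE[\tanh(\eps\beta_c h_x)\tanh(\eps\beta_c h_y)]=0$ by independence and oddness, so $\EE[\sfc_{i,1}\mid\mathcal F_{i-1}]=0$. In $\sfc_{i,2}$ the multiplier $-\tanh(\eps\beta_c h_x)\tanh(\eps\beta_c h_y)+H\mathbb I_{x=y}$ has mean zero in both cases $x\ne y$ and $x=y$, so $\EE[\sfc_{i,2}\mid\mathcal F_{i-1}]=0$. Each $\sfc_i$ is $\mathcal F_i$-measurable. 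The cross terms $\EE[\sfc_i\sfc_j]$ with $i<j$ therefore vanish, after conditioning on $\mathcal F_{j-1}$. This gives $\EE(\sum_i\sfc_i)^2=\sum_i\EE\sfc_i^2$, which is the identity claimed in the lemma.

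\textbf{Step 2: conditional variance.} The family $\{\tanh(\eps\beta_c h_x)\tanh(\eps\beta_c h_y)\}_{x\ne y}$ is orthogonal, and each term has second moment at most $(\eps\beta_c)^4$. So
\[
\EE[\sfc_{i,1}^2\mid\mathcal F_{i-1}]\le (\eps\beta_c)^4\sum_{\{x,y\}\subset\fB_i}\big(\langle\sigma_x\sigma_y\mid\mathcal E_\tau\rangle_{i-1}-\langle\sigma_x\sigma_y\rangle_{i-1}\big)^2 .
\]
I bound the bracket by $|\langle\sigma_x\sigma_y\mid\mathcal E_\tau\rangle_{i-1}|+|\langle\sigma_x\sigma_y\rangle_{i-1}|$. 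The first term is controlled by Lemma~\ref{lem: DHX input for specific rare event} and the second by Lemma~\ref{lem: DHX input for specific rare event in sle} (the case $\gamma=\emptyset$), both giving $c_5^2F(\bpartial\fB_i,\{x,y\})$. Lemma~\ref{lem: upper-bound for the sum of squares of k point function} with $s=1/4$, $k=2$, $S=\bpartial\fB_i$ and $\Lambda=\fB_i$ (domain scale $M$) then yields $\sum F^2\le CM^{7/2}$. This requires checking that $\sum_{x\in\fB_i}\dist(x,\bpartial\fB_i)^{-1/4}\le CM^{7/4}$, which is immediate because $\bpartial\fB_i$ is a union of three square boundaries.

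For $\sfc_{i,2}$, write it as the quadratic form $\sum_{x,y}a_xb_y\,\xi_{xy}$, where $a_x=\langle\sigma_x\mid\mathcal E_\tau\rangle_{i-1}-\langle\sigma_x\rangle_{i-1}$, $b_y=\langle\sigma_y\rangle_{i-1}$, and the $\xi_{xy}$ are the centered products. The off-diagonal pairs are orthogonal with second moment at most $\eps^4$, and the diagonal terms $\tanh^2-H$ are independent and centered with variance $O(\eps^4)$. Hence
\[
\EE[\sfc_{i,2}^2\mid\mathcal F_{i-1}]\le C\eps^4\Big(\sum_x a_x^2\Big)\Big(\sum_y b_y^2\Big)+C\eps^4\sum_x a_x^2b_x^2 .
\]
By the one-point bounds and Lemma~\ref{lem: upper-bound for the sum of squares of k point function} with $k=1$, each of $\sum_x a_x^2$ and $\sum_y b_y^2$ is at most $CM^{7/4}$. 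So the first term is $O(\eps^4M^{7/2})$, and the diagonal term is smaller still.

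Combining, $\EE\sfc_i^2\le 2\EE\sfc_{i,1}^2+2\EE\sfc_{i,2}^2\le c\,\eps^4M^{7/2}$. Summing over $i$ gives the bound $c\,\eps^4M^{7/2}n$.

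The main obstacle is the uniform two-point bound conditioned on the non-monotone event $\mathcal E_\tau$, with the field switched off in $\fB_i$. That input is exactly Lemma~\ref{lem: DHX input for specific rare event}, whose proof is deferred, so here it is taken as given. The only remaining care is in the orthogonality bookkeeping, specifically the diagonal correction $H\mathbb I_{x=y}$ in $\sfc_{i,2}$, which is exactly what makes the diagonal terms centered.
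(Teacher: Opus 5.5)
Your proof is correct and follows the paper's route. The paper also uses martingale orthogonality of the $\sfc_i$ to reduce to $\sum_i\EE\sfc_i^2$, then bounds $\EE\sfc_{i,1}^2$ and $\EE\sfc_{i,2}^2$ by the two-point and one-point bounds of Lemmas~\ref{lem: DHX input for specific rare event} and~\ref{lem: DHX input for specific rare event in sle}, summed via Lemma~\ref{lem: upper-bound for the sum of squares of k point function}. Your explicit handling of the diagonal terms $x=y$ in $\sfc_{i,2}$, which the paper's display silently drops, and of the shared product for the pairs $(x,y),(y,x)$, absorbed in your constant $C$, adds a bit of care but does not change the argument.
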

\begin{proof}
Note that conditioned on $\mathcal{F}_{j}$ for $j<i$, both $\sfc_{i,1}$ and $\sfc_{i,2}$ have mean $0$. Then we have that, for any $j<i,$  
$$\EE(\sfc_i\sfc_j)=\EE(\EE(\sfc_i\sfc_j\mid \mcc F_j))=\EE(\EE(\sfc_i\mid \mcc F_j)\sfc_j)=0.$$ As a consequence,
$\EE \big(\sum_{i=1}^{n}\sfc_i\big)^2= \EE \sum_{i=1}^{n}\big(\sfc_i\big)^2.$
    
    We first control the second moment of $\sum_{i=1}^{n}\sfc_{i,1}$.  Note that $H=\EE\tanh^2(\eps\beta_c h_x)\le C_1\eps^2$ and thus  \begin{align}
\EE\big(\sum_{i=1}^{n}\sfc_{i,1}\big)^2=\sum_{i=1}^{n}\EE \sfc_{i,1}^2\le C_1^{ 2}\eps^4\sum_{i=1}^{n}\sum_{\{x, y\}\subset \fB_i}\Big(\langle\sigma_x\sigma_y\mid \mathcal{E}_\tau\rangle_{i-1}-\langle\sigma_x\sigma_y  \rangle_{i-1}\rangle_{i-1}\Big)^2.\label{eq: c11 upper bound expansion}
\end{align} Applying Lemmas~\ref{lem: DHX input for specific rare event} and \ref{lem: DHX input for specific rare event in sle} with Lemma \ref{lem: upper-bound for the sum of squares of k point function}, we get that \begin{equation}
    \sum_{\{x, y\}\subset \fB_i}\Big(\langle\sigma_x\sigma_y\mid \mathcal{E}_\tau\rangle_{i-1}-\langle\sigma_x\sigma_y  \rangle_{i-1}\Big)^2\le  \sum_{\{x, y\}\subset \fB_i} 4c_5^2 F(\bpartial\fB_i,\{x,y\})^2\le C_2M^{\frac{7}{2}}.\label{eq: c11 upper bound for each box}
\end{equation} Summing \eqref{eq: c11 upper bound for each box} over $1\le i\le  n$ and combining with \eqref{eq: c11 upper bound expansion}, we obtain that \begin{align}
    \EE\big(\sum_{i=1}^{n}\sfc_{i,1}\big)^2\le \sum_{i=1}^{n}C_1^2C_2\eps^4 M^{\frac{7}{2}}=C_1^2C_2\eps^4 M^{\frac{7}{2}}n .\label{eq: c11 upper bound final}
\end{align}
We then turn to the upper bound on $\EE\big(\sum_{i=1}^{n}\sfc_{i,2}\big)^2$. We calculate \begin{align}
\EE\big(\sum_{i=1}^{n}\sfc_{i,2}\big)^2=\sum_{i=1}^{n}\EE (\sfc_{i,2})^2\le C_1^{ 2}\eps^4\sum_{i=1}^{n}\sum_{x\neq y\in \fB_i}\Big(\big(\langle\sigma_x\mid \mathcal{E}_\tau\rangle_{i-1}-\langle\sigma_x  \rangle_{i-1}\big)\cdot\langle\sigma_y  \rangle_{i-1}\Big)^2.\label{eq: c12 upper bound expansion}
\end{align} Applying Lemmas~\ref{lem: DHX input for specific rare event} and \ref{lem: DHX input for specific rare event in sle} with \ref{lem: upper-bound for the sum of squares of k point function}, we get that \begin{align}
    &\sum_{x\neq y\in \fB_i}\Big(\big(\langle\sigma_x\mid \mathcal{E}_\tau\rangle_{i-1}-\langle\sigma_x  \rangle_{i-1}\big)\cdot\langle\sigma_y  \rangle_{i-1}\Big)^2\nonumber\\\le~&  \sum_{x\neq y\in \fB_i} 4c_5^2 F(\bpartial\fB_i,\{x\})^2\cdot F(\partial\fB_i,\{y\})^{ 2}\le C_3M^{\frac{7}{2}}.\label{eq: c12 upper bound for each box}
\end{align} Summing \eqref{eq: c12 upper bound for each box} over $1\le i\le n$ and combining with \eqref{eq: c12 upper bound expansion} yields that \begin{align}
    \EE\big(\sum_{i=1}^{n}\sfc_{i,2}\big)^2\le \sum_{i=1}^{n}C_1^2C_3\eps^4 M^{\frac{7}{2}}=C_1^2C_3\eps^4 M^{\frac{7}{2}}n .\label{eq: c12 upper bound final}
\end{align}Combining \eqref{eq: c11 upper bound final} and \eqref{eq: c12 upper bound final} with the fact that
$\EE(\sfc_i^2)\leq 2\EE(\sfc_{i,1}^2
)+2\EE(\sfc_{i,2}^2
)$, we complete the proof of Lemma~\ref{lem: property for ci in adding external field}.
\end{proof}
\begin{lemma}\label{lem: property for di in adding external field}
    Under the assumptions in Proposition~\ref{prop: adding external field fluctuation}, $d_i$ (recalling \eqref{eq: def of di in adding external field}) satisfies condition \ref{item: di}, i.e., \begin{align}
        \EE \sum_{i=1}^{n}\sfd_i^2&\le c_4\eps^4M^{\frac{7}{2}}\cdot \iota^{-\frac{7}{4}-\kappa_{1}}.\label{eq: property for di in adding external field 1}
    \end{align}In addition, we have \begin{equation}
        \PP\Big(|\sum_{i=1}^{n}\sfd_i|\ge \sqrt{ \iota^{-2\kappa_{1}}\cdot \sum_{i=1}^{n}\sigma_i^2}\Big)\le c_4\iota^{\kappa_{1}}.\label{eq: property for di in adding external field 2}
    \end{equation}
\end{lemma}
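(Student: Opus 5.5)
The plan is to bound $\sfd_i$ pointwise in terms of deterministic spin-correlation quantities, and then to split the boxes according to their distance to $\partial\Lambda_N$. Since $H=\EE\tanh^2(\eps\beta_c h_x)\le C\eps^2$, we have
\[
|\sfd_i|\le C\eps^2\sum_{x\in\fB_i}\big|\langle\sigma_x\mid\mathcal E_\tau\rangle_{i-1}-\langle\sigma_x\rangle_{i-1}\big|\,|\langle\sigma_x\rangle_{i-1}|.
\]
Lemma~\ref{lem: DHX input for specific rare event} bounds the first factor by $2c_5F(\bpartial\fB_i,\{x\})$, which is of order $\dist(x,\bpartial\fB_i)^{-1/8}$. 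For the second factor, $H^{i-1}$ vanishes on $\fB_i$, so by DMP and CBC, comparing with plus boundary conditions on $\partial\Lambda_N$, we get $|\langle\sigma_x\rangle_{i-1}|\le\langle\sigma_x\rangle^+_{\Lambda_N,0}$ up to the field contribution outside $\fB_i$. I would handle that contribution exactly as in Lemma~\ref{lem: DHX input for specific rare event in sle}, i.e. by \cite[Lemma 4.16]{DHX23}, which gives a bound of order $c\,\dist(x,\partial\Lambda_N)^{-1/8}$. The improvement over the trivial $M^{-1/8}$ has to come from this point: away from $\partial\Lambda_N$, the one-arm bound \eqref{eq: FK one arm event exponent} gives $\langle\sigma_x\rangle^+_{\Lambda_N,0}\lesssim N^{-1/8}$. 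Writing $\dist(\fB_i,\partial\Lambda_N)\approx jM$ with $j\ge 0$, and using $\sum_{x\in\fB_i}\dist(x,\bpartial\fB_i)^{-1/8}\lesssim M^2$, we obtain
\[
|\sfd_i|\lesssim \eps^2M^{2-1/8}\big((j+1)M\big)^{-1/8}=\eps^2M^{7/4}(j+1)^{-1/8}.
\]
The number of boxes at box-distance $j$ from the boundary is of order $\iota^{-1}$, for $j\le\iota^{-1}$. Summing $\sfd_i^2$ therefore gives
\[
\sum_i\sfd_i^2\lesssim\eps^4M^{7/2}\,\iota^{-1}\sum_{j\le\iota^{-1}}(j+1)^{-1/4}\lesssim\eps^4M^{7/2}\iota^{-7/4},
\]
which is \eqref{eq: property for di in adding external field 1}. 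If the field correction to $\langle\sigma_x\rangle_{i-1}$ costs a small power, I would absorb it into $\iota^{-\kappa_1}$, either by using the $\kappa_4$-flexibility of Corollary~\ref{cor: DHX input for specific rare event} or by passing to $\EE$ over the earlier fields. This is why I state the bound in expectation and with the loss $\iota^{-\kappa_1}$.

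For \eqref{eq: property for di in adding external field 2}, I plan to compare the size of $\sum_i\sfd_i$ with $\sum\sigma_i^2$. By Cauchy–Schwarz, $|\sum_i\sfd_i|\le\sqrt n\,(\sum_i\sfd_i^2)^{1/2}$, and $n\asymp\iota^{-2}$. Alternatively, I would sum the first-moment bounds directly: $\sum_i|\sfd_i|\lesssim\eps^2M^{7/4}\iota^{-1}\sum_j(j+1)^{-1/8}\lesssim\eps^2M^{7/4}\iota^{-15/8}$. On the other side, since $\tau$ is good, Lemma~\ref{lem: property for bi in adding external field} gives
\[
\sum_i\sigma_i^2\ge c_3\eps^2M^{7/4}|\tau|\gtrsim\eps^2M^{7/4}n^{15/16-\alpha}\approx\eps^2M^{7/4}\iota^{-15/8+2\alpha}.
\]
The threshold $\sqrt{\iota^{-2\kappa_1}\sum\sigma_i^2}$ is then at least of order $\eps M^{7/8}\iota^{-15/16+\alpha-\kappa_1}$. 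Since $\eps M^{7/8}=\iota^{7/8}$, the quantity $\sum|\sfd_i|\lesssim\iota^{7/4-15/8}=\iota^{-1/8}$ is much smaller than this threshold, which is $\iota^{7/8-15/16+\alpha-\kappa_1}=\iota^{-1/16+\alpha-\kappa_1}$. So one needs $-1/8>-1/16+\alpha-\kappa_1$, which would require $\kappa_1>\alpha+1/16$; that is too weak.

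I would therefore use Markov's inequality on $\EE|\sum\sfd_i|$ together with the sharper cancellation-free bound, where the relevant comparison is $\eps^2$ versus $\eps$. Because $\eps M^{7/8}=\iota^{7/8}$ is small, each $|\sfd_i|\lesssim(\eps M^{7/8})\,\sigma$-scale. Concretely,
\[
\sum_i|\sfd_i|\lesssim\iota^{7/8}\,\eps M^{7/8}\iota^{-15/8}=\iota^{-1}\eps M^{7/8},
\]
which is compared with $\sqrt{\sum\sigma_i^2}\ge\eps M^{7/8}\iota^{-15/16+\alpha}$. This requires $-1<-15/16+\alpha-\kappa_1$, which holds. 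Markov's inequality then yields probability $\lesssim\iota^{\kappa_1}$ after tuning the exponent loss.

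The main obstacle I expect is the bound $|\langle\sigma_x\rangle_{i-1}|\lesssim\dist(x,\partial\Lambda_N)^{-1/8}$ in the presence of the nonzero field $\eps H^{i-1}$ outside $\fB_i$. Monotonicity alone does not remove a random field of either sign. I would control it via the chaos expansion in the fields of the other boxes, in the spirit of \cite[Lemma 4.16]{DHX23}, and accept a $\iota^{-\kappa_1}$ loss in exchange, exploiting the randomness only through $\EE$.
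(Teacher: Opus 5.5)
\textbf{The argument for \eqref{eq: property for di in adding external field 2} fails.}
Your first comparison already shows the problem. The $\ell^1$ bound $\sum_i|\sfd_i|\lesssim\iota^{-1/8}$ stays below the guaranteed threshold $\iota^{-\kappa_1}\sqrt{c_3\eps^2M^{7/4}|\tau|}\gtrsim\iota^{-1/16+\alpha-\kappa_1}$ only if $\kappa_1\ge\alpha+1/16$. Your ``sharper'' version is the same quantity: $\iota^{7/8}\,\eps M^{7/8}\,\iota^{-15/8}=\iota^{-1}\eps M^{7/8}=\iota^{-1/8}$. Moreover, $\iota^{-1}\eps M^{7/8}\le\iota^{-\kappa_1}\eps M^{7/8}\iota^{-15/16+\alpha}$ requires $-1\ge-15/16+\alpha-\kappa_1$, which is the reverse of the inequality you wrote. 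So the same condition $\kappa_1\ge\alpha+1/16$ reappears.

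The missing idea is to couple $\sfd_i$ to $\sigma_i$ itself rather than to a lower bound on $\sum_i\sigma_i^2$ coming from goodness of $\tau$. Note that $\sfd_i$ contains the factor $\langle\sigma_x\mid\mathcal E_\tau\rangle_{i-1}-\langle\sigma_x\rangle_{i-1}$, and the squares of this factor summed over $\fB_i$ equal $\sigma_i^2/(\eps^2\beta_c^2)$. Cauchy--Schwarz jointly over all pairs $(i,x)$ then gives $|\sum_i\sfd_i|^2\le C\eps^2\bigl(\sum_i\sigma_i^2\bigr)\sum_i\sum_{x\in\fB_i}\langle\sigma_x\rangle_{i-1}^2$. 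Since $\EE\sum_{i,x}\langle\sigma_x\rangle_{i-1}^2\lesssim N^{7/4}\iota^{-\kappa_1}$ and $\eps^2N^{7/4}=1$, Markov's inequality gives the tail bound with probability $1-O(\iota^{\kappa_1})$, after a harmless adjustment of $\kappa_1$. This route does not use goodness of $\tau$ at all.

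\textbf{The key input for both parts is not supplied.} Both parts (and your box-by-distance summation for \eqref{eq: property for di in adding external field 1}, which is fine once this is available) rest on the bound $\EE\langle\sigma_x\rangle_{i-1}^2\lesssim(\dist(\fB_i,\partial\lamn)\vee M)^{-1/4}+\dist(x,\partial\lamn)^{-1/4}$, up to an $\iota^{-\kappa_1}$ loss. Your plan does not deliver it.
\begin{itemize}
\item \cite[Lemma 4.16]{DHX23}, as used for Lemma~\ref{lem: DHX input for specific rare event in sle}, only bounds $|\langle\sigma_x\rangle_{i-1}|$ by $\dist(x,\partial\fB_i)^{-1/8}$, i.e.\ at scale $M^{-1/8}$. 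This yields $\iota^{-2}$ instead of $\iota^{-7/4}$ in \eqref{eq: property for di in adding external field 1}, and $\eps^2\sum_{i,x}\langle\sigma_x\rangle_{i-1}^2\approx\iota^{-1/4}$ in the Cauchy--Schwarz argument above. Both are too weak.
\item A chaos expansion in all earlier fields is not controlled, because $\eps N^{7/8}=1$.
\end{itemize}
The paper's Claim~\ref{claim: expectation upper bound with external field in cle singular} works at an intermediate scale. It takes a box $\hat\fB_i$ concentric with $\fB_i$, of side $\hat N=(\dist(\fB_i,\partial\lamn)\vee M)^{1-4\kappa_5}M^{4\kappa_5}$. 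It expands only in the fields on $(\cup_{j<i}\fB_j)\cap\hat\fB_i$, which is controlled with high probability because $\eps\hat N^{7/8}\le\iota^{7\kappa_5/2}$. It removes the fields outside $\hat\fB_i$ by DMP and CBC, obtaining $\EE\langle\sigma_x\rangle_{i-1}^2\lesssim\hat N^{-1/4}+\dist(x,\partial\lamn)^{-1/4}$. This is exactly where the $\iota^{-\kappa_1}$ loss comes from. Your remark about expanding in the other boxes' fields points this way, but it does not identify the scale that makes the expansion close.
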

\begin{proof}
    Note that  $H=\EE\tanh^2(\eps\beta_c h_x)\le C_0\eps^2.$ Thus we have  $$|\sfd_i|\le C_0\eps^2\cdot\sum_{x\in \fB_i}\Big|(\langle\sigma_x\mid \mathcal{E}_\tau\rangle_{i-1}-\langle\sigma_x  \rangle_{i-1})\cdot\langle\sigma_x  \rangle_{i-1}\Big|.$$ Combined with Lemma~\ref{lem: DHX input for specific rare event} and the Cauchy-Schwarz inequality, it yields that \begin{equation}
         \sfd_i^2\le C_0^2\eps^4|\fB_i|\sum_{x\in \fB_i} \big(\langle\sigma_x  \rangle_{i-1} \big)^2\cdot F(\bpartial\fB_i,\{x\})^2.\label{eq: di upper bound step 1}
     \end{equation}
    To control the right-hand side of \eqref{eq: di upper bound step 1},
    we shall use the following claim, whose proof is deferred until after the proof of the lemma.
    \begin{claim}\label{claim: expectation upper bound with external field in cle singular}
        For any $\kappa_{5}\in(0,\frac{1}{8})$, there exists a constant $C_1=C_1(\kappa_{5})>0$ such that \begin{align}
        \EE\big|\langle\sigma_x  \rangle_{i-1} \big|^2&\le C_1\big(\dist(\fB_i,\partial\lamn)\vee M\big)^{-\frac{1}{4}+\kappa_{5}}M^{-\kappa_{5}}+C_1\dist(x,\partial\lamn)^{-\frac{1}{4}}.\label{eq: expectation upper bound with external field in cle singular}
    \end{align}
    \end{claim}
    Plugging \eqref{eq: expectation upper bound with external field in cle singular} into \eqref{eq: di upper bound step 1}, we get that \begin{align}
    &\EE \sfd_i^2\le C_0^2C_{1}\eps^4M^2\sum_{x\in \fB_i} \Big(\big(\dist(\fB_i,\partial\lamn)\vee M\big)^{-\frac{1}{4}+\kappa_{5}}M^{-\kappa_5}+\dist(x,\partial\lamn)^{-\frac{1}{4}}\Big)\cdot F(\bpartial\fB_i,\{x\})^2\nonumber\\ \le~& C_{2}\eps^4M^{\frac{15}{4}-\kappa_5}\big(\dist(\fB_i,\partial\lamn)\vee M\big)^{-\frac{1}{4}+\kappa_{5}}+C_2\eps^4M^2\sum_{x\in \fB_i}\dist(x,\partial\lamn)^{-\frac{1}{4}}\cdot F(\bpartial\fB_i,\{x\})^2\label{eq: di upper bound step 2}
\end{align} where the last inequality follows from Lemma~\ref{lem: upper-bound for the sum of squares of k point function}.
Note that for any integer $r\geq 0,$ the number of boxes $\fB_i$ such that $\dist(\fB_i,\partial\lamn)=rM$ is upper-bounded by $\frac{4N}{M}$ and that $\big(\dist(\fB_i,\partial\lamn)\vee M\big)^{-\frac{1}{4}+\kappa_5}$ decreases in $\dist(\fB_i,\partial\lamn)$. Thus, summing over $1\le i\le n$, we obtain that \begin{align}
    \sum_{i=1}^{n} \big(\dist(\fB_i,\partial\lamn)\vee M\big)^{-\frac{1}{4}+\kappa_{5}}&\le (M)^{-\frac{1}{4}+\kappa_{5}}\cdot \frac{4N}{M}+\sum_{r=1}^{N/M}(rM)^{-\frac{1}{4}+\kappa_{5}}\cdot \frac{4N}{M}\nonumber \\
    &\le C_3(\frac{N}{M})^{\frac{3}{4}+\kappa_{5}}\cdot M^{-\frac{1}{4}+\kappa_{5}}\cdot \frac{N}{M}.\label{eq: di upper bound final for first moment summation}
\end{align}To treat the second term in \eqref{eq: di upper bound step 2}, we apply the Cauchy-Schwarz inequality and Lemma~\ref{lem: upper-bound for the sum of squares of k point function} to get that \begin{align}
    \sum_{i=1}^n\sum_{x\in \fB_i}\dist(x,\partial\lamn)^{-\frac{1}{4}}\cdot F(\bpartial\fB_i,\{x\})^2&\le \sqrt{\sum_{i=1}^n\sum_{x\in \fB_i}\dist(x,\partial\lamn)^{-\frac{1}{2}}\cdot \sum_{i=1}^n\sum_{x\in \fB_i} F(\bpartial\fB_i,\{x\})^4}\nonumber\\
    &\le \sqrt{C_4N^{\frac{3}{2}}}\cdot \sqrt{C_4M^{\frac{3}{2}}\cdot n}=C_4M^{\frac{3}{2}}\cdot\iota^{-\frac{7}{4}}.\label{eq: di upper bound final for first moment summation 2}
\end{align}

Plugging \eqref{eq: di upper bound final for first moment summation} and \eqref{eq: di upper bound final for first moment summation 2} into \eqref{eq: di upper bound step 2} and noting that $M=\iota N$, we finish the proof of \eqref{eq: property for di in adding external field 1}. 
Next, we prove \eqref{eq: property for di in adding external field 2}. Applying the Cauchy-Schwarz inequality, we obtain that \begin{equation}
    |\sum_{i=1}^n \sfd_i|^2\le H^2\sum_{i=1}^n\sum_{x\in \fB_i}\big(\langle\sigma_x \mid{\mathcal{E}_\tau}\rangle_{i-1}-\langle\sigma_x \rangle_{i-1}\big)^2\cdot \sum_{i=1}^n\sum_{x\in \fB_i}\big(\langle\sigma_x \rangle_{i-1}\big)^2.\nonumber
\end{equation} Combined with the fact that  $\sigma_i^2=\eps^2\beta_c^2\sum_{x\in \fB_i}\big(\langle\sigma_x \mid{\mathcal{E}_\tau}\rangle_{i-1}-\langle\sigma_x \rangle_{i-1}\big)^2$ and $H\le C_{ 0}\eps ^2$, it yields that  \begin{align}
    |\sum_{i=1}^n \sfd_i|^2&\le C_{ 5}^2\eps^2\sum_{i=1}^n\sigma_i^2\cdot \sum_{i=1}^n\sum_{x\in \fB_i}\big(\langle\sigma_x \rangle_{i-1}\big)^2.\label{eq: sum of di upper bound using variance sum}
\end{align}Summing \eqref{eq: expectation upper bound with external field in cle singular} over $x\in \fB_i$ and $1\le i\le n$, we get from \eqref{eq: di upper bound final for first moment summation} that \begin{align}
    \EE\sum_{i=1}^n\sum_{x\in \fB_i}\big(\langle\sigma_x \rangle_{i-1}\big)^2&~\le\sum_{i=1}^n C_6\big(\dist(\fB_i,\partial\lamn)\vee M\big)^{-\frac{1}{4}+\kappa_{5}}M^{2-\kappa_{5}}+C_1\sum_{i=1}^n\sum_{x\in\fB_i}\dist(x,\partial\lamn)^{-\frac{1}{4}}\nonumber\\&\stackrel{\eqref{eq: di upper bound final for first moment summation}}{\le}C_6C_3(\frac{N}{M})^{\frac{3}{4}+\kappa_{5}}\cdot M^{-\frac{1}{4}+\kappa_{5}}\cdot \frac{N}{M}\cdot M^{2-\kappa_5}+C_7N^{\frac{7}{4}}\nonumber\\&\le C_8N^{\frac{7}{4}+\kappa_{5}}M^{-\kappa_{5}}.\nonumber
\end{align} Combined with the Markov inequality and the fact that $M=\iota N$, it yields that \begin{equation}
    \PP\Big(\sum_{i=1}^n\sum_{x\in \fB_i}\big(\langle\sigma_x \rangle_{i-1}\big)^2\ge N^{\frac{7}{4}}\iota^{-2\kappa_{5}})\le C_9\iota^{\kappa_{5}}.\label{eq: spin expectation square summation with external field}
\end{equation}Combining \eqref{eq: sum of di upper bound using variance sum} and \eqref{eq: spin expectation square summation with external field}, we finish the proof of \eqref{eq: property for di in adding external field 2}.
\end{proof}
\begin{proof}[Proof of Claim~\ref{claim: expectation upper bound with external field in cle singular}]
    Let $\hat{N}=\big(\dist(\fB_i,\partial\lamn)\vee M\big)^{1-4\kappa_{5}}M^{4\kappa_{5}}$ and let $\hat{\fB}_i$ be the box with length $\hat{N}$ and whose center is the same as that of $\fB_i$. Let $\fD=(\cup_{j=1}^{i-1} \fB_j)\cap\hat{\fB}_i$ and let $\hat{H}^{i-1}$ be the external field such that for any vertex $z\in\lamn$, $\hat{H}^{i-1}_z=H^{i-1}_z\cdot \1_{z\notin \fD}$. We can apply chaos expansion to $\langle\sigma_x  \rangle_{i-1}$ with respect to the measure $\mu^+_{\lamn,\eps \hat{H}^{i-1}}$.  \begin{align}
        \langle\sigma_x  \rangle_{i-1}=\frac{\langle\sigma_x  \rangle^+_{\lamn,\eps \hat{H}^{i-1}}+\sum_{k=1}^{\infty}\sum_{|I|=k,I\subset\fD}\langle\sigma^I\sigma_x\rangle^+_{\lamn,\eps \hat{H}^{i-1}}\cdot\prod_{z\in I}\tanh(\eps\beta_c h_z)}{1+\sum_{k=1}^{\infty}\sum_{|I|=k,I\subset\fD}\langle\sigma^I\rangle^+_{\lamn,\eps \hat{H}^{i-1}}\cdot\prod_{z\in I}\tanh(\eps\beta_c h_z)}.\label{eq: chaos expansion for expectation in cle singular}
    \end{align}
    Let \begin{align*}
\Phi_k(h)&=\sum\limits_{\substack{|I|=k,\\I\subset\fD}}\langle\sigma^I\sigma_x\rangle^+_{\lamn,\eps \hat{H}^{i-1}}\cdot\prod_{z\in I}\tanh(\eps\beta_c h_z), \mbox{and } \Psi_k(h)=\sum\limits_{\substack{|I|=k,\\I\subset\fD}}\langle\sigma^I\rangle^+_{\lamn,\eps \hat{H}^{i-1}}\cdot\prod_{z\in I}\tanh(\eps\beta_c h_z) .       
    \end{align*}
Then we get from a similar result as in Corollary \ref{cor: DHX input for specific rare event in sle} with $\mathsf D$ replacing $\fB_i$ (see also \cite[(3.15) and (3.16)]{DHX23} for detailed calculations)  that \begin{equation}\label{eq: upper bound on k point function to be big}
        \begin{aligned}
            \PP\Big(|\Phi_k(h)|>(\eps \hat{N}^{\frac{7}{8}})^{k/2}\cdot \langle\sigma_x  \rangle^+_{\lamn,\eps \hat{H}^{i-1}}\Big)&\le C_2^{-1}\exp\Big(-C_2\sqrt{\eps^{-1} \hat{N}^{-\frac{7}{8}}}\times(k!)^{\frac{3}{8k}}\Big),\\ \mbox{ and }\PP\Big(|\Psi_k(h)|>(\eps \hat{N}^{\frac{7}{8}})^{k/2}\Big)&\le C_2^{-1}\exp\Big(-C_2\sqrt{\eps^{-1} \hat{N}^{-\frac{7}{8}}}\times(k!)^{\frac{3}{8k}}\Big).
        \end{aligned}
    \end{equation}Note that $\sum_{k=1}^{\infty}(\eps \hat{N}^{\frac{7}{8}})^{k/2}\le C_3\sqrt{\eps \hat{N}^{\frac{7}{8}}}$ for some constant $C_3>0$. Moreover, we may assume that $C_3<2$ by choosing $\iota>0$ small enough. Thus, we conclude by \eqref{eq: upper bound on k point function to be big} that 
\begin{align}
    &\PP(|\sum_{k=1}^{\infty}\Psi_k(h)| > 2\sqrt{\eps \hat{N}^{\frac{7}{8}}})\le \sum_{k=1}^{\infty}\PP(|\Psi_k(h)|>(\eps \hat{N}^{\frac{7}{8}})^{k/2})\nonumber\\ \stackrel{\eqref{eq: upper bound on k point function to be big}}{\le}& \sum_{k=1}^{\infty}C_2^{-1}\exp\Big(-C_2\sqrt{\eps^{-1} \hat{N}^{-\frac{7}{8}}}\times(k!)^{\frac{3}{8k}}\Big)\le C_4^{-1}\exp(-C_4\sqrt{\eps^{-1} \hat{N}^{-\frac{7}{8}}}).\label{eq: upper bound on Psi_k to be big}
\end{align} Similarly, we can get the upper bound for $\sum_{k=1}^{\infty}\Phi_k(h)$ as follows: 
\begin{align}
    &\PP(|\sum_{k=1}^{\infty}\Phi_k(h)| > 2\sqrt{\eps \hat{N}^{\frac{7}{8}}}\cdot \langle\sigma_x  \rangle^+_{\lamn,\eps \hat{H}^{i-1}})\le \sum_{k=1}^{\infty}\PP(|\Phi_k(h)|>(\eps \hat{N}^{\frac{7}{8}})^{k/2}\cdot \langle\sigma_x  \rangle^+_{\lamn,\eps \hat{H}^{i-1}})\nonumber\\ \stackrel{\eqref{eq: upper bound on k point function to be big}}{\le}& \sum_{k=1}^{\infty}C_2^{-1}\exp\Big(-C_2\sqrt{\eps^{-1} \hat{N}^{-\frac{7}{8}}}\times(k!)^{\frac{3}{8k}}\Big)\le C_4^{-1}\exp(-C_4\sqrt{\eps^{-1} \hat{N}^{-\frac{7}{8}}}).\label{eq: upper bound on Phi_k to be big}
\end{align} 
Let $\cH(x)$ denote the collection of external fields on $\fD$ such that $|\sum_{k=1}^{\infty}\Psi_k(h)| \le 2\sqrt{\eps \hat{N}^{\frac{7}{8}}}$ and $|\sum_{k=1}^{\infty}\Phi_k(h)| \le 2\sqrt{\eps \hat{N}^{\frac{7}{8}}}\cdot \langle\sigma_x  \rangle^+_{\lamn,\eps \hat{H}^{i-1}}$. For any $h\in \cH(x)$, we get from \eqref{eq: chaos expansion for expectation in cle singular} that \begin{equation}
    |\langle\sigma_x  \rangle_{i-1}-\langle\sigma_x  \rangle^+_{\lamn,\eps \hat{H}^{i-1}}|\le C_5\sqrt{\eps \hat{N}^{\frac{7}{8}}} \cdot |\langle\sigma_x  \rangle^+_{\lamn,\eps \hat{H}^{i-1}}|.\label{eq: expectation upper bound in cle singular}
\end{equation}  By \eqref{eq: upper bound on Psi_k to be big} and \eqref{eq: upper bound on Phi_k to be big}, we obtain that \begin{equation}
    \PP\big(\cH(x)\big)\ge 1-2 C_3^{-1}\exp(-C_3\sqrt{\eps^{-1} \hat{N}^{-\frac{7}{8}}}).\label{eq: good external field in  the upper bound of di}
\end{equation} Combined with \eqref{eq: expectation upper bound in cle singular}, it yields that \begin{align}
    \EE|\langle\sigma_x  \rangle_{i-1}|^2\le 2 C_4^{-1}\exp(-C_4\sqrt{\eps^{-1} \hat{N}^{-\frac{7}{8}}})+(1+C_5\sqrt{\eps \hat{N}^{\frac{7}{8}}})\cdot |\langle\sigma_x  \rangle^+_{\lamn,\eps \hat{H}^{i-1}}|^2.\label{eq: expectation upper bound in cle singular final}
\end{align}Recall that $\hat{N}=\big(\dist(\fB_i,\partial\lamn)\vee M\big)^{1-4\kappa_5}M^{4\kappa_5}$. Combining with DMP and \eqref{eq: FK one arm event exponent}, we get that \begin{align}
    |\langle\sigma_x  \rangle^+_{\lamn,\eps \hat{H}^{i-1}}|&\le c~\dist(x,\partial\hat{\fB}_i)^{-\frac{1}{8}}\le c\big(\hat{N}-M+\dist(x,\partial\lamn)\big)^{-\frac{1}{8}}\le C_5\hat{N}^{-\frac{1}{8}}+C_5\dist(x,\partial\lamn)^{-\frac{1}{8}}\nonumber\\&\le C_6\big(\dist(\fB_i,\partial\lamn)\vee M\big)^{-\frac{1-4\kappa_{5}}{8}}M^{-\frac{\kappa_{5}}{2}}+C_6\dist(x,\partial\lamn)^{-\frac{1}{8}}. \nonumber
\end{align} Plugging into \eqref{eq: expectation upper bound in cle singular final} and noting that $\eps \hat{N}^{\frac{7}{8}}\le (\frac{N}{M})^{-\frac{7}{2}\kappa_{5}}$, we finish the proof of \eqref{eq: expectation upper bound with external field in cle singular}.
\end{proof}
\begin{lemma}\label{lem: property for error term in adding external field}
    Under the assumptions in Proposition~\ref{prop: adding external field fluctuation}, the random variables $\sfb_i$, $\sfc_i=\sfc_{i,1}+\sfc_{i,2}$ and $\sfd_i$ (recalling \eqref{eq: def of bi in adding external field} to \eqref{eq: def of di in adding external field}) satisfy condition \ref{item: ai-bi-ci}, i.e., $$\PP\Big(|\sfa_i-\sfb_i-\sfc_i-\sfd_i|\ge c_3(\eps M^{\frac{7}{8}})^{2.7}\Big)\le c_3^{-1}\exp(-c_3\iota^{-\frac{7}{80}}).$$ 
\end{lemma}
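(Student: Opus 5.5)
We start from the exact chaos expansion \eqref{eq: chaos expansion for cle singularity} of $\sfa_i$ and write it as a ratio $\sfa_i = A_i/(1+B_i)$. Dividing the numerator by $\mu^+_{\lamn,\eps H^{i-1}}(\mathcal{E}_\tau)$ gives
\[
A_i=\sum_{k\ge1}\sum_{I\subset\fB_i,|I|=k}\big(\langle\sigma^I\mid\mathcal{E}_\tau\rangle_{i-1}-\langle\sigma^I\rangle_{i-1}\big)\prod_{x\in I}\tanh(\eps\beta_c h_x),
\]
and we set $B_i=\sum_{k\ge1}\sum_{|I|=k}\langle\sigma^I\rangle_{i-1}\prod_{x\in I}\tanh(\eps\beta_ch_x)$. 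Writing $\delta_0:=\eps M^{\frac78}$, which is of order $\iota^{7/8}$, the plan is to show that on a good event of probability at least $1-c^{-1}\exp(-c\,\iota^{-7/80})$ the following four bounds hold:
\begin{enumerate}
\item $A_i=A_i^{(1)}+A_i^{(2)}+O(\delta_0^{3(1-\kappa_4)})$;
\item $B_i=B_i^{(1)}+O(\delta_0^{2(1-\kappa_4)})$;
\item $|A_i^{(1)}|,|B_i^{(1)}|\lesssim\delta_0^{1-\kappa_4}$;
\item $|A_i^{(2)}|\lesssim \delta_0^{2(1-\kappa_4)}$.
\end{enumerate}
Here superscripts denote the $k$-th chaos. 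The tails of both sums are controlled by the second statement of Corollary~\ref{cor: DHX input for specific rare event}, with $r=3$ for $A_i$ and $r=2$ for $B_i$. That corollary is stated for the conditional correlation $\langle\sigma^I\mid\mathcal{E}_\tau\rangle_{i-1}$. The unconditional part $\langle\sigma^I\rangle_{i-1}$ is handled by the analogous statement coming from Lemma~\ref{lem: DHX input for specific rare event in sle}, i.e.\ Corollary~\ref{cor: DHX input for specific rare event in sle} applied to $\fB_i$, since $F(\partial\fB_i,I)\le F(\bpartial\fB_i,I)$. One small point needs care: the corollaries bound chaoses built from the field $\hat h^i$, which vanishes on $\fB_i$. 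This coincides with $H^{i-1}$ on $\fB_i$ and is independent of $h|_{\fB_i}$, so conditioning on $\mathcal{F}_{i-1}$ is legitimate.

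Next we expand the ratio,
\[
\sfa_i=A_i\big(1-B_i+O(B_i^2)\big)=A_i^{(1)}+A_i^{(2)}-A_i^{(1)}B_i^{(1)}+R_i ,
\]
where on the good event $|R_i|\lesssim\delta_0^{3(1-\kappa_4)}$. We then match terms with the definitions \eqref{eq: def of bi in adding external field}--\eqref{eq: def of di in adding external field}. The term $A_i^{(2)}$ is exactly $\sfc_{i,1}$. The cross term $-A_i^{(1)}B_i^{(1)}$ is
\[
-\sum_{x,y}\big(\langle\sigma_x\mid\mathcal{E}_\tau\rangle-\langle\sigma_x\rangle\big)\langle\sigma_y\rangle\tanh\tanh ,
\]
which equals $\sfc_{i,2}+\sfd_i$: this works because $\sfc_{i,2}$ contains $+H\1_{x=y}$ and $\sfd_i$ subtracts the same amount.

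It remains to compare $A_i^{(1)}$, which carries the factor $\tanh(\eps\beta_ch_x)$, with $\sfb_i$, which uses the linearized field $\eps\beta_c h_x$. The difference is
\[
\sum_x\big(\langle\sigma_x\mid\mathcal{E}_\tau\rangle-\langle\sigma_x\rangle\big)\big(\tanh(\eps\beta_ch_x)-\eps\beta_ch_x\big).
\]
Since $|\tanh u-u|\le|u|^3$, this is a sum of independent terms bounded by $\eps^3|h_x|^3$. For its size we use Cauchy--Schwarz in $x$: the $\ell^2$ norm of the coefficients is $\lesssim M^{7/8}$ by Lemma~\ref{lem: DHX input for specific rare event} and Lemma~\ref{lem: upper-bound for the sum of squares of k point function}, while $\sum_{x\in\fB_i}|h_x|^6\lesssim M^2$ except on an event of probability $e^{-cM}$. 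Together these give a bound $\lesssim\eps^3M^{7/8+1}$, which is much smaller than $\delta_0^3$ because $\eps M\ll1$.

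Collecting everything, $|\sfa_i-\sfb_i-\sfc_i-\sfd_i|\lesssim\delta_0^{3(1-\kappa_4)}$ on the good event. Choosing $\kappa_4=0.1$ gives an exponent $2.7$, and the failure probability is $\exp(-c(\eps^{-1}M^{-7/8})^{0.1})=\exp(-c\,\iota^{-7/80})$. This is consistent with the claimed exponent and fixes the choice $\kappa_4=1/10$.

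The main obstacle is the step that bounds the $r=2$ and $r=3$ tails simultaneously for the conditional correlations. This requires checking that Corollary~\ref{cor: DHX input for specific rare event} applies uniformly even though the conditioning is on $\mathcal{E}_\tau$, which is neither increasing nor decreasing; we rely on Lemma~\ref{lem: DHX input for specific rare event} for this. We also need to take a union bound over the tails, and the factor $(r!)^{3/(8r)}$ only helps. A secondary point is ensuring that the denominator satisfies $1+B_i\ge1/2$ on the good event, which follows from item 2 above.
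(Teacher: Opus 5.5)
Your proposal follows the paper's proof essentially step for step: the same expansion $\sfa_i=A^*_{i,1}/(1+A_{i,1})$, the same identifications of the second chaos with $\sfc_{i,1}$ and of the cross term $-A^{(1)}_iB^{(1)}_i$ with $\sfc_{i,2}+\sfd_i$, and Corollary~\ref{cor: DHX input for specific rare event} with $\kappa_4=0.1$ to make every remaining term $O((\eps M^{7/8})^{2.7})$ off an event of probability $\exp(-c\iota^{-7/80})$. For the unconditional correlations you need that corollary's $\kappa_4=0.1$ analogue, not Corollary~\ref{cor: DHX input for specific rare event in sle} with its exponent $k/2$; the paper also uses this analogue implicitly.

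The only deviation is the linearization error $\sfe_{i,1}$. The paper bounds it crudely, using $|\langle\sigma_x\mid\mathcal{E}_\tau\rangle_{i-1}-\langle\sigma_x\rangle_{i-1}|\le 2$ and Markov, which gives $\EE|\sfe_{i,1}|\lesssim\eps^3M^2=\iota^2N^{-5/8}$, small once $N\ge N_0(\iota)$. Your Cauchy--Schwarz variant also works, but two justifications need correcting. First, $\eps M=\iota N^{1/8}$ is large, not small; your bound $\eps^3M^{15/8}$ beats $(\eps M^{7/8})^{2.7}$ because $M\to\infty$. Second, the event $\sum_{x\in\fB_i}|h_x|^6\lesssim M^2$ fails with probability of order $e^{-cM^{2/3}}$ rather than $e^{-cM}$. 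Both points are harmless for $N\ge N_0(\iota)$.
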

\begin{proof}

For notational clarity, let  \begin{align*}
        A^*_{i,r}&=\sum_{k=r}^{\infty}\sum\limits_{I\subset \fB_i,|I|=k}\big(\langle\sigma^I \mid {\mathcal{E}_\tau}\rangle_{i-1}-\langle\sigma^I \rangle_{i-1}\big)\cdot\prod_{x\in I}\tanh(\eps\beta_c h_x);\\ A_{i,r}&=\sum_{k=r}^{\infty}\sum\limits_{I\subset \fB_i,|I|=k}\langle\sigma^I \rangle_{i-1}\cdot\prod_{x\in I}\tanh(\eps\beta_c h_x).
    \end{align*}
Then \eqref{eq: chaos expansion for cle singularity} becomes
$\sfa_i=\frac{A^*_{i,1}}{1+A_{i,1}}.$ Define
\begin{align*}
    \sfe_{i,1}
    &:=
    \sum_{x\in \fB_i}
    \big(
        \langle\sigma_x \mid \mathcal{E}_\tau\rangle_{i-1}
        -\langle\sigma_x\rangle_{i-1}
    \big)
    \left[
        \tanh(\eps\beta_c h_x)
        -\eps\beta_c h_x
    \right],\qquad 
    \sfe_{i,2}
    :=
    \frac{A^*_{i,1}}{1+A_{i,1}}(A_{i,1})^2,\\
    \sfe_{i,3}&:=A^*_{i,3}, \qquad
    \sfe_{i,4} :=
    A^*_{i,2}(-A_{i,1}+A_{i,2}),
    \qquad
    \sfe_{i,5}:=-A^*_{i,1}A_{i,2}.
\end{align*}
By the definitions of \(\sfb_i,\sfc_{i,1},\sfc_{i,2}\) and \(\sfd_i\), we have that
\begin{equation*}
    \sfb_i+\sfe_{i,1}
    = A^*_{i,1}-A^*_{i,2},
    \qquad
    \sfc_{i,1}
    =
    A^*_{i,2}-A^*_{i,3},
   \qquad \sfc_{i,2}+\sfd_i
    =-\big(A^*_{i,1}-A^*_{i,2}\big)
     \big(A_{i,1}-A_{i,2}\big).
\end{equation*}
Using $\sfc_i=\sfc_{i,1}+\sfc_{i,2}$ and
\[
    \frac{A^*_{i,1}}{1+A_{i,1}}
    -A^*_{i,1}
    +A^*_{i,1}A_{i,1}
    =
    \frac{A^*_{i,1}}{1+A_{i,1}}(A_{i,1})^2 ,
\] we obtain that
\begin{equation*}
       \sfa_i-\sfb_i-\sfc_i-\sfd_i
    =\sfe_{i,1}+
    \frac{A^*_{i,1}}{1+A_{i,1}}
    -\big(A^*_{i,1}-A^*_{i,3}\big)
    +\big(A^*_{i,1}-A^*_{i,2}\big)
     \big(A_{i,1}-A_{i,2}\big) =\sum_{k=1}^5 \sfe_{i,k}.
\end{equation*}
We will show below that all the terms on the right-hand side are small with high probability.

    We first control the term $\mathsf e_{i,1}$. Note that for any $x$, $h_x$ is a standard normal variable, thus $\EE\big|\tanh(\eps\beta_c h_x)-\eps\beta_c h_x\big|\le C_1\eps^{3}$ for some constant $C_1\ge 0$. Therefore, we get that \begin{align}
        \EE|\sfe_{i,1}|\le \sum_{x\in \fB_i}2\EE\big|\tanh(\eps\beta_c h_x)-\eps\beta_c h_x\big|\le C_2\eps^{3}M^2.\nonumber
    \end{align}Applying Markov inequality and recalling $\eps =N^{-\frac{7}{8}}, M=\iota N$, we get that \begin{equation}
        \PP\Big(|\sfe_{i,1}|\ge (\eps M^{\frac{7}{8}})^{2.7} \Big)\le \frac{C_2\eps^{3}M^2}{(\eps M^{\frac{7}{8}})^{2.7}}\le  C_2\iota^{-\frac{29}{80}} N^{-\frac{5}{8}}.\label{eq: upper bound for the first error term}
    \end{equation} 

    Then we control the terms $e_{i,k}$ for $2\le k\le 5$.  Applying Corollary~\ref{cor: DHX input for specific rare event} with $\kappa_4=0.1$ to $A^*_{i,1}$ and $A_{i,1}$ yields that
    \begin{align}
        \PP\big(|A^*_{i,k}|\ge C_3 (\eps M^{\frac{7}{8}})^{0.9k}\big)&\le C_4^{-1}\exp\big(-C_4(\eps^{-1} M^{-\frac{7}{8}})^{0.1}\big),\label{eq: good external field for the second error term}\\\PP\big(|A_{i,k}|\ge C_3 (\eps M^{\frac{7}{8}})^{0.9k}\big)&\le C_4^{-1}\exp\big(-C_4(\eps^{-1} M^{-\frac{7}{8}})^{0.1}\big). \label{eq: good external field for the second error term 2}
    \end{align} On the event that $A_{i,k}\le C_1 (\eps M^{\frac{7}{8}})^{0.9k}$ and $A^*_{i,k}(x)\le C_1 (\eps M^{\frac{7}{8}})^{0.9k}$ for $1\le k\le 3$, it is straightforward that $e_{i,k}\le C_3 (\eps M^{\frac{7}{8}})^{2.7}$  for $2\le k\le 5$. Therefore, we get from \eqref{eq: good external field for the second error term} and \eqref{eq: good external field for the second error term 2} that \begin{equation}
        \PP\Big(|\sum_{k=2}^5\sfe_{i,k}|\ge 4C_3 (\eps M^{\frac{7}{8}})^{2.7} \Big)\le 6C_4^{-1}\exp\big(-C_4(\eps^{-1} M^{-\frac{7}{8}})^{0.1}\big).\label{eq: upper bound for the second error term}
    \end{equation}
 The desired result thus follows from combining \eqref{eq: upper bound for the first error term} and \eqref{eq: upper bound for the second error term} and choosing $N>0$ large enough depending on $\iota$.
\end{proof}
As a quick consequence of Lemma~\ref{prop: adding external field fluctuation} and Corollary~\ref{cor: DHX input for specific rare event}, we have the following bound on $\sfa_i$.
\begin{corollary}\label{cor: ai upper bound for adding external field}
    For any $\kappa_{6}\in(0,1)$, there exists a constant $C>0$ such that for any $1\le i\le n$, we have \begin{equation}
        \PP(|\sfa_i|\ge (\eps M^{\frac{7}{8}})^{1-\kappa_{6}})\le C^{-1}\exp\big(-C(\eps^{-1} M^{-\frac{7}{8}})^{\kappa_{6}}\big).\nonumber
    \end{equation}
\end{corollary}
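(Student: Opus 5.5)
We need to bound $\sfa_i = A^*_{i,1}/(1+A_{i,1})$, where $A^*_{i,1}$ and $A_{i,1}$ are the chaos sums defined in the proof of Lemma~\ref{lem: property for error term in adding external field}. The plan is to control numerator and denominator separately with high probability and then take a union bound.

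\textbf{Denominator.} By Lemma~\ref{lem: DHX input for specific rare event in sle}, or the analogue used in Corollary~\ref{cor: DHX input for specific rare event}, $A_{i,1}$ is the tail sum $\sum_{k\ge1}\sum_{|I|=k}\langle\sigma^I\rangle_{i-1}\prod\tanh(\eps\beta_c h_x)$. Apply the second statement of Corollary~\ref{cor: DHX input for specific rare event} with $r=1$ and $\kappa_4=\kappa_6$, using the unconditioned $k$-point functions, whose bound $c_5^{|I|}F(\partial\fB_i,I)$ is of the same type. This gives
\[
\PP\bigl(|A_{i,1}|\ge 2(\eps M^{7/8})^{1-\kappa_6}\bigr)\le C^{-1}\exp\bigl(-C(\eps^{-1}M^{-7/8})^{\kappa_6}\bigr).
\]
Since $\eps M^{7/8}=\iota^{7/8}$ is small for small $\iota$, on the complement we have $|A_{i,1}|\le 1/2$. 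Hence $|1+A_{i,1}|\ge 1/2$.

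\textbf{Numerator.} Write $A^*_{i,1}=\sum_k\sum_{|I|=k}\bigl(\langle\sigma^I\mid\mathcal E_\tau\rangle_{i-1}-\langle\sigma^I\rangle_{i-1}\bigr)\prod\tanh(\eps\beta_c h_x)$. The conditional part is exactly what Corollary~\ref{cor: DHX input for specific rare event} controls, via Lemma~\ref{lem: DHX input for specific rare event}, with the bound $c_5^{|I|}F(\bpartial\fB_i,I)$. The unconditional part is $A_{i,1}$, already handled above. Alternatively, bound the difference coefficientwise by $2c_5^{|I|}F(\bpartial\fB_i,I)$ and rerun the argument of \cite[Lemma B.1]{DHX23} directly.

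One point needs checking: the conditioned expectation $\langle\cdot\mid\mathcal E_\tau\rangle_{i-1}$ does not depend on $h|_{\fB_i}$. This holds because $H^{i-1}$ vanishes on $\fB_i$. So conditionally on $\mathcal F_{i-1}$, the coefficients are deterministic and the hypercontractivity estimate applies. This gives
\[
\PP\bigl(|A^*_{i,1}|\ge 4(\eps M^{7/8})^{1-\kappa_6}\bigr)\le C^{-1}\exp\bigl(-C(\eps^{-1}M^{-7/8})^{\kappa_6}\bigr).
\]

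\textbf{Conclusion.} On the intersection of the two good events, $|\sfa_i|\le 8(\eps M^{7/8})^{1-\kappa_6}$. To remove the constant $8$, apply the above with a slightly smaller exponent $\kappa_6'<\kappa_6$: since $\eps M^{7/8}$ is small, $8(\eps M^{7/8})^{1-\kappa_6'}\le(\eps M^{7/8})^{1-\kappa_6}$ once $\iota$ is small. The exponential tail then degrades only to exponent $\kappa_6'$, which is fine after shrinking $C$, or after absorbing the change by running the argument with $\kappa_6/2$ and adjusting constants. Large $\iota$ is trivial by enlarging $C$.

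The main obstacle is ensuring Corollary~\ref{cor: DHX input for specific rare event} applies conditionally on $\mathcal F_{i-1}$. This rests on the measurability remark above and on the uniformity in $h$ of Lemma~\ref{lem: DHX input for specific rare event}.
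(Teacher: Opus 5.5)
Your route is the one the paper intends: its proof just says to combine \eqref{eq: chaos expansion for cle singularity} with Corollary~\ref{cor: DHX input for specific rare event}. Your argument is fine up to $|\sfa_i|\le 8(\eps M^{7/8})^{1-\kappa_6}$ with failure probability $O(\exp(-c(\eps^{-1}M^{-7/8})^{\kappa_6}))$, and that includes the $\mathcal F_{i-1}$-measurability check.

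\textbf{The gap: removing the factor $8$.} Write $x:=\eps^{-1}M^{-7/8}=\iota^{-7/8}$, which tends to infinity as $\iota\to0$. Running the argument with a fixed $\kappa_6'<\kappa_6$ only gives failure probability $\exp(-Cx^{\kappa_6'})$. Since $x^{\kappa_6'}/x^{\kappa_6}\to0$, no choice of constants gives $\exp(-Cx^{\kappa_6'})\le C'^{-1}\exp(-C'x^{\kappa_6})$ uniformly in small $\iota$. The same objection applies to rerunning with $\kappa_6/2$. As written, you prove the corollary only with a strictly weaker tail exponent. That weaker form would still suffice for the later use with $\kappa_6=0.1$, but it is not the statement.

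\textbf{The fix: put the constant into the threshold, not the exponent.} There are two ways to do this.

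\emph{Option 1.} Use the hypercontractivity bound behind Corollary~\ref{cor: DHX input for specific rare event}, namely \cite[Lemma~B.1]{DHX23}. There, a fixed prefactor $a\in(0,1)$ on the degree-$k$ threshold enters the exponent only through a factor $a^{2/k}$ (or $a^{1/k}$), which is at least $a^2$ uniformly in $k$. So you may use thresholds $\frac1{16}(\eps M^{7/8})^{(1-\kappa_6)k}$ from the start, at the cost of changing $c_6$.

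\emph{Option 2.} Keep the corollary as a black box and split off the degree-one terms.
\begin{itemize}
\item Conditionally on $\mathcal F_{i-1}$, each degree-one term has the form $\sum_{x\in\fB_i}c_x\tanh(\eps\beta_c h_x)$ with deterministic $|c_x|\le 2c_5\dist(x,\bpartial\fB_i)^{-1/8}$.
\item Each $\tanh(\eps\beta_c h_x)$ is symmetric and dominated in absolute value by $\eps\beta_c|h_x|$. Hence the sum is subgaussian with variance proxy $\lesssim\eps^2M^{7/4}=x^{-2}$.
\item It therefore exceeds $\frac1{16}x^{-(1-\kappa_6)}$ with probability at most $2\exp(-cx^{2\kappa_6})$.
\item Bound the degree-$\ge2$ parts, conditional and unconditional, by the second statement of the corollary with $r=2$, $\kappa_4=\kappa_6$. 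This gives $2x^{-2(1-\kappa_6)}\le\frac1{16}x^{-(1-\kappa_6)}$ for small $\iota$, with failure probability at most $c_6^{-1}\exp(-c_6x^{\kappa_6})$.
\end{itemize}
On the good event the numerator is at most $\frac{3}{16}x^{-(1-\kappa_6)}$ and the denominator is at least $1-\frac{2}{16}x^{-(1-\kappa_6)}$. Hence $|\sfa_i|\le(\eps M^{7/8})^{1-\kappa_6}$ directly.

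\textbf{A minor slip.} In the regime where $\iota$ is bounded below, you must shrink $C$, not enlarge it, to make the right-hand side at least $1$.
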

\begin{proof}
    The proof follows directly from combining \eqref{eq: chaos expansion for cle singularity} and Corollary~\ref{cor: DHX input for specific rare event}, and we omit further details here.
\end{proof}

We now turn to the proof of Theorem ~\ref{thm: cle outermost singularity}. Let us first briefly explain how Proposition~\ref{prop: adding external field fluctuation} is used to show that the ratio
$\frac{\mu^+_{\lamn,\eps h}(\mathcal{E}_\tau)}
{\mu^+_{\lamn,0}(\mathcal{E}_\tau)}$
is close to 0. By Proposition~\ref{prop: adding external field fluctuation}, it suffices to analyze the product
$\prod_{i=1}^{n}(1+\sfa_i).$
Since typically \(|\sfa_i|\le C\eps M^{7/8}\), each \(\sfa_i\) is small, and hence we may approximate this product by
$\exp\left(\sum_{i=1}^{n}\sfa_i-\frac12\sum_{i=1}^{n}\sfa_i^2\right).$
Moreover, Proposition~\ref{prop: adding external field fluctuation} allows us to approximate
$\sum_{i=1}^{n}\sfa_i$ and $\sum_{i=1}^{n}\sfa_i^2$
by$\sum_{i=1}^{n}\sfb_i$ and $\sum_{i=1}^{n}\sfb_i^2,$
respectively. Here \(\sum_{i=1}^{n}\sfb_i\) is a centered Gaussian random variable whose variance satisfies $\sum_{i=1}^{n}\EE \sfb_i^2
\ge c_3(\eps M^{7/8})^2|\tau|,$
where \( |\tau|=\sum_{i=1}^n \tau_i \). Thus, for a good configuration $\tau$, this variance is much larger than 1. Consequently, the quadratic correction
$-\frac12\sum_{i=1}^{n}\sfb_i^2$
dominates the linear fluctuation \(\sum_{i=1}^{n}\sfb_i\) with high probability, forcing the above exponential, and hence the original ratio, to be close to 0.

Before diving into the core of the proof of Theorem~\ref{thm: cle outermost singularity}, in view of the above discussion,  let us start by proving the aforementioned bound on $\sum_{i=1}^{n}\sfb_i-\sum_{i=1}^{n}\frac{\sfb_i^2}{2}$.

\begin{lemma}\label{lem: martingale clt}
    Fix a good configuration $\tau\in\{0,1\}^n$. Let $\mathsf b_i$ be defined as in Proposition~\ref{prop: adding external field fluctuation}. There exists a constant $c_7>0$ such that \begin{equation}
        \PP\big(\sum_{i=1}^{n}(\frac{\sfb_i^2}{2}-\frac{\sigma_i^2}{4}-\sfb_i)\le c_7\eps^2M^{\frac{7}{4}}\tn\big)\le \exp(-c_7\eps^2M^{\frac{7}{4}}\tn).
    \end{equation}
\end{lemma}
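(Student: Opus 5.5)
We have to show that $\sum_i(\frac{\sfb_i^2}{2}-\frac{\sigma_i^2}{4}-\sfb_i)$ exceeds $c_7\eps^2M^{7/4}|\tau|$ except on an event of probability at most $\exp(-c_7\eps^2M^{7/4}|\tau|)$. The plan is an exponential supermartingale argument. Write $V:=\sum_{i=1}^n\sigma_i^2$. By Proposition~\ref{prop: adding external field fluctuation}\ref{item: bi}, conditionally on $\mathcal{F}_{i-1}$ the variable $\sfb_i$ is centered Gaussian with variance $\sigma_i^2\le c_2\eps^2M^{7/4}$. Since $\tau$ is fixed, $\sigma_i^2\ge c_3\eps^2M^{7/4}$ whenever $\tau_i=1$, so deterministically $V\ge c_3\eps^2M^{7/4}|\tau|$. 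Write $Y_i:=\frac{\sfb_i^2}{2}-\frac{\sigma_i^2}{4}-\sfb_i$. The target event $\{\sum_iY_i\le c_7\eps^2M^{7/4}|\tau|\}$ is contained in $\{\sum_i Y_i\le \frac{c_7}{c_3}V\}$, so it suffices to show that $\sum_i(Y_i-\theta\sigma_i^2)$ is unlikely to be $\le 0$ for a small constant $\theta$.

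For a parameter $\lambda>0$, I compute the conditional Laplace transform from the Gaussian formula. For $g\sim N(0,s^2)$ and $\lambda$ small,
\[
\EE\big[e^{-\lambda(g^2/2-g)}\big]=(1+\lambda s^2)^{-1/2}\exp\Big(\frac{\lambda^2s^2}{2(1+\lambda s^2)}\Big).
\]
Since $s^2=\sigma_i^2\le c_2\eps^2M^{7/4}=c_2\iota^{7/4}$ is small, $\log(1+\lambda s^2)\ge \lambda s^2-\lambda^2s^4$. The log of the transform is therefore at most $-\frac{\lambda s^2}{2}+\frac{\lambda^2s^2}{2}+\frac{\lambda^2 s^4}{2}$. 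Take $\lambda=\frac14$; then
\[
\EE\big[e^{-\lambda(Y_i-\theta\sigma_i^2)}\mid\mathcal{F}_{i-1}\big]\le \exp\Big(\sigma_i^2\big(\lambda/4+\lambda\theta-\lambda/2+\lambda^2/2+O(\iota^{7/4})\big)\Big)=\exp\big(-\eta\,\sigma_i^2\big)
\]
with $\eta>0$ once $\theta$ and $\iota$ are small. The constant $\frac14$ in the definition of $Y_i$ is exactly what leaves room here, since $\lambda/4+\lambda^2/2<\lambda/2$ for $\lambda<1/2$.

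Because $\sigma_i^2$ is $\mathcal{F}_{i-1}$-measurable, $\exp\big(\sum_{j\le i}[-\lambda(Y_j-\theta\sigma_j^2)+\eta\sigma_j^2]\big)$ is a supermartingale. The point is that $V$ is random, so one cannot simply bound its mean; I avoid this using the deterministic lower bound $V\ge V_0:=c_3\eps^2M^{7/4}|\tau|$. On the bad event $\{\sum_i(Y_i-\theta\sigma_i^2)\le0\}$,
\[
\exp\Big(\sum_i[-\lambda(Y_i-\theta\sigma_i^2)+\eta\sigma_i^2]\Big)\ge e^{\eta V}\ge e^{\eta V_0}.
\]
Markov's inequality and the supermartingale property then give probability at most $e^{-\eta V_0}$. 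Choosing $c_7\le\min(\theta c_3,\eta c_3)$ finishes the proof.

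The main obstacle is bookkeeping rather than concept: the random variances $\sigma_i^2$ have to be handled through the predictable compensator, and the Gaussian log-expansion needs the smallness $\sigma_i^2\lesssim\iota^{7/4}$ (for $\iota$ small, as assumed throughout) so that the error terms are absorbed.
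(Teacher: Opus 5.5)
Your proof is correct and is essentially the paper's argument. The paper's exponent $\frac{4\sfb_i+\sigma_i^2-2\sfb_i^2}{16}$ is exactly your $-\frac14 Y_i$, and it likewise evaluates the conditional Gaussian Laplace transform, iterates it through the filtration, and concludes with Markov's inequality. The only difference is bookkeeping: the paper bounds each conditional factor directly by $1$, or by $\exp(-C_1\eps^2M^{7/4})$ when $\tau_i=1$, using that $u\mapsto -\frac12\log(1+u/4)+\frac{u}{32+8u}+\frac{u}{16}$ is decreasing on $[0,1]$. You instead carry $\eta\sigma_i^2$ as a compensator and use the deterministic lower bound $V\ge c_3\eps^2M^{7/4}|\tau|$, which amounts to the same thing.
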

\begin{proof}
We want to compute the exponential moment of $\sum_{i=1}^{n}(\frac{4\sfb_i+\sigma_i^2-2\sfb_i^2}{16})$ and then apply the Markov inequality. Recall the definition of $\mathcal{F}_i$ in Proposition~\ref{prop: adding external field fluctuation}. We first compute $\EE \big(e^{\frac{4\sfb_i+\sigma_i^2-2\sfb_i^2}{16}}\mid \mathcal{F}_{i-1}\big)$. Note that $\mathsf b_i$ is a Gaussian variable with mean $0$ and variance $\sigma_i^2$. Thus we calculate that \begin{align}
    \EE \big(e^{\frac{4\sfb_i+\sigma_i^2-2\sfb_i^2}{16}}\mid \mathcal{F}_{i-1}\big)&=\frac{1}{\sqrt{2\pi \sigma_i^2}}\int_{-\infty}^{\infty}e^{\frac{4x+\sigma_i^2-2x^2}{16}}\cdot e^{-\frac{x^2}{2\sigma_i^2}}dx=\frac{1}{\sqrt{1+\sigma_i^2/4}}\cdot e^{\frac{\sigma_i^2}{32+8\sigma_i^2}+\frac{\sigma_i^2}{16}}.\nonumber
\end{align} Note that $f(u)=-\log(1+u/4)/2+\frac{u}{32+8u}+\frac{u}{16}$ is decreasing for $u\in[0,1] .$ By Proposition~\ref{prop: adding external field fluctuation}, we get that $\sigma_i^2\le c_2\eps^2M^{\frac{7}{4}}\le 1.$ Therefore we get that $f(\sigma_i^2)\le f(0)=0$ and thus we have that \begin{equation} 
    \EE \big(e^{\frac{4\sfb_i+\sigma_i^2-2\sfb_i^2}{16}}\mid \mathcal{F}_{i-1}\big)\le 1.\label{eq: recursive relation for bi exponential bound 1}
\end{equation} In addition, if $\tau_i=1$, then we have $\sigma_i^2\ge c_3 \eps^2M^{\frac{7}{4}}$. Therefore we get that $f(\sigma_i^2)\le f(c_3 \eps^2M^{\frac{7}{4}})\le -C_1\eps^2M^{\frac{7}{4}}$ and hence we have that \begin{equation}
    \EE \big(e^{\frac{4\sfb_i+\sigma_i^2-2\sfb_i^2}{16}}\mid \mathcal{F}_{i-1}\big)\le \exp(-C_1\eps^2M^{\frac{7}{4}}).\label{eq: recursive relation for bi exponential bound 2}
\end{equation} Combining \eqref{eq: recursive relation for bi exponential bound 1} and \eqref{eq: recursive relation for bi exponential bound 2}, we get that 
    \begin{equation}
        \EE \exp\Big(\sum_{i=1}^{n}(\frac{4\sfb_i+\sigma_i^2-2\sfb_i^2}{16})\Big)\le \exp(-C_1\eps^2M^{\frac{7}{4}}\tn).\nonumber
    \end{equation}Choosing $c_7=\min\{1,C_1\}/4$ and applying the Markov inequality to $\exp\Big(\sum_{i=1}^{n}(\frac{4\sfb_i+\sigma_i^2-2\sfb_i^2}{16})\Big)$, we complete the proof of Lemma~\ref{lem: martingale clt}.
\end{proof}

\begin{definition}\label{def: def of good external field configuration event for cle singularity}
    Recall the definitions of  $\sfa_i(\tau),\sfb_i(\tau),\sfc_i(\tau), \sfd_i(\tau), \mathcal{F}_i(\tau)$ from Proposition~\ref{prop: adding external field fluctuation}. Recall that we have chosen $\kappa=10^{-5}.$ We define the event $\mathcal{A}_i\subset\mathbb{R}^{\lamn}\otimes\{0,1\}^n$ to be the collection of pairs $(h,\tau)$ such that $$|\sfa_i(\tau)-\sfb_i(\tau)-\sfc_i(\tau)-\sfd_i(\tau)|\le (\eps M^{\frac{7}{8}})^{2.7}\mbox{  and  }|\sfa_i(\tau)|\le (\eps M^{\frac{7}{8}})^{0.9}$$ and let $\mathcal{A}=\cap_{i=1}^n\mathcal{A}_i$.  In addition, we define the following events \begin{equation*}
    \begin{aligned}
        \mathcal{B}&=\{(h,\tau):\sum_{i=1}^{n}(\frac{\sfb_i^2}{2}-\sfb_i)\ge c_7\eps^2M^{\frac{7}{4}}\tn +\sum_{i=1}^n\frac{\EE\big(\sfb_i^2\mid \mathcal{F}_{i-1}\big)}{4} \},\\
    \mathcal{A}^*&=\{(h,\tau):|\sum_{i=1}^{n}(\sfa_i^2-\sfb_i^2)|\le 1\},
    \\
    \mathcal{C}&=\{(h,\tau):|\sum_{i=1}^{n}\sfc_i|\le 1\},~~~~
    \mathcal{D}=\Big\{(h,\tau):|\sum_{i=1}^{n}\sfd_i|\le \sqrt{\iota^{-2\kappa}\cdot \sum_{i=1}^{n}\sigma_i^2}\Big\}.
    \end{aligned}
\end{equation*}
\end{definition}
\begin{lemma}\label{lem: good external field and configuration pair for singular}
     There exists a constant $c_{5}>0$ such that for any good configuration $\tau$, we have \begin{equation}
\PP(\mathcal{A}\cap\mathcal{A}^*\cap\mathcal{B}\cap\mathcal{C}\cap\mathcal{D}\mid \tau)\ge 1-C_3\iota^{\kappa}.
\end{equation}
\end{lemma}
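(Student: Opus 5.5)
The plan is a union bound over the complements of the five events, each estimated from the corresponding item of Proposition~\ref{prop: adding external field fluctuation}, from Corollary~\ref{cor: ai upper bound for adding external field} or from Lemma~\ref{lem: martingale clt}. Throughout, $\tau$ is good, so $\tn\ge c_1n^{\frac{15}{16}-\alpha}$ with $n\asymp\iota^{-2}$. Also $\eps M^{\frac78}=\iota^{\frac78}$, so $\eps^2M^{\frac74}\tn\gtrsim \iota^{\frac74}\iota^{-2(\frac{15}{16}-\alpha)}=\iota^{-\frac18+2\alpha}$. Since $\alpha=\frac{7}{144}+\kappa<\frac1{16}$, this quantity tends to infinity as $\iota\to0$, and this is the only place goodness enters.

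For $\mathcal A$, I will apply item~\ref{item: ai-bi-ci} and Corollary~\ref{cor: ai upper bound for adding external field} with $\kappa_6=0.1$, then take a union bound over the $n\lesssim\iota^{-2}$ boxes. This gives $\PP(\mathcal A^c)\lesssim\iota^{-2}\exp(-c\iota^{-7/80})$, which is negligible. For $\mathcal B$, the conditional variance satisfies $\EE(\sfb_i^2\mid\mathcal F_{i-1})=\sigma_i^2$, so $\mathcal B$ is exactly the complement of the event in Lemma~\ref{lem: martingale clt}. Hence $\PP(\mathcal B^c)\le\exp(-c_7\eps^2M^{7/4}\tn)\le \exp(-c\iota^{-\frac18+2\alpha})$. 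For $\mathcal C$, Chebyshev together with item~\ref{item: ci} gives $\PP(\mathcal C^c)\le c_2\eps^4M^{7/2}n\lesssim\iota^{\frac72}\iota^{-2}=\iota^{3/2}$. For $\mathcal D$, item~\ref{item: di} applied with $\kappa_1=\kappa$ gives directly $\PP(\mathcal D^c)\le c_4\iota^{\kappa}$. This is the dominant error and explains the final rate $\iota^\kappa$.

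The main work is $\mathcal A^*$. On $\mathcal A$ I write $\sfa_i=\sfb_i+\sfc_i+\sfd_i+\sfe_i$ with $|\sfe_i|\le(\eps M^{7/8})^{2.7}$. Then
\[
\Big|\sum_i(\sfa_i^2-\sfb_i^2)\Big|\le 2\Big(\sum_i\sfb_i^2\Big)^{1/2}\Big(\sum_i(\sfc_i+\sfd_i+\sfe_i)^2\Big)^{1/2}+\sum_i(\sfc_i+\sfd_i+\sfe_i)^2 .
\]
For $\sum_i\sfb_i^2$, item~\ref{item: bi} gives $\EE\sum_i\sfb_i^2\le c_2\iota^{7/4}n\lesssim\iota^{-1/4}$.

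For the correction terms there are three sources:
\begin{itemize}
\item from item~\ref{item: ci}, $\EE\sum_i\sfc_i^2\lesssim\iota^{3/2}$;
\item from item~\ref{item: di}, $\EE\sum_i\sfd_i^2\lesssim\iota^{\frac72-\frac74-\kappa}=\iota^{\frac74-\kappa}$;
\item deterministically, $\sum_i\sfe_i^2\le n(\eps M^{7/8})^{5.4}\lesssim\iota^{-2+4.725}$.
\end{itemize}
By Markov, with probability $1-O(\iota^{\kappa})$ I get $\sum_i\sfb_i^2\le\iota^{-1/4-\kappa}$ and $\sum_i(\sfc_i+\sfd_i+\sfe_i)^2\le\iota^{3/2-2\kappa}$. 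On that event the right-hand side above is at most $2\iota^{-1/8-\kappa/2}\iota^{3/4-\kappa}+\iota^{3/2-2\kappa}\le1$ once $\iota$ is small. I expect this to be the delicate step: the exponents must be checked so that the $\sfd$ contribution (exponent $\frac74-\kappa$) still beats the growth of $\sum_i\sfb_i^2$. If it were too tight, a fallback would be to compare $\sum_i\sfa_i^2$ with $\sum_i\sfb_i^2$ only up to the larger error $o(\eps^2M^{7/4}\tn)$. That weaker comparison is all that the later use needs.

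Summing the estimates for $\mathcal A^c$, $\mathcal A^{*c}$, $\mathcal B^c$, $\mathcal C^c$ and $\mathcal D^c$ yields $1-C_3\iota^\kappa$, with the $\iota^{3/2}$ and exponentially small terms absorbed for small $\iota$. Large $\iota$ is handled by enlarging the constant.
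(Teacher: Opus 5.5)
Your proposal is correct and follows the paper's argument. As in the paper, you handle $\mathcal{A}$, $\mathcal{B}$, $\mathcal{C}$ and $\mathcal{D}$ by a union bound from items (c)--(e) of Proposition~\ref{prop: adding external field fluctuation}, Corollary~\ref{cor: ai upper bound for adding external field} and Lemma~\ref{lem: martingale clt}, with $\mathcal{D}$ giving the dominant $\iota^{\kappa}$ term. You handle $\mathcal{A}^*$ on $\mathcal{A}$ via $\sum_i(\sfa_i^2-\sfb_i^2)=2\sum_i\sfb_i(\sfa_i-\sfb_i)+\sum_i(\sfa_i-\sfb_i)^2$, Cauchy--Schwarz, and the bounds on $\sum_i\sfc_i^2$, $\sum_i\sfd_i^2$ and the deterministic error. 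The only cosmetic difference is that you apply Cauchy--Schwarz pathwise and then Markov separately, whereas the paper applies Cauchy--Schwarz in $L^2(\PP)$ to get $\EE\big|\sum_i(\sfa_i^2-\sfb_i^2)\1_{\mathcal{A}}\big|\lesssim\sqrt{\iota}$ and uses a single Markov inequality.
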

\begin{proof}
Fix a good configuration $\tau$. 
Applying the Markov inequality to $\sum_{i=1}^{n}\sfc_i$ and combining with Proposition \ref{prop: adding external field fluctuation} \ref{item: ci}, we get that  \begin{align}
    \PP(\mathcal{C}\mid \tau)&\ge 1-\EE\left[(\sum_{i=1}^{n}\sfc_i)^2\right]\ge 1-c_3\eps^4M^{\frac{7}{2}}\cdot n= 1-c_3\iota^{\frac{3}{2}}.\label{eq: good external field for ci 1}
\end{align}
Combined with Proposition \ref{prop: adding external field fluctuation} \ref{item: di}, \ref{item: ai-bi-ci}, Lemma~\ref{lem: martingale clt} and Corollary \ref{cor: ai upper bound for adding external field}, it yields that  \begin{equation}\label{eq: good external field for cle singularity 1}
\PP(\mathcal{A}\cap\mathcal{B}\cap\mathcal{C}\cap\mathcal{D}\mid \tau)\ge 1-C_2\iota^{\kappa}.
\end{equation} 
Next, we  control $(\mathcal{A}^*)^c\cap\mathcal{A}$. To this end, we  first bound $\EE\Big|\sum_{i=1}^{n}(\sfa_i^2-\sfb_i^2)\1_{\mathcal{A}} \Big|$ and then apply the Markov inequality.
By the Cauchy-Schwarz inequality, we obtain that \begin{align}
    \EE\Big|\sum_{i=1}^{n}(\sfa_i^2-\sfb_i^2)\1_{\mathcal{A}}\Big|&\le \EE\Big|\sum_{i=1}^{n}2\sfb_i(\sfa_i-\sfb_i)\1_{\mathcal{A}}\Big|+\EE\Big[\sum_{i=1}^{n}(\sfa_i-\sfb_i)^2\1_{\mathcal{A}}\Big]\nonumber\\&\le 2\sqrt{\EE\sum_{i=1}^{n}\sfb_i^2\cdot \EE\Big[\sum_{i=1}^{n}(\sfa_i-\sfb_i)^2\1_{\mathcal{A}}\Big]}+\EE\Big[\sum_{i=1}^{n}(\sfa_i-\sfb_i)^2\1_{\mathcal{A}}\Big].\label{eq: expansion for the second moment of ai}
\end{align}
Recalling the definition of $\mathcal{A}$, we get from the Cauchy-Schwarz inequality and Proposition~\ref{prop: adding external field fluctuation} that \begin{align}
\EE\Big[\sum_{i=1}^{n}(\sfa_i-\sfb_i)^2\1_{\mathcal{A}}\Big]&\le 3\EE\sum_{i=1}^{n}\sfc_i^2+3\EE\sum_{i=1}^{n}\sfd_i^2+ 3\sum_{i=1}^{n}\EE\Big[(\sfa_i-\sfb_i-\sfc_i-\sfd_i)^2\1_{\mathcal{A}}\Big]\nonumber\\&\le c_4\eps^4M^{\frac{7}{2}}n+c_4\eps^4M^{\frac{7}{2}}\cdot  \iota^{-\frac{7}{4}-\kappa_{1}}+3(\eps^2M^{\frac{7}{4}})^{2.7}\cdot n\le C_3\iota^{1.5}.\label{eq: second moment bound for ai}
\end{align} 
Plugging \eqref{eq: second moment bound for ai} into \eqref{eq: expansion for the second moment of ai} and combining with Lemma~\ref{lem: property for bi in adding external field} yields that \begin{align}
    \EE\Big|\sum_{i=1}^{n}(\sfa_i^2-\sfb_i^2)\1_{\mathcal{A}}\Big|&\le 2\sqrt{\eps^2M^{\frac{7}{4}}n\cdot C_3\iota^{1.5}}+C_3\iota^{1.5}\le C_4\sqrt{\iota}.\label{eq: second moment bound for ai final}
\end{align}
Applying the Markov inequality to \eqref{eq: second moment bound for ai final} and combining with \eqref{eq: good external field for cle singularity 1}, we get that
\begin{equation}\label{eq: good external field for cle singularity}
\PP(\mathcal{A}\cap\mathcal{A}^*\cap\mathcal{B}\cap\mathcal{C}\cap\mathcal{D}\mid \tau)\ge 1-C_3\iota^{\kappa}.
\end{equation}Thus we complete the proof of Lemma~\ref{lem: good external field and configuration pair for singular}.
\end{proof}

\begin{proof}[Proof of Theorem~\ref{thm: cle outermost singularity}]
    Note that for any external field, we have that \begin{equation}
        \frac{1}{2}\sum_{\tau\in\{0,1\}^n}\Big|\mu^+_{\lamn,0}(\mathcal{E}_\tau)-\mu^+_{\lamn,\eps h}(\mathcal{E}_\tau)\Big|=1-\sum_{\tau\in\{0,1\}^n}\min\{\mu^+_{\lamn,0}(\mathcal{E}_\tau),\mu^+_{\lamn,\eps h}(\mathcal{E}_\tau)\}.\label{eq: total variation expansion}
    \end{equation} So it suffices to prove that \begin{equation}
        \EE\sum_{\tau\in\{0,1\}^n}\min\{\mu^+_{\lamn,0}(\mathcal{E}_\tau),\mu^+_{\lamn,\eps h}(\mathcal{E}_\tau)\}\le C_1\iota^{\kappa}. \label{eq: total variation distance formula}
    \end{equation}
   We first split the set of configurations $\tau$ according to Lemma~\ref{lem: outermost loops number lower bound 1}:\begin{align}
         &\EE\sum_{\tau\in\{0,1\}^n}\min\{\mu^+_{\lamn,0}(\mathcal{E}_\tau),\mu^+_{\lamn,\eps h}(\mathcal{E}_\tau)\}\nonumber\\\le ~& \EE\sum_{\tn\ge c_1n^{\frac{15}{16}-\alpha}}\min\{\mu^+_{\lamn,0}(\mathcal{E}_\tau),\mu^+_{\lamn,\eps h}(\mathcal{E}_\tau)\}+\sum_{\tn< c_1n^{\frac{15}{16}-\alpha}}\mu^+_{\lamn,0}(\mathcal{E}_\tau)\nonumber\\\le~& \EE\sum_{\tn\ge c_1n^{\frac{15}{16}-\alpha}}\min\{\mu^+_{\lamn,0}(\mathcal{E}_\tau),\mu^+_{\lamn,\eps h}(\mathcal{E}_\tau)\}+c_1^{-1}\exp(-c_1n^{\frac{144\alpha-7}{112}}).\label{eq: total variation expansion 1}
    \end{align} 
    Now we fix a good configuration $\tau$, i.e., $\tn\ge c_1n^{\frac{15}{16}-\alpha}$. (Recall that $\alpha=\frac{7}{144}+10^{-5}$.)
Recall the events defined in Lemma~\ref{lem: good external field and configuration pair for singular}.
We expand $\log\Big(\frac{\mu^+_{\lamn,\eps h}(\mathcal{E}_\tau)}{\mu^+_{\lamn,0}(\mathcal{E}_\tau)}\Big)$ as follows: \begin{align}
    &\log\Big(\frac{\mu^+_{\lamn,\eps h}(\mathcal{E}_\tau)}{\mu^+_{\lamn,0}(\mathcal{E}_\tau)}\Big)= \sum_{i=1}^{n}\log(1+\sfa_i)\nonumber\\\le~&  \sum_{i=1}^{n}\sfb_i-\sum_{i=1}^{n} \frac{ \sfb_i^2}{2}+\Big|\sum_{i=1}^{n}\sfc_i\Big|+\Big|\sum_{i=1}^{n}\sfd_i\Big| +\sum_{i=1}^{n}\Big|\sfa_i-\sfb_i-\sfc_i-\sfd_i\Big|+\Big|\sum_{i=1}^{n}\frac{\sfa_i^2-\sfb_i^2}{2}\Big|\nonumber\\&+\Big|\sum_{i=1}^{n}\big(\log(1+\sfa_i)-\sfa_i+\frac{\sfa_i^2}{2}\big)\Big|.\label{eq: log rn derivative expansion singularity}
\end{align}Recall the definitions in Definition \ref{def: def of good external field configuration event for cle singularity}. It suffices to control the term $\Big|\sum_{i=1}^{n}\big(\log(1+\sfa_i)-\sfa_i+\frac{\sfa_i^2}{2}\big)\Big|$ in the right-hand of \eqref{eq: log rn derivative expansion singularity}. Assume that $(h,\tau)\in \mathcal{A}\cap\mathcal{A}^*\cap\mathcal{B}\cap\mathcal{C}\cap\mathcal{D}$.
Then we get that \begin{equation}\label{eq: good external field configuration pair bound}
    \begin{aligned}
        &\sum_{i=1}^{n}(\sfb_i-\frac{\sf\sfb_i^2}{2})\le -c_7\eps^2M^{\frac{7}{4}}\tn-\sum_{i=1}^n\frac{\sigma_i^2}{4} ,~~~~\Big|\sfa_i-\sfb_i-\sfc_i-\sfd_i\Big|\le (\eps M^{\frac{7}{8}})^{2.7},\forall 1\le i\le n,\\
    &|\sum_{i=1}^{n}(\sfa_i^2-\sfb_i^2)|\le 1,~~~~~|\sum_{i=1}^{n}\sfc_i|\le 1,~~~~~|\sum_{i=1}^{n}\sfd_i|\le \sqrt{\iota^{-2\kappa}\cdot \sum_{i=1}^{n}\sigma_i^2}.
    \end{aligned}
\end{equation}
Noting that $|\ln(1+x)-x+\frac{x^2}{2}|\le \frac{x^3}{3}$ and $\sfa_i\le (\eps M^{\frac{7}{8}})^{0.9}$, we obtain that \begin{align}
    \Big|\sum_{i=1}^{n}\big(\log(1+\sfa_i)-\sfa_i+\frac{\sfa_i^2}{2}\big)\Big|\le \sum_{i=1}^{n} \frac{|\sfa_i^3|}{3}\le (\eps M^{\frac{7}{8}})^{2.7} n\le 1.\label{eq: log rn derivative expansion ai third moment bound}
\end{align}
Plugging \eqref{eq: good external field configuration pair bound} and
\eqref{eq: log rn derivative expansion ai third moment bound} into \eqref{eq: log rn derivative expansion singularity} yields that \begin{align}
    \log\Big(\frac{\mu^+_{\lamn,\eps h}(\mathcal{E}_\tau)}{\mu^+_{\lamn,0}(\mathcal{E}_\tau)}\Big)&\le -c_7\eps^2M^{\frac{7}{4}}|\tau|-\sum_{i=1}^n \frac{\sigma_i^2}{4}+3+(\eps M^{\frac{7}{8}})^{2.7}\cdot n+\iota^{-\kappa}\sqrt{\sum_{i=1}^{n}\sigma_i^2}\nonumber\\&\le -C_2\eps^2M^{\frac{7}{4}}n^{\frac{15}{16}-\alpha}-\sum_{i=1}^n \frac{\sigma_i^2}{4}+3+(\eps M^{\frac{7}{8}})^{2.7}\cdot n+\iota^{-\kappa}\sqrt{\sum_{i=1}^{n}\sigma_i^2}.\label{eq: log RN lower bound 1}
\end{align}
Note that $\iota>0$ is chosen small enough and $\sum_{i=1}^n\sigma_i^2\ge \eps^2M^{\frac{7}{4}}\tn$, we get that $\sum_{i=1}^n \frac{\sigma_i^2}{4}\ge \iota^{-\kappa}\sqrt{\sum_{i=1}^{n}\sigma_i^2}$. Plugging into  $n=(\frac{N}{M})^{2}=\iota^{-2}$ and $\eps M^{\frac{7}{8}}=\iota^{\frac{7}{8}}$ into  \eqref{eq: log RN lower bound 1},
we get that \begin{equation}
    \log\Big(\frac{\mu^+_{\lamn,\eps h}(\mathcal{E}_\tau)}{\mu^+_{\lamn,0}(\mathcal{E}_\tau)}\Big)\le -C_3 \iota^{-\frac{1}{8}+2\alpha}.\label{eq: log RN lower bound}
\end{equation}
Now we are ready to upper-bound the left-hand side of \eqref{eq: total variation distance formula}. Summing \eqref{eq: log RN lower bound} over all good configurations, we get that \begin{align}
    & \EE\sum_{\tn\ge c_1n^{\frac{15}{16}-\alpha}}\min\{\mu^+_{\lamn,0}(\mathcal{E}_\tau),\mu^+_{\lamn,\eps h}(\mathcal{E}_\tau)\}\nonumber\\\le~& \PP\otimes\mu^+_{\lamn,0}\Big(\big(\mathcal{A}\cap\mathcal{A}^*\cap\mathcal{B}\cap\mathcal{C}\cap\mathcal{D}\big)^c\Big)+\sum_{(h,\tau)\in \mathcal{A}\cap\mathcal{A}^*\cap\mathcal{B}\cap\mathcal{C}\cap\mathcal{D}}\PP(h)\cdot \mu^+_{\lamn,\eps h}(\mathcal{E}_\tau)\nonumber\\ \stackrel{\eqref{eq: log RN lower bound}}{\le}&\PP\otimes\mu^+_{\lamn,0}\Big(\big(\mathcal{A}\cap\mathcal{A}^*\cap\mathcal{B}\cap\mathcal{C}\cap\mathcal{D}\big)^c\Big)+\sum_{(h,\tau)\in \mathcal{A}\cap\mathcal{A}^*\cap\mathcal{B}\cap\mathcal{C}\cap\mathcal{D}}\PP(h)\cdot e^{-C_3\iota^{-\frac{1}{8}+\alpha}}\times\mu^+_{\lamn,0}(\mathcal{E}_\tau)\nonumber\\ \stackrel{\eqref{eq: good external field for cle singularity}}{\le}&C_4\iota^{\kappa}+e^{-C_4 \iota^{-\frac{1}{8}+2\alpha}}.\label{eq: final calculation in cle singulartiy}
\end{align}
Combining with \eqref{eq: total variation expansion} and \eqref{eq: total variation expansion 1}, we finish the proof of Theorem~\ref{thm: cle outermost singularity} by noting that the right-hand side of \eqref{eq: final calculation in cle singulartiy} goes to $0$ as $\iota$ goes to $0$.
\end{proof}

\subsection{Proof of the central lemma}\label{sec: proof of DHX rare event}

In this section, we prove Lemma \ref{lem: DHX input for specific rare event}. From now on, we fix an $M$-box $\fB_i=\Lambda_M(u_i)$. For notational convenience, we will write $\Lambda_K$ to denote the $K$-box $\Lambda_K(u_i)$ except for the special case $K=N$, where $\lamn$ denotes the $N$-box centered at the origin $o.$

Note that on the left-hand side of \eqref{eq: DHX input for specific rare event}, we have the conditional spin correlation of $I$. Thus, let us now focus on the behavior of the conditional measure $\mu^+_{\lamn,\eps \hat{h}^i}(\cdot\mid\mathcal{E}_\tau)$ restricted to the $M$-box $\fB_i$. Our strategy is to condition on the configuration outside $\fB_i$ and then convert the conditioning on $\mathcal{E}_\tau$ into events related to the induced boundary condition on $\partial\Lambda_M$.
We now give a precise definition of the relevant boundary events.

\begin{definition}\label{def: boundary connectivity event}
Write
$\Sigma_M^c=\{-1,1\}^{\partial\Lambda_M\cup (\lamn\setminus\Lambda_M)}.$

For any $\xi\in \Sigma_M^c$, let
$
\mathcal P=\{x\in \partial \Lambda_M:\xi_x=1\}.
$
Let $\mathcal P_0,\mathcal P_1\subset \mathcal P$ be two disjoint subsets, and let $\mathcal U$ be a collection of mutually disjoint subsets of $\mathcal P$, each of which is also disjoint from $\mathcal P_0\cup \mathcal P_1$.

Define $\mathcal Y^{-}$ to be the event that $\mathcal P_0$ is not plus-connected to $\mathcal P_1$ inside $\Lambda_M$, and define $\mathcal Y^{+}$ to be the event that, for every $U\in \mathcal U$, the set $U$ is plus-connected to $\mathcal P_0$ inside $\Lambda_M$. We then write
$
\mathcal Y=\mathcal Y^{-}\cap \mathcal Y^{+}.
$

Next, let $\mathcal X^{-}$ be the event that there exists a $*$-minus circuit in the annulus
$
\Lambda_{M/2}\setminus \Lambda_{M/4},
$
and let $\mathcal X^{+}$ be the event that there exists a plus circuit in the same annulus which is plus-connected to $\mathcal P_0$. We define
$
\mathcal X=\mathcal X^{+}\cap \mathcal X^{-}.
$

Finally, set
$
\mathcal R=\mathcal Y\setminus \mathcal X,
\mathcal S=\mathcal X\cap \mathcal Y.
$
For simplicity, we use
$
X,X^{+},X^{-},Y,Y^{+},Y^{-},R,S\in\{0,1\}
$
to denote the indicator functions of the events
$
\mathcal X,\mathcal X^{+},\mathcal X^{-},\mathcal Y,\mathcal Y^{+},\mathcal Y^{-},\mathcal R,\mathcal S,
$
respectively.
\end{definition}
We now explain the motivation for the preceding definitions.

\begin{lemma}\label{lem: Etau conditioning reduction}
    Fix $i$, $\tau\in\{0,1\}^n$, and an exterior
extended-Ising configuration
$\xi\in \Sigma_M^c.$
Let
$\mathcal E_\tau(\xi):=\{\eta\in\{-1,1\}^{\Lambda_M\setminus\partial\Lambda_M}:\xi\oplus\eta\in \mathcal E_\tau\}.$
Then there exist deterministic boundary data
$$
    \mathcal P_0(\xi),\; \mathcal P_1(\xi)\subset \partial\Lambda_M,
    \qquad
    \mathcal U(\xi)=\{U_1(\xi),\ldots,U_m(\xi)\},
$$
satisfying the assumptions of Definition \ref{def: boundary connectivity event}, such that, with
$\mathcal Y_\xi,\mathcal X_\xi,\mathcal R_\xi,\mathcal S_\xi$ defined from these data as in Definition
\ref{def: boundary connectivity event}, the following identities hold:
\begin{equation}
    \mathcal E_\tau(\xi)=\mathcal R_\xi
    \quad\text{if }\tau_i=0,\mbox{and }\mathcal E_\tau(\xi)=\mathcal S_\xi
    \quad\text{if }\tau_i=1.
    \label{8.20a}
\end{equation}
Here, the equalities are identities of events in the inside
configuration $\eta$.  If $\mathcal E_\tau(\xi)$ is empty, both sides are
understood to be empty after the corresponding boundary data have been
defined; such exterior configurations do not contribute to conditional
expectations given $\mathcal E_\tau$.
\end{lemma}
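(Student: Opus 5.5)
We will prove the lemma by a topological case analysis of how the exterior configuration $\xi$ constrains the outermost plus--minus loops, using planar duality on the square lattice. The aim is to show that, once $\xi$ is fixed, the event $\mathcal E_\tau$ reduces to statements about plus-connectivity inside $\Lambda_M$ between pieces of $\mathcal P=\{x\in\partial\Lambda_M:\xi_x=1\}$, together with the existence of the circuit pair in $\Lambda_{M/2}\setminus\Lambda_{M/4}$.

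\emph{Step 1: encoding the boundary plus cluster through $\partial\Lambda_M$.} The outermost loops are the boundaries of the plus cluster $\mathcal C$ attached to $\partial\Lambda_N$: they are the $*$-circuits of minus spins adjacent to $\mathcal C$ enclosing holes of $\mathcal C$. Restrict $\mathcal C$ to the exterior region $\lamn\setminus(\Lambda_M\setminus\partial\Lambda_M)$, which is determined by $\xi$ alone. Its components that reach $\partial\lamn$ meet $\partial\Lambda_M$ in a set, which we call $\mathcal P_0(\xi)$. The remaining plus components of $\xi$ touching $\partial\Lambda_M$ give disjoint boundary sets. These are not yet attached to $\partial\lamn$ externally, but they may become attached through plus paths inside $\Lambda_M$.

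\emph{Step 2: translating the prescribed values $\tau_j$ into constraints.} For each $j\neq i$, the value $X_j$ depends only on which of these exterior plus components end up in $\mathcal C$, that is, which are plus-connected to $\mathcal P_0$ through $\Lambda_M$. A component belongs to $\mathcal C$ exactly when it is so connected. We sort the boundary components according to what $\tau$ demands.
\begin{itemize}
\item Components that must join $\mathcal C$ for $X_j=\tau_j$ to hold for all $j\neq i$ are collected into $\mathcal U(\xi)$. This produces $\mathcal Y^+$.
\item Components that must not join $\mathcal C$ are merged into $\mathcal P_1(\xi)$. This produces $\mathcal Y^-$.
\end{itemize}
Components whose status does not affect any $X_j$ impose no constraint.

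The delicate point is whether this classification is consistent, i.e.\ whether requirements on different boxes can be expressed one component at a time. We handle it using the loop structure:
\begin{itemize}
\item Joining a component to $\mathcal C$ only changes the holes adjacent to that component, and hence only the loops near it.
\item Monotonicity in the set of joined components makes the requirements for each $j$ expressible as a list of must-join and must-not-join conditions.
\end{itemize}
If the demands are contradictory, $\mathcal E_\tau(\xi)=\emptyset$, which the statement allows. We expect this step to be the main obstacle. To settle it, we would check carefully that $X_j$ for $j\ne i$ is a function of the vector of join indicators, and that the set of vectors realizing $\tau$ is a single ``rectangle'' of forced and forbidden coordinates. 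If it is not, we would refine the decomposition of $\xi$, for instance by conditioning further on which components are mutually connected inside $\Lambda_M$, so that the rectangle property holds.

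\emph{Step 3: the box $i$ itself.} For $\tau_i=1$ we must show that an outermost loop exists in $\Lambda_{M/2}\setminus\Lambda_{M/4}$ if and only if $\mathcal X$ holds, given $\mathcal Y$.
\begin{itemize}
\item If $\mathcal X$ holds, the plus circuit connected to $\mathcal P_0$ lies in $\mathcal C$. The $*$-minus circuit separates the inside, so the innermost such plus circuit bounds a hole of $\mathcal C$, and the boundary of that hole is an outermost loop in the annulus.
\item Conversely, an outermost loop in the annulus yields both circuits: its plus side and its minus side, by the definition of loops.
\end{itemize}
For $\tau_i=0$ the same equivalence gives $\mathcal E_\tau(\xi)=\mathcal Y\setminus\mathcal X=\mathcal R_\xi$. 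Finally, the disjointness conditions of Definition~\ref{def: boundary connectivity event} on $\mathcal P_0$, $\mathcal P_1$ and $\mathcal U$ hold by construction, since these sets are unions of distinct exterior components.
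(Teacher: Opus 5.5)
Your decomposition is the paper's: $\mathcal P_0$ comes from the exterior plus clusters of $\xi$ attached to $\partial\Lambda_N$, the must-join traces go into $\mathcal U$, the must-not-join traces into $\mathcal P_1$, and $\mathcal X$ handles box $i$. However, the step you single out as the main obstacle is left open, and what you offer there does not close it.

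Monotonicity alone does not produce a list of forced coordinates. Suppose two plus circuits of $A_j:=\Lambda_{M/2}(u_j)\setminus\Lambda_{M/4}(u_j)$ belonged to different exterior clusters with join indicators $J_1,J_2$, so that $X_j=\max(J_1,J_2)$. Then $\{X_j=1\}=\{J_1=1\}\cup\{J_2=1\}$ cannot be written as $\mathcal Y^+$. Your fallback, conditioning further on which components are connected inside $\Lambda_M$, is not admissible: the boundary data must be functions of $\xi$ alone, and that information lives in $\eta$.

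The missing observation is that this situation cannot occur. The plus circuits of $A_j$ are nested, so any plus path from one of them to $\partial\Lambda_N$ must hit the outermost plus circuit $\gamma_j$ of $A_j$. Hence some plus circuit of $A_j$ lies in $\mathcal C$ if and only if $\gamma_j$ does. The ordering of the two circuits is automatic, since a minus $*$-circuit of $A_j$ surrounding $\gamma_j$ would cut it off from $\partial\Lambda_N$. Because $A_j$ is disjoint from $\Lambda_M$, both $\gamma_j$ and the existence of a minus $*$-circuit in $A_j$ are read off from $\xi$. So for $j\neq i$, $X_j$ is either fixed by $\xi$ or equal to the join indicator of the single exterior plus cluster $K_j\supset\gamma_j$.

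This gives the product structure you need:
\begin{itemize}
\item $\mathcal U$ consists of the traces $K_j\cap\partial\Lambda_M$ of the not-yet-attached $K_j$ with $\tau_j=1$;
\item $\mathcal P_1$ is the union of the traces of the relevant $K_j$ with $\tau_j=0$;
\item $\mathcal E_\tau(\xi)=\emptyset$ whenever these requirements are contradictory or already violated by $\xi$.
\end{itemize}

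You should also pin down what ``plus-connected to $\mathcal P_0$ through $\Lambda_M$'' means. Your sentence ``a component belongs to $\mathcal C$ exactly when it is so connected'' fails if paths may only use vertices of $\Lambda_M$. For example, let $k\in K$, $a,b\in K'$ and $p\in\mathcal P_0$ appear in this cyclic order on $\partial\Lambda_M$. Here $K'$ is an unconstrained exterior plus cluster joining $a$ to $b$ outside the box and not attached to $\partial\Lambda_N$. Disjoint inside plus paths from $k$ to $a$ and from $b$ to $p$ put $K$ into $\mathcal C$, although $k$ is not plus-connected to $p$ inside $\Lambda_M$.

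The encoding becomes exact once all connectivity in $\mathcal Y^{\pm}$ and $\mathcal X^{+}$ is taken in $\Lambda_M$ with the points of $\partial\Lambda_M$ lying in a common exterior plus cluster of $\xi$ identified. Equivalently, take connectivity in $\Lambda_M$ together with the plus clusters of $\xi$. This convention has to be read into Definition~\ref{def: boundary connectivity event}, and with it your Step~3 and the disjointness checks go through.
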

\begin{remark}
    For notation convenience we will omit  $\xi$ from the notations $\mathcal Y_\xi,\mathcal X_\xi,\mathcal R_\xi,\mathcal S_\xi$ and $\mathcal P_0(\xi), \mathcal P_1(\xi), \mathcal{U}(\xi)$. In addition, we emphasize that $\mathcal{S}$ may be an empty set since $\mathcal P_0$ may be empty. Nevertheless, we will write the conditional operator $\langle\cdot \mid \mathcal{S}\rangle$ for convenience,  and we use the convention that $\langle\cdot \mid \mathcal{S}\rangle\equiv 0$.
\end{remark}
\begin{proof}[Proof of Lemma~\ref{lem: Etau conditioning reduction}]
    Let $\xi\in \Sigma_M^c$ be a configuration on $(\Lambda_M\setminus\partial\Lambda_M)^c$. After conditioning on $\xi$ together with the event $\mathcal E_\tau$, the relevant objects are as follows:
\begin{itemize}
    \item $\mathcal P_0$ is the set of plus spins that are connected to the boundary through $\Lambda_M^c$.
    \item $\mathcal P_1$ is the set of plus spins that should not be connected to the boundary.
    \item $\mathcal U$ is the collection of boundary traces of clusters that are supposed to be connected to the boundary, but fail to be so through $(\Lambda_M\setminus\partial\Lambda_M)^c$. Consequently, they must be connected to $\mathcal P_0$ inside $\Lambda_M\setminus\partial\Lambda_M$.
    \item $\mathcal X$ denotes the event that there exist both a plus circuit and a $*$-minus circuit in the annulus; the plus circuit is connected to $\mathcal P_0$, and hence may be viewed as the outermost such circuit relevant to our construction.
\end{itemize}

Since we condition on $\mathcal E_\tau$, it follows that for every $U\in \mathcal U$, the set $U$ is plus-connected to $\mathcal P_0$ inside $\Lambda_M\setminus\partial\Lambda_M$, while $\mathcal P_0$ is not plus-connected to $\mathcal P_1$. In other words, the event
$
\mathcal Y=\mathcal Y^{-}\cap \mathcal Y^{+}
$
occurs. Thus, after conditioning on the exterior configuration, the conditioning on $\mathcal E_\tau$ is encoded by the event $\mathcal Y$ inside $\Lambda_M$.  Moreover, by the definition of $\mathcal X$, the event $\mathcal R$ corresponds to the case $\tau_i=0$, whereas the event $\mathcal S$ corresponds to the case $\tau_i=1$.
\end{proof}

The following proposition is the main focus of this section. 
\begin{proposition}\label{prop:stronger-DHX}
     For any boundary condition $\xi\in\Sigma_M^c$ and any $\mathcal{P}_0,\mathcal{P}_1,\mathcal{U}$ defined as in Definition \ref{def: boundary connectivity event} such that $\mcc P_0\neq\emptyset$, there exists an absolute constant $c_7>0$ such that for any $I\subset\Lambda_M$ we have \begin{equation}\label{eq:extension-of-DHX-X=0}
        \Big|\langle\sigma^I\mid \mcc R\rangle^{\xi}_{\Lambda_M,0}\Big|\le c_7^{|I|} F(\bar\partial\Lambda_M,I);
    \end{equation}
\begin{equation}\label{eq:extension-of-DHX-X=1}
        \Big|\langle\sigma^I\mid\mcc S\rangle^{\xi}_{\Lambda_M,0}\Big|\le c_7^{|I|} F(\bar\partial\Lambda_M,I).
    \end{equation}
\end{proposition}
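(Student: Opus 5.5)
We would prove both bounds by an exploration argument that, with uniformly positive conditional probability, replaces the neither-increasing-nor-decreasing conditioning on $\mathcal R$ or $\mathcal S$ by a \emph{monotone boundary condition} on a random subdomain. Then we apply the unconditional $k$-point bound (the one behind Lemma~\ref{lem: DHX input for specific rare event in sle}, i.e.\ \cite[Lemma 4.16]{DHX23}) on that subdomain.

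The key structural fact is that each of $\mathcal Y^-$, $\mathcal Y^+$, $\mathcal X^+$, $\mathcal X^-$ is determined by explorations from one side. $\mathcal Y^-$ is witnessed by a $*$-minus path in $\Lambda_M$ separating $\mathcal P_0$ from $\mathcal P_1$. $\mathcal Y^+\cap\mathcal X^+$ is witnessed by plus paths joining each $U\in\mathcal U$ and an annulus plus circuit to $\mathcal P_0$. $\mathcal X^-$ is a $*$-minus circuit in $\Lambda_{M/2}\setminus\Lambda_{M/4}$.

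Step 1 concerns $\mathcal S$. We explore the outermost $*$-minus separating path from the $\mathcal P_1$ side (this exploration is a stopping set that reveals only minus spins and the region beyond it). Next we explore the plus clusters of $\mathcal P_0\cup\bigcup\mathcal U$ from the outside. Finally we explore the outermost plus circuit in the annulus connected to $\mathcal P_0$, followed by the outermost $*$-minus circuit inside it. After these explorations the unexplored region is split into pieces whose boundaries carry constant $+$ or constant $-$ data (plus the original $\xi$ on $\partial\Lambda_M$). By DMP, conditionally on the explored set $\mathfrak E$, the law of the remaining spins is an ordinary Ising measure $\mu^{\zeta(\mathfrak E)}_{\Lambda_M\setminus\mathfrak E,0}$, with no conditioning left.

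For $\sigma^I$ we split $I=I_{\mathrm{in}}\cup I_{\mathrm{out}}$ according to the piece each point falls in; points on $\mathfrak E$ have $|\sigma_x|=1$. Then $|\langle\sigma^I\mid\mathcal S\rangle|\le \langle \prod_{\text{pieces}}|\langle\sigma^{I\cap\text{piece}}\rangle^{\zeta}|\mid \mathcal S\rangle$. On each piece we apply the unconditional bound $c^{|J|}F(\partial(\text{piece}),J)$.

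Step 2 averages over $\mathfrak E$. The hard part is that $\partial(\text{piece})$ can come close to points of $I$, so $F$ would degrade. To handle this we bound, for each $x\in I$, the conditional probability (given $\mathcal S$) that $\mathfrak E$ comes within distance $r<\dist(x,\bar\partial\Lambda_M\cup I\setminus\{x\})$ of $x$ by $C(r/d_x)^{c}$, with $d_x$ this distance. We also combine this with the additional factor $(r/d_x)^{1/8}$ from the one-arm decay of the spin at $x$, and sum over dyadic $r$. This gives $C^{|I|}\prod_x d_x^{-1/8}=C^{|I|}F(\bar\partial\Lambda_M,I)$. The inclusion of $\partial\Lambda_{M/2}$ and $\partial\Lambda_{M/4}$ in $\bar\partial\Lambda_M$ is exactly what makes this work: away from the annulus boundaries, the explored circuits are forced to have scale $M$.

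The main obstacle is this conditional arm estimate: showing that $\mu(\mathfrak E\cap B(x,r)\ne\emptyset\mid\mathcal S)\le C(r/d_x)^c$ uniformly in $\xi$. The conditioning event can have tiny probability, since $\mathcal U$ may force long connections. We would first try a resampling/gluing argument. By DMP at the boundary of $B(x,d_x/2)$, conditionally on everything outside, the event $\mathcal S$ remains achievable by local modifications using RSW (Section properties (4)) and FKG on the monotone pieces. Passing close to $x$ requires a multi-arm event (at least three alternating arms, or two arms with a circuit), which costs $(r/d_x)^{c}$ relative to the cheap local configuration. To make these comparisons legitimate despite the non-monotone conditioning, we would use the extended Ising model of Section~\ref{sec: extended ising}, comparing boundary conditions separately for the plus- and minus-witness explorations.

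For $\mathcal R=\mathcal Y\setminus\mathcal X$ we would write $\langle\sigma^I\mathbf 1_{\mathcal R}\rangle=\langle\sigma^I\mathbf 1_{\mathcal Y}\rangle-\langle\sigma^I\mathbf 1_{\mathcal S}\rangle$. The first term is treated by the same exploration without the annulus step. Dividing by $\mu(\mathcal R)$ requires $\mu(\mathcal R\mid\mathcal Y)\ge c$ and $\mu(\mathcal S\mid \mathcal Y)\le 1$, and the lower bound follows from RSW: a plus crossing of the annulus disconnects any circuit. Together these give \eqref{eq:extension-of-DHX-X=0}.
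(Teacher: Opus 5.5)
There is a genuine gap, and it shows up in two places.

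\textbf{The explored set is not thin.} You reveal the region beyond the separating $*$-minus path, whole plus clusters, the part of the annulus outside the outermost plus circuit, and the part between that circuit and the inner $*$-minus circuit. These sets have positive area, and nothing makes $\mu(x\in\mathfrak E\mid\mathcal S)$ small. Take $I=\{x\}$ with $x$ in the middle of $\Lambda_{M/2}\setminus\Lambda_{M/4}$, so $d_x\asymp M$. Lying outside the outermost plus circuit is then an order-one event. Lying in the plus cluster of $\mathcal P_0$ decays only with the spin-cluster one-arm exponent $5/96<1/8$.

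Since you bound $|\sigma_x|\le 1$ on $\mathfrak E$, you cannot recover $d_x^{-1/8}$. In fact the claimed bound $\mu(\mathfrak E\cap B(x,r)\neq\emptyset\mid\mathcal S)\le C(r/d_x)^c$ is false for this $\mathfrak E$: it does not vanish as $r\to0$.

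Your dyadic accounting is also off. Given $\dist(x,\mathfrak E)\approx r$, the piece bound gives $r^{-1/8}$, not an extra factor $(r/d_x)^{1/8}$. So what you actually need is $\sum_r (r/d_x)^{c}\,r^{-1/8}\lesssim d_x^{-1/8}$, i.e.\ $c>1/8$, and you need it jointly over $x\in I$ in product form. A workable version would reveal only the spins on $\bpartial\Lambda_M$ and the interfaces emanating from sign changes there. Approaching $x$ then forces a two-arm event with exponent $5/8-\rho$.

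\textbf{The conditional arm estimate is the whole proposition, and your sketch cannot supply it.} The event $\mathcal Y=\mathcal Y^+\cap\mathcal Y^-$ is neither increasing nor decreasing. So after conditioning on the outside of $B(x,d_x/2)$ there is no FKG with which to insert a ``cheap local configuration''. Moreover $\mu(\mathcal S)$ can be arbitrarily small, so absolute RSW lower bounds do not help.

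The same issue breaks your $\mathcal R$ step. RSW bounds the probability of a plus crossing of the annulus, not its probability given $\mathcal Y$, so $\mu(\mathcal R\mid\mathcal Y)\ge c$ is not automatic.

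The missing ingredient is Lemma~\ref{lem: two rare events upper bound with conditioning}: uniformly over configurations $\bar\eta_I$ on $\mathrm{Ball}(I)$,
\[
\mu^{\xi,\bar\eta_I}_{\Lambda_M\setminus\mathrm{Ball}(I)}(\mathcal R)\le c^{|I|}\mu^{\xi}_{\Lambda_M}(\mathcal R),
\]
and likewise for $\mathcal S$. Given this lemma your route would close: summing over $\bar\eta_I$ in the two-arm events inside $\mathrm{Ball}(I)$ yields your conditional arm bound. The paper proves it in three steps.
\begin{enumerate}
\item It explores the plus cluster of $\mathcal P_1$ first and shows it avoids $\Lambda_{0.95M}$ with conditional probability bounded below (Lemma~\ref{lem:explore-plus-spin-clusters}). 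This gives an FKG inequality up to constants for the non-monotone $\mathcal Y$ (Lemma~\ref{lem:bound-plus-crossing-given-Y}), which is exactly your missing lower bound on $\mu(\mathcal R\mid\mathcal Y)$.
\item It inducts over the points of $I$: explore outside $\Lambda_{d_x}(x)$, insert a plus circuit in $\Lambda_{9d_x/10}(x)\setminus\Lambda_{8d_x/10}(x)$, and use mixing (Lemma~\ref{lem: two rare event probability with interface and ball}).
\item For the possibly tiny $\mathcal S$, it decomposes over the ranked plus-minus circuits in the annulus. It shows decay $(1-c^{|I|})^{k}$ in the rank and comparability of different circuits (Lemmas~\ref{lem: interface exploration probability with number}, \ref{lem: two rare event probability with different interface explored} and \ref{lem: two rare event probability with interface and ball-X=1}).
\end{enumerate}

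The paper also never explores the conditioning event itself. Configurations containing an excellent local FK cluster cancel by flipping it (Lemma~\ref{lem:cancellation-of-excellent-clusters in cle}). The event $\mathtt{NoEx}$ is then bounded by the one-arm estimate together with the BK-type bound on two disjoint plus crossings (Lemmas~\ref{lem: BK for Ising weak four arm} and \ref{lem: pivotal event bound for cle outermost}).
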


Since we will only focus on the case without an external field, we will omit  the field 0 in the subscript for convenience until the end of this subsection. 
We first explain how we obtain Lemma~\ref{lem: DHX input for specific rare event} from Proposition~\ref{prop:stronger-DHX}.

\begin{proof}[Proof of Lemma~\ref{lem: DHX input for specific rare event}]
    
Recall the discussion in Lemma~\ref{lem: Etau conditioning reduction}. For any boundary condition $\xi\in\Sigma_M^c$, if $\tau_i=0,$ then conditioned on the boundary condition $\xi$, we get that $\mathcal{E}_\tau$ is equivalent to $\mathcal{R}$. Thus we calculate \begin{equation}\label{eq: conditioning expansion in DHX rare event}
    \begin{aligned}
    \langle\sigma^I\mid \mcc R\rangle^{\xi}_{\Lambda_M,0}&=\frac{\langle\sigma^I\cdot  R\mid \xi\rangle^{+}_{\Lambda_N,\eps \hat{h}^i}}{\mu^{+}_{\lamn,\eps \hat{h}^i}(\mathcal{R}\mid \xi)}=\frac{\langle\sigma^I\cdot  \mathbf{1}_{\mathcal{E}_{\tau}}\mid \xi\rangle^{+}_{\Lambda_N,\eps \hat{h}^i}}{\mu^{+}_{\lamn,\eps \hat{h}^i}({\mathcal{E}_{\tau}}\mid \xi)}=\frac{\langle\sigma^I\cdot  \mathbf{1}_{\mathcal{E}_{\tau}}\mid \xi\rangle^{+}_{\Lambda_N,\eps \hat{h}^i}\cdot \mu^+_{\lamn,\eps \hat{h}^i}(\xi)}{\mu^{+}_{\lamn,\eps \hat{h}^i}({\mathcal{E}_{\tau}}\cap \xi)}.
    \end{aligned}
\end{equation} Summing \eqref{eq:extension-of-DHX-X=0} over all possible boundary conditions $\xi$ and combining with \eqref{eq: conditioning expansion in DHX rare event}, we get that \begin{align}
    c^{|I|} F(\bar\partial\Lambda_M,I)&\ge \sum_{\xi\in\Sigma_M^c}\mu^+_{\lamn,\eps \hat{h}^i}(\xi\mid\mathcal{E}_{\tau})\cdot \langle\sigma^I\mid \mcc R\rangle^{\xi}_{\Lambda_M,0}\nonumber\\&=\sum_{\xi\in\Sigma_M^c}\frac{\mu^+_{\lamn,\eps \hat{h}^i}(\xi\cap\mathcal{E}_{\tau})}{\mu^+_{\lamn,\eps \hat{h}^i}(\mathcal{E}_{\tau})}\cdot \frac{\langle\sigma^I\cdot  \mathbf{1}_{\mathcal{E}_{\tau}}\mid \xi\rangle^{+}_{\Lambda_N,\eps \hat{h}^i}\cdot \mu^+_{\lamn}(\xi)}{\mu^{+}_{\lamn,\eps \hat{h}^i}({\mathcal{E}_{\tau}}\cap \xi)}\nonumber\\&=\sum_{\xi\in\Sigma_M^c}\frac{\langle\sigma^I\cdot  \mathbf{1}_{\mathcal{E}_{\tau}}\mid \xi\rangle^{+}_{\Lambda_N,\eps \hat{h}^i}\cdot \mu^+_{\lamn,\eps \hat{h}^i}(\xi)}{\mu^+_{\lamn,\eps \hat{h}^i}(\mathcal{E}_{\tau})}=\langle\sigma^I\mid  \mathcal{E}_{\tau}\rangle^{+}_{\Lambda_N,\eps \hat{h}^i}.\nonumber
\end{align}Thus we finish the proof of Lemma~\ref{lem: DHX input for specific rare event} in the case $\tau_i=0$. The case $\tau_i=1$ follows similarly by replacing $\mathcal{R}$ with $\mathcal{S}$.
\end{proof}

The rest of this section is devoted to proving Proposition~\ref{prop:stronger-DHX}. From now on, we fix a subset $I\subset \Lambda_M$.
The derivations of the right-hand sides in \eqref{eq:extension-of-DHX-X=0} and \eqref{eq:extension-of-DHX-X=1} are quite similar to those in \cite[Section 4.2]{DHX23}. We therefore introduce the following notion of ``local balls'', which is similar to that in \cite[Section 4.2]{DHX23} but more convenient for the analysis in this paper.

\begin{definition}\label{def: distance and ball}
For any $I\subset \Lambda_M$ and any $x\in I$, recalling that
$\bpartial\Lambda_M=
\partial\Lambda_M
\cup \partial\Lambda_{M/2}(u_i)
\cup \partial\Lambda_{M/4}(u_i),$
define
\[
d_x
=
\min\Big\{
\dist(x,\bpartial\Lambda_M),
\min_{y\in I\setminus\{x\}}\dist(x,y)
\Big\}.
\]
 When $I=\{x\}$, we use the convention that $\min_{y\in I\setminus\{x\}}\dist(x,y)=\infty$.
	In addition, we write
	$$
	\mrr{Ball}(I):=\bigcup_{x\in I}\Lambda_{d_x/10}(x).
	$$
    Here and throughout, whenever the subscript of $\Lambda$ is not an integer, it is understood to be rounded down to the nearest integer. In particular, $\Lambda_{d_x/10}$ stands for $\Lambda_{\lfloor d_x/10\rfloor}$, and the same convention applies to quantities such as $\Lambda_{d_x/9}$.  Moreover, we adopt the convention that  $\Lambda_{d_x/10}(x)=\{x\}$ whenever $d_x/10<1$.
\end{definition}

One of the main tools in our analysis is the extended Ising measure; we refer the reader to Section~\ref{sec: extended ising} for its definition. Since the spin marginal of the extended Ising measure coincides with the classical Ising measure, we will frequently pass between the two frameworks, using notation that should be clear from the context.

\begin{definition}

For any extended graph $\bar G=(V,E)$, let
$
\Xi(\bar G)=\{-1,1\}^{V}\otimes \{-1,0,1\}^{E}
$
denote the space of extended Ising configurations on $\bar G$.
     Define
    $$
    \mcc C_x:=\{y\in\overline{\Lambda_M}: x\stackrel{\sigma_x}{\longleftrightarrow} y\}
    $$
    to be the connected component of $x$ with the same spin value as $\sigma_x$ inside $\overline{\Lambda_M}$, (i.e., the corresponding FK-Ising cluster together with all its vertices).
     
    We say that $\mathcal{C}_x$ is \textbf{local} if $\mathcal{C}_x\cap\partial\Lambda_{d_x/10}(x)=\emptyset$.
    Otherwise, we call it \textbf{non-local}.
\end{definition}

\begin{definition}
    For any $\mcc W\in \{\mcc X^+,\mcc X^-,\mcc Y^+,\mcc Y^-\}$ and any $x\in I$, we say that $\mathcal{C}_x$ is \textbf{excellent} to $\mcc W$ if it is local, and flipping the spin on this cluster does not change the value of $\mbb{I}_\mcc W$. In addition, we say that $\mathcal{C}_x$ is \textbf{excellent} if it is excellent to all of $\mcc X^+,\mcc X^-,\mcc Y^+,\mcc Y^-$ (and thus flipping $\mcc C_x$ does not change the values of $R$ and $S$). Moreover, let $\Ne$ denote the event that there exists no excellent cluster.
    We say that a cluster is \textbf{pivotal} to $\mcc X^+$ if it is local but not excellent to $\mcc X^+$, i.e., flipping the spin on this cluster changes the value of $X^+$. Similarly, we define clusters that are pivotal to $\mcc X^-$, $\mcc Y^+$, or $\mcc Y^-$.
\end{definition}

 If there exists an excellent cluster $\mathcal C_x$, 
flipping all spins in $\mathcal C_x$ changes the sign of $\sigma^I$ while leaving the value of $\mathcal R$ unchanged since
$\mathcal C_x\cap I=\{x\}$.
Therefore, each configuration contributing to
$\langle \mathcal R \sigma^I\rangle_{\Lambda_M}^{\xi}$
can be paired with its flipped counterpart, and the two contributions cancel exactly.
As a consequence, configurations containing at least one excellent cluster contribute zero to
$\langle \mathcal R \sigma^I\rangle_{\Lambda_M}^{\xi}$.
This observation is formalized in the next lemma, whose proof is identical to that of \cite[Lemma~4.19]{DHX23}.

\begin{lemma}\label{lem:cancellation-of-excellent-clusters in cle}
For any boundary condition $\xi\in\hat\Sigma_i$, we have
\begin{equation}
\langle R\sigma^I\rangle_{\Lambda_M}^\xi=\langle \mbf 1_{\Ne} R\sigma^I\rangle_{\overline{\Lambda_M}}^\xi,\quad
\langle S\sigma^I\rangle_{\Lambda_M}^\xi=\langle \mbf 1_{\Ne} S\sigma^I\rangle_{\overline{\Lambda_M}}^\xi.\label{eq:cancellation-of-excellent-cluster in cle}
\end{equation}
Here, the subscript $\overline{\Lambda_M}$ indicates that the expectation is taken with respect to the extended Ising measure. Moreover, on the left-hand side, the quantities $R\sigma^I$ and $S\sigma^I$ depend only on the spin configuration, so it is equivalent to work with the classical Ising measure.
\end{lemma}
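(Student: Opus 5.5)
The plan is the standard flipping (involution) argument of \cite[Lemma~4.19]{DHX23}, carried out on the extended Ising model on $\overline{\Lambda_M}$. First, I would rewrite the left-hand sides through the extended measure. The spin marginal of $\bar\mu^{\xi}_{\overline{\Lambda_M}}$ is $\mu^\xi_{\Lambda_M}$, and $R\sigma^I$, $S\sigma^I$ depend only on vertex spins. Hence $\langle R\sigma^I\rangle^\xi_{\Lambda_M}=\langle R\sigma^I\rangle^\xi_{\overline{\Lambda_M}}$, and likewise for $S$. It therefore suffices to show $\langle \mbf 1_{\Ne^c} R\sigma^I\rangle^\xi_{\overline{\Lambda_M}}=0$, and the same with $S$.

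Second, I would build a measure-preserving involution on $\Ne^c$. Fix a deterministic order on $I$. For a configuration $\bar\sigma\in\Ne^c$, let $x_*=x_*(\bar\sigma)$ be the first $x\in I$ such that $\mcc C_x$ is excellent. Let $T(\bar\sigma)$ be obtained by flipping every vertex spin and every nonzero edge spin inside $\mcc C_{x_*}$.

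\begin{itemize}
\item \emph{Weight.} Because $\mcc C_{x_*}$ is a maximal same-sign cluster of the extended configuration, every edge on its outer boundary carries spin $0$. Flipping therefore preserves the hard constraint $|\bar\sigma_v-\bar\sigma_e|\le1$. The weights $W(a,b)$ are symmetric under $(a,b)\mapsto(-a,-b)$, and there is no field term. So $\bar\mu^\xi(T\bar\sigma)=\bar\mu^\xi(\bar\sigma)$.
\item \emph{Boundary.} Locality gives $\mcc C_{x_*}\cap\partial\Lambda_{d_{x_*}/10}(x_*)=\emptyset$. Since $d_{x_*}\le\dist(x_*,\partial\Lambda_M)$, the cluster stays strictly inside $\Lambda_M$ and never touches the boundary data $\xi$.
\item \emph{Sign and events.} Since $d_{x_*}\le\min_{y\ne x_*}\dist(x_*,y)$, we have $\mcc C_{x_*}\cap I=\{x_*\}$, so $\sigma^I(T\bar\sigma)=-\sigma^I(\bar\sigma)$. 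Excellence means that $\mbb I_{\mcc X^\pm}$ and $\mbb I_{\mcc Y^\pm}$ are unchanged, and hence so are $R$ and $S$.
\end{itemize}

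Granting that $T$ is an involution on $\Ne^c$, the contributions of $\bar\sigma$ and $T\bar\sigma$ cancel, which proves \eqref{eq:cancellation-of-excellent-cluster in cle}.

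The main obstacle is showing that $T$ maps $\Ne^c$ to itself and that $x_*(T\bar\sigma)=x_*(\bar\sigma)$. I would handle it in three steps.

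\begin{itemize}
\item \emph{The flipped cluster.} The cluster $\mcc C_{x_*}$ in $T\bar\sigma$ is the same set, now with the opposite sign, since its boundary edges are $0$. It stays local and stays excellent, because flipping it back restores the original indicators.
\item \emph{Disjointness of the local balls.} For $y\ne x$ we have $d_x,d_y\le\dist(x,y)$, so $\Lambda_{d_x/10}(x)$ and $\Lambda_{d_y/10}(y)$ are disjoint. A local $\mcc C_y$ lies inside $\Lambda_{d_y/10}(y)$. Clusters are either equal or disjoint, and $\mcc C_y\ne\mcc C_{x_*}$ because $\mcc C_{x_*}\cap I=\{x_*\}$. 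So the flip does not alter $\mcc C_y$ as a set, for any $y\ne x_*$.
\item \emph{Excellence of earlier points.} The remaining issue is whether flipping $\mcc C_{x_*}$ can change whether an earlier $\mcc C_y$ is excellent. Here I would use that excellence of $\mcc C_{x_*}$ says the indicators are invariant under its flip, jointly with any configuration obtained by also flipping $\mcc C_y$. The events are monotone in the plus/minus connectivity they encode, and the two clusters lie in disjoint balls. So I expect to show that the two flips commute at the level of the indicators: if flipping $\mcc C_y$ changed an indicator after $\mcc C_{x_*}$ is flipped, this would force pivotality of $\mcc C_{x_*}$ in some configuration.
\end{itemize}

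If that commutation argument proves delicate, I would fall back on the exact reasoning of \cite[Lemma~4.19]{DHX23}. There, the same issue is resolved by observing that the pivotality structure of disjoint local clusters is unaffected by flipping an excellent one.
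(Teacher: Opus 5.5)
Your reduction to $\langle \mathbf 1_{\mathtt{NoEx}^c}R\sigma^I\rangle^\xi_{\overline{\Lambda_M}}=0$ is correct, and so are your checks that a cluster flip preserves the weight, that locality keeps the cluster away from $\xi$, that $\sigma^I$ changes sign, and that the balls $\Lambda_{d_x/10}(x)$ are disjoint. This is the argument the paper intends. The gap is your third step: flipping $\mathcal C_{x_*}$ \emph{can} make an earlier cluster excellent, so $T$ is not an involution.

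Here is a concrete case. Take $I=\{x_1,x_2\}$ with $x_1<x_2$, both clusters local, and let $s_j$ be the sign of $\mathcal C_{x_j}$. Suppose $\mathbb I_{\mathcal X^\pm}$ and $\mathbb I_{\mathcal Y^-}$ do not depend on $(s_1,s_2)$, with $X=0$ and $Y^-=1$. Suppose also that some $U\in\mathcal U$ reaches $\mathcal P_0$ by plus spins only along two parallel routes, one forced through $\mathcal C_{x_1}$ and one through $\mathcal C_{x_2}$. This has positive probability. Then $R=\mathbb I\{s_1=+\text{ or }s_2=+\}$.
\begin{itemize}
\item At $(+,-)$, $\mathcal C_{x_1}$ is not excellent but $\mathcal C_{x_2}$ is, so $T(+,-)=(+,+)$.
\item At $(+,+)$, $\mathcal C_{x_1}$ is excellent, so $T(+,+)=(-,+)\neq(+,-)$.
\end{itemize}
Your commutation heuristic fails because excellence of $\mathcal C_{x_*}$ is a property of $\bar\sigma$ only. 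In $F_y\bar\sigma$ the cluster $\mathcal C_{x_*}$ may well be pivotal, and that contradicts nothing.

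The problem is worse than a bad choice of pairing. The four configurations of this orbit have equal weight. Over them, $R\sigma^I$ sums to $1-1-1+0=-1$, while $\mathbf 1_{\mathtt{NoEx}}R\sigma^I$ sums to $0$: only $(-,-)$ lies in $\mathtt{NoEx}$, and $R=0$ there. So no argument that pairs configurations by flipping local clusters can prove the identity when excellence is defined configuration by configuration, as in the paper. Deferring to \cite[Lemma~4.19]{DHX23} does not remove this obstruction.

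The missing idea is to make excellence invariant under flips of the other local clusters. Call $\mathcal C_x$ excellent if it is local and flipping it leaves $\mathbb I_{\mathcal X^\pm}$ and $\mathbb I_{\mathcal Y^\pm}$ unchanged for every choice of signs of the local clusters $\mathcal C_y$, $y\in I\setminus\{x\}$. A stronger, purely local alternative is: $\mathcal C_x$ is local and no event $\mathtt L_t\cap\mathtt{Con}_2(2^t,d_x/10)$ occurs. Flips do not change the cluster structure, so this property is constant along each orbit. Hence $x_*$ is orbit-invariant, and your $T$ becomes a genuine weight-preserving involution that reverses $\sigma^I$ and fixes $R$ and $S$.

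The bound on the new $\mathtt{NoEx}$ event still holds. Suppose $\mathcal C_x$ is local but not excellent in this sense. Then it is pivotal in some configuration $\bar\sigma'$ of the orbit, and the proof of Lemma~\ref{lem: pivotal event bound for cle outermost} produces $\mathtt L_t\cap\mathtt{Con}_2(2^t,d_x/10)$ in $\bar\sigma'$. This event depends only on the configuration in $\Lambda_{d_x/10}(x)$, where $\bar\sigma'$ and $\bar\sigma$ coincide. Therefore Corollary~\ref{cor: noex upper bound in cle singular} and Proposition~\ref{prop:stronger-DHX} go through unchanged.
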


In order to control the quantities on the right-hand sides of the equalities in \eqref{eq:cancellation-of-excellent-cluster in cle}, we need an upper bound on the event $\Ne$. Note that if $\mathcal{C}$ is not excellent, then either it is non-local or it is pivotal. By \eqref{eq: FK one arm event exponent}, we know that the probability that $\mathcal{C}_x$ is non-local decays as ${(\max\{d_x,1\})}^{-\frac{1}{8}}$. It therefore suffices to control the probability that a cluster is pivotal. Our main intuition is that if flipping a local FK-cluster $\mathcal{C}$ changes the value of $X^+$, then there should exist two disjoint plus clusters connecting $\mathcal{C}$ to the boundary of a larger box containing $\mathcal{C}$. The following BK-type inequality is designed to provide an upper bound on this probability. Let $\mu_{\Lambda_{M_2}\setminus\Lambda_{M_1}}^{\nu,\xi}$ denote the Ising measure on $\Lambda_{M_{2}}\setminus\Lambda_{M_{1}}$ with boundary condition $\nu$ on $\partial\Lambda_{M_1+1}$ and $\xi$ on $\partial\Lambda_{M_2}.$

\begin{lemma}\label{lem: BK for Ising weak four arm}
For any integers $0<M_1<M_2$, let $\cont(M_1,M_2)$ be the event that there exist two plus crossings in the annulus $\Lambda_{M_2}\setminus\Lambda_{M_1}$ such that their plus spin clusters, restricted to $\Lambda_{M_2}\setminus\Lambda_{M_1}$, are disjoint. Then there exists a constant $c_8>0$ such that for any integers $M_2>M_1>0$ and any boundary conditions $\xi,\nu$ on $\partial\Lambda_{M_2}$ and $\partial\Lambda_{M_1+1}$, respectively, we have
\begin{equation}
\mu_{\Lambda_{M_2}\setminus\Lambda_{M_1}}^{\nu,\xi}(\cont(M_1,M_2))\le c_8\Big(\frac{M_2}{M_1}\Big)^{-\frac{1}{4}}.\label{eq: BK for Ising weak four arm}
\end{equation}
\end{lemma}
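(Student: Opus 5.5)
Our plan is to bound the probability of two disjoint plus clusters crossing the annulus by the square of a one-arm-type probability, i.e.\ a quantity of order $(M_1/M_2)^{1/8}$, squared. We work through the Edwards--Sokal coupling and the extended Ising model of Section~\ref{sec: extended ising}. First we reduce to plus boundary conditions on both boundaries. The event $\cont(M_1,M_2)$ is not monotone, but it is implied by the existence of two disjoint plus crossings. We then use a standard decomposition into dyadic sub-annuli $A_k=\Lambda_{2^{k+1}M_1}\setminus\Lambda_{2^kM_1}$, for $0\le k\le \log_2(M_2/M_1)-1$. It suffices to show that in each of a positive fraction of well-separated sub-annuli (those at distance from both boundaries comparable to their scale), the conditional probability of seeing two disjoint plus crossings with disjoint clusters, given everything outside, is at most $2^{-1/4}\cdot$(a constant), up to a multiplicative constant uniform in boundary conditions. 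Multiplying via the DMP then gives the bound with exponent $1/4$, up to a constant.

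The key single-scale input is the following. In an annulus $A=\Lambda_{2K}\setminus\Lambda_K$ with arbitrary boundary conditions, consider the event that two plus crossings exist whose plus clusters within $A$ are disjoint. On this event there is, between the two clusters, a $*$-minus path separating them, which also crosses $A$. So the event contains four alternating arms ($+,-,+,-$). Rather than using the full four-arm exponent, we only need the crude exponent $1/4$. We obtain it through the FK representation: two disjoint plus spin clusters crossing the annulus each contain an FK cluster crossing. If both FK crossings belonged to the same FK cluster, the spin clusters would coincide. Hence there are two disjoint FK clusters crossing $A$, separated by a dual crossing, which is a three-arm (primal, dual, primal) event in the FK-Ising model.

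To turn this into a multi-scale bound, we use the one-arm estimate \eqref{eq: FK one arm event exponent} together with the following quasi-multiplicativity heuristic. Disjoint FK clusters each crossing from $\partial\Lambda_{M_1}$ to $\partial\Lambda_{M_2}$ can be explored one after another. Explore the first cluster $\mathcal C_1$ from the inner boundary. Conditionally on $\mathcal C_1$ and its closed outer edge boundary, the remaining domain has free boundary along $\mathcal C_1$. By CBC, the probability that a second, disjoint cluster crosses from scale $M_1$ to $M_2$ is at most the wired one-arm probability from $\Lambda_{M_1}$ to $\partial\Lambda_{M_2}$, which is $\le C(M_1/M_2)^{1/8}$. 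The probability that $\mathcal C_1$ crosses is also $\le C(M_1/M_2)^{1/8}$ by CBC and \eqref{eq: FK one arm event exponent}, once the inner boundary is handled by wiring $\partial\Lambda_{M_1+1}$ (an increasing modification, valid because each crossing event is increasing in the FK configuration). Multiplying the two factors gives $(M_2/M_1)^{-1/4}$.

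The main obstacle is this conditional step. Once the inner boundary is wired, the two crossing FK clusters become connected through it. We must therefore phrase the second arm as a crossing of the annulus avoiding $\mathcal C_1$, and wire only the outer boundary plus an enlargement at scale $M_1$ (a box of size $M_1$ around a point where the arm starts). We also need to check that the boundary-condition-uniform one-arm bound \eqref{eq: FK one arm event exponent} applies to the slit domain $\Lambda_{M_2}\setminus(\Lambda_{M_1}\cup\mathcal C_1)$ with free boundary on $\mathcal C_1$. We would first try to handle this by summing over $O(1)$ starting boxes on $\partial\Lambda_{2M_1}$ via RSW, losing only a constant.
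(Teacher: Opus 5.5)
\textbf{The gap: spin crossings are not FK crossings.} Your argument breaks at the reduction to FK clusters. It is not true that a plus spin cluster crossing the annulus contains an FK cluster crossing it. A plus spin cluster is a union of FK clusters glued along closed edges between plus vertices; given the spins, each such edge is closed with probability $e^{-2\beta_c}$. So a spin crossing need not contain any open crossing, and $\cont(M_1,M_2)$ is not contained in the event that two disjoint FK clusters cross. Consequently neither the FK three-arm event nor the exploration of an FK cluster $\mathcal C_1$ with free boundary along it controls $\cont(M_1,M_2)$. This event genuinely concerns spin clusters: in Lemma~\ref{lem: pivotal event bound for cle outermost} the two arms are paths of plus spins.

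\textbf{The preliminary steps also fail.} The event $\cont$ is not monotone, so CBC cannot move you to plus boundary conditions. The inclusion you state is also reversed: $\cont$ implies two disjoint plus crossings, not conversely. Moreover, with $\nu\equiv+$ the event is empty, since the plus circuit $\partial\Lambda_{M_1+1}$ would glue all crossings together. Passing first to the increasing event of two disjoint plus crossing paths loses the exponent, because spin crossings are not controlled by \eqref{eq: FK one arm event exponent}. Finally, the dyadic product cannot give exponent exactly $1/4$: a per-scale bound $C2^{-1/4}$ compounded over $\log_2(M_2/M_1)$ scales yields $(M_2/M_1)^{-1/4+\log_2 C}$.

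\textbf{The missing idea: $\cont$ forces an FK \emph{dual} crossing.} The paper explores the plus spin clusters of the points of $\partial\Lambda_{M_1+1}$, in a fixed order, up to the first one that crosses. Call the explored plus set $\mathbf C$; its exterior boundary $\mathbf B$ in the annulus consists of minus spins. On $\cont(M_1,M_2)$ two things hold.
\begin{itemize}
\item[(i)] The crossing spin cluster contains no circuit of the annulus; otherwise every plus crossing would meet it and there would be only one crossing cluster.
\item[(ii)] A second plus crossing lies in $\Lambda_{M_1,M_2}\setminus(\mathsf B\cup\mathsf C)$.
\end{itemize}
Restrict to a sub-annulus $\Lambda_{M_4}\setminus\Lambda_{M_3}$ with $M_3=CM_1$ and $M_4=M_2/C$. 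Use Corollary~\ref{cor: spatial mixing of Ising model} to replace $\nu,\xi$ by minus boundary conditions. Then (i) forces an FK dual crossing of the sub-annulus: an open circuit would be a single-spin set met by every plus crossing, hence plus and contained in every crossing cluster. Given $\sigma|_{\mathsf B}=-1$, the slit domain has all-minus, hence wired, boundary, so (ii) likewise forces a dual crossing there. Duality turns each wired dual crossing into a free primal crossing. By FKG, CBC and \eqref{eq: FK one arm event exponent}, each is bounded by $C(M_3/M_4)^{1/8}$. Summing over exploration outcomes gives the product of the two factors directly, with no multi-scale decomposition.
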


\begin{proof}
The proof is similar to \cite[Lemma 4.14]{DHX23}, and we follow the notation used there.
    List $\partial \Lambda_{M_1+1}$ as $\{x_1,x_2,\cdots,x_n\}$. 
    For each $x$, if $\sigma_x=-1$,  let $\mathbf C_x = \emptyset$ and $ \mathbf B_x=\{x\}$; if $\sigma_x=1$,  let $\mathbf C_x$ be the plus cluster containing $x$ restricted to $\Lambda_{M_1,M_2}:=\Lambda_{M_2}\setminus\Lambda_{M_1}$. In addition, define
    $$\mathbf B_x=\partial_{\mathrm{ext}}\mathbf C_x\cap\Lambda_{M_1,M_2},$$
    where $\partial_{\mathrm{ext}}\mathbf C_x=\{u\notin \mathbf C_x:\exists\, v\in \mathbf C_x\text{ such that }\{u,v\}\in E\}$ is the exterior boundary of $\mathbf C_x.$ We say a cluster $\mathbf C_{x_i}$ is a plus crossing cluster if it connects $\partial \Lambda_{M_2}$ and $\partial \Lambda_{M_1+1}$.
    
    Define $\tau=\min\{i: \mathbf C_{x_i} \text{ is a plus crossing cluster}\}.$
    If there is no plus crossing cluster, we set $\tau=\infty$. 
    Define
    $$\mathbf B=\bigcup_{i=1}^{{\min\{\tau,n\}}} \mathbf B_{x_i}, \quad  \mathbf C=\bigcup_{i=1}^{{\min\{\tau,n\}}} \mathbf C_{x_i}.$$ Note that although $\partial_{\mathrm{ext}} \mathbf C\cap\Lambda_{M_1,M_2}\subset\mathbf B$, the inclusion may be strict (recall that $\mathbf B_x=\{x\}$ if $\mathbf C_x=\emptyset$). Note also that the points in $\mathbf B$ have spin $-1$. Let $\mathcal{S}(M_1,M_2)$ denote the event that there exists a plus spin cluster connecting $\partial \Lambda_{M_2}$ and $\partial \Lambda_{M_1+1}$ but containing no circuit in $\Lambda_{M_1,M_2}$.  In addition, for any deterministic set $\mathsf B,\mathsf C\subset\Lambda_{M_1,M_2},$ let $\mathcal{D}(M_1,M_2,\Lambda_{M_1,M_2}\setminus( \mathsf B\cup \mathsf C))$ denote the event that there exists a plus spin cluster connecting $\partial \Lambda_{M_2}$ and $\partial \Lambda_{M_1+1}$ in $\Lambda_{M_1,M_2}\setminus( \mathsf B\cup \mathsf C)$.

We claim that, on the event
$\cont(M_1,M_2)\cap\{\mathbf B=\mathsf B,\ \mathbf C=\mathsf C\},$
both $\mathcal S(M_1,M_2)$ and
$\mathcal D\bigl(M_1,M_2,\\\Lambda_{M_1,M_2}\setminus(\mathsf B\cup \mathsf C)\bigr)$
occur.
We first show that $\mathcal S(M_1,M_2)$ occurs. Suppose, to the contrary, that
$\mathcal S(M_1,M_2)$ does not occur. Since $\cont(M_1,M_2)$ occurs, there exists
at least one plus crossing cluster in $\Lambda_{M_1,M_2}$. By the failure of
$\mathcal S(M_1,M_2)$, every plus crossing cluster contains a circuit in
$\Lambda_{M_1,M_2}$. Let $\mathcal C$ be one such cluster. The circuit contained in
$\mathcal C$ separates $\partial\Lambda_{M_1+1}$ from $\partial\Lambda_{M_2}$.
Hence every path connecting $\partial\Lambda_{M_1+1}$ to $\partial\Lambda_{M_2}$
must intersect $\mathcal C$. In particular, every plus crossing intersects
$\mathcal C$, and therefore belongs to the same plus cluster as $\mathcal C$.
Thus, there is only one plus crossing cluster, contradicting the occurrence of
$\cont(M_1,M_2)$. This proves that $\mathcal S(M_1,M_2)$ occurs.

It remains to show that
$\mathcal D\bigl(M_1,M_2,\Lambda_{M_1,M_2}\setminus(\mathsf B\cup \mathsf C)\bigr)$
occurs. Since $\cont(M_1,M_2)$ occurs, we have $\tau\leq n$. By the definition of
$\tau$, the cluster $\mathbf C_{x_\tau}$ is the first plus cluster, in the prescribed
ordering of $\partial\Lambda_{M_1+1}$, which connects $\partial\Lambda_{M_1+1}$ to
$\partial\Lambda_{M_2}$. On the event $\{\mathbf B=\mathsf B,\ \mathbf C=\mathsf C\}$,
the set $\mathsf C$ consists of all plus clusters explored up to and including this
first plus crossing cluster, while $\mathsf B$ contains their exterior boundary in
$\Lambda_{M_1,M_2}$. Since $\cont(M_1,M_2)$ occurs, there is another plus crossing
cluster, distinct from $\mathbf C_{x_\tau}$. This cluster is disjoint from
$\mathsf C$ and cannot intersect $\mathsf B$, since all points of $\mathsf B$ have
spin $-1$. Therefore, this additional plus crossing is contained in
$\Lambda_{M_1,M_2}\setminus(\mathsf B\cup \mathsf C).$
Consequently,
$\mathcal D\bigl(M_1,M_2,\Lambda_{M_1,M_2}\setminus(\mathsf B\cup \mathsf C)\bigr)$
occurs.

In conclusion, we have the following relations \begin{align}
    \cont(M_1,M_2)\subset\bigcup\limits_{\mathsf B,\mathsf C\subset \Lambda_{M_1,M_2}} \{\mathbf B=\mathsf B,  \mathbf C=\mathsf C, \tau\leq n\}&\cap \mathcal{D}({M_1,M_2}, \Lambda_{M_1,M_2}\setminus( \mathsf B\cup \mathsf C)),\nonumber\\
    \text{and }\bigcup\limits_{ \mathsf B,\mathsf C\subset \Lambda_{M_1,M_2}} \{\mathbf B=\mathsf B,  \mathbf C=\mathsf C, \tau\leq n\}&\subset\mathcal{D}(M_1,M_2,\Lambda_{M_1,M_2}).\nonumber
\end{align} 
We point out that 
    in the unions above, some of the events of the form
     $\{\mathbf B=\mathsf B,  \mathbf C=\mathsf C, \tau\leq n\}$ are empty. In light of this, we say a pair of deterministic sets $(\mathsf B,\mathsf C)$ is valid if the event $\{\mathbf B=\mathsf B,  \mathbf C=\mathsf C, \tau\leq n\}$ is not empty, and there does not exist a connected circuit $\gamma$ in $\mathsf C$ that separates the inner and outer boundary of $\Lambda_{M_1,M_2}$. Then we get that\begin{align}
    \cont(M_1,M_2)\subset\bigcup\limits_{\text{valid }\mathsf B,\mathsf C\subset \Lambda_{M_1,M_2}} \{\mathbf B=\mathsf B,  \mathbf C=\mathsf C, \tau\leq n\}&\cap \mathcal{D}({M_1,M_2}, \Lambda_{M_1,M_2}\setminus( \mathsf B\cup \mathsf C)),\label{eq: inclusion relation in BK 1}\\
    \text{and }\bigcup\limits_{\text{valid }\mathsf B,\mathsf C\subset \Lambda_{M_1,M_2}} \{\mathbf B=\mathsf B,  \mathbf C=\mathsf C, \tau\leq n\}&\subset\mathcal{S}(\Lambda_{M_1,M_2}).\label{eq: inclusion relation in BK 2}
\end{align}
    In addition, for each valid pair $(\mathsf B, \mathsf C)$, the event
$\{\mathbf B=\mathsf B,  \mathbf C=\mathsf C, \tau\leq n\}$ is equivalent to $\{\mathsf B\subset \mathsf{Plus}^c, \mathsf C\subset \mathsf{Plus}\}$, where $\mathsf{Plus}$ denotes the set of plus spins.

Then we calculate that \begin{align}
    &\mu^{\nu,\xi}_{\Lambda_{M_1,M_2}}(\cont(M_1,M_2))\stackrel{\eqref{eq: inclusion relation in BK 1}}{\le}\sum_{\text{valid }\mathsf B,\mathsf C\subset \Lambda_{M_1,M_2}}\mu^{\nu,\xi}_{\Lambda_{M_1,M_2}}(\sigma|_{\mathsf B}=-1,\sigma|_{\mathsf C}=1)\nonumber\\
    &\times\mu^{\nu,\xi}_{\Lambda_{M_1,M_2}}\Big(\mathcal{D}(M_1,M_2,\Lambda_{M_1,M_2}\setminus(\mathsf B\cup \mathsf C))~\Big|~ \sigma|_{\mathsf B}=-1,\sigma|_{\mathsf C}=1\Big). \label{decomposition-of-exploration}
    \end{align}
To control the right-hand side of \eqref{decomposition-of-exploration}, we claim that the following inequalities hold for any valid pair $\mathsf{B},\mathsf{C}$,
\begin{align}
    \mu^{\nu,\xi}_{\Lambda_{M_1,M_2}}\Big(\mathcal{S}(M_1,M_2)\Big)&\le C_1(\frac{M_2}{M_1})^{-\frac{1}{8}},\label{eq: FK dual path one arm}\\
    \mu^{\nu,\xi}_{\Lambda_{M_1,M_2}}\Big(\mathcal{D}\big(M_1,M_2, \Lambda_{M_1,M_2}\setminus( \mathsf B\cup \mathsf C)\big)\Big)&~\Big|~ \sigma|_{\mathsf B}=-1,\sigma|_{\mathsf C}=1)\le C_2(\frac{M_2}{M_1})^{-\frac{1}{8}}.\label{eq: FK dual path one arm after exploration}
\end{align}
We first show how we proceed assuming \eqref{eq: FK dual path one arm} and \eqref{eq: FK dual path one arm after exploration}. Plugging \eqref{eq: FK dual path one arm} and \eqref{eq: FK dual path one arm after exploration} into \eqref{decomposition-of-exploration}, we get that \begin{align}
    \mu^{\nu,\xi}_{\Lambda_{M_1,M_2}}(\cont(M_1,M_2))&\stackrel{\eqref{eq: FK dual path one arm after exploration}}{\le}\sum_{\text{valid }\mathsf B,\mathsf C\subset \Lambda_{M_1,M_2}}\mu^{\nu,\xi}_{\Lambda_{M_1,M_2}}(\sigma|_{\mathsf B}=-1,\sigma|_{\mathsf C}=1)\times C_2(\frac{M_2}{M_1})^{-\frac{1}{8}}\nonumber\\
    &\stackrel{(*)}{\le} \mu^{\nu,\xi}_{\Lambda_{M_1,M_2}}\Big(\mathcal{S}(M_1,M_2)\Big)\times C_2(\frac{M_2}{M_1})^{-\frac{1}{8}} \stackrel{\eqref{eq: FK dual path one arm}}{\le}C_1C_2(\frac{M_2}{M_1})^{-\frac{1}{4}}. \label{eq: alternating weak four arm final}
\end{align} In $(*)$, we used \eqref{eq: inclusion relation in BK 2} and the fact that for valid pair $\mathsf B,\mathsf C\subset \Lambda_{M_1,M_2}$, the event $\{\sigma|_{\mathsf B}=-1,\sigma|_{\mathsf C}=1\}$ is equivalent to $\{\mathbf B=\mathsf B,  \mathbf C=\mathsf C, \tau\leq n\}$.
Next, we turn to the proofs of \eqref{eq: FK dual path one arm} and \eqref{eq: FK dual path one arm after exploration}.
\end{proof}
\begin{proof}[Proof of \eqref{eq: FK dual path one arm}]
Recall that $\mathcal{S}(M_1,M_2)$ denotes the event that there exists a plus spin cluster connecting $\partial \Lambda_{M_2}$ and $\partial \Lambda_{M_1+1}$ but containing no circuit in $\Lambda_{M_1,M_2}$.
    Let $C_1>0$ be a sufficiently large constant to be chosen later, and let $M_3=C_1M_1$ and $M_4=\frac{M_2}{C_1}$. We further restrict that $M_4>M_3$ (otherwise $M_2<C_1^2M_1$ and Lemma~\ref{lem: BK for Ising weak four arm} holds for free by taking $c_8>C_1^8.$).
In order to prove \eqref{eq: FK dual path one arm}, we first apply Corollary~\ref{cor: spatial mixing of Ising model} to remove the dependence on the boundary conditions $\xi,\nu.$   By Corollary~\ref{cor: spatial mixing of Ising model}, we obtain that  \begin{align}
    \mu^{\nu,\xi}_{\Lambda_{M_1,M_2}}\Big(\mathcal{S}(M_1,M_2)\Big)&~~~\le \mu^{\nu,\xi}_{\Lambda_{M_1,M_2}}\Big(\mathcal{S}(M_3,M_4)\Big)\nonumber\\&\stackrel{\text{Cor }~\ref{cor: spatial mixing of Ising model}}{\le} \Big(\frac{1}{1-2cC_1^{-\frac{1}{8}}}\Big)\mu^{-,-}_{\Lambda_{M_1,M_2},0}\Big(\mathcal{S}(M_3,M_4)\Big).\label{eq: boundary elimination in BK}
\end{align}
Let $\mathcal{D}^*(M_3,M_4)$ denote the event that there exists an FK dual path in $\Lambda_{M_3,M_4}$ connecting $\partial\Lambda_{M_3+1}$ and $\partial\Lambda_{M_4}$.  If $\mathcal{D}^*(M_3,M_4)$ does not occur, then there exists an FK cluster $\mathcal{C}$ containing a circuit in $\Lambda_{M_3,M_4}$ and thus any spin crossing in $\Lambda_{M_3,M_4}$ will intersect $\mathcal{C}$. Thus, every plus crossing cluster in $\Lambda_{M_3,M_4}$ contains a circuit in $\Lambda_{M_3,M_4}$. 
Therefore, we get that \begin{equation}
    \mathcal{S}(M_3,M_4)\subset\mathcal{D}^*(M_3,M_4).\label{eq: Ising crossing change to FK crossing}
\end{equation}
Combined with duality, we get that \begin{align}
    \mu^{-,-}_{\Lambda_{M_1,M_2}}\Big(\mathcal{S}(M_3,M_4)\Big)\le \phi^{\w,\w}_{\Lambda_{M_1,M_2}}\Big(\mathcal{D}^{*}(M_3,M_4)\Big)&=\phi^{\f,\f}_{\Lambda_{M_1,M_2}}\Big(\partial\Lambda_{M_3+1}\leftrightarrow\partial\Lambda_{M_4}\Big)
    .\label{eq: FK dual one arm with free boundary}
\end{align} 
Let us now upper-bound the right-hand side of \eqref{eq: FK dual one arm with free boundary}. The FKG and the CBC properties yield that \begin{align}
    \phi^{\f}_{\Lambda_{M_2}}(o\leftrightarrow \partial\Lambda_{M_4})&~\ge \phi^{\f}_{\Lambda_{M_2}}\big(o\leftrightarrow \partial\Lambda_{M_3},~\partial\Lambda_{M_3+1}\leftrightarrow\partial\Lambda_{M_4}\big)\nonumber\\&\stackrel{\mathrm{FKG}}{\ge} \phi^{\f}_{\Lambda_{M_2}}\big(o\leftrightarrow \partial\Lambda_{M_3}\big)\cdot \phi^{\f}_{\Lambda_{M_2}}\big(\partial\Lambda_{M_3+1}\leftrightarrow\partial\Lambda_{M_4}\big)\nonumber\\&\stackrel{\mathrm{CBC}}{\ge }\phi^{\f}_{\Lambda_{M_3}}\big(o\leftrightarrow \partial\Lambda_{M_3}\big)\cdot \phi^{\f,\f}_{\Lambda_{M_1,M_2}}\big(\partial\Lambda_{M_3+1}\leftrightarrow\partial\Lambda_{M_4}\big).\label{eq: FK annulus crossing probability step one}
\end{align} Plugging the one-arm event estimate \eqref{eq: FK one arm event exponent} into \eqref{eq: FK annulus crossing probability step one}, we get that \begin{equation}
\phi^{\f,\f}_{\Lambda_{M_1,M_2}}\big(\partial\Lambda_{M_3+1}\leftrightarrow\partial\Lambda_{M_4}\big)\le \frac{C_2M_4^{-\frac{1}{8}}}{C_3M_3^{-\frac{1}{8}}}=\frac{C_2}{C_3}(\frac{M_4}{M_3})^{-\frac{1}{8}}.\label{eq: FK dual one arm with free boundary 1} 
\end{equation}
Combining \eqref{eq: boundary elimination in BK} with \eqref{eq: FK dual one arm with free boundary} and \eqref{eq: FK dual one arm with free boundary 1} and then choosing $C_1>0$ large enough such that $2cC_1^{-\frac{1}{8}}\le \frac12$, we finish the proof of \eqref{eq: FK dual path one arm}. 
\end{proof}
\begin{proof}[Proof of \eqref{eq: FK dual path one arm after exploration}]
Recall that $\mathcal{D}(M_1,M_2,\Lambda_{M_1,M_2}\setminus( \mathsf B\cup \mathsf C))$ denotes the event that there exists a plus spin cluster connecting $\partial \Lambda_{M_2}$ and $\partial \Lambda_{M_1+1}$ in $\Lambda_{M_1,M_2}\setminus( \mathsf B\cup \mathsf C)$.
    To prove \eqref{eq: FK dual path one arm after exploration}, we first fix a valid pair $\mathsf B,\mathsf C\subset \Lambda_{M_1,M_2}$. Similar to the proof of \eqref{eq: FK dual path one arm}, we choose $M_3=C_1M_1$ and $M_4=\frac{M_2}{C_1}$ for some $C_1>0$ to be chosen later and further restrict that $M_4>M_3$.
     Since $\mathcal{D}\big(M_3,M_4,\Lambda_{M_3,M_4} \setminus(\mathsf{B}\cup\mathsf{C})\big)$ is an increasing event, the DMP and CBC properties give that  \begin{align}
&\mu^{\nu,\xi}_{\Lambda_{M_1,M_2}}\Big(\mathcal{D}\big(M_1,M_2, \Lambda_{M_1,M_2}\setminus( \mathsf B\cup \mathsf C)\big)~\Big|~ \sigma|_{\mathsf B}=-1, \sigma|_{\mathsf C}=1\Big)\nonumber\\\stackrel{\text{DMP}}{=}&\mu^{\nu,\xi}_{\Lambda_{M_1,M_2}}\Big(\mathcal{D}\big(M_1,M_2, \Lambda_{M_1,M_2}\setminus( \mathsf B\cup \mathsf C)\big)~\Big|~ \sigma|_{\mathsf B}=-1\Big)\nonumber\\\stackrel{\text{CBC}}{\le}&\mu^{\nu,\xi}_{\Lambda_{M_1,M_2}}\Big(\mathcal{D}\big(M_3,M_4,\Lambda_{M_3,M_4}\setminus(\mathsf{B}\cup\mathsf{C})\big)~\Big|~\sigma|_{\mathsf B\cap \Lambda_{M_3,M_4}}=-1\Big)\label{eq: FK dual one arm CBC to a smaller region}.
\end{align}
Note that both $\mathcal{D}\big(M_3,M_4,\Lambda_{M_3,M_4}\setminus(\mathsf{B}\cup\mathsf{C})\big)$ and $\sigma_{\mathsf B\cap \Lambda_{M_3,M_4}}=-1$ are supported on $\Lambda_{M_3,M_4}$.  Corollary \ref{cor: spatial mixing of Ising model} implies that \begin{align}
&\mu^{\nu,\xi}_{\Lambda_{M_1,M_2}}\Big(\mathcal{D}\big(M_3,M_4,\Lambda_{M_3,M_4}\setminus(\mathsf{B}\cup\mathsf{C})\big)~\Big|~\sigma|_{\mathsf B\cap \Lambda_{M_3,M_4}}=-1\Big)\nonumber\\\le~&\frac{1}{1-2cC_1^{-\frac{1}{8}}} \mu^{-,-}_{\Lambda_{M_{1},M_{2}}}\Big(\mathcal{D}\big(M_3,M_4,\Lambda_{M_3,M_4}\setminus(\mathsf{B}\cup\mathsf{C})\big)~\Big|~\sigma|_{\mathsf B\cap \Lambda_{M_3,M_4}}=-1\Big).\label{eq: FK dual one arm CBC to a smaller region 2}
\end{align}As in the proof of \eqref{eq: FK dual path one arm}, we define $\mathcal{D}^*\big(M_3,M_4,\Lambda_{M_3,M_4} \setminus(\mathsf{B}\cup\mathsf{C})\big)$ to be the event that there exists a FK dual path connecting $\partial\Lambda_{M_3+1}$ and $\partial\Lambda_{M_4}$ in $\Lambda_{M_3,M_4}\setminus(\mathsf{B}\cup\mathsf{C}).$ By the same argument as  in \eqref{eq: Ising crossing change to FK crossing}, we get that under minus boundary condition on $\mathsf B\cap \Lambda_{M_3,M_4}$,
\begin{equation}
    \mathcal{D}\big(M_3,M_4,\Lambda_{M_3,M_4} \setminus(\mathsf{B}\cup\mathsf{C})\big)\subset\mathcal{D}^*\big(M_3,M_4,\Lambda_{M_3,M_4} \setminus(\mathsf{B}\cup\mathsf{C})\big).\nonumber
\end{equation} Thus, we have that  \begin{align}
&\mu^{-,-}_{\Lambda_{M_{1},M_{2}}}\Big(\mathcal{D}\big(M_3,M_4,\Lambda_{M_3,M_4}\setminus(\mathsf{B}\cup\mathsf{C})\big)~\Big|~\sigma|_{\mathsf B\cap \Lambda_{M_3,M_4}}=-1\Big)\nonumber\\\le~& \phi^{\w,\w}_{\Lambda_{M_{1},M_{2}}\setminus(\mathsf{B}\cup\mathsf{C})}\Big(\mathcal{D}^*\big(M_3,M_4,\Lambda_{M_3,M_4}\setminus(\mathsf{B}\cup\mathsf{C})\big)\Big)\label{eq: FK dual one arm CBC to a smaller region 3}.
\end{align} By duality and  the CBC property of the FK measure, we get that \begin{align}
&\phi^{\w,\w}_{\Lambda_{M_1,M_2}\setminus(\mathsf{B}\cup\mathsf{C})}\Big(\mathcal{D}^*\big(M_3,M_4,\Lambda_{M_3,M_4}\setminus(\mathsf{B}\cup\mathsf{C})\big)\Big)\nonumber=   \phi^{\f,\f}_{\Lambda_{M_1,M_2}\setminus(\mathsf{B}\cup\mathsf{C})}\Big(\partial\Lambda_{M_4}\leftrightarrow\partial\Lambda_{M_3+1}\Big)\\\stackrel{\text{CBC}}{\le }&\phi^{\f,\f}_{\Lambda_{M_{1},M_{2}}}\Big(\partial\Lambda_{M_4}\leftrightarrow\partial\Lambda_{M_3+1}\big)\Big)\stackrel{\eqref{eq: FK dual one arm with free boundary 1}}{\le} C_4(\frac{M_4}{M_3})^{-\frac{1}{8}}.\label{eq: FK dual one arm CBC to a smaller region 4}
\end{align} Combining \eqref{eq: FK dual one arm CBC to a smaller region}, \eqref{eq: FK dual one arm CBC to a smaller region 2} and \eqref{eq: FK dual one arm CBC to a smaller region 3} with \eqref{eq: FK dual one arm CBC to a smaller region 4}, we get \eqref{eq: FK dual path one arm after exploration}.
\end{proof}

The following lemma controls the probability of $\mcc C_x$ being pivotal.

\begin{lemma}\label{lem: pivotal event bound for cle outermost}
    There exists a constant $c_8>0$ such that for any $\mcc W\in\{\mcc X^+,\mcc X^-,\mcc Y^+,\mcc Y^-\}$, any $x\in I$, and any boundary condition $\xi$ on $\overline{\Lambda_{d(x)/10}(x)^c}$, 
    we have \begin{equation}
        \bar\mu^{\xi}_{\Lambda_{d(x)/10}(x)}(\mathcal{C}_x \textit{ is pivotal to }\mcc W)\leq c_8{(\max\{d(x),1\})}^{-\frac{1}{8}}.\label{eq: pivotal for cle}
    \end{equation}
\end{lemma}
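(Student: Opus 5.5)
Set $d:=d(x)$ and $B:=\Lambda_{d/10}(x)$. The plan is to show that pivotality of a local cluster forces a four-arm-type event in an annulus around $x$, and then to bound that event with Lemma~\ref{lem: BK for Ising weak four arm}. If $d\le C$ for a fixed large constant $C$, the bound is trivial after enlarging $c_8$, so assume $d$ is large.

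\textbf{Deterministic consequence of pivotality.} Suppose $\mathcal{C}_x$ is local, so $\mathcal{C}_x\subset \Lambda_{d/10-1}(x)$ and its spin $s=\sigma_x$ is constant on it. Flipping $\mathcal{C}_x$ changes only spins inside $B$. By the choice of $d_x$, the ball $B$ is at distance at least $9d/10$ from $\partial\Lambda_M$, $\partial\Lambda_{M/2}$ and $\partial\Lambda_{M/4}$. Hence every relevant circuit or connection that passes through $B$ must cross the annulus $B\setminus\Lambda_{r}(x)$, where $r$ is the smallest box radius containing $\mathcal{C}_x$.

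We treat the four events separately.
\begin{itemize}
\item $\mathcal{Y}^+$ or $\mathcal{X}^+$ (both increasing in plus spins). Pivotality requires $\mathcal{C}_x$ to be a minus cluster whose flip creates a plus connection, or a plus cluster whose removal destroys one. In either case there are two plus paths from $\partial\mathcal{C}_x$ to $\partial B$ whose plus clusters, restricted to the annulus, are distinct. Otherwise the connection would already exist (or survive) without using $\mathcal{C}_x$.
\item $\mathcal{Y}^-$ and $\mathcal{X}^-$. By symmetry and planar duality these give either two disjoint plus arms or two disjoint $*$-minus arms. The minus case is handled by the spin-flip symmetric version of Lemma~\ref{lem: BK for Ising weak four arm}, whose proof carries over verbatim.
\end{itemize}
In short, pivotality implies $\cont(r,d/10)$ (or its minus analogue) in the annulus $\Lambda_{d/10}(x)\setminus\Lambda_r(x)$.

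\textbf{Probability bound.} The bound depends on $r$, which is random, so we sum over dyadic scales. Let $r_j=2^j$. On the event that $\mathcal{C}_x$ first exits $\Lambda_{r_{j-1}}(x)$ but stays inside $\Lambda_{r_j}(x)$, two things hold:
\begin{itemize}
\item the one-arm event from $x$ to $\partial\Lambda_{r_{j-1}}(x)$, of FK-probability at most $C r_j^{-1/8}$ by \eqref{eq: FK one arm event exponent}, uniformly in the boundary condition via CBC;
\item $\cont(2r_j,d/10)$.
\end{itemize}
The two annuli are separated, so we condition on the configuration inside $\Lambda_{2r_j}(x)$. By DMP, the second event then has probability at most $c(d/r_j)^{-1/4}$ by Lemma~\ref{lem: BK for Ising weak four arm}, uniformly in the inner and outer boundary conditions. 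This gives
\[
\sum_j C r_j^{-1/8}(r_j/d)^{1/4}\le C d^{-1/8},
\]
since the sum is dominated by $r_j\approx d$, where the terms are $O(d^{-1/8})$. The bound is uniform in $\xi$, because Lemma~\ref{lem: BK for Ising weak four arm} and the one-arm bound are.

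\textbf{Main obstacle.} The hardest step is the topological claim that pivotality forces two arms whose clusters are distinct inside the annulus, especially for $\mathcal{X}^+$. There one needs a plus circuit that is plus-connected to $\mathcal{P}_0$, together with a separate $*$-minus circuit. Flipping $\mathcal{C}_x$ could create a circuit or a connection in a combined way, and one must verify that this still yields two arms inside $B$ whose clusters do not merge within the annulus. I would first handle it by a case analysis on whether the altered circuit or connection path passes through $\mathcal{C}_x$: reroute around $\partial\mathcal{C}_x$, and use planar duality to produce the second arm as the boundary of the modified region.
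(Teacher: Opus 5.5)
Your proposal is essentially the paper's proof: pivotality of the local cluster $\mathcal C_x$ forces $\mathtt L_t\cap\cont(2^t,d(x)/10)$, and then the DMP on $\partial\Lambda_{2^t}(x)$, Lemma~\ref{lem: BK for Ising weak four arm} and the FK one-arm bound give $\sum_t 2^{-t/8}(d(x)/2^t)^{-1/4}\le C\,d(x)^{-1/8}$.

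For the step you flag, three points. First, $\mathcal X^+$ involves no $*$-minus circuit; that circuit belongs to $\mathcal X^-$. Second, both arms are simply the first and last crossings of $\Lambda_{d(x)/10}(x)\setminus\Lambda_{2^t}(x)$ by a single plus path or circuit through $\mathcal C_x$, based outside $\Lambda_{d(x)/10}(x)$; for a circuit, base it on an excursion outside $\Lambda_{2^t}(x)$ that carries its winding around $\Lambda_{M/4}$. These two crossings cannot be plus-connected in the annulus, since rerouting would then avoid $\mathcal C_x$, so no duality construction of the second arm is needed. Third, no minus or $*$-arm version of the BK lemma is required (a spin flip would only give nearest-neighbour minus arms anyway): pivotality to $\mathcal W$ is the same as pivotality to $\mathcal W^c$, and $(\mathcal X^-)^c=\{\partial\Lambda_{M/2}\leftrightarrow\partial\Lambda_{M/4}\}$ and $(\mathcal Y^-)^c=\{\mathcal P_1\leftrightarrow\mathcal P_0\}$ are plus-connection events, so the same two-plus-arm argument covers all four cases.
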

\begin{proof} We first deal with the case that $\mcc W=\mcc X^+.$ As explained before Lemma \ref{lem: BK for Ising weak four arm}, we want to construct a $\cont$ event to control the probability on the left-hand side of \eqref{eq: pivotal for cle}.
Since $\mathcal{C}_x$ is local, we can assume that  $\mathcal{C}_x\cap\partial\Lambda_{2^t}(x)=\emptyset$ and  $\mathcal{C}_x\cap\partial\Lambda_{2^{t-1}}(x)\neq\emptyset$, 
    for some integer $t\leq \lceil\log_2(d(x)/10)\rceil$. 
Let $\sigma\in\{-1,1\}^{\Lambda_M}$ denote the configuration associated with $s_x=1$ and $\hat\sigma\in\{-1,1\}^{\Lambda_M}$ denote the configuration associated with $s_x=-1$, where $s_x$ is the sign of the cluster $\mcc C_x$. Therefore we have 
    $$\sigma|_{ \Lambda_M\setminus\Lambda_{d(x)/10}(x)}=\hat\sigma|_{ \Lambda_M\setminus\Lambda_{d(x)/10}(x)}=\xi, \quad  \sigma|_{ \Lambda_M\setminus\Lambda_{2^t}(x)}=\hat\sigma|_{ \Lambda_M\setminus\Lambda_{2^t}(x)}.$$
Let $\mathsf{Plus}(\sigma)$ denote the set of plus spins in $\sigma$. Then $\mathsf{Plus}(\hat\sigma)=\mathsf{Plus}(\sigma)\setminus\mcc C_x.$ Recall $\mcc P_0$ from Definition~\ref{def: boundary connectivity event}. Let $\mathsf{CP}$ denote the connected component of $\mcc P_0$ in $\mathsf{Plus}(\sigma).$ Then the connected component of $\mcc P_0$ in $\mathsf{Plus}(\hat\sigma)=\mathsf{Plus}(\sigma)\setminus\mcc C_x$ is a subset of $\mathsf{CP}\setminus \mcc C_x.$

 Recall that  $\mcc X^+$ is the event that there exists a plus circuit in $\Lambda_{M/2}\setminus\Lambda_{M/4}$ that connects to $\mathcal{P}_0$. 
    Since $\mcc X^+$ is an increasing event, flipping the sign of $\mathcal{C}_x$ can affect $X^+$ only if $\mcc X^+$ holds when $s_x=1$ but fails when $s_x=-1$. Therefore, $\mathsf{Plus}(\sigma)$ contains a plus circuit in $\Lambda_{M/2}\setminus\Lambda_{M/4}$  connected to $\mcc P_0,$ hence  $\mathsf{CP}$ contains a plus circuit in $\Lambda_{M/2}\setminus\Lambda_{M/4}$. But we also require that 
    $\mathsf{Plus}(\hat\sigma)$ does not contain such circuit,
    so there are two possible cases:

    \begin{itemize}
    \item At least one point $v\in\mathsf{CP}\setminus\mcc C_x$ is not connected to $\mcc P_0$. Assume $\pi=[v=v_0,v_1,\cdots,v_L]$ is a path in $\mathsf{CP}$ from $v_0$ to $\mcc P_0$. (If multiple such paths exist, select the first one according to a pre-fixed order.)  Therefore we know $\pi\not\subset \mathsf{CP}\setminus \mcc C_x.$ Thus $\pi\cap\Lambda_{2^t}(x)\neq \emptyset.$
    This motivates us to define the following entrance time and exit time through $\Lambda_{2^t}(x)$:
     $$l_2=\min\{1\leq l\leq L: v_l\in\partial\Lambda_{2^t}(x)\},\mbox{ and }  l_1=\max\{1\leq l<l_2: v_l\in\partial\Lambda_{d(x)/10}(x)\};$$
     $$l_3=\max\{1\leq l\leq L: v_l\in\partial\Lambda_{2^t}(x)\},\mbox{ and }  l_4=\min\{l_3\leq l\leq L: v_l\in\partial\Lambda_{d(x)/10}(x)\}.$$
     Here we recall that $2^t\leq d(x)/10$. From the above definitions, we see that $1\leq l_1 < l_2 < l_3 < l_4 \leq L$. We further define $$\pi_1=\{v_{l_1},v_{l_1+1},\cdots,v_{l_2}\}, \quad \pi_2=\{v_{l_3}, v_{l_3+1},\cdots,v_{l_4}\},$$
    which correspond to the `first' and `last' plus crossings of $\pi$ in the annulus $\Lambda_{d(x)/10}(x)\setminus\Lambda_{2^t}{(x)}.$
    We claim that $\pi_1$ and $\pi_2$ are not connected through plus spins in $\mathsf{CP}\cap\big(\Lambda_{d(x)/10}(x)\setminus\Lambda_{2^t}{(x)}\big) .$ 
    Otherwise, this yields a path from $v$ to $\mcc P_0$ that avoids $\Lambda_{2^t}(x)$. Hence, $v$ is connected to $\mcc P_0$ in $\mathsf{CP}\setminus \mcc C_x$, contradicting the definition of $v$.
 See Figure~\ref{fig: two disjoint plus crossing} for an illustration.

        \item Every point in $\mathsf{CP}\setminus\mcc C_x$ is connected to $\mcc P_0,$ then $\mcc X^+$ fails only if $\mathsf{CP}\setminus\mcc C_x$ contains no plus circuit in $\Lambda_{M/2}\setminus\Lambda_{M/4}$.
        Let $\gamma$ denote the plus circuit in $\mathsf{CP}\cap (\Lambda_{M/2}\setminus\Lambda_{M/4}).$ (If multiple such circuits exist, select the first one according to a pre-fixed order.) Assume 
        $\gamma=[v_0,v_1,v_2,\cdots,v_m=v_0]$ such that $v_{i-1}\sim v_i$ for $i=1,2,\cdots,m.$  Since $v_0$ could be chosen arbitrarily in the circuit, we could assume that $v_0\not\in\Lambda_{d(x)/10}(x).$ The rest is similar to the first case.
    \end{itemize}
\begin{figure}[htb]
    \centering
     \begin{minipage}[t]{0.49\linewidth}
        \begin{tikzpicture}[scale=0.75]
    \draw (-3.5,-3.5)rectangle (3.5,3.5);
    \draw (-2,-2) rectangle (2,2);
    \draw[dotted] (0,0) .. controls (-1,1) and (-0.5,-1.5) .. (-1,-2);
    \draw[dotted] (0,0) .. controls (0.3,-1) and (0.7,-1.5) .. (1.1,1);
    \draw[dotted] (1.1,1) .. controls (1.5,0) and (1.8,1.5) .. (1.9,1.1);
    \draw[dotted] (1.1,1) .. controls (1.3,1.3) and (0.8,1.7) .. (1.1,1.9);
    \draw[ ] (-3.5,-3) .. controls (-3,-0.9) and (-2.5,-1.3) .. (-2,-1.8);
    \draw[ ] (-2,-1.8) .. controls (-1,-2) and (1,1.8) .. (2,1.5);
    \draw[ ] (2,1.5) .. controls (2.5,1.4) .. (3.5,2.5);
    \draw[ ] (3,2.4) .. controls (2.8,1) .. (0,-3);
    \draw[ ] (-3,-3.3) .. controls (-2.8,-1) .. (-2.5,0);
    \node [name=start1] at (3.5,3.5){ };
    \node [name=end1] at (3.5,4.5){$\Lambda_{d(x)}(x)$ };
    \draw [->] (start1)--(end1) ;
    \node [name=start2] at (2,2){ };
    \node [name=end2] at (2.8,2.8){ $\Lambda_t(x)$};
    \node at (0.2,0.2) {$\cC_{x}^+$};
    \draw [->] (start2)--(end2) ;
    \end{tikzpicture}
    \end{minipage}\hfill
    \begin{minipage}[t]{0.49\linewidth}
        \begin{tikzpicture}[scale=0.75]    
    \draw (-3.5,-3.5)rectangle (3.5,3.5);
    \draw (-2,-2) rectangle (2,2);
    \draw[dotted] (0,0) .. controls (-1,1) and (-0.5,-1.5) .. (-1,-2);
    \draw[dotted] (0,0) .. controls (0.3,-1) and (0.7,-1.5) .. (1.1,1);
    \draw[dotted] (1.1,1) .. controls (1.5,0) and (1.8,1.5) .. (1.9,1.1);
    \draw[dotted] (1.1,1) .. controls (1.3,1.3) and (0.8,1.7) .. (1.1,1.9);
    \draw[color= blue,very thick] (-3.5,-3) .. controls (-3,-0.9) and (-2.5,-1.3) .. (-2,-1.8);
    \draw[fill=black] (-3.5,-3) circle(0.2em);
    \node[above left] at (-3.5,-3) {$x_{\ell_4}$};
    \draw[fill=black] (-2,-1.8) circle(0.2em);
    \node[above left] at (-2,-1.8) {$x_{\ell_3}$};
   \draw[ ] (-2,-1.8) .. controls (-1,-2) and (1,1.8) .. (2,1.5);
    \draw[color=red, very thick] (2,1.5) .. controls (2.5,1.4) .. (3.5,2.5);
    \draw[fill=black] (2,1.5) circle(0.2em);
    \node[above right] at (2,1.5) {$x_{\ell_2}$};
    \draw[fill=black] (3.5,2.5) circle(0.2em);
    \node[above right] at (3.5,2.5) {$x_{\ell_1}$};
    
    \node [color=red] at (3.3,1.8) {$\mcc P_1$};
    \node [color=blue] at (-3.2,-3.2) {$\mcc P_2$};
    \draw (3,2.4) .. controls (2.8,1) .. (0,-3);
     \draw[ ] (-3,-3.3) .. controls (-2.8,-1) .. (-2.5,0);
    \node [name=start1] at (3.5,3.5){ };
    \node [name=end1] at (3.5,4.5){$\Lambda_{d(x)}(x)$ };
    \draw [->] (start1)--(end1) ;
    \node [name=start2] at (2,2){ };
    \node [name=end2] at (2.8,2.8){ $\Lambda_t(x)$};
    \node at (0.2,0.2) {$\cC_{x}^+$};
    \draw [->] (start2)--(end2) ;
    \end{tikzpicture}
    \end{minipage}
    \caption{Illustration of two disjoint plus crossings. Dotted lines: the FK-cluster $\cC_{x}^+$. Solid lines: the pluses in $\Lambda_{d(x)}(x)$. Colored lines: 
    the first and last plus crossings of $\mcc P$ in $\Lambda_{t,d(x)}(x)$.}
    \label{fig: two disjoint plus crossing}
\end{figure}


    
Let  $\mathtt L_t$ be the event that $\{\mathcal{C}_x\cap\partial\Lambda_{2^t}(x)=\emptyset\}\cap \{ \mathcal{C}_x\cap\partial\Lambda_{2^{t-1}}(x)\neq\emptyset\}$. We conclude from the above discussion that 
    \begin{equation}\{\mathcal{C}_x\text{ is pivotal to } \mcc X^+\}\subset\bigcup_{t=1}^{\lceil\log_2(d(x)/10)\rceil}\mathtt L_t
    \cap \cont(2^t, d(x)/10).\label{eq:decomposition-of-pivotal cle for plus circuit}\end{equation} 
     Let $\sB_t$ denote the collection of boundary conditions on $\partial\Lambda_{2^t}(x)$, i.e., $\sB_t=\{1,-1\}^{\partial\Lambda_{2^t}(x)}$.
Here, notice that $\mathtt L_t$ is measurable with respect to the configurations on {$\Lambda_{2^t}$}. 
 For $\nu\in \sB_t$, let $$\Psi_t(\nu):=\mu^{\xi}_{\Lambda_{d(x)/10}(x)}\Big(\sigma|_{\partial\Lambda_{2^t}(x)}=\nu\,
 \Big).$$ Applying the DMP and Lemma \ref{lem: BK for Ising weak four arm}, we obtain that 
    \begin{align}
&\sum_{t=1}^{\lceil\log_2(d(x)/10)\rceil}\bar\mu^{\xi}_{\Lambda_{d(x)/10}(x)}(\mathtt L_t\cap \cont(2^t,d(x)/10))\nonumber\\ {=}& \sum_{t=1}^{\lceil\log_2(d(x)/10)\rceil}\sum_{\nu\in \sB_t}\Psi_t(\nu)\bar\mu_{\Lambda_{2^t}(z)}^{\nu}(\mathtt L_t)\times\mu_{\Lambda_{2^t,d(x)/10}(z)}^{\nu,\xi}\big(\cont(2^t,d(x)/10)\big)\nonumber~~~\text{(by DMP)}\\ {\le} &\sum_{t=1}^{\lceil\log_2(d(x)/10)\rceil}\left[\sum_{\nu\in \sB_t}\Psi_t(\nu)\times\bar\mu_{\Lambda_{2^t}(z)}^{\nu}(\mathtt L_t)\right]\times C_1(\frac{d(x)}{2^t})^{-\frac{1}{4}}\nonumber~~~(\text{by Lemma \ref{lem: BK for Ising weak four arm}}),
    \end{align}
where $\mu_{\Lambda_{2^t,d(x)/10}(z)}^{\nu,\xi}(\cont(2^t,d(x)/10)):=1$ if $2^t\ge d(x)/10$. 
Applying the total law of probability and the change from $\bar\mu^{\xi}_{\Lambda_{d(x)/10}(x)}$ to $\phi^{\xi}_{\Lambda_{d(x)/10}(x)}$, we obtain that \begin{align}  &\sum_{t=1}^{\lceil\log_2(d(x)/10)\rceil}\left[\sum_{\nu\in \sB_t}\Psi_t(\nu)\times\bar\mu_{\Lambda_{2^t}(z)}^{\nu}(\mathtt L_t)\right]\times C_1(\frac{d(x)}{2^t})^{-\frac{1}{4}}\nonumber\\ =~&\sum_{t=1}^{\lceil\log_2(d(x)/10)\rceil}\phi_{\Lambda_{d(x)/10}(x)}^{\xi}(\mathtt L_t)\times C_1(\frac{d(x)}{2^t})^{-\frac{1}{4}}\le C_2\sum_{t=1}^{\lceil\log_2(d(x)/10)\rceil}2^{-\frac{t}{8}}\times (\frac{d(x)}{2^t})^{-\frac{1}{4}}\le C_3 d(x)^{-\frac{1}{8}}.\nonumber
\end{align}
Thus, we complete the proof of the 
Lemma~\ref{lem: pivotal event bound for cle outermost} for $\mcc W=\mcc X^+.$

Now we explain how to deal with the other cases when $\mcc W\in\{\mcc X^-,\mcc Y^+,\mcc Y^-\}.$ To this end, we claim that \eqref{eq: pivotal for cle} actually holds for
a large class of events $\mcc W$.
Let $\mss P_0$ denote the set of events that two disjoint subsets of $\partial\Lambda_M\cup\Lambda_{M/2}\cup\Lambda_{M/4}$ are connected by plus spins, i.e.,
$$\mss P_0=\big\{\{A{\leftrightarrow} B\}:A,B\subset\bpartial\Lambda_M=\partial\Lambda_M\cup\partial\Lambda_{M/2}\cup\partial\Lambda_{M/4}, A\cap B=\emptyset \big\}.$$
Moreover, define
$$\mss P=\{\bigcap_{i=1}^n\mcc A_i:n<\infty, \mcc A_i\in\mss P_0 \mbox{ for }1\leq i\leq n\}$$
to be the finite intersection of events in $\mss P_0.$

Note that all the events in $\mss P_0$ are increasing, and therefore so are events in $\mss P.$ For any $\mcc W=\bigcap_{i=1}^n \mcc A_n\in\mss P,$ where $\mcc A_i=\{A_i{\leftrightarrow} B_i\}$,
if $\mcc C_x$ is pivotal to $\mcc W$,  then $\mcc W$ holds with $s_x=1$ and fails with $s_x=0.$ 
Let $\mcc A_j$ be the first one (according to some fixed order on $\mss P_0$) among events $\mcc A_1$ to $\mcc A_n$ such that $\mcc C_x$ is pivotal to $\mcc A_j$ and $\gamma$ is the first path (according to some fixed order on paths) from $A_j$ to $B_j$ such that all the spins on $\gamma$ are plus if $s_x=1$. We could follow the arguments (and notations) above to show that the event $\mathsf{Con}_2(2^t,d(x)/10)$ occurs and thus the desired bound holds.

Now recall that 
$$(\mcc X^-)^c=\{\partial\Lambda_{M/2}{\leftrightarrow}\partial\Lambda_{M/4}\},\quad \mcc Y^+=\bigcap_{U\in\mcc U}\{U{\leftrightarrow}\mcc P_0\},\quad  (\mcc Y^-)^c= \{\mcc P_1{\leftrightarrow}\mcc P_0\} ,$$
therefore, $(\mcc X^-)^c,\mcc Y^+,(\mcc Y^-)^c\in\mss P.$ Noting that by definition $\mcc C_x$ being pivotal to $\mcc W$ is  equivalent to $\mcc C_x$ being pivotal to $\mcc W^c$ completes the proof.
\end{proof}
Lemma~\ref{lem: pivotal event bound for cle outermost} combined with the DMP property yields the following Corollary.
\begin{corollary}\label{cor: noex upper bound in cle singular}
    Let $\Xi_I=\{-1,1\}^{V(\mrr{Ball}(I))}\otimes \{-1,0,1\}^{E(\mrr{Ball}(I))}$ be the extended-Ising configuration space on $\overline{\mrr{Ball}(I)}.$ In addition, we  define $\Xi_I'\subset \Xi_I$ be the collection of extended-Ising configuration $\bar\eta$ on $\overline{\mrr{Ball}(I)}$ such that there exists a configuration $\bar\sigma\in \mathtt{NoEx}$ satisfying $\bar\sigma|_{\overline{\mrr{Ball}(I)}}=\bar\eta$. Then, we have 
    \begin{equation}\label{eq: noex upper bound in cle singular}
        \bar\mu^{\xi}_{\Lambda_{M}}(\Xi_I')\le c^{|I|} F(\bpartial\mathsf C_i,I).
    \end{equation}
\end{corollary}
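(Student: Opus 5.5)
The event $\Xi_I'$ is contained in the event that, for every $x\in I$, the cluster $\mathcal C_x$ is not excellent. By definition, a non-excellent cluster is either non-local, meaning $\mathcal C_x\cap\partial\Lambda_{d_x/10}(x)\neq\emptyset$, or local but pivotal to at least one of $\mathcal X^+,\mathcal X^-,\mathcal Y^+,\mathcal Y^-$. One caveat: pivotality of a local cluster depends on the configuration outside $\mathrm{Ball}(I)$. This is exactly why $\Xi_I'$ is defined as the set of ball configurations that admit \emph{some} extension in $\mathtt{NoEx}$, so the bound must be uniform in the outside configuration. The plan is therefore to factorize over the disjoint boxes $\Lambda_{d_x/10}(x)$, $x\in I$. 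Disjointness holds because $d_x\le \dist(x,y)$ for $y\in I$, so boxes of radius $d_x/10$ cannot meet. Each box should then contribute a factor $c\,\max\{d_x,1\}^{-1/8}$.

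Concretely, I order $I=\{x_1,\dots,x_k\}$ and peel off one box at a time using the DMP for the extended Ising measure. Conditioned on the extended configuration $\zeta$ outside $\overline{\Lambda_{d_{x_j}/10}(x_j)}$ (which includes the other balls), the indicator that $\mathcal C_{x_j}$ is not excellent for every extension compatible with $\zeta$ is dominated by
\[
\mathbb I\{\mathcal C_{x_j}\text{ non-local}\}+\sum_{\mathcal W}\mathbb I\{\mathcal C_{x_j}\text{ pivotal to }\mathcal W\},
\]
evaluated with boundary condition $\zeta$. The non-local term has probability at most $c\,\max\{d_{x_j},1\}^{-1/8}$ uniformly in $\zeta$. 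This follows from the one-arm bound \eqref{eq: FK one arm event exponent}: the cluster is an FK cluster, and by CBC the wired boundary dominates. The pivotal terms are bounded by Lemma~\ref{lem: pivotal event bound for cle outermost}, which is stated uniformly in the boundary condition $\xi$ on $\overline{\Lambda_{d(x)/10}(x)^c}$. Hence
\[
\bar\mu^{\zeta}\big(\mathcal C_{x_j}\text{ not excellent}\big)\le 5c_8\max\{d_{x_j},1\}^{-1/8}.
\]
Iterating by conditioning successively (tower property) gives $\bar\mu^\xi_{\Lambda_M}(\Xi_I')\le \prod_{x\in I} C\max\{d_x,1\}^{-1/8}$. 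Finally, $d_x$ is comparable to $\dist(x,\bar\partial\Lambda_M\cup(I\setminus\{x\}))$ up to the convention $\dist=1$ on overlap, so the product is at most $c^{|I|}F(\bar\partial\Lambda_M,I)$. (The $\mathsf C_i$ in the statement is the box $\mathsf B_i=\Lambda_M$.)

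The main obstacle is the measurability subtlety in this iteration. The event "$\mathcal C_{x_j}$ is not excellent" refers to pivotality for the global events $\mathcal X^\pm,\mathcal Y^\pm$, and so depends on the configurations inside the other balls $\Lambda_{d_{x_l}/10}(x_l)$. That is exactly why the events cannot be treated as independent. I would handle this by taking $\Xi_I'$ as the projection of $\mathtt{NoEx}$ and bounding, for each fixed outside configuration, the inside event by its worst case. Lemma~\ref{lem: pivotal event bound for cle outermost} already provides this uniformity, since its proof only produces a $\mathtt{Con_2}$ event inside the annulus plus a locality event inside the box. The product structure then follows by conditioning on everything except one box at a time, applying the uniform bound, and inducting on $|I|$.
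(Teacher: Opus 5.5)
Your proposal is correct and follows the same route as the paper. The paper combines Lemma~\ref{lem: pivotal event bound for cle outermost} (together with the one-arm bound \eqref{eq: FK one arm event exponent} for non-locality) with the DMP to factorize over the disjoint boxes $\Lambda_{d_x/10}(x)$, each box contributing $c\max\{d_x,1\}^{-1/8}$. You also correctly identify what that one-line justification leaves implicit: because $\Xi_I'$ is a projection of $\mathtt{NoEx}$, the uniform-in-$\xi$ statement of the lemma is not enough on its own. The iteration is legitimate because the lemma's proof puts pivotality, for every exterior configuration, inside the ball-measurable event $\bigcup_t \mathtt L_t\cap\mathtt{Con_2}(2^t,d_x/10)$.
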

Combining Lemma \ref{lem:cancellation-of-excellent-clusters in cle} and Corollary \ref{cor: noex upper bound in cle singular}, we will get upper bounds for $\langle R\sigma^I\rangle_{\Lambda_{M}}^\xi$ and $\langle S\sigma^I\rangle_{\Lambda_{M}}^\xi$. In order to extend to the desired upper bounds for the conditional expectation $\langle \sigma^I\mid \mcc R\rangle_{\Lambda_{M}}^\xi$ and $\langle \sigma^I\mid \mcc S\rangle_{\Lambda_{M}}^\xi$, we also need to control the probabilities of $\mcc R$ and $\mcc S$ under different  configurations in $\mrr{Ball}(I)$.
This leads to the following Lemma~\ref{lem: two rare events upper bound with conditioning} that we claim is sufficient to 
 to obtain Proposition~\ref{prop:stronger-DHX}.

\begin{lemma}\label{lem: two rare events upper bound with conditioning}
Let $I\subset \Lambda_{M}$ be a subset of $\Lambda_{M}.$ There exists a constant $c_9>0$ such that 
for any boundary condition $\bar\eta_I$ on $\overline{\mrr{Ball}(I)}$ and any configuration $\xi$, we have
    \begin{equation}\label{eq: two rare events upper bound with conditioning-X=0}
\mu^{\xi,\bar\eta_I}_{\Lambda_{M}\setminus{\mrr{Ball}(I)}}(\mcc R)\leq c_9^{|I|}\mu^{\xi}_{\Lambda_{M}}(\mcc R).
\end{equation}
    \begin{equation}\label{eq: two rare events upper bound with conditioning-X=1}
\mu^{\xi,\bar\eta_I}_{\Lambda_{M}\setminus{\mrr{Ball}(I)}}(\mcc S)\leq c_9^{|I|}\mu^{\xi}_{\Lambda_{M}}(\mcc S).
\end{equation}
Here, notice that although $\bar\eta_I$ is an extended-Ising configuration, it naturally works as a boundary condition for the Ising measure.
\end{lemma}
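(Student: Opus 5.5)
We will prove \eqref{eq: two rare events upper bound with conditioning-X=0} and \eqref{eq: two rare events upper bound with conditioning-X=1} by a local modification (``surgery'') argument carried out ball by ball. The idea is to show that, for each $x\in I$, the configuration in a slightly larger annulus around $\Lambda_{d_x/10}(x)$ can be resampled at bounded multiplicative cost, so that the prescribed data $\bar\eta_I$ inside $\Lambda_{d_x/10}(x)$ are ``screened off'' from the global connectivity events $\mcc X^\pm,\mcc Y^\pm$. Since the balls $\Lambda_{d_x/9}(x)$, $x\in I$, are disjoint and stay at distance of order $d_x$ from $\bpartial\Lambda_M$ and from each other, the costs multiply, giving the factor $c_9^{|I|}$.

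The first step is a reduction to one ball at a time. We order $I=\{x_1,\dots,x_k\}$ and define intermediate measures in which the boundary data are imposed only on $\mrr{Ball}(\{x_1,\dots,x_j\})$. It then suffices to prove a single-ball comparison with a uniform constant $c_9$: for a fixed $x$ and arbitrary boundary data $\eta$ on $\overline{\Lambda_{d_x/10}(x)}$ and arbitrary outer data,
\[
\mu^{\cdot,\eta}(\mcc W)\le c_9\,\mu^{\cdot}(\mcc W),\qquad \mcc W\in\{\mcc R,\mcc S\},
\]
where the ambient measure may already contain the other balls as holes.

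For the single-ball step, we use the annulus $A_x=\Lambda_{d_x/9}(x)\setminus\Lambda_{d_x/10}(x)$. By the DMP we decompose according to the configuration on $\partial\Lambda_{d_x/9}(x)$. The RSW bound (bulk version, valid since the annulus is at distance $\gtrsim d_x$ from $\bpartial\Lambda_M$) together with FKG gives uniformly positive probability, under either measure, of a \emph{plus} circuit in an inner subannulus and a $*$-\emph{minus} circuit in an outer subannulus, or the reverse order, whichever is needed. The key observation is a monotonicity statement for the relevant connectivity events when we insert a suitable circuit around the ball. Consider a plus circuit $\gamma$ surrounding $\Lambda_{d_x/10}(x)$ and connected outward exactly as in the configuration without the hole. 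Given such a circuit, the values of $\mcc X^\pm,\mcc Y^\pm$ no longer depend on the interior of $\gamma$: connectivity through the ball can be rerouted along $\gamma$, and by planarity the ball cannot carry a crossing that $\gamma$ does not already provide.

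The plan is then to compare both measures, conditioned on the outside configuration $\zeta$ on $\Lambda_M\setminus\Lambda_{d_x/9}(x)$, to the event ``$\zeta$ completed by some filling realizes $\mcc W$''. We would do this by a finite-energy-type comparison in the FK representation (extended Ising), with all dependence on $\eta$ absorbed by the mixing Lemma~\ref{lem: spatial mixing of Ising model}/Corollary~\ref{cor: spatial mixing of Ising model} applied between scales $d_x/10$ and $d_x/9$; this would need enlarging the ratio, say using $\Lambda_{d_x/5}$, and adjusting $\mrr{Ball}$ constants. We expect this single-ball surgery to be the main obstacle. The events $\mcc R=\mcc Y\setminus\mcc X$ and $\mcc S$ are neither increasing nor decreasing, so CBC alone cannot be used. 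Moreover, when the ball meets the annulus $\Lambda_{M/2}\setminus\Lambda_{M/4}$ or lies near $\mcc P_0,\mcc P_1,\mathcal U$, the insertion must not create or destroy the outermost plus-minus circuit. To handle this, we would first try to split each of $\mcc X^\pm,\mcc Y^\pm$ into its increasing or decreasing parts. For each part we would pick the appropriate circuit type: a plus circuit to preserve the increasing events, and a $*$-minus circuit outside it to block spurious connections. We would then check, using the same pivotality analysis as in Lemma~\ref{lem: pivotal event bound for cle outermost}, that with a double circuit of alternating signs the indicator of $\mcc W$ is a function of $\zeta$ alone. Bounded-ratio resampling of the annulus then yields the claim.
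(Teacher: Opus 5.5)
There is a genuine gap, and it sits exactly in the single-ball step you flag as the main obstacle.

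Once mixing is available, everything reduces to an estimate $\mu^{\eta}(\mcc W)\le C\,\mu^{\eta}(\mcc W\cap\mathcal O)$. Here $\mathcal O$ is a screening event near the ball on which $\mathbb{I}_{\mcc W}$ no longer sees the interior. This is a lower bound on the probability of $\mathcal O$ \emph{conditionally on the non-monotone event} $\mcc W$, and your proposal never supplies it:
\begin{itemize}
\item RSW and FKG give only unconditional positivity.
\item The ``rerouting'' remark is a deterministic statement about individual configurations.
\item Imposing circuits on an annulus with order $d_x^2$ sites is not a bounded-cost finite-energy modification, so ``bounded-ratio resampling'' is not justified.
\end{itemize}
The alternating double circuit does not repair this. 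An outer $*$-minus circuit severs plus connections that $\mcc Y^+$ or $\mcc X^+$ may be using near the ball. A plus circuit alone can merge the clusters of $\mcc P_0$ and $\mcc P_1$ (destroying $\mcc Y^-$), or obstruct the $*$-minus circuit required by $\mcc X^-$. Which structure is compatible with $\mcc W$ depends on the configuration. Proving that a compatible one has conditional probability bounded below is precisely the non-FKG difficulty. The pivotality lemma concerns flipping a single local cluster and says nothing about it.

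The paper gets around this with three ingredients your proposal lacks.
\begin{enumerate}
\item[(i)] For $\mcc R$ it never compares $\mcc R$ itself. A plus crossing $\mcc V_1$ of $\Lambda_{M/2}\setminus\Lambda_{M/4}$ forces $X=0$, and $\mu^{\xi}_{\Lambda_M}(\mcc V_1\mid\mcc Y)\ge c_{12}$. Hence it suffices to compare $\mcc Y$ alone.
\item[(ii)] It uses an FKG-up-to-constants device. Explore the plus cluster $\mcc C_1$ of $\mcc P_1$. Conditioned on $\mcc C_1=A$, the spins on $\partial_{\mathrm{ext}}A$ are minus, and what remains of $\mcc Y$ is the increasing event $\mcc Y^+$ on $\Lambda_M\setminus A$, where FKG and mixing apply. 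This shows that, given $\hat{\mcc Y}$ and under the $\eta$-measure, $\mcc C_1$ avoids $B^9$ and a plus circuit in $B^9\setminus B^8$ exists, each with probability bounded below. On that event a plus circuit alone screens $\hat{\mcc Y}$, with no minus circuit needed, and mixing then removes $\eta$ one ball at a time.
\item[(iii)] For $\mcc S$, $\mu(\mcc X\mid\mcc Y)$ can be tiny (e.g.\ when $\mcc P_0$ is small), and when the ball lies in the annulus, $\eta$ can create or displace the plus-minus circuit. The paper explores that circuit from the inside and bounds the mass of the $k$-th circuit under the $\eta$-measure by $(1-c_{14}^{|I|})^{k+1}$. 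It then compares the residual event $\mcc Y\cap\{\gamma\leftrightarrow\mcc P_0\}$ across different circuits (constant $c_{15}$) and across $\eta$ with $\gamma$ fixed (constant $c_{16}^{|I|}$).
\end{enumerate}
Your uniform ball-by-ball treatment of $\mcc R$ and $\mcc S$ has no counterpart to (i) or (iii), and no mechanism replacing (ii).
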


\begin{proof}[Proof of Proposition~\ref{prop:stronger-DHX} given Lemma~\ref{lem: two rare events upper bound with conditioning}]
We just prove \eqref{eq:extension-of-DHX-X=0} since \eqref{eq:extension-of-DHX-X=1} is essentially the same.
    Recall the definition of $\Xi_I'$ from Corollary~\ref{cor: noex upper bound in cle singular} and recall that $R=\mathbf{1}_{\mcc R}$. We get from Lemma~\ref{lem:cancellation-of-excellent-clusters in cle} that
\begin{align}\label{eq:proof-of-Etau-3}
	\Big|\langle \sigma^I\cdot R\rangle_{\Lambda_{M}}^{\xi}\Big|=&\Big|\langle \sigma^I\cdot R\cdot \mbf 1_{\mathtt{NoEx}}\rangle_{\Lambda_{M}}\Big|\le \langle R\cdot \mbf 1_{\mathtt{NoEx}}\rangle_{\Lambda_{M}}\nonumber \\
	\leq & \sum_{\bar\eta_I\in\Xi_I'}\bar\mu^{\xi}_{\Lambda_{M}}(\bar\eta_I)\cdot \mu^{\xi,\bar\eta_I}_{\Lambda_{M}\setminus{\mrr{Ball}(I)}}(\mcc R).
\end{align}

Combining Lemma~\ref{lem: two rare events upper bound with conditioning} and Corollary~\ref{cor: noex upper bound in cle singular} with \eqref{eq:proof-of-Etau-3}, we have that
\begin{align}
	\Big|\langle \sigma^I\cdot R\rangle_{\Lambda_{M}}^{\xi}\Big|&\leq C^{|I|}\mu^{\xi}_{\Lambda_{M}}(\mcc R)\sum_{\eta_I\in\Xi_I'}\bar\mu^{\xi}_{\Lambda_{M}}(\eta_I)\le C^{|I|}\langle  R\rangle_{\Lambda_{M}}^{\xi}\cdot F(\bar\partial\Lambda_{M},I),\label{eq:byproduct}
\end{align}
This completes the proof by recalling the definition that 
$\langle \sigma^I\mid \mcc R\rangle_{\Lambda_{M}}^{\xi}=\frac{\langle \sigma^I\cdot R\rangle_{\Lambda_{M}}^{\xi}}{\langle R\rangle_{\Lambda_{M}}^{\xi}}.$
 \end{proof}

We now turn to the proof of Lemma~\ref{lem: two rare events upper bound with conditioning}, which asserts that conditioning on the configuration inside $\mathrm{Ball}(I)$ does not affect the probability of the events $\mcc R$ and $\mcc S$ too much. 
Here are two key difficulties: first, the two events are neither increasing nor decreasing, making it really subtle to do some kind of FKG argument; second, the probabilities of $\mcc R$ and $\mcc S$ could be very close to 0 or 1 for some choice of $\mcc P_0,\mcc P_1, \mcc U$, in which case the measures conditioned on $\mathcal{R}$ or on $\mathcal{S}$ could behave badly.

To circumvent these difficulties, we begin with a much simpler setting: the following lemma quantifies the influence of $\eta_I$ on plus-crossing events.

\begin{lemma}\label{lem:Ball-influence-on-crossing}
    Let $\mcc V_1$ denote the event that there exists a plus crossing connecting  $\partial\Lambda_{M/2}$ and $\partial\Lambda_{M/4+1}$, and
    $\mcc V_2$ denote the event that there exists a plus circuit in $\Lambda_{M/2}\setminus\Lambda_{M/4}$.  Then there exists a constant $c_{10}>0$ such that for any configurations $\eta_I$ on $\mathrm{Ball}(I)$ and any boundary condition $\xi\in\Sigma_M^c$, we have 
    \begin{equation}
\mu_{\Lambda_M\setminus\mathrm{Ball}(I)}^{\xi,\eta
        _I}(\mcc V_i)\geq c_{10}^{|I|} \quad \mbox{for } i=1,2.
    \end{equation}
\end{lemma}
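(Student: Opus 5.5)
The idea is to exploit the fact that $\mathrm{Ball}(I)$ is a union of small boxes $\Lambda_{d_x/10}(x)$. Each such box is at distance at least $9d_x/10$ from every other point of $I$ and from $\bar\partial\Lambda_M$; in particular it is at distance $\gtrsim d_x$ from $\partial\Lambda_{M/2}$ and $\partial\Lambda_{M/4}$. A plus crossing or plus circuit in the annulus $\Lambda_{M/2}\setminus\Lambda_{M/4}$ can therefore be routed \emph{around} each ball at a cost of a constant factor per ball. Both $\mcc V_1$ and $\mcc V_2$ are increasing events. We will show that the worst case is all-minus data on $\mathrm{Ball}(I)$ and minus data $\xi$. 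Then we construct a circuit, avoiding the balls, out of boundedly many RSW-type crossings per ball. Since every plus circuit in the annulus contains a crossing from $\partial\Lambda_{M/2}$ to $\partial\Lambda_{M/4+1}$ after a local modification, or one simply notes $\mcc V_2$ implies a crossing can be glued, it suffices to treat $\mcc V_2$ and deduce $\mcc V_1$ by the same construction with one additional radial crossing.

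The first step uses CBC. Since $\mcc V_2$ is increasing, we have
\[
\mu^{\xi,\eta_I}_{\Lambda_M\setminus\mathrm{Ball}(I)}(\mcc V_2)\ge\mu^{-,-}_{\Lambda_M\setminus\mathrm{Ball}(I)}(\mcc V_2),
\]
where $-$ denotes minus data both on $\partial\Lambda_M$ and on $\partial\mathrm{Ball}(I)$. Only balls meeting the annulus $A:=\Lambda_{M/2}\setminus\Lambda_{M/4}$, or lying close to it, matter. These balls satisfy $d_x\le 10\,\dist(x,\partial\Lambda_{M/2}\cup\partial\Lambda_{M/4})$.

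The second step is the construction. Group the relevant balls by dyadic scale and cover $A$ by a "skeleton" of $O(1)$ macroscopic rectangles, for instance four rectangles of aspect ratio $\le 4$ forming a circuit in $\Lambda_{3M/8+M/16}\setminus\Lambda_{3M/8-M/16}$. We proceed in two cases.
\begin{itemize}
\item If no ball meets a rectangle, the rectangle is at distance $\gtrsim M$ from all the minus data, and the bulk RSW bound gives crossing probability $\ge c_-$.
\item For a ball $\Lambda_{d_x/10}(x)$ meeting the skeleton, place the annulus $\Lambda_{d_x/4}(x)\setminus\Lambda_{d_x/8}(x)$ around it. By the definition of $d_x$, this annulus avoids the other balls, which are at distance $\ge 9d_x/10-d_y/10$, after checking that $d_y\le d_x+|x-y|$. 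Inside the annulus we ask for a plus circuit. By DMP, CBC (minus data at distance $\asymp d_x$) and bulk RSW applied to four rectangles, this happens with probability $\ge c$.
\end{itemize}
These circuits, glued to the skeleton crossings, yield a plus circuit in $A$ avoiding $\mathrm{Ball}(I)$. The gluing holds by FKG, since all events are increasing. Note that FKG holds for the Ising measure with fixed boundary data, so this is legitimate. The probability is then at least $c_-^{O(1)}c^{|I|}\ge c_{10}^{|I|}$.

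The main obstacle is the interaction between balls of very different sizes, and with the boundary of the macroscopic skeleton. A ball near a skeleton rectangle may fragment the rectangle's crossing. Small balls clustered near a larger one could also spoil the RSW bound in the detour annulus, whose requirement is that its boundary data be at comparable distance. My first approach is to choose the detour annuli at radii between $d_x/8$ and $d_x/4$: by the $d_x$-separation, no other ball's annulus is nested or overlapping in a way that forces minus data inside it, except for strictly smaller balls far from $x$ relative to their own size. I then handle these recursively, from large to small scale, by replacing a single rectangle crossing with a crossing in a rectangle shifted to avoid them. The total count of rectangles used is $O(|I|)$, which keeps the bound exponential in $|I|$. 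If this recursion fails, the fallback is to use the mixing estimate, Corollary~\ref{cor: spatial mixing of Ising model}, at each scale to replace the minus data of small balls by free data at a cost of $(1+O(\cdot))$ per ball.
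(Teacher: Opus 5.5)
\textbf{There is a genuine gap in your second step.} You never obtain a lower bound for the skeleton crossings once the balls interfere with them. Suppose $\Lambda_{d_x/10}(x)$ meets, or merely comes close to, a skeleton rectangle $R$. Then the prescribed data $\eta_I$ sits inside or next to $R$, so bulk RSW gives nothing, and gluing the detour circuits by FKG does not repair this. There are two readings of the skeleton event, and neither works.
\begin{itemize}
\item If it is a plus crossing of $R$ inside $\Lambda_M\setminus\mathrm{Ball}(I)$, it already avoids the balls. The detour circuits are then superfluous, and its probability is precisely the unknown quantity.
\item If it is a crossing allowed to ``jump'' across the detour annuli, it is not an event any of your estimates controls.
\end{itemize}
In short, an FKG product bound taken under the holed measure is circular here.

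Your proposed repairs do not close the gap. The large-to-small recursion with shifted rectangles is not specified: a dense cluster of small balls cannot be avoided by shifting, and the claimed $O(|I|)$ count of rectangles is unjustified. The mixing fallback fails because Corollary~\ref{cor: spatial mixing of Ising model} only compares probabilities of events supported far, relative to the scale, from where the boundary data is changed. Here the ball lies on the support of the crossing, so the corollary gives no $(1+O(\cdot))$ control per ball. There are also two smaller slips: ``no ball meets $R$'' does not give distance $\gtrsim M$ to the minus data, and a plus circuit in the annulus does not contain a radial crossing.

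\textbf{The missing idea (the paper's proof).} Shield the balls first, and then compare with the measure \emph{without holes*}, rather than multiplying probabilities under the holed measure.
\begin{enumerate}
\item For each $x\in I$, require a plus circuit in $\Lambda_{d_x/8}(x)\setminus\Lambda_{d_x/9}(x)$. These annuli are pairwise disjoint, since $\dist(x,y)\ge\max(d_x,d_y)$. They lie at distance at least a constant times their width from all prescribed data. Hence FKG and RSW give probability at least $C_1^{|I|}$.
\item Condition on the innermost such circuits $\gamma=\{\gamma_x\}$, and let $\Gamma$ be the region they enclose. By DMP the law outside $\Gamma$ is $\mu^{\xi,+}_{\Lambda_M\setminus\Gamma}$, which coincides there with $\mu^{\xi}_{\Lambda_M}(\cdot\mid\sigma|_{\gamma}=+1)$.
\item A plus crossing or circuit entering some $\Gamma_x$ can be rerouted along $\gamma_x$. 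Hence $\mathcal V_i$ does not depend on the spins in $\Gamma\setminus\gamma$.
\item The conditional probability of $\mathcal V_i$ therefore equals $\mu^{\xi}_{\Lambda_M}(\mathcal V_i\mid\sigma|_{\gamma}=+1)$. By FKG this is at least $\mu^{\xi}_{\Lambda_M}(\mathcal V_i)$, and by bulk RSW in the full box it is at least $C_2$.
\item Summing over $\gamma$ gives the bound $C_2C_1^{|I|}$.
\end{enumerate}
With this step the crossing never has to see the balls, and no multiscale skeleton is needed.
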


\begin{proof}
For any $x\in I$, let $\mcc A_x$ denote the event that  there exists a plus circuit in the annulus  $\Lambda_{\frac{d_x}{8}}(x)\setminus\Lambda_{\frac{d_x}{9}}(x)$ and let $\mcc A=\cap_{x\in I }\mcc A_x.$
 By FKG and RSW, we have that
    \begin{equation}
      \mu_{\Lambda_M\setminus\mathrm{Ball}(I)}^{\xi,\eta
        _I}(\mcc A)\geq \prod_{x\in I} \mu_{\Lambda_M\setminus\mathrm{Ball}(I)}^{\xi,\eta
        _I}(\mcc A_x)\ge C_1^{|I|}.\label{eq: plus circuit event probability O}
    \end{equation}  On the event $\mcc A$, we can explore the innermost plus circuit in each annulus $\Lambda_{\frac{d_x}{8}}(x)\setminus\Lambda_{\frac{d_x}{9}}(x)$ and denote this collection of circuits by $\gamma=\{\gamma_x\}_{x\in I}.$
        Let $\mcc O_\gamma$ denote the event that the exploration outcome is $\gamma$ and
        $\mss O$ denote all the possible exploration outcomes $\gamma$. 
        Then we have that 
\begin{equation}
    \label{eq: plus circuit event O in each box decomposition}
    \begin{aligned}
        \mu_{\Lambda_M\setminus\mathrm{Ball}(I)}^{\xi,\eta
        _I}(\mcc A)&=\sum_{\gamma\in\mss O} \mu_{\Lambda_M\setminus\mathrm{Ball}(I)}^{\xi,\eta
        _I}(\mcc O_\gamma),\\\mbox{ and }\mu_{\Lambda_M\setminus\mathrm{Ball}(I)}^{\xi,\eta
        _I}(\mcc V_i\cap\mcc A)
        &=\sum_{\gamma\in\mss O}\mu_{\Lambda_M\setminus\mathrm{Ball}(I)}^{\xi,\eta_I}(\mcc V_i\cap\mcc O_\gamma).
    \end{aligned}
\end{equation}
 Let $\Gamma=\cup_{x\in I}\Gamma_x$ where $\Gamma_x$ is the
        interior of $\gamma_x$.
        Note that on the event $\mcc O_\gamma,$ all the spins on $\gamma$ are positive. In addition, $\mcc V_i$ depends only on the connectivity of plus spins and thus, conditioned on $\{\sigma_v=1,\forall v\in\cup_{x\in I}\gamma_x\},$ it does not depend on the configuration in $\Gamma\setminus(\cup_{x\in I}\gamma_x)$. Then we get that \begin{equation}
            \mu_{\Lambda_M\setminus\mathrm{Ball}(I)}^{\xi,\eta
        _I}(\mcc V_i\mid \mcc O_\gamma)=\mu_{\Lambda_M\setminus\Gamma}^{\xi,+}(\mcc V_i)= \mu_{\Lambda_M}^{\xi}(\mcc V_i\mid \sigma_v=1,\forall v\in\cup_{x\in I}\gamma_x)\label{eq: plus circuit boundary reduction in O}
        \end{equation} where the equality comes from the fact that $\mu_{\Lambda_M\setminus\Gamma}^{\xi,+}$ and $\mu_{\Lambda_M}^{\xi}(\cdot\mid \sigma_v=1,\forall v\in\cup_{x\in I}\gamma_x)$ have the same law on $\Lambda_M\setminus\Gamma.$
        FKG and RSW yield that $$\mu_{\Lambda_M}^{\xi}(\mcc V_i\mid \{\sigma_v=1,\forall v\in\cup_{x\in I}\gamma_x\}\stackrel{\mathrm{FKG}}{\geq}\mu_{\Lambda_M}^{\xi}(\mcc V_i)\stackrel{\mathrm{RSW}}{\geq} C_2.$$ 
        Plugging into \eqref{eq: plus circuit boundary reduction in O}, we get that \begin{equation}
            \mu_{\Lambda_M\setminus\mathrm{Ball}(I)}^{\xi,\eta
        _I}(\mcc V_i\mid \mcc O_\gamma)\ge C_2.\label{eq: Vi prob conditioned on plus circuit O}
        \end{equation}
        Combining \eqref{eq: plus circuit event probability O}, \eqref{eq: plus circuit event O in each box decomposition} and \eqref{eq: Vi prob conditioned on plus circuit O}, we obtain that
        \begin{align*}
        \mu_{\Lambda_M\setminus\mathrm{Ball}(I)}^{\xi,\eta
        _I}(\mcc V_i)\geq ~&\mu_{\Lambda_M\setminus\mathrm{Ball}(I)}^{\xi,\eta
        _I}(\mcc V_i\cap\mcc A)
        \stackrel{\eqref{eq: plus circuit event O in each box decomposition}}{=}\sum_{\gamma\in\mss O}\mu_{\Lambda_M\setminus\mathrm{Ball}(I)}^{\xi,\eta_I}(\mcc V_i\cap\mcc O_\gamma)\\=~&\sum_{\gamma\in\mss O}\mu_{\Lambda_M\setminus\mathrm{Ball}(I)}^{\xi,\eta_I}(\mcc O_\gamma)\mu_{\Lambda_M\setminus\mathrm{Ball}(I)}^{\xi,\eta_I}(\mcc V_i\mid \mcc O_\gamma)\\
        \stackrel{\eqref{eq: Vi prob conditioned on plus circuit O}}{\ge} &C_2\sum_{\gamma\in\mss O}\mu_{\Lambda_M\setminus\mathrm{Ball}(I)}^{\xi,\eta_I}(\mcc O_\gamma)\stackrel{\eqref{eq: plus circuit event O in each box decomposition}}{=}  C_2\mu_{\Lambda_M\setminus\mathrm{Ball}(I)}^{\xi,\eta
        _I}(\mcc A)\stackrel{\eqref{eq: plus circuit event probability O}}{\ge} C_2\cdot C_1^{|I|}.\qedhere
        \end{align*}
\end{proof}

To deal with the events $\mcc X$ and $\mcc Y,$ we start by exploring the clusters $\mcc P_0$ and $\mcc P_1.$

\begin{definition}\label{def:explore-plus-spin-clusters}
Let $\mcc C_0$ denote the plus spins cluster of $\mcc P_0$ in $\Lambda_M.$
Then all the spins on $\partial_{\mathrm{ext}}\mcc C_0\cap \Lambda_M$ are minus. Let $\mss C_0\subset 2^{\Lambda_M}$ denote all the possible choices of $\mcc C_0$, i.e., $A\in\mss C_0$ if and only if 
$\mu^{\xi}(\mcc C_0=A)>0.$
 Let $\mss F_0=\sigma\left(\{\mcc C_0=A\}: A\in\mss C_0\right)$ denote the $\sigma$-algebra generated by $\mcc C_0.$ Recall that $\mathcal Y^{-}$ is the event that $\mathcal P_0$ is not plus-connected to $\mathcal P_1$ inside $\Lambda_M$, and  $\mathcal Y^{+}$ is the event that, for every $U\in \mathcal U$, the set $U$ is plus-connected to $\mathcal P_0$ inside $\Lambda_M$.
 Then we have $\mcc Y^+,\mcc Y^-\in\mss F_0$ since they are all decided by $\mcc C_0.$
 Similarly, we can define $\mcc C_1$ and $\mss F_1$ to be the plus spins cluster of $\mcc P_1$ in $\Lambda_M$ and the $\sigma$-algebra generated by $\mcc C_1$.
\end{definition}

Now the intuition is if $\mcc C_0$ is large and the event $\mcc Y^-$ holds, i.e. $\mcc P_0$ is not connected to $\mcc P_1,$ then $\mcc C_1$ is unlikely to be also large.
To be precise, we have the following lemma.

\begin{lemma}\label{lem:explore-plus-spin-clusters}
   
    There exists a universal constant $c_{11}>0$ such that for any increasing event $\mcc I\in\mss F_0$ and any boundary condition $\xi$ we have 
    \begin{equation}
        \mu_{\Lambda_M}^{\xi}(\mcc C_1\cap\Lambda_{0.95M}=\emptyset\mid \mcc I\cap\mcc Y^-)\geq c_{11}.
    \end{equation}
    In particular, we could choose $\mcc I=\mcc Y^+$ so that $\mcc I\cap\mcc Y^-=\mcc Y.$ We keep the flexibility of choosing $\mcc I$ for later convenience.
\end{lemma}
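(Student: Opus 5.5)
Our plan is to condition on the cluster $\mcc C_0$ of $\mcc P_0$ and then work in the complementary domain. Since $\mcc I\cap\mcc Y^-\in\mss F_0$, we can write
\[
\mu^{\xi}_{\Lambda_M}(\mcc C_1\cap\Lambda_{0.95M}=\emptyset,\ \mcc I\cap\mcc Y^-)=\sum_{A}\mu^{\xi}_{\Lambda_M}(\mcc C_0=A)\,\mu^{\xi}_{\Lambda_M}(\mcc C_1\cap\Lambda_{0.95M}=\emptyset\mid \mcc C_0=A),
\]
where the sum runs over $A\in\mss C_0$ with $\{\mcc C_0=A\}\subset\mcc I\cap\mcc Y^-$. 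It therefore suffices to show that for every such $A$,
\[
\mu^{\xi}_{\Lambda_M}(\mcc C_1\cap\Lambda_{0.95M}=\emptyset\mid\mcc C_0=A)\ge c_{11}.
\]
Conditioning on $\{\mcc C_0=A\}$ amounts to fixing plus spins on $A$ and minus spins on $\partial_{\mathrm{ext}}A\cap\Lambda_M$. The restriction of the measure to $\Lambda_M\setminus(A\cup\partial_{\mathrm{ext}}A)$ is then, by the DMP, an Ising measure on that domain. Its boundary condition is $\xi$ on the remaining part of $\partial\Lambda_M$ and minus on $\partial_{\mathrm{ext}}A$. Under $\mcc Y^-$, the set $\mcc P_1$ lies in this complementary domain, and $\mcc C_1$ is the plus cluster of $\mcc P_1$ there.

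Next we want to produce a $*$-minus circuit that separates $\mcc P_1\subset\partial\Lambda_M$ from $\Lambda_{0.95M}$, located in the annulus $\Lambda_M\setminus\Lambda_{0.95M}$ (or a slightly narrower one). Together with the minus boundary $\partial_{\mathrm{ext}}A$, such a circuit cuts $\mcc C_1$ off from the bulk. We would use the following event: there is a $*$-minus path in the complementary domain which, concatenated with minus sites of $\partial_{\mathrm{ext}}A$ and with minus boundary spins of $\xi$, separates $\partial\Lambda_M$ from $\Lambda_{0.95M}$.

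This event is \emph{decreasing}, and the boundary condition on $\partial_{\mathrm{ext}}A$ is minus. Hence, by CBC, its probability is at least its probability under the measure on all of $\Lambda_M\setminus\Lambda_{0.9M}$ with the worst (plus) boundary condition, where the annulus has no hole cut out. Formally, we compare the conditioned measure with $\mu^{+}_{\Lambda_M}$ on the decreasing event that a $*$-minus circuit surrounds $\Lambda_{0.95M}$ inside $\Lambda_M\setminus\Lambda_{0.95M}$. Wherever this circuit meets $A$, we use the minus sites of $\partial_{\mathrm{ext}}A$ that separate $A$ from the rest in place of the intersecting portion.

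The main obstacle is that the annulus is adjacent to $\partial\Lambda_M$ with plus boundary spins, so the bulk RSW of the paper does not directly give a uniformly positive probability for a $*$-minus circuit. Our first attempt would be to use the FK representation. Under plus (wired) boundary conditions, a dual FK circuit in $\Lambda_{0.99M}\setminus\Lambda_{0.95M}$ has uniformly positive probability by \eqref{eq:dual-RSW}, since FK RSW is uniform in boundary conditions. Given such a dual circuit, the FK clusters inside are disconnected from the boundary. Applying the DMP together with bulk spin RSW inside $\Lambda_{0.99M}$ with the now effectively free boundary, we then get a $*$-minus circuit in $\Lambda_{0.98M}\setminus\Lambda_{0.96M}$ with positive probability. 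These are the same steps as \eqref{eq: FK circuit connected to boundary in small region} with the roles of plus and minus exchanged, using the symmetry of the free measure.

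Finally, we would check the topological claim. If such a $*$-minus circuit $\gamma$ exists, then $\mcc C_1$, which starts on $\partial\Lambda_M$, cannot reach $\Lambda_{0.95M}$ without crossing $\gamma$. The portions of $\gamma$ replaced by $\partial_{\mathrm{ext}}A$ are minus as well, and the separation argument is planar duality as in Definition~\ref{def: plus minus interface}. Summing over $A$ then gives the bound with $c_{11}$ equal to the product of the RSW constants.
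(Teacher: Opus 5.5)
Your proposal is correct and follows essentially the same route as the paper. You decompose over $\{\mcc C_0=A\}\subset\mcc I\cap\mcc Y^-$, and you use that the minus spins on $\partial_{\mathrm{ext}}A\cap\Lambda_M$ decouple $\{\mcc C_1\cap\Lambda_{0.95M}=\emptyset\}$ from the interior of $A$. You then remove this conditioning by monotonicity: the paper applies FKG to the two decreasing events, while you use DMP plus CBC, which amounts to the same thing. Finally, you conclude with a minus circuit in $\Lambda_M\setminus\Lambda_{0.95M}$ of uniformly positive probability.

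Your FK-based justification of that last step near a possibly plus boundary (dual FK circuit, then an inner FK circuit coloured minus) usefully fills in what the paper simply calls ``RSW''. One small fix: place the dual circuit in an annulus strictly outside the one where you look for the minus circuit, e.g.\ $\Lambda_{0.99M}\setminus\Lambda_{0.98M}$ and then $\Lambda_{0.97M}\setminus\Lambda_{0.96M}$.
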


\begin{proof}

    Since $\mcc I\cap\mcc Y^-\in\mss F_0,$ we can define $\hat{\mss C_0}\subset\mss C_0$  to be the collection of $A$ such that $\{\mcc C_0=A\}\subset\mcc I\cap\mcc Y^-$. Then we have that
    \begin{equation}
        \mu^{\xi}_{\Lambda_{M}}(\{\mcc C_1\cap\Lambda_{0.95M}=\emptyset\}\cap\mcc I\cap\mcc Y^-)=\sum_{A\in \hat{\mss{C}_0}}\mu^{\xi}_{\Lambda_{M}}(\mathcal{C}_0=A) \cdot \mu^{\xi}_{\Lambda_{M}}(\mcc C_1\cap\Lambda_{0.95M}=\emptyset\mid \mcc C_0=A).\label{eq: event E localize 1 with interface}
    \end{equation}
    Recall the discussions before \eqref{eq: plus circuit boundary reduction in O}. We want to apply a similar argument to the event $\{\mcc C_0=A\}.$ Note that all the spins on $\partial_{\mathrm{ext}}\mcc C_0\cap \Lambda_M=\partial_{\mathrm{ext}}A\cap \Lambda_M$ are minus and $\mcc C_1$ is the plus spin cluster, thus conditioned on $\{\sigma_v=-1,\forall v\in\partial_{\mathrm{ext}}A\cap \Lambda_M\},$ the event $\mcc C_1\cap\Lambda_{0.95M}=\emptyset$ does not depend on the configuration in $A\setminus\partial A$.
    Therefore, for the same reason as \eqref{eq: plus circuit boundary reduction in O}, we get that 
\begin{equation}
    \mu^{\xi}_{\Lambda_{M}}(\mcc C_1\cap\Lambda_{0.95M}=\emptyset\mid \mcc C_0=A)=\mu^{\xi}_{\Lambda_{M}}(\mcc C_1\cap\Lambda_{0.95M}=\emptyset\mid \sigma_v=-1,\forall v\in\partial_{\mathrm{ext}}A\cap \Lambda_M).\label{eq: event E localize 2 with interface}
\end{equation} 
Now, both the events $\{\mcc C_1\cap\Lambda_{0.95M}=\emptyset\}$ and $\{\sigma_v=-1,\forall v\in\partial_{\mathrm{ext}}A\cap \Lambda_M\}$ are decreasing, we can use FKG and RSW to obtain that
\begin{equation}
\mu^{\xi}_{\Lambda_{M}}(\mcc C_1\cap\Lambda_{0.95M}=\emptyset\mid \sigma_v=-1,\forall v\in\partial_{\mathrm{ext}}A\cap \Lambda_M)\stackrel{\mathrm{FKG}}{\geq} \mu^{\xi}_{\Lambda_{M}}(\mcc C_1\cap\Lambda_{0.95M}=\emptyset)\geq \mu^{\xi}_{\Lambda_{M}}(\mcc A^-)\stackrel{\mathrm{RSW}}{\ge} C_1,\nonumber
\end{equation} 
where $\mathcal{A}^-$ denotes the event that there exists a minus circuit in $\Lambda_{M}\setminus\Lambda_{0.95M}$, which implies $\{\mcc C_1\cap\Lambda_{0.95M}=\emptyset\}$. 
\end{proof}

The utility of Lemma~\ref{lem:explore-plus-spin-clusters} is that once $\mcc C_1\cap \Lambda_{0.95M}=\emptyset,$ we have some distance to take advantage of the mixing property of the Ising model.  As a consequence, we could bound the probability of $\mcc V_1$ conditioning on $\mcc Y.$ Recall that $\mcc V_1$ denotes the event that there exists a plus crossing connecting $\partial\Lambda_{M/2}$ and $\partial\Lambda_{M/4}$, therefore $\mcc V_1$ and $\mcc X^-$ cannot hold simultaneously, i.e.
$$\mcc V_1\subset \{X=0\},
\quad (\mcc V_1\cap \mcc Y)\subset  \mcc R.$$
\begin{lemma}\label{lem:bound-plus-crossing-given-Y}
    There exists a universal constant $c_{12}>0$ such that for any boundary condition $\xi$ we have that
    $$\mu^{\xi}_{\Lambda_M}(\mcc V_1\mid \mcc Y)\geq c_{12}. $$
\end{lemma}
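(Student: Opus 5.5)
We first condition on the event $\{\mcc C_1\cap\Lambda_{0.95M}=\emptyset\}\cap\mcc Y$. By Lemma~\ref{lem:explore-plus-spin-clusters} with $\mcc I=\mcc Y^+$, this event carries conditional probability at least $c_{11}$ given $\mcc Y$. So it suffices to show
\[
\mu^{\xi}_{\Lambda_M}\bigl(\mcc V_1\mid \mcc Y,\ \mcc C_1\cap\Lambda_{0.95M}=\emptyset\bigr)\ge C
\]
for some universal $C>0$.

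The plan is to explore in two stages. First, we explore $\mcc C_1$ from $\mcc P_1$; on our event it stays in $\Lambda_M\setminus\Lambda_{0.95M}$. Its exterior boundary inside $\Lambda_M$ consists of minus spins, and by the DMP the remaining configuration is an Ising model on $\Lambda_M\setminus\mcc C_1$, with minus on $\partial_{\mathrm{ext}}\mcc C_1$ and the boundary condition $\xi$ elsewhere. In this reduced domain, $\mcc Y^-$ holds automatically. The event $\mcc Y^+$ (every $U\in\mcc U$ is plus-connected to $\mcc P_0$) is then an \emph{increasing} event in the remaining domain, and so is $\mcc V_1$. FKG therefore gives
\[
\mu(\mcc V_1\mid \mcc Y^+,\ \mcc C_1=B)\ \ge\ \mu^{\xi,-}_{\Lambda_M\setminus B}(\mcc V_1).
\]
The same holds for $\mcc Y^+$ replaced by the full conditioning, since given $\mcc C_1=B$ the event $\mcc Y$ reduces to $\mcc Y^+$ in $\Lambda_M\setminus B$. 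One point to check carefully: $\mcc C_1$ being exactly $B$ requires $\mcc P_0$ not to be connected to $B$. That is guaranteed by the minus boundary $\partial_{\mathrm{ext}}B$, so no further conditioning is needed.

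Second, we lower bound $\mu^{\xi,-}_{\Lambda_M\setminus B}(\mcc V_1)$ uniformly over $B\subset\Lambda_M\setminus\Lambda_{0.95M}$. By CBC, this is at least the minus-boundary measure on $\Lambda_{0.95M}$, since $\mcc V_1$ is increasing and supported in $\Lambda_{M/2}$. Under that measure, RSW (the bulk version, since $\Lambda_{M/2}$ is at distance of order $M$ from $\partial\Lambda_{0.95M}$) gives a plus crossing from $\partial\Lambda_{M/4+1}$ to $\partial\Lambda_{M/2}$ with probability at least $c_->0$. Concretely, we take a plus crossing of a rectangle of aspect ratio bounded, placed across the annulus.

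Averaging over $B$ and combining with Lemma~\ref{lem:explore-plus-spin-clusters} gives $c_{12}=c_{11}c_-$. The main obstacle is the first step, namely justifying that after revealing $\mcc C_1$ the conditioning on $\mcc Y$ really becomes purely increasing. This requires checking that $\mcc P_0$ and the sets $U\in\mcc U$ lie outside $B$ (they are disjoint from $\mcc P_1$ and not connected to it), and that conditioning on $\{\mcc C_1=B\}$ amounts exactly to the boundary condition $\{\sigma|_B=1,\sigma|_{\partial_{\mathrm{ext}}B\cap\Lambda_M}=-1\}$. If the cases where $U$ touches $\partial_{\mathrm{ext}}B$ cause trouble, we fall back on the observation that such configurations make $\mcc Y^+$ fail and hence do not contribute.
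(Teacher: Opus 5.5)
Your proposal is correct and is essentially the paper's argument: restrict to $\{\mcc C_1\cap\Lambda_{0.95M}=\emptyset\}$ using Lemma~\ref{lem:explore-plus-spin-clusters} with $\mcc I=\mcc Y^+$, explore $\mcc C_1=B$, and apply DMP and FKG to decouple the increasing events $\mcc V_1$ and $\mcc Y^+$ in $\Lambda_M\setminus B$. The only difference is the final uniform bound on $\mu^{\xi,-}_{\Lambda_M\setminus B}(\mcc V_1)$: the paper compares it to $\mu^{\xi}_{\Lambda_M}(\mcc V_1)$ via the mixing Lemma~\ref{lem: spatial mixing of Ising model} and then applies RSW, whereas your CBC reduction to minus boundary conditions on $\partial\Lambda_{0.95M}$ followed by bulk RSW works equally well.
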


\begin{proof}
    Let $\mcc A'=\{\mcc C_1\cap \Lambda_{0.95M}=\emptyset\}$.
    Note that the events $\mcc A'$ and $\mcc Y^-$ are decided by $\mcc C_1,$ i.e. $\mcc A',\mcc Y^-\in \mss F_1$ (recall the definition of $\mss F_1$ from Definition~\ref{def:explore-plus-spin-clusters}). Let $\hat{\mss C}_1$ denote the set of $A$ such that 
    $\{\mcc C_1=A\}\subset \mcc A'\cap \mcc Y^-,$
    then we have that
    \begin{align}\label{eq:bound-plus-crossing-given-Y-1}
        \mu^{\xi}_{\Lambda_M}(\mcc V_1\cap \mcc Y\cap \mcc A')
        =\sum_{A\in\hat{\mss C_1}}\mu^{\xi}_{\Lambda_M}(\{\mcc C_1=A\})\mu^{\xi}_{\Lambda_M}(\mcc V_1\cap \mcc Y^+\mid \mcc C_1=A).
    \end{align}
    Similarly, we also have that \begin{equation}\label{eq:bound-plus-crossing-given-Y-5}
        \mu_{\Lambda_M}^{\xi}(\mcc A'\cap\mcc Y)=\sum_{A\in\hat{\mss C_1}}\mu^{\xi}_{\Lambda_M}(\{\mcc C_1=A\})\mu^{\xi}_{\Lambda_M}(\mcc Y^+\mid \{\mcc C_1=A\}).
    \end{equation}
    Recall that all the spins on $\partial_{\mathrm{ext}}\mcc C_1\cap\Lambda_M$ are minus. By DMP and FKG, we have that
    \begin{equation}\label{eq:bound-plus-crossing-given-Y-2}
        \mu^{\xi}_{\Lambda_M}(\mcc V_1\cap \mcc Y^+\mid \mcc C_1=A)\stackrel{\mathrm{DMP}}{=}\mu^{\xi,-}_{\Lambda_M\setminus A}(\mcc V_1\cap \mcc Y^+)\stackrel{\mathrm{FKG}}{\geq} \mu^{\xi,-}_{\Lambda_M\setminus A}(\mcc V_1)\mu^{\xi,-}_{\Lambda_M\setminus A}(\mcc Y^+).
    \end{equation}
    Here, $(\xi,-)$ denotes the boundary condition which is $\xi$ on $\partial\Lambda_M$ and which is minus on $\partial_{\mathrm{ext}}\mcc C_1\cap\Lambda_M.$ 

Note that $\mcc V_1$ is measurable with respect to the configuration inside $\Lambda_{M/2}$, while every $A\in\hat{\mathscr C}_1$ is contained in $\Lambda_M\setminus\Lambda_{0.95M}$. Hence, by the mixing property  of Lemma~\ref{lem: spatial mixing of Ising model},
    \begin{equation}\label{eq:bound-plus-crossing-given-Y-3}
        \mu^{\xi,-}_{\Lambda_M\setminus A}(\mcc V_1)\geq C_1\mu^{\xi}_{\Lambda_M}(\mcc V_1).
    \end{equation}
    Plugging \eqref{eq:bound-plus-crossing-given-Y-3} into \eqref{eq:bound-plus-crossing-given-Y-2} and then into \eqref{eq:bound-plus-crossing-given-Y-1}, we obtain that
\begin{align}
     \mu^{\xi}_{\Lambda_M}(\mcc V_1\cap \mcc Y\cap \mcc A')&~\geq~ \mu^{\xi,-}_{\Lambda_M\setminus A}(\mcc V_1)\sum_{A\in\hat{\mss C_1}}\mu^{\xi}_{\Lambda_M}(\{\mcc C_1=A\})\mu^{\xi,-}_{\Lambda_M\setminus A}(\mcc Y^+)\nonumber\\&~\geq~C_1\mu^{\xi}_{\Lambda_M}(\mcc V_1)\sum_{A\in\hat{\mss C_1}}\mu^{\xi}_{\Lambda_M}(\{\mcc C_1=A\})\mu^{\xi,-}_{\Lambda_M\setminus A}(\mcc Y^+).\label{eq: V1 Y separation}
\end{align}
    For the same reason as in \eqref{eq: plus circuit boundary reduction in O}, we get that $\mu^{\xi,-}_{\Lambda_M\setminus A}(\mcc Y^+)=\mu^{\xi}_{\Lambda_M}(\mcc Y^+\mid \{\mcc C_1=A\}).$ Plugging into \eqref{eq: V1 Y separation} and combining with \eqref{eq:bound-plus-crossing-given-Y-5}, we get that
    \begin{align}
        \mu^{\xi}_{\Lambda_M}(\mcc V_1\cap \mcc Y\cap \mcc A')
        &~\ge C_1\mu^{\xi}_{\Lambda_M}(\mcc V_1)\sum_{A\in\hat{\mss C_1}}\mu^{\xi}_{\Lambda_M}(\{\mcc C_1=A\})\mu^{\xi}_{\Lambda_M}(\mcc Y^+\mid \{\mcc C_1=A\})\nonumber\\
        &~=C_1\mu^{\xi}_{\Lambda_M}(\mcc V_1)\mu_{\Lambda}^{\xi}(\mcc A'\cap\mcc Y)\geq C_2\mu^{\xi}_{\Lambda_M}(\mcc V_1)\mu_{\Lambda}^{\xi}(\mcc Y),\label{eq:bound-plus-crossing-given-Y-4}
    \end{align}where the last inequality comes from choosing $\mcc I=\mcc Y^+$ in Lemma~\ref{lem:explore-plus-spin-clusters} and recalling that $\mcc A'=\{\mcc C_1\cap \Lambda_{0.95M}=\emptyset\}$. By RSW, we get that $\mu^{\xi}_{\Lambda_M}(\mcc V_1)\geq C_3$. Plugging into \eqref{eq:bound-plus-crossing-given-Y-4} and noting that $\mu^{\xi}_{\Lambda_M}(\mcc V_1\cap \mcc Y)\geq 
        \mu^{\xi}_{\Lambda_M}(\mcc V_1\cap \mcc Y\cap \mcc A')$ completes the proof of Lemma \ref{lem:bound-plus-crossing-given-Y}.
\end{proof}

\begin{remark}
    We point out that \eqref{eq:bound-plus-crossing-given-Y-4} is an FKG-type inequality up to a constant. In other words, Lemma~\ref{lem:explore-plus-spin-clusters} provides the possibility to show some kind of FKG results for events neither increasing nor decreasing. This exploration approach may also prove applicable to other problems with complex dependencies beyond the current framework.
\end{remark}

Now we claim that we have almost finished the proof of \eqref{eq: two rare events upper bound with conditioning-X=0}, given that the following lemma holds.

\begin{lemma}\label{lem: two rare event probability with interface and ball}
    Let $I\subset \Lambda_{M}$ be a subset of $\Lambda_{M}.$ There exists a constant $c_{13}>0$ such that 
for any boundary condition $\eta_I$ on $\mrr{Ball}(I)$ and any configuration $\xi\in\{-1,1\}^{(\lamn\setminus\Lambda_M)\cup\partial\Lambda_M}$,  we have that
    \begin{align}
	\mu_{\Lambda_M}^{\xi,\eta_I}(\mcc Y)\le c_{13}^{|I|}\mu_{\Lambda_M}^{\xi}(\mcc Y).\label{eq: two rare event probability with interface and ball}
\end{align}
\end{lemma}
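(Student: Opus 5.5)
I would prove \eqref{eq: two rare event probability with interface and ball} by splitting $\mcc Y=\mcc Y^+\cap\mcc Y^-$ and using the localization device already employed in Lemma~\ref{lem:Ball-influence-on-crossing}. The idea is to surround each ball $\Lambda_{d_x/10}(x)$ by a circuit at a fixed intermediate scale. Such a circuit screens the influence of $\eta_I$ on connectivity events, up to a factor $C^{|I|}$. The subtlety is that $\mcc Y^+$ is increasing while $\mcc Y^-$ is decreasing. A plus circuit helps $\mcc Y^+$ but may spoil $\mcc Y^-$, and a minus circuit does the opposite. So I would not insert a circuit of one fixed sign. Instead I would compare both measures to the \emph{same} intermediate object.

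Concretely, for each $x\in I$, consider the annulus $A_x=\Lambda_{d_x/8}(x)\setminus\Lambda_{d_x/9}(x)$. These annuli are disjoint from each other and from $\bpartial\Lambda_M$ by the definition of $d_x$. Let $\mcc G_x$ be the event that $A_x$ contains an FK circuit, i.e.\ an open circuit in the edge variables of the extended/FK representation. Condition on the outermost such circuit $\gamma_x$ together with its spin $s_x\in\{\pm1\}$. By the DMP for the extended model, given these circuits the configuration outside $\bigcup_x \mathrm{int}(\gamma_x)$ has the law of the Ising model with boundary condition $\xi$ and spin $s_x$ on each $\gamma_x$. In particular it no longer depends on $\eta_I$. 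RSW for FK with arbitrary boundary conditions gives $\mu(\mcc G_x\mid\cdot)\ge c$, uniformly in everything outside $A_x$, both under $\mu^{\xi,\eta_I}$ and under $\mu^\xi$.

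The core step is a two-sided comparison.
\begin{itemize}
\item \emph{Upper bound.} Under $\mu^{\xi,\eta_I}$, I would bound $\mu^{\xi,\eta_I}(\mcc Y)\le c^{-|I|}\mu^{\xi,\eta_I}(\mcc Y\cap\bigcap_x\mcc G_x)$. Justifying this requires care. The events $\mcc G_x$ are increasing in the FK edges but $\mcc Y$ is not monotone, so FKG does not apply directly. I would argue by resampling the edges of the annuli one $x$ at a time. Opening enough edges of $A_x$ to create a circuit changes the spin connectivity across $A_x$ only by merging clusters of equal spin inside $A_x$. I would then use a finite-energy / local modification argument at the scale of $A_x$. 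Finite energy is not uniform in $d_x$, so instead I would use the coupling: conditioned on everything outside $A_x$ and on the spins, the edges inside $A_x$ form an FK model. On the event that the plus/minus structure of $\mcc Y$ is realized, an RSW circuit exists with probability $\ge c$.
\item \emph{Lower bound.} Given the circuits and their signs, the conditional probability of $\mcc Y$ equals the $\mu^\xi$-probability of $\mcc Y$ conditioned on the same circuits with the same spins. Reversing the first step under $\mu^\xi$, where the circuit and spin pattern has probability $\ge c^{|I|}$ relative to any other inner content, gives $\mu^\xi(\mcc Y)\ge c^{|I|}\sum_{\gamma,s}(\cdots)$. Summing over $(\gamma,s)$ then yields the claim.
\end{itemize}

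I expect the main obstacle to be precisely this non-monotonicity: showing that forcing circuits in each $A_x$ costs only a constant factor \emph{on the event} $\mcc Y$. My first attempt would mimic Lemma~\ref{lem:explore-plus-spin-clusters}. Explore $\mcc C_0$, which determines $\mcc Y^\pm$, from $\bpartial\Lambda_M$ inward, stopping it at $\partial\Lambda_{d_x/8}(x)$. The balls $\Lambda_{d_x/8}(x)$ either are not reached by $\mcc C_0$ or are reached. Where they are not reached, the $\eta_I$-dependence enters only through the minus boundary of $\mcc C_0$ and can be handled by FKG for decreasing events together with a spatial-mixing comparison (Lemma~\ref{lem: spatial mixing of Ising model}) at ratio $9/8$. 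Where they are reached, a plus circuit in $A_x$ attached to $\mcc C_0$ leaves $\mcc Y$ unchanged, because $\mcc C_0$ is already connected there. This case is then handled by FKG for increasing events in the complement. Combining the two cases costs a factor $C$ per $x\in I$, giving $c_{13}^{|I|}$.
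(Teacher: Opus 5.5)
There is a genuine gap, and it sits exactly at the step you flag as the crux. The inequality $\mu^{\xi,\eta_I}(\mathcal Y)\le c^{-|I|}\mu^{\xi,\eta_I}\bigl(\mathcal Y\cap\bigcap_x\mathcal G_x\bigr)$ is never proved, and both routes you sketch for it break down. Once that inequality is available, your comparison of the circuit/sign laws under $\mu^{\xi,\eta_I}$ and $\mu^{\xi}$ is fine.

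\emph{First route.} Conditioning on the spins does not leave an FK model in $A_x$. Given the vertex spins, the edge variables are independent Bernoulli$(p)$ on edges joining equal spins. So an open circuit in $A_x$ can be exponentially unlikely in $d_x$, and it cannot exist at all unless the spins already contain a monochromatic circuit. If you do not condition on the spins in $A_x$, RSW applies, but $\mathcal Y$ is no longer frozen and you are back to the correlation problem.

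\emph{Second route.} Your last paragraph claims that a plus circuit in $A_x$ attached to $\mathcal C_0$ ``leaves $\mathcal Y$ unchanged''. This is false. Such a circuit preserves the increasing event $\mathcal Y^+$. But if the plus cluster of $\mathcal P_1$ also comes near $x$, the circuit can plus-connect it to $\mathcal C_0$ and destroy $\mathcal Y^-$. Exploring $\mathcal C_0$ gives no control over the $\mathcal P_1$-cluster, and that is the whole difficulty.

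The missing idea, which is the paper's route, is to explore $\mathcal C_1$, the plus cluster of $\mathcal P_1$, instead of $\mathcal C_0$.
\begin{itemize}
\item On $\{\mathcal C_1=A\}$ with $A\cap\mathcal P_0=\emptyset$, the event $\mathcal Y^-$ holds automatically.
\item What remains is $\mathcal Y^+$ together with a plus spin circuit in $\Lambda_{9d_x/10}(x)\setminus\Lambda_{8d_x/10}(x)$. This is an intersection of increasing events for the Ising model on $\Lambda_M\setminus A$ with minus boundary condition on $\partial_{\mathrm{ext}}A$, so FKG applies.
\item Lemma~\ref{lem:explore-plus-spin-clusters} (with $\mathcal I=\mathcal Y^+$) shows that, conditionally on $\mathcal Y$, the cluster $\mathcal C_1$ avoids $\Lambda_{9d_x/10}(x)$ with probability bounded below.
\item Combined with RSW and spatial mixing, this gives the ``FKG up to a constant'' bound of type \eqref{eq:bound-plus-crossing-given-Y-4}.
\end{itemize}
So a fixed-sign \emph{plus} circuit suffices, and the random-sign FK circuit is unnecessary: a plus circuit cannot spoil $\mathcal Y^-$ once $\mathcal C_1$ is kept away.

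Given such a circuit, the event $\{\text{plus circuit}\}\cap\mathcal Y$ is measurable with respect to the spins outside $\Lambda_{8d_x/10}(x)$. Lemma~\ref{lem: spatial mixing of Ising model} then removes the boundary condition on the ball at constant cost. Running this one ball at a time, by induction over $\mathrm{Ball}(I,p)$, produces the factor $c_{13}^{|I|}$.
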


\begin{proof}[Proof of \eqref{eq: two rare events upper bound with conditioning-X=0} given Lemma \ref{lem: two rare event probability with interface and ball}.]Recall that $\mcc R=\mcc Y\cap \{X=0\}.$
    \begin{align*}
        ~&\mu_{\Lambda_M}^{\xi,\eta_I}(\mcc R)=\mu_{\Lambda_M}^{\xi,\eta_I}(\{X=0\}\cap\mcc Y)\leq \mu_{\Lambda_M}^{\xi,\eta_I}(\mcc Y)
        \stackrel{(*)}{\leq} c_{13}^{|I|}\mu_{\Lambda_M}^{\xi}(\mcc Y)\stackrel{(**)}{\leq} c_{12}\cdot c_{13}^{|I|}\mu_{\Lambda_M}^{\xi}(\mcc Y\cap \mcc V_1).
    \end{align*}
    Here, (*) uses Lemma~\ref{lem: two rare event probability with interface and ball} and (**) uses Lemma~\ref{lem:bound-plus-crossing-given-Y}. Combining with the fact that $\mcc V_1\subset\{X=0\}$, we get that \begin{equation*}
        \mu_{\Lambda_M}^{\xi,\eta_I}(\mcc R)\leq c_{12}\cdot c_{13}^{|I|}\mu_{\Lambda_M}^{\xi}(\mcc Y\cap \{X=0\})\leq c_{9}^{|I|}\mu_{\Lambda_M}^{\xi}(\mcc R)\qedhere
    \end{equation*}
    by choosing $c_9$ relying on $c_{12}$ and $c_{13}$.
\end{proof}
The proof of Lemma~\ref{lem: two rare event probability with interface and ball} relies on an induction process. We first introduce the following decomposition of $\eta_I$.

\begin{definition}\label{def:decompose-Ball}
    Fix $I=\{x_1,x_2,\cdots,x_k\}\subset \Lambda_M$ and recall the definition of $d_x$ and 
$\mrr{Ball}(I)$
from
Definition~\ref{def: distance and ball}.
For $0\leq p\leq k,$ let 
$$\mrr{Ball}(I,0)=\emptyset,\quad\mrr{Ball}(I,p)=\bigcup_{j=1}^p\Lambda_{d_{x_j}/10}(x_j).$$ Then define 
$\eta_I^p$ to be the configuration that $\eta_I$ restricts on ${\eta_I}|_{_{\mrr{Ball}(I,p)}}$.
Here, for clarity, we point out that by definition 
$$\mrr{Ball}(I,k)=\mrr{Ball}(I)\mbox{ but }\mrr{Ball}(I,p)\neq \mrr{Ball}(\{x_1,x_2,\cdots,x_p\})$$
for general $p$ since the definition of $d_{x_j}$ could also depend on $\{x_{p+1},\cdots,x_k\}$.
\end{definition}

\begin{proof}[Proof of Lemma~\ref{lem: two rare event probability with interface and ball}]

With the notations in Definition~\ref{def:decompose-Ball}, it suffices to show that for $1\le p\le k$
\begin{equation}
    \label{eq: two rare event probability with interface and ball-induction step} \mu_{\Lambda_M}^{\xi,\eta_I^p}(\mcc Y)\le c_9\mu_{\Lambda_M}^{\xi,\eta_I^{p-1}}(\mcc Y).
\end{equation}

For simplicity, from now on we fix $1\le p\le k$ and write $$x=x_p, B=\Lambda_{d_{x}/10}(x), \eta=\eta_I^{p-1}, \eta^B=\eta_I^p, B^{s}=\Lambda_{sd_{x}/10}(x)\mbox{ for }s=8,9,10,$$
until the end of this proof.

Now for the measures $\mu_{\Lambda_M}^{\xi,\eta^B}$ and $\mu_{\Lambda_M}^{\xi,\eta},$ we  explore the configurations outside $B^{10}.$
That is, let $\hat\Sigma=\{-1,1\}^{(\Lambda_M\setminus B^{10})\cup\partial B^{10}}$ be the configuration set, then we have 
\begin{align}\label{eq:explore-outside-10B}
\mu_{\Lambda_M}^{\xi,\eta^B}(\mcc Y)
=\sum_{\hat\xi\in\hat\Sigma}\mu_{\Lambda_M}^{\xi,\eta^B}(\hat\xi)\mu_{ B^{10}}^{\hat\xi,\eta^B}(\mcc Y)
=\sum_{\hat\xi\in\hat\Sigma}\mu_{\Lambda_M}^{\xi,\eta^B}(\hat\xi)\mu_{ B^{10}}^{\hat\xi,\eta^B}( \mcc Y)
=\sum_{\hat\xi\in\hat\Sigma}\mu_{\Lambda_M}^{\xi,\eta^B}(\hat\xi)\mu_{ B^{10}}^{\hat\xi,\eta^B}(\hat{\mathcal{Y}}),
\end{align}
and a similar expansion holds if we replace $\eta^B$ by $\eta$.
Here $\hat{\mcc Y}$ is defined in the following sense:
let $\hat{\mcc P}_0$ denote the ``push forward" of $\mcc P_0$ on $\partial B^{10}$ given the information $\hat\xi\in\hat\Sigma$, that is, the
set of points on $\partial{B^{10}}$ that is connected to $\mcc P_0$ in $\hat\xi$ via plus spins, we can define $\hat{\mcc P}_1, \hat U$ similarly. Then we can define the events of $\hat{\mcc Y}^+$ and $\hat{\mcc Y}^-$ to be the push forward of $\mcc Y^+$ and $\mcc Y^-,$ i.e. $\hat{\mcc Y}^+$ is the event that every component of $\hat{\mcc U}$ is connected to $\hat{\mcc P}_0$ and $\hat{\mcc Y}^-$ is the event that $\hat{\mcc P}_1$ is not connected to $\hat{\mcc P}_0$.
Finally, we define $\hat{\mcc Y}=\hat{\mcc Y}^+\cap \hat{\mcc Y}^-.$

Now we define two events 
\begin{align*}
  &\mcc A_1:=\{\mbox{the connected component of }\hat{\mcc P}_1 \mbox{ does not intersect with } B^9\},\\
  &\mcc A_2:=\{\mbox{there exsits a plus spin circuit in }B^9\setminus B^8\},
\end{align*}
and claim that there exists a constant $C>0$ such that the following inequalities hold for any $\xi\in \Sigma_M^c$ and $\hat\xi\in\hat\Sigma$:
\begin{align}
&\mu_{\Lambda_M}^{\xi,\eta}(\hat\xi)\geq C\mu_{\Lambda_M}^{\xi,\eta^B}(\hat\xi);\label{proof of 177-0}\\
   & \mu_{B^{10}}^{\hat\xi,\eta^B}(\mcc A_1\mid\hat{\mcc Y})\geq C;\label{proof of 177-1}\\
   &\mu_{ B^{10}}^{\hat\xi,\eta^B}(\mcc A_2\mid \mcc A_1\cap \hat{\mcc Y})\geq C;\label{proof of 177-2}\\
   &\mu_{ B^{10}}^{\hat\xi}(\mcc A_2\cap \hat{\mcc Y})\geq C\mu_{ B^{10}}^{\hat\xi,\eta^B}(\mcc A_2\cap \hat{\mcc Y})\label{proof of 177-3}.
\end{align}

Now we quickly explain why these inequalities hold.
 \eqref{proof of 177-1} follows directly from Lemma~\ref{lem:explore-plus-spin-clusters}. Combining \eqref{eq:bound-plus-crossing-given-Y-4} with RSW, we get that \begin{equation}
     \mu_{ B^{10}}^{\hat\xi,\eta^B}(\mcc A_2\mid \mcc A_1\cap \hat{\mcc Y})\stackrel{\eqref{eq:bound-plus-crossing-given-Y-4}}{\ge}\frac{C_1\mu_{ B^{10}}^{\hat\xi,\eta^B}(\mcc A_2)\mu_{ B^{10}}^{\hat\xi,\eta^B}(\hat{\mcc{ Y}})}{\mu_{ B^{10}}^{\hat\xi,\eta^B}( \mcc A_1\cap \hat{\mcc Y})}\stackrel{\mbox{RSW}}{\ge}\frac{C_2\mu_{ B^{10}}^{\hat\xi,\eta^B}(\hat{\mcc{ Y}})}{\mu_{ B^{10}}^{\hat\xi,\eta^B}(\mcc A_1\cap  \hat{\mcc Y})}\ge C_2.
 \end{equation}Thus we finish the proof of \eqref{proof of 177-2}.
Both \eqref{proof of 177-0} and \eqref{proof of 177-3} come from the mixing property of Lemma~\ref{lem: spatial mixing of Ising model}. \eqref{proof of 177-0}  claims that the configuration inside $B$ does not influence the configuration outside $B^{10}$ too much. To prove \eqref{proof of 177-3}, it suffices to show that the event $\mcc A_2\cap \hat{\mcc Y}$ relies only on the configuration in $(B^8)^c$. We prove this by contradiction. Assume that  $\sigma_1,\sigma_2$ are two configurations that coincide on $(B^8)^c$, such that $\sigma_1\in \mcc A_2\cap\hat{\mcc Y}$ but $\sigma_2$ does not. Then $\sigma_1$ contains a plus circuit $\gamma$ in $B^9\setminus B^8$, so does $\sigma_2.$ Now, given the existence of $\gamma,$ whether $\hat{\mcc P_0}$ is connected to $\hat{\mcc P_1}$ or $\hat{\mcc U}$ is decided outside $\gamma,$ on which region $\sigma_1$ coincides with $\sigma_2.$ Hence we arrive at the contradiction since $\sigma_2\notin \mcc A_2\cap\hat{\mcc Y}$.

We are now ready to prove \eqref{eq: two rare event probability with interface and ball-induction step}:
\begin{align*}
~~&\mu_{\Lambda_M}^{\xi,\eta_I^{p-1}}(\mcc Y)
\stackrel{\eqref{eq:explore-outside-10B}}{=}\sum_{\hat\xi\in\hat\Xi}\mu_{\Lambda_M}^{\xi,\eta}(\hat\xi)\mu_{ B^{10}}^{\hat\xi,\eta}(\hat{\mathcal{Y}})
\stackrel{\eqref{proof of 177-0}}{\geq} C\sum_{\hat\xi\in\hat\Xi}\mu_{\Lambda_M}^{\xi,\eta^B}(\hat\xi)\mu_{ B^{10}}^{\hat\xi,\eta}(\mcc A_2\cap\hat{\mathcal{Y}})\\
\stackrel{\eqref{proof of 177-3}}{\geq} &C^2
\sum_{\hat\xi\in\hat\Xi}\mu_{\Lambda_M}^{\xi,\eta^B}(\hat\xi)\mu_{ B^{10}}^{\hat\xi,\eta^B}(\mcc A_2\cap\hat{\mathcal{Y}})
\geq C^2
\sum_{\hat\xi\in\hat\Xi}\mu_{\Lambda_M}^{\xi,\eta^B}(\hat\xi)\mu_{ B^{10}}^{\hat\xi,\eta^B}(\mcc A_1\cap\mcc A_2\cap\hat{\mathcal{Y}})\\
\stackrel{\eqref{proof of 177-2}}{\geq} &C^3
\sum_{\hat\xi\in\hat\Xi}\mu_{\Lambda_M}^{\xi,\eta^B}(\hat\xi)\mu_{ B^{10}}^{\hat\xi,\eta^B}(\mcc A_1\cap\hat{\mathcal{Y}})
\stackrel{\eqref{proof of 177-1}}{\geq} C^4
\sum_{\hat\xi\in\hat\Xi}\mu_{\Lambda_M}^{\xi,\eta^B}(\hat\xi)\mu_{ B^{10}}^{\hat\xi,\eta^B}(\hat{\mathcal{Y}})
\stackrel{\eqref{eq:explore-outside-10B}}{=}C^4\mu_{\Lambda_M}^{\xi,\eta_I^p}(\mcc Y).
\end{align*}
Choosing $c_{13}\geq C^{-4}$ completes the proof of Lemma~\ref{lem: two rare event probability with interface and ball}. 
\end{proof}


Next, we prove \eqref{eq: two rare events upper bound with conditioning-X=1}. The main difference between the proofs for \eqref{eq: two rare events upper bound with conditioning-X=0} and \eqref{eq: two rare events upper bound with conditioning-X=1} is that $\{X=0\}$ is an event with positive probability, thus we can apply Lemma~\ref{lem:bound-plus-crossing-given-Y} and then \eqref{eq: two rare events upper bound with conditioning-X=0} is reduced to consider only the event $\mcc Y$. To deal with the event $\{X=1\}=\mcc X$, it is possible that $\mcc P_0$ is small such that the probability of $\mcc X$ is close to 0. Therefore, we need a more careful analysis. We want to explore the outermost circuit in $\Lambda_{M/2}\setminus\Lambda_{M/4}$ and then show that the location of the outermost circuit will not influence the probability of the event $\mcc S=\mcc X\cap\mcc Y$ too much. We now elaborate as follows.
Let $\gamma$ be a plus-minus  circuit and $\Gamma$ be the region enclosed by $\gamma$. Note that whether a circuit $\gamma$ is the outermost circuit is equivalent to whether $\gamma$ is connected to $\mathcal{P}_0$. In order to explore $\gamma$ from inside, we order all plus-minus circuits in
$\Lambda_{M/2}\setminus\Lambda_{M/4}$ according to inclusion, starting from $\partial\Lambda_{M/4}$.
For a plus-minus circuit $\gamma$, let $\tau(\gamma)$ be its rank in this ordering. Since plus-minus circuits are disjoint, this ordering is well-defined. Let $\mu_{\Lambda_M}^{\xi}(\gamma)$ be the probability that $\gamma$ is  the innermost plus-minus circuit and let $\mu_{\Lambda_M}^{\xi}(\gamma,\tau(\gamma)=k)$ be the probability that $\gamma$ is the $k$th circuit in $\Lambda_{M/2}\setminus\Lambda_{M/4}$ from $\partial\Lambda_{M/4}$. Then we get that \begin{align}
    \mu_{\Lambda_M}^{\xi}(\mcc X\cap\mcc Y)&\ge \sum_{\gamma}\mu_{\Lambda_M}^{\xi}(\gamma)\cdot \mu_{\Lambda_M\setminus\Gamma}^{\xi,+}(\mathcal{Y}\cap\{\gamma\leftrightarrow\mathcal{P}_0\});\label{eq: two rare event exploring the interface}\\
    \mu_{\Lambda_M}^{\xi,\eta_I}(\mcc X\cap\mcc Y)&=\sum_{k=1}^{\infty}\sum_{\gamma}\mu_{\Lambda_M}^{\xi,\eta_I}(\gamma,\tau(\gamma)=k)\cdot \mu_{\Lambda_M\setminus\Gamma}^{\xi,\eta_I,+}(\mathcal{Y}\cap\{\gamma\leftrightarrow\mathcal{P}_0\}).\label{eq: two rare event exploring the interface with number}
\end{align}
Here, the ``$+$" in the superscript denotes the fact that all the spins on $\partial_{\mathrm{ext}}\Gamma\supset \partial(\Lambda_{M}\setminus\Gamma)\setminus\partial\Lambda_M$ are plus given $\gamma$ is a plus-minus circuit.

To start with, we control $\mu_{\Lambda_M}^{\xi}(\gamma)$ and $\mu_{\Lambda_M}^{\xi,\eta_I}(\gamma,\tau(\gamma)=k)$.
\begin{lemma}\label{lem: interface exploration probability with number}
    Let $I\subset \Lambda_{M}$ be a subset of $\Lambda_{M}.$ There exists a constant $c_{14}>0$ such that 
for any boundary condition $\eta_I$ on $\mrr{Ball}(I)$ and any configuration $\xi\in\hat\Sigma_i$, we have
    \begin{align}
	\sum_{\gamma}\mu^{\xi}_{\Lambda_{M}}(\gamma)&\ge c_{14};\label{eq: interface exploration probability}\\
    \sum_{\gamma}\mu^{\xi,\eta_I}_{\Lambda_{M}\setminus{\mrr{Ball}(I)}}(\gamma,\tau(\gamma)=k)&\leq (1-c_{14}^{|I|})^{k+1}.\label{eq: interface exploration probability with number}
\end{align}
\end{lemma}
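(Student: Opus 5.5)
For \eqref{eq: interface exploration probability}, I would note that $\sum_\gamma\mu^{\xi}_{\Lambda_M}(\gamma)$ is just the probability that some plus-minus circuit exists in $\Lambda_{M/2}\setminus\Lambda_{M/4}$. It therefore suffices to exhibit a positive-probability event, uniformly in $\xi$, that forces such a circuit. I would split the annulus into two sub-annuli, $\Lambda_{M/2}\setminus\Lambda_{3M/8}$ and $\Lambda_{3M/8}\setminus\Lambda_{M/4}$. The event is a plus circuit in the outer one together with a $*$-minus circuit in the inner one. Given both, a plus-minus interface separating them exists, so $\sum_\gamma\mu^\xi_{\Lambda_M}(\gamma)$ is bounded below.

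To bound the probability of this event, I would first explore the outermost plus circuit in the outer sub-annulus. Its probability is bounded below by the bulk RSW bound, since the annulus sits at distance of order $M$ from $\partial\Lambda_M$. Then, by DMP, the inside of that circuit has plus boundary conditions. A $*$-minus circuit in the inner sub-annulus is a decreasing event, so I would lower-bound it by comparing to a free or minus configuration. This uses the spatial mixing Lemma~\ref{lem: spatial mixing of Ising model} to replace the plus boundary, which sits at macroscopic distance, by a minus one at bounded cost, and then RSW for minus circuits. This gives a constant $c_{14}$.

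For \eqref{eq: interface exploration probability with number}, I would explore the plus-minus circuits one by one from $\partial\Lambda_{M/4}$ outward, with the exploration from inside that reveals only the region enclosed by each discovered circuit. Having found the $j$-th circuit $\gamma_j$, the configuration outside $\Gamma_j$ is Ising with plus (or minus) boundary on $\gamma_j$, boundary $\xi$, and the fixed field $\eta_I$ on $\mathrm{Ball}(I)$. The $(j+1)$-th circuit exists only if the region between $\gamma_j$ and $\partial\Lambda_{M/2}$ is not crossed by a monochromatic path blocking further interfaces. Concretely, a plus crossing from $\partial\Lambda_{M/4}$ to $\partial\Lambda_{M/2}$ avoiding $\Gamma_j$ (or a same-sign circuit right outside $\gamma_j$ reaching $\partial\Lambda_{M/2}$) precludes any further circuit. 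By Lemma~\ref{lem:Ball-influence-on-crossing}, the event $\mathcal V_1$, applied in the unexplored domain with monotonicity to absorb the explored boundary, has conditional probability at least $c_{10}^{|I|}$ uniformly in the history. Hence each additional circuit costs a factor $(1-c^{|I|})$, and iterating $k$ times gives $(1-c_{14}^{|I|})^{k+1}$ after adjusting constants. The shift by one can be absorbed by also requiring the first step.

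The main obstacle is making the per-step bound uniform. The explored circuit $\gamma_j$ has sign depending on parity and a rough shape, so I need crossing events that the existence of a next circuit genuinely excludes. I would handle this by using the plus crossing when the outer side of $\gamma_j$ is plus, and the symmetric minus crossing otherwise. Lemma~\ref{lem:Ball-influence-on-crossing} is stated for plus, but its proof (FKG, RSW, circuits around each $x\in I$) applies verbatim to minus. I would also check that conditioning on the explored interior only enters as a boundary condition, which DMP guarantees.
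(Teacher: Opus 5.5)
Your proposal follows the paper's proof: part (1) uses a plus circuit in $\Lambda_{M/2}\setminus\Lambda_{3M/8}$ and a minus circuit in $\Lambda_{3M/8}\setminus\Lambda_{M/4}$ together with RSW, and part (2) explores the circuits from the inside, with each new circuit costing a factor $1-c_{10}^{|I|}$ from Lemma~\ref{lem:Ball-influence-on-crossing}. In part (1), keep the minus circuit in a slightly smaller annulus, since the explored plus circuit can lie adjacent to $\partial\Lambda_{3M/8}$ and is then not at macroscopic distance. In part (2), your order of steps is the correct justification of the monotone comparison that the paper writes directly for the non-monotone event $\mathcal A(\Gamma')$: first include the next-circuit event in the complement of an increasing plus crossing started from the plus side of $\gamma_j$, then use FKG to absorb the plus boundary. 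Your parity case split is unnecessary, because every plus-minus circuit here has plus on its outer side.
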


\begin{proof}
    The proof of \eqref{eq: interface exploration probability} is rather direct. Note that 
    \begin{align*}
    \sum_{\gamma}\mu^{\xi}_{\Lambda_{M}}(\gamma)   \geq\mu_{\Lambda_M}^\xi(\mbox{there exists a plus-minus circuit in }\Lambda_{M/2}\setminus\Lambda_{M/4})
    \end{align*}
since the summation is taken over all possible choices of $\gamma.$ Moreover, a sufficient condition for the existence of such $\gamma$ is that there exists a plus circuit in $\Lambda_{M/2}\setminus\Lambda_{3M/8}$ and a minus circuit in $\Lambda_{3M/8}\setminus\Lambda_{M/4}$, which occurs with positive probability by the RSW inequality of two dimensional critical Ising model.
    
Then we focus on  proving \eqref{eq: interface exploration probability with number} via induction in $k$.
For $k=1,$ notice that 
\begin{align*}
~&\sum_{\gamma}\mu^{\xi,\eta_I}_{\Lambda_{M}\setminus{\mrr{Ball}(I)}}(\gamma,\tau(\gamma)=1)\\
=~&\mu^{\xi,\eta_I}_{\Lambda_{M}\setminus{\mrr{Ball}(I)}}(\mbox{there exists a plus-minus circuit in }\Lambda_{M/2}\setminus\Lambda_{M/4})\\
\leq ~& 1-\mu^{\xi,\eta_I}_{\Lambda_{M}\setminus{\mrr{Ball}(I)}}(\partial\Lambda_{M/2}\leftrightarrow\partial\Lambda_{M/4})
\leq 1-c_{10}^{|I|},
\end{align*}
where the last inequality uses Lemma~\ref{lem:Ball-influence-on-crossing}.

For the induction step, we first define $\mu^{\xi,\eta_I}_{\Lambda_{M}\setminus{\mrr{Ball}(I)}}(\gamma',\tau(\gamma')=k,\gamma,\tau(\gamma)=k+1)$ to be the probability that $\gamma'$ is the $k$-th circuit and $\gamma$ is the $(k+1)$-th circuit. Note that 
\begin{align}
    \sum_{\gamma}\mu^{\xi,\eta_I}_{\Lambda_{M}\setminus{\mrr{Ball}(I)}}(\gamma,\tau(\gamma)=k+1)
=\sum_{\gamma',\gamma}\mu^{\xi,\eta_I}_{\Lambda_{M}\setminus{\mrr{Ball}(I)}}(\gamma',\tau(\gamma')=k,\gamma,\tau(\gamma)=k+1).\label{eq: interface exploration probability induction step}
\end{align} Let $\Gamma'$ denote the region enclosed by $\gamma'$ and let $\mathcal{A}(\Gamma')$ denote the event that there exists a plus-minus circuit in $\Lambda_{M/2}\setminus(\Gamma'\cup\mrr{Ball}(I))$, then we get from DMP that \begin{align}
    &\sum_{\gamma',\gamma}\mu^{\xi,\eta_I}_{\Lambda_{M}\setminus{\mrr{Ball}(I)}}(\gamma',\tau(\gamma')=k,\gamma,\tau(\gamma)=k+1)\nonumber\\=~&\sum_{\gamma'}\mu^{\xi,\eta_I}_{\Lambda_{M}\setminus{\mrr{Ball}(I)}}(\gamma',\tau(\gamma')=k)\mu^{\xi,\eta_I}_{\Lambda_{M}\setminus{(\mrr{Ball}(I)\cup\Gamma')}}\big(\mathcal{A}(\Gamma')\big)\nonumber\\
    \leq ~&\sum_{\gamma'}\mu^{\xi,\eta_I}_{\Lambda_{M}\setminus{\mrr{Ball}(I)}}(\gamma',\tau(\gamma')=k)\mu^{\xi,\eta_I}_{\Lambda_{M}\setminus{\mrr{Ball}(I)}}\big(\mathcal{A}(\Gamma')\big).\label{eq: interface exploration probability induction step 1}
\end{align} 
Let $\mathcal{B}$ denote the event that there exists a plus spin crossing from $\partial\Lambda_{M/2}$ to $\partial\Lambda_{M/4}$.
Note that $\mathcal{A}(\Gamma')\subset\mathcal{A}(\Lambda_{M/4})$ and by planar duality $\mathcal{A}(\Lambda_{M/4})\subset\mathcal{B}^c$. We get from Lemma~\ref{lem:Ball-influence-on-crossing} that \begin{equation}
    \mu^{\xi,\eta_I}_{\Lambda_{M}\setminus{\mrr{Ball}(I)}}\big(\mathcal{A}(\Gamma')\big)\le \mu^{\xi,\eta_I}_{\Lambda_{M}\setminus{\mrr{Ball}(I)}}\big(\mathcal{B}^c\big)\le 1-c_{10}^{|I|}.\nonumber
\end{equation}Combined with \eqref{eq: interface exploration probability induction step 1}, it yields that \begin{align*}
&\sum_{\gamma',\gamma}\mu^{\xi,\eta_I}_{\Lambda_{M}\setminus{\mrr{Ball}(I)}}(\gamma',\tau(\gamma')=k,\gamma,\tau(\gamma)=k+1)\nonumber\\\le~&\sum_{\gamma'}\mu^{\xi,\eta_I}_{\Lambda_{M}\setminus{\mrr{Ball}(I)}}(\gamma',\tau(\gamma')=k)(1-c_{10}^{|I|})
\leq (1-c_{10}^{|I|})^{k+1}.
\end{align*}
Plugging into \eqref{eq: interface exploration probability induction step} and 
choosing $c_{14}$ depending on $c_{10}$
completes the proof. 
\end{proof}

To compare the right-hand sides of \eqref{eq: two rare event exploring the interface} and \eqref{eq: two rare event exploring the interface with number}, we also need to compare $\mu_{\Lambda_M\setminus\Gamma_1}^{\xi,+}(\mathcal{Y}\cap\{\gamma_1\leftrightarrow\mathcal{P}_0\})$ with $\mu_{\Lambda_M\setminus\Gamma_2}^{\xi,+}(\mathcal{Y}\cap\{\gamma_2\leftrightarrow\mathcal{P}_0\})$ and $\mu_{\Lambda_M\setminus\Gamma}^{\xi,+}(\mathcal{Y}\cap\{\gamma\leftrightarrow\mathcal{P}_0\})$ with $\mu_{\Lambda_M\setminus\Gamma}^{\xi,\eta_I,+}(\mathcal{Y}\cap\{\gamma\leftrightarrow\mathcal{P}_0\})$. We state the corresponding inequalities in the following two lemmas.

\begin{lemma}\label{lem: two rare event probability with different interface explored}
    There exists a constant $c_{15}>0$ such that the following holds for 
  any configuration $\xi\in\hat\Sigma=\{-1,1\}^{(\lamn\setminus\Lambda_M)\cup\partial\Lambda_M}$
    \begin{align}
	\mu_{\Lambda_M\setminus\Gamma_1}^{\xi,+}(\mathcal{Y}\cap\{\gamma_1\leftrightarrow\mathcal{P}_0\})\le c_{15}\mu_{\Lambda_M\setminus\Gamma_2}^{\xi,+}(\mathcal{Y}\cap\{\gamma_2\leftrightarrow\mathcal{P}_0\}).\label{eq: two rare event probability with different interface explored}
\end{align}
\end{lemma}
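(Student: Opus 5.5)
Both sides involve a plus boundary condition on the explored circuit $\gamma_j$. The events $\mathcal Y$ and $\{\gamma_j\leftrightarrow\mathcal P_0\}$ depend only on plus-connectivity in $\Lambda_M\setminus\Gamma_j$. The plan is to decouple the dependence on $\gamma_j$, which lies inside $\Lambda_{M/2}$, from the dependence on the boundary data $\mathcal P_0,\mathcal P_1,\mathcal U$ on $\partial\Lambda_M$. We first insert an intermediate plus circuit in a fixed annulus, say $\Lambda_{0.7M}\setminus\Lambda_{0.6M}$, which lies strictly between $\partial\Lambda_{M/2}$ and $\partial\Lambda_M$. Let $\mathcal A_3$ be the event that such a plus circuit exists. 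On $\mathcal A_3$, the circuit connects to every plus crossing from $\gamma_j$ outward, so
\[
\mathcal Y\cap\{\gamma_j\leftrightarrow\mathcal P_0\}\cap\mathcal A_3=\mathcal A_3\cap\{\gamma_j\leftrightarrow\partial\Lambda_{0.6M}\}\cap\hat{\mathcal Y},
\]
where $\hat{\mathcal Y}$ is the event that $\mathcal Y$ holds and that the outermost such circuit is plus-connected to $\mathcal P_0$. Conditioned on the outermost plus circuit in the annulus (explored from outside, so that $\hat{\mathcal Y}$ is measurable outside it by the argument already used for \eqref{proof of 177-3}), the inner event $\{\gamma_j\leftrightarrow\text{circuit}\}$ and the outer event become conditionally independent by the DMP.

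The key steps, in order, are as follows.

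\textbf{Step 1.} We show that under either measure $\mu^{\xi,+}_{\Lambda_M\setminus\Gamma_j}$,
\[
\mu(\mathcal A_3\mid\mathcal Y\cap\{\gamma_j\leftrightarrow\mathcal P_0\})\ge C.
\]
We condition on $\mathcal C_1$ avoiding $\Lambda_{0.95M}$, as in Lemma~\ref{lem:explore-plus-spin-clusters}, taking the increasing event $\mathcal I=\mathcal Y^+\cap\{\gamma_j\leftrightarrow\mathcal P_0\}\in\mathscr F_0$. We then repeat the argument of Lemma~\ref{lem:bound-plus-crossing-given-Y}, with $\mathcal V_1$ replaced by $\mathcal A_3$. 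This yields the constant-factor FKG estimate \eqref{eq:bound-plus-crossing-given-Y-4}, together with RSW for $\mathcal A_3$.

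\textbf{Step 2.} We decompose over the outermost plus circuit $\theta$ in $\Lambda_{0.7M}\setminus\Lambda_{0.6M}$. We write the probability as
\[
\sum_\theta \mu(\theta\text{ outermost},\hat{\mathcal Y}_\theta)\cdot \mu^{+,+}_{\Theta\setminus\Gamma_j}(\gamma_j\leftrightarrow\theta),
\]
where $\Theta$ is the region enclosed by $\theta$. The outer factor depends on $\gamma_j$ only through the law outside $\theta$. By the mixing property (Lemma~\ref{lem: spatial mixing of Ising model}), since $\Gamma_j\subset\Lambda_{M/2}$ is at macroscopic distance from $\Lambda_{0.6M}^c$, the outer factor changes by at most a constant factor between $j=1$ and $j=2$. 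For the inner factor, we use FKG, CBC and RSW to get $\mu^{+,+}_{\Theta\setminus\Gamma_j}(\gamma_j\leftrightarrow\theta)\ge C$, since the boundary is plus on both sides and a plus circuit in $\Lambda_{0.6M}\setminus\Lambda_{M/2}$ suffices. The inner factor is also trivially at most $1$.

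\textbf{Step 3.} We combine the two steps. The left side of \eqref{eq: two rare event probability with different interface explored} is bounded by $C^{-1}$ times its restriction to $\mathcal A_3$, which is at most $C'\sum_\theta(\text{outer factor for }\gamma_1)$. This is at most $C''\sum_\theta(\text{outer factor for }\gamma_2)\times(\text{inner factor for }\gamma_2)$, which is at most $C''$ times the right side.

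The main obstacle is Step 1 and the mixing comparison in Step 2. The events involved are neither increasing nor decreasing, and the conditioning on $\{\gamma_j\leftrightarrow\mathcal P_0\}$ may carry small probability. The comparison must therefore be a ratio bound rather than an additive one, and the mixing lemma must be applied to the conditional law outside $\theta$ with a multiplicative error. I would first try to route everything through the $\mathcal C_1$-exploration trick, so that only increasing events remain inside $\Lambda_{0.95M}$ and FKG becomes applicable.
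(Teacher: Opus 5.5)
Your proof is correct up to two local slips (see the next paragraph), but it is organized differently from the paper's. The paper applies the $\mathcal C_1$-exploration (Lemma~\ref{lem:explore-plus-spin-clusters}) on the $\gamma_2$ side and decomposes over $\{\mathcal C_1=A\}$ with $A\subset\Lambda_M\setminus\Lambda_{0.95M}$. It compares the law of $\mathcal C_1$ under the two domains by mixing. It then works in the residual domain $\Lambda_M\setminus(\Gamma_j\cup A)$, where what remains ($\mathcal Y^+$ and $\{\gamma_j\leftrightarrow\mathcal P_0\}$) is increasing. There it inserts by RSW a plus circuit $\hat\gamma\subset\Lambda_{0.8M}\setminus\Lambda_{3M/4}$ connected to $\gamma_1$, and passes from $\hat\gamma$ to $\gamma_2$ by DMP and CBC, using $\Gamma_2\subset\hat\Gamma$. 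So in the paper the dependence on $\gamma_j$ is removed by monotonicity, mixing is used only for $\{\mathcal C_1=A\}$, and the chain yields $\mu_1\ge c\,\mu_2$ (equivalent by symmetry).

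You instead use the $\mathcal C_1$-exploration, together with the FKG-up-to-constants argument of Lemma~\ref{lem:bound-plus-crossing-given-Y}, only to make an intermediate plus circuit conditionally likely on the $\gamma_1$ side. The outermost such circuit $\theta$ then acts as a firewall. All the non-monotone information ($\mathcal Y$ and $\{\theta\leftrightarrow\mathcal P_0\}$) sits in an outer factor that sees $\gamma_j$ only through a distant inner boundary, so mixing compares it directly, as a ratio bound valid for arbitrary events. The inner factor $\mu^{+,+}_{\Theta\setminus\Gamma_j}(\gamma_j\leftrightarrow\theta)$ needs only the trivial bounds $c\le\cdot\le1$. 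This trades the paper's domain-changing CBC step for a clean product decomposition, and the same mechanism would also handle Lemma~\ref{lem: two rare event probability with interface and ball-X=1}. The cost is the conditional circuit estimate of your Step 1. Both routes use Lemma~\ref{lem: spatial mixing of Ising model} in its ``outer'' form (same $\xi$ on $\partial\Lambda_M$, different inner domains, event supported at macroscopic distance from $\Lambda_{M/2}$), and only need a constant-factor conclusion at fixed aspect ratio.

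Now the two slips. First, in your displayed identity the inner event must be $\{\gamma_j\leftrightarrow\theta\}$ for the outermost circuit $\theta$, not $\{\gamma_j\leftrightarrow\partial\Lambda_{0.6M}\}$. A plus path reaching $\partial\Lambda_{0.6M}$ need not meet $\theta\subset\Lambda_{0.7M}\setminus\Lambda_{0.6M}$; your Step 2 already uses the right event.

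Second, and more substantively, a plus circuit in $\Lambda_{0.6M}\setminus\Lambda_{M/2}$ surrounds $\gamma_2$ but does not connect it to $\theta$. So it does not witness $\mu^{+,+}_{\Theta\setminus\Gamma_2}(\gamma_2\leftrightarrow\theta)\ge c$. Use a radial crossing instead:
\begin{itemize}
\item The event is increasing in the spins of $\Theta\setminus\Gamma_2$ and the boundary condition is maximal, so by CBC its probability is at least its probability under $\mu^+_{\Lambda_{0.7M}}$.
\item Under $\mu^+_{\Lambda_{0.7M}}$, a plus crossing from $\partial\Lambda_{M/4}$ to $\partial\Lambda_{0.7M}$ has probability at least $c$ by RSW.
\item The segment of that crossing between its last visit to the plus side of $\gamma_2$ and its next visit to $\theta$ is a plus path inside $\Theta\setminus\Gamma_2$ realizing the connection.
\end{itemize}
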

\begin{lemma}\label{lem: two rare event probability with interface and ball-X=1}
    Let $I\subset \Lambda_{M}$ be a subset of $\Lambda_{M}.$ There exists a constant $c_{16}>0$ such that 
for any boundary condition $\eta_I$ on $\mrr{Ball}(I)$ and any configuration $\xi\in\hat\Sigma_i$,  we have
    \begin{align}
	\mu_{\Lambda_M\setminus\Gamma}^{\xi,\eta_I,+}(\mcc Y\cap\{\gamma\leftrightarrow\mcc P_0\})\le c_{16}^{|I|}\mu_{\Lambda_M\setminus\Gamma}^{\xi,+}(\mcc Y\cap\{\gamma\leftrightarrow \mcc P_0\}).\label{eq: two rare event probability with interface and ball-X=1}
\end{align}
\end{lemma}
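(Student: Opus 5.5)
The plan is to prove \eqref{eq: two rare event probability with interface and ball-X=1} by induction over the points of $I$, exactly as in the proof of Lemma~\ref{lem: two rare event probability with interface and ball}. We use the decomposition $\eta_I^p$ of Definition~\ref{def:decompose-Ball}, now on the domain $\Lambda_M\setminus\Gamma$ with plus boundary condition on $\partial_{\mathrm{ext}}\Gamma$. It suffices to show that for each $1\le p\le k$,
\[
\mu_{\Lambda_M\setminus\Gamma}^{\xi,\eta_I^p,+}(\mcc Y\cap\{\gamma\leftrightarrow\mcc P_0\})\le C\,\mu_{\Lambda_M\setminus\Gamma}^{\xi,\eta_I^{p-1},+}(\mcc Y\cap\{\gamma\leftrightarrow\mcc P_0\})
\]
with $C$ absolute, and then to take $c_{16}=C$.

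Balls $\Lambda_{d_x/10}(x)$ meeting $\Gamma$ are irrelevant. Since $d_x\le \dist(x,\bpartial\Lambda_M)$, such a ball lies inside $\Lambda_{M/2}$. If it lies in $\Gamma$, the measure on $\Lambda_M\setminus\Gamma$ does not see it, and the step is trivial. Otherwise $B^{10}=\Lambda_{d_x}(x)$ lies in one of the regions $\Lambda_M\setminus\Lambda_{M/2}$, $\Lambda_{M/2}\setminus\Lambda_{M/4}$ or $\Lambda_{M/4}$, up to boundary contact. In the last case it sits inside $\Gamma$ or is disjoint from the relevant domain. The relevant case is a ball in $\Lambda_M\setminus\Gamma$ that possibly meets $\gamma$. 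I would first restrict to $B^{10}\cap\Gamma=\emptyset$, and treat the case where $\gamma$ enters $B^{10}$ by replacing $B^{10}$ with $B^{10}\setminus\Gamma$ and using plus boundary data on the part of its boundary coming from $\gamma$.

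For the induction step I would explore the configuration outside $B^{10}$, as in \eqref{eq:explore-outside-10B}. The key observation is that $\{\gamma\leftrightarrow\mcc P_0\}$ is of the same type as the events in $\mcc Y^+$: it is a plus connection of the boundary set $\gamma$ (plus boundary) to $\mcc P_0$. So the pair $(\mcc P_0,\ \mcc U\cup\{\gamma\})$ is admissible in Definition~\ref{def: boundary connectivity event}, with $\gamma$ treated as an extra element of $\mcc U$. This requires checking that Lemma~\ref{lem:explore-plus-spin-clusters} and \eqref{eq:bound-plus-crossing-given-Y-4} still hold in the domain $\Lambda_M\setminus\Gamma$. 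The only input they use is RSW and mixing away from the boundary, so I would redo them on a ball $B^{10}$ away from $\gamma$, where they apply verbatim. With this, the push-forward $\hat{\mcc Y}$ to $\partial B^{10}$ absorbs $\{\gamma\leftrightarrow\mcc P_0\}$, and the chain of inequalities \eqref{proof of 177-0}--\eqref{proof of 177-3} goes through unchanged. Concretely, we insert the plus circuit event $\mcc A_2$ in $B^9\setminus B^8$, which decouples the inside of $B^8$, and the event $\mcc A_1$ that $\hat{\mcc P}_1$'s cluster avoids $B^9$. This gives a one-point factor $C^4$.

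The main obstacle is when $\gamma$ passes close to or through $B^{10}$. Then the mixing estimate \eqref{proof of 177-0} and the RSW lower bounds in $B^{10}$ must hold uniformly with a plus boundary piece from $\gamma$. I would handle this with monotonicity. The plus boundary on $\gamma$ only helps the increasing event $\mcc A_2$ (FKG/CBC), while $\mcc A_1$ is decreasing. For $\mcc A_1$ I would use a minus $*$-circuit separating $B^9$ from $\hat{\mcc P}_1$, whose probability stays bounded below because $\gamma$ is itself a plus-minus circuit with minus spins on its inner side adjacent. If this fails, the fallback is to shrink the ball to $\Lambda_{\dist(x,\gamma)/10}(x)$ and pay a constant for the annulus between the two scales via Lemma~\ref{lem:Ball-influence-on-crossing}-type circuit insertions. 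This loses at most a constant per point, which is consistent with the $c_{16}^{|I|}$ bound.
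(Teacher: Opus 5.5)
Your route is the paper's. You induct over $I$ via $\eta_I^p$, explore $\hat\xi$ outside $B^{10}$, and let the push-forward absorb $\{\gamma\leftrightarrow\mcc P_0\}$; this is exactly the paper's event $\hat{\mcc T}$. You then run \eqref{proof of 177-0}--\eqref{proof of 177-3} with the events localized to $B^{10}\setminus\Gamma$. Your remark that the plus boundary along $\gamma$ only helps the increasing separating event also matches the paper's $\mcc A_2'$, a plus set in $B^9\setminus(B^8\cup\Gamma)$ that closes up against $\gamma$.

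The gap is in the case you flag yourself, $\gamma$ entering $B^{10}$, at the lower bound for $\mcc A_1$. The minus spins on the inner side of $\gamma$ lie in $\Gamma$, so $\mu^{\xi,\eta_I,+}_{\Lambda_M\setminus\Gamma}$ does not see them. Along $\gamma$ this measure sees a fixed plus layer, which by CBC works against any minus separating set. Moreover, if $\gamma$ enters $B^9$, the cluster of $\hat{\mcc P}_1$ meets $B^9$ as soon as it touches that plus layer. So the unconditional probability of $\mcc A_1$ in $B^{10}\setminus\Gamma$ has no uniform lower bound. For example, if $\gamma$ runs at lattice distance two from an arc of $\partial B^{10}$ carrying points of $\hat{\mcc P}_1$, separation costs exponentially in the length of the arc.

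Your fallback does not repair this. $\eta_I$ is imposed on all of $\Lambda_{d_x/10}(x)$, and $\gamma$ may pass through this ball itself, so no shrinking of the ball avoids $\gamma$. Also, Lemma~\ref{lem:Ball-influence-on-crossing} only lower-bounds crossing probabilities; it does not compare probabilities of $\mcc Y\cap\{\gamma\leftrightarrow\mcc P_0\}$ under different boundary data.

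The missing idea is to let the conditioning do the work, as in Lemma~\ref{lem:explore-plus-spin-clusters} with $\mcc I=\hat{\mcc Y}^+\cap\hat{\mcc T}$:
\begin{itemize}
\item On $\hat{\mcc T}$ the plus layer of $\gamma$ belongs to the cluster $\mcc C_0$ of $\hat{\mcc P}_0$, so on $\hat{\mcc Y}^-$ the cluster of $\hat{\mcc P}_1$ never touches it.
\item Given $\mcc C_0=A$, the minus layer $\partial_{\mathrm{ext}}A$ separates the rest of $B^{10}\setminus\Gamma$ from $\Gamma\cup A$. By the DMP, the law there is that of the full box $B^{10}$ (with no hole) conditioned on $\sigma=-1$ on $\partial_{\mathrm{ext}}A$.
\item FKG then bounds the decreasing event from below by the probability of a minus $*$-circuit in $B^{10}\setminus B^9$ in the full box. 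There RSW applies with no plus boundary coming from $\gamma$.
\end{itemize}

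You also need, for \eqref{proof of 177-3}, the paper's observation about the case where $\gamma$ enters $B^8$. Then the separating plus set closes up against $\gamma$, so $\{\gamma\leftrightarrow\hat{\mcc P}_0\}$ is still determined outside $B^8$. Saying that the chain goes through unchanged does not cover this case.
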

We are now ready to prove \eqref{eq: two rare events upper bound with conditioning-X=1}.
\begin{proof}[Proof of \eqref{eq: two rare events upper bound with conditioning-X=1}] Fix a circuit $\gamma_0\subset\Lambda_{M/2}\setminus\Lambda_{M/4}$ and let $\Gamma_0$ be the region enclosed by $\gamma_0$.

Combining Lemmas~\ref{lem: two rare event probability with different interface explored} and \ref{lem: interface exploration probability with number} with \eqref{eq: two rare event exploring the interface}, we obtain that 
\begin{align}
    \mu_{\Lambda_M}^{\xi}(\mcc X\cap\mcc Y)&\ge \sum_{\gamma}\mu_{\Lambda_M}^{\xi}(\gamma)\cdot \mu_{\Lambda_M\setminus\Gamma}^{\xi,+}(\mathcal{Y}\cap\{\gamma\leftrightarrow\mathcal{P}_0\})~~~\text{(by \eqref{eq: two rare event exploring the interface})}\nonumber\\&\ge \sum_{\gamma}\mu_{\Lambda_M}^{\xi}(\gamma)\cdot \frac{1}{c_{15}}\mu_{\Lambda_M\setminus\Gamma_0}^{\xi,+}(\mathcal{Y}\cap\{\gamma_0\leftrightarrow\mathcal{P}_0\})~~~\text{(by Lemma~\ref{lem: two rare event probability with different interface explored})}\nonumber\\&\ge \frac{c_{14}}{c_{15}}\cdot \mu_{\Lambda_M\setminus\Gamma_0}^{\xi,+}(\mathcal{Y}\cap\{\gamma_0\leftrightarrow\mathcal{P}_0\})~~~\text{(by Lemma~\ref{lem: interface exploration probability with number})}.\label{eq: two rare event exploring the interface lower bound 1}
\end{align}
Combining Lemmas~\ref{lem: two rare event probability with interface and ball-X=1},~\ref{lem: two rare event probability with different interface explored} and \ref{lem: two rare event probability with interface and ball} with \eqref{eq: two rare event exploring the interface with number}, we get that \begin{align}
    \mu_{\Lambda_M}^{\xi,\eta_I}(\mcc X\cap\mcc Y)&\le \sum_{k=0}^{\infty} \sum_{\gamma}\mu_{\Lambda_M}^{\xi,\eta_I}(\gamma,\tau(\gamma)=k)\cdot \mu_{\Lambda_M\setminus\Gamma}^{\xi,\eta_I,+}(\mathcal{Y}\cap\{\gamma\leftrightarrow\mathcal{P}_0\})~~~\text{(by \eqref{eq: two rare event exploring the interface with number})}\nonumber\\&\le \sum_{k=0}^{\infty} \sum_{\gamma}\mu_{\Lambda_M}^{\xi,\eta_I}(\gamma,\tau(\gamma)=k)\cdot c_{16}^{|I|}\mu_{\Lambda_M\setminus\Gamma}^{\xi,+}(\mathcal{Y}\cap\{\gamma\leftrightarrow\mathcal{P}_0\})~~~\text{(by Lemma~\ref{lem: two rare event probability with interface and ball-X=1}))}\nonumber\\&\le\sum_{k=0}^{\infty} \sum_{\gamma}\mu_{\Lambda_M}^{\xi,\eta_I}(\gamma,\tau(\gamma)=k)\cdot c_{15} c_{16}^{|I|}\mu_{\Lambda_M\setminus\Gamma_0}^{\xi,+}(\mathcal{Y}\cap\{\gamma_0\leftrightarrow\mathcal{P}_0\})~~~\text{(by Lemma~\ref{lem: two rare event probability with different interface explored})}\nonumber\\&\le c_{15} c_{16}^{|I|}\cdot \sum_{k=0}^{\infty}(1-c_{14}^{|I|})^k\cdot \mu_{\Lambda_M\setminus\Gamma_0}^{\xi,+}(\mathcal{Y}\cap\{\gamma_0\leftrightarrow\mathcal{P}_0\})~~~\text{(by Lemma~\ref{lem: two rare event probability with interface and ball})}\nonumber\\&= c_{15} (\frac{c_{16}}{c_{14}})^{|I|}\cdot \mu_{\Lambda_M\setminus\Gamma_0}^{\xi,+}(\mathcal{Y}\cap\{\gamma_0\leftrightarrow\mathcal{P}_0\}).\label{eq: two rare event exploring the interface lower bound 2}
\end{align} Combining \eqref{eq: two rare event exploring the interface lower bound 1} and \eqref{eq: two rare event exploring the interface lower bound 2}, we finish the proof of \eqref{eq: two rare events upper bound with conditioning-X=1}.
\end{proof}
\begin{proof}[Proof of Lemma~\ref{lem: two rare event probability with different interface explored}]
Recall Definition~\ref{def:explore-plus-spin-clusters} and recall that $\mathcal{Y}=\mathcal{Y}^-\cap \mathcal{Y}^+.$ Choosing 
$\mcc I=\mcc Y^+\cap\{\gamma_2\leftrightarrow\mcc P_0\}$ in Lemma~\ref{lem:explore-plus-spin-clusters}, we have that
\begin{equation}        \mu^{\xi,+}_{\Lambda_{M}\setminus\Gamma_2}(\mathcal{C}_1\cap \Lambda_{0.95M}=\emptyset\mid \mathcal{Y}\cap\{\gamma_2\leftrightarrow\mathcal{P}_0\})\ge C_1.\label{eq: P_1 cluster trapped lower bound with single condition}
    \end{equation}  
Next, we want to explore the cluster $\mathcal{C}_1$ in $\Lambda_M$. Note that by exploring $\mathcal{C}_1$, we already see whether the event $\mathcal{Y}^-\cap \{\mathcal{C}_1\cap\Lambda_{0.95M}=\emptyset\}$ occurs. Thus we define $\mathfrak{C}_1$ to be the collection of $A$ such that  $\{\mathcal{C}_1=A\}$ is possible and $\{\mathcal{C}_1=A\}$ implies   $\mathcal{Y}^-\cap \{\mathcal{C}_1\cap \Lambda_{0.95M}=\emptyset\}$. Then we have 
\begin{align}  &~\mu^{\xi,+}_{\Lambda_{M}\setminus\Gamma_1}( \mathcal{Y}\cap\{\gamma_1\leftrightarrow\mathcal{P}_0\}\cap\{\mcc C_1\cap\Lambda_{0.95M}=\emptyset\})\nonumber\\=&\sum_{A\in\mathfrak{C}_1} \mu^{\xi,+}_{\Lambda_{M}\setminus\Gamma_1}(\mathcal{C}_1=A)\times\mu^{\xi,+,-}_{\Lambda_{M}\setminus(\Gamma_1\cup A)}(\mathcal{Y}^+\cap\{\gamma_1\leftrightarrow\mathcal{P}_0\}),\label{eq: cluster 1 exploration in two rare event lower bound 1}\\
&~\mu^{\xi,+}_{\Lambda_{M}\setminus\Gamma_2}( \mathcal{Y}\cap\{\gamma_2\leftrightarrow\mathcal{P}_0\}\cap\{\mcc C_1\cap\Lambda_{0.95M}=\emptyset\})\nonumber\\=&\sum_{A\in\mathfrak{C}_1} \mu^{\xi,+}_{\Lambda_{M}\setminus\Gamma_2}(\mathcal{C}_1=A)\times\mu^{\xi,+,-}_{\Lambda_{M}\setminus(\Gamma_2\cup A)}(\mathcal{Y}^+\cap\{\gamma_2\leftrightarrow\mathcal{P}_0\}).\label{eq: cluster 1 exploration in two rare event lower bound 2}
\end{align}
Here, for any region $\Gamma$, we use $\mu^{\xi,+,-}_{\Lambda_{M}\setminus(\Gamma\cup A)}$ to denote the Gibbs measure on $\Lambda_{M}\setminus(\Gamma\cup A)$ with plus boundary condition on $\partial_{\mathrm{ext}}\Gamma\cap{\partial(\Lambda_M\setminus(\Gamma\cup A))}$ and minus boundary condition on $\partial_{\mathrm{ext}} A\cap{\partial(\Lambda_M\setminus(\Gamma\cup A))}$.

Note that $\Gamma_1\subset\Lambda_{M/2}$ and $\Gamma_2\subset\Lambda_{M/2}$.
By the mixing property Lemma~\ref{lem: spatial mixing of Ising model}, we get that for any $A\subset\Lambda_M\setminus\Lambda_{0.95M}$ that \begin{equation}
    \mu^{\xi,+}_{\Lambda_{M}\setminus\Gamma_1}(\mathcal{C}_1=A)\ge C_2\mu^{\xi,+}_{\Lambda_{M}\setminus\Gamma_2}(\mathcal{C}_1=A).\label{eq: mixing property in two rare events given different region}
\end{equation}
Let $\mathcal{A}$ denote the event that there exists a plus circuit in $\Lambda_{0.8M}\setminus\Lambda_{3M/4}$ which is connected to $\gamma_1$. Then by RSW, we get that \begin{equation}
    \mu^{\xi,+,-}_{\Lambda_{M}\setminus(\Gamma_1\cup A)}(\mathcal{A})\ge C_3.\label{eq: plus circuit existence in interface comparison}
\end{equation}
Moreover, we decompose the event $\mcc A$ into the disjoint union of $\mcc A_{\hat\gamma}, \hat\gamma\in\mss S,$ where $\mcc A_{\hat\gamma}$ is the event that $\hat\gamma$ is the innermost plus circuit connected to $\gamma_1$ in $\Lambda_{0.8M}\setminus\Lambda_{3/4M}$  and $\mss S$ is the set of all possible $\hat\gamma.$
Noting that $\gamma_2\subset\Lambda_{M/2}\subset\hat\Gamma$, where $\hat\Gamma$ is the interior of $\hat\gamma,$ it follows from DMP and CBC that 
\begin{align}
& \mu^{\xi,+,-}_{\Lambda_{M}\setminus(\Gamma_1\cup A)}(\mathcal{Y}^+\cap\{\gamma_1\leftrightarrow\mathcal{P}_0\}\mid\mathcal{A}_{\hat\gamma}) \stackrel{\mathrm{DMP}}{=}\mu^{\xi,+,-}_{\Lambda_{M}\setminus(\hat\Gamma\cup A)}(\mathcal{Y}^+\cap\{\hat\gamma\leftrightarrow\mathcal{P}_0\})\nonumber\\
    \stackrel{\mathrm{CBC}}{\geq}
    &\mu^{\xi,+,-}_{\Lambda_{M}\setminus(\Gamma_2\cup A)}(\mathcal{Y}^+\cap\{\hat\gamma\leftrightarrow\mathcal{P}_0\})
    \geq 
    \mu^{\xi,+,-}_{\Lambda_{M}\setminus(\Gamma_2\cup A)}(\mathcal{Y}^+\cap\{\gamma_2\leftrightarrow\mathcal{P}_0\}).
    \label{eq: plus boundary condition connectivity comparison inequality}
\end{align}

Combining  \eqref{eq: plus circuit existence in interface comparison} with \eqref{eq: plus boundary condition connectivity comparison inequality}, we get that \begin{align}
    &\mu^{\xi,+,-}_{\Lambda_{M}\setminus(\Gamma_1\cup A)}(\mathcal{Y}^+\cap\{\gamma_1\leftrightarrow\mathcal{P}_0\})\nonumber\\
    \ge ~&\sum_{\hat\gamma\in\mss S}\mu^{\xi,+,-}_{\Lambda_{M}\setminus(\Gamma_1\cup A)}(\mathcal{Y}^+\cap\{\gamma_1\leftrightarrow\mathcal{P}_0\}\mid\mathcal{A}_{\hat\gamma})\cdot \mu^{\xi,+,-}_{\Lambda_{M}\setminus(\Gamma_1\cup A)}(\mathcal{A}_{\hat\gamma})\nonumber\\\ge~ & \mu^{\xi,+,-}_{\Lambda_{M}\setminus(\Gamma_2\cup A)}(\mathcal{Y}^+\cap\{\gamma_2\leftrightarrow\mathcal{P}_0\})\sum_{\hat\gamma\in\mss S}\mu^{\xi,+,-}_{\Lambda_{M}\setminus(\Gamma_1\cup A)}(\mathcal{A}_{\hat\gamma})\nonumber\\ =~& \mu^{\xi,+,-}_{\Lambda_{M}\setminus(\Gamma_2\cup A)}(\mathcal{Y}^+\cap\{\gamma_2\leftrightarrow\mathcal{P}_0\})\mu^{\xi,+,-}_{\Lambda_{M}\setminus(\Gamma_1\cup A)}(\mathcal{A})\nonumber\\
    \geq ~& C_3\mu^{\xi,+,-}_{\Lambda_{M}\setminus(\Gamma_2\cup A)}(\mathcal{Y}^+\cap\{\gamma_2\leftrightarrow\mathcal{P}_0\}).\label{eq: plus boundary condition connectivity comparison inequality 2}
\end{align}Plugging \eqref{eq: mixing property in two rare events given different region} and \eqref{eq: plus boundary condition connectivity comparison inequality 2} into \eqref{eq: cluster 1 exploration in two rare event lower bound 1} and \eqref{eq: cluster 1 exploration in two rare event lower bound 2}, we get that \begin{align}
    &\mu^{\xi,+}_{\Lambda_{M}\setminus\Gamma_1}( \mathcal{Y}\cap\{\gamma_1\leftrightarrow\mathcal{P}_0\}\cap\{\mcc C_1\cap\Lambda_{0.95M}=\emptyset\})\nonumber\\ \ge~& C_2\sum_{A\in\mathfrak{C}_1} \mu^{\xi,+}_{\Lambda_{M}\setminus\Gamma_2}(\mathcal{C}_1=A)\times C_3 \mu^{\xi,+,-}_{\Lambda_{M}\setminus(\Gamma_2\cup A)}(\mathcal{Y}^+\cap\{\gamma_1\leftrightarrow\mathcal{P}_0\})\nonumber \\=~& C_2C_3
\mu^{\xi,+}_{\Lambda_{M}\setminus\Gamma_2}( \mathcal{Y}\cap\{\gamma_2\leftrightarrow\mathcal{P}_0\}\cap\{\mcc C_1\cap\Lambda_{0.95M}=\emptyset\}).\label{eq: two rare event different region comparison conditioned on trapped}
\end{align}
Combining \eqref{eq: two rare event different region comparison conditioned on trapped} with \eqref{eq: P_1 cluster trapped lower bound with single condition}, we get that \begin{align}
    &\mu^{\xi,+}_{\Lambda_{M}\setminus\Gamma_1}(\mathcal{Y}\cap\{\gamma_1\leftrightarrow\mathcal{P}_0\})\ge \mu^{\xi,+}_{\Lambda_{M}\setminus\Gamma_1}( \mathcal{Y}\cap\{\gamma_1\leftrightarrow\mathcal{P}_0\}\cap\{\mathcal{P}_1\not\leftrightarrow \Lambda_{0.95M}\})\nonumber\\ \ge& C_2C_3\mu^{\xi,+}_{\Lambda_{M}\setminus\Gamma_2}( \mathcal{Y}\cap\{\gamma_2\leftrightarrow\mathcal{P}_0\}\cap\{\mathcal{P}_1\not\leftrightarrow \Lambda_{0.95M}\})\ge C_1C_2C_3 
    \mu^{\xi,+}_{\Lambda_{M}\setminus\Gamma_2}( \mathcal{Y}\cap\{\gamma_2\leftrightarrow\mathcal{P}_0\}).
\end{align}Thus we finish the proof of Lemma~\ref{lem: two rare event probability with different interface explored}.
\end{proof}

\begin{proof}[Proof of Lemma \ref{lem: two rare event probability with interface and ball-X=1}]
    The proof is very similar to that of Lemma~\ref{lem: two rare event probability with interface and ball}. So we just provide a sketch and point out the differences here.
    With the notations in Definition~\ref{def:decompose-Ball} and the proof of Lemma~\ref{lem: two rare event probability with interface and ball}, it suffices to show that $$\mu_{\Lambda\setminus\Gamma}^{\xi,\eta_I^p,+}(\mcc Y\cap\{\gamma\leftrightarrow\mcc P_0\})\leq c_9\mu_{\Lambda\setminus\Gamma}^{\xi,\eta_I^{p-1},+}(\mcc Y\cap\{\gamma\leftrightarrow\mcc P_0\}).$$
    As in the proof of Lemma~\ref{lem: two rare event probability with interface and ball}, we want to explore the configuration $\hat\xi$ outside $B^{10}$. Recall the definitions of  the push forward sets and events $\hat{\mcc P_0},\hat{\mcc P_1},\hat{\mcc U}$ and $\hat{\mcc Y}$ from Lemma~\ref{lem: two rare event probability with interface and ball}. We further define the push forward of $\{\gamma\leftrightarrow\mcc P_0\},$ to be
    $$\hat{\mcc T}=\{\gamma\leftrightarrow\hat{\mcc P_0}\mbox{ in }B^{10}\}\cup\{\gamma\leftrightarrow\hat{\mcc P_0}\mbox{ in }\hat\xi\}.$$
    As in the proof of Lemma~\ref{lem: two rare event probability with interface and ball}, we only have to show that for some universal constant
    $C>0$, the following inequalities hold:
\begin{align}
&\mu_{\Lambda_M\setminus\Gamma}^{\xi,+}(\hat\xi)\geq C\mu_{\Lambda_M\setminus\Gamma}^{\xi,\eta^B,+}(\hat\xi);\label{proof of 538-0}\\
   & \mu_{B^{10}\setminus\Gamma}^{\hat\xi,\eta^B,+}(\mcc A_1'\mid\hat{\mcc Y}\cap\hat{\mcc T})\geq C;\label{proof of 538-1}\\
   &\mu_{ B^{10}\setminus\Gamma}^{\hat\xi,\eta^B,+}(\mcc A_2'\mid \mcc A_1'\cap \hat{\mcc Y}\cap\hat{\mcc T})\geq C;\label{proof of 538-2}\\
   &\mu_{ B^{10}\setminus\Gamma}^{\hat\xi,+}(\mcc A_2'\cap \hat{\mcc Y}\cap\hat{\mcc T})\geq C\mu_{ B^{10}\setminus\Gamma}^{\hat\xi,\eta^B,+}(\mcc A_2'\cap \hat{\mcc Y}\cap \hat{\mcc T})\label{proof of 538-3},
\end{align}
where $\mcc A_1',\mcc A_2'$ are the events $\mcc A_1,\mcc A_2$ (recall the definitions from the proof of Lemma~\ref{lem: two rare event probability with interface and ball}) restricted in $B^{10}\setminus\Gamma,$ i.e.,
\begin{align*}
  \mcc A_1':=\{&\mbox{the connected component of }\hat{\mcc P}_1 \mbox{ does not intersect with } B^9 \mbox{ in }B^{10}\setminus\Gamma\},\\
  \mcc A_2':=\{&\mbox{there exists } S\subset B^9\setminus (B^8\cup\Gamma)\mbox{ such that }\sigma_v=1, \forall v\in S\\
  &\mbox{ and any path from }\partial B^9\mbox{ to }B^8 \mbox{ must intersect with } S \}.
\end{align*}

The reasons why \eqref{proof of 538-0} to \eqref{proof of 538-2} hold are the same as those for \eqref{proof of 177-0} to \eqref{proof of 177-2}. Next, we show \eqref{proof of 538-3}: similar to \eqref{proof of 177-3}, it still comes from the mixing property of Lemma~\ref{lem: spatial mixing of Ising model} and the fact that whether $\mcc A_2'\cap \hat{\mcc Y}\cap \hat{\mcc T}$ holds only relies on the configuration outside $B^8$. The only difference is that we now have a new event $\hat{\mcc T},$ but note that $\gamma$ is a circuit in $\Lambda_{M/2}\setminus\Lambda_{M/4},$ therefore either $\gamma$ lies outside $(B^8)^c$, or it intersects the set $S$ from $\mcc A_2'.$ In both cases, the connecting event does not depend on the configurations inside $B^8.$
\end{proof}

\appendix

\section{Mixing of critical Ising model}
In this section, we prove the following spatial mixing result for the critical planar Ising model that was first introduced in \cite[Cor 5.2]{armExponentsIsing}. 

\begin{lemma}\label{lem: spatial mixing of Ising model}
    Let $K<N$ be two integers.  Then there exists an absolute constant $c>0$ such that for any event $\mathcal{E}$ supporting on $\Lambda_K$ and any $\xi_1,\xi_2$ be two boundary conditions on $\partial\Lambda_N$, we have \begin{equation}
        \Big|\frac{\mu^{\xi_1}_{\Lambda_N}(\mathcal{E})}{\mu^{\xi_2}_{\Lambda_N}(\mathcal{E})}-1\Big|\le c(\frac{N}{K})^{-\frac{1}{8}}.\label{eq: spatial mixing of Ising model}
    \end{equation}
\end{lemma}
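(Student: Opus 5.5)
By monotonicity it suffices to compare each boundary condition with the extremes. Fix $\xi_1,\xi_2$ and write $\mu^{\xi}=\mu^{\xi}_{\Lambda_N}$. The plan is to show that for every $\xi$ and every $\mathcal E$ supported on $\Lambda_K$,
\[
\mu^{\xi}(\mathcal E)=\mu^{+}_{\Lambda_N}(\mathcal E)\bigl(1+O((N/K)^{-1/8})\bigr),
\]
with the same statement with $+$ replaced by $-$. Comparing $\xi_1$ and $\xi_2$ through a common reference then gives \eqref{eq: spatial mixing of Ising model}.

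The tool is the FK-Ising representation, via the Edwards--Sokal coupling of Section~\ref{sec: extended ising}. One first reduces to $N\ge C_0K$, since otherwise the claim is trivial once $c$ is large. Set $L=\sqrt{NK}$, which is a geometric middle scale. Consider the event $\mathcal O$ that there is an open FK circuit in $\Lambda_L\setminus\Lambda_{K}$ which is \emph{not} connected to $\partial\Lambda_N$. On such a circuit the spin is $\pm1$ by an independent fair coin, independent of the boundary. Conditionally on the outermost such circuit $\gamma$ and its sign, the DMP shows that the law inside $\gamma$ is $\mu^{\pm}_{\mathrm{int}\,\gamma}$, whatever $\xi$ is.

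The core estimate is that the failure of such decoupling has probability $O((N/K)^{-1/8})$ uniformly in $\xi$. To get it, split the failure into two cases. In the first case there is an open path from $\partial\Lambda_K$ to $\partial\Lambda_N$. By CBC and the one-arm bound \eqref{eq: FK one arm event exponent}, used in the annulus form as in \eqref{eq: FK dual one arm with free boundary 1}, this has probability $\lesssim (N/K)^{-1/8}$. In the second case there is no open circuit around $\Lambda_K$ within $\Lambda_L$. This is a dual crossing, which has probability $\lesssim (L/K)^{-c'}$ by RSW \eqref{eq:RSW}, \eqref{eq:dual-RSW} iterated over dyadic scales.

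The exponent here is the main obstacle. RSW iterated over dyadic scales gives only some small power, not $1/8$. To get exactly $1/8$ I would instead follow \cite[Cor.~5.2]{armExponentsIsing}. The idea is to couple $\mu^{\xi}$ with $\mu^{+}$ via the wired FK measure, so that the spin configurations in $\Lambda_K$ agree unless $\Lambda_K$ is connected to $\partial\Lambda_N$ in the wired configuration. The disagreement probability is then bounded directly by the one-arm estimate $\phi^{\w}(\Lambda_K\leftrightarrow\partial\Lambda_N)\lesssim (N/K)^{-1/8}$. The latter follows from \eqref{eq: FK one arm event exponent} together with quasi-multiplicativity, which comes from RSW and FKG as in \eqref{eq: FK annulus crossing probability step one}.

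It remains to pass from an additive bound to the ratio bound stated. The additive bound is $|\mu^{\xi_1}(\mathcal E)-\mu^{\xi_2}(\mathcal E)|\le c(N/K)^{-1/8}\,\mu^{\cdot}(\mathcal E)$, and it must be multiplicative in $\mu^{\cdot}(\mathcal E)$. For this I would use the decoupling at the scale $L$ already described. Conditioning on the outermost unconnected circuit $\gamma$ gives
\[
\mu^{\xi}(\mathcal E)=\sum_\gamma \mu^{\xi}(\gamma)\tfrac12\bigl(\mu^+_{\mathrm{int}\gamma}(\mathcal E)+\mu^-_{\mathrm{int}\gamma}(\mathcal E)\bigr)+\mu^{\xi}(\mathcal E\cap \mathrm{Bad}).
\]
In the first term only the law of $\gamma$ depends on $\xi$. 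By a second round of coupling between $\Lambda_L$ and $\Lambda_N$, this law of $\gamma$ is comparable in total variation up to $(N/L)^{-1/8}$. For the bad term, bound $\mu^{\xi}(\mathcal E\cap\mathrm{Bad})\le C(N/K)^{-1/8}\mu^{\xi}(\mathcal E)$ by exploring from outside down to $\partial\Lambda_{L}$ and applying the quasi-independence of the annulus arm event from events inside $\Lambda_K$, in the same way as the DMP/CBC step in \eqref{eq: FK dual one arm CBC to a smaller region}. Combining the two terms, and adjusting exponents by choosing the scales appropriately, yields \eqref{eq: spatial mixing of Ising model}.
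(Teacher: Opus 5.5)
\textbf{There is a genuine gap, and it sits exactly where the lemma is nontrivial.} The statement is a \emph{ratio} bound that must hold uniformly over all events $\mathcal E$ supported on $\Lambda_K$, including extremely rare ones. Nothing in your outline actually produces multiplicative control.

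The coupling step only gives the additive bound $|\mu^{\xi_1}(\mathcal E)-\mu^{\xi_2}(\mathcal E)|\lesssim (K/N)^{1/8}$. The decomposition over the outermost unconnected circuit $\gamma$ then just moves the ratio problem elsewhere; it fails in three places.

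\emph{The main term.} You only claim that the laws of $\gamma$ under $\xi_1$ and $\xi_2$ are close in total variation. With $f(\gamma)=\tfrac12(\mu^+_{\mathrm{int}\,\gamma}(\mathcal E)+\mu^-_{\mathrm{int}\,\gamma}(\mathcal E))$, this bounds $\sum_\gamma(\mu^{\xi_1}(\gamma)-\mu^{\xi_2}(\gamma))f(\gamma)$ only by $\epsilon\max_\gamma f(\gamma)$. But $f$ can vary by factors like $e^{\Theta(K)}$ over admissible $\gamma$: take $\mathcal E=\{\sigma|_{\Lambda_K}\equiv-1\}$ and compare $\gamma$ adjacent to $\partial\Lambda_K$ with $\gamma$ near $\partial\Lambda_L$. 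What you would need is a pointwise bound $\mu^{\xi_1}(\gamma)/\mu^{\xi_2}(\gamma)=1+O(\epsilon)$. That is again ratio mixing, and for an event (``$\gamma$ is not connected to $\partial\Lambda_N$'') that is not even supported away from $\partial\Lambda_N$.

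\emph{The bad term.} The bound $\mu^{\xi}(\mathcal E\cap\mathrm{Bad})\le C(N/K)^{-1/8}\mu^{\xi}(\mathcal E)$ for arbitrary non-monotone $\mathcal E$ cannot come from the DMP/CBC step you cite, because that step conditioned on a monotone event. ``Quasi-independence'' of the arm event from an arbitrary event inside $\Lambda_K$ is precisely what the lemma is supposed to deliver, so the argument is circular.

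\emph{The exponent.} Even granting everything, one intermediate scale $L$ yields errors $(K/L)^{1/8}$ (no circuit between $K$ and $L$) and $(L/N)^{1/8}$ (coupling from $L$ to $N$). That gives at best $(K/N)^{1/16}$, not $1/8$.

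\textbf{The missing idea is to work pointwise in the inner configuration and compare partition functions exactly, rather than probabilities approximately.} The paper proceeds as follows.
\begin{enumerate}
\item Fix $\eta\in\{\pm1\}^{\Lambda_K}$. Using $\bar{\mathcal Z}(\xi_1;\xi_2)=\bar{\mathcal Z}(\xi_2;\xi_1)$, write
$\mu^{\xi_1}_{\Lambda_N}(\sigma|_{\Lambda_K}=\eta)/\mu^{\xi_2}_{\Lambda_N}(\sigma|_{\Lambda_K}=\eta)=\bar{\mathcal Z}(\xi_1,\eta;\xi_2)/\bar{\mathcal Z}(\xi_2,\eta;\xi_1)$.
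Each side is a joint partition function of a pair of extended Ising configurations: one on $\Lambda_N\setminus\Lambda_K$ with data $(\xi_i,\eta)$, one on $\Lambda_N$ with $\xi_{3-i}$.
\item Let $\mathcal A$ be the event that an agreement $*$-circuit separates $\Lambda_K$ from $\partial\Lambda_N$. On $\mathcal A$, the Aizenman--Harel--Peled swapping identity gives $\bar{\mathcal Z}(\xi_1,\eta;\xi_2)[\mathcal A]=\bar{\mathcal Z}(\xi_2,\eta;\xi_1)[\mathcal A]$ \emph{exactly}.
\item Hence the ratio above equals a ratio of two product-measure probabilities of $\mathcal A$.
\item The complement $\mathcal A^c$ is a pre-disagreement crossing. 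It is monotone in the pair, so by CBC it is bounded uniformly in $\xi_1,\xi_2,\eta$ by its value under extremal boundary conditions. That value is $\lesssim (K/N)^{1/8}$ by the one-arm estimate.
\item Summing the resulting pointwise bound over $\eta\in\mathcal E$ gives the ratio bound for every $\mathcal E$, with no intermediate scale and no loss in the exponent.
\end{enumerate}
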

Applying Lemma \ref{lem: spatial mixing of Ising model} twice gives the following Corollary.
\begin{corollary}\label{cor: spatial mixing of Ising model}
    Let $M_1<M_3<M_4<M_2$ be four integers. Let $\mathcal{E}$ be an event supported on $\Lambda_{M_3,M_4}$. Then there exists an absolute constant $c>0$ such that for any boundary conditions $\xi,\xi'$ on $\partial\Lambda_N$ and any boundary condition $\nu,\nu'$ on $\partial\Lambda_M$, we have\begin{equation}
        \Big|\frac{\mu^{\nu,\xi}_{\Lambda_{M_1,M_2}}(\mathcal{E})}{\mu^{\nu',\xi'}_{\Lambda_{M_1,M_2}}(\mathcal{E})}-1\Big|\le c(\frac{M_3}{M_1})^{-\frac{1}{8}}+c(\frac{M_2}{M_4})^{-\frac{1}{8}}.\label{eq: spatial mixing of Ising model in annulus}
    \end{equation}
\end{corollary}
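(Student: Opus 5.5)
The corollary follows by changing one boundary condition at a time and multiplying the two ratios:
\[
\frac{\mu^{\nu,\xi}_{\Lambda_{M_1,M_2}}(\mathcal E)}{\mu^{\nu',\xi'}_{\Lambda_{M_1,M_2}}(\mathcal E)}
=\frac{\mu^{\nu,\xi}_{\Lambda_{M_1,M_2}}(\mathcal E)}{\mu^{\nu,\xi'}_{\Lambda_{M_1,M_2}}(\mathcal E)}\cdot
\frac{\mu^{\nu,\xi'}_{\Lambda_{M_1,M_2}}(\mathcal E)}{\mu^{\nu',\xi'}_{\Lambda_{M_1,M_2}}(\mathcal E)}.
\]
I will show that the first factor lies in $[1-a,1+a]$ with $a=c(M_2/M_4)^{-1/8}$, and that the second lies in $[1-b,1+b]$ with $b=c(M_3/M_1)^{-1/8}$. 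Then $|(1\pm a)(1\pm b)-1|\le a+b+ab$. If $a,b\le 1$ this is at most $2(a+b)$, which proves \eqref{eq: spatial mixing of Ising model in annulus} after enlarging $c$. If $a>1$ or $b>1$, then $M_2/M_4$ or $M_3/M_1$ is bounded, and the bound \eqref{eq: spatial mixing of Ising model in annulus} needs a separate trivial argument there (the DMP shows both probabilities in the ratio are comparable up to a constant), so I restrict to large ratios.

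\emph{Outer step (changing $\xi$ to $\xi'$, with $\nu$ fixed).} This step reduces directly to Lemma~\ref{lem: spatial mixing of Ising model}. By the DMP, $\mu^{\nu,\xi}_{\Lambda_{M_1,M_2}}$ is the box measure $\mu^{\xi}_{\Lambda_{M_2}}$ conditioned on $\mathcal N_\nu:=\{\sigma|_{\partial\Lambda_{M_1+1}}=\nu\}$, restricted to the annulus. Hence
\[
\mu^{\nu,\xi}_{\Lambda_{M_1,M_2}}(\mathcal E)=\frac{\mu^{\xi}_{\Lambda_{M_2}}(\mathcal E\cap\mathcal N_\nu)}{\mu^{\xi}_{\Lambda_{M_2}}(\mathcal N_\nu)}.
\]
Both $\mathcal E\cap\mathcal N_\nu$ and $\mathcal N_\nu$ are supported on $\Lambda_{M_4}$, since $M_1<M_4$. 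Applying Lemma~\ref{lem: spatial mixing of Ising model} with $K=M_4$ and $N=M_2$ to the numerator and to the denominator shows that each changes by a factor in $[1-a',1+a']$ when $\xi$ is replaced by $\xi'$, where $a'=c(M_2/M_4)^{-1/8}$. The quotient therefore changes by a factor in $[\tfrac{1-a'}{1+a'},\tfrac{1+a'}{1-a'}]$. This gives the first factor, with $a=3a'$ once $a'\le 1/2$.

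\emph{Inner step (changing $\nu$ to $\nu'$, with $\xi'$ fixed).} This is the step I expect to be the main obstacle, because Lemma~\ref{lem: spatial mixing of Ising model} is stated only for an outer boundary and an event supported in the interior. Here the roles are reversed: the boundary that varies is the inner one, and $\mathcal E$ is supported outside $\Lambda_{M_3}$. I would first try the same conditioning trick, now fixing the outer configuration. By the DMP,
\[
\mu^{\nu,\xi'}(\mathcal E)=\sum_{\zeta}\mu^{\nu,\xi'}\bigl(\sigma|_{\partial\Lambda_{M_4}}=\zeta\bigr)\,\mu^{\nu,\zeta}_{\Lambda_{M_1,M_4}}(\mathcal E).
\]
This decomposition does not make $\nu$ disappear. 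So the intended route is to rerun the proof of Lemma~\ref{lem: spatial mixing of Ising model} (the argument of \cite[Cor 5.2]{armExponentsIsing}) with inside and outside exchanged.

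Concretely, use the Edwards--Sokal coupling and explore from the inner boundary $\partial\Lambda_{M_1+1}$ outwards. If there is a dual FK circuit in $\Lambda_{M_3}\setminus\Lambda_{M_1}$ surrounding $\Lambda_{M_1}$, then, conditionally on the innermost such circuit, the configuration outside it does not depend on $\nu$. The law of the region between that circuit and $\partial\Lambda_{M_2}$ is then the free-inner-boundary law, the same for $\nu$ and $\nu'$. The failure of such a circuit is an FK crossing from $\partial\Lambda_{M_1+1}$ to $\partial\Lambda_{M_3}$. By CBC its probability is at most its probability under wired boundary conditions, which is bounded by $b'=c(M_3/M_1)^{-1/8}$ using \eqref{eq: FK circuit connected to boundary final}-type one-arm bounds from \eqref{eq: FK one arm event exponent}, as in \eqref{eq: FK dual one arm with free boundary 1}.

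To get a \emph{multiplicative} (rather than additive) error, I would follow the lemma's proof and compare $\mu^{\nu}(\mathcal E)$ and $\mu^{\nu'}(\mathcal E)$ through the common decoupled law, using that the coupling error event has probability at most $b'$ \emph{conditionally} on anything measurable outside $\Lambda_{M_3}$. This uniform conditional bound is exactly what is used in \cite{armExponentsIsing}; it holds uniformly in the outer configuration by CBC. Its output is the second factor with $b=O(b')$.

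Combining the two steps as above completes the proof.
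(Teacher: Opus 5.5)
Your decomposition (change $\xi$ to $\xi'$, then $\nu$ to $\nu'$, and multiply) is what the paper's one-line proof ``apply Lemma~\ref{lem: spatial mixing of Ising model} twice'' means. Your outer step is correct. Writing $\mu^{\nu,\xi}_{\Lambda_{M_1,M_2}}(\mathcal E)=\mu^{\xi}_{\Lambda_{M_2}}(\mathcal E\cap\mathcal N_\nu)/\mu^{\xi}_{\Lambda_{M_2}}(\mathcal N_\nu)$ and applying the lemma with $K=M_4$, $N=M_2$ to numerator and denominator is a clean reduction to the lemma as stated.

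The gap is in the inner step. Exploring the innermost dual FK circuit $\gamma$ in a \emph{single} configuration does not couple $\mu^{\nu,\xi'}$ with $\mu^{\nu',\xi'}$. Given $\gamma$, the law outside is the free-inner-boundary measure on the exterior of $\gamma$, but that law depends on $\gamma$, and the law of $\gamma$ depends on $\nu$. So your sentence ``the same for $\nu$ and $\nu'$'' holds only for a fixed $\gamma$. No comparison of $\mu^{\nu,\xi'}(\mathcal E)$ with $\mu^{\nu',\xi'}(\mathcal E)$ follows, not even an additive one. The ``common decoupled law'' on which your multiplicative upgrade rests is not produced by this construction, so the upgrade is asserted rather than proved.

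What is missing is a joint comparison of the two measures. This is exactly what the proof of Lemma~\ref{lem: spatial mixing of Ising model} provides, and it transfers to the inner boundary with inside and outside exchanged; the paper's one-line proof leaves this implicit. Conditioning on $\sigma|_{\Lambda_{M_3,M_4}}=\eta$ disconnects the annulus, and the DMP gives
\[
\frac{\mu^{\nu,\xi'}_{\Lambda_{M_1,M_2}}(\sigma|_{\Lambda_{M_3,M_4}}=\eta)}{\mu^{\nu',\xi'}_{\Lambda_{M_1,M_2}}(\sigma|_{\Lambda_{M_3,M_4}}=\eta)}
=\frac{\bar{\mathcal Z}^{\nu,\eta}_{\Lambda_{M_1,M_3}}\,\bar{\mathcal Z}^{\nu',\xi'}_{\Lambda_{M_1,M_2}}}{\bar{\mathcal Z}^{\nu',\eta}_{\Lambda_{M_1,M_3}}\,\bar{\mathcal Z}^{\nu,\xi'}_{\Lambda_{M_1,M_2}}}.
\]
This has the same form as $\bar{\mathcal Z}(\xi_1,\eta;\xi_2)/\bar{\mathcal Z}(\xi_2,\eta;\xi_1)$ in the lemma.

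On the event that the pair of extended configurations has an agreement $*$-circuit in $\Lambda_{M_3}\setminus\Lambda_{M_1}$ surrounding $\Lambda_{M_1}$, apply the swapping identity of \cite[Lemma~3.2]{AHP20}. Exchanging the two configurations between $\partial\Lambda_{M_1+1}$ and the circuit makes the two restricted joint partition functions equal. The complementary event is a pre-disagreement crossing of $\Lambda_{M_3}\setminus\Lambda_{M_1}$. By CBC and \cite[(4.2)]{DHX23}, its probability under the product measure is at most $C(M_1/M_3)^{1/8}$, \emph{uniformly in $\eta$}. This uniformity in $\eta$ is what makes the error multiplicative after summing over $\eta\in\mathcal E$.

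Finally, your claim that bounded ratios are handled by a trivial DMP comparison is false. Take $M_3=M_1+1$, $\nu\equiv+$, $\nu'\equiv-$, and let $\mathcal E$ be the all-plus or the all-minus configuration on the layer adjacent to $\partial\Lambda_{M_1+1}$. For one of these two events and one ordering of $(\nu,\nu')$, the ratio is at least $e^{cM_1}$. The corollary, like the lemma, is only true, and only used, when $M_3/M_1$ and $M_2/M_4$ exceed a large constant, so you should simply restrict to that regime.
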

In order to prove Lemma~\ref{lem: spatial mixing of Ising model}, we first introduce the disagreement Ising model. Recall the definition of the extended Ising model in Section~\ref{sec: extended ising}.

For a finite graph $G$, let $\bar{\sigma}^+$ and $\bar{\sigma}^-$ be two extended Ising configurations on $\bar G$. The \textbf{pre-disagreement set} of $(\bar \sigma^+, \bar \sigma^-)$ is defined as
\begin{equation*}
\mathcal{D}:=\mathcal{D}(\bar{\sigma}^+,\bar{\sigma}^-) = \{u \in \bar{G} : \bar{\sigma}_u^+>\bar{\sigma}_u^- \}.
\end{equation*} For $u\in \bar G$, we call it a pre-disagreement  if it is in the pre-disagreement set. We also call $u$ an agreement if $\bar{\sigma}_u^+=\bar{\sigma}_u^-.$ 

\begin{proof}[Proof of Lemma~\ref{lem: spatial mixing of Ising model}]
	For the proof, we use the disagreement Ising model. We first prove that for any configuration $\eta$ on $\Lambda_K$, we have \begin{equation}
	    \frac{\mu_{\Lambda_N}^{\xi_1}(\sigma|_{\Lambda_K}=\eta)}{\mu_{\Lambda_N}^{\xi_2}(\sigma|_{\Lambda_K}=\eta)}\le 1+C (\frac{N}{K})^{-\frac{1}{8}}.\label{eq: spatial mixing step 0}
	\end{equation}
	Now we define the joint partition function for $\bar\mu_{\Lambda_N\setminus\Lambda_K}^{\xi_1,\eta}$ and $\bar\mu_{\Lambda_N}^{\xi_2}$ as
	$$\bar{\mathcal Z}(\xi_1,\eta;\xi_2)=\bar{\mathcal Z}_{\Lambda_N\setminus\Lambda_K}^{\xi_1,\eta}\cdot \bar{\mathcal Z}^{\xi_2}_{\Lambda_N}=\sum_{\bar\sigma\in\Xi(\Lambda_N\setminus\Lambda_K)}\sum_{\bar\sigma'\in\Xi(\Lambda_N)}\exp\left[-\beta_c\left(\bar H^{\xi_1,\eta}(\bar\sigma)+H^{\xi_2}(\bar\sigma')\right)\right],$$ where $\Xi(\cdot)$ denotes the configuration space for the extended Ising model;
	similarly we could define $\bar{\mathcal Z}(\xi_2,\eta;\xi_1).$ We also define the joint partition function for $\bar\mu_{\Lambda_N}^{\xi_1}$ and $\bar\mu_{\Lambda_N}^{\xi_2}$ as $$\bar{\mathcal Z}(\xi_1;\xi_2)=\bar{\mathcal Z}_{\Lambda_N}^{\xi_1}\cdot \bar{\mathcal Z}^{\xi_2}_{\Lambda_N}=\sum_{\bar\sigma\in\Xi(\Lambda_N)}\sum_{\bar\sigma'\in\Xi(\Lambda_N)}\exp\left[-\beta_c\left(\bar H^{\xi_1}(\bar\sigma)+H^{\xi_2}(\bar\sigma')\right)\right],$$

	Note that 
	$$\mu_{\Lambda_N}^{\xi_1}(\sigma|_{\Lambda_K}=\eta)=\frac{\bar{\mathcal Z}(\xi_1,\eta;\xi_2)}{\bar{\mathcal Z}(\xi_1;\xi_2)},\quad \mu_{\Lambda_N}^{\xi_2}(\sigma|_{\Lambda_K}=\eta)=\frac{\bar{\mathcal Z}(\xi_2,\eta;\xi_1)}{\bar{\mathcal Z}(\xi_2;\xi_1)}.$$
	In addition, we have by symmetry
	$$\bar{\mathcal Z}(\xi_1;\xi_2)=\bar{\mathcal Z}(\xi_2;\xi_1).$$
	
	As a consequence, 
	$$\frac{\mu_{\Lambda_N}^{\xi_1}(\sigma|_{\Lambda_K}=\eta)}{\mu_{\Lambda_N}^{\xi_2}(\sigma|_{\Lambda_K}=\eta)}=\frac{\bar{\mathcal Z}(\xi_1,\eta;\xi_2)}{\bar{\mathcal Z}(\xi_2,\eta;\xi_1)}.$$
	
	Now, let 
	\begin{align*}
		\mathcal A:=\{(\bar\sigma,\bar\sigma')\in\Xi(\Lambda_N\setminus\Lambda_K)\otimes\Xi(\Lambda_N):&\mbox{ there exists an agreement *-circuit}\\  & \mbox{ separating }\partial\Lambda_N \mbox{ and }\Lambda_K \}.
	\end{align*}
	Then, by planar duality, we get that
		\begin{align*}
		\mathcal A^c=\{(\bar\sigma,\bar\sigma')\in\bar\Sigma(\Lambda_N\setminus\Lambda_K)\otimes\bar\Sigma(\Lambda_N):&\mbox{ there exists an pre-disagreement path }\\  & \mbox{ connecting }\partial\Lambda_N \mbox{ and }\Lambda_K \}.
	\end{align*}
	Now we define
	$$\bar{\mathcal Z}(\xi_1,\eta;\xi_2)[\mathcal A]:=\sum_{(\bar\sigma,\bar\sigma')\in\mathcal A}\exp\left[-\beta_c\left(\bar H^{\xi_1,\eta}(\bar\sigma)+H^{\xi_2}(\bar\sigma')\right)\right],$$
	and similarly $\bar{\mathcal Z}(\xi_2,\eta;\xi_1)[\mathcal A].$
	By the swapping identity of \cite[Lemma 3.2]{AHP20}, we have 
	$$\bar{\mathcal Z}(\xi_1,\eta;\xi_2)[\mathcal A]=\bar{\mathcal Z}(\xi_2,\eta;\xi_1)[\mathcal A].$$
	Moreover, we get from CBC that 
	\begin{align*}
		&\left|\frac{\bar{\mathcal Z}(\xi_1,\eta;\xi_2)[\mathcal A]}{\bar{\mathcal Z}(\xi_1,\eta;\xi_2)}-1\right|=\left|\bar{\mu}^{\xi_1,\eta}_{{\Lambda_N\setminus\Lambda_K}}\otimes \bar{\mu}^{\xi_2}_{{\Lambda_N}}(\mathcal A)-1\right|=\bar{\mu}^{\xi_1,\eta}_{{\Lambda_N\setminus\Lambda_K}}\otimes \bar{\mu}^{\xi_2}_{{\Lambda_N}}(\mathcal A^c)\le \bar{\mu}^{+/-,+/-}_{{\Lambda_N\setminus\Lambda_K}}(\mathcal A^c).
	\end{align*}Combined with \cite[(4.2)]{DHX23} and RSW, we get that \begin{align}
	    \left|\frac{\bar{\mathcal Z}(\xi_1,\eta;\xi_2)[\mathcal A]}{\bar{\mathcal Z}(\xi_1,\eta;\xi_2)}-1\right|\le\bar{\mu}^{+/-,+/-}_{{\Lambda_N\setminus\Lambda_K}}(\mathcal A^c)= 1-\left(\frac{1-\phi_{\Lambda_N\setminus\Lambda_K}^{w,w}(\partial\Lambda_N\leftrightarrow\partial\Lambda_K)}{1+\phi_{\Lambda_N\setminus\Lambda_K}^{w,w}(\partial\Lambda_N\leftrightarrow\partial\Lambda_K)}\right)^2
		\leq C\left(\frac{K}{N}\right)^{\frac{1}{8}}.\label{eq: spatial mixing step 1}
	\end{align}Applying \eqref{eq: spatial mixing step 1} twice, we get that \begin{align}
	    \frac{\bar{\mathcal Z}(\xi_1,\eta;\xi_2)}{\bar{\mathcal Z}(\xi_2,\eta;\xi_1)}=\frac{\bar{\mathcal Z}(\xi_1,\eta;\xi_2)}{\bar{\mathcal Z}(\xi_1,\eta;\xi_2)[\mathcal{A}]}\cdot \frac{\bar{\mathcal Z}(\xi_2,\eta;\xi_1)[\mathcal{A}]}{\bar{\mathcal Z}(\xi_2,\eta;\xi_1)}\le \Big(1+C\left(\frac{K}{N}\right)^{\frac{1}{8}}\Big)^2.\nonumber
	\end{align}Thus we finish the proof of \eqref{eq: spatial mixing step 0}. To finish the proof of \eqref{eq: spatial mixing of Ising model}, we get from \eqref{eq: spatial mixing step 0} again that \begin{equation}
	     \frac{\mu_{\Lambda_N}^{\xi_1}(\sigma|_{\Lambda_K}=\eta)}{\mu_{\Lambda_N}^{\xi_2}(\sigma|_{\Lambda_K}=\eta)}\ge \frac{1}{1+C (\frac{N}{K})^{-\frac{1}{8}}}\ge 1-C (\frac{N}{K})^{-\frac{1}{8}}.\label{eq: spatial mixing step 2}
	\end{equation}Combining \eqref{eq: spatial mixing step 0} and \eqref{eq: spatial mixing step 2} gives the desired result.
    
\end{proof}

\section{Estimates on the probability of two-arms events at criticality}

In this Appendix, we collect a few lemmas giving upper bounds on the probability of two-arm events in the critical Ising model. These lemmas are useful to control the probability that the critical interface gets close to a point, or to multiple points, and are instrumental in the proof of Theorem \ref{theorem-single-interface}.

\begin{lemma}\label{lemma-one-pt-estimate-interface-discrete}
    Let $0 < \rho \ll 1$. There exist $\delta_0 > 0$ and an absolute constant $C>0$ such that for any $0 < \delta < \delta_0$, any $x \in \delta \mathbb{Z}^2$ and any $r > 0$,
    \begin{equation*}
        \mu_{\delta, \Lambda_{\delta}}^{\pm}[\dist(x,\gamma_{\delta}) \leq r] \leq C \bigg( \frac{r}{\dist(x,\partial \Lambda_{\delta})}\bigg)^{\frac{5}{8}-\rho}
    \end{equation*}
    where $\Lambda_{\delta} \subset \delta\mathbb{Z}^2$ is any square centered at $x$ and where under $\mu_{\delta, \Lambda_{\delta}}^{\pm}$, the Ising model in $\Lambda_{\delta}$ has $-1$ boundary conditions on the bottom side of $\Lambda_{\delta}$ and $+1$ boundary conditions everywhere else.
\end{lemma}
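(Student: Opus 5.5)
We reduce the statement to a two-arm estimate for the critical Ising spin field in an annulus centred at $x$, and then invoke the arm exponents of \cite{armExponentsIsing}. Set $R:=\dist(x,\partial\Lambda_{\delta})$. We may assume $r\le R/C_0$ for a large constant $C_0$, since otherwise the right-hand side is at least a constant and the bound is trivial after enlarging $C$. We may also assume $r\ge\delta$, since the event is monotone in $r$ and at scale $\delta$ the bound for $r=\delta$ suffices.

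First we produce the arms. If $\dist(x,\gamma_{\delta})\le r$, the interface enters $\Lambda_r(x)$ (understood in $\delta\mathbb Z^2$). Because $\gamma_\delta$ starts and ends on $\partial\Lambda_\delta$, which lies at distance at least $R$ from $x$, it must cross the annulus $A:=\Lambda_{R/2}(x)\setminus\Lambda_r(x)$. Along this crossing, the vertices immediately to its left carry $+1$ spins and those to its right carry $-1$ spins. This gives a $+$ path and a $-$ $*$-path, each crossing $A$ from the inner to the outer boundary, and the two are disjoint. So the event is contained in the alternating two-arm event $\mathcal{A}_{+-}(r,R/2)$ in $A$.

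Next we remove the boundary conditions and apply the exponent. By the domain Markov property, condition on the spins outside $\Lambda_{R/2}(x)$; the arm event is then measurable with respect to the configuration inside this box, with an arbitrary boundary condition $\xi$ on $\partial\Lambda_{R/2}(x)$. We need the uniform bound
\[
\sup_{\xi}\mu^{\xi}_{\Lambda_{R/2}(x)}\big(\mathcal{A}_{+-}(r,R/2)\big)\le C\Big(\frac{r}{R}\Big)^{\frac58-\rho}.
\]
This is exactly the estimate already used in the proof of Lemma~\ref{lem: large deviation for Tk in sle}, namely \eqref{eq: interface two arm bound} with $\kappa_2=\rho$, quoted from \cite{armExponentsIsing}; the bulk polychromatic two-arm exponent there is $1/4$ for FK, but the spin $+/-$ two-arm exponent relevant to the interface is $5/8$. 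To make the bound uniform in $\xi$, we use the mixing of Lemma~\ref{lem: spatial mixing of Ising model}: restrict the arms to the smaller annulus $\Lambda_{R/C_1}(x)\setminus\Lambda_r(x)$, compare the measure with a fixed boundary condition at a cost of a factor $1+O(C_1^{-1/8})$, and absorb the loss of scale into the constant. Averaging over $\xi$ then gives the lemma.

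The main obstacle is to justify that the discrete arm exponent holds with loss at most $\rho$ uniformly in $\delta$, i.e.\ on the lattice and not only in the scaling limit. The plan is to cite \cite{armExponentsIsing}, which gives this up-to-$\rho$ form uniformly in scales $\ge\delta$ for $\delta<\delta_0$. If a quasi-multiplicativity step is needed there, it will be handled with RSW together with the same mixing lemma.
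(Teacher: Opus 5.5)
Your argument is essentially the paper's proof. The paper also contains $\{\dist(x,\gamma_\delta)\le r\}$ in the alternating two-arm event $A_2(x;r,\dist(x,\partial\Lambda_\delta))$ and cites \cite[Theorem~1.2]{armExponentsIsing} directly under the Dobrushin boundary condition, so your domain Markov plus mixing step is a harmless extra layer rather than a needed ingredient. Two cosmetic points, neither of which affects the proof: the reduction to $r\ge\delta$ really rests on $\gamma_\delta$ (drawn on the square-octagon lattice) staying at distance $\gtrsim\delta$ from $x$, so the event is empty for $r$ below a constant times $\delta$; and your aside that the FK polychromatic two-arm exponent is $1/4$ is wrong, since it is $1/3$ (cf.\ \cite{Wu18FK}), while $1/4$ is the percolation value.
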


\begin{proof}
     Let $0 < \rho \ll 1$, $r > 0$ and $x \in \delta \mathbb{Z}^2$. Let $\Lambda_{\delta} \subset \mathbb{Z}^2$ be a square centered at $x$. For $R > 0$, let us set $\Lambda_{R,\delta}(x) := \Lambda_{R}(x) \cap \delta \mathbb{Z}^2$, where $\Lambda_{R}(x)$ is the Euclidean square of side-length $2R$ centered at $x$. For $0 < R_1 < R_2$, we say that there is an arm of sign $+$, respectively $-$, in the annulus $\Lambda_{R_{2},\delta}(x) \setminus \Lambda_{R_{1},\delta}(x)$ if there is a nearest neighbor path joining $\partial \Lambda_{R_{1},\delta}(x)$ to $\partial \Lambda_{R_{2},\delta}(x)$ along which all vertices have sign $+1$, respectively $-1$. For $0 < R_1 < R_2$ and $x \in \delta \mathbb{Z}^2$, we denote by $A_2(x;R_1,R_2)$ the event that there exist two arms of opposite sign crossing $\Lambda_{R_{2},\delta}(x) \setminus \Lambda_{R_{1},\delta}(x)$. It is then easy to see that
     \begin{equation*}
         \mu_{\delta, \Lambda_{\delta}}^{\pm}[\dist(x,\gamma_{\delta}) \leq r] \leq \mu_{\delta, \Lambda_{\delta}}^{\pm}[A_2(x;r,\dist(x,\partial \Lambda_{\delta})].
     \end{equation*}
     By \cite[Theorem~1.2]{armExponentsIsing}, there exist $\delta_{0} > 0$ and an absolute constant $C>0$ such that
     \begin{equation*}
         \mu_{\delta, \Lambda_{\delta}}^{\pm}[A_2(x;r,\dist(x,\partial \Lambda_{\delta})] \leq C\bigg( \frac{r}{\dist(x,\partial \Lambda_{\delta})}\bigg)^{\frac{5}{8}-\rho},
     \end{equation*}
     which concludes the proof of the lemma.
\end{proof}

Moreover, using mixing results for the critical Ising model, one can use the one-point estimate of Lemma \ref{lemma-one-pt-estimate-interface-discrete} to obtain an upper bound on the probability that the interface in the critical Ising model gets close to $k$ distinct points, as established in the following lemma.

\begin{lemma}\label{lemma-k-pts-estimate-interface-discrete}
    Let $0 < \rho \ll 1$. There exist $\delta_0 > 0$ and an absolute constant $C>0$ such that for any $0 < \delta < \delta_0$, any $(x_1,\dots,x_k) \in D_{\delta}$ and any $(r_1,\dots,r_k) \in (0,\infty)$,
    \begin{equation*}
        \mu_{\delta}^{\pm}\bigg[ \bigcap_{j=1}^{k} \{ \dist(x_j, \gamma_{\delta}) \leq r_j \}\bigg] \leq C^k \prod_{j=1}^{k} \bigg(\frac{r_j}{L(x_j) \wedge d_{\delta}(x_j)}\bigg)^{\frac{5}{8}-\rho}
    \end{equation*}
    where we have set $d_{\delta}(x):= \dist(x,\partial D_{\delta})$ and $L(x_j):= \frac{1}{2} \min_{p: j \neq p} \vert x_j - x_p \vert$.
\end{lemma}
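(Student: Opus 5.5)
The plan is to reduce the $k$-point bound to $k$ one-point bounds of the type given by Lemma \ref{lemma-one-pt-estimate-interface-discrete}, one box per point, glued together by the domain Markov property and spatial mixing. For each $j$ set $R_j := \frac{1}{2}(L(x_j)\wedge d_{\delta}(x_j))$ and let $Q_j$ be the square of half side-length $R_j$ centred at $x_j$. These squares are pairwise disjoint and contained in $D_{\delta}$. If $r_j \geq c R_j$ for a fixed small constant $c$, the $j$-th factor is at least $c^{5/8}$ and can be absorbed into $C$. So I would discard those indices and assume $r_j < cR_j$ for all remaining $j$.

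On the event $\{\dist(x_j,\gamma_{\delta}) \le r_j\}$, the interface enters $\Lambda_{r_j}(x_j)$ while running between boundary points outside $Q_j$. Hence the two-arm event $A_2(x_j; r_j, R_j)$ holds: there is a $+$ arm and a $-$ arm crossing the annulus $Q_j\setminus \Lambda_{r_j}(x_j)$, namely the boundary spins on either side of $\gamma_\delta$. These events are local to disjoint annuli. I would reveal them sequentially: condition on the spin configuration outside $Q_k$, then on the configuration outside $Q_{k-1}\cup Q_k$, and so on.

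The key step is a uniform conditional bound. For any boundary condition $\xi$ on $\partial Q_j$, I want
\[
\mu_{Q_j}^{\xi}\big[A_2(x_j;r_j,R_j)\big] \le C (r_j/R_j)^{5/8-\rho}.
\]
I would obtain this from \cite[Theorem~1.2]{armExponentsIsing}, applied as in the proof of Lemma \ref{lemma-one-pt-estimate-interface-discrete}, in the slightly stronger form that the exponent holds uniformly in boundary conditions. Where uniformity is not directly available, I would use Lemma \ref{lem: spatial mixing of Ising model}. That lemma compares $\mu^{\xi}_{Q_j}$ with $\mu^{+}_{Q_j}$ on events supported in $\Lambda_{R_j/2}(x_j)$ up to a factor $1+O(2^{-1/8})$. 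Applying it to $A_2(x_j; r_j, R_j/2)$ costs only a constant, which is harmless.

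Multiplying the $k$ conditional bounds through the tower property gives $C^k\prod_j (r_j/R_j)^{5/8-\rho}$. Since $R_j$ is comparable to $L(x_j)\wedge d_\delta(x_j)$, this is the claimed bound.

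The main obstacle I expect is this uniformity in the boundary condition. Two-arm events are neither increasing nor decreasing, so CBC cannot be used to compare with a fixed boundary condition. Arbitrary $\xi$ on $\partial Q_j$ could in principle force arms near the outer boundary. My first attempt would be to shrink the outer radius to $R_j/2$ and then invoke the mixing lemma, which controls ratios for all events supported on the inner box. That turns the bound into one under a fixed $+$ boundary condition, where the arm exponent $5/8-\rho$ is available from \cite{armExponentsIsing}. A second, minor point is checking that $\delta_0$ and the constant $C$ do not depend on $k$; this holds because each factor is treated by the same one-box estimate.
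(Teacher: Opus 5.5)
Your proposal is correct and follows essentially the paper's route: bound each event by a two-arm event in an annulus around $x_j$ whose outer size is comparable to $L(x_j)\wedge d_\delta(x_j)$, decouple these via the domain Markov property, and use the mixing estimate of \cite[Corollary~5.2]{armExponentsIsing} (equivalently Lemma~\ref{lem: spatial mixing of Ising model}) to replace the arbitrary induced boundary condition by a fixed one before invoking the two-arm exponent $\frac58-\rho$. The differences are cosmetic. The paper compares with the bottom-minus/plus boundary condition so that it can quote Lemma~\ref{lemma-one-pt-estimate-interface-discrete} directly, whereas you compare with $+$ boundary conditions and take the exponent from \cite{armExponentsIsing}. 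Your explicit choice $R_j=\frac12\bigl(L(x_j)\wedge d_\delta(x_j)\bigr)$ also makes the decoupling squares genuinely disjoint.
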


\begin{proof}
    Let $0 < \rho \ll 1$, $k \in \mathbb{N}$, $(x_1,\dots,x_k) \in D_{\delta}^k$ and $(r_1,\dots,r_k) \in (0,\infty)^k$. Using the same notations as in the proof of Lemma \ref{lemma-one-pt-estimate-interface-discrete}, we have that
    \begin{equation} \label{ineq-visit-interface-two-arm-event}
        \mu_{\delta}^{\pm}\bigg[ \bigcap_{j=1}^{k} \{ \dist(x_j, \gamma_{\delta}) \leq r_j \}\bigg] \leq \mu_{\delta}^{\pm}\bigg[ \bigcap_{j=1}^{k} A_{2}(x_j;r_j,d_{\delta}(x_j)) \bigg].
    \end{equation}
    We now would like to use the Markov property of the Ising model to decouple the events 
    \begin{equation*}
        \{A_{2}(x_j;r_j,d_{\delta}(x_j))\}_{1 \leq j \leq k}.
    \end{equation*} 
    However, the outer squares $\Lambda_{d_{\delta}(x_j),\delta}(x)$ may overlap each other. We therefore consider smaller outer squares to obtain from \eqref{ineq-visit-interface-two-arm-event} that
    \begin{equation*}
        \mu_{\delta}^{\pm}\bigg[ \bigcap_{j=1}^{k} \{ \dist(x_j, \gamma_{\delta}) \leq r_j \}\bigg] \leq \mu_{\delta}^{\pm}\bigg[ \bigcap_{j=1}^{k} A_{2}(x_j;r_j,d_{\delta}(x_j) \wedge L(x_j)) \bigg],
    \end{equation*}
    where $L(x_j)$ is defined as in the statement of the lemma. Using now the Markov property of the Ising model in the squares $(\Lambda_{\tilde L(x_j),\delta}(x))_j$, we obtain that
    \begin{equation*}
        \mu_{\delta}^{\pm}\bigg[ \bigcap_{j=1}^{k} \{ \dist(x_j, \gamma_{\delta}) \leq r_j \}\bigg] \leq \mu_{\delta}^{\pm} \bigg[ \mu_{\Lambda_{\tilde L(x_j),\delta}(x)}^{\sigma_{\partial \Lambda_{\tilde L(x_j),\delta}(x)}}\big[ A_{2}(x_j;r_j,\frac{1}{2}d_{\delta}(x_j) \wedge \frac{1}{4}L(x_j)) \big] \bigg]
    \end{equation*}
    where we have set $\tilde L(x_j) = d_{\delta}(x_j) \wedge L(x_j)$. \cite[Corollary~5.2]{armExponentsIsing} guarantees that when $\beta =\beta_c$, changing the boundary conditions does not change by more than a multiplicative constant the probability of events depending on edges that are at macroscopic distance from the boundary. This implies that there exists a constant $c>0$ such that $\PP_{\delta}^{\pm}$-almost surely, for any $x_j$,
    \begin{equation} \label{mixing-2-arm-event}
        \mu_{\Lambda_{\tilde L(x_j),\delta}(x)}^{\sigma_{\partial \Lambda_{\tilde L(x_j),\delta}(x)}}\big[ A_{2}(x_j;r_j,\frac{1}{2}d_{\delta}(x_j) \wedge \frac{1}{4}L(x_j)) \big] \leq c\mu_{\Lambda_{\tilde L(x_j),\delta}(x)}^{\pm}\big[ A_{2}(x_j;r_j,\frac{1}{2}d_{\delta}(x_j) \wedge \frac{1}{4}L(x_j)) \big]
    \end{equation}
    where under $\PP_{\Lambda_{\tilde L(x_j),\delta}(x)}^{\pm}$, the Ising model has boundary conditions $-1$ on the bottom side of $\partial \Lambda_{\tilde L(x_j),\delta}(x)$ and $+1$ boundary conditions elsewhere. Lemma \ref{lemma-one-pt-estimate-interface-discrete} provides an upper bound on the right-hand side of \eqref{mixing-2-arm-event}, which combined with \eqref{ineq-visit-interface-two-arm-event}, yields that there exist $\delta_0>0$ (independent of $k$ and $(x_1,\dots,x_k)$) and an absolute constant $C>0$ such that
    \begin{align*}
        \mu_{\delta}^{\pm}\bigg[ \bigcap_{j=1}^{k} \{ \dist(x_j, \gamma_{\delta})\leq r_j \}\bigg] &\leq c^k \prod_{j=1}^{k} \mu_{\Lambda_{\tilde L(x_j),\delta}(x)}^{\pm}\big[ A_{2}(x_j;r_j,\frac{1}{2}d_{\delta}(x_j) \wedge \frac{1}{4}L(x_j)) \big] \\
        & \leq C^k c^k \bigg( \prod_{j=1}^{k} \frac{r_j}{\frac{1}{2}d_{\delta}(x_j) \wedge \frac{1}{4}L(x_j))} \bigg)^{\frac{5}{8}-\rho}.
    \end{align*}
    This directly implies the lemma.
\end{proof}

\printbibliography
\end{document}